\newtheorem{theorem}{Theorem}[chapter]
\newtheorem{lemma}[theorem]{Lemma}
\theoremstyle{definition}
\newtheorem{definition}[theorem]{Definition}
\theoremstyle{remark}
\newtheorem{remark}[theorem]{Remark}
\numberwithin{section}{chapter}
\numberwithin{equation}{chapter}
\newcommand{\nn}{\nonumber}
\newcommand{\ms}{\medskip}
\newcommand{\R}{\mathbb{R}}
\renewcommand{\H}{\mathcal H}
\newcommand{\bN}{\mathbb{N}}
\renewcommand{\d}{\partial}
\newcommand{\dist}{\,\mathrm{dist}}
\newcommand{\sm}{\setminus}
\newcommand{\supp}{\mathrm{supp}}
\newcommand{\diam}{\mathrm{diam}}
\newcommand{\wt}{\widetilde}
\newcommand{\wh}{\widehat}
\newcommand{\ub}{\underbar}
\newcommand{\cW}{{\mathcal  W}}
\newcommand{\1}{{\mathds 1}}
\newcommand{\dr}{\partial}
\DeclareMathOperator*{\osc}{osc}
\newcommand{\ds}{\displaystyle}
\newcommand{\WW}{W_r}
\newcommand{\g}{\mathfrak g}
\newcommand{\rn}{{\mathbb{R}^n}}
\newcommand{\RR}{{\mathbb{R}}}
\newcommand{\NN}{{\mathbb{N}}}
\newcommand{\ZZ}{{\mathbb{Z}}}
\newcommand{\eps}{\varepsilon}
\newcommand{\A}{\mathcal{A}}
\newcommand{\po}{{\partial\Omega}}
\newcommand{\bp}{\noindent {\it Proof}.\,\,}
\newcommand{\ep}{\hfill$\Box$ \vskip 0.08in}
\DeclareMathOperator{\diver}{div}
\begin{document}

\frontmatter

\title{Elliptic Theory for Sets with Higher Co-dimensional Boundaries}

\author{Guy David}
\address{Univ Paris-Sud, Laboratoire de Math\'ematiques, UMR8628, Orsay, 91405, France}
\curraddr{}
\email{guy.david@math.u-psud.fr}

\author{Joseph Feneuil}
\address{Department of Mathematics, Temple University, 1805 N. Broad Street, Philadelphia, PA 19122, USA}
\curraddr{}
\email{joseph.feneuil@temple.edu}

\author{Svitlana Mayboroda}
\address{School of Mathematics, University of Minnesota, Minneapolis, MN 55455, USA}
\curraddr{}
\email{svitlana@math.umn.edu}
\thanks{This research was supported in part by Fondation Jacques Hadamard and by CNRS. The first author was supported in part by the ANR,
programme blanc GEOMETRYA ANR-12-BS01-0014, the European Community Marie Curie grant MANET 607643 and H2020 grant GHAIA 777822, and the Simons Collaborations in MSP grant 601941, GD. The third author was  supported  by the Alfred P. Sloan Fellowship, the NSF INSPIRE Award DMS 1344235, NSF CAREER Award DMS 1220089, the NSF RAISE-TAQ grant DMS 1839077, and the Simons Foundation grant 563916, SM.}
\thanks{We would like to thank the Department of Mathematics at Universit\'e Paris-Sud, the Ecole des Mines, and the Mathematical Sciences Research Institute (NSF grant DMS 1440140) for warm hospitality.}
\thanks{Finally, we would like to thank Alano Ancona for stimulating discussions at the early stages of the project and for sharing with us the results of his work.}

\date{February 25, 2017}

\subjclass[2010]{Primary 31B05, 35J25, Secondary 28A75, 28A78 , 35J70, 42B20, 42B25, 42B37}

\keywords{harmonic measure,
 boundary of co-dimension higher than 1, trace theorem, extension theorem, degenerate elliptic operators, maximum principle, H\"older continuity of solutions, De Giorgi-Nash-Moser estimates, Green functions, comparison principle, homogeneous weighted Sobolev spaces}

\begin{abstract}
Many geometric and analytic properties of sets hinge on the properties of elliptic measure, notoriously missing for sets of higher co-dimension. The aim of this manuscript is to develop a version of elliptic theory, associated to a linear PDE, which ultimately yields a notion analogous to that of the harmonic measure, for sets of codimension higher than 1.

To this end, we turn to degenerate elliptic equations. Let $\Gamma \subset \R^n$ be an Ahlfors regular set of dimension $d<n-1$ (not necessarily integer) and $\Omega = \R^n \setminus \Gamma$. Let $L = - \diver A\nabla$ be a degenerate elliptic operator with measurable coefficients such that the ellipticity constants of the matrix $A$ are bounded from above and below by a multiple of $\dist(\cdot, \Gamma)^{d+1-n}$. We define weak solutions; prove trace and extension theorems in suitable weighted Sobolev spaces; establish the maximum principle, De Giorgi-Nash-Moser estimates, the Harnack inequality, the H\"older continuity of solutions (inside and at the boundary). We define the Green function and provide the basic set of pointwise and/or $L^p$ estimates for the Green function and for its gradient. With this at hand, we define harmonic measure associated to $L$, establish its doubling property, non-degeneracy, change-of-the-pole formulas, and, finally, the comparison principle for local solutions. 

In another article to appear, we will prove that when $\Gamma$ is the graph of a Lipschitz function with small Lipschitz constant, we can find an elliptic operator $L$ for which the harmonic measure given here is absolutely continuous with respect to the $d$-Hausdorff measure on $\Gamma$ and vice versa. It thus extends Dahlberg's theorem to some sets of codimension higher than~1.
\end{abstract}

\maketitle

\tableofcontents


\mainmatter
\chapter{Introduction}
\label{S1}

The past few years have witnessed remarkable progress
in the study of relations between regularity properties 
of the harmonic measure $\omega$ on the boundary of a domain of $\R^n$ (for instance, 
its absolute continuity with respect to the Hausdorff measure $\H^{n-1}$) 
and the regularity of the domain (for instance, rectifiability properties  of the boundary). 
In short, the emerging philosophy is that  the  
rectifiability of the boundary is necessary for the absolute continuity of $\omega$ with respect to 
$\H^{n-1}$, and that rectifiability along with suitable connectedness assumptions is sufficient. 
Omitting for now precise definitions, let us recall the main results in this regard. 
The celebrated 1916 theorem of F. \& M. Riesz has established the 
absolute continuity of the harmonic measure for a simply connected domain in the complex plane, 
with a rectifiable boundary \cite{RR}. The quantifiable analogue of this result 
(the $A^\infty$ property of harmonic measure) was obtained by Lavrent'ev in 1936 \cite{Lv} 
and the local version, pertaining to subsets of a rectifiable curve which is a boundary of a simply connected planar domain, 
was proved by 
Bishop and Jones in 1990 \cite{BJ}. In the latter work the authors also showed that 
some connectedness is necessary for the absolute continuity of $\omega$ with respect to $\H^{n-1}$, 
for there exists a planar set with  rectifiable boundary for which the harmonic measure is singular with respect to $\H^{n-1}$. 

The situation in higher dimensions, $n\geq 3$, is even more complicated. 
The absolute continuity of $\omega$ with respect to $\H^{n-1}$ was first established by Dahlberg 
on Lipschitz graphs \cite{Dah} and was then extended to non-tangentially accessible (NTA) domains with Ahlfors regular boundary  in \cite{DJ}, \cite{Se}, and to more general NTA domains in \cite{Ba}. Roughly speaking, the non-tangential accessibility is an 
assumption of quantifiable connectedness, 
which requires the presence of  
interior and exterior corkscrew points, as well as Harnack chains. Ahlfors regularity simply postulates that the measure of intersection with the boundary of every ball of radius $r$ centered at the boundary is proportional to $r^{n-1}$, i.e., that the boundary is in a certain sense $n-1$ dimensional (we will provide a careful definition below).
Similarly to the lower-dimensional case, counterexamples 
show that some topological restrictions are needed for the
absolute continuity of $\omega$ with respect to $\H^{n-1}$ \cite{Wu}, \cite{Z}. 
Much more recently, in \cite{HM1}, \cite{HMU}, \cite{AHMNT}, the authors proved that, in fact, for sets with Ahlfors regular boundaries, under a (weaker) 1-sided NTA assumption, the
uniform rectifiability of the boundary is equivalent to the complete set of NTA  conditions 
and hence, is equivalent to the
absolute continuity of harmonic measure with respect to the Lebesgue measure. 
Finally, in 2015 the full converse, ``free boundary"  
result was obtained and established that rectifiability is necessary for  the
absolute continuity of harmonic measure with respect to $\H^{n-1}$ in any dimension $n\geq 2$ (without any additional topological assumptions) \cite{AHM3TV}. It was proved simultaneously that for a complement of an $(n-1)$-Ahlfors regular set the $A^\infty$ property of harmonic measure yields uniform rectifiability of the boundary \cite{HLMN}. Shortly after, it was established that in an analogous setting $\eps$-approximability and Carleson measure estimates for bounded harmonic functions are equivalent to uniform rectifiability \cite{HMM1}, \cite{GMT}, and that analogous results hold for more general elliptic operators \cite{HMM2}, \cite{AGMT}.

The purpose of this work is to start the investigation
of similar properties for domains with a lower-dimensional boundary $\Gamma$.  To the best of our knowledge, the only known approach to elliptic problems on domains with higher co-dimensional boundaries is by means of the $p$-Laplacian operator and its generalizations \cite{NDbook}, \cite{LN}. In \cite{LN} the authors worked with an associated Wiener capacity, defined $p$-harmonic measure, and established boundary Harnack inequalities for Reifenberg flat sets of co-dimension higher than one. Our goals here are different.

\medskip

We shall systematically assume that $\Gamma$ is Ahlfors-regular of some dimension
$d < n-1$, which {\it does not need to be an integer}. 
This means that there is a constant $C_0 \geq 1$ such that
\begin{equation} \label{1.1}
C_0^{-1} r^d \leq \H^d(\Gamma \cap B(x,r)) \leq C_0 r^d
\ \text{ for } x\in \Gamma \text{ and } r > 0.
\end{equation}
We want to define an analogue of the
harmonic measure, that will be defined on $\Gamma$ and
associated to a divergence form operator on $\Omega = \R^n \sm \Gamma$. We still write
the operator as $L = - \rm{div} A \nabla$, with $A : \Omega \to \mathbb{M}_n(\R)$, 
and we write the ellipticity condition with a different homogeneity, i.e., we require that for some 
$C_1 \geq 1$, 
\begin{eqnarray} \label{1.2.1}
&& \dist(x,\Gamma)^{n-d-1} A(x)\xi \cdot \zeta \leq C_1  |\xi| \,|\zeta|
\ \text{ for } x\in \Omega \text{ and } \xi, \zeta \in \R^n, \\[4pt]
\label{1.2.2}
&& \dist(x,\Gamma)^{n-d-1} A(x)\xi \cdot \xi \geq C_1^{-1}  |\xi|^2
\ \text{ for } x\in \Omega \text{ and } \xi \in \R^n.
\end{eqnarray}
The effect of this normalization should be to incite the analogue of the Brownian motion here
to get closer to the boundary with the right probability;  for instance if $\Gamma = \R^d \subset \R^n$
and $A(x) = \dist(x,\Gamma)^{-n+d+1} I$, it turns out that the effect of $L$ on functions $f(x,t)$ 
that are radial in the second variable $t\in \R^{n-d}$ is the same as for the Laplacian on $\R^{d+1}_+$. 
In some sense, we create Brownian travelers  which treat $\Gamma$ as a ``black hole": they detect more mass and they are more attracted to $\Gamma$ than a standard Brownian traveler governed by the Laplacian would be. 

The purpose of the present manuscript is to develop, with merely these assumptions, a comprehensive elliptic theory. We solve the Dirichlet problem for $Lu = 0$, 
prove the maximum principle, the De Giorgi-Nash-Moser estimates and the 
Harnack inequality for solutions, use this to define a harmonic measure associated to $L$, show that it is doubling, 
and prove the comparison principle for positive $L$-harmonic functions that vanish at the boundary. Let us discuss the details.

We first introduce some notation. Set $\delta(x) = \dist(x,\Gamma)$
and $w(x) = \delta(x)^{-n+d+1}$ for $x\in \Omega = \R^n \sm \Gamma$,
and denote by $\sigma$ the restriction to $\Gamma$ of $\H^d$.  Denote by $W=\dot W^{1,2}_w(\Omega)$ the weighted Sobolev space of functions 
$u \in L^1_{loc}(\Omega)$
whose distribution gradient in $\Omega$ lies in $L^2(\Omega,w)$:
\begin{equation}\label{defW}
W=\dot W^{1,2}_w(\Omega):=\{u \in L^1_{loc}(\Omega): \,\nabla u\in L^2(\Omega,w)\},
\end{equation}
and set
$\|u\|_W = \big\{\int_\Omega |\nabla u(x)|^2 w(x) dx \big\}^{1/2}$ for $f\in W$.
Finally denote 
by ${\mathcal M}(\Gamma)$ the set of measurable functions on $\Gamma$ and then set
\begin{equation}\label{defH}
H=\dot H^{1/2}(\Gamma) :=\left\{g\in {\mathcal M}(\Gamma): \,\int_\Gamma\int_\Gamma {|g(x)-g(y)|^2 \over |x-y|^{d+1}}d\sigma(x) d\sigma(y)<\infty\right\}.
\end{equation}

Before we solve Dirichlet problems we construct two bounded linear operators
$T : W \to H$ (a trace operator) and $E : H \to W$ (an extension operator), such that
$T \circ E = I_H$. The trace of $u \in W$ is such that for $\sigma$-almost every $x\in \Gamma$,
\begin{equation}\label{defT1.6}
Tu(x) = \lim_{r \to 0} \fint_{B(x,r)} u(y) dy :=  \lim_{r \to 0} {1 \over |B(x,r)|} \int u(y) dy,
\end{equation}
and even, analogously to the Lebesgue density property,
\begin{equation}\label{defLebdp1}
\lim_{r \to 0} \fint_{B(x,r)} |u(y)-Tu(x)| dy = 0.
\end{equation}
Similarly, we check that if $g\in H$, then 
\begin{equation}\label{defLebdp2}
\lim_{r \to 0} \fint_{\Gamma \cap B(x,r)} |g(y)-g(x)| d\sigma(y) = 0
\end{equation}
for $\sigma$-almost every $x\in \Gamma$.
We typically use the fact that $|u(x)-u(y)| \leq \int_{[x,y]} |\nabla u| d\mathcal L^1$ for almost all choices
of $x$ and $y \in \Omega$, for which we can use the absolute continuity of $u\in W$ on 
(almost all) line segments, plus the important fact that, by \eqref{1.1}, 
$\Gamma \cap \ell = \emptyset$ for almost every line $\ell$. 

Note that the latter geometric fact is enabled specifically by the higher co-dimension ($d<n-1$), even though our boundary can be quite wild. In fact, a stronger property holds in the present setting and gives, in particular, Harnack chains. 
There exists a constant $C>0$, that depends only
on $C_0$, $n$, and $d < n-1$, such that for $\Lambda \geq 1$ and $x_1, x_2 \in \Omega$ 
such that $\dist(x_i,\Gamma) \geq r$ and $|x_1-x_2| \leq \Lambda r$, we can find two points
$y_i \in B(x_i,r/2)$ such that $\dist([y_1,y_2],\Gamma) \geq C^{-1} \Lambda^{-d/(n-d-1)} r$.
That is, there is a thick tube in $\Omega$ that connects the two $B(x_i,r/2)$.

Once we have trace and extension operators, we deduce from the Lax-Milgram theorem that for $g\in H$, 
there is a unique weak solution $u\in W$ of $Lu=0$ such that $Tu = g$. 
For us a weak solution is a function $u\in W$ such that
\begin{equation}\label{defwsol}
\int_\Omega A(x)\nabla u(x) \cdot \nabla \varphi(x) dx = 0
\end{equation}
for all $\varphi \in C_0^\infty (\Omega)$, the space of infinitely differentiable functions which are compactly 
supported in $\Omega$. 

Then we follow the Moser iteration scheme to study the weak solutions of $Lu=0$, as we would do
in the standard elliptic case in codimension $1$.  
This leads to the quantitative boundedness (a.k.a. Moser bounds) and the quantitative H\"older continuity
(a.k.a. De Giorgi-Nash estimates), in an interior or boundary ball $B$,
of any weak solution of $Lu = 0$ in $2B$ such that $Tu = 0$ on $\Gamma \cap 2B$ 
when the intersection is non-empty.  Precise estimates will be given later in the introduction. 
The boundary estimates are trickier, because we do not have the conventional ``fatness" of the complement 
of the domain, and it is useful to know beforehand that suitable versions of Poincar\'e and Sobolev inequalities hold. For instance,
\begin{equation} \label{1.4}
\fint_{B(x,r)} |u(y)| dy \leq C r^{-d} \int_{B(x,r)} |\nabla u(y)| w(y) dy
\end{equation} 
for $u \in W$, $x\in \Gamma$, and $r>0$ such that $Tu = 0$ on $\Gamma \cap B(x,r)$
and, if $m(B(x,r))$ denotes $\int_{B(x,r)} w(y) dy$,
\begin{equation} \label{1.5}
\Big\{\frac1{m(B(x,r))}\int_{B(x,r)} \Big|u(y) - \fint_{B(x,r)} u \Big|^p w(y) dy\Big\}^{1/p}
\leq C r \Big\{ \frac{1}{m(B(x,r))}\int_{B(x,r)} |\nabla u(y)|^2 w(y) dy \Big\}^{1/2}
\end{equation}
for $u\in W$,  $x\in \overline\Omega=\R^n$, $r>0$, and 
$p\in \left[1,\frac{2n}{n-2}\right]$ (if $n\geq 3$) or $p\in [1,+\infty)$ (if $n=2$).

A substantial portion of the proofs lies in the
analysis of the newly defined Sobolev spaces. It is important to note, in particular, that we prove the density of smooth functions on $\R^n$ (and not just $\Omega$)
in our weighted Sobolev space $W$.
That is,
for any function $f$ in $W$, there exists a sequence $(f_k)_{k\geq 1}$ of functions in $C^\infty(\rn) \cap W$ such that $\|f- f_k\|_W$ tends to 0 and $f_k$ converges to $f$ in $L^1_{loc}(\rn)$. 
In codimension $1$, this sort of property, just like \eqref{1.4} or \eqref{1.5},
typically requires a fairly nice boundary, 
e.g., Lipschitz, and it is quite remarkable that here they all hold in the
complement of any Ahlfors-regular set. 
This is, of course, a fortunate outcome of working with lower dimensional boundary: 
we can guarantee ample access to the boundary (cf., e.g., the Harnack ``tubes" discussed above), 
which turns out to be sufficient despite the absence of traditionally required ``massive complement". 
Or rather one could say that the boundary itself is sufficiently ``massive" from the PDE point of view, due to our carefully chosen equation and corresponding function spaces.

With all these ingredients, we can follow the standard proofs for elliptic divergence form operators.
When $u$ is a solution to $Lu = 0$ in a ball $2B \subset \Omega$, the De Giorgi-Nash-Moser estimates and the Harnack inequality in the ball $B$ don't depend on the properties of the boundary $\Gamma$ and thus can be proven as in the case of codimension 1.
When $B \subset \R^n$ is a ball centered on $\Gamma$ and $u$ is a weak solution to $Lu = 0$ in $2B$ whose trace satisfies $Tu = 0$ on $\Gamma \cap 2B$, the quantitative boundedness and the quantitative H\"older continuity of the solution $u$ are expressed with the help of the weight $w$. There holds, if $m(2B) = \int_{2B} w(y) dy$,
\begin{equation} \label{MoserIntro}
\sup_B u \leq C \left( \frac{1}{m(2B)} \int_{2B} |u(y)|^2 w(y) dy \right)^{1/2} 
\end{equation} 
and, for any $\theta \in (0,1]$, 
\begin{equation} \label{HolderIntro}
\sup_{\theta B} u \leq C \theta^\alpha \sup_{B} u \leq C \theta^\alpha \left( \frac{1}{m(2B)} 
\int_{2B} |u(y)|^2 w(y) dy \right)^{1/2}, 
\end{equation} 
where $\theta B$ denotes the ball with same center as $B$ but whose radius is multiplied by $\theta$, and $C,\alpha>0$ are constants that depend only on the dimensions $d$ and $n$, the Ahlfors constant $C_0$ and the ellipticity constant $C_1$.

We establish then the existence and uniqueness of a Green function $g$, which is roughly speaking a positive function on $\Omega \times \Omega$ such that, for all $y\in \Omega$, the function $g(.,y)$ solves $Lg(.,y) = \delta(y)$ and $Tg(.,y) = 0$. In particular, the following pointwise estimates are shown: 
\begin{equation} \label{GreenIntro}
0 \leq g(x,y) \leq \left\{\begin{array}{ll} C|x-y|^{1-d} & \text{ if } 4|x-y| \geq \delta(y) \\ 
\frac{C|x-y|^{2-n}}{w(y)} & \text{ if } 2|x-y| \leq \delta(y), \, n\geq 3 \\
\frac{C_\epsilon}{w(y)} \left(\frac{\delta(y)}{|x-y|} \right)^\epsilon & \text{ if } 2|x-y| \leq \delta(y), \, n=2,\end{array} \right.
\end{equation} 
where $C>0$ depends on $d$, $n$, $C_0$, $C_1$ and $C_\epsilon>0$ depends on $d$, $C_0$, $C_1$, $\epsilon$.
When $n\geq 3$, the pointwise estimates can be gathered to a single one, and may look more natural for the reader: if $m(B) = \int_B w(y) dy$, 
\begin{equation} \label{GreenIntro2}
0 \leq g(x,y) \leq C\frac{|x-y|^{2}}{m(B(x,|x-y|))}
\end{equation} 
whenever $x,y\in \Omega$. The bound in the case where $n=2$ and $2|x-y| \leq \delta(y)$ can surely be improved into a logarithm bound, but the bound given here is sufficient for 
our purposes.
Also, our results hold for any $d$ and any $n$ such that $d<n-1$,  
(i.e., even in the cases where $n=2$ or $d\leq 1$), 
which proves that Ahlfors regular domains are `Greenian sets' in our adapted elliptic theory. Note that contrary to the codimension 1 case, the notion of the fundamental solution in $\RR^n$ is not accessible,
since the distance to the boundary of $\Omega$ is an integral part of the definition of $L$.

We use the Harnack inequality, the De Giorgi-Nash-Moser estimates, as well as a suitable version of the maximum principle, to solve the Dirichlet problem for continuous functions with compact support on $\Gamma$, and then to
define harmonic measures $\omega^x$ for $x\in \Omega$ (so that $\int_\Gamma g d\omega^x$ 
is the value at $x$ of the solution of the Dirichlet problem for $g$). Note that we do not need an analogue of the Wiener criterion (which normally guarantees that solutions with continuous data are continuous 
up
to the boundary and allows one to define the harmonic measure), as we have already proved a stronger property, that solutions are H\"older continuous 
up
to the boundary. 
Then, following the ideas of \cite[Section 1.3]{KenigB}, we prove the following properties 
on the harmonic measure $\omega^x$. First, the non-degeneracy of the harmonic measure states 
that if $B$ is a ball centered on $\Gamma$,
\begin{equation} \label{ndhmi1}
\omega^x(B \cap \Gamma) \geq C^{-1}
\end{equation} 
whenever $x \in \Omega \cap \frac12 B$ and 
\begin{equation} \label{ndhmi2}
\omega^x(\Gamma \setminus B) \geq C^{-1}
\end{equation} 
whenever $x \in \Omega \setminus 2B$, the constant $C>0$ depending as previously on $d$, $n$, $C_0$ and $C_1$. Next, let us recall that
any boundary ball has a corkscrew point, that is for any ball $B= B(x_0,r) \subset \R^n$ centered on $\Gamma$, there exists $\Delta_B \in B$ such that $\delta(\Delta_B)$ is bigger than $\epsilon r$,  where $\epsilon>0$ depends only on $d$, $n$ and $C_0$. 
With this definition in mind, we
compare the harmonic 
measure with the Green function: for any ball $B$ of radius $r$ centered on $\Gamma$,
\begin{equation} \label{Grandhm1}
C^{-1} r^{1-d} g(x,\Delta_B) \leq \omega^x(B \cap \Gamma) \leq C r^{1-d} g(x,\Delta_B)
\end{equation}
for any $x\in \Omega \setminus 2B$ and
\begin{equation} \label{Grandhm2}
C^{-1} r^{1-d} g(x,\Delta_B) \leq \omega^x(\Gamma \setminus B) \leq C r^{1-d} g(x,\Delta_B)
\end{equation}
for any $x\in \Omega \cap \frac12 B$ which is far enough from $\Delta_B$, say $|x-\Delta_B| \geq \epsilon r/2$, where $\epsilon$ is the constant used to define $\Delta_B$. The constant $C>0$ in \eqref{Grandhm1} and \eqref{Grandhm2} depends again only on $d$, $n$, $C_0$ and $C_1$. 
The estimates \eqref{Grandhm1} and \eqref{Grandhm2} can be seen as weak versions of the `comparison principle', which deal only with the Green functions and the harmonic measure and which can be proven by using the specific properties of the latter objects. The inequalities \eqref{Grandhm1} and \eqref{Grandhm2} 
are essential for the proofs of
the next three results. 

The first one is the doubling property of
the harmonic measure, 
which 
guarantees that, if $B$ is a ball centered on $\Gamma$, $\omega^x(2B \cap \Gamma) \leq C \omega^x(B \cap \Gamma)$ whenever $x\in \Omega \setminus 4B$. It has an interesting counterpart: $\omega^x(\Gamma \setminus B) \leq C \omega^x(\Gamma \setminus 2B)$ whenever $x\in \Omega \cap \frac12B$. 

The second one is the change-of-the-pole estimates, which can be stated as
\begin{equation} \label{Chpole}
C^{-1} \omega^{\Delta_B}(E) \leq \frac{\omega^x(E)}{\omega^x(\Gamma \cap B)} \leq C \omega^{\Delta_B}(E)
\end{equation}
when $B$ is a ball
centered on $\Gamma$, $E \subset B\cap \Gamma$ 
is a Borel set, 
and $x \in \Omega \setminus 2B$. 

The last result is the comparison principle, that says that if $u$ and $v$ are
positive weak solutions of $Lu = Lv = 0$ such that $Tu = Tv = 0$ on $2B \cap\Gamma$, 
where $B$ is a ball centered on $\Gamma$,
then $u$ and $v$ are comparable in $B$, i.e.,
\begin{equation} \label{cpintro}
\sup_{z\in B \sm \Gamma} \frac{u(z)}{v(z)} \leq C \inf_{z\in B \sm \Gamma} \frac{u(z)}{v(z)}.
\end{equation}
In each case, i.e., for the doubling property of the harmonic measure, the change of pole, or the comparison principle, the constant $C>0$ depends only on $d$, $n$, $C_0$ and $C_1$.

It is difficult to survey a history of the subject that is 
so
classical (in the co-dimension one case). In that setting, that is, in co-dimension one and reasonably nice geometry, 
e.g., of Lipschitz domains,  the results have largely become folklore and we often follow the exposition in 
standard texts \cite{GT}, \cite{HLbook}, \cite{Mazya11}, \cite{MZbook}, \cite{StaCRAS}, \cite{GW}, \cite{CFMS}. 
The general order of development is inspired by \cite{KenigB}. Furthermore, let us point out that while 
the invention of a harmonic measure which serves the higher co-dimensional boundaries, 
which is associated to a linear PDE, and which is absolutely continuous with respect to the Lebesgue measure 
on reasonably nice sets, is the main focal point of our work, various versions of degenerate elliptic operators 
and weighted Sobolev spaces have of course appeared in the literature over the years. 
Some versions of some of the results  listed above or similar ones can  be found, e.g., in \cite{Ancona}, \cite{FKS}, \cite{Hajlasz}, \cite{HK00}, \cite{NDbook}, \cite{Kilpelai}, \cite{JW}. However, 
the presentation here is fully self-contained, and since we did not rely on previous work, 
we hope to be forgiven for not providing a detailed review of the corresponding literature. 
Also, the context of the present paper often makes it possible to have much simpler proofs than 
a more general setting of not necessarily Ahlfors regular sets.
It is perhaps worth pointing out that we work with {\it homogeneous} Sobolev spaces. Unfortunately, those are much less popular in the literature that their non-homogeneous counterparts, while they are more suitable for PDEs on unbounded domains. 

\medskip

As outlined in \cite{DFM-CRAS}, ,
we intend in subsequent publications to take stronger assumptions, 
both on the geometry of $\Gamma$ and the choice of $L$,
and prove that the harmonic measure defined here is absolutely continuous with respect to 
$\H^d_{\vert \Gamma}$. For instance, we
will assume that $d$ is an integer and $\Gamma$ is the graph 
of a Lipschitz function $F : \R^d \to \R^{n-d}$, with a small enough Lipschitz constant.
As for $A$, we will
assume that $A(x) = D(x)^{-n+d+1} I$ for $x\in \Omega$, with 
\begin{equation} \label{1.3}
D(x) = \Big\{ \int_\Gamma |x-y|^{-d-\alpha} d\H^d(y) \Big\}^{-1/\alpha}
\end{equation}
for some constant $\alpha > 0$. Notice that because of \eqref{1.1}, $D(x)$ is equivalent to 
$\delta(x)$; when $d=1$ we can also take $A(x) = \delta(x)^{-n+d+1} I$, but
when $d \geq 2$ we do not know whether $\delta(x)$ is smooth enough to work. 
In \eqref{1.3}, we could also replace $\H^d$ with another Ahlfors-regular measure on $\Gamma$. 

With these additional assumptions we will prove that the harmonic measure described above 
is absolutely continuous with respect to $\H^d_{\vert \Gamma}$, with a density which is 
a Muckenhoupt $A_\infty$ weight. In other words, we shall establish an analogue of 
Dahlberg's result \cite{Dah} for domains with a higher co-dimensional boundary given by a Lipschitz graph 
with a small Lipschitz constant.
It is not so clear what is the right condition for this in terms of $A$, 
but the authors still hope that 
a good condition on $\Gamma$ is its uniform rectifiability. 
Notice that in remarkable contrast with the case of codimension 1, we do not state an additional quantitative connectedness condition on $\Omega$, 
such as the Harnack chain condition in codimension $1$; this is because 
such conditions are automatically satisfied when 
$\Gamma$ is Ahlfors-regular with a large codimension. 

The present paper is aimed at giving a fairly pleasant general framework for studying a version of the
harmonic measure in the context of Ahlfors-regular sets $\Gamma$ of codimension larger than $1$, 
but it will probably be interesting and hard to understand well the relations between the geometry 
of $\Gamma$, the regularity properties of $A$ (which has to be linked to $\Gamma$ through 
the distance function), 
and the regularity properties of the associated harmonic measure.

\chapter{The
Harnack Chain Condition and the
Doubling Property}

\label{SGP}
We keep the same notation as in Section \ref{S1}, concerning $\Gamma \subset \R^n$,
a closed set that satisfies \eqref{1.1} for some $d < n-1$, $\Omega = \R^n \sm \Gamma$, then 
$\sigma = \H^d_{\vert \Gamma}$, $\delta(z) = \dist(z,\Gamma)$, 
and the weight $w(z) = \delta(z)^{d+1-n}$.  

Let us add the notion of measure. The measure $m$ is defined on (Lebesgue-)measurable subset of $\R^n$ by $m(E) = \int_E w(z) dz$. We may write $dm(z)$ for $w(z) dz$. 
Since $0<w<+\infty$ a.e. in $\R^n$, $m$ and the Lebesgue measure are mutually absolutely continuous, that is they have the same zero sets. Thus there is no need to specify the measure when using the expressions {\em almost everywhere} and {\em almost every}, both abbreviated {\em a.e.}; conversely the expression $a.e.$ without any measure refers to {\em Lebesgue almost everywhere} or equivalently {\em $m$-almost everywhere}.

\medskip

In the sequel of the article, $C$ will denote a real number (usually big) that can vary from one line to another. The parameters which the constant $C$ depends on are either obvious from context or recalled. Besides, the notation $A\approx B$ will be used to replace $C^{-1}A \leq B \leq CA$. 

\medskip

This section is devoted to the proof of the very first geometric properties on the space $\Omega$ and the weight $w$. We will prove in particular that $m$ is a doubling measure and $\Omega$ satisfies the Harnack chain condition.

\medskip

First, let us prove the Harnack chain condition we stated in Section \ref{S1}.

\begin{lemma}\label{lHC} Let $\Gamma$ be a $d$-Ahlfors regular set in $\RR^n$, $d<n-1$, that is, assume that \eqref{1.1} is satisfied. Then there exists a constant $c>0$, that depends only
on $C_0$, $n$, and $d < n-1$, such that for $\Lambda \geq 1$ and $x_1, x_2 \in \Omega$ 
such that $\dist(x_i,\Gamma) \geq r$ and $|x_1-x_2| \leq \Lambda r$, we can find two points
$y_i \in B(x_i,r/3)$ such that $\dist([y_1,y_2],\Gamma) \geq c \Lambda^{-d/(n-d-1)} r$.
That is, there is a thick tube in $\Omega$ that connects the two $B(x_i,r/3)$.
\end{lemma}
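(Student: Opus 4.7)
The plan is to show that the ``bad'' set
$\mathrm{Bad}:=\{(y_1,y_2)\in B_1\times B_2 : \dist([y_1,y_2],\Gamma)<\rho\}$,
where $B_i:=B(x_i,r/3)$ and $\rho:=c\,\Lambda^{-d/(n-d-1)}r$ for a sufficiently small $c>0$, has Lebesgue measure strictly less than $|B_1\times B_2|\sim r^{2n}$; any pair in the complement of $\mathrm{Bad}$ then supplies the desired $y_1$ and $y_2$. The first step is a Fubini reduction based on Ahlfors regularity: if $[y_1,y_2]$ comes within $\rho$ of some $z_0\in\Gamma$, then by \eqref{1.1} the entire set $\Gamma\cap B(z_0,\rho)$ has $\sigma$-mass at least $C_0^{-1}\rho^d$ and lies within $2\rho$ of $[y_1,y_2]$. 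Consequently,
\[
|\mathrm{Bad}|\;\le\; C\rho^{-d}\int_{\Gamma}|S(z)|\,d\sigma(z),\qquad
S(z):=\{(y_1,y_2)\in B_1\times B_2 : \dist([y_1,y_2],z)<2\rho\}.
\]

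The second step is a geometric estimate on $|S(z)|$. For fixed $z$, I parameterize pairs via ``two-point polar coordinates'' centered at $z$: $y_i=z+u+s_iv$ with $v\in S^{n-1}$, $u\in v^\perp$, $s_1\le s_2$, and Jacobian $(s_2-s_1)^{n-1}$. In the principal case (the perpendicular foot of $z$ on the line through $y_1,y_2$ lands in the segment), the condition $\dist([y_1,y_2],z)<2\rho$ becomes $|u|<2\rho$ together with $s_1\le 0\le s_2$, while $y_i\in B_i$ forces $v$ into the intersection of two cones aligned with $\pm(x_i-z)/|x_i-z|$. Since these two directions are close to $\pm(x_2-x_1)/|x_1-x_2|$ for $z$ near the segment, that intersection has angular measure $\lesssim \Lambda^{-(n-1)}$; the variable $u$ contributes $\lesssim\rho^{n-1}$; each $s_i$-interval has length $\lesssim r$; and $s_2-s_1\approx(x_2-x_1)\cdot v\lesssim \Lambda r$. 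Multiplying,
\[
|S(z)|\;\lesssim\;\Lambda^{-(n-1)}\cdot\rho^{n-1}\cdot r^2\cdot(\Lambda r)^{n-1}\;\sim\;\rho^{n-1}r^{n+1},
\]
uniformly in $z$, with $|S(z)|=0$ unless $z$ lies within $r$ of $[x_1,x_2]$ (each segment $[y_1,y_2]$ stays within $r/3$ of $[x_1,x_2]$).

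Every such $z$ lies in $B(x_1,C\Lambda r)$ for some constant $C=C(d,n)$, so Ahlfors regularity gives $\sigma(\Gamma\cap B(x_1,C\Lambda r))\le C(\Lambda r)^d$. Combining the three estimates,
\[
|\mathrm{Bad}|\;\lesssim\;\rho^{-d}\cdot(\Lambda r)^d\cdot\rho^{n-1}r^{n+1}\;=\;\Lambda^d\,r^{n+d+1}\,\rho^{n-1-d}.
\]
Requiring this to be less than $\tfrac12|B_1\times B_2|\sim r^{2n}$ yields $\rho^{n-d-1}\lesssim \Lambda^{-d}r^{n-d-1}$, i.e., $\rho\le c\,\Lambda^{-d/(n-d-1)}r$. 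With $c$ chosen small (depending only on $C_0$, $n$, $d$), $\mathrm{Bad}$ is a strict subset of $B_1\times B_2$, producing the required pair $(y_1,y_2)$.

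The main difficulty is the shadow bound in the middle step: one must verify that the angular gain $\Lambda^{-(n-1)}$ from the narrow intersection of admissibility cones exactly cancels the $(\Lambda r)^{n-1}$ two-point Jacobian, leaving no residual $\Lambda$-factor in $|S(z)|$. This cancellation has to hold uniformly in $z$, including the degenerate regimes where $z$ is close to an endpoint of $[x_1,x_2]$ (one cone wide, the other narrow) or slightly off the segment axis (cones mildly misaligned); the non-principal endpoint cases (where the foot of perpendicular falls outside $[y_1,y_2]$) are smaller and handled similarly.
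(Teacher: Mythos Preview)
Your argument is correct and reaches the sharp exponent, but it takes a considerably heavier route than the paper.  The paper does not average over pairs $(y_1,y_2)\in B_1\times B_2$ at all: it restricts from the outset to the $(n-1)$-parameter family of \emph{parallel translates} $\ell_j=z_j+[x_1,x_2]$, with the $z_j$ chosen as $4\varepsilon r$-separated points in the hyperplane $P\perp (x_2-x_1)$ inside $B(0,r/3)$.  If every such $\ell_j$ came within $\varepsilon r$ of $\Gamma$, one picks $w_j\in\Gamma$ near $\ell_j$; the balls $B(w_j,\varepsilon r)$ are disjoint (the segments are $4\varepsilon r$-separated) and all lie in a ball of radius $\sim\Lambda r$, so Ahlfors regularity gives $N(\varepsilon r)^d\lesssim(\Lambda r)^d$.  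Combined with $N\gtrsim\varepsilon^{1-n}$ this forces $\varepsilon^{d+1-n}\lesssim\Lambda^d$, a contradiction for $\varepsilon\sim c\,\Lambda^{-d/(n-d-1)}$.  This is a pure pigeonhole/packing argument, three or four lines, with no integral geometry.

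Your approach, by contrast, integrates over the full $2n$-dimensional product $B_1\times B_2$ and then undoes the extra degrees of freedom via the Blaschke--Petkantschin change of variables.  The delicate cancellation you flag---angular gain $\Lambda^{-(n-1)}$ against Jacobian $(\Lambda r)^{n-1}$---is exactly what the paper sidesteps by fixing the direction once and for all.  Two small remarks on your write-up: (a) the angular bound is really $\lesssim(|x_1-x_2|/r)^{-(n-1)}$, not $\Lambda^{-(n-1)}$; these need not agree since the hypothesis is only $|x_1-x_2|\le\Lambda r$, but the product with $(s_2-s_1)^{n-1}\le(|x_1-x_2|+r)^{n-1}$ still cancels to $r^{n-1}$, so the final bound survives; (b) the ``non-principal'' endpoint cases are in fact empty, since $\dist(y_i,\Gamma)\ge 2r/3>2\rho$ forces the perpendicular foot to lie strictly inside the segment.
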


\bp Indeed, suppose $x_2 \neq x_1$, set $\ell = [x_1,x_2]$, and denote by $P$ the vector hyperplane 
with a direction orthogonal to $x_2-x_1$. Let $\varepsilon \in (0,1)$ be small, to be chosen soon.
We can find $N \geq C^{-1} \varepsilon^{1-n}$ points $z_j\in P \cap B(0,r/3)$, such that
$|z_j-z_k| \geq 4\varepsilon r$ for $j\neq k$. Set $\ell_j = z_j + \ell$, and suppose that
$\dist(\ell_j,\Gamma) \leq \varepsilon r$ for all $j$. Then we can find points $w_j \in \Gamma$
such that $\dist(w_j,\ell_j) \leq \varepsilon r$. Notice that the balls $B_j = B(w_j,\varepsilon r)$
are disjoint because $\dist(\ell_j,\ell_k) \geq 4 \varepsilon r$, and by \eqref{1.1}
\[N C_0^{-1}(\varepsilon r)^d \leq \sum_j \sigma(B_j) 
= \sigma\big(\bigcup_j B_j \big) \leq \sigma(B(w, 2r+|x_2-x_1|))
\leq C_0 (2+\Lambda)^d r^d\]
where $w$ is any of the $w_j$. Thus 
$\varepsilon^{1-n} \varepsilon^d \leq C C_0^{2} \Lambda^d$ (recall that $\Lambda\geq 1$), 
a contradiction if we take $\varepsilon \leq c \Lambda^{-d /(n-d-1)}$,
where $c >0$ depends on $C_0$ too. Thus we can find $j$ such that
$\dist(\ell_j,\Gamma) \geq \varepsilon r$, and the desired conclusion holds with
$y_i = x_i + z_j$.
\ep 

\begin{lemma}\label{lHC2}
Let $\Gamma \subset \R^n$ be an Ahlfors regular set of dimension $d$, $d<n-1$. 
Then for any $\Lambda \geq 1$, we can find a large integer $N_\Lambda$ and a small real $\epsilon_\Lambda$ such that for any couple of points $x_1,x_2 \in \Omega$ satisfying $\dist(x_i,\Gamma) \geq r$ and $|x_1-x_2| \leq \Lambda r$, there exists a sequence of balls $(B_i)_{0\leq i \leq N_\Lambda}$ - called Harnack chain - verifying:
\begin{enumerate}[(i)]
\item $B_0$ is centered on $x_1$ and $B_{N_\Lambda}$ is centered on $x_2$,
\item for $0 \leq i \leq N_\Lambda$, $3B_i \subset \Omega$ and $B_i$ has radius at least $\epsilon_\Lambda r$,
\item for $1 \leq i \leq N_\Lambda$, $B_{i-1} \cap B_i \neq \emptyset$.
\end{enumerate}
\end{lemma}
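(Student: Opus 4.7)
The idea is to reduce Lemma \ref{lHC2} to Lemma \ref{lHC}: the latter already produces a ``thick tube'' of quantified width along a line segment between auxiliary points $y_1,y_2$, and all that remains is to tile the resulting polygonal path $x_1 \to y_1 \to y_2 \to x_2$ by overlapping balls of the correct radius.

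First, I would apply Lemma \ref{lHC} to $x_1,x_2$ to obtain points $y_i \in B(x_i,r/3)$ and a constant $c>0$ (depending only on $n,d,C_0$) such that $\dist([y_1,y_2],\Gamma) \geq c \Lambda^{-d/(n-d-1)} r$. Shrinking $c$ if necessary, I may assume $c \leq 1$, so the lower bound is also valid along the short detour segments $[x_1,y_1]$ and $[y_2,x_2]$, which lie inside $B(x_1,r/3) \cup B(x_2,r/3)$ and therefore at distance at least $2r/3 \geq c\Lambda^{-d/(n-d-1)} r$ from $\Gamma$. Set
\[
\rho := \tfrac{c}{3}\, \Lambda^{-d/(n-d-1)}\, r,
\]
so that for every point $p$ lying on the piecewise-linear path $P := [x_1,y_1]\cup[y_1,y_2]\cup[y_2,x_2]$ the ball $B(p,3\rho)$ is contained in $\Omega$.

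Next, I would cover $P$ by a sequence of balls $B_0,\dots,B_{N_\Lambda}$ of radius $\rho$ as follows. The total length of $P$ is at most $\tfrac{r}{3}+|y_1-y_2|+\tfrac{r}{3} \leq (\Lambda + 1) r$. Parametrize $P$ by arc length and pick centers $p_0=x_1, p_1,\dots, p_{N_\Lambda}=x_2$ with consecutive centers at distance at most $\rho$ (and exactly at the break points $y_1,y_2$ to ensure that the segments used to connect successive centers actually lie in $P$, hence at distance $\geq 3\rho$ from $\Gamma$). The ball $B_i := B(p_i,\rho)$ then satisfies (i) and (ii) (with $\epsilon_\Lambda := \tfrac{c}{3}\Lambda^{-d/(n-d-1)}$), and consecutive balls meet at distance $\leq \rho$ of each other, so $B_{i-1}\cap B_i \neq \emptyset$, which is (iii). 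The number of centers is at most
\[
N_\Lambda \;\leq\; C\, \frac{(\Lambda+1) r}{\rho} \;\leq\; C\, \Lambda^{\,1+d/(n-d-1)},
\]
and depends only on $\Lambda$, $n$, $d$ and $C_0$, as required.

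\textbf{Main difficulty.} There is nothing deep here once Lemma \ref{lHC} is in hand; the only bookkeeping point is to ensure that the radius $\rho$ is simultaneously (a) small enough that $3B_i \subset \Omega$ along the \emph{entire} path, including the short segments $[x_i,y_i]$, and (b) small enough compared to consecutive-center spacing to guarantee overlap, while (c) large enough that the number of balls is controlled by $\Lambda$ alone. Choosing $\rho$ to be a fixed fraction of the tube width provided by Lemma \ref{lHC} handles all three constraints at once.
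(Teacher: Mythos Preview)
Your proof is correct and follows essentially the same route as the paper: invoke Lemma~\ref{lHC} to obtain the thick tube along $[y_1,y_2]$, then tile the path $x_1\to y_1\to y_2\to x_2$ by overlapping balls of radius a fixed fraction of the tube width. The paper's version differs only cosmetically, using larger balls $B(x_i,r/3)$ at the two endpoints rather than uniformly small balls along $[x_i,y_i]$; your variant is arguably cleaner. Two trivial constant adjustments: you need $c\le 2/3$ (not just $c\le 1$) to guarantee $2r/3 \ge c\Lambda^{-d/(n-d-1)}r$ when $\Lambda=1$, and the length of $P$ is at most $(\Lambda+4/3)r$ rather than $(\Lambda+1)r$, but neither affects the argument.
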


\bp
This lemma is a corollary of Lemma \ref{lHC}. First, in the proof, $c$ shall be the constant given by Lemma \ref{lHC}. 

Let $\Lambda \geq 1$ and $x_1,x_2$ as in the lemma. Let $y_1,y_2$ be the associated points given by Lemma \ref{lHC}, which thus verify $y_1 \in B(x_1,r/3)$, $y_2 \in B(x_2,r/3)$, and $\dist([y_1,y_2],\Gamma) \geq c\Lambda^{-d/(n-d-1)}r$.

From there, we define $\epsilon_\Lambda$ as $c\Lambda^{-d/(n-d-1)}/3$ and $N_\Lambda$ as the smallest integer bigger than $(\Lambda +2)r/\epsilon_\Lambda +2$. The sequence $(B_i)_{0\leq i \leq N_\Lambda}$ is then constructed as follows. First, $B_0$ is $B(x_1,r/3)$. Then $(B_i)_{1\leq i < N_\Lambda-1}$ are balls centered on the segment $[y_1,y_2]$ and of radius $\epsilon_\Lambda r$ -  precisely $B_i$ is centered on $[(N_\Lambda - i - 1) y_1 + (i-1)y_2]/[N_\Lambda - 2]$ - and finally $B_{N_\Lambda}$ is $B(x_2,r/3)$.
\ep

Then, we give estimates on the weight $w$.

\begin{lemma} \label{lwest}
There exists $C>0$ such that 
\begin{enumerate}[(i)]
\item for any $x\in \R^n$ and any $r>0$ satisfying $\delta(x) \geq 2r$,
\begin{equation} \label{ISob7b}
C^{-1} r^n w(x) \leq m(B(x,r))= \int_{B(x,r)} w(z) dz \leq C r^n w(x),
\end{equation}
\item for any $x\in \R^n$ and any $r>0$ satisfying $\delta(x) \leq 2r$,
\begin{equation} \label{ISob8b}
C^{-1} r^{d+1} \leq m(B(x,r)) = \int_{B(x,r)} w(z) dz \leq Cr^{d+1}.
\end{equation}
\end{enumerate}
\end{lemma}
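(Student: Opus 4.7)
The plan is to treat the two cases separately. Part (i) is a straightforward calculation since $w$ is slowly varying away from $\Gamma$, while part (ii) requires a dyadic layer-cake estimate toward $\Gamma$ for the upper bound and a codimension-based ``thickness'' argument for $\Omega$ for the lower bound. In both cases the Ahlfors regularity \eqref{1.1} enters only through a single covering inequality: for $x_0 \in \Gamma$ and $\tau \in (0,1)$, a greedy packing produces at most $C \tau^{-d}$ balls of radius $\tau R$ centered on $\Gamma$ whose doublings cover $\{z \in B(x_0,R) : \delta(z) < \tau R\}$, so this set has Lebesgue measure at most $C \tau^{n-d} R^n$.

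For (i), I would observe that when $\delta(x) \geq 2r$, the triangle inequality gives $\delta(x)/2 \leq \delta(z) \leq 3\delta(x)/2$ for every $z \in B(x,r)$. Since $w(z) = \delta(z)^{d+1-n}$, this yields $w(z) \approx w(x)$ on $B(x,r)$ with constants depending only on $n$ and $d$, and integrating gives \eqref{ISob7b} immediately.

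For the upper bound in (ii), pick $x_0 \in \Gamma$ with $|x-x_0| \leq 2r$; then $B(x,r) \subset B(x_0, 3r)$. Setting $R = 3r$ and decomposing $B(x_0, R)$ dyadically via $A_k = \{z \in B(x_0,R) : 2^{-k-1}R \leq \delta(z) < 2^{-k}R\}$, one has $w(z) \approx (2^{-k}R)^{d+1-n}$ on $A_k$, and the covering inequality above gives $|A_k| \leq C 2^{-k(n-d)} R^n$. The product telescopes to $C 2^{-k} R^{d+1}$, and summation over $k \geq 0$ yields $m(B(x,r)) \leq C r^{d+1}$.

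The main obstacle is the lower bound in (ii), which is the only place where the codimension hypothesis $d < n-1$ is genuinely needed (analogously to Lemma \ref{lHC}). Applying the covering inequality with $\tau = c$ bounds the volume of $\{z \in B(x, r/2) : \delta(z) < cr\}$ by $C c^{n-d} r^n$; since $n - d > 1 > 0$, choosing $c = c(n,d,C_0)$ small enough makes this strictly less than $|B(x, r/2)|$, so there exists $z_0 \in B(x, r/2)$ with $\delta(z_0) \geq cr$. Then $B(z_0, cr/2) \subset B(x, r)$, and on this sub-ball $\delta(z) \leq |z-x_0| \leq 3r$, so $w(z) \geq (3r)^{d+1-n}$. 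Integrating yields $m(B(x,r)) \geq C^{-1} r^{d+1}$, completing the proof.
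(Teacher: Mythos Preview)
Your proof is correct, and parts (i) and the upper bound in (ii) match the paper's argument essentially verbatim (same triangle-inequality comparison of $\delta(z)$ with $\delta(x)$, same dyadic shell decomposition after translating to a ball centered on $\Gamma$).

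The one notable difference is the lower bound in (ii), which you describe as ``the main obstacle'' and handle via a corkscrew-point argument. The paper's proof here is in fact trivial: since $\delta(x)\le 2r$, every $z\in B(x,r)$ satisfies $\delta(z)\le 3r$, and because the exponent $d+1-n$ is negative this already gives $w(z)\ge (3r)^{d+1-n}$ on the \emph{entire} ball; integrating over $B(x,r)$ yields $m(B(x,r))\ge C^{-1}r^{d+1}$ immediately. Your route---producing $z_0$ with $\delta(z_0)\ge cr$ and integrating only over $B(z_0,cr/2)$---works, but the extra machinery (the packing/covering estimate, the choice of $c$ small depending on $C_0$) is not needed: the weight is \emph{largest} near $\Gamma$, so restricting to a sub-ball far from $\Gamma$ actually throws information away. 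The codimension hypothesis $d<n-1$ enters only through the sign of the exponent, not through any thickness property of $\Omega$. (A minor technical point: your covering inequality is stated for balls centered on $\Gamma$, while you apply it to $B(x,r/2)$ with $x$ possibly off $\Gamma$; this is easily patched by passing to $B(x_0,3r)$, but it is another wrinkle the simpler argument avoids.)
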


\begin{remark}
In the above lemma, the estimates are different if $\delta(x)$ is bigger or smaller than $2r$. Yet the critical ratio $\frac{\delta(x)}{r}=2$ is not relevant: for any $\alpha >0$, we can show as well that \eqref{ISob7b} holds whenever $\delta(x) \geq \alpha r$ and \eqref{ISob8b} holds if $\delta(x) \leq \alpha r$, with a constant $C$ that depends on $\alpha$. 

Indeed, we can replace $2$ by $\alpha$ if we can prove that for any $K>1$ there exists $C>0$ such that for any $x\in \R^n$ and $r>0$ satisfying 
\begin{equation} \label{rdoubling1}
K^{-1} r \leq \delta(x) \leq Kr
\end{equation}
we have
\begin{equation} \label{rdoubling2}
C^{-1} r^{d+1} \leq r^n w(x) \leq Cr^{d+1}.
\end{equation}
However, since $w(x) = \delta(x)^{d+1-n}$, \eqref{rdoubling1} implies $w(x) \approx r^{d+1-n}$ which in turn gives \eqref{rdoubling2}.
\end{remark}

\bp First suppose that $\delta(x)\geq 2r$. Then for 
any $z\in B(x,r)$, $\frac12 \delta(x)\leq \delta(z) \leq \frac32 \delta(x)$
and hence
$C^{-1} w(x) \leq w(z) \leq Cw(x)$; 
\eqref{ISob7b} follows. 

The lower bound in \eqref{ISob8b} is also fairly easy, just note that when $\delta(x) \leq 2r$, $\delta(z) \leq 3r$ for any $z\in B(x,r)$ and hence
\begin{equation} \label{ISob8d}
m(B(x,r)) \geq \int_{B(x,r)} (3r)^{1+d-n} dz \geq C^{-1} r^{d+1}.
\end{equation}

Finally we
check the upper bound in \eqref{ISob8b}. We claim that for any $y\in \Gamma$ and any $r>0$,
\begin{equation} \label{ldoub1}
m(B(y,r)) = \int_{B(y,r)} \delta(\xi)^{d+1-n} d\xi \leq C r^{d+1}.
\end{equation}
From the claim, let us prove the upper bound in \eqref{ISob8b}. Let $x\in \R^n$ and $r>0$ be such that $\delta(x) \leq 2r$. 
Thus there exists $y\in \Gamma$ such that $B(x,r) \subset B(y,3r)$ and thanks to \eqref{ldoub1}
\begin{equation} \label{ISob8c}
m(B(x,r)) \leq \int_{B(y,3r)} w(z) dz \leq C(3r)^{d+1} \leq Cr^{d+1},
\end{equation}
which gives the upper bound in \eqref{ISob8b}. 

Let us now prove the claim. By translation invariance, we can choose $y=0 \in \Gamma$. Note that $\delta(\xi) \leq r$ in the domain
of integration. Let us evaluate the measure of the set 
$Z_k = \big\{\xi \in B(0,r) \, ; \, 2^{-k-1}r < \delta(\xi) \leq 2^{-k} r \big\}$.
We use \eqref{1.1} to cover $\Gamma \cap B(0,2r)$ with less than
$C 2^{kd}$ balls $B_j$ of radius $2^{-k}r$ centered on $\Gamma$; then $Z_k$
is contained in the union of the $3B_j$, so $|Z_k| \leq C 2^{kd} (2^{-k}r)^n$
and $\int_{\xi \in Z_k} \delta(\xi)^{1+d-n} d\xi
\leq C 2^{kd} (2^{-k}r)^n (2^{-k}r)^{d+1-n} = C 2^{-k} r^{d+1}$.
We sum over $k \geq 0$ and get \eqref{ldoub1}.
\ep 

A consequence of Lemma~\ref{lwest} is that $m$ is a doubling measure, that is for any $x\in \R^n$ and $r>0$, one has $m(B(x,2r)) \leq C m(B(x,r))$ for a constant $C>0$ independent of $x$ and $r$. Actually, we can prove the following stronger fact: for any $x\in \R^n$ and any $r>s>0$, there holds
\begin{equation} \label{doublinggen}
C^{-1} \left( \frac rs \right)^{d+1} \leq \frac{m(B(x,r))}{m(B(x,s))} \leq C \left( \frac rs \right)^n.
\end{equation} 
Three cases may happen. First, $\delta(x) \geq 2r \geq 2s$ and then with \eqref{ISob7b},
\begin{equation} \label{Idoub3}
\frac{m(B(x,r))}{m(B(x,s))} \approx \frac{r^n w(x)}{s^n w(x)} =  \left( \frac rs \right)^n.
\end{equation}
Second, $ \delta(x) \leq 2s \leq 2r$. In this case, note that \eqref{ISob8b} implies
\begin{equation} \label{ldoub4}
\frac{m(B(x,r))}{m(B(x,s))} \approx \frac{r^{d+1}}{s^{d+1}} = \left( \frac rs \right)^{d+1}.
\end{equation}
At last, $2s \leq \delta(x) \leq 2r$. Note that \eqref{ISob7b} and \eqref{ISob8b} yield
\begin{equation} \label{ldoub5}
\frac{m(B(x,r))}{m(B(x,s))} \approx \frac{r^{d+1}}{s^nw(x)}.
 \end{equation} 
Yet, $2s \leq \delta(x) \leq 2r$ implies $C^{-1} r^{d+1-n} \leq w(x) \leq Cs^{d+1-n}$ and thus 
\begin{equation} \label{ldoub6}
C^{-1} \left( \frac rs \right)^{d+1} \leq \frac{m(B(x,r))}{m(B(x,s))} \leq C \left( \frac rs \right)^n.
 \end{equation} 
which finishes the proof of \eqref{doublinggen}. 

One can see that the coefficients $d+1$ and $n$ are optimal in \eqref{doublinggen}. The fact that the volume of a ball with radius $r$ is not equivalent to $r^\alpha$ for some $\alpha>0$ will cause some difficulties. For instance, regardless of the choice of $p$, we cannot have a Sobolev embedding $W \hookrightarrow L^p$ and we have to settle for the Sobolev-Poincar\'e inequality \eqref{1.5}.

\medskip

Another consequence of Lemma~\ref{lwest} is that for any $x\subset \R^n$, any $r>0$, and any nonnegative function $g\in L^1_{loc}(\R^n)$,
\begin{equation} \label{L1byL1w}
\frac{1}{|B(x,r)|} \int_{B(x,r)} g(z) dz \leq C \frac{1}{m(B(x,r))} \int_B g(z) w(z) dz.
 \end{equation} 
Indeed, the inequality \eqref{L1byL1w} holds if we can prove that 
\begin{equation} \label{ldoub8}
\frac{m(B(x,r))}{|B(x,r)|} \leq Cw(z) \qquad \forall z\in B(x,r).
\end{equation}
This latter fact can be proven as follows: 
\begin{equation} \label{ldoub7}
\frac{m(B(x,r))}{|B(x,r)|} \leq \frac{m(B(z,2r))}{|B(x,r)|} \leq Cr^{-n} m(B(z,2r))
 \end{equation} 
If $\delta(z) \geq 4r$, then Lemma~\ref{lwest} gives $r^{-n} m(B(z,2r)) \leq Cw(z)$. If $\delta(z) \leq 4r$, then $w(z) \geq C^{-1} r^{d+1-n}$ and Lemma~\ref{lwest} entails $r^{-n} m(B(z,2r)) \leq Cr^{d+1-n} \leq Cw(z)$. In both cases, we obtain \eqref{ldoub8} and thus \eqref{L1byL1w}.   

\medskip

We end the section with a corollary of Lemma~\ref{lwest}.

\begin{lemma} \label{lwA2}
The weight $w$ is in the $\mathcal A_2$-Muckenhoupt class,  i.e. there exists $C>0$ such that for any ball $B\subset \R^n$, 
\begin{equation} \label{lconv25} 
\fint_B  w(z) dz  \fint_B w^{-1}(z) dz \leq C.
\end{equation}
\end{lemma}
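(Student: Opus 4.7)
The plan is to split into the two regimes already handled by Lemma \ref{lwest}, namely $\delta(x) \geq 2r$ (interior ball) and $\delta(x) \leq 2r$ (boundary-type ball), where $B = B(x,r)$. The key observation is that $w^{-1}(z) = \delta(z)^{n-d-1}$ with $n-d-1 > 0$, so $w^{-1}$ is \emph{bounded} on $B$ (unlike $w$, which blows up near $\Gamma$), and in both cases the asymptotics of $m(B)$ from Lemma \ref{lwest} dictate the size of $\fint_B w$ exactly.

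In the interior case $\delta(x) \geq 2r$, for every $z\in B$ we have $\tfrac12\delta(x)\leq \delta(z)\leq \tfrac32\delta(x)$, so both $w(z)\approx w(x)$ and $w^{-1}(z)\approx w^{-1}(x)$. Hence $\fint_B w\approx w(x)$ and $\fint_B w^{-1}\approx w^{-1}(x)$, and the product is comparable to $w(x)w^{-1}(x)=1$.

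In the remaining case $\delta(x)\leq 2r$, Lemma \ref{lwest}(ii) gives $m(B)\approx r^{d+1}$, so
\[
\fint_B w(z)\,dz \approx \frac{r^{d+1}}{r^n} = r^{d+1-n}.
\]
For $w^{-1}$, I would use the triangle inequality $\delta(z)\leq |z-x|+\delta(x)\leq 3r$ for $z\in B$; since $n-d-1>0$, this gives the pointwise bound $w^{-1}(z) = \delta(z)^{n-d-1}\leq (3r)^{n-d-1}$, and therefore $\fint_B w^{-1}(z)\,dz \leq C r^{n-d-1}$. Multiplying the two estimates yields $\fint_B w\cdot \fint_B w^{-1}\leq C r^{d+1-n}\cdot r^{n-d-1} = C$, as desired.

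I do not anticipate a real obstacle here: the only slight subtlety is confirming that $w^{-1}$ is harmless because $n-d-1>0$, which is exactly our standing hypothesis $d<n-1$. All other ingredients are immediate from Lemma \ref{lwest}, so the proof should consist of just the two-line case analysis above.
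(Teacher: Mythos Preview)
Your proof is correct and is essentially identical to the paper's own argument: the same two-case split on $\delta(x)\gtrless 2r$, the same use of Lemma \ref{lwest} for $\fint_B w$ in the boundary case, and the same pointwise bound $w^{-1}(z)=\delta(z)^{n-d-1}\leq (3r)^{n-d-1}$ coming from $\delta(z)\leq 3r$ and $n-d-1>0$.
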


\bp Let $B:=B(x,r)$. If $\delta(x) \geq 2r$, then for any $z\in B(x,r)$, $C^{-1} w(x) \leq w(z) \leq Cw(x)$ and thus $\fint_B  w(z)dz \cdot  \fint_B w^{-1}(z)dz \leq C w(x) w^{-1}(x) = C$.
If $\delta(x) \leq 2r$, then \eqref{ISob8b} implies that $\fint_B  w(z)dz \leq C r^{-n} r^{d+1} = Cr^{d+1-n}$. Besides, for any $z\in B(x,r)$, $\delta(z) \leq 3r$ and hence $w^{-1}(z) \leq C r^{n-d-1}$. It follows that if $\delta(x) \leq 2r$, $\fint_B  w(z)dz \cdot  \fint_B w^{-1}(z)dz \leq C$. The assertion \eqref{lconv25} follows.
\ep

\chapter{Traces}
\label{S2}

The weighted Sobolev space $W= \dot W_w^{1,2}(\Omega)$ and $H = \dot H^{1/2}(\Gamma)$ are defined as in Section \ref{S1} (see \eqref{defW}, \eqref{defH}). Let us give a precison. Any $u\in W$ has a distributional derivative in $\Omega$ that belongs to $L^2(\Omega,w)$, that is there exists a vector valued function $v\in L^2(\Omega,w)$ such that for any $\varphi \in C^\infty_0(\Omega,\R^n)$
\begin{equation} \label{defDD}
\int_\Omega v \cdot \varphi \, dx = -\int_\Omega u \diver\varphi \, dx.
\end{equation}
This definition make sense since $v \in L^2(\Omega,w) \subset L^1_{loc}(\Omega)$. For the proof of the latter inclusion, use for instance Cauchy-Schwarz inequality and  \eqref{L1byL1w}.

\medskip

The aim of the section is to state and prove a trace theorem. But for the moment, let us keep discussing about the space $W$. 
We say that $u$ is absolutely continuous on lines in $\Omega$ if there exists $\bar u$ which coincides with $u$ a.e. such that for almost every line $\ell$
(for the usual invariant measure on the Grassman manifold, but we can also say, given any choice of direction $v$ and 
and a vector hyperplane plane $P$ transverse to $v$, for the line 
$x+\R v$ for almost every $x \in P$), we have the following properties.
First, the restriction of $\bar u$ to $\ell \cap \Omega$ (which makes sense, for a.e. line $\ell$, and is measurable, by Fubini) is absolutely continuous, which means that it is differentiable almost  everywhere on $\ell \cap \Omega$ and is the indefinite integral of its derivative on each component of
$\ell \cap \Omega$. By the natural identification, the derivative in question is obtained from
the distributional gradient of $u$.

\begin{lemma}\label{eqACL}
Every $u \in W$ is absolutely continuous on lines in $\Omega$.
\end{lemma}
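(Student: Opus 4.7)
The approach is to reduce the claim to the classical (unweighted) ACL characterization of Sobolev functions. The key observation is that $u$ actually belongs to $W^{1,1}_{\loc}(\Omega)$ in the ordinary sense: by the definition of $W$ we already have $u \in L^1_{\loc}(\Omega)$, and for any ball $B$ with $\overline B \subset \Omega$ the weight $w = \delta^{d+1-n}$ is bounded above and below by positive constants on $\overline B$, so Cauchy--Schwarz yields
\[\int_B |\nabla u|\,dx \leq \left(\int_B |\nabla u|^2\, w\,dx\right)^{1/2}\left(\int_B w^{-1}\,dx\right)^{1/2} < \infty.\]
(Alternatively one can invoke Lemma~\ref{lwA2} together with \eqref{L1byL1w} to obtain $\nabla u \in L^1_{\loc}(\Omega)$ globally.)

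Once this is in hand, I would invoke the standard ACL theorem for $W^{1,1}_{\loc}(\Omega)$: fixing a direction $v \in \S^{n-1}$ and a vector hyperplane $P$ orthogonal to $v$, there is a Borel representative $\bar u$ of $u$ such that for a.e.\ $x \in P$ the map $t \mapsto \bar u(x+tv)$ is absolutely continuous on each component of $\{t \in \R : x+tv \in \Omega\}$, with classical derivative equal a.e.\ to $(v\cdot\nabla u)(x+tv)$. The proof is the standard one: mollify $u$ on relatively compact subdomains of $\Omega$, use Fubini to extract a subsequence converging, together with its gradient, in $L^1$ on a.e.\ line in direction $v$, and pass to the limit to produce the absolutely continuous representative with the expected derivative.

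To pass from ``a.e.\ line in a fixed direction'' to ``a.e.\ line in the Grassmann sense,'' I would apply Fubini once more in the direction variable $v \in \S^{n-1}$; the exceptional set of $(v,x)$ has product-measure zero, hence zero Grassmann measure. The most delicate-looking feature of the statement --- that the representative is absolutely continuous on each component of $\ell\cap\Omega$ --- is actually harmless here: since $\Gamma$ is closed and Ahlfors-regular of dimension $d<n-1$, a standard projection argument shows that for almost every line $\ell$ one has $\ell\cap\Gamma=\emptyset$, so $\ell\cap\Omega=\ell$ is connected and the single-component conclusion of the classical theorem is precisely what is required. The last point one might expect to be an obstacle --- matching the classical derivative of $\bar u$ along $\ell$ with the distributional gradient of $u$ via the natural identification --- is in fact built directly into the classical ACL theorem and requires no new argument in our weighted setting.
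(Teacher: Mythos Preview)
Your proposal is correct and follows essentially the same route as the paper: reduce to the classical ACL theorem by observing that the property is local and that on relatively compact subsets of $\Omega$ the weight is bounded above and below, so $u \in W^{1,1}_{\loc}(\Omega)$ (the paper phrases this via \eqref{L1byL1w} to get $\nabla u \in L^2_{\loc}(\Omega)$, then cites Maz'ya). Your extra remarks on the Grassmann-measure formulation and on $\ell\cap\Gamma=\emptyset$ for a.e.\ line are correct but slightly anticipate what the paper does in the next lemma; for the statement at hand they are not needed, since the definition of ACL in $\Omega$ already allows $\ell\cap\Omega$ to have several components.
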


\bp This lemma can be seen as a consequence of \cite[Theorem 1.1.3/1]{Mazya11} since 
the absolute continuity on lines is a local property and, thanks to \eqref{L1byL1w}, 
$W \subset \{u \in L^1_{loc}(\Omega), \, \nabla u \in L^2_{loc}(\Omega)\}$. 
Yet, the proof of Lemma~\ref{eqACL} is classical: since the property is local, 
it is enough to check the property on lines parallel to a fixed vector $e$, and when $\Omega$ is 
the product of $n$ intervals, one of which is parallel to $e$.
This last amounts to using the definition of the distributional gradient, testing on product functions,
and applying Fubini.
In addition, the derivative of $u$ on almost every line $\ell$ of direction $e$ coincides with 
$\nabla u \cdot e$ almost everywhere on $\ell$.
\ep

\begin{lemma}\label{W=barW}
We have the following equality of spaces
\begin{equation}\label{defWg}
W=\{u \in L^1_{loc}(\rn), \,\nabla u\in L^2(\R^n,w)\},
\end{equation}
where the derivative of $u$ is taken in the sense of distribution in $\R^n$, that is for any $\varphi \in C^\infty_0(\R^n,\R^n)$,
\[\int \nabla u \cdot \varphi \, dx= -\int u \diver\varphi \, dx. \]
\end{lemma}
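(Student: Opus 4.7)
My plan is to prove the two inclusions in \eqref{defWg} separately. The inclusion ``$\supseteq$'' is immediate: if $u$ belongs to the right-hand side, then $u_{|\Omega} \in L^1_{\loc}(\Omega)$ automatically, and restricting test functions from $C_0^\infty(\R^n)$ to $C_0^\infty(\Omega)$ shows that the distributional gradient of $u$ on $\Omega$ coincides with the restriction to $\Omega$ of its distributional gradient on $\R^n$; since $\Gamma$ has Lebesgue measure zero, this restriction still lies in $L^2(\Omega,w) = L^2(\R^n,w)$.

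For the converse, the key geometric input is that, because $\Gamma$ is $d$-Ahlfors regular with $d<n-1$, almost every line misses $\Gamma$. More precisely, fix a unit direction $e$ and let $P = e^\perp$; by \eqref{1.1}, the orthogonal projection of $\Gamma$ onto $P$ has locally finite $d$-dimensional Hausdorff content, hence vanishing $\H^{n-1}$-measure, so that for $\H^{n-1}$-a.e.\ $x' \in P$ the line $\ell_{x'} := x' + \R e$ avoids $\Gamma$ entirely. For such $x'$ one has $\ell_{x'} \cap \Omega = \ell_{x'}$, and Lemma~\ref{eqACL} gives that $u|_{\ell_{x'}}$ is absolutely continuous on the whole line, with classical derivative equal a.e.\ to $\nabla u \cdot e$.

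With this in hand, the identification of the distributional gradient on $\R^n$ with $\nabla u$ is routine. Given $\varphi \in C_0^\infty(\R^n,\R^n)$ and a coordinate direction $e_i$, Fubini reduces $\int_{\R^n} u\, \partial_i \varphi_i\, dx$ to an integral over $e_i^\perp$ of one-dimensional integrals along lines $x' + \R e_i$; for $\H^{n-1}$-a.e.\ $x'$ the inner integral equals $-\int_\R (\partial_i u)(x'+te_i)\, \varphi_i(x'+te_i)\, dt$ by ordinary integration by parts, with no boundary term since $\varphi_i$ is compactly supported. Summing in $i$ and applying Fubini in reverse produces $\int_{\R^n} u\, \diver \varphi\, dx = -\int_{\R^n} \nabla u \cdot \varphi\, dx$, which identifies the distributional gradient on $\R^n$ with the pointwise $\nabla u$ (extended arbitrarily on the null set $\Gamma$); in particular it lies in $L^2(\R^n,w)$.

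The main (mild) obstacle is the preliminary check that $u \in L^1_{\loc}(\R^n)$, rather than merely $L^1_{\loc}(\Omega)$. Cauchy--Schwarz and the local boundedness of $w^{-1} = \delta^{n-d-1}$ first yield $\nabla u \in L^1_{\loc}(\R^n)$, so an $L^1$-gradient is available. Given a ball $B \subset \R^n$, I would fix a direction $e$ and a thin ``slab'' $S$ contained in $\Omega$ whose orthogonal projection onto $P = e^\perp$ contains that of $B$; such an $S$ can be arranged because $\Gamma$ is closed with empty interior. For $\H^{n-1}$-a.e.\ line $\ell_{x'}$ meeting both $B$ and $S$, the absolute-continuity identity $u(x'+te) = u(x'+t_0 e) + \int_{t_0}^{t} (\nabla u \cdot e)\, ds$, averaged over $t_0$ in the $e$-range of $S$, controls $\int_{\ell_{x'} \cap B} |u|\, dt$ by $\int_{\ell_{x'} \cap S} |u|\, dt + \int_{\ell_{x'} \cap (B \cup S)} |\nabla u|\, dt$; integrating in $x'$ via Fubini and using $u \in L^1(S)$ (since $S$ is compactly contained in $\Omega$) together with $\nabla u \in L^1_{\loc}(\R^n)$ gives $\int_B |u|\, dx < \infty$. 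Arranging these geometric ingredients is the only delicate part of the argument; everything else reduces to one-dimensional calculus.
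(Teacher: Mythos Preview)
Your overall architecture matches the paper's: exploit that almost every line lies in $\Omega$ (from Ahlfors regularity with $d<n-1$), upgrade Lemma~\ref{eqACL} to absolute continuity on almost every line in $\R^n$, and identify the $\R^n$-distributional gradient by Fubini plus one-variable integration by parts. That part, and the inclusion ``$\supseteq$'', are fine.

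The gap is in your $L^1_{\loc}(\R^n)$ step. The sentence ``such an $S$ can be arranged because $\Gamma$ is closed with empty interior'' does not justify the claim: for a \emph{given} direction $e$, no slab $\pi_{e^\perp}(B)\times[a,b]$ need lie in $\Omega$. Take $n=3$, $d=1$, $\Gamma$ the $x_1$-axis, $B$ the unit ball at the origin, and $e=e_1$; then every such slab contains the segment $[a,b]\times\{0\}\times\{0\}\subset\Gamma$. Closedness with empty interior does not prevent $\Gamma$ from containing an entire line in your chosen direction. The repair is to also \emph{choose} $e$: the pigeonhole argument behind Lemma~\ref{lHC} (or the corkscrew Lemma~\ref{lNTA}) furnishes a ball $B'\subset\Omega$ of radius at least that of $B$; taking $e$ along the segment joining the centers forces $\pi_{e^\perp}(B')=\pi_{e^\perp}(B)$, and $B'$ then serves as your $S$. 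With this amendment your averaging argument goes through.

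The paper bypasses this detour. Rather than fix a direction, it integrates the segment inequality over \emph{all} pairs $(y,z)\in B\times B$; a change of variable produces the Riesz-potential bound $\fint_B|u-u(z)|\,dy\le C\int_B|\nabla u(\xi)|\,|z-\xi|^{1-n}\,d\xi$ (this is \eqref{2.4}), and a second average gives $\fint_B\fint_B|u(y)-u(z)|\,dy\,dz\le Cr\fint_B|\nabla u|$, from which $\fint_B|u|<\infty$ follows by picking any $z$ where the inner integral is finite. Besides avoiding the slab geometry altogether, this route yields the estimate \eqref{2.4} that the paper reuses in the trace construction (Theorem~\ref{tTr}) and the Poincar\'e inequality (Lemma~\ref{lpBry}).
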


\bp Here and in the sequel, we will constantly use the fact that with $\Omega = \R^n \sm \Gamma$ and
because \eqref{1.1} holds with $d<n-1$, 
\begin{equation} \label{2.1}
\text{ almost every line $\ell$ is contained in $\Omega$.} 
\end{equation}
Let us recall that it means that given any choice of direction $v$ and a vector hyperplane $P$ transverse to $v$, the line $x+ \R v \subset \Omega$ for almost every $x\in P$. In particular, for almost every $(x,y)\in (\R^n)^2$, there is a unique line going through $x$ and $y$ and this line is included in $\Gamma$.

Lemma~\ref{eqACL} and \eqref{2.1} implies that $u\in W$ is actually absolutely continuous on lines in $\R^n$, i.e. any $u \in W$ (possibly modified on a set of zero measure) is absolutely continuous on almost every line $\ell \subset \R^n$. 
As we said before, $\nabla u = (\dr_1 u, \dots, \dr_n u)$, the distributional gradient of $u$ in $\Omega$, equals the `classical' gradient of $u$ defined in the following way. 
If $e_1=(1,0,\dots,0)$ is the first coordinate vector, then $\dr_1 u(y,z)$ is the derivative at the point $y$ of the function $u_{|(0,z)+\R e_1}$, the latter quantity being defined for almost every $(y,z)\in \R\times \R^{n-1}$ because $u$ is absolutely continuous on lines in $\R^n$. If $i>1$, $\dr_i u(x)$ is defined in a similar way.

As a consequence, for almost any $(y,z) \in \R^n \times \R^n$,  $u(z) - u(y) = \int_{0}^1 (z-y)\cdot \nabla u(y+t(z-y)) dt$ and hence, 
\begin{equation} \label{2.2}
|u(y)-u(z)| \leq \int_{0}^1 |z-y||\nabla u(y+t(z-y))| dt.
\end{equation} 
Let us integrate this for $y$ in a ball $B$. We get that for almost every $z\in \R^n$,
\begin{equation} \label{2.3}
\fint_{y \in B} |u(y)-u(z)| dy \leq \fint_{y \in B}\int_{0}^1 |z-y||\nabla u(y+t(z-y))| dt.
\end{equation}
Let us further restrict to the case $z\in B=B(x,r)$; the change of variable $\xi = z+t(y-z)$ shows that
\begin{equation} \begin{split}
\fint_{y \in B} |u(y)-u(z)| dy & = \int_0^1 \fint_{y\in B}  |y-z| |\nabla u (z+t(y-z))| dy\, dt \\
& = \int_0^1 \frac{1}{|B|}\int_{\xi \in B(z+t(x-z),tr)} \frac{|z-\xi|}t |\nabla u(\xi)| \frac{d\xi}{t^n} dt \\
& = \int_{\xi \in B} |\nabla u (\xi)| \frac{|z-\xi|}{|B(z,r)|} d\xi \int_{|z-\xi|/2r}^1 \frac{dt}{t^{n+1}}\\
& \leq 2^n |B(0,1)|^{-1}\int_{\xi \in B} |\nabla u(\xi)| |z-\xi|^{1-n} d\xi, \\
\end{split} \end{equation}
where the last but one line is due to the fact that 
$\xi \in B(z+t(x-z),tr)$ is equivalent to $|\xi-z-t(x-z)| \leq tr$, 
 which forces
$|\xi-z| \leq tr + t|x-z| \leq 2rt$.
Therefore,
for almost any $z\in B$,
\begin{equation} \label{2.4}
 \fint_{y \in B} |u(y)-u(z)| dy
\leq C\int_{\xi \in B} |\nabla u(\xi)| |z-\xi|^{1-n} d\xi,
\end{equation}
where $C$ depends on $n$, but not on $r$, $u$, or $z$. With a second integration on $z\in B=B(x,r)$, we obtain
\begin{equation} \label{2.4a}
  \fint_{z \in B}\fint_{y \in B} |u(y)-u(z)| dy \, dz
\leq C\int_{\xi \in B} |\nabla u(\xi)| \fint_{z\in B} |z-\xi|^{1-n} dz \, d\xi \leq Cr\fint_{\xi \in B} |\nabla u(\xi)| d\xi.
\end{equation}
By H\"older's
inequality and \eqref{L1byL1w}, the right-hand side is bounded (up to a constant depending on $r$) by $\| u \|_W$. As a consequence,
\begin{equation} \label{2.4b}
  \fint_{z \in B}\fint_{y \in B} |u(y)-u(z)| 
\leq C_r \|u\|_W < +\infty.
\end{equation}
and thus, by Fubini's lemma, $\fint_{y \in B} |u(y)-u(z)| <+\infty$ for a.e. $z\in B$. In particular, the quantity $\fint_{y \in B} |u(y)|$ is finite for any ball $B\subset \R^n$, that is $u\in L^1_{loc}(\R^n)$. 

\medskip

Since $L^1_{loc}(\R^n) \subset L^1_{loc}(\Omega)$, we just proved that $W = \{u \in L^1_{loc}(\R^n), \, \nabla u \in L^2(\Omega,w)\}$, where $\nabla u=(\dr_1 u,\dots,\dr_n u)$ is distributional gradient on $\Omega$. Let $u\in W$. 
Since $\Gamma$ has zero measure, $\nabla u \in L^2(\R^n,w)$ and thus it suffices to check that $u$ has actually a distributional derivative in $\R^n$ and that this derivative equals $\nabla u$. 
However, the latter fact is a simple consequence of \cite[Theorem 1.1.3/2]{Mazya11}, because $u$ is absolutely continuous on lines in $\R^n$. 
The proof of Maz'ya's result is basically the following:  for any $i\in \{1,\dots,n\}$ and any $\phi \in C^\infty_0(\R^n)$, an integration by part gives $\int u \dr_i \phi = - \int (\dr_i u)  \phi$. 
The two integrals in the latter equality make sense since both $u$ and $\dr_i u$ are in $L^1_{loc}(\R^n)$; the integration by part is possible because $u$ is absolutely continuous on almost every line.
\ep

\begin{remark}
An important by-product of the proof is that Lemma~\ref{eqACL} can be improved into: for any $u\in W$ (possibly modified on a set of zero measure) and almost every line $\ell \subset \R^n$, $u_{|\ell}$ is absolutely continuous. This property will be referred 
to as (ACL).
\end{remark}

\begin{theorem}\label{tTr} There exists a bounded linear operator
$T : W \to H$ (a trace operator) with the following properties. The trace of $u \in W$ is such that, 
for $\sigma$-almost every $x\in \Gamma$, 
\begin{equation}\label{defT}
Tu(x) = \lim_{r \to 0} \fint_{B(x,r)} u(y) dy :=  \lim_{r \to 0} {1 \over |B(x,r)|} \int u(y) dy 
\end{equation}
and, analogously to the Lebesgue density property,
\begin{equation}\label{eqLeb}
\lim_{r \to 0} \fint_{B(x,r)} |u(y)-Tu(x)| dy = 0 .
\end{equation}
\end{theorem}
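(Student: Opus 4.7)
The plan is to define $T$ directly by the pointwise formula in the statement, i.e., $Tu(x):=\lim_{r\to 0}\fint_{B(x,r)} u\,dy$, and then to verify the $\sigma$-a.e.\ existence of this limit, the Lebesgue density property, and the bound $\|Tu\|_H\leq C\|u\|_W$. Linearity is automatic from the pointwise definition. The main tools are the Poincar\'e-type estimate \eqref{2.4a}, Cauchy--Schwarz against the weight $w$ combined with the $\mathcal A_2$-property (Lemma~\ref{lwA2}), and the measure bounds of Lemma~\ref{lwest}, all proved in the previous chapter.

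\textbf{Step 1: $\sigma$-a.e.\ existence of the limit.} For $k\in\mathbb Z$ set $r_k=2^{-k}$ and, for $x\in\Gamma$, let $a_k(x):=\fint_{B(x,r_k)}u\,dy$ and $I_k(x):=\fint_{B(x,r_k)}|u-a_k(x)|\,dy$. Since $|a_{k+1}(x)-a_k(x)|\leq 2^n I_k(x)$, it suffices to show $\sum_{k\geq 0}I_k(x)<\infty$ for $\sigma$-a.e.\ $x$. Applying \eqref{2.4a} on $B=B(x,r_k)$, then Cauchy--Schwarz against the weight, and the $\mathcal A_2$-bound $\fint_B w^{-1}\leq C|B|/m(B)$, I obtain
\[
I_k(x)\leq Cr_k\Bigl(\frac1{m(B(x,r_k))}\int_{B(x,r_k)}|\nabla u|^2\,w\Bigr)^{1/2}.
\]
Since $x\in\Gamma$, $m(B(x,r_k))\approx r_k^{d+1}$ by \eqref{ISob8b}; integrating in $x\in\Gamma$ and using Fubini together with $\sigma(\Gamma\cap B(z,r_k))\leq Cr_k^d$ yields
\[
\|I_k\|_{L^2(\sigma)}^2\leq Cr_k\int_{\{\delta(z)<r_k\}}|\nabla u|^2\,w\leq Cr_k\,\|u\|_W^2.
\]
Since $\sum_{k\geq 0} r_k^{1/2}<\infty$, the series $\sum_k I_k$ converges in $L^2(\sigma)$, hence $\sigma$-a.e., so $\{a_k(x)\}$ is Cauchy for $\sigma$-a.e.\ $x$. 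I define $Tu(x)$ as its limit. Comparing $\fint_{B(x,r)}u$ to $a_k(x)$ for $r\in[r_{k+1},r_k]$ via the same Poincar\'e inequality shows that the continuous limit $r\to 0$ exists and equals $Tu(x)$.

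\textbf{Step 2: Lebesgue density.} Writing $\fint_{B(x,r_k)}|u-Tu(x)|\,dy\leq I_k(x)+|a_k(x)-Tu(x)|\leq I_k(x)+C\sum_{j\geq k}I_j(x)$, both pieces tend to zero at $\sigma$-a.e.\ $x$ by the $L^2(\sigma)$-summability of $\sum I_j$ from Step~1. The passage from $r_k\to 0$ to general $r\to 0$ is routine by interpolation between consecutive dyadic radii.

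\textbf{Step 3: bound $\|Tu\|_H\leq C\|u\|_W$.} I would start from the dyadic representation
\[
\|Tu\|_H^2\approx\sum_{k\in\mathbb Z}2^{k(d+1)}\iint_{|x-y|\approx r_k}|Tu(x)-Tu(y)|^2\,d\sigma(x)\,d\sigma(y)
\]
and, for each pair with $|x-y|\approx r_k$, decompose $Tu(x)-Tu(y)=(Tu(x)-a_k(x))+(a_k(x)-a_k(y))+(a_k(y)-Tu(y))$. The endpoint terms are dominated by $C\sum_{j\geq k}I_j(\cdot)$, while the middle term is controlled by $C\fint_{B(x,4r_k)}|u-u_{B(x,4r_k)}|$ because $B(x,r_k)\cup B(y,r_k)\subset B(x,4r_k)$. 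Cauchy--Schwarz in the index $j$, Fubini in $(x,y)$, and the Ahlfors bound $\sigma(\Gamma\cap B(x,r_k))\lesssim r_k^d$ then reduce the total estimate to a Carleson-type sum of weighted gradient energies at the different scales, which is controlled by $\|u\|_W^2$.

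\textbf{Main obstacle.} The crux is Step~3: the bound $\|I_k\|_{L^2(\sigma)}^2\lesssim r_k\|u\|_W^2$ that sufficed for Step~1 is too crude to survive the $2^{k(d+1)}$ factor coming from the $H^{1/2}$ kernel (the resulting dyadic series diverges), so the estimate must be refined by keeping the localized quantity $\int_{\{\delta(z)<r_k\}}|\nabla u|^2 w$ on the right-hand side and organizing the dyadic double sum so that it telescopes against $\int_\Omega|\nabla u|^2 w$ rather than summing to infinity. Getting this bookkeeping right, while treating large and small scales symmetrically on the unbounded set $\Gamma$, is the delicate part of the proof.
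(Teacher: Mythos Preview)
Your Steps 1 and 2 are correct and follow essentially the same lines as the paper (the paper packages the same information via $M_r(x)=\fint_{B(x,r)}|\nabla u|^2$ and $N(x)=\sum_{k\ge 0}2^{-k}M_{2^{-k}}(x)$, but the content is the same).

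Step 3, however, has a real gap that your ``telescoping'' remark does not close. Even with the localized estimate $\|I_k\|_{L^2(\sigma)}^2\le C\,r_k\int_{\{\delta<r_k\}}|\nabla u|^2\,w$, the contribution of scale $k$ to the $H$-norm (after the $r_k^{-(d+1)}$ kernel, the $r_k^{d}$ from $\sigma$-integration in $y$, and the $r_k$ from Poincar\'e) is $\int_{\{\delta<r_k\}}|\nabla u|^2\,w$. Summing over $k$ gives $\int_\Omega|\nabla u(z)|^2 w(z)\cdot\#\{k:r_k>\delta(z)\}\,dz$, which is \emph{not} a telescoping sum: every $z$ is counted once per scale above $\delta(z)$, so you pick up a factor $\log(1/\delta(z))$ (and the sum even diverges if you range over all $k\in\mathbb Z$). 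The same overcount arises in the endpoint tails after a discrete Hardy inequality, so shifting the burden there does not help.

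The paper's resolution is a different device. It works with the approximations $g_r(x)=\fint_{B(x,r)}u$, fixes $r$, and for a pair $(x,y)$ at distance $\approx\rho=2^{k+1}r$ compares both $g_r(x)$ and $g_r(y)$ to the coarse average $g_\rho(y)$ via the pointwise Poincar\'e formula \eqref{2.4}. The crucial step is an \emph{asymmetric} Cauchy--Schwarz with kernels $|z-\zeta|^{1-n\pm\alpha}$ (small $\alpha>0$), which after integration produces a factor $\rho^{-\alpha}$ in the scale-$\rho$ contribution. One then obtains $I_k(r)\le C\int_\Omega|\nabla u(\zeta)|^2 h_k(\zeta)\,d\zeta$ with $\sum_k h_k(\zeta)\le C\sum_{\rho>\delta(\zeta)/4}\rho^{-\alpha}\,\delta(\zeta)^{d+1-n+\alpha}\le C\,\delta(\zeta)^{d+1-n}=Cw(\zeta)$, so the geometric decay in $\rho$ replaces your divergent count and yields $I(r)\le C\|u\|_W^2$ uniformly in $r$; Fatou then gives $\|Tu\|_H\le C\|u\|_W$. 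This $\alpha$-tilted Cauchy--Schwarz is the missing idea in your outline.
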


\bp 
First, we want bounds on $\nabla u$ near $x\in \Gamma$, so we set
\begin{equation} \label{2.5}
M_r(x) = \fint_{B(x,r)} |\nabla u|^2 \, dz
\end{equation}
and estimate $\int_\Gamma M_r(x) d\sigma(x)$.
We cover $\Gamma$ by balls $B_j = B(x_j,r)$ centered on $\Gamma$
such that the $2B_j = B(x_j,2r)$ have bounded overlap (we could even make the
$B(x_j,r/5)$ disjoint), and notice that for $x\in B_j$,
\begin{equation} \label{2.6}
M_r(x) \leq C r^{-n} \int_{2B_j} |\nabla u|^2 \, dz.
\end{equation}
We sum and get that
\begin{eqnarray} \label{2.7}
\int_\Gamma M_r(x) d\sigma(x) &\leq& \sum_j \int_{B_j} M_r(x) d\sigma(x)
\leq C \sum_j \sigma(B_j) \sup_{x\in B_j} M_r(x)
\nn\\
&\leq& C \sum_j  \sigma(B_j) r^{-n} \int_{2B_j} |\nabla u|^2 \, dz 
\leq C r^{d-n} \sum_j \int_{2B_j} |\nabla u|^2 \, dz \nn\\
& \leq & C r^{d-n} \int_{\Gamma(2r)}  |\nabla u|^2 \, dz
\end{eqnarray}
because the $2B_j$ have bounded overlap and where $\Gamma(2r)$ denotes a 
$2r$-neighborhood of $\Gamma$.
Next set
\begin{equation} \label{2.8}
N(x) = \sum_{k \geq 0} 2^{-k} M_{2^{-k}}(x);
\end{equation}
then
\begin{eqnarray} \label{2.9}
\int_\Gamma N(x) d\sigma(x)
&=& \sum_{k \geq 0} 2^{-k} \int_\Gamma M_{2^{-k}}(x) d\sigma(x)
\leq C \sum_{k \geq 0} 2^{k(n-d-1)} \int_{\Gamma(2^{-k+1})} |\nabla u(z)|^2 dz
\nn\\
&\leq& C \int_{\Gamma(2)}  |\nabla u(z)|^2 a(z) dz,
\end{eqnarray}
where $a(z) = \sum_{k \geq 0} 2^{k(n-d-1)} \1_{z\in \Gamma(2^{-k+1})}$.
For a given $z \in \Omega$, $z\in \Gamma(2^{-k+1})$ only for $k$ so small that
$\delta(z) \leq 2^{-k+1}$.
The largest values of $2^{k(n-d-1)}$ are for $k$ as large as possible, 
when $2^{-k} \approx \delta(z)$; thus $a(z) \leq C\delta(z)^{-n+d+1}= Cw(z)$, and
\begin{equation} \label{2.10}
\int_\Gamma N(x) d\sigma(x) 
\leq C \int_{\Gamma(2)}  |\nabla u(z)|^2 w(z) dz.
\end{equation}

Our trace function $g=Tu$ will be defined as the limit of the functions $g_r$, where 
\begin{equation} \label{2.11}
g_r(x) = \fint_{z\in B(x,r)} u(z)dz.
\end{equation}
Our aim is to use the estimates established in the proof of Lemma~\ref{W=barW}.
Notice that for $x\in \Gamma$ and $r > 0$,
\begin{equation} \label{2.12} \begin{split}
\fint_{z\in B(x,r)} |u(z)-g_r(x)| dz 
& = \fint_{z\in B(x,r)} \Big|u(z)- \fint_{\xi \in B(x,r)}u(y) dy \Big| dz \\
& \leq \fint_{z\in B(x,r)} \fint_{y\in B(x,r)} \Big|u(z)- u(y) \Big|dy \,  dz. \\
\end{split} \end{equation}
By \eqref{2.4a},
\begin{eqnarray} \label{2.14}
\fint_{z\in B(x,r)} |u(z)-g_r(x)| dz
& \leq &  \fint_{z\in B(x,r)} \fint_{y\in B(x,r)} \Big|u(z)- u(y) \Big|dy \,  dz \nn \\
&\leq& C r^{-n+1} \int_{\xi \in B(x,r)} |\nabla u(\xi)| d\xi.
\end{eqnarray}
Thus for $r/10 \leq s \leq r$,
\begin{eqnarray} \label{2.15}
|g_s(x)-g_r(x)|  &=& \Big|\fint_{z\in B(x,s)} u(z) dz -g_r(x) \Big|
\leq \fint_{z\in B(x,s)} |u(z)-g_r(x)| dz 
\nn\\&\leq& C r \fint_{\xi \in B(x,r)} |\nabla u(\xi)| d\xi \leq C r M_{r}(x)^{1/2}.
\end{eqnarray}
Set $\Delta_r(x) = \sup_{r/10 \leq s \leq r} |g_s(x)-g_r(x)|$; we just proved that
$\Delta_r(x) \leq C r M_{r}(x)^{1/2}$. Let $\alpha \in (0,1/2)$ be given.
If $N(x) < +\infty$, we get that
\begin{eqnarray} \label{2.16}
\sum_{k \geq 0} 2^{\alpha k} \Delta_{2^{-k}}(x)
&\leq& C \sum_{k \geq 0} 2^{\alpha k} 2^{-k} M_{2^{-k}}(x)^{1/2}
\nn\\
&\leq& C \Big\{\sum_{k \geq 0} 2^{-k} M_{2^{-k}}(x) \Big\}^{1/2}
\Big\{\sum_{k \geq 0} 2^{2\alpha k} 2^{-k}  \Big\}^{1/2} \leq C N(x)^{1/2} < +\infty.
\end{eqnarray}
Therefore, $\sum_{k \geq 0} \Delta_{2^{-k-2}}(x)$ converges (rather fast), and since \eqref{2.10} implies that $N(x)< +\infty$ for $\sigma$-almost every $x\in \Gamma$, it follows that
there exists
\begin{equation} \label{2.17}
g(x) = \lim_{r \to 0} g_r(x)  \ \text{ for $\sigma$-almost every } x\in \Gamma.
\end{equation}
In addition, we may integrate (the proof of) \eqref{2.16} and get that for $2^{-j-1} < r \leq 2^{-j}$,
\begin{eqnarray} \label{2.18}
\|g - g_r\|_{L^2(\sigma)}^2 
&=& \int_\Gamma |g(x) - g_r(x)|^2 d\sigma(x)
\leq \int_\Gamma \Big\{ \sum_{k \geq j} \Delta_{2^{-k}}(x) \Big\}^2 d\sigma(x)
\nn\\
&\leq& C 2^{-2\alpha j} \int_\Gamma \Big\{ \sum_{k \geq j} 
2^{\alpha j}\Delta_{2^{-k}}(x) \Big\}^2 d\sigma(x)
\nn\\
&\leq& C r^{2\alpha} \int_\Gamma N(x) d\sigma(x) \leq C r^{2\alpha} \|u\|_W^2
\end{eqnarray}
by \eqref{2.17} and the definition of $\Delta_r(x)$, then \eqref{2.16} and \eqref{2.10}.
Thus $g_r$ converges also (rather fast) to $g$ in $L^2$. Let us make an additional remark. 
Fix $r>0$ and $\alpha \in (0,1/2)$. For any ball $B$ centered on $\Gamma$,
\begin{equation} \label{2.18a}
\|g\|_{L^1(B,\sigma)} \leq C_B \|g-g_r\|_{L^2(\sigma)} + \|g_r\|_{L^1(B,\sigma)}
\end{equation}
by H\"older's inequality. 
The first term is bounded with \eqref{2.18}. Use \eqref{1.1} and Fubini's theorem
to bound the second one by $C_r \|g\|_{L^1(\wt B)}$, where $\wt B$ is a large ball 
(that depends on $r$ and contains $B$).
As a consequence,
\begin{equation} \label{L1locsigma}
\text{ for any $u\in W$, $g=T u \in L^1_{loc}(\Gamma,\sigma)$.}
\end{equation}

This completes the
definition of the trace $g = T(u)$.
We announced (as a Lebesgue property) that
\begin{equation} \label{2.19}
\lim_{r \to 0} \fint_{B(x,r)} |u(y)-Tu(x)| dy
\end{equation}
for $\sigma$-almost every $x\in \Gamma$, and indeed
\begin{eqnarray} \label{2.20}
\fint_{B(x,r)} |u(y)-Tu(x)| dy 
&=& \fint_{B(x,r)} |u(y)-g(x)| dy \leq |g(x)-g_r(x)| +  \fint_{B(x,r)} |u(y)-g_r(x)|\,dy
\nn\\
&\leq& |g(x)-g_r(x)| +  C r \fint_{B(x,3r)} |\nabla u|\, dy
\leq |g(x)-g_r(x)| +  C r M_{4r}(x)^{1/2}
\end{eqnarray}
by \eqref{2.14} and the second part of \eqref{2.15}. The first part tends to $0$ for 
$\sigma$-almost every $x\in \Gamma$, by \eqref{2.17}, and the second part tends to $0$
as well, because $N(x) <+\infty$ almost everywhere and by the definition \eqref{2.8}.

Next we show that $g=Tu$ lies in the Sobolev space $H = H^{1/2}(\Gamma)$, i.e., that
\begin{equation} \label{2.21}
\|g\|_H^2 = \int_\Gamma\int_\Gamma {|g(x)-g(y)|^2 \over |x-y|^{d+1}} d\sigma(x) d\sigma(y)
< +\infty.
\end{equation}
The simplest will be to prove uniform estimates on the $g_r$, and then go to the limit. Let us
fix $r>0$ and consider the integral
\begin{equation} \label{2.22}
I(r) = \int_{x\in\Gamma}\int_{y\in\Gamma ; |y-x| \geq r} 
{|g_r(x)-g_r(y)|^2 \over |x-y|^{d+1}} d\sigma(x) d\sigma(y).
\end{equation}
Set $Z_k(r) = \big\{ (x,y)\in \Gamma \times \Gamma \, ; \, 2^k r \leq |y-x| < 2^{k+1}r \big\}$
and $I_k(r) = \int\int_{Z_k(r)} {|g_r(x)-g_r(y)|^2 \over |x-y|^{d+1}} d\sigma(x) d\sigma(y)$.
Thus $I(r) = \sum_{k\geq 0} I_k(r)$ and 
\begin{equation} \label{2.23}
I_k(r) \leq (2^kr)^{-d-1} \int\int_{Z_k(r)} |g_r(x)-g_r(y)|^2 d\sigma(x) d\sigma(y).
\end{equation}
Fix $k \geq 0$, set $\rho = 2^{k+1}r$, and observe that for $(x,y) \in Z_k(r)$,
\begin{eqnarray} \label{2.24}
|g_r(x)-g_\rho(y)| &=& \Big|\fint_{z \in B(x,r)}\fint_{\xi \in B(y,\rho)} [u(z)-u(\xi)] dzd\xi \Big|
\leq \fint_{z \in B(x,r)} 
\fint_{\xi \in B(y,\rho)} |u(z)-u(\xi)| d\xi
dz
\nn\\
&\leq&3^n \fint_{z \in B(x,r)} \Big\{ \fint_{\xi \in B(z,3\rho)} |u(z)-u(\xi)| d\xi\Big\}dz
\nn\\
&\leq& C \rho^n \fint_{z \in B(x,r)} 
\fint_{\zeta \in B(z,3\rho)} |\nabla u(\zeta)| |z-\zeta|^{1-n}d\zeta dz
\end{eqnarray}
because $B(y,\rho) \subset B(z,3\rho)$ and by \eqref{2.4}. We apply Cauchy-Schwarz,
with an extra bit $|z-\zeta|^{-\alpha}$, where $\alpha >0$ will be taken small,
and which will be useful for convergence later
\begin{eqnarray} \label{2.25}
|g_r(x)-g_\rho(y)|^2 
&\leq& C\rho^{2n} \Big\{\fint_{z \in B(x,r)} 
\fint_{\zeta \in B(z,3\rho)} |\nabla u(\zeta)|^2 |z-\zeta|^{1-n+\alpha} \Big\}
\nn\\&\,& \hskip6cm
\Big\{ \fint_{z \in B(x,r)} \fint_{\zeta \in B(z,3\rho)} |z-\zeta|^{1-n-\alpha} \Big\}
\nn\\
&\leq& C \rho^{n+1-\alpha} \fint_{z \in B(x,r)} 
\fint_{\zeta \in B(z,3\rho)} |\nabla u(\zeta)|^2 |z-\zeta|^{1-n+\alpha} d\zeta dz.
\end{eqnarray} 
The same computation, with $g_r(y)$, yields
\begin{equation} \label{2.26}
|g_r(y)-g_\rho(y)|^2 
\leq C \rho^{n+1-\alpha} 
\fint_{z \in B(y,r)} \fint_{\zeta \in B(z,3\rho)} |\nabla u(\zeta)|^2 |z-\zeta|^{1-n+\alpha}
d\zeta dz.
\end{equation}
We add the two and get an estimate for $|g_r(x)-g_r(y)|^2$, which we can integrate
to get that
\begin{eqnarray} \label{2.27}
I_k(r) &\leq& C \rho^{-d-1}\rho^{n+1-\alpha} \int\int_{(x,y) \in Z_k(r)} 
\fint_{z \in B(x,r)} \fint_{\zeta \in B(z,3\rho)} |\nabla u(\zeta)|^2 |z-\zeta|^{1-n+\alpha}
d\zeta dz d\sigma(x)d\sigma(y)
\nn\\
&\leq& C \rho^{-d-\alpha} r^{-n} \int\int_{(x,y) \in Z_k(r)} 
\int_{z \in B(x,r)} \int_{\zeta \in B(z,3\rho)} |\nabla u(\zeta)|^2 |z-\zeta|^{1-n+\alpha}
d\zeta dz d\sigma(x)d\sigma(y)
\end{eqnarray}
by \eqref{2.23}, \eqref{2.25}, and \eqref{2.26}, and where we can drop the part that comes
from \eqref{2.26} by symmetry. We integrate in $y \in \Gamma$ such that
$2^k r \leq |x-y| \leq 2^{k+1}r$ and get that 
\begin{eqnarray} \label{2.28}
I_k(r) &\leq& C \rho^{-\alpha} r^{-n} \int_{x \in \Gamma} 
\int_{z \in B(x,r)} \int_{\zeta \in B(z,3\rho)} |\nabla u(\zeta)|^2 |z-\zeta|^{1-n+\alpha}
d\zeta dz d\sigma(x)
\nn\\
&\leq& C \int_{\zeta\in \Omega} |\nabla u(\zeta)|^2 h_k(\zeta) d\zeta,
\end{eqnarray}
with 
\begin{equation} \label{2.29}
h_k(\zeta) = \rho^{-\alpha} r^{-n} \int_{x \in \Gamma}\int_{z \in B(x,r) \cap B(\zeta,3\rho)} 
|z-\zeta|^{1-n+\alpha} dzd\sigma(x).
\end{equation}

We start with the contribution $h_k^0(\zeta)$ of the region where $|x-\zeta| \geq 2r$, 
where the computation is simpler because $|z-\zeta| \geq {1\over 2} |x-\zeta|$ there.
We get that  
\begin{eqnarray} \label{2.30}
h_k^0(\zeta) &\leq& C \rho^{-\alpha} r^{-n} 
\int_{x \in \Gamma}\int_{z \in B(x,r) \cap B(\zeta,3\rho)} 
|x-\zeta|^{1-n+\alpha} dzd\sigma(x)
\nn\\
&\leq& C \rho^{-\alpha} \int_{x \in \Gamma \cap B(\zeta,4\rho)} |x-\zeta|^{1-n+\alpha} d\sigma(x).
\end{eqnarray}
With $\zeta$, $r$, and $\rho$ fixed, $h_k^0(\zeta)$ vanishes unless $\delta(\zeta)
= \dist(\zeta,\Gamma) < 4\rho$. The region where $|x-\zeta|$ is of the order of $2^m\delta(\zeta)$,
$m \geq 0$, contributes less than $C (2^m\delta(\zeta))^{d+1-n+\alpha}$ to the integral
(because $\sigma$ is Ahlfors-regular). If $\alpha$ is chosen small enough, the exponent is still negative,
the largest contribution comes from $m=0$, and 
$h_k^0(\zeta) \leq C \rho^{-\alpha} \delta(\zeta)^{d+1-n+\alpha}$.
Recall that $\rho = 2^{k+1} r$, and $k$ is such that $\delta(\zeta) < 4\rho$; 
we sum over $k$ and get that
\begin{equation} \label{2.31}
\sum_k h_k^0(\zeta) 
\leq C \sum_{k \geq 0 \, ; \, \delta(\zeta) < 4\rho} \rho^{-\alpha} \delta(\zeta)^{d+1-n+\alpha}
\leq C \delta(\zeta)^{d+1-n},
\end{equation}
because this time the smallest values of $\rho$ give the largest contributions.
We are left with 
\begin{equation} \label{2.32}
h_k^1(\zeta)=h_k(\zeta)-h_k^0(\zeta)
= \rho^{-\alpha} r^{-n} \int_{x \in \Gamma \cap B(\zeta,2r)}\int_{z \in B(x,r) \cap B(\zeta,3\rho)} 
|z-\zeta|^{1-n+\alpha} dzd\sigma(x).
\end{equation}
Notice that $|z-\zeta| \leq |z-x| + |x-\zeta| \leq 3r$; 
we use the local Ahlfors-regularity to get rid of the integral on $\Gamma$, and 
get that
\begin{equation} \label{2.33}
h_k^1(\zeta) \leq C \rho^{-\alpha} r^{-n} r^{d}
\int_{z \in B(\zeta,3r)} |z-\zeta|^{1-n+\alpha} dz
\leq C \rho^{-\alpha} r^{d+1-n+\alpha}.
\end{equation}
We sum over $k$ and get that 
$\sum_k h_k^1
(\zeta) \leq C r^{d+1-n} \leq C \delta(\zeta)^{d+1-n}$, because
if $\delta(\zeta) \geq 2r$, we simply get that $h_k^1(\zeta) = 0$ for all $k$, 
because $\Gamma \cap B(\zeta,2r) = \emptyset$ and by \eqref{2.32}.
Altogether, $\sum_k h_k(\zeta) \leq C  \delta(\zeta)^{d+1-n}$, and 
\begin{equation} \label{2.34}
I(r) = \sum_{k} I_k(r) \leq C \sum_{k}\int_{\zeta\in \Omega} |\nabla u(\zeta)|^2  h_k(\zeta) d\zeta
\leq C \int_{\zeta\in \Omega} |\nabla u(\zeta)|^2 \delta(\zeta)^{d+1-n} d\zeta
= C \|u\|_W^2
\end{equation}
by definition of the $I_k(r)$, then \eqref{2.28} and the definition of $W$.
We may now look at the definition \eqref{2.22} of $I(r)$, let $r$ tend to $0$, and get that
\begin{equation} \label{2.35}
\|g\|_H \leq C \|u\|_W^2
\end{equation}
by Fatou's lemma, as needed for the trace theorem.
\ep 
\chapter{Poincar\'e Inequalities}
\label{SPoincare}

\begin{lemma}\label{lpBry} Let $\Gamma$ be a $d$-ADR set in $\RR^n$, $d<n-1$, that is, assume that \eqref{1.1} is satisfied. Then 
\begin{equation} \label{1.4-bis}
\fint_{B(x,r)} |u(y)| dy \leq C r^{-d} \int_{B(x,r)} |\nabla u(y)| w(y) dy
\end{equation} 
for $u \in W$, $x\in \Gamma$, and $r>0$ such that $Tu = 0$ on $\Gamma \cap B(x,r)$.
\end{lemma}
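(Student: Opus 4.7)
The plan is a boundary Poincaré inequality obtained by integrating $\nabla u$ along segments from $y \in B := B(x,r)$ to points $\xi \in E := \Gamma \cap B$ where $Tu$ vanishes. By \eqref{1.1}, $\sigma(E) \geq C^{-1} r^d$, which will be crucial for averaging over $\xi$.

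The heart of the argument is the pointwise bound: for a.e.\ $y \in B$ and $\sigma$-a.e.\ $\xi \in E$,
\begin{equation*}
|u(y)| \leq 2r \int_0^1 |\nabla u((1-t)y + t\xi)| \, dt.
\end{equation*}
I would justify this by combining three ingredients: (i) the hypothesis $Tu(\xi)=0$ on $E$; (ii) the (ACL) improvement of Lemma~\ref{eqACL} noted in the Remark after Lemma~\ref{W=barW}, making $u$ absolutely continuous along a.e.\ line in $\R^n$, together with the Fubini fact \eqref{2.1} that a.e.\ such line through $\sigma$-a.e.\ $\xi \in \Gamma$ is contained in $\Omega$ except at $\xi$; and (iii) the Lebesgue density property \eqref{eqLeb} of $Tu$, which via a polar-coordinates plus Fatou argument around $\xi$ forces the pointwise limit of $u$ along a.e.\ line ending at $\xi$ to equal $Tu(\xi)$. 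Writing $u(y) - Tu(\xi) = -\int_0^1 \nabla u((1-t)y + t\xi) \cdot (\xi-y)\, dt$ and using $|\xi-y| \leq 2r$ then yields the bound.

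Next, averaging this bound over $\xi \in E$ (using $\sigma(E) \gtrsim r^d$), integrating over $y \in B$, applying Fubini, and changing variables $z = (1-t)y + t\xi$ with Jacobian $(1-t)^n$ in $y$ at fixed $(\xi,t)$, I obtain
\begin{equation*}
\int_B |u(y)|\, dy \leq C r^{1-d} \int_0^1 (1-t)^{-n} \int_B |\nabla u(z)| \, \phi(z,t)\, dz\, dt,
\end{equation*}
where $\phi(z,t)$ is the $\sigma$-measure of admissible $\xi$'s for fixed $(z,t)$, namely of $E \cap B((z-(1-t)x)/t,\,(1-t)r/t)$. By \eqref{1.1}, $\phi(z,t) \leq C \min\bigl(((1-t)/t)^d r^d,\, r^d\bigr)$, and $\phi(z,t)=0$ unless $\delta(z) \leq 2(1-t)r$ (since $z$ lies on a segment from $B$ to some $\xi \in \Gamma$).

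The weight $w(z)$ emerges from the inner $t$-integral. Setting $s = 1-t$ and splitting at $s = 1/2$,
\begin{equation*}
\int_0^1 (1-t)^{-n} \phi(z,t)\, dt \leq C r^d \int_{\delta(z)/(2r)}^{1} s^{d-n}\, ds \leq C r^d \bigl(\delta(z)/r\bigr)^{d+1-n} = C r^{n-1} w(z),
\end{equation*}
since $d-n+1 < 0$ makes the integral dominated by its lower limit, while the tail over $[1/2,1]$ contributes only $Cr^d$, which is itself $\leq C r^{n-1} w(z)$ because $\delta(z) \leq r$ for $z \in B$ (as $x \in \Gamma$) forces $w(z) \geq r^{d+1-n}$. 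Substituting back and dividing by $|B| \approx r^n$ gives \eqref{1.4-bis}. The most delicate point is the identification in ingredient (iii); the remaining steps form a careful but essentially algebraic change of variables in which the twin Ahlfors bounds on $\phi$ must be used on the appropriate ranges of $t$.
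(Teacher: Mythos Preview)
Your change-of-variables computation is correct and would yield \eqref{1.4-bis}, but ingredient (iii) has a genuine gap, and the paper's proof is designed precisely to avoid it.

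The gap: your polar-coordinates-plus-Fatou argument at a Lebesgue point $\xi$ gives only $\liminf_{\rho\to 0}|u(\xi+\rho\theta)|=0$ for a.e.\ $\theta$. To upgrade this to an actual limit you need the restriction of $u$ to the open ray $\xi+(0,\rho_0)\theta$ to be absolutely continuous with integrable derivative. The integrability does follow (via Fubini) from finiteness of your triple integral, but absolute continuity does \emph{not} follow from the ACL property: ACL holds for almost every line with respect to the invariant measure on lines, whereas the lines meeting $\Gamma$ form a null set there---for any fixed direction $\theta$ the orthogonal projection of $\Gamma$ onto $\theta^\perp$ is $\H^{n-1}$-null (since $d<n-1$), so every line through $\Gamma$ in direction $\theta$ sits over an $\H^{n-1}$-null set of offsets, and ACL says nothing about those lines. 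The paper flags exactly this difficulty before its own argument: ``To avoid complications with the fact that \eqref{2.2} and \eqref{2.3} do not necessarily hold $\sigma$-almost everywhere on $\Gamma$, let us use the $g_s$ again.''

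The paper's route, and a fix for yours: instead of integrating along segments ending at $\xi\in\Gamma$, the paper integrates from $y$ to points $z\in B(\xi,s)$ using the established bound \eqref{2.4} (which only requires $z$ to be an a.e.\ point of $\R^n$), obtains the weight from the kernel estimate $\int_{\Gamma\cap B}|x'-\eta|^{1-n}\,d\sigma(x')\le C\,\delta(\eta)^{d+1-n}$, and sends $s\to 0$ via $g_s(\xi)\to Tu(\xi)=0$ and Fatou only at the very end. Your argument can be repaired by smoothing: run your computation for $u_\varepsilon=\rho_\varepsilon\ast u\in C^\infty(\R^n)$, where the pointwise bound $|u_\varepsilon(y)-u_\varepsilon(\xi)|\le 2r\int_0^1|\nabla u_\varepsilon|\,dt$ is elementary calculus, deduce $\fint_B|u_\varepsilon|\le Cr^{-d}\int_B|\nabla u_\varepsilon|\,w\,dz+\fint_E|Tu_\varepsilon|\,d\sigma$, and let $\varepsilon\to 0$ using Lemma~\ref{lconvol} for the first two terms and \eqref{CoA} for the trace term.
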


\bp To simplify the notation we assume that $x=0$.

We should of course observe
that the right-hand side of \eqref{1.4-bis} is finite. Indeed, recall that Lemma~\ref{lwest} gives
\begin{equation} \label{2.45}
\int_{\xi \in B(0,r)} w(\xi) d\xi \leq C r^{1+d};
\end{equation}
then by Cauchy-Schwarz
\begin{eqnarray} \label{2.46}
 r^{-d} \int_{\xi \in B(0,r)} |\nabla u(\xi)| w(\xi) d\xi
 &\leq&  r^{-d} \Big\{\int_{\xi \in B(0,r)} |\nabla u(\xi)|^2 w(\xi) \, d\xi \Big\}^{1/2}
 \Big\{\int_{\xi \in B(0,r)} w(\xi)\, d\xi \Big\}^{1/2}
 \nn\\
&\leq&  r^{1-d \over 2} 
\Big\{\int_{\xi \in B(0,r)} |\nabla u(\xi)|^2 w(\xi)\, d\xi \Big\}^{1/2}.
\end{eqnarray}
The homogeneity still looks a little weird because of the weight (but things become
simpler if we think that $\delta(\xi)$ is of the order of $r$), but at least the right-hand side is finite
because $u\in W$.

Turning to the proof of \eqref{1.4-bis}, to avoid complications with the fact that \eqref{2.2} and \eqref{2.3} do not necessarily
hold $\sigma$-almost everywhere on $\Gamma$, let us use the $g_s$ again.
We first prove that for $s < r$ small, 
\begin{equation} \label{2.36}
\fint_{y\in B(0,r)}\fint_{x\in \Gamma \cap B(0,r/2)} 
|u(y)-g_s(x)| d\sigma(x)\,dy \leq C r^{-d} \fint_{B(0,r)} |\nabla u(\xi)| \delta(\xi)^{1+d-n} d\xi.
\end{equation}
Denote by $I(s)$ the left-hand side. By \eqref{2.11},
\begin{equation} \label{2.37}
I(s) \leq \fint_{y\in B(0,r)}\fint_{x\in \Gamma \cap B(0,r/2)} \fint_{z\in B(x,s)}
|u(y)-u(z)| dz\,d\sigma(x)\,dy.
\end{equation}
For $x$ fixed, we can still prove as in \eqref{2.4} that
\begin{equation} \label{2.39}
\fint_{y\in B(0,r)} |u(y)-u(z)| dy \leq  C\int_{B(0,r)} |\nabla u(\xi)| |z-\xi|^{1-n} d\xi
\end{equation}
(for $x\in \Gamma \cap B(0,r/2)$ and $z\in B(x,s)$, there is even a bilipschitz change
of variable that sends $z$ to $0$ and maps $B(0,r)$ to itself). We are left with
\begin{equation} \label{2.40}
I(s) \leq C \fint_{x\in \Gamma \cap B(0,r/2)} \fint_{z\in B(x,s)}
\int_{\xi \in B(0,r)} |\nabla u(\xi)| |z-\xi|^{1-n} d\xi dz d\sigma(x).
\end{equation}
The main piece of the integral will again be called $I_0(s)$, where we integrate in the region
where $|\xi-x| \geq 2s$ and hence $|z-\xi|^{1-n} \leq 2^n |x-\xi|^{1-n}$. Thus
\begin{eqnarray} \label{2.41}
I_0(s) &\leq& C \int_{\xi \in B(0,r)}
\fint_{x\in \Gamma \cap B(0,r/2)} \fint_{z\in B(x,s)}
 |\nabla u(\xi)| |x-\xi|^{1-n} dz d\sigma(x) d\xi 
 \nn\\ 
 &\leq& C r^{-d} \int_{\xi \in B(0,r)} \int_{x\in \Gamma \cap B(0,r/2)} 
 |\nabla u(\xi)| |x-\xi|^{1-n} d\sigma(x) d\xi 
 \nn\\  &\leq& 
 C r^{-d} \int_{\xi \in B(0,r) \sm \Gamma} |\nabla u(\xi)| h(\xi) d\xi,
\end{eqnarray}
where for $\xi \in B(0,r) \sm \Gamma$ we set
\begin{equation} \label{2.42}
h(\xi) = \int_{x\in \Gamma \cap B(0,r/2)} |x-\xi|^{1-n} d\sigma(x)
\leq C \delta(\xi)^{1-n+d}
\end{equation}
where for the last inequality we cut the domain of integration into pieces where
$|x-\xi| \approx 2^m \delta(\xi)$ and use \eqref{1.1}. For the other piece of \eqref{2.40}
where $|\xi-x| < 2s$, we get the integral
\begin{eqnarray} \label{2.43}
I_1(s) &\leq& C r^{-d}s^{-n} \int_{\xi \in B(0,r)} \int_{x\in \Gamma \cap B(0,r/2) \cap B(\xi,2s)} 
\int_{z\in B(x,s)} |\nabla u(\xi)| |z-\xi|^{1-n} d\xi dz d\sigma(x)
\nn\\
&\leq& C  r^{-d}s^{d-n} \int_{\xi \in B(0,r) ; \delta(\xi) \leq 2s} 
\int_{z\in B(\xi,3s)}|\nabla u(\xi)| |z-\xi|^{1-n} d\xi dz 
\nn\\
&\leq& C  r^{-d}s^{1+d-n} \int_{\xi \in B(0,r) ; \delta(\xi) \leq 2s} 
|\nabla u(\xi)| d\xi 
\leq C r^{-d} \int_{\xi \in B(0,r)} |\nabla u(\xi)| \delta(\xi)^{1+d-n} d\xi .
\end{eqnarray}
Altogether
\begin{equation} \label{2.44}
I(s) \leq C r^{-d} \int_{\xi \in B(0,r)} |\nabla u(\xi)| \delta(\xi)^{1+d-n} d\xi ,
\end{equation}
which is \eqref{2.36}. When $s$ tends to $0$, $g_s(x)$ tends to 
$g(x) = Tu(x)=0$ for $\sigma$-almost every $x \in \Gamma \cap B(0,r/2)$, 
and we get \eqref{1.4-bis}
by Fatou. \ep

\begin{lemma}\label{lSob} Let $\Gamma$ be a $d$-ADR set in $\RR^n$, $d<n-1$, that is, 
assume that \eqref{1.1} is satisfied. 
Let $p\in \left[1,\frac{2n}{n-2}\right]$ (or $ p\in [1,+\infty)$ if $n=2$). 
Then for any $u\in W$, $x\in \R^n$ and $r>0$
\begin{equation} \label{1.5-bis}
\Big\{\frac1{m(B(x,r))}\int_{B(x,r)} \left|u(y) - u_{B(x,r)} \right|^p w(y) dy\Big\}^{1/p}
\leq C r \Big\{ \frac{1}{m(B(x,r))}\int_{B(x,r)} |\nabla u(y)|^2 w(y) dy \Big\}^{1/2},
\end{equation}
where $u_B$ denotes either $\fint_B u\, dz$ or $m(B)^{-1} \int_B u \, w(z)dz$. If $x\in \Gamma$ and, in addition, $Tu=0$ on $\Gamma\cap B(x, r)$ then 
\begin{equation} \label{1.5-bisbis}
\Big\{r^{-d-1}\int_{B(x,r)} |u(y)|^p w(y) dy\Big\}^{1/p}
\leq C r \Big\{ r^{-d-1} \int_{B(x,r)} |\nabla u(y)|^2 w(y) dy \Big\}^{1/2}.
\end{equation}
\end{lemma}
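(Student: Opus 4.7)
My plan is to derive both inequalities from the pointwise Riesz-type potential estimate already established in the proof of Lemma~\ref{W=barW}, combined with the $A_2$ property of $w$ (Lemma~\ref{lwA2}) and the doubling of $m$. Starting from \eqref{2.4}, for almost every $z \in B := B(x,r)$, one has
\[
\fint_{y\in B} |u(y) - u(z)| \, dy \leq C \int_B |\nabla u(\xi)| \, |z - \xi|^{1-n} d\xi;
\]
taking $u_B := \fint_B u$ and applying the triangle inequality gives the pointwise control $|u(z) - u_B| \leq C V(z)$, where $V(z) := \int_B |\nabla u(\xi)| |z-\xi|^{1-n} d\xi$ is a truncated Riesz potential of $|\nabla u|$.

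To prove \eqref{1.5-bis}, I first bound $V$ in $L^2(B,w)$. A dyadic decomposition of $B$ around $z$ gives the standard Hedberg-type estimate $V(z) \leq Cr \, \mathcal{M}(|\nabla u| \1_B)(z)$, where $\mathcal{M}$ denotes the Hardy--Littlewood maximal operator. Since $w \in A_2$, $\mathcal{M}$ is bounded on $L^2(w)$ by Muckenhoupt's theorem, and this yields the weighted $(1,2)$-Poincar\'e inequality
\[
\int_B |u - u_B|^2 w \, dz \leq Cr^2 \int_B |\nabla u|^2 w \, dz,
\]
which is \eqref{1.5-bis} for $p=2$. To reach the sharp exponent $p = 2n/(n-2)$, one applies a finer weighted Hedberg-type interpolation (splitting $V$ into near and far contributions relative to $z$, bounding the far part by $r^{1-\theta}$ times a maximal function and the near part by an $L^2$ term), or equivalently invokes the Haj\l{}asz--Koskela self-improvement for $(1,2)$- to $(p,2)$-Poincar\'e inequalities on a doubling metric measure space of upper dimension $n$ (ours, by \eqref{doublinggen}); in either case the doubling dimension $n$ produces exactly the exponent $2n/(n-2)$. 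Intermediate values of $p$ follow by H\"older, and the two candidate choices for $u_B$ yield equivalent inequalities since their difference is bounded by an application of the case $p=1$.

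To deduce \eqref{1.5-bisbis}, I note that when $x\in\Gamma$, $m(B) \approx r^{d+1}$ by \eqref{ISob8b}, so the inequality to prove coincides with \eqref{1.5-bis} except that $u_B$ is replaced by $0$; by the triangle inequality it suffices to bound $|u_B|$. Here the hypothesis $Tu=0$ on $\Gamma\cap B$ enters through Lemma~\ref{lpBry}:
\[
|u_B| \leq \fint_B |u|\, dy \leq Cr^{-d} \int_B |\nabla u|\, w\, dy \leq Cr^{-d} \, m(B)^{1/2} \Bigl(\int_B |\nabla u|^2 w\Bigr)^{1/2} \leq Cr \Bigl(m(B)^{-1}\int_B |\nabla u|^2 w\Bigr)^{1/2},
\]
by Cauchy--Schwarz and $m(B) \approx r^{d+1}$. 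The main technical obstacle is reaching the sharp Sobolev exponent $p = 2n/(n-2)$: the $(1,2)$-Poincar\'e inequality is essentially a corollary of the $A_2$ bound for $\mathcal{M}$, but the self-improvement up to $p = 2n/(n-2)$ requires a careful weighted truncation or interpolation argument, and making this self-contained for our specific weight is the delicate part.
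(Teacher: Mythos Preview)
Your proposal is correct and close in spirit to the paper's proof, but the initial step differs. The paper does not go through the maximal function and the $A_2$ bound; instead it proves the $(1,1)$-Poincar\'e inequality directly, by integrating the Riesz potential estimate \eqref{2.4} against $w$ and establishing the pointwise bound
\[
\int_{B(\xi,2r)} |z-\xi|^{1-n} w(z)\,dz \leq Cr\,w(\xi)
\]
via an explicit case analysis on $\delta(\xi)$ versus $r$. From there, both arguments converge: the paper applies the Haj\l{}asz--Koskela self-improvement (Theorem~5.1 and Corollary~9.8 of \cite{HK00}), using \eqref{doublinggen} to get doubling exponent $n$, to pass from the weak Poincar\'e inequality to the $(p,2)$ inequality with $p$ up to $2n/(n-2)$; and \eqref{1.5-bisbis} is obtained exactly as you propose, by combining \eqref{1.5-bis} with Lemma~\ref{lpBry} and $m(B)\approx r^{d+1}$. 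Your route via $V\leq Cr\,\mathcal M(|\nabla u|\1_B)$ and Muckenhoupt's theorem is clean and gives the $p=2$ case immediately, at the cost of invoking the $L^2(w)$-boundedness of $\mathcal M$; the paper's route is more self-contained and yields the slightly stronger $(1,1)$ version before self-improving. The equivalence of the two choices of $u_B$ is also handled explicitly in the paper (the identity \eqref{equivuB}), using \eqref{L1byL1w}.
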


\bp In the proof, we will use $dm(z)$ for $w(z) dz$ and hence, for instance $\int_B u \, dm$ denotes $\int_B u(z) w(z) dz$.
We start with the following inequality.
Let $p\in [1,+\infty)$. If $u\in L^p_{loc}(\R^n,w) \subset L^1_{loc}(\R^n)$, then for any ball 
$B$, 
\begin{equation} \label{equivuB}
\int_B \left| u - \fint_B u \, dy\right|^p dm \approx  \int_B \left| u - \frac1{m(B)}\int_B u \, dm \right|^p dm.
\end{equation}
First we bound the left-hand side. We introduce  $m(B)^{-1} \int_B u \, dm$ inside
the absolute values and then use the triangle inequality:
\begin{eqnarray} \label{ISobA} 
\int_B \left| u - \fint_B u\, dy \right|^p dm 
&\leq & C  \int_B \left| u(z) - \frac1{m(B)}\int_B u \, dm \right|^p dm + C  m(B) \left| \fint_B u - \frac1{m(B)}\int_B u \, dm \right|^p \nn\\
& \leq & C\int_B \left| u(z) - \frac1{m(B)}\int_B u \, dm \right|^p dm +  C \frac{m(B)}{|B|} \int_B \left|u - \frac1{m(B)}\int_B u \, dm \right|^p \nn\\
& \leq & C \int_B \left| u(z) - \frac1{m(B)}\int_B u \, dm \right|^p dm, 
\end{eqnarray}
where the last line is due to \eqref{L1byL1w}. The reverse estimate is quite immediate
\begin{equation} \label{ISobB}  \begin{split} 
& \int_B \left| u -  \frac1{m(B)}\int_B u \, dm \right|^p dm \\
&\qquad \leq C  \int_B \left| u(z) - \fint_B u \, dy\right|^p dm +  C m(B) \left| \frac1{m(B)}\int_B u \, dm 
- \fint_B u \, dy\right|^p \\
&\qquad \leq C  \int_B \left| u(z) - \fint_B u \, dy \right|^p dm +  C m(B) \left| \frac1{m(B)}\int_B \left( u  - \fint_B u \, dy \right) \, dm\right|^p \\
& \qquad \leq C \int_B \left| u(z) - \fint_B u \, dy \right|^p dm,
\end{split} \end{equation}
which finishes the proof of \eqref{equivuB}. 

In the sequel of the proof, we write $u_B$ for $m(B)^{-1} \int_B u \, dm$. 
Thanks to \eqref{equivuB}, it suffices to prove \eqref{1.5-bis} only for this particular choice of $u_B$.
We now want
to prove a (1,1) Poincar\'e inequality, that is
\begin{equation} \label{11Poincare}
\int_{B} \left|u(z) - u_B\right| dm \leq Cr \int_{B} |\nabla u(z)| dm.
\end{equation}
for any $u\in W$ and any ball $B\subset \R^n$ of radius $r$. In particular, $u\in L^1_{loc}(\R^n,w)$.

Let $B \subset \R^n$ of radius $r$. Recall first that thanks to Lemma~\ref{W=barW}, $\fint_B u$ makes sense for every ball $B$. If we prove for $u\in W$ the estimate
\begin{equation} \label{11Poincareb}
\int_{B} \left|u - \fint_B u \, dy \right| dm \leq Cr \int_{B} |\nabla u| dm,
\end{equation}
then \eqref{11Poincare} will follows. Indeed, assume \eqref{11Poincareb} holds for any ball $B$. 
The left-hand
of \eqref{11Poincareb} is then bounded, up to a constant, by $r\|u\|_W$ and is thus finite. 
Therefore,  
for any ball $B$, $\int_B |u| dm \leq \int_B |u-\fint_B u| dm + |\fint_B u\, dy| < +\infty$, i.e. $u \in L^1_{loc}(\R^n,w)$. 
But now, $u \in L^1_{loc}(\R^n,w)$, so we can use \eqref{equivuB}. 
Together with \eqref{11Poincareb}, it implies 
\eqref{11Poincare}.

We want to prove \eqref{11Poincareb}. The estimate \eqref{2.4} yields
\begin{equation} \label{ISob1} \begin{split}
 \int_{B} \left|u - \fint_{B} u \, dy\right| dm & \leq C \int_{B} \int_{B} |\nabla u (\xi)| |z-\xi|^{1-n} w(z) d\xi \, dz \\
& \leq C \int_{B} |\nabla u (\xi)| \int_{B(\xi,2r)} |z-\xi|^{1-n} w(z) dz\, d\xi
\end{split} \end{equation}
and thus it remains to check that for 
$\xi \in \R^n$ and  
$r>0$,
\begin{equation} \label{ISob2}
I = \int_{B(\xi,2r)} |z-\xi|^{1-n} w(z) dz \leq Crw(\xi).
\end{equation}
First, note that if $\delta(\xi)\geq 4r$, then for all $z\in B(\xi,2r)$, one has 
\[\frac12 \delta(\xi) \leq \delta(\xi) - 2r \leq \delta(z) \leq \delta(\xi)+2r \leq 2\delta(\xi)\]
and so $w(z)$ is equivalent to $w(\xi)$. 
Thus $I \leq C w(\xi) \int_{B(\xi,2r)} |z-\xi|^{1-n} dz \leq C r w(\xi)$. 
It remains to prove the case $\delta(\xi) < 4r$. We split $I$ into $I_1+I_2$ where, for $I_1$, the domain of integration is restrained to $B(\xi,\delta(\xi)/2)$. For any $z \in B(\xi,\delta(\xi)/2)$, we have $w(z) \leq C w(\xi)$ and thus
\begin{equation} \label{ISob3}
I_1 \leq C w(\xi) \int_{B(\xi,\delta(\xi)/2)}  |z-\xi|^{1-n} dz \leq C w(\xi) \delta(\xi) \leq C r w(\xi).
\end{equation}
It remains to bound $I_2$. In order to do it, we decompose the remaining domain into annuli $C_j(\xi):=\{z\in \R^n, \, 2^{j-1}\delta(\xi) \leq |\xi-z| \leq 2^j \delta(\xi)\}$. We write $\kappa$ for the smallest integer bigger than $\log_2(2r/\delta(\xi))$, which is the highest value for which $C_\kappa \cap B(\xi,2r)$ is non-empty. We have
\begin{equation} \label{ISob4} \begin{split}
I_2 \leq C \sum_{j=0}^{\kappa} 
2^{j(1-n)} \delta(\xi)^{1-n} \int_{C_j(\xi)} dm(z) 
\leq C  \sum_{j=0}^{\kappa}
2^{j(1-n)} \delta(\xi)^{1-n} m(B(\xi,2^j\delta(\xi))).
\end{split} \end{equation}
The ball $B(\xi,2^j\delta(\xi))$ is close to $\Gamma$ and thus Lemma~\ref{lwest} gives that the quantity $m(B(\xi,2^j\delta(\xi)))$ is bounded, up to a constant, by $2^{j(d+1)} \delta(x)^{d+1}$. We deduce, since $2+d-n \leq 1$, that
\begin{equation} \label{ISob5} \begin{split}
I_2 & \leq C \sum_{j=0}^{\kappa}  
2^{j(2+d-n)} \delta(\xi)^{2+d-n}  \leq C \delta(\xi)^{2+d-n} 
\sum_{j=0}^{\kappa}
2^{j} \\ 
& \leq C \delta(\xi)^{2+d-n} \left( \frac{2r}{\delta(\xi)} \right) \leq C r \delta(\xi)^{1+d-n} = Crw(\xi),
\end{split} \end{equation}
which ends the proof of \eqref{ISob2} and thus also the one of the Poincar\'e inequality \eqref{11Poincare}.

\medskip

Now we want
to establish \eqref{1.5-bis}. The quickest way to do it is to use some results of 
Haj{\l}asz
and Koskela. We say that $(u,g)$ forms a Poincar\'e pair if $u$ is in $L^1_{loc}(\R^n,w)$, $g$ is positive and measurable and for any ball $B\subset \R^n$ of radius $r$, we have
\begin{equation} \label{HK1} 
m(B)^{-1}\int_{B} |u(z) - u_B| dm(z) \leq Cr m(B)^{-1} \int_{B} g \,  dm(z).
\end{equation}

In this context, Theorem 5.1 (and Corollary 9.8) in \cite{HK00} states that the 
Poincar\'e inequality \eqref{HK1} can be improved into a Sobolev-Poincar\'e inequality. More precisely, if $s$ is such that, for any ball $B_0$ of radius $r_0$, any $x\in B_0$ and any $r\leq  r_0$,
\begin{equation} \label{doublings} 
\frac{m(B(x,r))}{m(B_0)} \geq C^{-1} \left( \frac{r}{r_0}\right)^s
\end{equation}
then \eqref{HK1} implies for any $1<q<s$
\begin{equation} \label{HK2} 
\left( m(B)^{-1}\int_{B} |u(z) - u_B|^{q^*} dm(z) \right)^\frac1{q^*} \leq Cr \left( m(B)^{-1} \int_{B} g^q dm(z) \right)^\frac1q
\end{equation}
where $q^*= \frac{qs}{s-q}$ and $B$ is a ball of radius $r$. Combined with H\"older's inequality, we get
\begin{equation} \label{HK3} 
\left( m(B)^{-1}\int_{B} |u(z) - u_B|^{p} dm(z) \right)^\frac1{p} \leq Cr \left( m(B)^{-1} \int_{B} g^2 dm(z) \right)^\frac12
\end{equation}
for any $p\in [1,2s/(s-2)]$ if $s> 2$ or any $p<+\infty$ if $s\leq 2$.

We will use the result of Haj{\l}asz
and Koskela with $g= |\nabla u|$. We need to check the assumptions of their result. The bound \eqref{HK1} is exactly \eqref{11Poincare} and we already proved it. 
The second and last thing we need to verify is that \eqref{doublings} holds with $s=n$. This fact is an easy consequence of \eqref{doublinggen}. 
Indeed, if $B_0$ is a ball of radius $r_0$, $x\in B_0$ and $r\leq r_0$
\begin{equation} \label{ISobC}
\frac{m(B(x,r))}{m(B_0)} \geq \frac{m(B(x,r))}{m(B(x,2r_0))}.
\end{equation}
Yet, \eqref{doublinggen} implies that $\frac{m(B(x,r))}{m(B(x,2r_0))}$ is bounded from below by $C^{-1}(\frac r{2r_0})^n$, that is $C^{-1}(\frac r{r_0})^n$. Then
\begin{equation} \label{doublingn} 
\frac{m(B(x,r))}{m(B_0)} \geq C^{-1} \left( \frac{r}{r_0}\right)^n,
\end{equation}
which is the desired conclusion. We deduce that \eqref{HK3} holds with $g=|\nabla u|$ and for any $p\in [1,\frac{2n}{n-2}]$ ($1\leq p< +\infty$ if $n=2$), which is exactly \eqref{1.5-bis}.

\medskip

To finish to prove the lemma, it remains to establish \eqref{1.5-bisbis}. Let $B=B(x,r)$ be a ball centered on $\Gamma$.
However, since $x\in \Gamma$, \eqref{ISob8b} entails that $m(B)$ is equivalent to $r^{d+1}$. Thus, thanks to \eqref{1.5-bis} and Lemma~\ref{lpBry},
\begin{eqnarray} \label{ISob12}
\left( r^{-d-1} \int_{B} |u(z)|^p dm(z) \right)^\frac1p 
&\leq& C \left( m(B)^{-1}\int_{B} \Big|u(z) - \fint_{B} u \, dy\Big|^p dm(z) \right)^\frac1p 
+ C \fint_{B} |u(z)| dz  \nn\\
&\leq& Cr \left( r^{-d-1} \int_{B} |\nabla u|^2 dm(z) \right)^\frac12,
\end{eqnarray}
which proves Lemma~\ref{lSob}.
\ep

\begin{remark} \label{rPoincare}
If $B \subset \R^n$ and $u \in W$ \ub{is supported in $B$}, then for any $p\in [1,2n/(n-2)]$ 
(or $p\in [1,+\infty)$ if $n=2$), there holds
\begin{equation} \label{rPoincare1}
\Big\{\frac1{m(B)}\int_{B} \left|u(y)\right|^p w(y) dy\Big\}^{1/p}
\leq C r \Big\{ \frac{1}{m(B)}\int_{B} |\nabla u(y)|^2 w(y) dy \Big\}^{1/2}.
\end{equation}
That is, we can choose $u_B = 0$ in \eqref{1.5-bis}.

To prove \eqref{rPoincare1}, the main idea is that we can replace in \eqref{1.5-bis} the quantity 
$u_B = m(B)^{-1} \int_B u$ by the average $u_{\bar B}$, where $\bar B$ is a ball near $B$.
We choose for $\bar B$ a ball with same radius as $B$ and contained in $3B \setminus B$,
because this way  $u_{\bar B} = 0$ since $u$ is supported in $B$. Then
\begin{eqnarray} \label{rPoincare2} 
\Big\{\frac1{m(3B)}\int_{3B} |u(y)|^p w(y) dy\Big\}^{1/p}  
&=& \Big\{\frac1{m(3B)}\int_{3B} |u(y) - u_{\bar B}|^p w(y) dy\Big\}^{1/p} \nn\\
&\leq& \Big\{\frac1{m(3B)}\int_{3B} |u(y) - u_{3B}|^p w(y) dy\Big\}^{1/p} + |u_{3B} - u_{\bar B}|
\end{eqnarray}
Yet, using Jensen's inequality and then H\"older's
inequality, $|u_{3B} - u_{\bar B}|$ is bounded by $\Big\{\frac1{m(\bar B)}\int_{3B} |u(y) - u_{3B}|^p w(y) dy\Big\}^{1/p}$. If we use in addition the doubling property given by \eqref{doublingn}, we get that $|u_{3B} - u_{\bar B}|$ is bounded by $\Big\{\frac1{m(3B)}\int_{3B} |u(y) - u_{3B}|^p w(y) dy\Big\}^{1/p}$, that is,
\begin{equation} \label{rPoincare2bis} \begin{split}
\Big\{\frac1{m(3B)}\int_{3B} |u(y)|^p w(y) dy\Big\}^{1/p}  
\leq \Big\{\frac1{m(3B)}\int_{3B} |u(y) - u_{3B}|^p w(y) dy\Big\}^{1/p}.
\end{split} \end{equation}
We conclude thanks to \eqref{1.5-bis} and the doubling property \eqref{doublinggen}.
\end{remark}
\chapter{Completeness
and Density of Smooth Functions}
\label{Scompleteness}

In later sections we shall work with various dense classes.
We prepare the job in this section, with a little bit of work on function spaces and approximation arguments. 
Most results in this section are basically unsurprising, except perhaps the fact that when $d \leq 1$,
the test functions are dense in $W$ (with no decay condition at infinity).

Let $\dot W$ be the factor space $W/\RR$, equipped with the norm $\|\cdot\|_W$. The elements of $\dot W$ are classes $\dot u=\{u+c\}_{c\in\RR}$, where $u\in W$.  
\begin{lemma}\label{lcomp}
The space $\dot W$ is complete.
In particular, if a sequence of elements of $W$, $\{v_k\}_{k=1}^\infty$, and $u\in W$ are such that $\|v_k-u\|_W \to 0$ as $k\to \infty$, then there exist constants $c_k\in \RR$ such that $v_k-c_k \to u$ in $L^1_{loc}(\rn).$
\end{lemma}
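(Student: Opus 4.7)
The plan is to use the Sobolev-Poincar\'e inequality \eqref{1.5-bis} to convert Cauchy-ness of gradients in $L^2(\Omega,w)$ into $L^1_{loc}(\R^n)$-convergence of the functions themselves, after fixing the arbitrary additive constant by prescribing the $m$-weighted average on a single reference ball.

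First I would fix once and for all a ball $B_0 \subset \R^n$, and for $f \in W$ write $f_B := m(B)^{-1}\int_B f\,dm$ for the $m$-weighted average on any ball $B$. Given a Cauchy sequence $\{\dot u_k\}$ in $\dot W$ with representatives $u_k \in W$, I set $v_k := u_k - (u_k)_{B_0}$, so that $(v_k)_{B_0} = 0$ and $\dot v_k = \dot u_k$. Since $\nabla v_k = \nabla u_k$ is Cauchy in the complete space $L^2(\Omega,w)$, there exists $V \in L^2(\Omega,w)$ with $\nabla v_k \to V$.

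The core step is to show $\{v_k\}$ is Cauchy in $L^1_{loc}(\R^n)$. For any ball $B \supset B_0$ of radius $r_B$, and any $j,k$, I apply \eqref{1.5-bis} with $p=2$ to $f := v_k - v_j \in W$ (choosing the $m$-weighted average version of $u_B$) to get
\begin{equation*}
\int_B |f - f_B|^2 \, dm \le C\,r_B^2 \,\|\nabla(u_k - u_j)\|_{L^2(\Omega,w)}^2 \longrightarrow 0.
\end{equation*}
Since $f_{B_0} = 0$ by normalization, Jensen's inequality applied on $B_0 \subset B$ controls the remaining average by the same quantity,
\begin{equation*}
|f_B|^2 = |f_{B_0} - f_B|^2 \le m(B_0)^{-1} \int_{B_0} |f - f_B|^2\,dm \le m(B_0)^{-1} \int_B |f - f_B|^2 \,dm \longrightarrow 0,
\end{equation*}
hence $v_k - v_j \to 0$ in $L^2(B, m)$. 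Applying \eqref{L1byL1w} to $g = |v_k - v_j|$ and then Cauchy-Schwarz converts this into convergence in $L^1(B, dy)$, and since any compact set in $\R^n$ lies in some ball containing $B_0$, this gives Cauchy-ness in $L^1_{loc}(\R^n)$. By completeness there exists $u \in L^1_{loc}(\R^n)$ with $v_k \to u$.

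Finally, I would verify $u \in W$ with $\nabla u = V$. For any $\varphi \in C_0^\infty(\Omega, \R^n)$, both $w$ and $w^{-1}$ are bounded on $\supp \varphi$ because $\delta$ is bounded away from $0$ and $\infty$ there; hence $\nabla v_k \to V$ in $L^2(\Omega,w)$ upgrades to convergence in $L^1(\supp \varphi, dy)$ via Cauchy-Schwarz, while $v_k \to u$ in $L^1_{loc}(\R^n)$ handles the other side. Passing to the limit in $\int \nabla v_k \cdot \varphi = -\int v_k \,\dv \varphi$ yields $V = \nabla u$ as distributions on $\Omega$, so $u \in W$ and $\|\dot v_k - \dot u\|_W = \|\nabla v_k - V\|_{L^2(\Omega,w)} \to 0$, proving completeness of $\dot W$. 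The ``in particular'' statement is then the same argument applied to $v_k - u \in W$: choose $c_k := (v_k - u)_{B_0}$ so that $(v_k - c_k - u)_{B_0} = 0$, and the Poincar\'e-plus-Jensen estimate above yields $v_k - c_k \to u$ in $L^1_{loc}(\R^n)$. The main subtlety is transferring the normalization from $B_0$ to arbitrary balls $B$; the Jensen bound handles it in one line, precisely because \eqref{1.5-bis} provides a weighted Poincar\'e inequality valid on \emph{every} ball in $\R^n$.
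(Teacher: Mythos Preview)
Your proof is correct and follows the same overall strategy as the paper: normalize representatives by subtracting an average over a fixed reference ball, then use the Sobolev--Poincar\'e inequality \eqref{1.5-bis} to convert Cauchy-ness of $\nabla v_k$ in $L^2(\Omega,w)$ into Cauchy-ness of $v_k$ in $L^1_{loc}(\R^n)$, and finally identify the distributional gradient of the limit.

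The execution differs slightly from the paper's. The paper fixes the reference ball centered at a point of $\Gamma$, uses \eqref{1.5-bis} with $p=1$ on dyadic balls $B_j=B(0,2^j)$, and chains the averages $m_j=\fint_{B_j}f$ through a telescoping estimate $|m_0-m_j|\le C2^{(n+1)j}\|f\|_W$. You instead take an arbitrary reference ball $B_0$, apply \eqref{1.5-bis} once with $p=2$ on a single ball $B\supset B_0$, and dispose of the average mismatch in one line via Jensen: $|f_B|^2=|f_{B_0}-f_B|^2\le m(B_0)^{-1}\int_B|f-f_B|^2\,dm$. This is a bit more direct and avoids the telescoping; the paper's dyadic chaining has the minor advantage of making the growth in the constant explicit, but neither approach needs that here.
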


\bp
Let $(\dot u_k)_{k\in \bN}$ be a Cauchy sequence in $\dot W$. We need to show
that
\begin{enumerate}[(i)]
\item for every sequence $(v_k)_{k\in \bN}$ in $W$, with
$v_k \in \dot u_k$ for 
$k\in \bN$, there exists $u\in W$ and $(c_k)_{k\in \bN}$ such that $v_k - c_k \to u$ in $L^1_{loc}(\R^n)$ and
\begin{equation}\label{aa}
\lim_{k\to \infty} \|v_k - u\|_W = 0 ;
\end{equation}
\item if $u$ and $u'$ are such that there exist $(v_k)_{k\in \bN}$ and $(v'_k)_{k\in \bN}$ 
such that 
$v_k,v'_k \in \dot u_k $ for all
$k\in \bN$ and 
\begin{equation}\label{bb}
\lim_{k\to \infty} \|v_k - u\|_W = \lim_{k\to \infty} \|v'_k - u'\|_W = 0,
\end{equation}
then $\dot u' = \dot u$.
\end{enumerate}

First assume
that (i) is true and let us prove (ii). Let $u$, $u'$, $(v_k)_{k\in \bN}$ and $(v'_k)_{k\in \bN}$ be such that $v_k,v'_k \in \dot u_k$ for any $k\in \bN$ 
and \eqref{bb} holds.
Then
the sequence $(\nabla v_k- \nabla v'_k)_{k\in \bN}$ converges in $L^2(\Omega,w)$ to $\nabla (u -u')$ on one hand and is constant equal to 0 on the other hand. Thus $\nabla (u-u')  = 0$ and $u$ and $u'$ differ only by a constant, hence $\dot u' = \dot u$.

\medskip
Now we prove (i). By translation invariance, we may assume that $0 \in \Gamma$.
Let the $v_k \in \dot u_k$ be given, and choose $c_k = \fint_{B(0,1)} v_k$. 
We want to show that $v_k- c_k$ converges in $L^1_{loc}(\R^n)$. 

Set $B_j = B(0,2^j)$ for $j \geq 0$; let us check that for $f\in W$ and $j \geq 0$,
\begin{equation} \label{Co1} 
\fint_{B_j} \Big|  f(x) - \fint_{B_0} f(y) \, dy\Big|\, dx \leq C 2^{(n+1)j} \|f\|_W.
\end{equation}
Set $m_j = \fint_{B_j} f\, dy$; observe that
\begin{eqnarray}\label{c1}
\fint_{B_j} |f-m_j| \, dx&\leq& \frac{1}{m(B_j)}\int_{B_j} |f(x)-m_j| w(x) dx 
\nn\\
&\leq& C 2^j m(B_j)^{-1/2} \Big\{\int_{B_j} |\nabla f(y)|^2 w(y)dy\Big\}^{1/2}
\\
&\leq& C 2^j m(B_j)^{-1/2} ||f||_W \leq C 2^j ||f||_W
\nn
\end{eqnarray}
by \eqref{L1byL1w}, the Poincar\'e inequality \eqref{1.5-bis} with $p=1$,
and a brutal estimate using \eqref{ISob8b}, our assumption that $0 \in \Gamma$, and 
the fact that $B_j \supset B_0$. In addition, 
\begin{equation}\label{c2} \begin{split}
|m_0-m_j| & = \Big|\fint_{B_0} f(x) \, dx -m_j \Big| \leq \fint_{B_0} |f(x)-m_j|\, dx \\
&  \leq 2^{jn} \fint_{B_j} |f(x)-m_j|\, dx
\leq C 2^{(n+1)j} \|f\|_W
\end{split}\end{equation}
by \eqref{c1}. Finally 
\begin{equation}\label{c3} \begin{split}
\fint_{B_j} \Big|  f(x) - \fint_{B_0} f(y)\, dy \Big|\, dx & = \fint_{B_j} \Big|  f(x) - m_0 \Big|\, dx \leq \fint_{B_j} |f(x)-m_j|\, dx + |m_0-m_j| \\
& \leq C 2^{(n+1)j} ||f||_W,
\end{split} \end{equation}
as needed for \eqref{Co1}.

\medskip

Return
to the convergence of $v_k$. 
Recall that 
$c_k = \fint_{B_1}  v_k$. 
By \eqref{Co1} with $f = v_k - c_k - v_l + c_l$ (so that $m_0 = 0$),
$v_k - c_k$ is a Cauchy sequence in $L^1_{loc}(B_j)$ for each $j \geq 0$, 
hence there exists $u^j \in L^1(B_j)$ such that $v_k - c_k$ converges to $u^j$. 
By uniqueness of the limit, we have that
for $1\leq j \leq j_0$,
\begin{equation} \label{Co2}
u^{j_0} = u^j \ \text{ a.e. in } B_j
\end{equation}
and thus we can define a function $u$ on $\R^n$ as $u(x) = u^j(x)$ if $x \in B_j$. 
By construction $u\in L^1_{loc}(\R^n)$ and $v_k - c_k \to u$ in $L^1_{loc}(\R^n)$. 

It remains to show that $u$ is actually in $W$ and $v_k \to u$ in $W$. 
First, since $L^2(\Omega,w)$ is complete, there exists $V$ such that $\nabla v_k$ 
converges to $V$ in $L^2(\Omega,w)$. 
Then observe that for
$\varphi \in C^\infty_0(B_j \setminus \Gamma,\R^n)$,
\[\int_{B_j} V \cdot \varphi  = \lim_{k\to \infty} \int_{B_j} \nabla v_k \cdot \varphi = -\lim_{k\to \infty} \int_{B_j} (v_k-c_k) \diver\varphi  = -\int_{B_j} u^j \diver\varphi.\]
Hence by definition of a 
weak derivative, 
\[\nabla u = \nabla u^j = V \ \text{ a.e. in } B_j.\]
Since the result holds for any $j\geq 1$,
\[\nabla u = V \ \text{ a.e. in } \R^n,\]
that is, by construction of $V$, $u \in W$ and $\|v_k - u\|_W$ converges to 0.
\ep

\begin{lemma}\label{lcomp2}
The space
\begin{equation} \label{2.47}
W_0 = \big\{ u\in W \, ; \, Tu = 0 \big\},
\end{equation}
equipped with the scalar product $\left< u,v \right>_W := \int_\Omega \nabla u(z) \cdot \nabla v(z) \, dm(z)$ (and the norm $\|.\|_W$) is a Hilbert space.

Moreover, for any ball $B$ centered on $\Gamma$, the set 
\begin{equation} \label{2.47b}
W_{0,B} = \big\{ u\in W \, ; \, Tu = 0   \ \text{ $\H^d$-almost everywhere on } \Gamma \cap B \big\},
\end{equation}
equipped with the scalar product $\left<.,. \right>_W$, is also a Hilbert space.
\end{lemma}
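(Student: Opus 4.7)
The plan is to treat $W_{0,B}$ in detail, since $W_0 = \bigcap_k W_{0,B_k}$ for any countable family of balls $B_k$ centered on $\Gamma$ covering $\Gamma$, and completeness then transfers automatically (any Cauchy sequence in $W_0$ is Cauchy in each $W_{0,B_k}$, and the limits must coincide by uniqueness). Definiteness of the form is straightforward: if $\|u\|_W=0$ then $\nabla u=0$ a.e.\ on $\Omega$; by the ACL property (the remark following Lemma \ref{W=barW}) together with \eqref{2.1}, $\partial_i u = 0$ a.e.\ for each coordinate direction, so by Fubini $u\equiv c$ a.e.\ on $\R^n$; the Lebesgue density \eqref{eqLeb} then identifies $Tu$ with $c$ for $\sigma$-a.e.\ $x\in\Gamma$, and since $Tu=0$ on the positive-$\sigma$-measure set $\Gamma\cap B$ one gets $c=0$.

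For completeness, given $(u_k)$ Cauchy in $W_{0,B}$, the first step is to build a candidate limit. By Lemma \ref{lcomp} there exist $u\in W$ and constants $c_k$ with $\|u_k-u\|_W\to 0$ and $u_k - c_k \to u$ in $L^1_{loc}(\R^n)$. To pin down the constants, I fix a ball $B'\subset B$ centered on $\Gamma$ and apply Lemma \ref{lpBry} to $u_k-u_l$ (whose trace vanishes on $\Gamma\cap B'$) to obtain $\fint_{B'}|u_k-u_l|\leq C_{B'}\|u_k-u_l\|_W\to 0$, so $(u_k)$ is Cauchy, hence convergent, in $L^1(B')$. Comparing with $u_k - c_k \to u$ in $L^1(B')$ forces $c_k\to c_\infty$ for some $c_\infty\in\R$, and then $\bar u:=u+c_\infty\in W$ satisfies $u_k\to \bar u$ in $L^1_{loc}(\R^n)$ and $\|u_k-\bar u\|_W=\|u_k-u\|_W\to 0$.

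The main obstacle is the final step: showing $T\bar u = 0$ on $\Gamma\cap B$. Theorem \ref{tTr} gives $\|T(u_k-\bar u)\|_H\to 0$, so restricting the double integral defining $\|\cdot\|_H$ to $(\Gamma\cap B)^2$ and using $Tu_k=0$ there produces
\begin{equation*}
\int\!\int_{(\Gamma\cap B)^2} \frac{|T\bar u(x)-T\bar u(y)|^2}{|x-y|^{d+1}}\,d\sigma(x)\,d\sigma(y) \leq \|T(u_k-\bar u)\|_H^2 \xrightarrow{k\to\infty} 0,
\end{equation*}
so $T\bar u\equiv c$ on $\Gamma\cap B$ for some constant $c$. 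The seminorm character of $\|\cdot\|_H$ leaves this constant undetermined a priori, so I propose the following trick to eliminate it: the function $v_k:=u_k-\bar u+c$ satisfies $Tv_k=0-c+c=0$ on $\Gamma\cap B$ (using that the constant $c$ lies in $W$ with trace $c$), so $v_k\in W_{0,B}$, and Lemma \ref{lpBry} applied to $v_k$ yields $\fint_{B'}|v_k|\leq C_{B'}\|v_k\|_W=C_{B'}\|u_k-\bar u\|_W\to 0$. On the other hand, $v_k=(u_k-\bar u)+c\to c$ in $L^1(B')$ since $u_k-\bar u\to 0$ there. Comparing the two limits forces $c=0$, hence $\bar u\in W_{0,B}$, proving completeness.
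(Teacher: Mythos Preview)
Your proof is correct. The overall architecture matches the paper's: invoke Lemma~\ref{lcomp} to obtain a candidate limit modulo constants, use the Poincar\'e-type inequality on a ball centered on $\Gamma$ (you use Lemma~\ref{lpBry}, the paper uses \eqref{1.5-bisbis}) to show the constants $c_k$ converge, and then verify the trace of the limit vanishes on $\Gamma\cap B$.

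The genuine difference is in that last step. The paper proves the general implication \eqref{CoA}, namely that convergence in both $W$ and $L^1_{loc}(\R^n)$ forces convergence of traces in $L^1_{loc}(\Gamma,\sigma)$; this requires reaching back into the proof of Theorem~\ref{tTr} for the quantitative estimate \eqref{2.18} on $\|g-g_r\|_{L^2(\sigma)}$. Your route stays at the level of the \emph{statement} of Theorem~\ref{tTr}: the $H$-seminorm bound forces $T\bar u$ to be constant on $\Gamma\cap B$, and then a second pass through Lemma~\ref{lpBry} applied to $v_k=u_k-\bar u+c$ identifies that constant as zero. This is a neat self-contained trick. The price is that you do not obtain \eqref{CoA}, which the paper relies on repeatedly afterwards (in Lemmas~\ref{lcompo}, \ref{lalg}, and \ref{ldensH}); so while your argument is cleaner for Lemma~\ref{lcomp2} in isolation, the paper's detour is an investment that pays off later.
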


 \bp
Observe that $W_0$ and $W_{0,B}$ are no longer spaces of functions defined modulo an additive constant.
That is, if $f\in W_0$ (or $W_{0,B}$) is a constant $c$, then $c=0$ because \eqref{defT} says that $Tu = c$
almost everywhere on $\Gamma$.
Thus $\|.\|_W$ is really a norm on $W_0$ and $W_{0,B}$, and we only need to prove that these spaces 
are complete.
We first prove this for $W_{0,B}$;
the case of $W_0$ will be easy deal with afterwards.

Let $B$ be a ball centered on $\Gamma$, and consider $W_{0,B}$. 
By translation and dilation
invariance of the result, we can 
assume that $B=B(0,1)$.

Let $(v_k)_{k\in \bN}$ be a Cauchy sequence of functions in $W_{0,B}$. 
We want first to show that $v_k$ has a limit in $L^1_{loc}(\R^n)$ and $W$.
We use Lemma~\ref{lcomp} and so there exists $\bar u\in W$ and $c_k\in \R$ such that 
\begin{equation} \label{Co5} 
\|v_k - \bar u\|_W \to 0
\end{equation}
and
\begin{equation} \label{Co6} 
v_k - c_k \to \bar u \ \  \text{ in } L^1_{loc}(\R^n).
\end{equation}
By looking at the proof of Lemma~\ref{lcomp}, we can take $c_k = \fint_B v_k$. Let us prove that 
$(c_k)$ is a Cauchy sequence in $\R$.
We have for any $k,l\geq 0$
\begin{equation} \label{Co7} 
|c_k - c_l| \leq \fint_B |v_k(z) - v_l(z)|\, dz \leq C m(B)^{-1}  \int_B |v_k(z) - v_l(z)| dm(z)
\end{equation}
with \eqref{L1byL1w}. Since $T(v_k-v_l) = 0$ on $B$ and since we assumed that $B$ is the unit ball, Lemma~\ref{lSob} entails
\begin{equation} \label{Co8} 
|c_k - c_l| \leq C \|v_k-v_l\|_W.
\end{equation}
Since $(v_k)_{k\in \bN}$ is a Cauchy sequence in W, $(c_k)_{k\in \bN}$ is a Cauchy sequence in $\R$ and thus converges to some value $c\in \R$. 
Set $u = \bar u - c$. We deduce from \eqref{Co6} that
\begin{equation} \label{Co9} 
v_k\to u \ \  \text{ in } L^1_{loc}(\R^n),
\end{equation}
and since $u$ and $\bar u$ differ only from a constant, \eqref{Co5} can be rewritten as
\begin{equation} \label{Co10} 
\|v_k - u\|_W \to 0.
\end{equation}

\medskip
We still need to show that $u\in W_{0,B}$, i.e., that 
$Tu = 0$ a.e. on $B$. We will actually prove something a bit stronger. We claim that
if $u,v_k\in W$, then the convergence of $v_k$ to $u$ in both $W$ and $L^1_{loc}(\R^n)$ implies the convergence of the traces $Tv_k \to Tu$ in $L^1_{loc}(\Gamma,\sigma)$.
That is,
\begin{equation} \label{CoA}
v_k \to u \text{ in $W$ and in $L^1_{loc}(\R^n)$ } \Longrightarrow Tv_k \to Tu \text{ in $L^1_{loc}(\Gamma,\sigma)$}.
\end{equation}

Recall that by \eqref{L1locsigma}, $Tf \in L^1_{loc}(\Gamma,\sigma)$ whenever $f\in W$.
Our result, that is $Tu = 0$ a.e. on $B$, follows easily from the claim: we already established that $v_k \to u$ in $W$ and in $L^1_{loc}(\R^n)$ and thus \eqref{CoA} gives that 
$\|Tu\|_{L^1(B,\sigma)}
= \lim_{k\to \infty} \|Tv_k\|_{L^1(B,\sigma)} = 0$, 
i.e., that
$Tu = 0$ $\sigma$-a.e. in $B$.

We turn to the proof of \eqref{CoA}. 
Since $T$ is linear, we may subtract $u$, and assume that $v_k$ tends to $0$ and $u=0$.
Let us use the notation of Theorem \ref{tTr}, and set $g^k = Tv_k$ and $g^k_r(x) = \fint_{B(x,r)} v_k(z)\, dz$.
Since $\|v_k\|_W$ tends to $0$, we may assume without loss of generality that $\|v_k\|_W \leq 1$
for $k\in \bN$. We want to prove that for every ball $\wt B\subset \R^n$ centered on $\Gamma$
and every $\epsilon >0$, we can find $k_0$ such that
\begin{equation}\label{dd}
\|g^k\|_{L^1(\wt B,\sigma)} \leq \epsilon \ \ \text{ for } k \geq k_0.
\end{equation}
We may also assume that the radius of $\wt B$ is larger than $1$ 
(as it makes \eqref{dd} harder to prove).

Fix $\wt B$ and $\varepsilon$ as above, and $\alpha\in (0,1/2)$, and observe that for $r\in (0,1)$,
\begin{equation} \label{Co11} \begin{split}
\int_{\wt B} \left|g^k\right| d\sigma 
&\leq  \int_{\wt B} |g^k - g^k_r| d\sigma + \int_{\wt B} |g^k_r|d\sigma \\
& \leq  C(\wt B) \|g^k-g^k_r\|_{L^2(\sigma)} 
+ \int_{x \in \wt B} \fint_{y \in B(x,r)} |v_k(y)| dy d\sigma(x) \\
& \leq C(\wt B,\alpha) r^{2\alpha} \|v_k\|_W + C r^{d-n} \int_{2\wt B} |v_k(y)| dy, 
\end{split}\end{equation}
where for the last line we used \eqref{2.18}, Fubini, and the condition \eqref{1.1} on $\Gamma$.
Recall that $\|v_k\|_W \leq 1$;
we choose $r$ so small that $C(\wt B,\alpha) r^{2\alpha} \|u\|_W \leq \epsilon/2$, and
since by assumption $v_k$ tends to $0$ in $L^1_{loc}$, we can find $k_0$
such that $C r^{d-n} \int_{2\wt B} |v_k(y)| dy \leq \varepsilon/2$ for $k \geq k_0$, as needed for \eqref{dd}.

This completes the proof of \eqref{CoA}, and we have seen that the completeness of $W_{0,B}$ follows.
Since $W_0$ is merely an intersection of spaces $W_{0,B}$, it is complete as well, and
Lemma \ref{lcomp2} follows.
\ep

\begin{lemma} \label{lconvol}
Choose a non-negative function $\rho \in C^\infty_0(\R^n)$ such that $\int \rho = 1$ and 
$\rho$ is supported in $\overline{B(0,1)}$. Furthermore let $\rho$ be radial and nonincreasing, 
i.e. $\rho(x) = \rho(y) \geq \rho(z)$ if $|x|=|y|\leq |z|$. 
Define $\rho_\epsilon$, for $\epsilon>0$,  by $\rho_\epsilon(x) = \epsilon^{-n} \rho(\epsilon^{-1} x)$. 
For every $u\in W$, we have:
\begin{enumerate}[(i)]
\item $\rho_\epsilon * u \in C^\infty(\R^n)$ for every $\epsilon>0$;
\item If $x\in \R^n$ is a Lebesgue point of $u$, then $\rho_\epsilon * u(x) \to u(x)$ as $\epsilon\to 0$; 
in particular, $\rho_\epsilon * u \to u$ a.e. in $\R^n$;
\item $\nabla (\rho_\epsilon*u) = \rho_\epsilon*\nabla u$ for $\varepsilon > 0$;
\item $\lim_{\epsilon \to 0} \|\rho_\epsilon* u - u\|_W = 0$;
\item $\rho_\epsilon* u \to u$ in $L^1_{loc}(\R^n)$.
\end{enumerate}
\end{lemma}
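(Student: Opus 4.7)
\textbf{Proof plan for Lemma \ref{lconvol}.} The strategy is to treat (i), (ii), (iii), (v) as essentially classical mollifier facts — which all become available because Lemma \ref{W=barW} tells us $u \in L^1_{\mathrm{loc}}(\R^n)$ and that the distributional gradient of $u$ in $\R^n$ exists and equals $\nabla u$ extended by $0$ on $\Gamma$ — and to devote the real work to (iv), where the weighted $L^2$ convergence of $\rho_\epsilon * \nabla u$ to $\nabla u$ must be controlled.

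For (i), since $\rho_\epsilon \in C^\infty_0(\R^n)$ and $u \in L^1_{\mathrm{loc}}(\R^n)$, differentiation under the integral sign yields $\rho_\epsilon * u \in C^\infty(\R^n)$ with $\partial^\alpha(\rho_\epsilon * u) = (\partial^\alpha \rho_\epsilon) * u$. For (ii), Lebesgue's differentiation theorem shows that $m$-almost every $x\in\R^n$ is a Lebesgue point of $u$ (using $u\in L^1_{\mathrm{loc}}(\R^n)$), and at such a point $|\rho_\epsilon * u(x) - u(x)| \leq \|\rho\|_\infty \fint_{B(x,\epsilon)} |u(y)-u(x)|\,dy \to 0$. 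For (iii), pick $\varphi \in C^\infty_0(\R^n)$; then by Fubini and Lemma \ref{W=barW},
\begin{equation*}
\int_{\R^n} (\rho_\epsilon * u)\, \partial_i \varphi \,dx = \int_{\R^n} u \,\partial_i(\rho_\epsilon * \varphi)\,dx = -\int_{\R^n} (\partial_i u)(\rho_\epsilon * \varphi)\,dx = -\int_{\R^n} (\rho_\epsilon * \partial_i u)\, \varphi\,dx,
\end{equation*}
so $\nabla(\rho_\epsilon * u) = \rho_\epsilon * \nabla u$ in the distributional sense, and both sides are smooth by (i). For (v), classical mollifier theory on $L^1_{\mathrm{loc}}(\R^n)$ applies directly.

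The main work is (iv). Thanks to (iii), it suffices to show that $\rho_\epsilon * \nabla u \to \nabla u$ in $L^2(\R^n, w)$ (recall $\Gamma$ has Lebesgue measure zero, so we integrate over $\R^n$). The key tool is Lemma \ref{lwA2}, which gives $w \in \mathcal A_2$. The radial-decreasing profile of $\rho$ yields the standard pointwise estimate $|\rho_\epsilon * f(x)| \leq C\,Mf(x)$ for all $f \in L^1_{\mathrm{loc}}(\R^n)$, where $M$ is the Hardy--Littlewood maximal operator; and by the Muckenhoupt theorem, $M$ is bounded on $L^2(\R^n, w)$ since $w\in\mathcal A_2$. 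The plan is then the classical three-step approximation: (a) $C_0(\R^n)$ is dense in $L^2(\R^n,w)$ (a standard fact for $\mathcal A_p$ weights, verifiable directly by truncation and mollification once the estimates below are established for continuous data); (b) for $g \in C_0(\R^n)$, uniform continuity gives $\rho_\epsilon * g \to g$ uniformly with supports in a common compact set, whence $L^2(w)$-convergence since $w$ is locally integrable by \eqref{ldoub1}; (c) for general $f \in L^2(\R^n, w)$ and $g \in C_0(\R^n)$ approximating $f$, write
\begin{equation*}
\|\rho_\epsilon * f - f\|_{L^2(w)} \leq \|\rho_\epsilon * (f-g)\|_{L^2(w)} + \|\rho_\epsilon * g - g\|_{L^2(w)} + \|g-f\|_{L^2(w)},
\end{equation*}
and bound the first term by $C\|M(f-g)\|_{L^2(w)} \leq C'\|f-g\|_{L^2(w)}$ via the $\mathcal A_2$ maximal bound. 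Applying this to $f = \partial_i u \in L^2(\R^n, w)$ for each coordinate yields (iv).

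The one subtle point — the main obstacle — is justifying density of $C_0(\R^n)$ in $L^2(\R^n, w)$ in the present setting where $w$ can blow up near $\Gamma$. This is handled by noting that $w \in L^1_{\mathrm{loc}}(\R^n)$ by \eqref{ldoub1}, so bounded compactly supported functions are dense in $L^2(\R^n, w)$ by standard measure theory, and then such a bounded compactly supported function can be approximated in $L^2(w)$ by convolving with $\rho_\epsilon$ — but this is self-referential, so the cleaner route is to approximate by bounded functions with compact support and then smooth them using $\rho_\epsilon$ combined with the pointwise $Mf$ bound, which closes the argument without circularity.
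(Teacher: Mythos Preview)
Your proof is correct and follows essentially the same route as the paper: parts (i), (ii), (iii), (v) are dispatched as classical mollifier facts via Lemma~\ref{W=barW}, and for (iv) both you and the paper reduce to the statement that $\rho_\epsilon \ast g \to g$ in $L^2(\R^n,w)$ for $g\in L^2(\R^n,w)$, relying on $w\in\mathcal A_2$ (Lemma~\ref{lwA2}) and Muckenhoupt's maximal theorem. The paper simply cites this last fact from \cite[Lemma~1.5]{Kilpelai}, whereas you reprove it by hand; your worry about circularity in the density step is unnecessary, since for a bounded compactly supported $f$ the pointwise convergence (ii) plus dominated convergence (with dominating function $\|f\|_\infty \1_K$, $K$ compact, and $w\in L^1_{\mathrm{loc}}$) already gives $\rho_\epsilon\ast f\to f$ in $L^2(w)$.
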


\bp Recall that $W \subset L^1_{loc}(\R^n)$ (see Lemma~\ref{W=barW}). 
Thus conclusions (i) and (ii) are classical and can be found as Theorem 1.12 in \cite{MZbook}.

Let $u\in W$ and write $u_\epsilon$ for $\rho_\epsilon * u$. 
We have seen that $u_\epsilon \in C^\infty(\R^n)$,
so $\nabla u_\epsilon$ is defined on $\R^n$. 
One would like to say that $\nabla u_\epsilon = \rho_\epsilon * \nabla u$, i.e. point (iii). 
Here
$\nabla u_\epsilon$ is the classical gradient of $u_\epsilon$ on $\R^n$, thus {\em a fortiori } 
also the distributional gradient on $\R^n$ of $u_\epsilon$.
That is, for any $\varphi \in C^\infty_0(\R^n,\R^n)$, there holds
\begin{equation} \label{lconv22} \begin{split}
\int_{\R^n} \nabla u_\epsilon \cdot \varphi  & 
= - \int_{\R^n} u_\epsilon(x) \diver\varphi(x) dx
= - \int_{\R^n} \int_{\R^n} \rho_\epsilon(y) u(x-y) \diver \varphi(x) dy \, dx \\
& = \int_{B(0,\epsilon)} \rho_\epsilon(y) \left( - \int_{\R^n} u(z) \diver \varphi(z+y) dz \right) dy.
\end{split} \end{equation}
The function $\varphi$ 
lies in $C^\infty_0(\R^n,\R^n)$, and so does,
for any $y\in \R^n$, the function $z\mapsto \varphi(z+y)$. Recall that $\nabla u$ is the distributional derivative of $u$ on $\Omega$ but yet also the distributional derivative of $u$ on $\R^n$ (see Lemma~\ref{W=barW}). Therefore
\begin{equation} \label{lconv23} \begin{split}
\int_{\R^n} \nabla u_\epsilon \cdot \varphi 
& = \int_{B(0,\epsilon)} \rho_\epsilon(y) \int_{\R^n} \nabla u(z) \cdot \varphi(z+y) dz \, dy \\
& = \int_{\R^n} \rho_\epsilon(y) \int_{\R^n} \nabla u(x-y) \cdot \varphi(x) dx \, dy = \int_{\R^n} (\rho_\epsilon * \nabla u) \cdot \varphi, 
\end{split} \end{equation}
which gives (iii).

From there, our
point (iv), that is the convergence of $\rho_\epsilon * u$ to $u$ in $W$, can be deduced with, for instance, \cite[Lemma~1.5]{Kilpelai}. 
The latter states that, under our assumptions on $\rho$, the convergence $\rho_\epsilon*g \to g$ holds in $L^2(\R^n,w)$ whenever $g\in L^2(\R^n,w)$ and $w$ is in the Muckenhoupt class $\mathcal A_2$ (we already proved this fact, see Lemma~\ref{lwA2}). 
Note that Kilpelai's result is basically a consequence of a result from Muckenhoupt about the boundedness of the (unweighted) Hardy-Littlewood maximal function in weighted $L^p$.

Finally we need to prove (v). 
Just notice that $u \in L^1_{loc}(\R^n)$, and apply the standard proof of the fact that 
$\rho_\epsilon* u \to u$ in $L^1$ for $f\in L^1$. The lemma follows
\ep

\begin{lemma} \label{lmult}
Let $u \in W$ and $\varphi \in C^\infty_0(\R^n)$. Then $u\varphi \in W$ and for any point $x \in \Gamma$ satisfying \eqref{eqLeb}
\begin{equation} \label{lmult1}
T(u\varphi)(x) = \varphi(x) Tu(x).
\end{equation}
\end{lemma}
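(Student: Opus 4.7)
My plan is to first verify that $u\varphi$ belongs to $W$ by establishing the product rule for its distributional gradient on $\R^n$, and then to identify the trace pointwise at Lebesgue-type points of $u$ using the averaged characterization \eqref{defT} together with the continuity of $\varphi$.

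For membership in $W$, I would begin by noting that $u\varphi \in L^1_{loc}(\R^n)$, since $u \in L^1_{loc}(\R^n)$ by Lemma~\ref{W=barW} and $\varphi$ is bounded. The key computation is the distributional product rule: for any test field $\psi \in C^\infty_0(\R^n,\R^n)$, the function $\varphi\psi$ also lies in $C^\infty_0(\R^n,\R^n)$, so $\diver(\varphi\psi) = \nabla\varphi\cdot\psi + \varphi\diver\psi$ and
\[
\int_{\R^n} u\varphi\,\diver\psi\,dx = \int_{\R^n} u\,\diver(\varphi\psi)\,dx - \int_{\R^n} u\,\nabla\varphi\cdot\psi\,dx = -\int_{\R^n}(\varphi\nabla u + u\nabla\varphi)\cdot\psi\,dx,
\]
using that $\nabla u$ is the distributional gradient of $u$ on all of $\R^n$ (again Lemma~\ref{W=barW}). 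Hence $\nabla(u\varphi) = \varphi\nabla u + u\nabla\varphi$ distributionally. Now $\varphi\nabla u \in L^2(\R^n,w)$ since $\varphi$ is bounded; and $u\nabla\varphi$ is supported in the compact support of $\varphi$, where $u$ is in $L^2(w)$ by the Sobolev--Poincar\'e estimate \eqref{1.5-bis} with $p=2$ (applied to any ball containing $\supp\varphi$, using that the mean value $u_B$ is a constant and $m(B)<\infty$). Thus $\nabla(u\varphi) \in L^2(\R^n,w)$ and $u\varphi \in W$.

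For the trace identity, I fix $x \in \Gamma$ satisfying \eqref{eqLeb}. Since $u\varphi \in W$, its trace is given by \eqref{defT}, so
\[
T(u\varphi)(x) = \lim_{r\to 0} \fint_{B(x,r)} u(y)\varphi(y)\,dy,
\]
whenever the limit exists. I split
\[
\fint_{B(x,r)} u\varphi\,dy - \varphi(x)\fint_{B(x,r)} u\,dy = \fint_{B(x,r)} u(y)\bigl(\varphi(y)-\varphi(x)\bigr)dy,
\]
and use the fact that $\varphi$ is (Lipschitz-)continuous, so $\sup_{y\in B(x,r)}|\varphi(y)-\varphi(x)| \leq \|\nabla\varphi\|_\infty\,r$. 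Property \eqref{eqLeb} yields $\fint_{B(x,r)}|u(y)|\,dy \to |Tu(x)|$ as $r\to 0$, so the remainder term vanishes. Combined with $\fint_{B(x,r)} u\,dy \to Tu(x)$ from \eqref{defT}, this gives $T(u\varphi)(x) = \varphi(x) Tu(x)$.

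The only mildly delicate step is showing that $u\nabla\varphi \in L^2(\R^n,w)$, where one must invoke local $L^2(w)$-integrability of $u$ via Lemma~\ref{lSob}; the rest is bookkeeping with the product rule and the averaged definition of the trace. I do not anticipate a real obstacle.
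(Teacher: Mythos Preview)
Your proof is correct and follows essentially the same route as the paper: the product rule $\nabla(u\varphi)=\varphi\nabla u+u\nabla\varphi$ combined with the Sobolev--Poincar\'e inequality \eqref{1.5-bis} to handle $u\nabla\varphi\in L^2(B,w)$, and then the averaged definition of the trace together with the continuity of $\varphi$. The only cosmetic difference is in the trace computation: the paper subtracts $\varphi(x)Tu(x)$ directly and splits with a triangle inequality, while you subtract $\varphi(x)\fint_{B(x,r)}u$ and use the Lipschitz bound on $\varphi$; both lead to the same conclusion with no extra difficulty.
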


\bp 
The function $u$ lies in $L^1_{loc}(\R^n)$ and thus defines
a distribution on $\R^n$ (see Lemma~\ref{W=barW}). Multiplication by smooth functions and (distributional) derivatives are always defined for distributions and, in the sense of distribution, $\nabla (u \varphi) = \varphi \nabla u + u \nabla \varphi$. Let $B \subset \R^n$ be a big ball such that $\supp \, \varphi \subset B$. Then
\begin{equation} \begin{split}
\|u\varphi\|_W & \leq \|\varphi\|_\infty \|\nabla u\|_{L^2(\Omega,w)} + \|\nabla \varphi\|_\infty \Big\|u-\fint_B u\Big\|_{L^2(B,w)} + \|\nabla \varphi\|_\infty \Big\|\fint_B u\Big\|_{L^2(B,w)} \\
& \leq \|\varphi\|_\infty \|\nabla u\|_{L^2(\Omega,w)} + C_B  \|\nabla \varphi\|_\infty \|u\|_{W} + C_B \|\nabla \varphi\|_\infty \|u\|_{L^1(B)}< +\infty
\end{split} \end{equation}
by the Poincar\'e inequality \eqref{1.5-bis}. We deduce $u\varphi \in W$. 

\medskip

Let take a Lebesgue point $x$ satisfying \eqref{eqLeb}. We have
\[\begin{split}
 \fint_{B(x,r)} |u(z)\varphi(z) & - \varphi(x) Tu(x)|\, dz \nn\\
&\leq \fint_{B(x,r)} |u(z)-Tu(x)| |\varphi(z)|\, dz + |Tu(x)| \fint_{B(x,r)} |\varphi(z)-\varphi(x)| \, dz
\\
&\leq \|\varphi\|_\infty \fint_{B(x,r)} |u(z)-Tu(x)|\, dz + |Tu(x)| \fint_{B(x,r)} |\varphi(z)-\varphi(x)|\, dz.
\end{split}\]
The first term of the right-hand side converges to 0 because $x$ is a Lebesgue point. The second term in the right-hand side converges to 0 because $\varphi$ is continuous. The equality \eqref{lmult1} follows.
\ep

\medskip

Let $F$ be a closed set in $\R^n$ and $E = \R^n\setminus F$. 
In the sequel, we let
\begin{equation} \label{Cinftycdef}
C^\infty_c(E) = \big\{ f \in C^\infty(E), \, \exists \epsilon>0 \text{ such that $f(x)=0$ whenever }
\dist(x,F) \leq \varepsilon \big\}
\end{equation}
denote
 the set of functions in $C^\infty(E)$ that equal 0 in a neighborhood of $F$. Furthermore, we use the notation $C^\infty_0(E)$ for the set of functions that are compactly supported in $E$, that is
\begin{equation} \label{Cinfty0def}
C^\infty_0(E) = \{ f \in C^\infty_c(E), \, \exists R>0: \, \supp f \subset B(0,R) \}.
\end{equation}

\medskip

\begin{lemma}\label{ldens0} 
The completion of $C_0^{\infty}(\Omega)$ for the norm $\|.\|_W$ 
is the set
\begin{equation} \label{2.47a}
W_0 = \big\{ u\in W \, ; \, Tu = 0 \big\}
\end{equation}
of \eqref{2.47}.
Moreover, if $u\in W_0$ is supported in a compact subset of the
open ball $B \subset \R^n$, then $u$ can be approximated 
in
the $W$-norm by functions of
$C^\infty_0(B \setminus \Gamma)$.
\end{lemma}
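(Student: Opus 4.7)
\textbf{Plan for the proof of Lemma \ref{ldens0}.}

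The strategy decomposes into three steps: closure direction, a local density result (the ``in particular'' statement), and reduction of the global statement to the local one via a truncation at infinity.

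First, the easy inclusion: any $\varphi \in C_0^\infty(\Omega)$ vanishes on a neighborhood of $\Gamma$, hence the averages $\fint_{B(x,r)} \varphi \, dy$ defining $T\varphi(x)$ in \eqref{defT} vanish for $x \in \Gamma$ and all small $r$, so $T\varphi = 0$ and $\varphi \in W_0$. By Lemma \ref{lcomp2}, $W_0$ is a Hilbert space, hence closed in $W$, so $\overline{C_0^\infty(\Omega)}^{\,\|\cdot\|_W} \subset W_0$.

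Next I prove the local statement. Given $u \in W_0$ with compact support $K \subset B$, I choose a Lipschitz cutoff $\eta_\epsilon$ with $\eta_\epsilon = 0$ on $\{\delta \leq \epsilon\}$, $\eta_\epsilon = 1$ on $\{\delta \geq 2\epsilon\}$, and $|\nabla \eta_\epsilon| \leq 2/\epsilon$; then $u \eta_\epsilon$ is supported in a compact subset of $B \setminus \Gamma$. I claim $u\eta_\epsilon \to u$ in $W$. Expanding the gradient yields two terms. The first, $\int (1-\eta_\epsilon)^2 |\nabla u|^2 w \leq \int_{\{\delta < 2\epsilon\}} |\nabla u|^2 w$, vanishes by dominated convergence. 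The second, $\int |u|^2 |\nabla \eta_\epsilon|^2 w \leq 4\epsilon^{-2} \int_{\{\epsilon < \delta < 2\epsilon\}} |u|^2 w$, is the Hardy-type term. I cover $\{x \in K' : \delta(x) \leq 2\epsilon\}$ by balls $B_i = B(y_i, 3\epsilon)$ with $y_i \in \Gamma$ and $\{B(y_i,\epsilon/5)\}$ disjoint (Vitali), so $\{B_i\}$ has bounded overlap. Applying \eqref{1.5-bisbis} with $p=2$ on each $B_i$ (which is valid since $Tu = 0$ on $\Gamma$) and summing gives
\[
\epsilon^{-2}\int_{\{\delta < 2\epsilon\}} |u|^2 w \,\leq\, C \int_{\{\delta < C\epsilon\}} |\nabla u|^2 w \,\longrightarrow\, 0.
\]
Once $u\eta_\epsilon$ is compactly supported in $B \setminus \Gamma$, Lemma \ref{lconvol} shows $\rho_\eta * (u\eta_\epsilon) \to u\eta_\epsilon$ in $W$, and for $\eta$ small enough $\rho_\eta * (u\eta_\epsilon) \in C_0^\infty(B \setminus \Gamma)$. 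A diagonal extraction concludes the local statement.

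Finally, to derive the global statement from the local one, I truncate at infinity. After translating so that $0 \in \Gamma$, I use a smooth log-type cutoff $\chi_R$ equal to $1$ on $B(0,R)$, to $0$ outside $B(0,R^2)$, with $|\nabla \chi_R| \leq C/(|x| \log R)$. By Lemma \ref{lmult}, $u\chi_R \in W_0$ is compactly supported. For the convergence $u\chi_R \to u$, the term $\int (1-\chi_R)^2 |\nabla u|^2 w$ is the tail of a convergent integral. For $\int |u|^2 |\nabla\chi_R|^2 w$, I split the annulus $\{R \leq |x| \leq R^2\}$ into dyadic shells $A_k$ and apply \eqref{1.5-bisbis} centered at $0 \in \Gamma$ with $r = 2^{k+1}R$, obtaining $\int_{A_k} \frac{|u|^2}{|x|^2} w \leq C\|u\|_W^2$; summing $O(\log R)$ shells gives $\int |u|^2|\nabla\chi_R|^2 w \leq C\|u\|_W^2 / \log R \to 0$. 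The local statement then approximates each $u\chi_R$ by $C_0^\infty(B(0,R^2+1) \setminus \Gamma) \subset C_0^\infty(\Omega)$, and diagonalization finishes the proof.

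The main obstacle I anticipate is the Hardy-type bound at the boundary in Step 2: one must control $\epsilon^{-2}\int_{\{\delta < 2\epsilon\}} |u|^2 w$ by an integral of $|\nabla u|^2 w$ that vanishes with $\epsilon$, and this hinges crucially on having \eqref{1.5-bisbis} available (i.e., a Poincaré inequality honoring the vanishing trace on all of $\Gamma$). The standard $1/R$ cutoff at infinity would require an analogous bound on large annuli and would only yield boundedness, not convergence to zero; this is what forces the logarithmic cutoff in Step 3.
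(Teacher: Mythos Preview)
Your proof is correct and follows essentially the same approach as the paper: a boundary cutoff $\eta_\epsilon$ controlled via the Poincar\'e inequality \eqref{1.5-bisbis} on a bounded-overlap covering by balls centered on $\Gamma$, combined with a logarithmic cutoff at infinity handled by the same inequality on dyadic shells, followed by mollification via Lemma~\ref{lconvol}. The only differences are organizational (you prove the compactly supported case first and then reduce to it, whereas the paper establishes the two cutoff steps separately and then composes them) and in the parameterization of the log cutoff (your $\chi_R$ is the paper's $\phi_{R^2}$).
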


\bp The proof of this result will use two main steps, where 
\begin{enumerate}[(i)]
\item we use cut-off functions $\varphi_r$ to approach any function $u\in W_0$ 
by functions in $W$ that equal 0 on a neighborhood of $\Gamma$;
\item we use cut-off functions $\phi_R$ to approach any function $u \in W_0$ by functions in $W$ 
that are compactly supported in $\R^n$.
\end{enumerate} 

\noindent{\bf Part (i):}  For $r>0$ small, we choose a smooth function $\varphi_r$ 
such that $\varphi(x) = 0$ when $\delta(x) \leq r$, $\varphi(x) = 1$ when $\delta(x) \geq 2 r$, 
$0 \leq \varphi \leq 1$ everywhere, and $|\nabla \varphi(x)| \leq 10 r^{-1}$ everywhere.

Let $u \in W_0$ be given. We want to show that for $r$ small, 
$\varphi_r u$ lies in $W$ and 
\begin{equation}\label{a1}
\lim_{r \to 0} \| u-\varphi_r u \|_W^2 = 0.
\end{equation}
Notice that $\varphi_r u \in L^1_{loc}(\Omega)$, just like $u$, 
and its distribution gradient on $\Omega$ is locally in $L^2$ and given by
\begin{equation} \label{2.47bbis}
\nabla(\varphi_r u)(x) = \varphi_r(x) \nabla u(x) + u(x) \nabla \varphi_r(x).
\end{equation} 
So we just need to show that
\begin{equation} \label{2.48}
\lim_{\begin{subarray}{c} r \to 0\end{subarray}} \int |\nabla(\varphi_r u)(x) -\nabla u(x) |^2 w(x) dx 
= \lim_{r \to 0} \int |u(x) \nabla \varphi_r(x) + (1-\varphi_r(x))\nabla  u(x) |^2 w(x) dx =0.
\end{equation}

Now $\int |\nabla  u(x) |^2 w(x) dx = \|u\|_W^2 < +\infty$, so 
$\int |(1-\varphi_r) \nabla  u(x) |^2 w(x) dx$ tends to $0$, by the dominated convergence theorem,
and
it is enough to show that
\begin{equation} \label{2.49}
\lim_{r \to 0} \int |u(x) \nabla \varphi_r(x)|^2 w(x) dx = 0.
\end{equation} 
Cover $\Gamma$ with balls $B_j$, $j\in J$, of radius $r$, centered on $\Gamma$, 
and such that the $3B_j$ have bounded overlap, and notice that the region where 
$\nabla\varphi_r \neq 0$ is contained in $\cup_{j\in J} 3B_j$. 
In addition, if $x \in 3B_j$ is such that $\nabla\varphi_r \neq 0$, then  
$|\nabla \varphi_r(x)| \leq 10 r^{-1}$, so that
\begin{equation} \label{2.50}
\int_{3B_j}  |u(x) \nabla \varphi_r(x)|^2 w(x) dx
\leq 100 r^{-2} \int_{3B_j}  |u(x)|^2 w(x) dx
\leq C \int_{3B_j}  |\nabla u(x)|^2 w(x) dx,
\end{equation}
where the last part comes from \eqref{1.5-bisbis}, applied with $p=2$
and justified by the fact that $Tu=0$ on the whole $\Gamma$.
We may now sum over $j$. Denote by $A_r$ the union of the $3B_j$; then
\begin{eqnarray} \label{2.52}
\int_\Omega |u(x) \nabla \varphi_r(x)|^2 w(x) dx
&\leq& \sum_{j\in J} \int_{3B_j}  |u(x) \nabla \varphi_r(x)|^2 w(x) dx
\leq C \sum_{j\in J} \int_{3B_j} |\nabla u(x)|^2 w(x) dx
\nn\\
&\leq& C \int_{A_r} |\nabla u(x)|^2 w(x) dx
\end{eqnarray}
because the $3B_j$ have bounded overlap. 
The right-hand side of \eqref{2.52}
tends to $0$, because $\int_\Omega |\nabla u(x)|^2 w(x) dx = \| u \|_W^2< +\infty$ and by the
dominated convergence theorem. The claim \eqref{2.49} follows, 
and so does \eqref{a1}. This completes Part (i).

\medskip

\noindent {\bf Part (ii).} 
By translation invariance, we 
may assume that
$0\in \Gamma$.  Let 
$R$ be a big radius; we want to define a cut-off function $\phi_R$.
 
If we used
the classical cut-off function built as $\bar \phi_R = \bar \phi\left(\frac xR\right)$ with $\bar \phi$ 
supported in $B(0,1)$, the convergence 
would work with the help of Poincar\'e's  inequality on annuli. 
But since we did not prove this inequality, we will proceed differently and use the
`better' cut-off functions defined as follows.
 
Set $\phi_R(x) = \phi\left( \frac{\ln |x|}{\ln R} \right)$,  where $\phi$ is a smooth function 
defined on $[0,+\infty)$, supported in $[0,1]$ and such that $\phi \equiv 1$ on $[0,1/2]$.  
In particular, one can see that $\nabla \phi_R(x) \leq \frac{C}{\ln R} \frac{1}{|x|}$ and that 
$\nabla \phi_R$ is supported in $\{x\in \R^n, \, \sqrt R \leq |x| \leq R\}$. 
We take $\wh u := \phi_R u$ and we want to show that $\wh u \in W$ and $\|u-\wh u\|_W$ is small. 
Notice that $\wh u \in W_0$, by Lemma \ref{lmult}, and in addition $\wh u$ is supported in
$B(0,R)$. We want to show that
\begin{equation}\label{a2}
\lim_{R \to +\infty} \| \wh u- u \|_W^2 = 0.
\end{equation}
But 
$\wh u \in L^1_{loc}(\Omega)$, just like $u$,  and 
its distribution gradient on $\Omega$ is locally in $L^1$ and given by
\begin{equation} \label{2.521}
\nabla\wh u(x) = \phi_R(x) \nabla u(x) + u(x) \nabla \phi_R(x).
\end{equation}
Hence
\begin{eqnarray}\label{2.522}
\| \wh u- u \|_W^2 &=& \int |\nabla \wh u(x) -\nabla u(x) |^2 w(x) dx 
\nn \\
&=& \int |u(x) \nabla \phi_R(x) + (1-\phi_R(x))\nabla  u(x) |^2 w(x) dx.
\end{eqnarray}
Now $\int |\nabla  u(x) |^2 w(x) dx = \|u\|_W^2 < +\infty$, so 
$\int |(1-\phi_R) \nabla  u(x) |^2 w(x) dx$ tends to $0$, by the dominated convergence theorem,
and it is enough to show that
\begin{equation} \label{2.523}
\lim_{R \to +\infty} \int |u(x) \nabla \phi_R(x)|^2 w(x) dx = 0. 
\end{equation}
Let $C_j$ be the annulus
$\{x\in \R^n, \, 2^{j} < |x| \leq 2^{j+1}\}$. The bounds on $\nabla \phi_R$ yield
\begin{equation} \label{2.52b}
\int |u(x) \nabla \phi_R(x)|^2 w(x) dx 
\leq \frac{C}{(\ln R)^2}\sum_{j=0}^{ 1 + \log_2 R}
2^{-2j}\int_{C_j} |u(x)|^2 w(x) dx.
\end{equation}
The integral on the annulus
$C_j$ is smaller than the integral in the ball $B(0,2^{j+1})$. Since 
$u \in W_0$ and
$0\in \Gamma$, \eqref{1.5-bisbis} yields
\begin{equation} \label{2.52c} \begin{split}
\int |u(x) \nabla \phi_R(x)|^2 w(x) dx 
& \leq \frac{C}{(\ln R)^2}\sum_{j=0}^{1 + \log_2 R} \int_{B(0,2^{j+1})} |\nabla u(x)|^2 w(x) dx \\
& \leq \frac{C}{(\ln R)^2} \|u\|_W^2 \sum_{j=0}^{1 + \log_2 R} 1 \leq \frac{C}{|\ln R|}  \|u\|_W. \\
\end{split} \end{equation}
Thus $\int |u(x) \nabla \phi_R(x)|^2 w(x) dx$ converges to 0 as $R$ goes to $+\infty$, which proves \eqref{2.523} and ends Part (ii).

\medskip

We are now ready to prove the lemma.
If $u\in W_0$ and $\varepsilon > 0$ is given, we can find $R$ such that $\| \phi_R u- u \|_W^2 \leq \varepsilon$
(by \eqref{a2}). Notice that $\phi_R u \in W_0$, by Lemma \ref{lmult}, and now 
we can find $r$ such that $\| \varphi_r \phi_R u- \phi_R u \|_W^2 \leq \varepsilon$ (by \eqref{a1}).
In turn $\varphi_r \phi_R u$ is compactly supported away from $\Gamma$,
and we may now use Lemma~\ref{lconvol} to approximate it with smooth functions
with compact support in $\Omega$.
It follows that $W_0$ is included in the completion of $C^\infty_0(\Omega)$. Since $W_0$ is complete (see Lemma~\ref{lcomp2}), the reverse inclusion is immediate.

For the second part of the lemma, we are given $u\in W_0$ with a compact support inside $B$, 
we can use Part (i) to approximate it by some $\varphi_r u$ with a compact support inside $B$.
A convolution as in Lemma~\ref{lconvol} then makes it smooth without destroying the support property;
Lemma \ref{ldens0} follows.
\ep

\begin{remark} \label{rdens0} We don't know how to prove 
exactly the same result for the spaces $W_{0,B}$ of \eqref{2.47b}. 
However, we have the following weaker result. Let $B\subset \R^n$ be a ball and $B_\frac12$ denotes the ball with same center as $B$ but half its radius. For any function $u\in W_{0,B}$, there exists a sequence $(u_k)_{k\in \bN}$ of functions in $C^\infty_c(\R^n \setminus \overline{B_\frac12 \cap \Gamma})$ such that $\|u_k - u\|_W$ converges to 0. 

\medskip

Indeed, take $\eta \in C^\infty_0(B)$ such that $\eta = 1$ on $B_\frac34$. Write $u = \eta u + (1-\eta) u$; 
it is enough to prove that both $\eta u$ and $(1-\eta)u$ can be approximated by functions in 
$C^\infty_c(\R^n \setminus \overline{B_\frac12 \cap \Gamma})$. 
Notice first that $\eta u \in W_0$ and thus can be approximated by functions in 
$C^\infty_0(\Omega) \subset C^\infty_c(\R^n \setminus \overline{B_\frac12 \cap \Gamma})$, 
according to Lemma~\ref{ldens0}. Besides, $(1-\eta)u$ is supported outside of $B_\frac34$ and thus, 
if $\epsilon$ is smaller than a quarter of the radius of $B$, then the functions $\rho_\epsilon * [(1-\eta)u]$ are in 
$C^\infty_c(\R^n \setminus \overline{B_\frac12}) \subset  C^\infty_c(\R^n \setminus \overline{B_\frac12 \cap \Gamma})$. Lemma~\ref{lconvol} gives then that the family $\rho_\epsilon * [(1-\eta)u]$ approaches $(1-\eta)u$ as $\epsilon$ goes to 0.
\end{remark}

\ms
Next we worry about the completion of $C^\infty_0(\R^n)$ for the norm $\|.\|_W$.
We start with the case when $d>1$; when $0 < d \leq 1$, things are a little different and 
they will be discussed in Lemma~\ref{ldensdleq1}.

\begin{lemma}\label{ldensd>1}
Let $d>1$. Choose $x_0 \in \Gamma$ and write $B_j$ for $B(x_0,2^j)$. 
Then for any $u\in W$
\begin{equation} \label{d>1a}
u^0:= \lim_{j\to +\infty} \fint_{B_j} u(z)\, dz \text{ exists and is finite.}
\end{equation}

The completion of $C^\infty_0(\R^n)$ for the norm $\|.\|_W$ can be identified to a subspace of $L^1_{loc}(\R^n)$, which is
\begin{equation} \label{d>1b}
W^0 = \{u\in W, \, u^0 = 0\}.
\end{equation}
\end{lemma}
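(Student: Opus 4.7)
The plan is to proceed in three steps: first establish existence of $u^0$ via a telescoping argument based on the Sobolev-Poincar\'e inequality; then approximate each element of $W^0$ by functions in $C^\infty_0(\R^n)$; finally identify the abstract completion with $W^0$ as a subspace of $L^1_{loc}(\R^n)$.

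For \eqref{d>1a}, I would show that $(u_{B_j})_{j\geq 0}$ is Cauchy in $\R$. Setting $u_{B_j} = \fint_{B_j} u$, the elementary inequality $|u_{B_j} - u_{B_{j+1}}| \leq 2^n \fint_{B_{j+1}} |u - u_{B_{j+1}}|\,dz$ combined with \eqref{L1byL1w} and the Sobolev-Poincar\'e inequality \eqref{1.5-bis} with $p=1$ gives a bound of $C\, 2^{j+1} m(B_{j+1})^{-1/2} \|u\|_W$. Since $x_0 \in \Gamma$, \eqref{ISob8b} yields $m(B_{j+1}) \approx 2^{(j+1)(d+1)}$, so this becomes $C\, 2^{-(j+1)(d-1)/2} \|u\|_W$, which is summable precisely because $d > 1$. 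This proves the existence of $u^0$ together with the quantitative estimate $|u_{B_j} - u^0| \leq C\, 2^{-j(d-1)/2} \|u\|_W$; in particular, if $u^0 = 0$, then $|u_{B_j}| \leq C\, 2^{-j(d-1)/2} \|u\|_W$.

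For the approximation of $u \in W^0$, I would use the logarithmic cutoff $\phi_R(x) = \phi(\ln|x-x_0|/\ln R)$ from Part (ii) of the proof of Lemma~\ref{ldens0}, and set $u_R = \phi_R u$. By Lemma~\ref{lmult}, $u_R \in W$ has compact support, and $\|u - u_R\|_W^2$ is bounded by $\|(1-\phi_R)\nabla u\|_{L^2(\Omega,w)}^2 + \|u\nabla\phi_R\|_{L^2(\Omega,w)}^2$. The first term tends to zero by dominated convergence. For the second, applying \eqref{1.5-bis} with $p=2$ gives $\int_{B_j}|u-u_{B_j}|^2 w \leq C\, 2^{2j}\|u\|_W^2$, while $|u_{B_j}|^2 m(B_j) \leq C\, 2^{-j(d-1)}\|u\|_W^2 \cdot 2^{j(d+1)} = C\, 2^{2j}\|u\|_W^2$ from the previous step (using $u^0 = 0$); hence $\int_{B_j} |u|^2 w \leq C\, 2^{2j}\|u\|_W^2$. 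Combined with $|\nabla\phi_R| \leq C/(|x-x_0|\ln R)$ on the annulus $\sqrt R \leq |x-x_0| \leq R$, the standard dyadic decomposition yields $\|u\nabla\phi_R\|_{L^2(\Omega,w)}^2 \leq C(\ln R)^{-1}\|u\|_W^2 \to 0$. Since $\phi_R u = u$ on $B_{(\log_2 R)/2}$, convergence also holds in $L^1_{loc}(\R^n)$. A convolution via Lemma~\ref{lconvol} then produces smooth approximants in $C^\infty_0(\R^n)$, and a diagonal argument gives a sequence converging to $u$ in both $W$ and $L^1_{loc}(\R^n)$.

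For the identification, given a Cauchy sequence $\{u_k\} \subset C^\infty_0(\R^n)$ in $W$, Lemma~\ref{lcomp} provides $\bar u \in W$ and constants $c_k$ with $u_k - c_k \to \bar u$ in $L^1_{loc}(\R^n)$ and $\|u_k - \bar u\|_W \to 0$. Each $u_k$ has compact support so $u_k^0 = 0$, and the quantitative bound gives $|(u_k)_{B_j}| \leq C\, 2^{-j(d-1)/2} \sup_k \|u_k\|_W$ uniformly in $k$. Sending first $k\to \infty$ and then $j \to \infty$ in $(u_k)_{B_j} - c_k = \fint_{B_j}(u_k - c_k)$ shows $c_k \to -\bar u^0$, so $u_k$ itself converges in $L^1_{loc}(\R^n)$ to $u := \bar u - \bar u^0$, which lies in $W^0$ since $u^0 = \bar u^0 - \bar u^0 = 0$. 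This defines an injective map from the completion into $W^0$ (injectivity: if $\|u_k - v_k\|_W \to 0$ for two such sequences, applying the same analysis to $w_k = u_k - v_k$ shows both $L^1_{loc}$ limits coincide), and surjectivity is exactly the approximation from the previous step. The main obstacle is the cutoff estimate, where the hypothesis $d > 1$ is essential both for summability of the telescoping estimate and for the logarithmic decay $\|u \nabla \phi_R\|_{L^2(\Omega,w)}^2 \leq C/\ln R$.
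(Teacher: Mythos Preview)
Your proposal is correct and follows essentially the same approach as the paper: the telescoping estimate for $u^0$ via the Sobolev--Poincar\'e inequality and \eqref{ISob8b}, and the approximation by $\phi_R u$ with the logarithmic cutoff are identical to the paper's argument (see \eqref{lD1}--\eqref{lD3} and \eqref{lD5}--\eqref{lD7}). The only organizational difference is in the identification step: the paper proves directly that $(W^0,\|\cdot\|_W)$ is complete using the quantitative bound \eqref{lD3c}, whereas you invoke Lemma~\ref{lcomp} and then show the constants $c_k$ converge via the same quantitative bound---this is a cosmetic repackaging of the same idea.
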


\ms
\begin{remark} 
Since $C_0^\infty(\Omega) \subset C_0^\infty(\R^n)$, Lemmata \ref{ldens0} and \ref{ldensd>1} imply that $W_0 \subset W^0$. In particular, 
we get that 
\begin{equation}\label{a3}
\lim_{j\to +\infty}  \fint_{B_j} u(z)\, dz = 0 \ \ \text{ for }  u\in W_0.
\end{equation}
\end{remark}

\begin{remark}
Since the completion of $C^\infty_0(\R^n)$ doesn't depend on our choice of $x_0$, the value $u^0$ doesn't depend on $x_0$ 
either. Similarly, with a small modification in the proof, we could replace $(2^j)$ with
any other sequence that tends to $+\infty$.
\end{remark}

\begin{remark}
The lemma immediately implies the following result: for any $u\in W$, $u-u^0 \in W^0$ and thus can be approximated in $L^1_{loc}(\R^n)$ and in the $W$-norm by function in $C^\infty_0(\R^n)$.
\end{remark}

\bp Let $d>1$ and choose $u\in W$. Let us first prove that $u^0$ is well defined. By translation invariance, we can choose $x_0=0$, that is $B_j = B(0,2^j)$. For $j\in \bN$, set $u_j = \fint_{B_j} f(z)\, dz$ and $V_j = \int_{B_j} dm$. The bounds \eqref{ISob8b} give that $V_j$ is equivalent to $2^{j(1+d)}$ and \eqref{ldoub8} gives that for any $z\in B_j$, $\frac{V_j}{|B_j|} \leq C w(z)$. 
Then by Lemma~\ref{lSob}
\begin{equation} \label{lD1} \begin{split}
|u_{j+1} - u_j| & \leq C \fint_{B_{j+1}} |u(z) - u_{j+1}|\, dz \leq C V_{j+1}^{-1} \int_{B_{j+1}} |u(z)-u_{j+1}| dm(z) \\
& \leq C 2^{j(1-\frac{d+1}{2})} \left( \int_{B_{j+1}} |\nabla u(z)|^2 dm(z) \right)^\frac12 \leq C2^{j\frac{1-d}{2}} \|u\|_{W}.
\end{split} \end{equation}
Since $d > 1$, $(u_j)_{j\in \bN}$ is a Cauchy sequence and converges to some value
\begin{equation} \label{lD2} 
u^0 = \lim_{j\to +\infty} u_j.
 \end{equation}
 Moreover \eqref{lD1} also entails
\begin{equation} \label{lD3} 
|u_j - u^0| \leq C 2^{j\frac{1-d}{2}} \|u\|_W.
 \end{equation}
Let us prove additional properties on $u^0$. Set $v= |u|$. 
Notice that
\begin{equation} \label{lD3a} 
|u_j| \leq v_j := \fint_{B_j} |u(z)|\, dz \leq |u_j| + \fint_{B_j} |u(z)-u_j|\, dz \leq |u_j| + C 2^{j\frac{1-d}{2}} \|u\|_W,
 \end{equation}
where the last inequality 
follows from \eqref{lD1} (with $j-1$).
As a consequence, for any $j\geq 1$, $ |v_j -  |u_j|| \leq C 2^{j\frac{1-d}{2}} \|u\|_W$ and by taking 
the limit as $j\to +\infty$,
\begin{equation} \label{lD3b} 
|u^0| = \lim_{j\to +\infty} \fint_{B_j} |u(z)|\, dz.
 \end{equation}
In addition,
\begin{equation} \label{lD3c} \begin{split}
\left|\fint_{B_j} |u(z)|\, dz - |u^0| \right|&  \leq |v_j - |u_j|| + ||u_j|- |u^0|| \leq |v_j - |u_j|| + |u_j- u^0| 
\leq C 2^{j\frac{1-d}{2}} \|u\|_W.
\end{split}  \end{equation}

\medskip

Let us show that $\|.\|_W$ is a norm for $W^0$. Let $u\in W^0$ be such that $\|u\|_W = 0$, 
then since $W^0 \subset W$, $u \equiv c$ is a constant function. Yet, observe that in this case, $u^0 = c$. 
The assumption $u\in W^0$ forces $u \equiv c \equiv 0$, that is $\|.\|_W$ is a norm on $W^0$.  

We now prove
that $(W^0,\|.\|_W)$ is complete. Let $(v_k)_{k\in \bN}$ be a Cauchy sequence in $W^0$. 
Since $(v_k-v_l)^0 = 0$, we deduce from \eqref{lD3c} that 
for $j\geq 1$ and $k,l\in \bN$,
\begin{equation} \label{lD3d} 
\fint_{B_j} |v_k(z) - v_l(z)|\, dz \leq C 2^{j\frac{1-d}{2}} \|v_k - v_l\|_W.
\end{equation}
Consequently, $(v_k)_{k\in \bN}$ is a Cauchy sequence in $L^1_{loc}$ and thus
there exists $u\in L^1_{loc}(\R^n)$ such that $v_k \to u$ in $L^1_{loc}(\R^n)$. 
Since $(\nabla v_k)_{k\in \bN}$ is also a Cauchy sequence in $L^2(\Omega,w)$, 
there exists $V \in L^2(\Omega,w)$ such that $\nabla v_k \to V$ in $\in L^2(\Omega,w)$. 
It follows that $v_k$ and $\nabla v_k$ converge in the sense of distribution to respectively $u$ and $V$, 
thus $u$ has a distributional derivative in $\Omega$ and $\nabla u$ equals $V \in L^2(\Omega,w)$. 
In particular $u\in W$. It remains to check that $u^0 = 0$. Yet, 
notice that
\begin{equation} \label{lD3e} 
|u^0| \leq \left|u^0 - \int_{B_j} u(z)\, dz\right| + \left|\int_{B_j} (u-v_k)(z)\, dz \right| + \left|\int_{B_j} v_k(z)\, dz\right|.
\end{equation}
The first term and the third term in the right-hand side are bounded by $C2^{j\frac{1-d}2} \|u\|_W$ and $C2^{j\frac{1-d}2} \|u_k\|_W$ respectively (thanks to \eqref{lD3}), the second by $C2^{j\frac{1-d}2} \|u-u_k\|_W$ (because of \eqref{lD3d}). By taking $k$ and $j$ big enough, we can make the right-hand side of \eqref{lD3e} as small as we want. It follows that $u^0 = |u^0| = 0$ and $u\in W^0$. The completeness of $W^0$ follows.

\medskip

It remains to check that the completion of $C^\infty_0(\R^n)$ is $W^0$. 
However, it is easy to see that
any function $u$ in $C^\infty_0(\R^n)$ satisfies $u^0 = 0$ and thus lies in
$W^0$. Together with the fact that $W^0$ is complete, we deduce that the completion of $C^\infty_0(\R^n)$ with the norm $\|.\|_W$ is included in $W^0$. The converse inclusion will hold once we establish that any function in $W^0$ can be approached in the $W$-norm by functions in $C^\infty_0(\R^n)$. Besides, thanks to Lemma~\ref{lconvol}, it is enough to prove that $u\in W^0$ can be approximated by functions in $W$ that are compactly supported in $\R^n$.

Fix $\phi \in C^\infty((-\infty,+\infty))$ such that $\phi \equiv 1$ on $(-\infty,1/2]$,
$\phi\equiv 0$ on $[1,+\infty)$. For $R>0$ define $\phi_R$ 
by
$\phi_R(x) = \phi(\ln|x|/\ln R)$. 
Observe that
that $\phi_R(x) \equiv 1$ if $|x|\leq \sqrt R$, $\phi_R(x) \equiv 0$ if $|x| \geq R$ and, for any $x\in \R^n$,
\begin{equation} \label{lD4} 
|\nabla \phi_R(x)| \leq \frac{C}{\ln R} \frac1{|x|}.
\end{equation}
The approximating functions will be the
$\phi_R u$, which are compactly supported in $\R^n$. 
Now
\begin{equation} \label{lD5} \begin{split}
\|u\phi_R - u\|_W^2 & = \|u(1-\phi_R)\|_W^2 \\
& \leq \int_{\Omega} (1-\phi_R(z))^2 |\nabla u(z)|^2 dm(z) + \int_\Omega |u(z)|^2 |\nabla \phi_R(z)|^2 dm(z) \\
& \leq \int_{|z|\geq \sqrt R} |\nabla u(z)|^2 dm(z) + \int_\Omega |u(z)|^2 |\nabla \phi_R(z)|^2 dm(z). \\
\end{split} \end{equation}
By the dominated convergence theorem, the first term of the right-hand side above converges to $0$ as $R$ 
goes to $+\infty$. It remains to check that the second term
also tends to $0$.
Set $C_j = B_{j} \setminus B_{j-1}$. We have if $R>1$,
\begin{equation} \label{lD6} \begin{split}
\int_\Omega |u(z)|^2 |\nabla \phi_R(z)|^2 dm(z) & \leq \frac{C}{|\ln R|^2} \int_{\sqrt R < |z| < R} \frac{|u(z)|^2}{|z|^2} dm(z) \\
& \leq \frac{C}{|\ln R|^2} \sum_{j=0}^{\log_2 R+1} 2^{-2j} \int_{C_j} |u(z) |^2 dm(z) \\
& \leq \frac{C}{|\ln R|^2} \sum_{j=0}^{\log_2 R+1} 2^{-2j} \int_{B_j} |u(z)|^2 dm(z) \\
& \leq \frac{C}{|\ln R|^2} \sum_{j=0}^{\log_2 R+1} 2^{-2j} \left( \int_{B_j} |u(z) - u_j|^2 dm(z) + V_j |u_j|^2 \right). \\
\end{split} \end{equation}
Lemma~\ref{lSob} gives that $ \int_{B_j} |u(z) - u_j|^2 dm(z)$ is bounded, up to a harmless constant, 
by $2^{2j} \int_{B_j} |\nabla u(z)|^2dm(z) \leq 2^{2j} \|u\|_W^2$. 
In addition, $V_j = m(B_j)$ is bounded by $C2^{j(1+d)}$ 
because of
\eqref{ISob8b} and we get 
that $|u_j|^2 \leq 2^{j(1-d)} \|u\|_W$, by \eqref{lD3}. Hence 
\begin{equation} \label{lD7} \begin{split}
\int_\Omega 
|u(z)|^2 |\nabla \phi_R(z)|^2 dm(z) 
& \leq \frac{C}{|\ln R|^2} \|u\|_W^2 \sum_{j=0}^{\log_2 R+1} 2^{-2j}\left( 2^{2j} + 2^{j(d+1)} 
2^{j(1-d)}\right)
\\& \leq  \frac{C}{|\ln R|^2} \|u\|_W^2 \sum_{j=0}^{\log_2 R+1} 1 \leq \frac{C}{|\ln R|} \|u\|_W^2,\\
\end{split} \end{equation}
which converges to 0 as $R$ goes to $+\infty$. This concludes the proof of Lemma \ref{ldensd>1}.
\ep 

As we shall see now, the situation in low dimensions is different, essentially because when $d\leq 1$,
the constant function $1$ can be approximated by functions of $C^\infty_0(\R^n)$.

\begin{lemma} \label{ldensdleq1}
Let $d\leq 1$. For any function $u$ in $W$, we can find a sequence of functions $(u_k)_{k\in \bN}$ in $C^\infty_0(\R^n)$ such that $u_k$ converges, 
in $L^1_{loc}(\R^n)$ and
and for the semi-norm $\|.\|_W$, to $u$. 
\end{lemma}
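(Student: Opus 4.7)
\medskip
\noindent\textbf{Plan.} The proof follows the strategy of Lemma~\ref{ldensd>1}, but substitutes for the role of the ``limit at infinity'' $u^0$ (which need no longer exist when $d \le 1$) the observation that, when $d \le 1$, the constant function $1$ itself can be approximated by $C_0^\infty(\R^n)$ functions in the $W$-seminorm and in $L^1_{loc}(\R^n)$. Once this is established, the approximation of a general $u\in W$ splits into a Lipschitz truncation (reducing to bounded $u$), a multiplicative cutoff, and a mollification.

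\medskip
\noindent\textbf{Step 1 (approximating $1$).} Translate so that $0 \in \Gamma$. Pick $\psi \in C^\infty([0,\infty))$ with $\psi \equiv 1$ on $[0,\tfrac{1}{2}]$ and $\psi \equiv 0$ on $[1,\infty)$, and define $\psi_R(x) := \psi(\ln|x|/\ln R)$ for $R \ge 2$. Then $\psi_R \in C_0^\infty(\R^n)$, $\psi_R \to 1$ in $L^1_{loc}(\R^n)$, and $|\nabla \psi_R(x)| \le C (\ln R)^{-1} |x|^{-1}$ is supported in $\{\sqrt R \le |x| \le R\}$. Decomposing this annulus dyadically and using the bound $m(B(0,2^{j+1})) \le C\, 2^{j(d+1)}$ from \eqref{ISob8b} yields
\[
\|\psi_R - 1\|_W^2 = \int_{\R^n} |\nabla \psi_R|^2\, w\, dx \le \frac{C}{(\ln R)^2} \sum_{\frac12 \log_2 R \le j \le \log_2 R + 1} 2^{j(d-1)},
\]
which is $O(1/\ln R)$ when $d = 1$ and $O(R^{(d-1)/2}/(\ln R)^2)$ when $d < 1$; in either case $\|\psi_R - 1\|_W \to 0$.

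\medskip
\noindent\textbf{Step 2 (truncation and cutoff).} For $u \in W$ and $N \ge 1$, the Lipschitz truncation $u_N := \max(-N,\min(N,u))$ lies in $W$ with $\nabla u_N = \mathbf{1}_{\{|u|<N\}} \nabla u$ a.e.\ (via the ACL characterization of Lemma~\ref{eqACL} and Lemma~\ref{W=barW} applied to $u_N$). Dominated convergence then gives $\|u - u_N\|_W^2 = \int_{\{|u|\ge N\}}|\nabla u|^2 w\,dx \to 0$ and $u_N \to u$ in $L^1_{loc}(\R^n)$, so it suffices to approximate bounded $u \in W$. For such $u$, set $v_R := \psi_R u$; by Lemma~\ref{lmult} we have $v_R \in W$, and $v_R$ is compactly supported with $v_R \to u$ in $L^1_{loc}(\R^n)$. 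From $\nabla[(1-\psi_R)u] = (1-\psi_R)\nabla u - u\,\nabla\psi_R$,
\[
\|u - v_R\|_W \le \|(1-\psi_R)\nabla u\|_{L^2(\Omega,w)} + \|u\|_\infty\,\|\psi_R - 1\|_W,
\]
with the first term tending to $0$ by dominated convergence and the second by Step~1. A final mollification of each $v_R$ via Lemma~\ref{lconvol}, followed by a diagonal extraction, yields the desired sequence $(u_k) \subset C_0^\infty(\R^n)$.

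\medskip
\noindent\textbf{Main obstacle.} The argument is conceptually straightforward once Step~1 is in hand, and the hypothesis $d \le 1$ enters exactly through the summability of $\sum_j 2^{j(d-1)}$ in that step. For $d > 1$ this sum diverges (it is dominated by its largest term, of order $R^{d-1}$), reflecting the fact that $1 \notin \overline{C_0^\infty(\R^n)}^{\,\|\cdot\|_W}$ in that case --- which is precisely why Lemma~\ref{ldensd>1} had to be formulated via the functional $u^0$.
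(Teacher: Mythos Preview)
Your proof is correct and takes a genuinely different route from the paper's.

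The paper also begins by approximating the constant $1$ by cutoffs, but uses the double-logarithm cutoff $\psi_R(x)=\phi(\ln\ln|x|/\ln\ln R)$ rather than your single-logarithm one. For the purpose of Step~1 alone your single-log cutoff is already sufficient (the sum $\sum_j 2^{j(d-1)}$ over $\frac12\log_2 R\le j\le \log_2 R$ is $O(\ln R)$ when $d=1$, giving $\|\psi_R-1\|_W^2=O(1/\ln R)$), so your Step~1 is a slight simplification. The real divergence is in Step~2. The paper does \emph{not} truncate: instead it subtracts the constant $u^0=\fint_{B_0}u$ and then must control $\|u\nabla\psi_R\|_{L^2(w)}$ for an unbounded $u$. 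This forces a delicate Poincar\'e argument on chains of off-center balls $D_j^\xi=B(2^j\xi,\tfrac9{10}2^j)$ with bounded overlap, yielding $|m_j-u^0|^2\le C(j+1)2^{j(1-d)}\|u\|_W^2$; the extra factor $(j+1)$ is exactly what makes the double-log cutoff (with its additional $|x|^{-1}(\ln|x|)^{-1}$ decay) necessary in the paper's approach when $d=1$. Your Lipschitz truncation $u_N=\max(-N,\min(N,u))$ sidesteps all of this: once $u$ is bounded, the crude estimate $\|u\nabla\psi_R\|_{L^2(w)}\le\|u\|_\infty\|\psi_R-1\|_W$ reduces Step~2 directly to Step~1, and the single-log cutoff suffices throughout. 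Your argument is more elementary and self-contained; the paper's has the mild advantage of treating $d\le 1$ in closer parallel with the $d>1$ case of Lemma~\ref{ldensd>1} (same ``subtract a constant, then cut off'' template), at the cost of the off-center Poincar\'e machinery.

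One small remark: your $\psi$ is declared on $[0,\infty)$ but evaluated at $\ln|x|/\ln R$, which is negative for $|x|<1$; just extend $\psi$ by $1$ on $(-\infty,\tfrac12]$. Also, the truncation identity $\nabla u_N=\mathbf 1_{\{|u|<N\}}\nabla u$ that you derive from the ACL characterization is exactly the content of the later Lemma~\ref{lcompo}(b), but your direct justification via Lemma~\ref{eqACL} and Lemma~\ref{W=barW} avoids any forward reference.
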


\begin{remark}
The fact that the function $1$ can be approached  with the norm $\|.\|_W$ by functions in $C_0^\infty$ means that the completion of $C_0^\infty$ with the norm $\|.\|_W$ is not a space of distributions.

We can legitimately say that the completion of $C_0^\infty$ is embedded into the space of distributions $D'=(C_0^\infty)' \supset L^1_{loc}$ if the convergence $u_k \in C_0^\infty \subset L^1_{loc}$ to $u \in W \subset L^1_{loc}$ in the norm $\|.\|_W$ implies, for $\varphi\in C_0^\infty$, that $\int u_k \varphi$ tends to $\int u \varphi$. 
Take $u_k \in C^\infty_0(\R^n)$ such that $u_k$ tends to 1 in $L^1_{loc}(\R^n)$ and $W$. Then since $\|.\|_W$ doesn't see the constants, $u_k$ tends to 0 in $W$; but the convergence of $u_k$ to 1 in $L^1_{loc}(\R^n)$ implies that $\int u_k \varphi$ tends to $\int \varphi \neq 0$ for some function $\varphi \in C^\infty_0(\R^n)$.
\end{remark}

\bp 
As before, we may assume that $0\in \Gamma$. Let us first prove that for $d \leq 1$,
the constant function $1$ 
(and thus any constant function) is the limit in $W$ and $L^1_{loc}(\R^n)$ of 
test functions. 

Choose $\phi \in C^\infty([0,+\infty))$ such that $\phi \equiv 1$ on $[0,1/2]$ and
$\phi\equiv 0$ on $[1,+\infty)$.
For $R>1$, define $\psi_R$ as $\psi_R(x) = \phi(\ln \ln |x|/\ln \ln R)$ if $|x|>1$ and $\psi_R(x) =1$ if $|x|\leq 1$. 
This cut-off function is famous for being used by Sobolev, and is useful to handle the critical case (that is, for us, $d=1$). It can be avoided if $d<1$ but we didn't want to separate the cases $d<1$ and $d=1$.
Let us return to the proof of the lemma. We have: $\psi_R(x) \equiv 1$ if $|x|\leq \exp(\sqrt{\ln R})$, $\psi_R(x) \equiv 0$ if $|x| \geq R$ and for any $x\in \R^n$ satisfying $|x|>1$,
\begin{equation} \label{lD8} 
|\nabla \psi_R(x)| \leq \frac{C}{\ln \ln R} \frac1{|x|\ln |x|}.
\end{equation}
It is easy to see that $\psi_R$ converges to 1 in $L^1_{loc}(\R^n)$ as $R$ goes to $+\infty$. We claim
that
\begin{equation} \label{lD9} 
\|\psi_R\|_W \text{ converges to 0 as $R$ goes to $+\infty$}.
\end{equation}
Let us prove \eqref{lD9}. 
As in Lemma~\ref{ldensd>1}, we write $B_j$ for $B(0,2^j)$ and $C_j$ for $B_{j} \setminus B_{j-1}$. 
Then for $R$ large,
\begin{equation} \label{lD10} \begin{split}
\|\psi_R\|_W^2 & \leq \frac{C}{|\ln \ln R|^2} 
\int_{2< |z| \leq R}
\frac1{|z|^2|\ln |z||^2} dm(z) \\ 
& \leq  \frac{C}{|\ln \ln R|^2}  
\sum_{j=1}^{+\infty}
2^{-2j} |\ln 2^{j}|^{-2} \int_{C_j} dm(z) \\
& \leq  \frac{C}{|\ln \ln R|^2}  
\sum_{j=1}^{+\infty}
\frac{1}{j^2} 2^{-2j} 2^{j(d+1)} \leq \frac{C}{|\ln \ln R|^2}. \\
\end{split} \end{equation}
Our claim follows,
and it implies that $\|1-\psi_R\|_W$ tends to $0$.

\medskip
We will prove now that any function in $W$ can be approached by functions in $C^\infty_0(\R^n)$. 
Let $u\in W$ be given. Let $u^0 = \fint_{B_0} u$ denote the average of $u$ on the unit ball.
We have just seen how to approximate $u_0$ by test functions, so it will be enough to show that
$u-u^0$ can be approached by test functions. 

For this we shall proceed as in Lemma~\ref{ldensd>1}. 
We shall use the product $\psi_R (u-u^0)$, where $\psi_R$ is the same cut-off function as above, 
and prove that $\psi_R (u-u^0)$ lies in $W$ and 
\begin{equation}\label{a7}
\lim_{R \to +\infty} \| (u-u_0) \psi_R \|_{W} = 0.
\end{equation}
Notice that $\psi_R (u-u^0)$ is compactly supported, and converges (pointwise and in $L^1_{loc}$)
to $u-u_0$. Thus, as soon as we prove \eqref{a7}, Lemma \ref{lconvol} will allow us to approximate 
$(u-u_0) \psi_R$ by smooth, compactly supported functions, and the desired approximation result will follow.

As for the proof of \eqref{a7}, of course we shall use Poincar\'e's inequality, and the 
the key point will be to get proper bounds on differences of averages of $u$.
These will not be as good as before, because now $d \leq 1$, and instead of working directly
on the balls $B_j$ we shall use strings of balls $D_j$ that do not contain the origin, so that their overlap 
is smaller.

Fix any unit vector $\xi \in \d B(0,1)$, and consider the balls
\begin{equation}\label{a5}
D = D^\xi = B(\xi,9/10) \ \text{ and, for $j \in \NN$, } \ 
D_j = D_j^\xi = B(2^j \xi,\frac{9}{10} 2^j).
\end{equation}
We will later use the $D_j^\xi$ to cover our usual annuli $C_j$, but in the mean time we fix $\xi$ and want
estimates on the numbers $m_j = \fint_{D_j} u_j$.

The Poincar\'e inequality \eqref{1.5-bis}, applied with with $p=1$, yields 
\begin{equation} \label{lD13} 
m(D_j)^{-1} \int_{D_j} |u - m_j| dm
\leq C 2^{j} \left( m(D_j)^{-1} \int_{D_j}  |\nabla u(z)|^2 dm \right)^\frac12. 
\end{equation}
Of course we have a similar estimate on $D_{j+1}$; observe also that
$D_j \cap D_{j+1}$ contains a ball $D'_j$ of radius $2^{j-2}$ (we may even take it centered at $2^j \xi$);
then
\begin{eqnarray}\label{71}
|m_j-m_{j+1}| &=& m(D'_j)^{-1} \int_{D'_j} |m_j -m_{j+1}| dm
\nn\\
&\leq& m(D'_j)^{-1} \int_{D'_j} (|u - m_j| + |u - m_{j+1}|) dm
\nn\\
&\leq& C m(D_j)^{-1} \int_{D_j} |u - m_j| dm + C m(D_{j+1})^{-1} \int_{D_{j+1}} |u - m_j| dm
\\
&\leq& C 2^{j} \left( m(D_j)^{-1} \int_{D_j \cup D_{j+1}}  |\nabla u(z)|^2 dm \right)^\frac12
\nn
\end{eqnarray}
because $m(D_j) \leq C m(D'_j)$ (and similarly for $m(D_{j+1})$), since the measure $m$ is doubling
by \eqref{doublinggen}. 
By \eqref{ISob8b}, $m(D_j) \geq C^{-1} 2^{j(d+1)}$, so \eqref{71} yields
\begin{equation}\label{72}
|m_j-m_{j+1}| \leq C 2^{-j(d-1)/2} \left(\int_{D_j \cup D_{j+1}}  |\nabla u(z)|^2 dm(z) \right)^\frac12
\end{equation}
The same estimate, run with $B_0 = B(0,1)$ and $D_0$ whose intersection also contains a large ball, yields
\begin{equation}\label{73}
|u^0-m_0| \leq C \left( \int_{B_0 \cup D_{0}}  |\nabla u(z)|^2 dm(z) \right)^\frac12
\leq C \|u\|_W.
\end{equation}
With $\xi$ fixed, the various $D_j \cup D_{j+1}$ have bounded overlap; thus by \eqref{72}
and Cauchy-Schwarz,
\begin{equation} \label{74}
\begin{split}
\left| m_{j+1} -m_0 \right|^2 
&\leq C \left(\sum_{i=0}^j 2^{-i(d-1)/2} \|\nabla u\|_{L^2(D_j\cup D_{j+1},w)} \right)^2 
\\ &\leq C(j+1) \sum_{i=0}^j 2^{i(1-d)} \|\nabla u\|_{L^2(D_j\cup D_{j+1},w)}^2 
\leq C(j+1) 2^{j(1-d)} \|u\|_W^2.
\end{split} \end{equation}
Here we used our assumption that $d \leq 1$, and we are happy about our trick with
the bounded overlap because a more brutal estimate would lead to a factor $(j+1)^2$
that would hurt us soon. Anyway, we add \eqref{73} and get that for $j \geq 0$,
\begin{equation}\label{75}
\left| m_j - u^0 \right|^2 \leq C(j+1) 2^{j(1-d)} \|u\|_W^2.
\end{equation}

We are now ready to prove \eqref{a7}. Since the first part of the proof gives that $\|u^0\psi_R\|_W$ tends to 0, 
we shall assume that $u^0 = 0$ to simplify the estimates. By Lemma \ref{lmult}, 
$(u-u_0) \psi_R = u \psi_R$ lies in $W$ and its gradient is $u \nabla \psi_R + \psi_R \nabla u$.
So we just need to show that when $R$ tends to $+\infty$,
\begin{equation} \label{lD16} 
\|u\psi_R - u\|_W \leq  \|(1-\psi_R) \nabla u\|_{L^2(\Omega,w)} + \|u\nabla \psi_R\|_{L^2(\Omega,w)}
\end{equation}
tends to $0$. The first term of the right-hand side converges to 0 as $R$ goes to $+\infty$,
thanks to the dominated convergence theorem, and for the second term we use \eqref{lD8} and the fact that
$\nabla \psi_R$ is supported in the region $Z_R$ where $\exp(\sqrt{\ln R}) \leq |x| \leq R$. Thus
\begin{equation} \label{lD17} 
\| u \nabla \psi_R\|_{L^2(\Omega,w)}^2 = \int_{\R^n} |u(z)|^2 |\nabla \psi_R(z)|^2 dm(z) 
\leq \frac{C}{|\ln \ln R|^2} \int_{Z_R} \frac{|u(z)|^2}{|z|^2 (\ln|z|)^2} dm(z) 
\end{equation}
As usual, we cut $Z_R$ into annular subregions $C_j$, and then further into balls like the $D_j$.
We start with the $C_j = B_j \sm B_{j-1}$. For $R$ large, if $C_j$ meets $Z_R$, then 
$10 \leq j \leq 1 + \log_2 R$ and
\begin{equation}\label{79}
\int_{C_j} \frac{|u(z)|^2}{|z|^2 (\ln|z|)^2} dm(z) \leq j^{-2} 2^{-2j} \int_{C_j} |u(z)|^2 dm(z).
\end{equation}
We further cut $C_j$ into balls, because we want to apply Poincar\'e's inequality.
Let the $D_j^\xi$ be as in the definition \eqref{a5}. We can find a finite set $\Xi \subset \d B(0,1)$
such that the balls $D^\xi$, $\xi\in \Xi$, cover $B(0,1) \sm B(0,1/2)$. Then for $j \geq 1$ the
$D^\xi$, $\xi\in \Xi$, cover $C_j$ and, by \eqref{lD17} and \eqref{79}, 
\begin{equation} \label{80}  
\| u \nabla \psi_R\|_{L^2(\Omega,w)}^2 
\leq \frac{C}{|\ln \ln R|^2} \sum_{j=10}^{1+ \log_2 R} j^{-2} 2^{-2j} \sum_{\xi \in \Xi}
\int_{D_j^\xi} |u(z)|^2 dm(z).
\end{equation}
Then by the Poincar\'e inequality \eqref{1.5-bis} (with $p=2$), 
\begin{equation}\label{81}
\int_{D_j^\xi} |u(z)-m_j^\xi|^2 dm(z) \leq C 2^{2j} \int_{D_j^\xi} |\nabla u(z)|^2 dm(z),
\end{equation}
where $m_j^\xi = \fint_{D_j^\xi}$ as in the estimates above. Thus 
\begin{equation}\label{82}
\int_{D_j^\xi} |u(z)|^2 dm(z) \leq C 2^{2j} \int_{D_j^\xi} |\nabla u(z)|^2 dm(z)
+ C m(D_j^\xi) (j+1) 2^{j(1-d)} \|u\|_W^2
\end{equation}
by \eqref{75} and because $u^0=0$. But $m(D_j^\xi) \leq C 2^{(d+1)j}$ by \eqref{ISob8b}, so
\begin{equation}\label{83}
\int_{D_j^\xi} |u(z)|^2 dm(z) \leq C (j+1) 2^{2j} \|u\|_W^2.
\end{equation}
We return to \eqref{80} and get that
\begin{equation} \label{84}  
\begin{split}
\| u \nabla \psi_R\|_{L^2(\Omega,w)}^2 
&\leq \frac{C}{|\ln \ln R|^2} \sum_{j=10}^{1+ \log_2 R} j^{-2} 2^{-2j} \sum_{\xi \in \Xi} (j+1) 2^{2j} \|u\|_W^2
\\
&\leq \frac{C}{|\ln \ln R|^2} \sum_{j=10}^{1+ \log_2 R} j^{-1} \|u\|_W^2
\leq \frac{C}{|\ln \ln R|} \|u\|_W^2
\end{split}
\end{equation}
because $\Xi$ is finite, and where we see that $j^{-1}$ is really useful.

We already took care of the other part of \eqref{lD16}; thus $\|u\psi_R - u\|_W$ tends to $0$.
This proves \eqref{a7} (recall that $u^0=0$), and completes our proof of Lemma \ref{ldensdleq1}.
\ep
\chapter{The Chain
Rule and Applications}

We record here some basic (and not shocking) properties concerning the derivative 
of $f \circ u$ when $u\in W$, and the fact that $uv \in W \cap L^\infty$ when $u, v \in W \cap L^\infty$.

\begin{lemma} \label{lcompo}
The following properties hold:
\begin{enumerate}[(a)]
\item Let $f\in C^1(\R)$ be such that $f'$ is bounded and let $u\in W$. Then $f\circ u \in W$ and
\begin{equation} \label{lcompo1}
\nabla (f \circ u) = f'(u) \nabla u.
\end{equation} 
Moreover, $T (f\circ u) = f\circ (Tu)$ $\sigma$-a.e. in $\Gamma$.
\item Let $u,v\in W$. Then $\max\{u,v\}$ and $\min\{u,v\}$ belong to $W$ and,
for almost every $x\in \R^n$,
\begin{equation} \label{lcompo2}
\nabla \max\{u,v\}(x) = \left\{\begin{array}{ll} \nabla u(x) 
& \text{ if } u(x) \geq v(x) \\ \nabla v(x) 
& \text{ if } v(x) \geq u(x)\end{array}\right.
\end{equation} 
and
\begin{equation} \label{lcompo3}
\nabla \min\{u,v\}(x) = \left\{\begin{array}{ll} \nabla u(x) 
& \text{ if } u(x) \leq v(x) \\ \nabla v(x) 
& \text{ if } v(x) \leq u(x).\end{array}\right.
\end{equation}
In particular, for any $\lambda\in \R$, $\nabla u = 0$ (Lebesgue) a.e. on $\{x\in \R^n, \, u(x) = \lambda\}$.

In addition, $T \max\{u,v\} = \max\{Tu, Tv\}$ and $T \min\{u,v\} = \min\{Tu, Tv\}$
$\sigma$-a.e. on $\Gamma$. Thus
$\max\{u,v\}$ and $\min\{u,v\}$ 
 lie in $W_0$ as soon as $u,v\in W_0$. 
\end{enumerate}
\end{lemma}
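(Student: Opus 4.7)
The plan is to prove part (a) by reducing to the (ACL) characterization of $W$ (the Remark after Lemma~\ref{W=barW}) and the classical chain rule for absolutely continuous functions of one real variable. Then part (b) will follow from (a) by approximating $t\mapsto t^+$ by $C^1$ functions with uniformly bounded derivatives, once we first show that $\nabla u$ vanishes a.e.\ on every level set $\{u=\lambda\}$.

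For (a), $f$ is $M$--Lipschitz for $M=\sup|f'|$, so $|f(u)|\leq|f(0)|+M|u|$ shows $f\circ u\in L^1_{loc}(\R^n)$ since $u\in L^1_{loc}(\R^n)$ by Lemma~\ref{W=barW}. On almost every line $\ell$ the restriction $u|_\ell$ is absolutely continuous by the (ACL) property, and the one-variable chain rule for AC functions gives that $(f\circ u)|_\ell$ is AC with derivative $f'(u)(\nabla u\cdot e)$. This identifies the distributional gradient of $f\circ u$ in $\R^n$ as $f'(u)\nabla u$, of modulus $\leq M|\nabla u|\in L^2(\R^n,w)$; hence $f\circ u\in W$ and \eqref{lcompo1} holds. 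For the trace, at any $x\in\Gamma$ for which \eqref{eqLeb} holds for $u$, the Lipschitz bound gives
\[
\fint_{B(x,r)}|f(u(y))-f(Tu(x))|\,dy\leq M\fint_{B(x,r)}|u(y)-Tu(x)|\,dy\xrightarrow[r\to0]{}0,
\]
so $x$ is also a Lebesgue point for $f\circ u$ with value $f(Tu(x))$, and \eqref{defT} yields $T(f\circ u)(x)=f(Tu(x))$ at $\sigma$--a.e.\ $x\in\Gamma$.

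For (b), I first claim that for every $u\in W$ and $\lambda\in\R$, $\nabla u=0$ a.e.\ on $E_\lambda=\{u=\lambda\}$. Fix a coordinate direction $e$: by (ACL), on almost every line $\ell$ of direction $e$ the restriction $u|_\ell$ is absolutely continuous, hence differentiable a.e., and the standard one-variable fact that an AC function has vanishing derivative almost everywhere on any of its level sets applies to $E_\lambda\cap\ell$ (at a density point of $E_\lambda\cap\ell$ where $u|_\ell$ is differentiable, the derivative is necessarily $0$). Fubini then yields $\partial_e u=0$ a.e.\ on $E_\lambda$, and running $e$ through the coordinate directions proves the claim. Applied to $u-v\in W$, this gives $\nabla(u-v)=0$ a.e.\ on $\{u=v\}$.

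To handle the maximum, write $\max\{u,v\}=u+(v-u)^+$ and choose $C^1$ functions $\phi_k:\R\to\R$ with $0\leq\phi_k'\leq 1$ everywhere, $\phi_k(t)\to t^+$ pointwise, and $\phi_k'(t)\to\chi_{(0,\infty)}(t)$ for all $t\neq 0$ (e.g.\ $\phi_k=0$ on $(-\infty,0]$, $\phi_k(t)=t-\tfrac1{2k}$ on $[\tfrac1k,\infty)$, smoothly interpolated). By part (a), $\phi_k(v-u)\in W$ with gradient $\phi_k'(v-u)\nabla(v-u)$, dominated in modulus by $|\nabla(v-u)|\in L^2(\R^n,w)$. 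Since $\phi_k'(v-u)\to\chi_{\{v>u\}}$ pointwise off $\{u=v\}$ and $\nabla(v-u)=0$ a.e.\ on $\{u=v\}$ by the claim, dominated convergence in $L^2(\R^n,w)$ gives $\phi_k'(v-u)\nabla(v-u)\to\chi_{\{v>u\}}\nabla(v-u)$ in $L^2(\R^n,w)$, while $\phi_k(v-u)\to(v-u)^+$ in $L^1_{loc}(\R^n)$; passing to the limit in the distributional identity for $\nabla\phi_k(v-u)$ identifies $(v-u)^+\in W$ with gradient $\chi_{\{v>u\}}\nabla(v-u)$, yielding \eqref{lcompo2}. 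The identity $\min\{u,v\}=u+v-\max\{u,v\}$ gives \eqref{lcompo3}. The trace identity $T\max\{u,v\}=\max\{Tu,Tv\}$ at $\sigma$--a.e.\ $x$ follows from the $1$--Lipschitz character of $(a,b)\mapsto\max\{a,b\}$ exactly as in part (a); the analogous statement for $\min$ is identical, and both imply preservation of $W_0$. The main obstacle is the vanishing-gradient-on-level-sets claim, which is what allows the passage to the limit with the non-smooth function $t^+$ via dominated convergence.
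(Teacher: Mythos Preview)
Your proof is correct and takes a genuinely different route from the paper. The paper proves (a) by approximating $u$ with smooth functions via Lemma~\ref{lconvol}, using the classical chain rule on the approximants, and passing to the limit in $L^2(\Omega,w)$ (this requires the continuity of $f'$ for dominated convergence); for the trace it invokes the stability result \eqref{CoA}. You instead go straight through the (ACL) characterization and the one-variable chain rule for absolutely continuous functions, and for the trace you use the Lipschitz bound on $f$ directly at Lebesgue points of $u$---this is shorter and avoids the approximation machinery entirely. For (b), the paper first proves the gradient formula for $u^+$ via $C^1$ approximants $f_j$ and only then deduces that $\nabla u=0$ a.e.\ on level sets; you reverse the order, establishing the level-set vanishing first via (ACL) and the elementary one-variable fact, which then makes the dominated-convergence step for $\phi_k'(v-u)\nabla(v-u)$ transparent. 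Your trace argument for $\max$ and $\min$ via their $1$-Lipschitz character is again more direct than the paper's route through the auxiliary claim \eqref{lcompo12} and \eqref{CoA}. Both approaches are valid; yours is more elementary but leans on the (ACL) property and Maz'ya's converse (Theorem~1.1.3/2 in \cite{Mazya11}) already used in Lemma~\ref{W=barW}, while the paper's approximation argument is more self-contained within the $W$-framework and produces intermediate convergence statements that are reused later.
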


\begin{remark} \label{rcompo}
A consequence of Lemma~\ref{lcompo} (b) is that, for example, $|u| \in W$ (resp. $|u|\in W_0$) whenever $u\in W$ (resp. $u\in W_0$).
\end{remark}

\bp A big part of this proof follows the results from 1.18 to 1.23 in \cite{NDbook}.

\smallskip

Let us start with (a). 
More precisely, we aim for \eqref{lcompo1}. Let $f\in C^1(\R) \cap Lip(\R)$ and let $u\in W$. 
The idea of the proof is the following: 
we approximate $u$ by smooth functions $\varphi_k$, for which the result is immediate. 
Then we observe
that both $\nabla (f \circ u)$ and $f'(u) \nabla u$ are 
the limit (in the sense of distributions) of the gradient of  $f\circ \varphi_k$.

According to Lemma~\ref{lconvol}, there exists a sequence $(\varphi_k)_{k\in \bN}$ of functions in $C^\infty(\R^n) \cap W$ such that $\varphi_k \to u$ in $W$ and in $L^1_{loc}(\R^n)$. The classical (thus distributional) derivative of $f\circ \varphi_k$ is
\begin{equation} \label{lcompo4}
\nabla [f\circ \varphi_k] = f'(\varphi_k) \nabla \varphi_k.
\end{equation} 
In particular, since $\varphi_k \in W$ and $f'$ is bounded, $f\circ \varphi_k \in W$ and 
$\|f\circ \varphi_k \|_W \leq \|\varphi_k\|_W \sup|f'|$.

Notice 
that $|f(s) - f(t)| \leq |s-t|\sup|f'|$. 
Therefore, since $\varphi_k \to u$ in $L^1_{loc}(\R^n)$, for any ball $B\subset \R^n$
\begin{equation} \label{lcompo5}
\int_B |f\circ \varphi_k - f\circ u|\, dx \leq \sup|f'| \int_B |\varphi_k - u|\, dx \longrightarrow 0.
\end{equation} 
That is $f\circ \varphi_k \to f\circ u$ in $L^1_{loc}(\R^n)$, hence also in the sense of distributions.
Besides,
\begin{eqnarray} \label{lcompo6} 
&\,& \left( \int_\Omega |f'(\varphi_k) \nabla \varphi_k - f'(u) \nabla u|^2 dm \right)^\frac12 
\nn\\
&\,&\qquad \qquad \qquad  \leq \left( \int_\Omega |f'(\varphi_k) [\nabla \varphi_k - \nabla u] |^2 dm \right)^\frac12 + \left( \int_\Omega |\nabla u[f'(\varphi_k) - f'(u)]|^2 dm \right)^\frac12 
\nn\\
&\,&\qquad \qquad \qquad    \leq \sup |f'| \left( \int_\Omega |\nabla \varphi_k - \nabla u|^2 dm \right)^\frac12 + \left( \int_\Omega |\nabla u|^2 |f'(\varphi_k) - f'(u)|^2 dm \right)^\frac12. 
\end{eqnarray} 
The first term in the right-hand side is converges to 0 since $\varphi_k \to u$ in $W$. 
Besides, $\varphi_k \to u$ a.e. in $\Omega$ and 
$f'$ is continuous, so
$f'(\varphi_k) \to f'(u)$ a.e. in $\Omega$. 
Therefore, the second term also converges to 0 thanks to the dominated convergence theorem. 
It follows that $\nabla [f\circ \varphi_k] \to f'(u) \nabla u$ in $L^2(\Omega,w)$,
and hence also in the sense of distributions. We proved that $f\circ \varphi_k \to f\circ u$ and $\nabla [f\circ \varphi_k] \to f'(u) \nabla u \in L^2(\Omega,w)$ in the sense of distributions, 
and so the distributional derivative of $f\circ u$ 
lies in $L^2(\Omega,w)$ and is equal to 
$f'(u) \nabla u$. In particular,
$f\circ u \in W$. Note
that we also proved that
$f\circ \varphi_k \to f\circ u$ in $W$.

In order to finish the proof of (a), we need to prove that $T (f\circ u) = f(Tu)$ $\sigma$-a.e. in $\Gamma$.  
If $v \in W$ is also
a continuous function on $\R^n$, then it is easy to check from the definition of the trace 
that $Tv(x) = v(x)$ for every $x\in \Gamma$.
Since $f\circ \varphi_k$ and $\varphi_k$ are both continuous functions, we get that
\begin{equation} \label{lcompo112}
f\circ \varphi_k(x) = T(f\circ \varphi_k)(x) = f(T\varphi_k(x))
\ \text{ for $x\in \Gamma$ and } k\in \bN.
\end{equation}
Hence for every ball $B$ centered on $\Gamma$ and every $k\geq 0$, 
\begin{equation} \label{lcompo113} \begin{split}
\int_{B} |T(f\circ u) - f(Tu)| d\sigma & \leq \int_{B} |T(f\circ u) - T(f\circ \varphi_k)| d\sigma + \int_{B} |f(T\varphi_k) - f(Tu)| d\sigma \\
& \leq \int_{B} |T(f\circ u) - T(f\circ \varphi_k)| d\sigma + \sup|f'| \int_{B} |T\varphi_k - Tu| d\sigma.
\end{split} \end{equation}
Recall 
that each convergence $\varphi_k \to u$ and $f\circ \varphi_k \to f\circ u$ holds in both $W$ 
and $L^1_{loc}(\R^n)$. The assertion \eqref{CoA} 
then gives 
that both convergences $T\varphi_k \to T u$ and $T(f\circ \varphi_k) \to T(f\circ u)$ hold in $L^1_{loc}(\Gamma,\sigma)$.
Thus the right-hand side of \eqref{lcompo113} converges to 0 as $k$ goes to $+\infty$. 
We obtain that for every 
ball $B$ centered on $\Gamma$,
\begin{equation} \label{lcompo114}
\int_{B} |T(f\circ u) - f(Tu)| d\sigma = 0 ;
\end{equation}
in particular, $T (f\circ u) = f(Tu)$ $\sigma$-a.e. in $\Gamma$. 

\medskip

Let us turn to the proof of (b). 
Set $u^+=\max\{u,0\}$. Then
$\max\{u,v\} = (u-v)^+ + v$ and $\min\{u,v\} = u - (u-v)^+$. Thus is it enough to show that for any $u\in W$, $u^+$ lies in $W$ and satisfies \begin{equation} \label{lcompo7}
\nabla u^+(x) = \left\{\begin{array}{ll} \nabla u(x) 
& \text{ if } u(x) > 0 \\ 0 & \text{ if } u(x)\leq 0 \end{array}\right.
 \ \text{ for almost every } x\in \R^n
\end{equation}
and
\begin{equation} \label{lcompo7a}
T (u^+)  = (Tu)^+ \ \text{ $\sigma$-almost everywhere on } \Gamma.
\end{equation}
Note that in particular
\eqref{lcompo7} implies that $\nabla u = 0$ a.e. in $\{u=\lambda\}$. Indeed, since $u = \lambda + (u-\lambda)_+ - (\lambda - u)_+$, \eqref{lcompo7} implies that for almost every $x\in \Omega$,
\begin{equation} \label{lcompo8}
\nabla u(x) = \left\{\begin{array}{ll} \nabla u(x) & \text{ if } u(x) \neq \lambda \\ 0 & \text{ if } u(x)= \lambda. \end{array}\right.
\end{equation}

Let us prove 
the claim \eqref{lcompo7}. 
Define $f$ and $g= \1_{(0,+\infty)}$ by $f(t) = \max\{0,t\}$ and $g(t) = 0$
 when $t\leq 0$ and $g(t) =1$ when $t>0$.  Our aim
is to approximate $f$ by an increasing sequence of $C^1$-functions and then to conclude by using (a) and the monotone convergence theorem. Define for any integer $j\geq 1$ the function $f_j$ by
\begin{equation} \label{lcompo9}
f_j(t) = \left\{\begin{array}{ll} 0 & \text{ if } t\leq 0 \\ \frac{j}{j+1} t^{\frac{j+1}{j}} & \text{ if } 0 \leq t \leq 1 \\ t - \frac1{j+1} & \text{ if } t\geq 1. \end{array}\right.
\end{equation}
Notice that
$f_j$ is non-negative and $(f_j)$ is a nondecreasing sequence that converges pointwise to $f$.
Consequently, $f_j \circ u \geq 0$ 
and $(f_j \circ u)$  is a nondecreasing sequence that converges pointwise to 
$f\circ u = u^+ \in L^1_{loc}(\R^n)$. The 
monotone convergence theorem implies that $f_j \circ u \to u^+$ in $L^1_{loc}(\R^n)$. 

Moreover, $f_j$ lies in  
$C^1(\R)$ and its derivative is
\begin{equation} \label{lcompo10}
f'_j(t) = \left\{\begin{array}{ll} 0 & \text{ if } t\leq 0 \\ t^{\frac{1}{j}} & \text{ if } 0 \leq t \leq 1 \\ 1 & \text{ if } t\geq 1. \end{array}\right.
\end{equation}
Thus $f'_j$ is bounded and part (a) of the lemma implies $f_j \circ u \in W$ and 
$\nabla (f_j \circ u) = f'_j(u) \nabla u$ 
almost everywhere on $\R^n$. In addition, $f'_j$ converges to $g$ pointwise everywhere, so
\begin{equation} \label{lcompo11}
\nabla (f_j \circ u) = f'_j(u) \nabla u \to v:= (g\circ u) \nabla u = 
\left\{\begin{array}{ll} \nabla u & \text{ if } u>0 \\ 0 & \text{ if } u\leq 0 \end{array}\right.
\end{equation}
almost everywhere (i.e., wherever $\nabla (f_j \circ u) = f'_j(u) \nabla u$).
The convergence also occurs in $L^2(\Omega,w)$ and in $L^1_{loc}(\R^n)$, 
because $|\nabla (f_j \circ u)| \leq |\nabla u|$ and by the dominated convergence theorem, 
and therefore also in the sense of distribution.
Since $f_j \circ u$ converges to $u^+$ pointwise almost everywhere and hence (by the dominated
convergence theorem again) in $L^1_{loc}$ and in the sense of distributions, we get that 
$v = (g\circ u) \nabla u$ is the distribution derivative of $u^+$.
This completes the proof of \eqref{lcompo7}.

\medskip

Finally, let us establish \eqref{lcompo7a}. 
The plan is to prove that we can find smooth functions $\varphi_k$ such that $\varphi_k^+$ 
converges (in $L^1_{loc}(\Gamma,\sigma)$) to both $Tu^+$ and $(Tu)^+$. 
We claim that for $u\in W$ and any sequence $(u_k)$ in $W$, the following implication holds true:
\begin{equation} \label{lcompo12}
u_k \to u \  \text{ 
pointwise a.e.
and in }W \ \Longrightarrow \ u_k^+ \to u^+ \  \text{ 
pointwise a.e. and 
in }W.
\end{equation}
First we assume the claim and prove \eqref{lcompo7a}. 
With the help of Lemma~\ref{lconvol}, take $(\varphi_k)_{k\in \bN}$ be a sequence of functions in 
$C^\infty(\R^n)$ such that $\varphi_k \to u$ 
in $W$, and in $L^1_{loc}(\R^n)$. We may also replace $(\varphi_k)$ by a subsequence, and
get that $\varphi_k \to u$ pointwise a.e.
The claim \eqref{lcompo12} implies 
that $\varphi_k^+ \to u^+$ in $W$. 
In addition, $\varphi_k^+ \to u^+$ in $L^1_{loc}(\R^n)$, for instance because 
$\varphi_k$ tends to $u$ in $L^1_{loc}(\R^n)$ and by the estimate 
$|\varphi_k^+ - u^+| \leq |\varphi_k - u|$. 

Thus we may apply \eqref{CoA}, and we get that $T \varphi_k^+$ tends to $Tu^+$ in $L^1_{loc}(\Gamma)$.
Since $\varphi_k^+$ is continuous, $T \varphi_k^+ = \varphi_k^+$ and 
\begin{equation}\label{19}
\ \text{$\varphi_k^+$ tends to $Tu^+$ in $L^1_{loc}(\Gamma)$.}
\end{equation}

We also need to check that $\varphi_k^+$  converges to $(Tu)^+ $.
Notice that \eqref{CoA} also implies that $\varphi_k \to Tu$ in $L^1_{loc}(\Gamma,\sigma)$.
Together with the easy fact that $|a^+ - b^+| \leq |a-b|$ for $a, b \in \R$, this proves that 
$\varphi_k^+ \to (Tu)^+$ in $L^1_{loc}(\Gamma,\sigma)$. 

We just proved that $\varphi_k^+$ converges to both $T(u^+)$ and $(Tu)^+$ in $L^1_{loc}(\Gamma,\sigma)$.
By uniqueness of the limit, $T(u^+) = (Tu)^+$ $\sigma$-a.e. in $\Gamma$, 
as needed for \eqref{lcompo7a}. Thus the proof of the lemma will 
be complete as soon as we establish the claim \eqref{lcompo12}. 

\smallskip
First notice that
$|u_j^+ - u^+| \leq |u_j - u|$ and thus the a.e. pointwise convergence of $u_j \to u$ yields the a.e. pointwise convergence $u_j^+\to u^+$.
Let $g$ denote the characteristic function of $(0,+\infty)$; then by \eqref{lcompo7} 
 \begin{equation} \label{lcompo13} \begin{split}
& \left( \int_\Omega |\nabla u_j^+ - \nabla u^+|^2 dm \right)^\frac12 = \left( \int_\Omega |g(u_j)\nabla u_j - g(u)\nabla u|^2 dm \right)^\frac12 \\
&\qquad \qquad \qquad  \leq \left( \int_\Omega |g(u_j) [\nabla u_j - \nabla u] |^2 dm \right)^\frac12 + \left( \int_\Omega |\nabla u[g(u_j) - g(u)]|^2 dm \right)^\frac12 \\
&\qquad \qquad \qquad    \leq \left( \int_\Omega |\nabla u_j - \nabla u|^2 dm \right)^\frac12 + \left( \int_\Omega |\nabla u|^2 |g(u_j) - g(u)|^2 dm \right)^\frac12. \\
\end{split} \end{equation} 
The first term in the right-hand side converges to 0 since $u_j \to u$ in $W$. 
Call $I$ the second term;
$I$ is finite, since $u\in W$ and $|g(u_j) - g(u)| \leq 1$. 
Moreover,  thanks to \eqref{lcompo8}, $\nabla u = 0$ a.e. on $\{u=0\}$. So the square of $I$ can be written
 \begin{equation} \label{lcompo14} \begin{split}
I^2 & = \int_{\big\{|u|>0\big\}} |\nabla u|^2 |g(u_j) - g(u)|^2 dm.
\end{split} \end{equation} 
Let $x\in \big\{|u|>0\big\}$ be such that $u_j(x)$ converges to $u(x) \neq 0$;
then there exists $j_0\geq 0$ such that for 
$j\geq j_0$ the sign of $u_j(x)$ is the same as the 
sign of $u(x)$. That is, 
$g(u_j)(x)$ converges to $g(u)(x)$. 
Since $u_j \to u$ a.e. in $\Omega$, the previous argument implies that 
$g(u_j) \to g(u)$
a.e. in $\{|u|>0\}$. 
Then $I^2$ converges to $0$, by  
the dominated convergence theorem.
Going back to \eqref{lcompo13}, we obtain that $u_j^+ \to u^+$ in $W$, which concludes 
our proof of \eqref{lcompo12}; Lemma~\ref{lcompo} follows. 
\ep

\begin{lemma} \label{lalg}
Let $u,v \in W \cap L^\infty(\Omega)$. 
Then $uv \in W \cap L^\infty(\Omega)$, $\nabla (uv) = v \nabla u + u \nabla v$, and $T(uv) = Tu\cdot Tv$.
\end{lemma}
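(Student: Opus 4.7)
The plan is to approximate $u$ and $v$ by smooth, uniformly bounded functions, use the classical product rule, and pass to the limit; the trace identity will come from the trace convergence property \eqref{CoA}. The $L^\infty$ bound is immediate: $\|uv\|_\infty \leq \|u\|_\infty \|v\|_\infty$.

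For $uv \in W$ and the gradient formula, I would set $\varphi_k = \rho_{1/k}\ast u$ and $\psi_k = \rho_{1/k}\ast v$ as in Lemma~\ref{lconvol}, where $\rho \geq 0$ with $\int \rho = 1$, so that $\|\varphi_k\|_\infty \leq \|u\|_\infty$ and $\|\psi_k\|_\infty \leq \|v\|_\infty$; by Lemma~\ref{lconvol}, $\varphi_k \to u$ and $\psi_k \to v$ in both $W$ and $L^1_{loc}(\R^n)$, and (after extracting a subsequence) a.e.\ in $\R^n$. Since $\varphi_k,\psi_k \in C^\infty(\R^n)$, the classical product rule gives $\nabla(\varphi_k\psi_k) = \psi_k \nabla\varphi_k + \varphi_k \nabla\psi_k$. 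Then I would split
\begin{equation}
\|\psi_k\nabla\varphi_k - v\nabla u\|_{L^2(\Omega,w)} \leq \|\psi_k\|_\infty \|\nabla\varphi_k - \nabla u\|_{L^2(\Omega,w)} + \left(\int_\Omega |\nabla u|^2 |\psi_k - v|^2\, dm\right)^{1/2},
\end{equation}
where the first term vanishes because $\varphi_k \to u$ in $W$ and the second by dominated convergence (pointwise a.e.\ convergence with uniform bound $2\|v\|_\infty$, dominating function $4\|v\|_\infty^2 |\nabla u|^2 \in L^1(\Omega,w)$); the analogous estimate handles $\varphi_k\nabla\psi_k - u\nabla v$. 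Hence $\nabla(\varphi_k\psi_k) \to v\nabla u + u\nabla v$ in $L^2(\Omega,w)$, hence in $\mathcal{D}'(\Omega)$. Meanwhile, on any ball $B$, $|\varphi_k\psi_k - uv| \leq \|u\|_\infty|\psi_k - v| + \|v\|_\infty|\varphi_k - u|$, so $\varphi_k\psi_k \to uv$ in $L^1_{loc}(\R^n)$, hence in $\mathcal{D}'(\R^n)$. By uniqueness of distributional limits, $\nabla(uv) = v\nabla u + u\nabla v$ as distributions on $\Omega$, and this representative lies in $L^2(\Omega,w)$ (since $u,v \in L^\infty$ and $\nabla u,\nabla v \in L^2(\Omega,w)$), so $uv \in W$ by Lemma~\ref{W=barW}. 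The same computation shows $\varphi_k\psi_k \to uv$ in $W$.

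For the trace, since $\varphi_k\psi_k$ is continuous we have $T(\varphi_k\psi_k)(x) = \varphi_k(x)\psi_k(x)$ for every $x\in \Gamma$. On one side, having established $\varphi_k\psi_k \to uv$ in both $W$ and $L^1_{loc}(\R^n)$, \eqref{CoA} yields $T(\varphi_k\psi_k) \to T(uv)$ in $L^1_{loc}(\Gamma,\sigma)$. On the other side, \eqref{CoA} applied separately to $\varphi_k$ and $\psi_k$ gives $\varphi_k|_\Gamma \to Tu$ and $\psi_k|_\Gamma \to Tv$ in $L^1_{loc}(\Gamma,\sigma)$; noting that $|Tu| \leq \|u\|_\infty$ and $|Tv| \leq \|v\|_\infty$ $\sigma$-a.e.\ (immediate from \eqref{defT}), I would write, on $\Gamma$,
\begin{equation}
|\varphi_k\psi_k - Tu\cdot Tv| \leq \|u\|_\infty\,|\psi_k|_\Gamma - Tv| + \|v\|_\infty\,|\varphi_k|_\Gamma - Tu|,
\end{equation}
which shows $\varphi_k\psi_k|_\Gamma \to Tu\cdot Tv$ in $L^1_{loc}(\Gamma,\sigma)$. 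By uniqueness of the limit, $T(uv) = Tu \cdot Tv$ $\sigma$-a.e.\ on $\Gamma$.

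The only subtle point is securing uniform $L^\infty$ bounds on the approximants so that the dominated convergence arguments go through both in the $L^2(\Omega,w)$ limit for the gradient and in the $L^1_{loc}(\Gamma,\sigma)$ limit for the trace; using the convolution mollifiers of Lemma~\ref{lconvol} automatically preserves the $L^\infty$ bounds of $u$ and $v$, so there is no real obstacle. Everything else is a direct combination of the classical product rule on smooth functions, uniqueness of distributional limits, and the trace continuity \eqref{CoA}.
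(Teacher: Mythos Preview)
Your proof is correct and follows essentially the same route as the paper's: mollify $u$ and $v$ via Lemma~\ref{lconvol} to get smooth approximants with the same $L^\infty$ bounds, apply the classical product rule, pass to the limit in $L^2(\Omega,w)$ for the gradients using dominated convergence, and deduce the trace identity from \eqref{CoA} applied to both $\varphi_k\psi_k \to uv$ and separately to $\varphi_k \to u$, $\psi_k \to v$. The only cosmetic differences are that you invoke Lemma~\ref{W=barW} at the end (the paper concludes directly from the definition of $W$) and that you extract a subsequence for a.e.\ convergence (in fact Lemma~\ref{lconvol}(ii) already gives a.e.\ convergence along the full family).
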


\bp Without loss of generality, we can assume that $\|u\|_\infty,\|v\|_\infty \leq 1$. 
The fact that $uv \in L^\infty(\Omega)$ is 
immediate. Let us now prove that $uv \in W$. 
According to Lemma~\ref{lconvol}, there exists two sequences $(u_j)_{j\in \bN}$ and $(v_j)_{j\in \bN}$ 
of functions in $C^\infty(\R^n) \cap W$ such that $u_j \to u$ and $v_j \to v$ in $W$, 
in $L^1_{loc}(\R^n)$, and pointwise.
Besides, the construction of $u_j,v_j$ given by Lemma~\ref{lconvol} allows us to assume that $\|u_j\|_\infty\leq \|u\|_\infty \leq 1$ and $\|v_j\|_\infty \leq \|v\|_\infty \leq 1$. The distributional derivative of $u_jv_j$ equals the classical derivative, which is
\begin{equation}\label{lalg1}
\nabla (u_jv_j) = v_j \nabla u_j + u_j \nabla v_j.
\end{equation}
Since $u_j$ and $v_j$ lie in $W$, \eqref{lalg1} says that 
$u_j v_j \in W$. The bound
\begin{equation}\label{lalg2} \begin{split}
\int_B |u_jv_j - uv| \leq \int_B |u_j| |v_j-v| + \int_B |v| |u_j - u| \leq \|v_j-v\|_{L^1(B)} + \|u_j-u\|_{L^1(B)} ,
\end{split} \end{equation}
which holds for any ball $B \subset \R^n$, shows that $u_jv_j \to uv$ in $L^1_{loc}(\R^n)$. Moreover,
\begin{equation}\label{lalg3} \begin{split}
& \left(\int_B |(u_j\nabla v_j + v_j \nabla u_j) - (u\nabla v + v \nabla u)|^2 dm \right)^\frac12  \\
& \qquad \qquad \leq \left(\int_B |u_j\nabla v_j- u\nabla v |^2 dm\right)^\frac12 + \left(\int_B |v_j \nabla u_j - v \nabla u|^2 dm \right)^\frac12 \\
& \qquad \qquad \leq  \left(\int_B |u_j|^2 |\nabla v_j- \nabla v |^2 dm\right)^\frac12 +  \left(\int_B |u_j- u|^2 |\nabla v |^2 dm\right)^\frac12 \\
& \qquad \qquad \qquad \qquad + \left(\int_B |v_j|^2  |\nabla u_j - \nabla u|^2 dm \right)^\frac12 + \left(\int_B |v_j - v|^2  |\nabla u|^2 dm \right)^\frac12. \\
\end{split} \end{equation}
The first and third terms in the right-hand side converge to 0 as $j$ goes to $+\infty$, because 
$|u_j|,|v_j| \leq 1$ and since $u_j \to u$ and $v_j \to v$ in $W$. 
The second and forth terms also converge to 0 thanks to the dominated convergence theorem 
(and the fact that $u_j \to u$ and $v_j \to v$ pointwise a.e.).
We deduce that $\nabla (u_jv_j) = u_j\nabla v_j + v_j \nabla u_j \to u\nabla v + v \nabla u$ 
in $L^2(\Omega,w)$. By the
uniqueness of the distributional derivative, $\nabla (uv) = u\nabla v + v \nabla u \in L^2(\Omega,w)$. 
In particular, $uv \in W$. Note that we also proved that $u_jv_j \to uv$ in $W$.

\medskip
It remains to prove that $T(uv) = Tu \cdot Tv$. 
Since $u_jv_j$ is continuous and $u_jv_j \to uv$ in $W$ and $L^1_{loc}(\R^n)$, then by  
\eqref{CoA}, $u_jv_j = T(u_jv_j) \to T(uv)$ in $L^1_{loc}(\Gamma,\sigma)$. 
Moreover, for any ball $B$ centered on $\Gamma$,
\begin{equation}\label{lalg4} \begin{split}
\int_B |u_jv_j - Tu\cdot Tv| d\sigma & \leq \int_B |u_j| |v_j-Tv| d\sigma + \int_B |u_j - Tu| |Tv| d\sigma \\
& \leq \int_B |v_j-Tv| d\sigma + \int_B |u_j - Tu| d\sigma \\
\end{split} \end{equation}
where the last line holds because $|u_j| \leq 1$ and $|Tv| \leq \sup|v| \leq 1$, where the later bound
either follows from Lemma \ref{lcompo} or is easily deduced
from the definition of the trace.
By construction, $u_j \to u$ and $v_j \to v$ in $W$ and $L^1_{loc}(\R^n)$. 
Then by \eqref{CoA}
the right-hand side terms in \eqref{lalg4} converge to 0.

We proved that $u_j v_j$ converges in $L^1_{loc}(\Gamma,\sigma)$ to both $T(uv)$ and $Tu \cdot Tv$. By uniqueness of the limit, $T(uv) = Tu \cdot Tv$ $\sigma$-a.e. in $\Gamma$. 
Lemma \ref{lalg} follows.
\ep
\chapter{The Extension Operator}
\label{Sextension} 
 
The main point of this section is the construction of our extension operator $E : H \to W$,
which will be done naturally, with the Whitney extension scheme that uses dyadic cubes.

Our main object will be a function $g$ on $\Gamma$, that typically lies in $H$ or in 
$L^1_{loc}(\Gamma,\sigma)$.  We start with the \ub{Lebesgue density} result for 
$g \in L^1_{loc}(\Gamma,\sigma)$ that was announced in the introduction.
\medskip

\begin{lemma} \label{lLdH}
For any $g\in L^1_{loc}(\Gamma,\sigma)$ and 
$\sigma$-almost all $x\in \Gamma$, 
\begin{equation} \label{3.1}
\lim_{r \to 0} \fint_{\Gamma \cap B(x,r)} |g(y)-g(x)| d\sigma(y) = 0.
\end{equation}
\end{lemma}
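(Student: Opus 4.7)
The plan is to invoke the classical Lebesgue differentiation theorem in the metric measure space $(\Gamma, \sigma)$, which is a doubling space by the Ahlfors regularity assumption \eqref{1.1}: indeed $\sigma(\Gamma \cap B(x,2r)) \leq C_0 (2r)^d \leq 2^d C_0^2\, \sigma(\Gamma \cap B(x,r))$ for $x \in \Gamma$ and $r>0$. The argument is entirely soft, using only the doubling property together with a Vitali-type covering lemma and density of continuous functions.

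First I would introduce the centered Hardy--Littlewood maximal operator on $\Gamma$,
\[
Mg(x) = \sup_{r>0} \fint_{\Gamma \cap B(x,r)} |g(y)|\, d\sigma(y), \qquad x \in \Gamma,
\]
and establish the standard weak-type $(1,1)$ inequality
\[
\sigma\bigl(\{ x \in \Gamma : Mg(x) > \lambda \}\bigr) \leq \frac{C}{\lambda}\, \|g\|_{L^1(\Gamma,\sigma)}
\]
for $g \in L^1(\Gamma, \sigma)$. This follows from a $5r$-covering lemma applied to the family $\{B(x,r(x))\}$ of balls realizing $Mg(x) > \lambda$, combined with doubling, exactly as in $\R^n$.

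Next, since the assertion \eqref{3.1} is local at each $x$, I may reduce to the case $g \in L^1(\Gamma, \sigma)$ by multiplying $g$ by the indicator of a large ball. Standard measure theory (e.g., Lusin's theorem on the Radon measure $\sigma$ supported on the closed set $\Gamma \subset \R^n$) gives density in $L^1(\Gamma,\sigma)$ of the restrictions to $\Gamma$ of continuous, compactly supported functions on $\R^n$. For such an approximant $\varphi$, the quantity $\fint_{\Gamma \cap B(x,r)} |\varphi(y)-\varphi(x)|\, d\sigma(y)$ tends to $0$ as $r \to 0$ for \emph{every} $x \in \Gamma$, simply by uniform continuity on compact sets.

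The final step is the classical decomposition. Set
\[
\Lambda g(x) = \limsup_{r \to 0} \fint_{\Gamma \cap B(x,r)} |g(y)-g(x)|\, d\sigma(y),
\]
and fix $\varepsilon > 0$. Write $g = \varphi + h$ with $\varphi$ continuous and $\|h\|_{L^1(\Gamma,\sigma)} \leq \varepsilon$. Since $\Lambda \varphi \equiv 0$ and $\Lambda g \leq \Lambda \varphi + \Lambda h$, the triangle inequality yields, for every $x$,
\[
\Lambda g(x) \leq Mh(x) + |h(x)|.
\]
For any $\lambda > 0$, applying the weak-$(1,1)$ bound on $Mh$ together with Chebyshev's inequality for $h$ gives
\[
\sigma\bigl(\{\Lambda g > \lambda\}\bigr) \leq \sigma\bigl(\{Mh > \lambda/2\}\bigr) + \sigma\bigl(\{|h| > \lambda/2\}\bigr) \leq \frac{C}{\lambda}\, \|h\|_{L^1(\Gamma,\sigma)} \leq \frac{C\varepsilon}{\lambda}.
\]
Letting $\varepsilon \to 0$ shows $\sigma(\{\Lambda g > \lambda\}) = 0$ for every $\lambda > 0$, so $\Lambda g = 0$ $\sigma$-almost everywhere, which is \eqref{3.1}. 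The only ``obstacle'' is to verify the Vitali covering lemma and the density of continuous functions in this non-Euclidean but doubling setting; both are entirely standard consequences of \eqref{1.1}, so no genuine difficulty is expected.
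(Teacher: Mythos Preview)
Your proof is correct and follows essentially the same approach as the paper: both observe that Ahlfors regularity \eqref{1.1} makes $(\Gamma,\sigma)$ a doubling metric measure space, after which the Lebesgue differentiation theorem applies. The paper simply cites Federer for this last step, whereas you spell out the standard maximal-function argument; the content is identical.
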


\bp
Since $(\Gamma,\sigma)$ satisfies \eqref{1.1}, the space $(\Gamma,\sigma)$ equipped with the metric induced by $\R^n$ is a doubling space. 
Indeed, let $B$ be a ball centered on $\Gamma$. According to \eqref{1.1}, 
\begin{equation}
\sigma(2B) \leq 2^d C_0 r^d \leq 2^d C_0^2 r^d \sigma(B).
\end{equation}
From there, the lemma is only a consequence of the Lebesgue differentiation theorem in doubling spaces (see for example \cite[Sections 2.8-2.9]{Federer}).
\ep

\begin{remark} \label{rHinL1loc}
We claim that $H \subset L^1_{loc}(\Gamma,\sigma)$, and hence \eqref{3.1} holds 
for $g\in H$ and $\sigma$-almost every $x\in \Gamma$.
Indeed, let $B$ be a ball centered on $\Gamma$, then 
a brutal estimate yields
\begin{equation}
\int_B \int_B |g(x) - g(y)| d\sigma(x) d\sigma(y) \leq C_B \left(\int_B \int_B |g(x) -g(y)|^2 d\sigma(x) d\sigma(y)\right)^\frac12 \leq C_B \|g\|_H < +\infty.
\end{equation}
Hence for $\sigma$-almost every $x\in B \cap \Gamma$,   $\int_B |g(x) - g(y)| d\sigma(y)<+\infty$. 
In particular, since $\sigma(B) > 0$, there exists $x\in B \cap \Gamma$ such that 
$\int_B |g(x) - g(y)| d\sigma(y)<+\infty$. 
We get that $g\in L^1(B,\sigma)$, and our claim follows.
\end{remark}

Let us now start 
the construction of the \ub{extension operator} 
$E : H \to W$. We
proceed as for the Whitney extension theorem, with only a minor
modification because averages will be easier to manipulate than specific values of $g$.

We shall use 
the family $\cW$ of dyadic Whitney cubes constructed as in the first pages of 
\cite{Stein}
and the partition of unity $\{ \varphi_Q \}, Q \in \cW$, that is usually associated to $\cW$.
Recall that $\cW$ is the family of maximal dyadic cubes $Q$ (for the inclusion) such that
$20Q \subset \Omega$, say, and the $\varphi_Q$ are smooth functions such that 
$\varphi_Q$ is supported in $2Q$, $0 \leq \varphi_Q \leq 1$, $|\nabla \varphi_Q| \leq C \diam(Q)^{-1}$,
and $\sum_Q \varphi_Q = \1_\Omega$.

Let us record a few of the simple properties of $\cW$. These are simple, but yet we refer to \cite[Chapter VI]{Stein} for
details.
It will be convenient to denote by $r(Q)$ the side length of the dyadic cube $Q$.
Also set $\delta(Q) = \dist(Q,\Gamma)$.
For $Q \in \cW$, we select a point $\xi_Q \in \Gamma$ such that $\dist(\xi_Q,Q) \leq 2 \delta(Q)$,
and set 
\begin{equation}\label{a7.6}
B_Q = B(\xi_Q,\delta(Q)).
\end{equation}

If $Q, R \in \cW$ are such that $2Q$ meets $2R$, then $r(R) \in \{ \frac12 r(Q), r(Q), 2r(Q) \}$;
then we can easily check that $R \subset 8Q$. Thus $R$ is a dyadic cube in $8Q$ whose side length 
is bigger than $\frac12 r(Q)$; there exist at most $2 \cdot 16^n$ dyadic cubes like this. 
This proves that there is a constant $C = C(n)$ such that for $Q\in \cW$,
\begin{equation}\label{a7.7}
\ \text{the number of cubes $R \in \cW$ such that $2R \cap 2Q \neq \emptyset$ is at most $C$.}
\end{equation}

The operator $E$ is defined on functions in $L^1_{loc}(\Gamma,\sigma)$ by
\begin{equation}\label{3.2}
Eg(x) = \sum_{Q \in \cW} \varphi_Q(x) y_Q,
\end{equation}
where we set
\begin{equation}\label{3.3}
y_Q = \fint_{B_Q} g(z) d\sigma(z),
\end{equation}
with $B_Q$ as in \eqref{a7.6}. For the extension of Lipschitz functions, for instance, one would 
take $y_Q = g(\xi_Q)$, but here we will use the extra regularity of the averages.

Notice that $Eg$ is continuous on $\Omega$, because the sum in \eqref{3.2} is locally finite.
Indeed, if $x\in \Omega$ and $Q \in \cW$ contains $x$, \eqref{a7.7} says that there are at most
$C$ cubes $R \in \cW$ such that $\varphi_R$ does not vanish on $2Q$; then the 
restriction of $Eg$ to $2Q$ is a finite sum of continuous functions.
Moreover, if $g$ is continuous on $\Gamma$, then $Eg$ is continuous on the whole $\R^n$.
We refer to \cite[Proposition VI.2.2]{Stein} for the easy proof.

\begin{theorem} \label{tExt}
For any $g\in L^1_{loc}(\Gamma,\sigma)$ 
(and by Remark \ref{rHinL1loc}, this applies to $g\in H$),
\begin{equation}\label{a7.11}
T(Eg) = g \quad \text{ $\sigma$-a.e. in $\Gamma$.}
\end{equation}
Moreover, there exists $C>0$ such that for any $g\in H$,
\begin{equation} \label{a7.12}
\|Eg\|_W \leq C \|g\|_H.
\end{equation}
\end{theorem}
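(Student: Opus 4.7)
My plan is to establish the two assertions of Theorem~\ref{tExt} separately, exploiting the Whitney structure of the extension and the density/regularity results already proved.

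For the trace identity \eqref{a7.11}, I fix $x\in\Gamma$ where the Lebesgue density \eqref{3.1} holds (a $\sigma$-a.e.\ point by Lemma~\ref{lLdH}) and show $\fint_{B(x,r)}Eg(y)\,dy\to g(x)$ as $r\to 0$. Since $\sum_Q \varphi_Q\equiv 1$ on $\Omega$,
$$Eg(y)-g(x)=\sum_{Q:\,y\in 2Q}\varphi_Q(y)(y_Q-g(x))$$
is a convex combination; for $y\in B(x,r)$ each contributing $Q$ has $r(Q)\lesssim\delta(y)\leq r$ and $\xi_Q\in B(x,Cr)$. I then split $B(x,r)$ using the ratio $\delta(y)/|y-x|$: on the ``good'' set $\{\delta(y)\geq\tau|y-x|\}$, every contributing $Q$ satisfies $|\xi_Q-x|\leq C_\tau r(Q)$, so $B_Q\subset B(x,C'_\tau r(Q))$; Ahlfors doubling of $\sigma$ on $\Gamma$ then bounds $|y_Q-g(x)|\leq \fint_{B_Q\cap\Gamma}|g-g(x)|\,d\sigma$ by $C_\tau\,\fint_{B(x,C'_\tau r(Q))\cap\Gamma}|g-g(x)|\,d\sigma$, which tends to $0$ uniformly as $r\to 0$ by \eqref{3.1}. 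The complementary ``bad'' set has Lebesgue measure $\lesssim\tau^{n-d}|B(x,r)|$, obtained by covering $\Gamma\cap B(x,2r)$ with $\tau r$-balls and using Ahlfors regularity with $d<n-1$; its contribution to the average is $O(\tau^{n-d})$ times a local bound on $|Eg|$, which is finite because $g\in L^1_{\mathrm{loc}}(\Gamma,\sigma)$ and the Whitney sum is locally finite. Letting $r\to 0$ and then $\tau\to 0$ proves \eqref{a7.11}; the stronger \eqref{defLebdp1} follows by the same argument applied to $|Eg(y)-g(x)|$.

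For the norm bound \eqref{a7.12}, I work on a Whitney cube $Q\in\cW$ and $z\in Q$. The identity $\sum_R\nabla\varphi_R\equiv 0$ gives
$$\nabla Eg(z)=\sum_{R\sim Q}\nabla\varphi_R(z)(y_R-y_Q),$$
where $R\sim Q$ means $2R\cap Q\neq\emptyset$; by \eqref{a7.7} there are only $O(1)$ such $R$, all with $r(R)\asymp r(Q)$ and $|\nabla\varphi_R|\lesssim r(Q)^{-1}$. Hence $|\nabla Eg(z)|^2\lesssim r(Q)^{-2}\sum_{R\sim Q}|y_R-y_Q|^2$; combining with Lemma~\ref{lwest}'s bound $\int_Q w\asymp r(Q)^{d+1}$, Jensen's inequality on $B_Q\times B_R$, and the estimate $|x-y|\leq Cr(Q)$ on $B_Q\times B_R$, I get
$$\int_Q|\nabla Eg|^2 w\,dz\lesssim r(Q)^{d-1}\sum_{R\sim Q}|y_R-y_Q|^2,\qquad |y_R-y_Q|^2\lesssim r(Q)^{1-d}\int_{B_Q\times B_R}\frac{|g(x)-g(y)|^2}{|x-y|^{d+1}}\,d\sigma(x)\,d\sigma(y).$$
The factors of $r(Q)$ cancel, and summing over $Q$ yields
$$\int_\Omega|\nabla Eg|^2 w\,dz\lesssim \int_\Gamma\int_\Gamma\frac{|g(x)-g(y)|^2}{|x-y|^{d+1}}\,N(x,y)\,d\sigma(x)\,d\sigma(y),\qquad N(x,y):=\#\{(Q,R\sim Q):\,x\in B_Q,\,y\in B_R\}.$$

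The main obstacle is controlling the multiplicity $N(x,y)$. For fixed $(x,y)$ with $\rho:=|x-y|$, the geometric constraints force $r(Q)\gtrsim\rho$, and at each dyadic scale $r(Q)=2^{-k}$ there are $O(1)$ admissible pairs; however, since $\Gamma$ is unbounded, scales $2^{-k}\gg\rho$ are admissible a priori, so $N$ is not pointwise bounded. The way out is to exploit that $|y_R-y_Q|$ is itself small at large scales: the averages are taken over large balls both containing $x$ and $y$, so their difference captures oscillation at scale $r(Q)$ rather than at scale $\rho$. Reorganizing the sum $\sum_{Q,R\sim Q} r(Q)^{d-1}|y_R-y_Q|^2$ by dyadic scales and applying a Jonsson-Wallin / Besov-type scale-localized Poincar\'e-type estimate on $\Gamma$ — the scale-$2^{-k}$ contribution is bounded by the corresponding scale-$2^{-k}$ slice of $\|g\|_H^2$ — produces a convergent geometric series in $k$ totalling $O(\|g\|_H^2)$, which yields \eqref{a7.12}. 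This scale-by-scale estimate, resting decisively on the Ahlfors regularity with $d<n-1$, is the delicate technical core of the argument.
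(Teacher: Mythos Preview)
Your argument has a gap in each half.

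For the trace identity \eqref{a7.11}: the bad-set step asserts a ``local bound on $|Eg|$'' near $\Gamma$, but this is false for general $g\in L^1_{\mathrm{loc}}(\Gamma,\sigma)$ --- the Whitney averages $y_Q=\fint_{B_Q}g\,d\sigma$ need not stay bounded as $r(Q)\to 0$, so $Eg$ can be unbounded near $\Gamma$. The bad-set contribution \emph{can} be controlled, but only by integrating through the Whitney structure (bounded overlap at each scale) to get $\frac{1}{|B(x,r)|}\int_{\mathrm{bad}}|Eg-g(x)|\lesssim \tau^{n-d}\big(\fint_{B(x,Cr)\cap\Gamma}|g|\,d\sigma+|g(x)|\big)$, which is finite at Lebesgue points; a pointwise bound does not suffice. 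The paper avoids the good/bad split entirely: for $z$ in a Whitney cube $R$, every contributing $Q$ has $B_Q\subset 100B_R$ with $\sigma(B_Q)\approx\sigma(100B_R)$, so $|Eg(z)-g(x)|\leq C\fint_{100B_R}|g-g(x)|\,d\sigma$; summing $r^{-n}\sum_{R\in\cW(B)}|R|\fint_{100B_R}|g-g(x)|$ by dyadic scales gives directly $\lesssim r^{-d}\int_{B(x,Cr)\cap\Gamma}|g-g(x)|\,d\sigma\to 0$.

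For the norm bound \eqref{a7.12}: your multiplicity $N(x,y)$ is genuinely infinite, and this is self-inflicted. When you bound $|y_R-y_Q|^2\lesssim r(Q)^{1-d}\int_{B_Q\times B_R}\frac{|g(x)-g(y)|^2}{|x-y|^{d+1}}\,d\sigma\,d\sigma$ via $|x-y|\leq Cr(Q)$, you trade the factor $r(Q)^{-d-1}$ for $|x-y|^{-d-1}$, discarding exactly the geometric decay in scale that would make the sum over $Q$ converge. Your proposed fix (reorganizing by scale and invoking an unspecified ``Besov-type'' estimate) is too vague, and once made precise would amount to undoing that step. The paper never introduces $|x-y|$ in the cube-wise bound: it keeps $|y_R-y_Q|^2\leq C\delta(R)^{-2d}\int_{B_R}\int_{100B_R}|g(x)-g(y)|^2\,d\sigma\,d\sigma$, so summing yields $\|Eg\|_W^2\leq C\iint|g(x)-g(y)|^2\,h(x,y)\,d\sigma\,d\sigma$ with $h(x,y)=\sum_R\delta(R)^{-d-1}$ over $R$ such that $x\in B_R$, $y\in 100B_R$. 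Such $R$ must satisfy $\delta(R)\gtrsim|x-y|$, and there are $O(1)$ per dyadic scale, so $h(x,y)\leq C|x-y|^{-d-1}$ by a geometric series --- no multiplicity issue, no auxiliary lemma needed.
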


\smallskip 

\bp
Let $g \in L^1_{loc}$ be given, and set $u = Eg$. 
We start the proof with the verification of \eqref{a7.11}.
Recall that by definition of the trace, 
\begin{equation}\label{3.12}
T(E(g))(x) = \lim_{r \to 0} \fint_{B(x,r)} u(z) \, dz
\end{equation}
for $\sigma$-almost every $x\in \Gamma$; 
we want to prove that this limit is $g(x)$ for almost every $x\in \Gamma$, 
and we can restrict to the case when $x$ is a Lebesgue point for $g$ (as in \eqref{3.1}).

Fix such an $x \in \Gamma$ and $r > 0$. Set $B = B(x,r)$, then
\begin{equation}\label{3.13}
\Big| \fint_{B(x,r)} u(z)\, dz - g(x) \Big| \leq \fint_{B} |u(z)-g(x)| dz
\leq C r^{-n} \sum_{R\in \cW(B)}
\int_R |u(z)-g(x)| dz,
\end{equation}
where we denote by $\cW(B)$ the set of cubes $R \in \cW$ that meet $B$.
 
Let $R \in \cW$ and $z\in R$ be given. 
Recall from \eqref{3.2} that $u(z) = \sum_{Q \in \cW} \varphi_Q(z) y_Q$;
the sum has less than $C$  terms, corresponding to cubes 
$Q \in \cW$ such that $z\in 2Q$.
If $Q$ is such a cube, we have seen that $\frac12 r(R) \leq r(Q) \leq 2 r(R)$, and 
since $\delta(R) \geq 10 r(Q)$ because $20Q \subset \Omega$, a small computation with \eqref{a7.6}
yields that $B_Q \subset 100B_R$. Hence
\begin{equation}\label{3.14}
|y_Q - g(x)| = \Big|\fint_{B_Q} g\,  d\sigma - g(x) \Big| \leq \fint_{B_Q} |g-g(x)| d\sigma
\leq C \fint_{100B_R} |g-g(x)| d\sigma.
\end{equation}
Since $u(z)$ is an average of such $y_Q$, we also get that 
$|u(z)-g(x)| \leq C \fint_{100B_R} |g-g(x)| d\sigma$, and \eqref{3.13} yields
\begin{equation}\label{3.15}
\Big| \fint_{B(x,r)} u\, dz - g(x) \Big| \leq C r^{-n} \sum_{R\in \cW(B)} |R| \fint_{100B_R} |g-g(x)| d\sigma.
\end{equation}
Notice that $\delta(R) = \dist(R,\Gamma) \leq \dist(R,x) \leq r$ because 
$R$ meets $B = B(x,r)$ and $x\in \Gamma$, so, by definition of $\cW$, the sidelength of $R$ 
is such that $r(R) \leq Cr$.
Let $\cW_k(B)$ be the collection of $R \in \cW(B)$ such that $r(R) = 2^k$. For each $k$,
the balls $100B_R$, $R \in \cW_k(B)$ have bounded overlap 
(because the cubes $R$ are essentially
disjoint and they have the same sidelength), and they are 
contained in $B' = B(x,Cr)$. Thus
\begin{eqnarray}\label{3.16}
\sum_{R\in \cW_k(B)} |R| \fint_{100B_R} |g-g(x)| d\sigma
&\leq& C 2^{nk} 2^{-dk} \sum_{R\in \cW_k(B)} \int_{100B_R} |g-g(x)| d\sigma
\nn\\
&\leq& C 2^{(n-d)k} \int_{B'} |g-g(x)| d\sigma.
\end{eqnarray}
We may sum over $k$ (because $2^k = r(R) \leq Cr$ when $R\in \cW_k(B)$, and the exponent
$n-d$ is positive). We get that
\begin{equation}\label{3.17}
\Big| \fint_{B(x,r)} u\, dz - g(x) \Big| \leq C r^{-n} \sum_k 2^{(n-d)k} \int_{B'} |g-g(x)| d\sigma
\leq C r^{-d}\int_{B'} |g-g(x)| d\sigma.
\end{equation}
If $x$ is a Lebesgue point for $g$,
\eqref{3.1} says that both sides of \eqref{3.17} tend to $0$ when $r$ tends to $0$. 
Recall from \eqref{3.12} that for almost every $x\in \Gamma$, 
$T(E(g))(x)$ is the limit of $\fint_{B(x,r)} u$; if in addition $x$ is a Lebesgue point, 
we get that $T(E(g))(x) = g(x)$. This completes our proof of \eqref{a7.11}.

\medskip
Now we show that for $g\in H$, $u\in W$ and even $\|u\|_W \leq C \|g\|_H$.
The fact that $u$ is locally integrable in $\Omega$ is obvious
($u$ is continuous there because
the cubes $2Q$ have bounded overlap), 
and similarly the distribution derivative is locally 
integrable, and given by
\begin{equation}\label{3.4}
\nabla u(x) = \sum_{Q \in \cW} y_Q \nabla\varphi_Q(x) 
= \sum_{Q \in \cW} [y_Q-y_R] \nabla\varphi_Q(x),
\end{equation}
where in the second part (which will be used later) we can pick for $R$
any given cube (that may depend on $x$), for instance, one to the cubes of $\cW$ that contains $x$,
and the identity holds because $\sum_Q \nabla\varphi_Q = \nabla(\sum_Q \varphi_Q) = 0$.
Thus the question is merely the computation of 
\begin{eqnarray}\label{3.5}
\|u\|^2_W &=& \int_\Omega |\nabla u(x)|^2 w(x) dx
= \sum_{R\in \cW} \int_R |\nabla u(x)|^2 w(x) dx
\nn\\
&\leq& C \sum_{R\in \cW} \delta(R)^{d+1-n}\int_R |\nabla u(x)|^2 dx
\end{eqnarray}
(because $w(x) = \delta(x)^{d+1-n} \leq \delta(R)^{d+1-n}$ when $x\in R$).
Fix $R \in \cW$, denote by $\cW(R)$ the set of cubes $Q \in \cW$ such that $2Q$ meets $R$,
and observe that for $x\in R$,
\begin{equation}\label{3.6}
|\nabla u(x)| \leq \sum_{Q \in \cW(R)} \big| [y_Q-y_R] \nabla\varphi_Q(x)\big|
\leq C \delta(R)^{-1} \sum_{Q \in \cW(R)} \big|y_Q-y_R \big|
\end{equation}
because $|\nabla\varphi_Q(x)| \leq C \delta(Q)^{-1} \leq C \delta(R)^{-1}$ by definitions and the standard
geometry of Whitney cubes. In turn,
\begin{eqnarray}\label{3.7}
\big|y_Q-y_R \big| 
&\leq& \fint_{\Gamma \cap B_Q}\fint_{\Gamma \cap B_R} |g(x)-g(y)| d\sigma(x)d\sigma(y)
\nn\\
&\leq& \Big\{\fint_{\Gamma \cap B_Q}\fint_{\Gamma \cap B_R} 
|g(x)-g(y)|^2 d\sigma(x)d\sigma(y)\Big\}^{1/2}
\nn\\
&\leq& C \delta(R)^{-d} \Big\{\int_{\Gamma \cap B_R}\int_{\Gamma \cap 100B_R} 
|g(x)-g(y)|^2 d\sigma(x)d\sigma(y)\Big\}^{1/2}
\end{eqnarray}
by \eqref{1.1} and because $B_Q \subset 100B_R$. Thus by \eqref{3.6}
\begin{eqnarray}\label{3.8}
\int_R |\nabla u(x)|^2 dx 
&\leq& C |R| \delta(R)^{-2} \delta(R)^{-2d} \int_{\Gamma \cap B_R}
\int_{\Gamma \cap 100B_R}  |g(x)-g(y)|^2 d\sigma(x)d\sigma(y)
\nn\\
&\leq& C \delta(R)^{n-2d-2} \int_{\Gamma \cap B_R}
\int_{\Gamma \cap 100B_R}  |g(x)-g(y)|^2 d\sigma(x)d\sigma(y)
\end{eqnarray}
because $\cW(R)$ has at most $C$ elements. We multiply by $\delta(R)^{d+1-n}$, sum 
over $R$, and get that
\begin{eqnarray}\label{3.9}
\|u\|^2_W &\leq& C \sum_{R\in \cW} \delta(R)^{-d-1} \int_{\Gamma \cap B_R}
\int_{\Gamma \cap 100B_R}  |g(x)-g(y)|^2 d\sigma(x)d\sigma(y)
\nn\\
&\leq& C \int_\Gamma\int_\Gamma |g(x)-g(y)|^2 h(x,y) d\sigma(x)d\sigma(y),
\end{eqnarray}
where we set
\begin{equation}\label{3.10}
h(x,y) = \sum_{R} \delta(R)^{-d-1},
\end{equation}
and we sum over $R \in \cW$ such that $x\in B_R$ and $y\in 100B_R$.
Notice that $|x-y| \leq 101\delta(R)$, so we only sum over $R$ such that 
$\delta(R) \geq |x-y|/101$. 

Let us fix $x$ and $y$, and evaluate $h(x,r)$.
For each scale (each value of $\diam(R)$), there are less than 
$C$ cubes $R\in \cW$ that are possible, because $x\in B_R$ implies that 
$\dist(x,R) \leq 3\delta(R)$. So the contribution of the cubes for which $\diam(R)$ is of the
order $r$ is less than $C r^{-d-1}$. We sum over the scales (larger than $C^{-1} |x-y|$)
and get less than $C |x-y|^{-d-1}$.
That is, $h(x,y) \leq C |x-y|^{-d-1}$ and
\begin{equation}\label{3.11}
\|u\|^2_W \leq C \int_\Gamma\int_\Gamma {|g(x)-g(y)|^2 \over |x-y|^{d+1}} d\sigma(x)d\sigma(y)
= C \|g\|_H^2,
\end{equation}
as needed for \eqref{a7.12}. Theorem \ref{tExt} follows.
\ep

\ms

We end the section with the density in $H$ of (traces of) smooth functions.

\begin{lemma} \label{ldensH}
For every $g\in H$,
we can find a sequence $(v_k)_{k\in \bN}$ 
in $C^\infty(\R^n)$ such that $Tv_k$
converges to $g$ in $H$ in $L^1_{loc}(\Gamma,\sigma)$, and $\sigma$-a.e. pointwise.
\end{lemma}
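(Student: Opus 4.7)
The plan is to produce the approximants by first extending $g$ to an element $u \in W$, then mollifying $u$ to get smooth functions on $\R^n$, and finally invoking the boundedness of $T$ to push convergence down to $\Gamma$. Concretely, given $g \in H$, let $u = E g \in W$, where $E$ is the extension operator of Theorem \ref{tExt}; then $T u = g$ $\sigma$-a.e.\ on $\Gamma$ and $\|u\|_W \le C\|g\|_H$. Set $v_k = \rho_{1/k} \ast u$ for $k \ge 1$, where $\rho_\varepsilon$ is the standard mollifier of Lemma \ref{lconvol}. By parts (i), (iv), (v) of that lemma, $v_k \in C^\infty(\R^n)$, $\|v_k - u\|_W \to 0$, and $v_k \to u$ in $L^1_{\loc}(\R^n)$.

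Now I read off the three desired modes of convergence. For $H$-convergence, Theorem \ref{tTr} asserts the boundedness of $T : W \to H$, so
\[
\|T v_k - g\|_H \;=\; \|T v_k - T u\|_H \;\le\; C \, \|v_k - u\|_W \;\longrightarrow\; 0.
\]
For $L^1_{\loc}(\Gamma,\sigma)$ convergence, I apply the implication \eqref{CoA} established inside the proof of Lemma \ref{lcomp2}: since $v_k \to u$ in both $W$ and $L^1_{\loc}(\R^n)$, we obtain $T v_k \to T u = g$ in $L^1_{\loc}(\Gamma,\sigma)$. Finally, passing to a subsequence of this last convergence gives $T v_k \to g$ $\sigma$-a.e.\ on $\Gamma$; note that since each $v_k$ is continuous on $\R^n$, the trace $T v_k$ agrees with the pointwise restriction $v_k|_\Gamma$ wherever the Lebesgue-type limit in \eqref{defT} exists, which is at $\sigma$-a.e.\ point of $\Gamma$.

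There is no genuinely hard step here: the lemma is essentially a bookkeeping corollary of the extension theorem (Theorem \ref{tExt}), the continuity of the trace (Theorem \ref{tTr}), and the smooth approximation result for $W$ via mollification (Lemma \ref{lconvol}) together with the trace-continuity criterion \eqref{CoA}. The only mild subtlety to verify is that the sequence one writes down really does enjoy convergence in \emph{all three} senses simultaneously, which is why it is convenient to take $v_k = \rho_{1/k} \ast E g$ rather than, say, apply the density results of Lemma \ref{ldensd>1} or Lemma \ref{ldensdleq1} directly; the mollified sequence automatically comes with both $W$ and $L^1_{\loc}(\R^n)$ convergence, which is exactly the joint input needed to feed into \eqref{CoA}. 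Extracting a subsequence for the pointwise statement is then routine.
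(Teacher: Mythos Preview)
Your proof is correct and follows essentially the same approach as the paper: extend $g$ via $E$, mollify, and use the boundedness of $T$ (Theorem~\ref{tTr}), the criterion \eqref{CoA}, and a subsequence extraction to obtain the three modes of convergence. The paper's argument is identical in structure, with the same choice $v_\varepsilon = \rho_\varepsilon \ast Eg$.
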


Notice that since $v_k$ is continuous across $\Gamma$, $Tv_k$ is the restriction of $v_k$ to $\Gamma$,
and we get the density in $H$ of continuous functions on $\Gamma$, for the same three convergences.

\smallskip
\bp
The quickest way to prove this will be 
to use Theorem \ref{tTr}, Theorem \ref{tExt} and the results in  Section~\ref{Scompleteness}.

Let $g\in H$
be given. Let $\rho_\epsilon$ be defined as in Lemma~\ref{lconvol}, and set 
$v_\varepsilon = \rho_\epsilon * Eg$ and $g_\varepsilon = Tv_\varepsilon$.
Theorem \ref{tExt} says that $Eg \in W$; then by Lemma~\ref{lconvol}, $v_\varepsilon = \rho_\epsilon * Eg$ 
lies in $C^\infty(\R^n) \cap W$. We still need to check that $g_\varepsilon$ tends to $g$
for the three types of convergence.

By Lemma~\ref{lconvol}, $v_\varepsilon = \rho_\epsilon*Eg$ converges to $Eg$ 
in $L^1_{loc}(\R^n)$ and in $W$, and then \eqref{CoA} implies that 
$g_\epsilon = Tv_\varepsilon$ tends to $g = T(Eg)$ in $L^1_{loc}(\Gamma,\sigma)$.

The convergence in $H$ is the consequence of the  
bounds 
\begin{equation} \label{ldensH1}
\|g-g_\epsilon\|_H \leq \|T(Eg - v_\epsilon)\|_H \leq C \|Eg - v_\epsilon\|_W
\end{equation}
that come from Theorem \ref{tExt},
plus the fact that the 
right-hand side converges to 0 thanks to Lemma~\ref{lconvol}.

For the a.e. pointwise convergence, let us cheat slightly: we know that the $g_\varepsilon$
converge to $g$ in $L^1_{loc}(\Gamma,\sigma)$; we can then use the diagonal process to
extract a sequence of $g_\varepsilon$ that converges pointwise a.e. to $g$, which is enough
for the lemma.
\ep

\chapter{Definition of Solutions}
\label{Ssolutions}

The aim of the following sections is to define the  harmonic measure on $\Gamma$. We follow the presentation of Kenig \cite[Sections 1.1 and 1.2]{KenigB}.
\medskip

In addition to $W$, we introduce a local
version of $W$. Let $E \subset \R^n$ be an open set. The set of function $\WW(E)$ is defined as 
\begin{equation} \label{defWE}
\WW(E) = \{f \in L^1_{loc}(E), \, \varphi f \in W \, \text{ for all } \varphi \in C_0^\infty(E)\}
\end{equation}
where the function $\varphi f$ is seen 
as a function on $\R^n$
(since $\varphi f$ is compactly supported in $E$, it can be extended by 0 outside $E$).
The inclusion $W \subset \WW(E)$ is given by 
Lemma~\ref{lmult}. 

Let us discuss a 
bit more about our newly defined spaces. First, 
we claim that
\begin{equation} \label{defWE2}
\WW(E)  \subset  \{f \in L^1_{loc}(E), \, \nabla f \in L^2_{loc}(E,w)\},
\end{equation}
where here $\nabla f$ denotes 
the distributional derivative 
of $f$ in $E$.
To see this,
let $f \in \WW(E)$ be given; we just need to see that $\nabla f \in L^2(K,w)$
for any relatively compact open subset $K$ of $E$. Pick 
$\varphi \in C^\infty_0(E)$ such that $\varphi \equiv 1$ on $K$, and observe that
$\varphi f \in W$ by \eqref{defWE}, so Lemma~\ref{W=barW} says that $\varphi f$ has 
a distribution derivative (on $\R^n$) that lies in $L^2(\R^n, w)$. Of course the two distributions
$\nabla f$ and $\nabla (\varphi f)$ coincide near $K$, so $\nabla f \in L^2(K,w)$ and our claim follows.

The reverse inclusion $\WW(E)  \supset  \{f \in L^1_{loc}(E), \, \nabla f \in L^2_{loc}(E,w)\}$ surely holds, but we will not use it.
Note that thanks to Lemma~\ref{W=barW}, we do not need to worry, even locally as here, 
about the difference between having a derivative in $\Omega \cap E$ that lies in $L^2_{loc}(E,w)$
and the apparently stronger condition of having a derivative in $E$ that lies in $L^2_{loc}(E,w)$. 
Also note that $\WW(\R^n) \neq W$; the difference is that $W$ demands some decay of $\nabla u$ at infinity,
while $\WW(\R^n)$ doesn't. 

\begin{lemma} \label{defTrWE}
Let $E\subset \R^n$ be an open set.
For every function $u\in \WW(E)$, we can define the trace of $u$ on $\Gamma \cap E$ by
\begin{equation} \label{defTrWE2}
Tu (x) = \lim_{r\to 0} \fint_{B(x,r)} u(z) \, dz
\qquad \text{ for $\sigma$-almost every } x\in \Gamma \cap E,
\end{equation}
and $Tu \in L^1_{loc}(\Gamma \cap E,\sigma)$.
 Moreover, for 
every choice of $f\in \WW(E)$ and $\varphi \in C^\infty_0(E)$, 
 \begin{equation} \label{defTrWE1}
T(\varphi u)(x) = \varphi(x) Tu(x)  \ \text{ for $\sigma$-almost every } x\in \Gamma \cap E.
 \end{equation}
\end{lemma}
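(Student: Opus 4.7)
The plan is to reduce Lemma \ref{defTrWE} to the global trace theorem (Theorem \ref{tTr}) and the product rule (Lemma \ref{lmult}) by localizing with cutoffs from $C^\infty_0(E)$. The key observation is that the very definition of $\WW(E)$ asserts $\varphi u \in W$ for every $\varphi \in C^\infty_0(E)$, so every local question about $u$ near $\Gamma \cap E$ reduces to a question about a function in $W$, for which everything is already established.

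First, for any $x_0 \in \Gamma \cap E$ I would pick $\rho > 0$ with $\overline{B(x_0, 2\rho)} \subset E$ and a cutoff $\varphi \in C^\infty_0(E)$ with $\varphi \equiv 1$ on $B(x_0, \rho)$. Then $\varphi u \in W$, and Theorem \ref{tTr} furnishes a trace $T(\varphi u)$ defined $\sigma$-a.e.\ on $\Gamma$ as the limit of its ball averages. For $x \in B(x_0, \rho/2) \cap \Gamma$ and $0 < r < \rho/2$, every ball $B(x, r)$ lies inside $B(x_0, \rho)$, so $\fint_{B(x,r)} \varphi u = \fint_{B(x,r)} u$. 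This shows that the limit in \eqref{defTrWE2} exists for $\sigma$-a.e.\ $x \in B(x_0, \rho/2) \cap \Gamma$ and equals $T(\varphi u)(x)$. Moreover, if two cutoffs $\varphi_1, \varphi_2 \in C^\infty_0(E)$ both equal $1$ near $x_0$, the identity of averages forces the two candidate local traces to coincide $\sigma$-a.e.\ near $x_0$, so $Tu(x_0)$ is unambiguously defined. A countable cover of $\Gamma \cap E$ by such balls then glues these local definitions into a single measurable function $Tu$ on $\Gamma \cap E$. Local integrability follows by covering a compact set $K \subset \Gamma \cap E$ by finitely many such balls and applying \eqref{L1locsigma} to each $T(\varphi_j u)$ on its patch.

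To prove \eqref{defTrWE1}, given $\varphi \in C^\infty_0(E)$ I would choose an auxiliary $\psi \in C^\infty_0(E)$ with $\psi \equiv 1$ on $\supp \varphi$. Then $\psi u \in W$ and $\varphi \psi u = \varphi u$, so Lemma \ref{lmult} applied to $\psi u$ and $\varphi$ (viewed as an element of $C^\infty_0(\R^n)$) yields $T(\varphi u) = T(\varphi \cdot \psi u) = \varphi \cdot T(\psi u)$ $\sigma$-a.e.\ on $\Gamma$. By the construction of $Tu$ in the previous step, $T(\psi u)(x) = Tu(x)$ whenever $\psi$ is identically $1$ in a neighborhood of $x$; this holds on a neighborhood of $\supp \varphi$, while outside $\supp \varphi$ both sides of \eqref{defTrWE1} are zero. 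Hence \eqref{defTrWE1} holds $\sigma$-a.e.\ on $\Gamma \cap E$.

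I do not anticipate any genuine obstacle: the argument is a standard localization via cutoffs, and the invariance of the local trace under the choice of cutoff is transparent from the averaging formula \eqref{defTrWE2}. The only mild bookkeeping is to assemble the $\sigma$-null sets coming from the countable cover, but since countable unions of $\sigma$-null sets are $\sigma$-null this is automatic.
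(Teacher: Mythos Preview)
Your proposal is correct and follows essentially the same approach as the paper: localize via cutoffs in $C^\infty_0(E)$ to reduce to functions in $W$, invoke the global trace theorem for existence and \eqref{L1locsigma} for local integrability, and then apply Lemma~\ref{lmult} with an auxiliary cutoff $\psi \equiv 1$ on $\supp\varphi$ to obtain \eqref{defTrWE1}. The paper's write-up is slightly terser (one cutoff per compact set rather than a countable cover of balls), but the argument is the same.
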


\smallskip
\bp 
The existence of $\lim_{r\to 0} \fint_{B(x,r)} u(z)dz$ is easy. If $B$ is any relatively compact ball in $E$,
we can pick $\varphi \in C^\infty_0(E)$ such that $\varphi \equiv 1$ near $B$. 
Then $\varphi u \in W$, and the analogue of \eqref{defTrWE2} for $\varphi u$ comes
with the construction of the trace. This implies the existence
of the same limit for $f$, almost everywhere in $\Gamma \cap B$.

Next we check that $Tu \in L^1_{loc}(\Gamma \cap E,\sigma)$.
Let $K$ be a compact set in $E$; we want to show that $Tu \in L^1(K\cap \Gamma,\sigma)$. 
Take $\varphi \in C^\infty_0(E)$ such that $\varphi \equiv 1$ on $K$. Then $\varphi u \in W$ by definition of $\WW(E)$ and thus 
 \begin{equation} \label{defTrWE3}
\|Tu\|_{L^1(K\cap \Gamma,\sigma)} \leq \|T[\varphi u]\|_{L^1(K \cap \Gamma,\sigma)} \leq C_K \|\varphi u\|_W <+\infty
 \end{equation}
by 
 \eqref{L1locsigma}.
 
Let us turn to the proof of \eqref{defTrWE1}. Take $\varphi \in C^\infty_0(E)$ and then choose $\phi \in C^\infty_0(E)$ such that $\phi \equiv 1$ on $\supp \, \varphi$. 
According to Lemma~\ref{lmult}, $T(\varphi \phi u)(x) = \varphi(x) T(\phi u)(x)$ for almost every $x\in \Gamma$. The result then holds by noticing that $\varphi \phi u = \varphi u$ (i.e. $T(\varphi \phi u)(x) = T(\varphi u)(x)$) and $\phi u = u$ on $\supp \, \varphi$ (i.e. $\varphi(x) T(\phi u)(x) = \varphi(x) T(u)(x)$).
\ep

\bigskip

Let us 
remind the reader that we will be working with the differential operator $L=- \diver A \nabla$,
where $A: \Omega \to \mathbb M_n(\R)$ satisfies, for some constant
$C_1 \geq 1$,
\begin{itemize}
\item the boundedness condition
\begin{equation} \label{ABounded2}
|A(x)\xi \cdot \nu| \leq C_1 w(x) |\xi|\cdot |\nu| \qquad \forall x\in \Omega, \, \xi,\nu\in \R^n ;
\end{equation}
\item the ellipticity condition 
\begin{equation} \label{AElliptic2}
A(x)\xi \cdot \xi \geq C_1^{-1} w(x) |\xi|^2 \qquad \forall x\in \Omega, \, \xi\in \R^n.
\end{equation}
\end{itemize}
We denote the matrix
$w^{-1}A$ by $\A$, so that $\int_\Omega A\nabla u \cdot \nabla v = \int_\Omega \A \nabla u \cdot \nabla v \, dm$. The matrix $\A$ satisfies the unweighted elliptic and boundedness conditions, that is 
\begin{equation} \label{ABounded}
|\A(x)\xi \cdot \nu| \leq C_1 |\xi|\cdot |\nu| \qquad \forall x\in \Omega, \, \xi,\nu\in \R^n,
\end{equation}
and
\begin{equation} \label{AElliptic}
\A(x)\xi \cdot \xi \geq C_1^{-1} |\xi|^2 \qquad \forall x\in \Omega, \, \xi\in \R^n.
\end{equation}

Let us introduce now the bilinear form $a$ defined by
\begin{equation} \label{defofa}
a(u,v) = \int_\Omega A\nabla u \cdot \nabla v \, dz= \int_\Omega \A\nabla u \cdot \nabla v \, dm.
\end{equation}
From \eqref{ABounded} and \eqref{AElliptic}, we deduce that $a$ is a bounded on $W \times W$ 
and coercive on $W$ (hence also on $W_0$). That is,
\begin{equation}\label{aa8}
a(u,u) = \int_\Omega \A \nabla u \cdot \nabla u \, dm \geq C_1^{-1} \int_\Omega |\nabla u|^2 \, dm
= C_1^{-1} \| u \|_W^2
\end{equation}
for $u\in W$, by \eqref{AElliptic}.
\begin{definition}
Let $E \subset \Omega$ be an open set. 

We say that $u \in \WW(E)$ is a solution of $Lu=0$ in $E$ if for any $\varphi\in C^\infty_0(E)$,
\begin{equation} \label{defsol}
a(u,\varphi) = \int_\Omega A \nabla u \cdot \nabla \varphi \, dz= \int_\Omega \A \nabla u \cdot \nabla \varphi \, dm = 0.
\end{equation}
We say that $u \in \WW(E)$ is a subsolution (resp. supersolution) in $E$ if for any $\varphi\in C^\infty_0(E)$ 
such that $\varphi\geq 0$, 
\begin{equation} \label{defsubsol}
a(u,\varphi) = \int_\Omega A \nabla u \cdot \nabla \varphi \, dz= \int_\Omega \A \nabla u \cdot \nabla \varphi \, dm \leq 0 \ (\text{resp.} \geq 0).
\end{equation}
\end{definition}

In particular, subsolutions and supersolutions are always associated to the equation $Lu=0$. In the same way, 
each time we say that $u$ is a solution in $E$, it means that $u$ is in $\WW(E)$ and is a solution 
of $Lu=0$ in $E$.

\medskip

We start with the following important result, that extends the possible test functions in the definition of solutions.

\begin{lemma}  \label{rdefsol}
Let $E \subset \Omega$ be an open set and 
let $u\in \WW(E)$ be a solution of 
$Lu=0$ in $E$. 
Also denote by $E^\Gamma$ is the interior of $E \cup \Gamma$.
The identity \eqref{defsol} holds: 
\begin{itemize}
\item when $\varphi \in W_0$ is compactly supported in $E$;
\item when $\varphi \in W_0$ is compactly supported in $E^\Gamma$ \ub{and} $u \in \WW(E^\Gamma)$;
\item when $E = \Omega$, $\varphi \in W_0$, \ub{and} $u\in W$.
\end{itemize}
In addition, 
\eqref{defsubsol} holds when $u$ is a subsolution (resp. supersolution) in $E$ and $\varphi$ is a non-negative test function satisfying one of the above conditions.
\end{lemma}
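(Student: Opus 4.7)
The overall strategy is uniform across the three cases: approximate $\varphi \in W_0$ in the $W$-norm by a sequence $\varphi_k$ of test functions lying in the correct set ($C^\infty_0(E)$ for bullets 1, 2, and $C^\infty_0(\Omega)$ for bullet 3), with $\varphi_k \geq 0$ whenever $\varphi \geq 0$, and then pass to the limit in the identity $a(u,\varphi_k) = 0$ (or the inequality $a(u,\varphi_k) \leq 0$ in the subsolution case). The limit passage is driven by Cauchy-Schwarz combined with the boundedness \eqref{ABounded} of $\A$:
\[
|a(u,\varphi) - a(u,\varphi_k)| = \Big|\int_\Omega \A \nabla u \cdot \nabla(\varphi - \varphi_k)\, dm\Big| \leq C_1 \|\nabla u\|_{L^2(K, w)} \|\varphi - \varphi_k\|_W,
\]
where $K$ is an open set containing the supports of $\varphi$ and all the $\varphi_k$ and on which $\nabla u \in L^2(K,w)$ by the appropriate $\WW$-hypothesis together with \eqref{defWE2}.

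For bullet 1, $\supp \varphi$ is a compact subset of $E \subset \Omega$ and hence at positive distance from $\Gamma$, so I would set $\varphi_k := \rho_{\epsilon_k} \ast \varphi$ with $\epsilon_k \to 0$ chosen small enough that $\supp \varphi_k \subset E$. Then $\varphi_k \in C^\infty_0(E)$, $\varphi_k \to \varphi$ in $W$ by Lemma \ref{lconvol}, and $\varphi_k \geq 0$ when $\varphi \geq 0$ since $\rho_{\epsilon_k} \geq 0$. For bullet 3, the first assertion of Lemma \ref{ldens0} directly furnishes $\varphi_k \in C^\infty_0(\Omega)$ with $\varphi_k \to \varphi$ in $W$; the construction in that lemma uses only non-negative cut-offs $\varphi_r, \phi_R$ and convolution against the non-negative kernel $\rho_\epsilon$, so it preserves the sign of $\varphi$. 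Since $u \in W$ gives $\nabla u \in L^2(\Omega, w)$ globally, the Cauchy-Schwarz step goes through with $K = \Omega$, and one obtains $a(u, \varphi) = \lim_k a(u, \varphi_k)$ with the correct sign.

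For bullet 2, the support $\supp \varphi$ is compact in $E^\Gamma$ but may touch $\Gamma$, so mollification alone is not enough. I would cover $\supp \varphi$ by finitely many open balls $B_j$ with $\overline{B_j} \subset E^\Gamma$, fix a subordinate non-negative smooth partition of unity $\{\eta_j\}$, and decompose $\varphi = \sum_j \eta_j \varphi$. Each $\eta_j \varphi$ lies in $W_0$ by Lemma \ref{lmult}, is supported in $B_j$, and is non-negative when $\varphi$ is. The second assertion of Lemma \ref{ldens0} then approximates $\eta_j \varphi$ in the $W$-norm by functions in $C^\infty_0(B_j \setminus \Gamma)$, and the chain of inclusions $B_j \setminus \Gamma \subset E^\Gamma \setminus \Gamma \subset E$ (the last because $E^\Gamma \subset E \cup \Gamma$) places these approximations in $C^\infty_0(E)$. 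Summing over $j$ yields the desired approximations $\varphi_k \in C^\infty_0(E)$ of $\varphi$ in $W$. The hypothesis $u \in \WW(E^\Gamma)$ supplies $\nabla u \in L^2(K, w)$ for a compact neighborhood $K$ of $\supp \varphi$ inside $E^\Gamma$, which is exactly what the Cauchy-Schwarz step needs.

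The only real obstacle is the bookkeeping in bullet 2: one must verify both that the approximation scheme of Lemma \ref{ldens0} preserves non-negativity (so that the subsolution/supersolution inequality passes to the limit) and that the approximations land in the correct space $C^\infty_0(E)$, not merely in $C^\infty_0(\Omega)$. Preservation of sign is immediate from inspection of the proof of Lemma \ref{ldens0}, which multiplies by non-negative cut-off functions and convolves with a non-negative mollifier; membership in $C^\infty_0(E)$ is forced by our choice of balls $B_j$ with $\overline{B_j} \subset E^\Gamma$. Once these points are in hand, the three assertions follow from the single Cauchy-Schwarz passage to the limit described above.
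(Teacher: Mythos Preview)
Your proof is correct and follows essentially the same approach as the paper: approximate $\varphi$ in $W$ by functions in $C^\infty_0(E)$ (or $C^\infty_0(\Omega)$ for bullet 3), then pass to the limit using the boundedness of $\phi \mapsto \int_K \A\nabla u \cdot \nabla\phi\, dm$ on $W$, which follows from $\nabla u \in L^2(K,w)$ on a suitable compact $K$. The only cosmetic difference is in bullet 2: you localize via a partition of unity subordinate to balls $B_j \subset E^\Gamma$ and invoke the second assertion of Lemma~\ref{ldens0} as a black box on each $\eta_j\varphi$, whereas the paper simply picks a relatively compact open $\wt{E^\Gamma} \subset E^\Gamma$ containing $\supp\varphi$ and notes that the cut-off-and-mollify construction from the proof of Lemma~\ref{ldens0} (Part~(i)) directly yields approximants in $C^\infty_0(\wt{E^\Gamma}\setminus\Gamma) \subset C^\infty_0(E)$; both routes are valid and equally short.
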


\begin{remark} \label{rdefsol2}
The second statement of the Lemma will be used in the following context. 
Let $B \subset \R^n$ be a ball centered on $\Gamma$ and let $u \in \WW(B)$ be a solution of $Lu=0$ in $B \setminus \Gamma$.
Then we have
\begin{equation} \label{defsola}
a(u,\varphi) = \int_\Omega \A \nabla u \cdot \nabla \varphi \, dm = 0
\end{equation}
for any $\varphi \in W_0$ compactly supported in $B$. Similar statements can be written for subsolutions and supersolutions.
\end{remark}

\bp Let $u\in \WW(E)$ be a solution of $Lu=0$ on $E$ and let $\varphi \in W_0$ be 
compactly supported in $E$. We want to prove that $a(u,\varphi)=0$. 

Let $\wt E$ be an open set such that $\supp \, \varphi$ compact in $\wt E$ and 
$\wt E$ is relatively compact in $E$.  By 
Lemma~\ref{lconvol}, there exists a sequence $(\varphi_k)_{k\geq 1}$ of functions in $C^\infty_0(\wt E)$ such that $\varphi_k \to \varphi$ in $W$. 
Observe that the map
\begin{equation} \label{defsol1}
\phi \to a_{\wt E}(u,\phi) = \int_{\wt E} \A \nabla u \cdot \nabla \phi \, dm
\end{equation}
is bounded on $W$ thanks to \eqref{ABounded} and the fact that $\nabla u \in L^2(\wt E,w)$ 
(see \eqref{defWE2}). 
Then, since $\varphi$ and the $\varphi_k$
are supported in $\wt E$,
\begin{equation} \label{defsol2}
a(u,\varphi) = a_{\wt E}(u,\varphi) = \lim_{k\to +\infty} a_{\wt E}(u,\varphi_k) = \lim_{k\to +\infty} a(u,\varphi_k) = 0
\end{equation}
 by
\eqref{defsol}.

\medskip

Now let
$u\in \WW(E^\Gamma)$ be a solution of $Lu = 0$ on $E$ and let $\varphi \in W_0$ be compactly supported 
in $E^\Gamma$. We want to prove that $a(u,\varphi)=0$. 

Let $\wt{E^\Gamma}$ be an open set such that $\supp \, \varphi$ is compact in $\wt{E^\Gamma}$ 
and $\wt{E^\Gamma}$ is relatively compact in $E^\Gamma$. If we look at the proof of Lemma~\ref{ldens0} 
(that uses cut-off functions and the smoothing process given by Lemma~\ref{lconvol}), we can see that
our
$\varphi \in W_0$ can be approached in $W$ by functions 
$\varphi_k \in C^\infty_0(\wt{E^\Gamma} \setminus \Gamma)$. 
In addition,
the map
\begin{equation} \label{defsol3}
\phi \to a_{\wt {E^\Gamma}}(u,\phi) = \int_{\wt {E^\Gamma}} \A \nabla u \cdot \nabla \phi \, dm
\end{equation}
is bounded on $W$ thanks to \eqref{ABounded} and the fact that $\nabla u \in L^2(\wt {E^\Gamma},w)$ (that holds because $u \in \WW(E^\Gamma)$). 
Then, as before,
\begin{equation} \label{defsol4}
a(u,\varphi) = a_{\wt {E^\Gamma}}(u,\varphi) = \lim_{k\to +\infty} a_{\wt {E^\Gamma}}(u,\varphi_k) = \lim_{k\to +\infty} a(u,\varphi_k) = 0.
\end{equation}

\medskip

The proof of the last point, that is $a(u,\varphi) = 0$ if $u\in W$ and $\varphi \in W_0$, 
works the same 
way
as before.
This time, we use the facts that Lemma~\ref{lconvol} gives an 
 approximation of $\varphi$ by functions in $C^\infty_0(\Omega)$ and that $\phi \to a(u,\phi)$ 
 is bounded on $W$.

\medskip

Finally, the cases where $u$ is a subsolution or a supersolution have a similar proof. 
We just need to observe that
the smoothing provided by Lemma~\ref{lconvol} conserves the non-negativity of a test function.
\ep

The first property that 
we need to know about sub/supersolution is the following stability property.

\begin{lemma} \label{lstabsol}
Let $E\subset \Omega$ be an open set. 
\begin{itemize}
\item If $u,v\in \WW(E)$ are subsolutions in $E$, then $t = \max\{u,v\}$ is also a subsolution in $E$.
\item If $u,v\in \WW(E)$ are supersolutions in $E$, then $t = \min\{u,v\}$ is also a supersolution in $E$.
\end{itemize}

In particular if
$k\in \R$, then $(u-k)_+ := \max\{ u-k,0\}$ is a subsolution in $E$ whenever $u \in \WW(E)$ is a subsolution in $E$ and $\min\{u,k\}$ is a supersolution in $E$ whenever $u\in \WW(E)$ is a supersolution in $E$.
\end{lemma}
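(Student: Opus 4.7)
The plan is to prove the subsolution statement; the supersolution case follows by applying it to $(-u,-v)$ together with the identity $-\min\{u,v\}=\max\{-u,-v\}$, and the particular claims about $(u-k)_+$ and $\min\{u,k\}$ follow because any constant lies in $\WW(E)$ with $a(k,\varphi)=0$, hence is simultaneously a sub- and a supersolution, combined with the trivial decomposition $(u-k)_+ = \max\{u,k\} - k$.

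Let $u,v\in \WW(E)$ be subsolutions and set $t:=\max\{u,v\}$. First, $t \in \WW(E)$: for any $\eta \in C^\infty_0(E)$ we have $\eta u,\eta v\in W$ by definition of $\WW(E)$, so $\max\{\eta u,\eta v\}\in W$ by Lemma~\ref{lcompo}(b), and a short cutoff manipulation yields $\eta t\in W$. Fix a nonnegative $\varphi\in C^\infty_0(E)$; the goal is $a(t,\varphi)\leq 0$. Since every integral that follows is concentrated on $\supp\varphi$, we may replace $u,v$ by $\eta u,\eta v$ for a cutoff $\eta\in C^\infty_0(E)$ equal to $1$ on a neighborhood of $\supp\varphi$, and thereby assume outright that $u,v\in W$.

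The heart of the argument is a smoothed test-function partition between $\{u>v\}$ and $\{u\leq v\}$. Let $F_\epsilon:\R\to[0,1]$ be smooth, nondecreasing, with $F_\epsilon\equiv 0$ on $(-\infty,0]$ and $F_\epsilon\equiv 1$ on $[\epsilon,+\infty)$. Set
\[
\psi_+^\epsilon := F_\epsilon(u-v)\,\varphi, \qquad \psi_-^\epsilon := \bigl(1-F_\epsilon(u-v)\bigr)\,\varphi.
\]
By Lemmata~\ref{lcompo}(a) and \ref{lalg}, $\psi_\pm^\epsilon\in W\cap L^\infty$; since $\psi_\pm^\epsilon$ are nonnegative and compactly supported in $E\subset\Omega$ (so their traces vanish on $\Gamma$), Lemma~\ref{rdefsol} permits us to use them as test functions in the subsolution inequalities:
\[
a(u,\psi_+^\epsilon)\leq 0 \qquad\text{and}\qquad a(v,\psi_-^\epsilon)\leq 0.
\]
Adding these and expanding the gradients via the product and chain rules, the term carrying $F_\epsilon'$ is
\[
\int_E \varphi\,F_\epsilon'(u-v)\,\A(\nabla u-\nabla v)\cdot(\nabla u-\nabla v)\,dm \geq 0
\]
by ellipticity \eqref{AElliptic} together with $F_\epsilon'\geq 0$ and $\varphi\geq 0$; dropping this nonnegative contribution leaves
\[
\int_E \bigl[F_\epsilon(u-v)\,\A\nabla u + (1-F_\epsilon(u-v))\,\A\nabla v\bigr]\cdot\nabla\varphi\,dm \leq 0.
\]

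Finally, let $\epsilon\to 0$. The integrand is bounded uniformly by $C(|\nabla u|+|\nabla v|)|\nabla\varphi|$, which lies in $L^1(dm)$ on $\supp\varphi$ by Cauchy-Schwarz, so dominated convergence applies. Pointwise a.e., $F_\epsilon(u-v)\to\1_{\{u>v\}}$, while by the chain rule for the maximum (Lemma~\ref{lcompo}(b)) one has $\nabla t=\nabla u$ on $\{u>v\}$, $\nabla t=\nabla v$ on $\{u<v\}$, and $\nabla u=\nabla v$ a.e.\ on $\{u=v\}$; these identifications show the limiting integrand equals $\A\nabla t\cdot\nabla\varphi$ a.e., yielding $a(t,\varphi)\leq 0$. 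The main technical obstacle is simply to verify that $\psi_\pm^\epsilon$ are admissible test functions against the subsolution inequality, which is exactly the content of the extension result Lemma~\ref{rdefsol}.
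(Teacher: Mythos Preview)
Your argument is correct. The approach, however, differs from the paper's. The paper localizes to an open set $F$ with smooth boundary whose closure is compact in $E\subset\Omega$; on such $F$ the weight $w$ is bounded above and below, so $L$ is a classical uniformly elliptic operator and the space $W^F$ coincides with the unweighted $W^{1,2}(F)$. The authors then simply invoke \cite[Theorem~3.5]{Stampacchia65} to conclude that $\max\{u,v\}$ is a subsolution in $F$, and since any $\varphi\in C^\infty_0(E)$ is supported in some such $F$, the result follows.

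Your proof, by contrast, is self-contained: you reprove Stampacchia's truncation argument directly in the weighted setting via the smoothed partition $\psi_\pm^\epsilon$, exploiting ellipticity to discard the $F_\epsilon'$ term and then passing to the limit with dominated convergence and Lemma~\ref{lcompo}(b). The trade-off is clear: the paper's route is shorter but relies on an external reference and the observation that the problem is ``classical'' on compact subsets of $\Omega$; yours is longer but intrinsic to the framework developed in the paper and makes explicit use of the admissibility Lemma~\ref{rdefsol} and the chain-rule Lemma~\ref{lcompo}. One small point worth tightening: when you write ``assume outright that $u,v\in W$'', what you really do is build $\psi_\pm^\epsilon$ from $\eta u,\eta v\in W$ and then test the \emph{original} subsolutions $u,v$ against them via Lemma~\ref{rdefsol}; since $\eta\equiv 1$ on $\supp\varphi$ and all integrands are supported there, the two descriptions agree, but the subsolution inequality is for $u,v$, not for $\eta u,\eta v$.
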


\bp 
It will be enough to prove the the first statement of the lemma, i.e., the fact that
$t= \max\{u,v\}$ is a subsolution when $u$ and $v$ are subsolutions.
Indeed, the statement about supersolutions
will follow at once, because it is easy to see that $u\in \WW(E)$ is a supersolution if and
only $-u$ is a subsolution.
The remaining assertions are then straightforward consequences of the first ones 
(because constant functions are solutions).

\medskip
So we need to prove the first part, and fortunately it will be easy to reduce to the classical
situation, where the desired result is proved in \cite[Theorem 3.5]{Stampacchia65}.
We need an adaptation, because Stampacchia's proof corresponds to the case where the 
subsolutions $u,v$ lie in $W$, and also we want to localize to a place where $w$ is bounded 
from above and below. 

Let $F \subset E$ be any open set with a smooth boundary and a finite number of connected components,
and whose closure is compact in $E$. We define a
set of functions $W^F$ as
\begin{equation} \label{lstabsol1}
W^F = \{ f \in L^1_{loc}(F), \, \nabla f \in L^2(F,w)\}.
\end{equation}
Let us record a few properties of $W^F$. Since $F$ is relatively compact in $E \subset \Omega$,
the weight $w$ is
bounded from above and below by a positive constant. 
Hence 
$W^F$ is the collection of functions in $L^1_{loc}(F)$ whose distributional derivative lies 
in $L^2(F)$. Since $F$ is bounded and has a smooth boundary, 
these functions lie in $L^2(F)$ (see \cite[Corollary 1.1.11]{Mazya11}).
Of course Mazya states this when $F$ is connected, but we here $F$ has a finite
number of components, and we can apply the result to each one.
So $W^F$ is the `classical' (where the weight is plain) Sobolev space on $F$. That is,
\begin{equation}\label{a8.25}
W^F = \{f \in L^2(F), \, \nabla f \in L^2(F)\}.
\end{equation}

Notice that $u$ and $v$ lie in $W^F$, so they are ``classical'' subsolutions of $L$ in $F$, where 
(since $F$ is relatively compact in $E \subset \Omega$) $w$ is bounded from above and below, 
and hence $L$ is a classical elliptic operator.
Then, by \cite[Theorem 3.5]{Stampacchia65},  $t = \max\{u,v\}$ is also a classical subsolution in $F$. 
This means that $a(t,\varphi) \leq 0$ for $\varphi \in C_0^{\infty}(F)$. 

Now we wanted to prove this for every $\varphi \in C_0^{\infty}(E)$, and it is enough to observe that
if $\varphi \in C_0^{\infty}(E)$ is given, then we can find an open set $F \subset \subset E$ 
that contains the support of $\varphi$, and with the regularity properties above.
Hence $t$ is a subsolution in $E$, and the lemma follows.
It was fortunate for this argument that the notion of subsolution does not come with precise estimates that
would depend on $w$.
\ep

In the sequel, the notation $\sup$ and $\inf$ are used for the essential supremum and essential infimum, 
since they are the only definitions that makes sense for the functions in $W$ or in $\WW(E)$, 
$E\subset \R^n$ open.  
Also, when we talk about solutions or subsolutions and don't specify, 
this will always refer to our fixed operator $L$.
We now state some classical regularity results inside the domain.

\begin{lemma}[interior Caccioppoli inequality] \label{CaccioI} 
Let $E \subset \Omega$ be an open set, and let $u\in \WW(E)$ be a non-negative subsolution in $E$. 
Then for any $\alpha \in C^\infty_0(E)$,
\begin{equation} \label{Caccio1}
\int_\Omega \alpha^2 |\nabla u|^2 dm \leq C  \int_{\Omega} |\nabla \alpha|^2 u^2 dm,
\end{equation}
where $C$ depends only upon the dimensions $n$ and $d$ and the constant $C_1$. 

In particular, if $B$ is a ball of radius $r$ such that $2B \subset \Omega$ and $u\in \WW(2B)$ is
a non-negative subsolution in $2B$, then
\begin{equation} \label{Caccio2}
\int_B |\nabla u|^2 dm \leq C r^{-2} \int_{2B} u^2 dm.
\end{equation}
\end{lemma}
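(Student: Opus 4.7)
The plan is to test the subsolution inequality against $\varphi = \alpha^2 u$, exactly as in the classical proof of Caccioppoli's estimate for uniformly elliptic operators, and then absorb the resulting cross term using ellipticity and Young's inequality. The bound \eqref{Caccio2} will follow at once from \eqref{Caccio1} by choosing $\alpha \in C_0^\infty(2B)$ with $\alpha \equiv 1$ on $B$ and $|\nabla \alpha| \leq C/r$.

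The main point to justify is that $\varphi = \alpha^2 u$ is an admissible test function in the sense of Lemma~\ref{rdefsol}. Fix $\eta \in C_0^\infty(E)$ with $\eta \equiv 1$ on $\supp \alpha$; then $\eta u \in W$ by definition of $\WW(E)$, and since $\eta u$ is compactly supported in $\Omega$, Remark~\ref{rPoincare} (with $p=2$) yields $\eta u \in L^2(\Omega, w)$, and hence $u \in L^2_\loc(E, w)$. Combined with $\nabla u \in L^2_\loc(E, w)$, the distributional product rule (argued as in the proof of Lemma~\ref{lmult}) gives
\[\nabla(\alpha^2 u) = 2 \alpha u \nabla \alpha + \alpha^2 \nabla u \in L^2(\Omega, w),\]
so that $\alpha^2 u \in W$. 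Since $\alpha^2 u$ vanishes in a neighborhood of $\Gamma$, its trace is identically zero on $\Gamma$ by \eqref{defT}, placing it in $W_0$; and the assumption $u \geq 0$ forces $\alpha^2 u \geq 0$. Thus the subsolution version of Lemma~\ref{rdefsol} (first bullet) delivers $a(u, \alpha^2 u) \leq 0$.

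Expanding the bilinear form we then have
\[0 \geq a(u, \alpha^2 u) = \int_\Omega \alpha^2 \, \A \nabla u \cdot \nabla u \, dm + 2 \int_\Omega \alpha u \, \A \nabla u \cdot \nabla \alpha \, dm.\]
Ellipticity \eqref{AElliptic} bounds the first integral below by $C_1^{-1} \int \alpha^2 |\nabla u|^2 \, dm$. For the cross term, boundedness \eqref{ABounded} gives $|\A \nabla u \cdot \nabla \alpha| \leq C_1 |\nabla u| |\nabla \alpha|$, and then Young's inequality $2 a b \leq \eta a^2 + \eta^{-1} b^2$ applied with $a = \alpha |\nabla u|$, $b = C_1 u |\nabla \alpha|$, and $\eta = (2 C_1)^{-1}$ yields
\[\Big|2 \int_\Omega \alpha u \, \A \nabla u \cdot \nabla \alpha \, dm\Big| \leq \frac{1}{2 C_1} \int_\Omega \alpha^2 |\nabla u|^2 \, dm + 2 C_1^3 \int_\Omega u^2 |\nabla \alpha|^2 \, dm.\]
The first term on the right is finite (because $\nabla u \in L^2_\loc(E, w)$ and $\alpha$ is compactly supported in $E$), hence can be absorbed into the ellipticity term on the left, producing \eqref{Caccio1} with a constant depending only on $C_1$.

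The main obstacle is really just the bookkeeping that places $\alpha^2 u$ in $W_0$ with the expected gradient formula, since this requires controlling $u$ in a weighted $L^2$-sense on $\supp \alpha$; once that is done, the remainder is the standard Caccioppoli calculation, and the dependence on $d$ and $n$ enters only through the application of Remark~\ref{rPoincare}.
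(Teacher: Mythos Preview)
Your proof is correct and follows essentially the same route as the paper: test against $\varphi=\alpha^2 u$, invoke Lemma~\ref{rdefsol}, expand, and absorb. The only difference is cosmetic: you use Young's inequality where the paper uses Cauchy--Schwarz, and your admissibility argument for $\alpha^2 u$ is more elaborate than needed, since $\alpha^2\in C_0^\infty(E)$ and $u\in \WW(E)$ already give $\alpha^2 u\in W$ directly from the definition \eqref{defWE} of $\WW(E)$, with no detour through Remark~\ref{rPoincare}.
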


\begin{proof} Let $\alpha \in C^\infty_0(E)$. We set $\varphi = \alpha^2 u$. Since $u\in \WW(E)$, 
the definition yields $\varphi \in W$. Moreover $\varphi$ is compactly supported in $E$ (and in particular
$\varphi \in W_0$).
The first item of Lemma~\ref{rdefsol} yields
\begin{equation} \label{Caccio5}
\int_{\Omega} \A \nabla u \cdot \nabla \varphi \, dm \leq 0.
\end{equation}
By the product rule, 
$\nabla \varphi = \alpha^2 \nabla u + 2 \alpha u \nabla \alpha$.
Thus \eqref{Caccio5} becomes
\begin{equation} \label{Caccio6}\begin{split}
\int_{\Omega}  \alpha^2 \A\nabla u \cdot \nabla u\,  dm & \leq -2 \int_{\Omega} \alpha u \, \A \nabla u \cdot \nabla \alpha \, dm. \\
\end{split}\end{equation}
It follows 
from this and 
the ellipticity and boundedness conditions \eqref{AElliptic} and \eqref{ABounded} that
\begin{equation} \label{Caccio7}\begin{split}
\int_{\Omega}  \alpha^2 |\nabla u|^2  dm & \leq C \int_{\Omega} |\alpha| |\nabla u|  |u| |\nabla \alpha| \, dm \\
\end{split}\end{equation}
and then
\begin{equation} \label{Caccio8}\begin{split} 
\int_{\Omega}  \alpha^2 |\nabla u|^2  dm 
& \leq C \left(\int_{\Omega} \alpha^2 |\nabla u|^2  dm\right)^\frac12  
\left(\int_{\Omega} u^2 |\nabla \alpha|^2  dm \right)^\frac12 \\
\end{split}\end{equation}
by  the Cauchy-Schwarz inequality. 
Consequently,
\begin{equation} \label{Caccio9} 
\int_{\Omega} \alpha^2 |\nabla u|^2  dm \leq C \int_{\Omega} |\nabla \alpha|^2 u^2 dm,
\end{equation}
which is \eqref{Caccio2}. Lemma~\ref{CaccioB} follows since \eqref{Caccio3} is a straightforward application of \eqref{Caccio2} when $E=2B$, $\alpha \equiv 1$ on $B$ and $|\nabla \alpha|\leq \frac2r$.
\end{proof}

\begin{lemma}[interior Moser estimate] \label{MoserI}
Let $p>0$ and $B$ be a ball such that $3B \subset \Omega$. 
If $u \in \WW(3B)$ is a non-negative subsolution in $2B$, then
\begin{equation} \label{Moser1}
\sup_{B} u \leq C \left( \frac1{m(2B)} \int_{2B} u^p \, dm \right)^\frac1p,
\end{equation}
where $C$ depends on $n$, $d$, $C_1$ and $p$.
\end{lemma}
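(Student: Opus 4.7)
The plan is to carry out the classical Moser iteration, adapted to the weighted measure $m$: first establish the case $p=2$ via a Caccioppoli-plus-Sobolev-Poincar\'e iteration, then extract $0<p<2$ by a standard absorption argument. The case $p\geq 2$ then follows immediately from Jensen's inequality applied with the probability measure $dm/m(2B)$.

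\textbf{One-step estimate.} Fix $\beta\geq 1/2$ and a cutoff $\eta\in C_0^\infty(2B)$. Formally testing the subsolution condition against $\varphi=\eta^2 u^{2\beta-1}$ (admissible modulo the truncation issue discussed below), expanding $\nabla\varphi=2\eta u^{2\beta-1}\nabla\eta+(2\beta-1)\eta^2 u^{2\beta-2}\nabla u$, and combining the ellipticity/boundedness bounds \eqref{ABounded2}--\eqref{AElliptic2} with Cauchy--Schwarz and Young's inequality produces the weighted Caccioppoli-type estimate
\[
\int_{2B}\eta^2|\nabla(u^\beta)|^2\,dm\;\leq\;C\beta^2\int_{2B}u^{2\beta}|\nabla\eta|^2\,dm.
\]
Since $\eta u^\beta$ is compactly supported in a ball, Remark~\ref{rPoincare} with Sobolev exponent $2\chi$ (where $\chi=n/(n-2)$ for $n\geq 3$, or any fixed $\chi>1$ for $n=2$) then yields
\[
\Bigl(\tfrac{1}{m(2B)}\int_{2B}(\eta u^\beta)^{2\chi}\,dm\Bigr)^{1/(2\chi)}\leq Cr\Bigl(\tfrac{1}{m(2B)}\int_{2B}|\nabla(\eta u^\beta)|^2\,dm\Bigr)^{1/2},
\]
and the combination of these two inequalities drives the iteration.

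\textbf{Iteration and absorption.} Set $r_k=r(1+2^{-k})$, $B_k=B(x_0,r_k)$, and choose $\eta_k\in C_0^\infty(B_k)$ with $\eta_k\equiv 1$ on $B_{k+1}$ and $|\nabla\eta_k|\leq C\,2^k/r$; define $\beta_k=\chi^k$ and $\Phi_k=\bigl(m(2B)^{-1}\int_{B_k}u^{2\chi^k}\,dm\bigr)^{1/(2\chi^k)}$. Using the doubling property \eqref{doublinggen} to replace $m(B_k)$ by $m(2B)$ up to a constant, the one-step estimate becomes $\Phi_{k+1}\leq(C(2\chi)^k)^{1/\chi^k}\Phi_k$. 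Since $\prod_k(C(2\chi)^k)^{1/\chi^k}$ converges and $L^{2\chi^k}$-averages of $u$ over $B$ converge to $\sup_B u$ as $k\to\infty$, iterating delivers the case $p=2$. For $0<p<2$ one runs the same iteration between radii $\rho$ and $R$ with $r\leq\rho<R\leq 2r$, producing an estimate of the form
\[
\sup_{B_\rho}u\leq\frac{Cr^a}{(R-\rho)^a}\Bigl(\tfrac{1}{m(2B)}\int_{B_R}u^2\,dm\Bigr)^{1/2}
\]
for some exponent $a$. Inserting $u^2\leq(\sup_{B_R}u)^{2-p}u^p$ and applying Young's inequality with exponents $2/(2-p)$ and $2/p$ then gives
\[
\sup_{B_\rho}u\leq\tfrac12\sup_{B_R}u+\frac{Cr^{2a/p}}{(R-\rho)^{2a/p}}\Bigl(\tfrac{1}{m(2B)}\int_{2B}u^p\,dm\Bigr)^{1/p},
\]
after which a standard Giaquinta--Giusti iteration lemma absorbs the sup term on the right and produces \eqref{Moser1}.

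\textbf{Main obstacle.} The test function $\varphi=\eta^2 u^{2\beta-1}$ is only admissible once we already know $u$ is locally bounded, which is precisely what we are trying to prove. The standard remedy is to replace $u^{2\beta-1}$ by a bounded Lipschitz approximation $F_M(u)$ (for instance $F_M(t)=\min(t,M)^{2\beta-1}$ suitably extended), obtain all estimates uniformly in $M$, and pass to the limit by Fatou. Lemma~\ref{lcompo} guarantees that these truncated compositions lie in $\WW(3B)$ with the expected weak gradients (vanishing on $\{u\geq M\}$), so the formal calculations above can be made rigorous.
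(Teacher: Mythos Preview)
Your argument is correct, but it takes a considerably longer route than the paper. The paper's proof exploits a structural observation you do not use: since $3B\subset\Omega$, the weight $w$ satisfies $w(z)\approx w(x)$ uniformly on $2B$ (where $x$ is the center), so the operator $L$ on $2B$ is just a constant multiple of a classical uniformly elliptic operator with unweighted ellipticity constants bounded by $C_{n,d}C_1$. The paper then simply cites the classical interior Moser estimate (e.g.\ \cite[Lemma~1.1.8]{KenigB}) to obtain $\sup_B u\leq C(\fint_{2B}u^p\,dz)^{1/p}$, and converts the Lebesgue average to an $m$-average via \eqref{L1byL1w}. No iteration, truncation, or absorption argument is needed in the paper itself.

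Your approach---running the full weighted Moser iteration from scratch---is self-contained and does not rely on an external reference; it is also essentially the same machinery the paper later deploys for the \emph{boundary} Moser estimate (Lemma~\ref{MoserB}), where reduction to the classical case is not available. So your method is more general in spirit, but for this particular interior lemma it is overkill: the key point is precisely that interior balls see no weight degeneracy, and the paper's reduction captures this in two lines.
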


\bp
For this lemma and the next ones, we shall use the fact that since $2B$ is far from $\Gamma$,
our weight $w$ is under control there, and we can easily reduce to the classical case.
Let $x$ and $r$ denote  
the center and the radius of $B$.
Since $3B \subset \Omega$, $\delta(x) \geq 3r$. 
For any $z\in 2B$, $\delta(x)-2r \leq \delta(z) \leq \delta(x+2r)$, hence
\begin{equation} \label{pprI1}
\frac13 \leq 1-\frac{2r}{\delta(x)} \leq \frac{\delta(z)}{\delta(x)} \leq 1+\frac{2r}{\delta(x)} \leq \frac53
\end{equation}
and consequently
\begin{equation} \label{pprI2}
C^{-1}_{n,d} \, w(x) \leq w(z) \leq C_{n,d} \, w(x).
\end{equation}

Let $u\in \WW(3B)$ be a non-negative subsolution in $2B$. 
Thanks to \eqref{defWE2} and \eqref{pprI2}, the gradient $\nabla u$ lies 
in $L^2(2B)$. By the Poincar\'e's inequality, 
$u\in L^2(2B)$ and thus $u$ lies 
in the classical (with no weight) Sobolev space $W^{2B}$ 
 of \eqref{a8.25}.

Consider
the differential operator $\wt L = -\diver \wt A \nabla$ 
with
$\wt A(z) = \A(z) \frac{w(z)}{w(x)}$.
Thanks to \eqref{pprI2}, \eqref{ABounded} and \eqref{AElliptic}, 
$\wt A(z)$ satisfies the elliptic condition and the boundedness condition \eqref{ABounded} and \eqref{AElliptic},
in the domain $2B$, and with the constant $C_{n,d} C_1$.
The condition satisfied by a subsolution (of $Lu=0$) on $2B$ can be rewritten
\begin{equation} \label{pprI3}
\int_{2B} \wt A \nabla u \cdot \nabla \varphi  \leq 0,
\end{equation}
and so 
we are back in 
the situation of the classical elliptic case.
By \cite[Lemma~1.1.8]{KenigB}, for instance,
\begin{equation} \label{pprI4}
\sup_B u \leq C \left( \fint_{2B} u^p(z) \, dz \right)^\frac1p,
\end{equation}
and \eqref{Moser1} follows from this and \eqref{L1byL1w}
\ep

\ms
\begin{lemma}[interior H\"older continuity] \label{HolderI}
Let $x\in \Omega$ and $R>0$ be such that $B(x,3R) \subset \Omega$, and let $u\in \WW(B(x,3R))$ be a solution in $B(x,2R)$. Write $\ds \osc_B u$ for $\ds \sup_B u - \inf_B u$. 
Then there exists $\alpha\in (0,1]$ and $C>0$ such that for any $0<r<R$,
\begin{equation} \label{Holder1}
\osc_{B(x,r)} u \leq C \left( \frac rR \right)^\alpha \left( \frac{1}{m(B(x,R))} \int_{B(x,R)} u^2 \, dm \right)^\frac12,
\end{equation}
where $\alpha$ and $C$ depend only on $n$, $d$, and $C_1$. 
Hence 
$u$
is (possibly after modifying it on a set of measure $0$)
locally H\"older continuous with exponent $\alpha$.
\end{lemma}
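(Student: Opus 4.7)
The plan is to reduce to the classical (unweighted) uniformly elliptic case, exactly as was done in the proof of Lemma~\ref{MoserI}. Since $B(x,3R)\subset\Omega$ we have $\delta(x)\geq 3R$, and for every $z\in B(x,2R)$ the ratio $\delta(z)/\delta(x)$ lies in $[1/3,5/3]$. Consequently $w(z)\approx w(x)$ on $B(x,2R)$ with constants depending only on $n$ and $d$. The renormalized coefficient matrix $\tilde A(z):=\mathcal{A}(z)\,w(z)/w(x)$ then satisfies the standard uniform ellipticity and boundedness conditions on $B(x,2R)$, with constant $C_{n,d}C_1$, and $u$ is a weak solution of $-\diver\tilde A\nabla u=0$ on $B(x,2R)$ in the classical sense (by \eqref{defWE2} and the Poincar\'e inequality, $u$ lies in the standard unweighted Sobolev space on $B(x,2R)$).

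Next I would invoke the classical De Giorgi--Nash--Moser H\"older estimate for uniformly elliptic divergence-form operators with bounded measurable coefficients (see e.g.\ \cite{GT} or \cite[Lemma~1.1.9]{KenigB}). This gives $\alpha=\alpha(n,C_{n,d}C_1)\in(0,1]$ and $C=C(n,d,C_1)$ such that for $0<r<R$,
\begin{equation}
\osc_{B(x,r)} u \leq C\Bigl(\frac{r}{R}\Bigr)^\alpha \Bigl(\fint_{B(x,R)} u^2(z)\,dz\Bigr)^{1/2}.
\end{equation}
Since $w\approx w(x)$ on $B(x,R)$ (by the same comparison as above, applied on $B(x,R)\subset B(x,2R)$), we have $\fint_{B(x,R)} u^2\,dz \approx m(B(x,R))^{-1}\int_{B(x,R)} u^2\,dm$, which converts the right-hand side to the weighted form stated in \eqref{Holder1}. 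Alternatively, one could directly apply \eqref{L1byL1w} with $g=u^2$.

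Since a function satisfying such an oscillation decay on every sub-ball is (after redefinition on a measure-zero set) H\"older continuous with exponent $\alpha$, the last assertion of the lemma follows. There is no real obstacle here: the entire difficulty of the DGNM theory is absorbed into the cited classical result, and the only thing to verify is the uniform ellipticity of $\tilde A$ on $B(x,2R)$, which is immediate from \eqref{pprI1}--\eqref{pprI2}. (If one wanted a self-contained proof, the standard route is: combine the Caccioppoli estimate of Lemma~\ref{CaccioI} with the Moser bound of Lemma~\ref{MoserI}, together with a weak Harnack inequality for nonnegative supersolutions, to obtain a geometric oscillation decay $\osc_{B(x,r/2)}u\leq\theta\osc_{B(x,r)}u$ for some $\theta<1$, and then iterate; but all of this takes place in the unweighted setting once the reduction above is performed.)
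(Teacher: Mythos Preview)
Your proof is correct and follows exactly the same approach as the paper: reduce to the classical unweighted setting by observing that $w\approx w(x)$ on $B(x,2R)$ (the ``same trick as for Lemma~\ref{MoserI}''), then invoke the standard De Giorgi--Nash--Moser estimate. You in fact supply more detail than the paper's own proof, which simply cites \cite{KenigB} and \cite{GT} after noting the reduction.
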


\bp
This lemma and the next one follow from the classical results 
(see for instance \cite[Section 1.1]{KenigB}, or \cite[Sections 8.6, 8.8 and 8.9]{GT}),
by the same trick as for Lemma \ref{MoserI}: we observe that $L$ is a constant times a
classical elliptic operator on $2B$.
\ep

\ms
\begin{lemma}[Harnack] \label{HarnackI}
Let $B$ be a ball such that $3B\subset \Omega$, and let $u\in \WW(3B)$ be a non-negative solution in $3B$. Then 
\begin{equation} \label{Harnack1}
\sup_B u \leq C \inf_B u,
\end{equation}
where $C$ depends only on  $n$, $d$ and $C_1$.
\end{lemma}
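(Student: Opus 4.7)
The plan is to reduce to the classical Harnack inequality for uniformly elliptic operators on $3B$, using exactly the trick employed in Lemma~\ref{MoserI} and Lemma~\ref{HolderI}. Since $3B\subset\Omega$, we have $\delta(z)\approx \delta(x)$ for every $z\in 3B$, where $x$ denotes the center of $B$. Consequently the weight $w$ is comparable to the constant $w(x)$ on $3B$, say $C_{n,d}^{-1}w(x)\le w(z)\le C_{n,d}w(x)$, by the same computation as in \eqref{pprI1}--\eqref{pprI2}.

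Next I would introduce the renormalized matrix $\widetilde A(z)=\mathcal A(z)\,w(z)/w(x)$ and the associated operator $\widetilde L=-\diver\widetilde A\nabla$. Because $\mathcal A$ satisfies the unweighted bounds \eqref{ABounded} and \eqref{AElliptic} and $w(z)/w(x)$ is pinched between two positive constants on $3B$, the matrix $\widetilde A$ is bounded and uniformly elliptic on $3B$ with constants depending only on $n$, $d$, and $C_1$. Moreover, for any $\varphi\in C_0^\infty(3B)$,
\begin{equation*}
\int_{3B}\widetilde A\nabla u\cdot\nabla\varphi\,dz
=\frac{1}{w(x)}\int_{3B}\mathcal A\nabla u\cdot\nabla\varphi\,w(z)\,dz
=\frac{1}{w(x)}\int_\Omega A\nabla u\cdot\nabla\varphi\,dz=0
\end{equation*}
by Lemma~\ref{rdefsol}, since $u\in \WW(3B)$ is a solution of $Lu=0$ in $3B$. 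Thus $u$ is a (classical, unweighted) weak solution of $\widetilde Lu=0$ in $3B$. Arguing as in the proof of Lemma~\ref{MoserI}, the Poincar\'e inequality together with $\nabla u\in L^2(3B)$ gives $u\in L^2(3B)$, so $u$ belongs to the classical Sobolev space on $3B$ and fits the usual framework.

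Finally, I would invoke the standard Harnack inequality for non-negative weak solutions of uniformly elliptic divergence-form equations with bounded measurable coefficients (for instance, \cite[Theorem~8.20]{GT} or \cite[Lemma~1.1.9]{KenigB}): applied to $u$ on the ball $B$ sitting inside $3B$, it gives
\begin{equation*}
\sup_B u\le C\inf_B u,
\end{equation*}
with $C$ depending only on the ellipticity constants of $\widetilde A$ and on the ratio of radii $3$, hence only on $n$, $d$, and $C_1$. I do not foresee a real obstacle here: all the work was done in building the trace/Sobolev machinery and in Lemmas~\ref{CaccioI}--\ref{MoserI}; Lemma~\ref{HarnackI} is essentially a bookkeeping exercise that transfers the problem to a classical interior-regularity statement.
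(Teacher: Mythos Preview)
Your proposal is correct and matches the paper's own approach exactly: the paper simply observes that on $2B$ (with $3B\subset\Omega$) the weight is comparable to the constant $w(x)$, so $L$ becomes a constant multiple of a classical uniformly elliptic operator, and then invokes the standard Harnack inequality from \cite[Section~1.1]{KenigB} or \cite[Sections~8.6--8.9]{GT}. The only cosmetic point is that the comparability $w(z)\approx w(x)$ from \eqref{pprI1}--\eqref{pprI2} is stated for $z\in 2B$, not all of $3B$, but since the classical Harnack for $B$ only needs the equation on $2B$ this changes nothing.
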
 

\ms 
For the next lemma, we shall need the Harnack chains from Lemma~\ref{lHC}.

\begin{lemma} \label{HarnackI2}
Let $K$ be a compact set of $\Omega$ and let $u\in \WW(\Omega)$ be a non-negative solution in $\Omega$. Then 
\begin{equation} \label{Harnack2}
\sup_K u \leq C_K \inf_K u,
\end{equation}
where $C_K$ depends only on 
$n$, $d$, $C_0$, $C_1$, $\dist(K,\Gamma)$ and $\diam \, K$.
\end{lemma}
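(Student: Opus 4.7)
The strategy is to deduce the statement from the local Harnack inequality (Lemma \ref{HarnackI}) by chaining it along a Harnack chain supplied by Lemma \ref{lHC2}. The key observation is that, because $K$ is a compact set with $\dist(K,\Gamma)>0$ and $\diam K<\infty$, only finitely many local Harnack steps are needed, and the number of steps can be controlled purely in terms of the allowed parameters.

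Let $r_0 = \dist(K,\Gamma)/10$ and $\Lambda = \max\{1,\diam(K)/r_0\}$. Fix any two points $x_1, x_2 \in K$. Then $\dist(x_i,\Gamma)\geq 10r_0 \geq r_0$ and $|x_1-x_2|\leq \diam(K)\leq \Lambda r_0$, so by Lemma \ref{lHC2} there exists a chain of balls $B_0,\dots,B_{N_\Lambda}$ with $B_0$ centered at $x_1$, $B_{N_\Lambda}$ centered at $x_2$, each $B_i$ of radius at least $\epsilon_\Lambda r_0$, satisfying $3B_i\subset \Omega$, and with $B_{i-1}\cap B_i\neq \emptyset$. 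By Lemma \ref{HolderI}, $u$ has a continuous (in fact, locally H\"older) representative on $\Omega$, which we now fix, so that pointwise values and classical $\sup/\inf$ are meaningful.

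Since $3B_i\subset\Omega$ and $u$ is a non-negative solution on each $3B_i$, the interior Harnack inequality (Lemma \ref{HarnackI}) gives $\sup_{B_i} u \leq C\inf_{B_i} u$ for each $i$, with $C$ depending only on $n$, $d$, and $C_1$. Picking for each $i=0,\dots,N_\Lambda-1$ a point $z_i\in B_i\cap B_{i+1}$, we telescope
\[
u(x_1)\leq \sup_{B_0} u \leq C\inf_{B_0} u \leq C u(z_0)\leq C\sup_{B_1} u \leq C^2\inf_{B_1} u\leq \cdots \leq C^{N_\Lambda+1} u(x_2),
\]
so $u(x_1)\leq C^{N_\Lambda+1} u(x_2)$. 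Taking the supremum over $x_1\in K$ and the infimum over $x_2\in K$ yields \eqref{Harnack2} with $C_K = C^{N_\Lambda+1}$.

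The resulting $C_K$ depends on $C$ (hence on $n$, $d$, $C_1$) and on $N_\Lambda$, which by Lemma \ref{lHC2} depends only on $\Lambda$, which itself depends on $\diam(K)/\dist(K,\Gamma)$, and on $C_0$ through the construction of the chain. No real difficulty is expected: the only mild point is making sure we work with the continuous representative when writing pointwise inequalities at the connecting points $z_i$, which is legitimate by Lemma \ref{HolderI}.
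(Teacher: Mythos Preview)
Your proof is correct and follows essentially the same approach as the paper: both use the Harnack chain from Lemma~\ref{lHC2} (the paper cites Lemma~\ref{lHC} and then builds the chain of balls, which is exactly Lemma~\ref{lHC2}) and then iterate the local Harnack inequality (Lemma~\ref{HarnackI}) along the chain. Your version is slightly more explicit in invoking the continuous representative via Lemma~\ref{HolderI} and in writing out the telescoping step, but the argument is the same.
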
 

\bp
Let $K$ be a compact set in $\Omega$. 
We can find $r>0$ and $k\geq 1$ such that $\dist(K,\Gamma) \geq r$ and $\diam\, K \leq kr$. 
Now let $x,y\in K$ be given. Notice that
$\delta(x)\geq r$, $\delta(y)\geq r$ and $|x-y| \leq kr$, 
so Lemma~\ref{lHC} implies the existence of a path of length at most 
by $(k+1)r$ that joins  
$x$ to $y$ and stays 
at a distance larger than some $\epsilon$ 
(that depends on $C_0$, $d$, $n$, $r$ and $k$) of $\Gamma$. 
That is, we can find a finite collection of 
balls $B_1, \dots, B_n$ ($n$  bounded uniformly on
$x,y\in K$) such that $3B_i \subset \Omega$, $B_1$ is centered on $x$, $B_n$ is centered on $y$,
and $B_i \cap B_{i+1} \neq \emptyset$. It remains to use $n$ times Lemma~\ref{HarnackI} to get that
 \begin{equation} \label{pprI5}
u(x) \leq C^n u(y) \leq C_K u(y).
\end{equation}
Lemma~\ref{HarnackI2} follows.\ep

\ms
We also need analogues at the boundary of the previous results. For these we cannot immediately reduce
to the classical case, but we will be able to copy the proofs. Of course we shall use our trace operator
to define boundary conditions, say, in a ball $B$, and this is the reason why we want to use the 
space is $\WW(B)$ defined by \eqref{defWE}. 
We cannot use 
$\WW(B\setminus \Gamma)$ instead, because we need some control on $u$ near $\Gamma$ to
define $T(u)$.

In the sequel, we will use
the expression `$Tu = 0$ a.e. on $B$', for a function $u\in \WW(B)$, 
to mean 
that $Tu$, which is defined on $\Gamma \cap B$ and lies in $L^1_{loc}(B\cap \Gamma,\sigma)$ thanks to Lemma~\ref{defTrWE}, is equal to $0$ $\sigma$-almost everywhere on $\Gamma \cap B$.
The expression `$Tu \geq 0$ a.e. on $B$' is defined similarly.

We 
start with the Caccioppoli inequality on the boundary.

\begin{lemma}[Caccioppoli inequality on the boundary] \label{CaccioB} 
Let $B \subset \R^n$ be a ball of radius $r$ centered on $\Gamma$, and let $u\in \WW(2B)$ be a non-negative subsolution in $2B \setminus \Gamma$ such that 
$T(u) = 0$ a.e. on $2B$. Then for any $\alpha \in C^\infty_0(2B)$,
\begin{equation} \label{Caccio3}
\int_{2B} \alpha^2 |\nabla u|^2 dm \leq C  \int_{2B} |\nabla \alpha|^2 u^2 dm,
\end{equation}
where $C$ depends only on 
the dimensions $n$ and $d$ and the constant $C_1$. In particular, we can take $\alpha \equiv 1$ on $B$ and $|\nabla \alpha| \leq \frac2r$, which gives
\begin{equation} \label{Caccio4}
\int_B |\nabla u|^2 dm \leq C r^{-2} \int_{2B} u^2 dm.
\end{equation}
\end{lemma}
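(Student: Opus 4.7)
The plan is to repeat the proof of the interior Caccioppoli inequality (Lemma \ref{CaccioI}), but with $\varphi = \alpha^2 u$ now playing the role of a test function that is allowed by the boundary version of Lemma \ref{rdefsol} (as formulated in Remark \ref{rdefsol2}), rather than by its first bullet. The only genuine new task is therefore to check that the candidate test function lies in $W_0$ and is compactly supported in $2B$; once this is in place, the same algebraic manipulations that appeared in \eqref{Caccio5}--\eqref{Caccio9} close the argument.

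First I would verify that $\varphi := \alpha^2 u$ belongs to $W_0$ and is compactly supported in $2B$. Since $\alpha \in C^\infty_0(2B)$, the function $\alpha^2$ also lies in $C^\infty_0(2B)$, and the very definition \eqref{defWE} of $\WW(2B)$ gives $\alpha^2 u \in W$ (extended by $0$ outside $2B$, which is legitimate because $\alpha^2$ is compactly supported in $2B$). For the trace, Lemma \ref{defTrWE} yields $T(\alpha^2 u)(x) = \alpha^2(x) \, Tu(x)$ for $\sigma$-a.e.\ $x \in \Gamma \cap 2B$, and the hypothesis $Tu = 0$ a.e.\ on $2B$ forces $T(\alpha^2 u) = 0$ there; outside $2B$ the function $\alpha^2 u$ vanishes identically, so its trace also vanishes $\sigma$-a.e. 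Hence $\varphi \in W_0$, with compact support in $2B$.

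Next I would apply Remark \ref{rdefsol2} (i.e., the second bullet of Lemma \ref{rdefsol} with $E = 2B \setminus \Gamma$ and $E^\Gamma = 2B$, using $u \in \WW(2B)$) to the non-negative test function $\varphi = \alpha^2 u \geq 0$; since $u$ is a subsolution in $2B \setminus \Gamma$, this gives
\begin{equation*}
\int_\Omega \A \nabla u \cdot \nabla(\alpha^2 u) \, dm \leq 0.
\end{equation*}
Expanding $\nabla(\alpha^2 u) = \alpha^2 \nabla u + 2 \alpha u \nabla \alpha$ and rearranging yields
\begin{equation*}
\int_{2B} \alpha^2 \, \A \nabla u \cdot \nabla u \, dm \leq -2 \int_{2B} \alpha u \, \A \nabla u \cdot \nabla \alpha \, dm.
\end{equation*}
The ellipticity condition \eqref{AElliptic} bounds the left-hand side from below by $C_1^{-1}\int_{2B} \alpha^2 |\nabla u|^2 \, dm$, while the boundedness condition \eqref{ABounded} and the Cauchy--Schwarz inequality bound the right-hand side from above by
\begin{equation*}
2 C_1 \Big(\int_{2B} \alpha^2 |\nabla u|^2 \, dm\Big)^{1/2} \Big(\int_{2B} u^2 |\nabla \alpha|^2 \, dm\Big)^{1/2}.
\end{equation*}
Absorbing the first factor on the left produces \eqref{Caccio3}, and specializing $\alpha$ to a standard cut-off with $\alpha \equiv 1$ on $B$ and $|\nabla \alpha| \leq 2/r$ gives \eqref{Caccio4}.

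The main (and only) obstacle is the verification that $\alpha^2 u$ qualifies as an admissible test function at the boundary, because Lemma \ref{rdefsol} is the mechanism that lets the boundary condition $Tu=0$ be used as a replacement for the support away from $\Gamma$ required in the purely interior setting. Once this is secured, the computation is identical in structure to the proof of Lemma \ref{CaccioI}; in particular, no weighted Sobolev or Poincar\'e inequality is needed here, since the right-hand side already has the form $\int u^2 \, dm$.
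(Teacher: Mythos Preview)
Your proof is correct and follows essentially the same approach as the paper: both set $\varphi=\alpha^2 u$, verify that $\varphi\in W_0$ with compact support in $2B$, invoke the second bullet of Lemma~\ref{rdefsol} (via Remark~\ref{rdefsol2}) to obtain the analogue of \eqref{Caccio5}, and then repeat the algebra of Lemma~\ref{CaccioI} verbatim. The only cosmetic difference is that you cite Lemma~\ref{defTrWE} directly for the trace identity $T(\alpha^2 u)=\alpha^2\,Tu$, whereas the paper introduces an auxiliary cutoff $\phi\in C^\infty_0(2B)$ with $\phi\equiv 1$ near $\supp\alpha$ and applies Lemma~\ref{lmult} to $\alpha^2\phi u$; both routes justify $T\varphi=0$ equally well.
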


\bp 
We can proceed exactly as for Lemma~\ref{CaccioI}, except that the initial estimate \eqref{Caccio5}
needs to be justified differently. Here we choose to apply the second item of Lemma~\ref{rdefsol}, as
explained in Remark \ref{rdefsol2}. That is, $E = 2B \sm \Gamma$ and $E^\Gamma = 2B$.

So we check the assumptions. We set, as before, $\varphi = \alpha^2 u$. First observe that $\varphi \in W$ because $u\in \WW(2B)$ and 
$\alpha \in C^\infty_0(2B)$. Moreover, $\varphi \in W_0$ because, if we let 
$\phi \in C^\infty_0(2B)$ be such that $\phi\equiv 1$ on a neighborhood of $\supp\, \alpha$,
Lemma~\ref{lmult} says that  $T(\varphi) = T(\alpha^2 \phi u) = \alpha^2 T(\phi u) = 0$ a.e. on $\Gamma$.
In addition, $\varphi$ is compactly supported in $2B$ because $\alpha$ is, and $u\in \WW(2B)$ by assumption.

Thus $\varphi$ is a valid test function, Lemma~\ref{rdefsol} applies, \eqref{Caccio5} holds, and
the rest of the proof is the same as for Lemma~\ref{CaccioI}.
\ep

\begin{lemma}[Moser estimates on the boundary] \label{MoserB}
Let $B$ be a ball centered on $\Gamma$. Let $u \in \WW(2B)$ be a non-negative subsolution in 
$2B \setminus \Gamma$ such that $Tu=0$ a.e. on $2B$. Then
\begin{equation} \label{Moser2}
\sup_{B} u \leq C \left(m(2B)^{-1}\int_{2B} u^2 dm \right)^\frac12,
\end{equation}
where $C$ depends only on 
the dimensions $d$ and $n$ and the constants $C_0$ and $C_1$.
\end{lemma}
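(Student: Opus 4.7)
The plan is the standard Moser iteration, based on two ingredients: the boundary Caccioppoli inequality of Lemma~\ref{CaccioB}, and the Sobolev-Poincar\'e inequality of Remark~\ref{rPoincare}, which gains an integrability factor $\kappa > 1$ (specifically $\kappa = n/(n-2)$ when $n \geq 3$, any fixed $\kappa > 1$ when $n = 2$). Write $r$ for the radius of $B$, and, for $\rho \in [1,2]$, denote by $B(\rho)$ the concentric ball of radius $\rho r$.

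For $p \geq 1$ and $k > 0$, set $H(t) = \min(t,k)^{p-1}$ and, for $\eta \in C_0^\infty(2B)$, I would test the subsolution inequality against $\varphi = \eta^2 u H(u)^2$. The truncation by $k$ makes $\varphi \in L^\infty$; by Lemmata~\ref{lcompo}, \ref{lalg}, and \ref{lmult}, $\varphi \in W_0$ with compact support in $2B$ (the trace vanishes because $Tu = 0$ and $T(u \cdot g) = Tu \cdot Tg$ when $g \in W \cap L^\infty$), so Remark~\ref{rdefsol2} legitimizes it as a test function. After expanding $\nabla \varphi$, dropping the non-negative term $2\eta^2 u H(u) H'(u) \A\nabla u \cdot \nabla u$, and using Cauchy-Schwarz to absorb the cross term, I obtain
\[
\int_{2B} \eta^2 H(u)^2 |\nabla u|^2 \, dm \leq C \int_{2B} u^2 H(u)^2 |\nabla \eta|^2 \, dm.
\]
Setting $f_k = u H(u) = u \min(u,k)^{p-1}$, a direct computation shows that $H(u)^2 |\nabla u|^2 \geq p^{-2} |\nabla f_k|^2$ a.e.\ and $u^2 H(u)^2 = f_k^2$, yielding the Caccioppoli-type bound $\int \eta^2 |\nabla f_k|^2 \, dm \leq C p^2 \int f_k^2 |\nabla \eta|^2 \, dm$. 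Then, for $1 \leq \sigma < \sigma' \leq 2$, I pick $\eta \in C_0^\infty(B(\sigma'))$ with $\eta \equiv 1$ on $B(\sigma)$ and $|\nabla \eta| \leq 2((\sigma'-\sigma)r)^{-1}$, and apply Remark~\ref{rPoincare} with exponent $2\kappa$ to $\eta f_k$, which is supported in $B(\sigma') \subset 2B$ and has zero trace. Combined with the Caccioppoli estimate and the doubling property~\eqref{doublinggen}, this produces
\[
\left( \frac{1}{m(2B)} \int_{B(\sigma)} f_k^{2\kappa} \, dm \right)^{\frac{1}{2\kappa}} \leq \frac{Cp}{\sigma' - \sigma} \left( \frac{1}{m(2B)} \int_{B(\sigma')} f_k^{2} \, dm \right)^{\frac{1}{2}}.
\]
Letting $k \to \infty$, $f_k \uparrow u^p$ pointwise, and monotone convergence yields the same bound with $u^p$ in place of $f_k$.

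Raising to the $1/p$-th power rewrites this as the reverse-H\"older inequality $\Phi(p\kappa, \sigma) \leq [Cp/(\sigma'-\sigma)]^{1/p} \Phi(p, \sigma')$, where I set $\Phi(q, \rho) := \bigl( m(2B)^{-1} \int_{B(\rho)} u^{2q} \, dm \bigr)^{1/(2q)}$. Iterating with $p_j = \kappa^j$ and $\sigma_j = 1 + 2^{-j}$ (so $\sigma_j - \sigma_{j+1} = 2^{-j-1}$), the accumulated constant $\prod_{j \geq 0} [C \kappa^j 2^{j+1}]^{\kappa^{-j}}$ converges because $\sum_j j \kappa^{-j} < \infty$, while $\Phi(p_J, \sigma_J) \to \sup_B u$ as $J \to \infty$ (the lower bound from restricting the integral to $B \subset B(\sigma_J)$ and the upper bound from $B(\sigma_J) \subset B(\rho)$ for any fixed $\rho > 1$). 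This produces the desired bound $\sup_B u \leq C \, \Phi(1, 2) = C \bigl( m(2B)^{-1} \int_{2B} u^2 \, dm \bigr)^{1/2}$. The main obstacle is the first step: choosing a test function that (i) lies in $W_0$ with compact support in $2B$, which requires the full force of the algebra/chain-rule lemmata from Section~\ref{Scompleteness}, (ii) handles the possible unboundedness of $u$ through the $k$-truncation, and (iii) yields a Caccioppoli estimate for a quantity that monotonically approximates $u^p$. Keeping $u$ linear in $\varphi$ — so that it is really the subsolution inequality for $u$ (and not for some transformation of it, which may fail to be a subsolution as Lemma~\ref{lstabsol} warns) that is being used — forces us to discard a nonnegative term rather than absorb it; this is the structural price of working with an unbounded subsolution.
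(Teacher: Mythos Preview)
Your proof is correct and follows the classical Moser iteration. The paper takes a genuinely different route: instead of testing against powers of $u$, it applies the boundary Caccioppoli inequality (Lemma~\ref{CaccioB}) directly to the truncations $(u-k)_+$, which are themselves subsolutions by Lemma~\ref{lstabsol}, and then runs Stampacchia's level-set argument (\cite[Lemma~5.1]{Stampacchia65}). Concretely, the paper combines Caccioppoli with the Sobolev--Poincar\'e inequality to bound both the measure $a(k,s)=m(\{u>k\}\cap B(x,s))$ and the energy $u(k,s)=\int_{\{u>k\}\cap B(x,s)}(u-k)^2\,dm$ through a coupled system of inequalities, forms the product $\varphi(h,s)=u(h,s)^\epsilon a(h,s)$ with a carefully chosen $\epsilon$, and invokes Stampacchia's abstract iteration lemma to conclude $\varphi(\mathfrak d,r)=0$ for an explicit $\mathfrak d$ of the right size.

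The Stampacchia route has the advantage that the only test function ever needed is $\alpha^2(u-k)_+$, which sits squarely inside Lemma~\ref{CaccioB}; there is no need to justify that $u\min(u,k)^{2(p-1)}$ lies in $W$ via a chain rule for merely Lipschitz composites, a point where your sketch leans on Lemmata~\ref{lcompo} and~\ref{lalg} slightly beyond their literal statements (Lemma~\ref{lcompo}(a) asks for $f\in C^1$, while $t\mapsto t\min(t,k)^{2(p-1)}$ has a corner at $t=k$). This is easily repaired by the same approximation device used in the proof of Lemma~\ref{lcompo}(b), so it is not a genuine gap, but the paper sidesteps it entirely. Your approach, on the other hand, is more direct and more widely recognized, makes the $L^{2p}\to L^{2p\kappa}$ gain explicit at each step, and dovetails naturally with the subsequent Lemma~\ref{MoserB2}, which is itself an iteration in $p$.
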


\bp 
This proof will be a little longer, but we will follow
the ideas used by Stampacchia in \cite[Section 5]{Stampacchia65}. 
The aim is to use the so-called Moser iterations. 
We start with some consequences of Lemma \ref{CaccioB}. 

Pick $2^* \in (2,+\infty)$ 
in the range of $p$ satisfying the Sobolev-Poincar\'e inequality \eqref{rPoincare1}; 
for instance take
$2^* = \frac{2n}{n-1}$. 
Let $u$ be as in the statement and let 
$B=B(x,r)$ be a ball centered on $\Gamma$. 
We claim that for any $\alpha \in C^\infty_0(2B)$,
\begin{equation} \label{Moser6}
\int_{2B} (\alpha u)^2 dm \leq Cr^2 m(\supp \, \alpha u)^{1-\frac{2}{2^*}} m(2B)^{\frac{2}{2^*}-1} \int_{2B} |\nabla \alpha|^2 u^2 dm
\end{equation}
where in fact we abuse notation and set 
$\supp \, \alpha u = \{ \alpha u > 0 \}$. 
Indeed, by H\"older's inequality and the Sobolev-Poincar\'e inequality \eqref{rPoincare1},
\begin{equation} \label{Moser7}
\begin{split}
\int_{\R^n} (\alpha u)^2 dm
& \leq C m(\supp \, \alpha u)^{1-\frac{2}{2^*}} \left(\int_{2B} (\alpha u)^{2^*} dm \right)^{\frac{2}{2^*}} \\
& \leq Cr^2 m(\supp\,  \alpha u)^{1-\frac{2}{2^*}} m(2B)^{\frac{2}{2^*}-1}
\int_{2B} |\nabla [\alpha u]|^2 dm. \\
\end{split}
\end{equation}
The last integral can be estimated, using Caccioppoli's inequality (Lemma~\ref{CaccioB}), by
\begin{equation} \label{Moser8}
\begin{split}
\int_{2B} |\nabla (\alpha u)|^2 dm & \leq 2 \int_{2B} |\nabla \alpha|^2 u^2 dm + 2 \int_{2B} |\nabla u|^2 \alpha^2 dm \\
& \leq C  \int_{2B} |\nabla \alpha|^2 u^2 dm.
\end{split}
\end{equation}
Our claim claim \eqref{Moser6} follows.

Recall that $B = B(x,r)$, with $x\in \Gamma$.
Since $u$ is a subsolution in $2B  \setminus \Gamma$, 
Lemma~\ref{lstabsol} says that
$(u-k)_+ : = \max\{u-k,0\}$ is a non-negative subsolution in $2B  \setminus \Gamma$. 
For any $0 < s < t \leq 2 r$, we 
choose a smooth 
function $\alpha$ supported in $B(x,t)$, such that $0\leq \alpha \leq 1$, $\alpha \equiv 1$ on $B(x,s)$,
and $|\nabla \alpha| \leq \frac{2}{t-s}$. 
By \eqref{Moser6} (applied to $(u-k)_+$ and this function $\alpha$),
\begin{equation} \label{Moser9}
\int_{A(k,s)} |u-k|^2 dm \leq C\frac{r^2}{(t-s)^2} 
m(A(k,t))^{1-\frac{2}{2^*}}
m(2B)^{\frac{2}{2^*}-1} \int_{A(k,t)} |u-k|^2 dm
\end{equation}
where $A(k,s) = \{y \in B(x,s), \, u(y) > k\}$.
If $h>k$, we have also,
\begin{equation} \label{Moser10}
(h-k)^2 m(A(h,s)) \leq \int_{A(h,s)} |u-k|^2 dm \leq   \int_{A(k,s)} |u-k|^2 dm.
\end{equation}
Define
\begin{equation} \label{Moser11}
a(h,s) = m(A(h,s))
\end{equation}
and
\begin{equation} \label{Moser12}
u(h,s) = \int_{A(h,s)} |u-h|^2 dm;
\end{equation}
thus
\begin{equation} \label{Moser13}
\left\{\begin{split} 
& u(k,s) 
\leq \frac{C r^2 m(2B)^{\frac{2}{2^*}-1}}{(t-s)^2} \, u(k,t) [a(k,t)]^{1-\frac{2}{2^*}} \\
& a(h,s) \leq \frac{1}{(h-k)^2} \, u(k,t)
\end{split}\right.
\end{equation}
or, if we set $\kappa = 1-\frac{2}{2^*} > 0$,
\begin{equation}\label{systMoser}\left\{\begin{split} 
& u(k,s)
\leq \frac{C r^2 m(2B)^{-\kappa}}{(t-s)^2} \, u(k,t) [a(k,t)]^{\kappa} \\
& a(h,s) \leq \frac{1}{(h-k)^2} \, u(k,t).
\end{split}\right.\end{equation}
Notice also that $u(h,s) \leq u(k,s)$ because $A(h,s) \subset A(k,s)$ and
$|u-h|^2 \leq |u-k|^2$ on $A(h,s)$. 

Let $\epsilon >0 $ be given, to be chosen
later. The estimates \eqref{systMoser} yield
\begin{equation} \label{Moseriterate}
u(h,s)^\epsilon a(h,s) \leq u(k,s)^\epsilon
a(h,s)  \leq \frac{C r^{2\epsilon} m(2B)^{-\epsilon\kappa} }{(t-s)^{2\epsilon}(h-k)^2} \,
u(k,t)^{\epsilon + 1} a(k,t)^{\epsilon\kappa}.
\end{equation}

\ms
Following \cite{Stampacchia65}, we define a function of two variables $\varphi$ by
\begin{equation}\label{a8.62}
\varphi(h,s) = u(h,s)^\epsilon a(h,s)
\ \text{ for $h > 0$ and } 0 < s < 2 r.
\end{equation}
Notice that $\varphi(h,s) \geq 0$. When $s$ is fixed, $\varphi(h,s)$ is non increasing in $h$,
and when $h$ is fixed, $\varphi(h,s)$ is non decreasing in $s$. We want to show that
\begin{equation} \label{Moser3}
\varphi(h,s) \leq \frac{K}{(h-k)^\alpha (t-s)^\gamma} \left[ \varphi(k,t)\right]^\beta
\end{equation}
for some choice of positive constants $K$, $\alpha$ and $\gamma$, and some $\beta >1$,
because if we do so we shall be able to use Lemma~5.1 in \cite{Stampacchia65} directly.

It is a good idea to choose $\epsilon$ so that 
\begin{equation}\label{systepsilon}\left\{\begin{split} 
\beta \epsilon = \epsilon + 1, \\
\beta = \epsilon\kappa.
\end{split}\right.\end{equation}
for some $\beta >1$. 
Choose $\beta= \frac{1}{2} + \sqrt{\frac14 + \kappa} >1 $ and $\epsilon = \frac{\beta}{\kappa} >0$. An easy computation proves that $(\epsilon,\beta)$ satisfies \eqref{systepsilon}. 
With this choice,  
\eqref{Moseriterate} becomes
\begin{equation} \label{Moseriterate2}
\varphi(h,s) \leq \frac{C r^{2\epsilon} m(2B)^{-\epsilon\kappa} }{(t-s)^{2\epsilon}(h-k)^2} \varphi(k,t)^\beta,
\end{equation}
which is exactly \eqref{Moser3} with $K = C r^{2\epsilon} m(2B)^{-\epsilon\kappa}$, $\alpha = 2$ 
and $\gamma = 2\epsilon$. 

So we can apply Lemma~5.1 in \cite{Stampacchia65}, which says that
\begin{equation} \label{Moser4}
\varphi(\mathfrak d,r) = 0,
\end{equation}
where $\mathfrak d$ is given by 
\begin{equation} \label{Moser5}
\mathfrak d^\alpha = \dfrac{2^{\beta\frac{\alpha+\beta}{\beta-1}}K [\varphi(0,2 r)]^{\beta-1}}{r^\gamma}.
\end{equation}
We replace and get that we can take
\begin{equation} \label{Moser15}
\mathfrak d^2 =
C r^{2\epsilon} m(2B)^{-\epsilon\kappa} \ \frac{\varphi(0,2 r)^{\beta-1}} {r^{\gamma}}
= C m(2B)^{-\epsilon\kappa} \varphi(0, 2 r)^{\beta-1}.
\end{equation}
Notice that $\varphi(\mathfrak d,r) = 0$ implies that
$a(\mathfrak d,r) = 0$, which in turn implies that $u\leq \mathfrak d$ a.e. on $B=B(x,r)$. 
Moreover, by definition of $a$, we have $a(0,2r) \leq m(2B)$. Thus
\begin{equation} \label{Moser16}
\begin{split}
\sup_{B(x,r)} u &\leq \mathfrak d \leq C m(2B)^{-\epsilon\kappa/2}
u(0,2 r)^{(\beta-1) \varepsilon /2} a(0,2r)^{(\beta-1)/2}
\\&\leq C u(0,2 r)^{\epsilon(\beta-1)/2} m(2B)^{(\beta-1-\epsilon\kappa)/2}.
\end{split}
\end{equation}
The first line in \eqref{systepsilon} yields $\epsilon(\beta-1) = 1$ and the second line in \eqref{systepsilon} yields 
$\beta-1-\epsilon\kappa = -1$. 
Besides, 
$u(0,2 r) = \int_{2 B} u^2 dm$ 
because $u$ is nonnegative. Hence
\begin{equation} \label{Moser17}
\sup_{B} u \leq C \left( m(2B)^{-1} \int_{2 B} u^2 dm \right)^\frac12,
\end{equation}
which is the desired conclusion.
\ep

\ms
\begin{lemma}[Moser estimate at the boundary for general $p$] \label{MoserB2}
Let $p>0$. Let $B$ be a ball centered on $\Gamma$. Let $u \in \WW(2B)$ be a non-negative subsolution in $2B \setminus \Gamma$ such that $Tu=0$ a.e. on $2B$. Then
\begin{equation} \label{Moser2p}
\sup_{B} u \leq C_p \left(m(2B)^{-1}\int_{2B} u^p dm \right)^\frac1p,
\end{equation}
where $C_p$ depends only on 
the dimensions $n$ and $d$, the constants $C_0$ and $C_1$, and the exponent $p$.
\end{lemma}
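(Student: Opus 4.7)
The plan is to split on the size of $p$. When $p\ge 2$, Jensen's inequality gives
$\bigl(m(2B)^{-1}\int_{2B} u^2\,dm\bigr)^{1/2}\le \bigl(m(2B)^{-1}\int_{2B} u^p\,dm\bigr)^{1/p}$,
so \eqref{Moser2p} follows immediately from Lemma~\ref{MoserB}. The genuine work is the range $0<p<2$, which we treat by a Moser-type self-improvement in the spirit of Giaquinta-Giusti.

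Write $B=B(x_0,r)$ with $x_0\in\Gamma$, set $A:=m(2B)^{-1}\int_{2B}u^p\,dm$, and for $\tau\in[1,3/2]$ define $\Psi(\tau):=\sup_{B(x_0,\tau r)}u$. First I would check $\Psi(\tau)<+\infty$ for every $\tau\in[1,3/2]$: given $y$ in that ball, either $\delta(y)$ is comparable to $r$ and the interior Moser (Lemma~\ref{MoserI}) on a small ball around $y$ inside $2B\setminus\Gamma$ gives a finite pointwise bound, or $\delta(y)\ll r$ and Lemma~\ref{MoserB} applied to a boundary ball around the projection of $y$ on $\Gamma$ (contained in $2B$) does the same. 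Next, for $1\le \tau<\tau'\le 3/2$ and $y\in B(x_0,\tau r)$, I pick an associated ball $B_y$ of radius comparable to $(\tau'-\tau)r$ with $2B_y\subset B(x_0,\tau' r)$: if $\delta(y)<(\tau'-\tau)r/10$, take $B_y=B(z,(\tau'-\tau)r/5)$ for a closest $z\in\Gamma$ (so Lemma~\ref{MoserB} applies, using that $B_y$ is centered on $\Gamma$ and $2B_y\subset 2B$); otherwise, take $B_y$ well inside $\Omega$ and apply Lemma~\ref{MoserI}. Either way I get
$u(y)^2 \le C\, m(2B_y)^{-1}\int_{2B_y}u^2\,dm.$

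Now I use the elementary bound $u^2\le \Psi(\tau')^{2-p}u^p$ on $2B_y\subset B(x_0,\tau'r)$, together with the doubling estimate $m(2B_y)\ge C^{-1}(\tau'-\tau)^n\, m(2B)$ coming from \eqref{doublinggen}, to obtain
$\Psi(\tau)^2 \le C(\tau'-\tau)^{-n}\,\Psi(\tau')^{2-p}\,A.$
Taking square roots and applying Young's inequality with conjugate exponents $\tfrac{2}{2-p}$ and $\tfrac{2}{p}$,
$\Psi(\tau')^{(2-p)/2}A^{1/2}\le \epsilon\,\Psi(\tau')+C_\epsilon\,A^{1/p}$
for every $\epsilon>0$. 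Choosing $\epsilon$ proportional to $(\tau'-\tau)^{n/2}$ so as to absorb the prefactor, I reach the recursion
$\Psi(\tau) \le \tfrac12\,\Psi(\tau')+C_p(\tau'-\tau)^{-\gamma}\,A^{1/p}$
valid for all $1\le \tau<\tau'\le 3/2$, with some $\gamma=\gamma(n,p)>0$.

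Finally, since $\Psi$ is bounded on $[1,3/2]$, I close the argument with the standard Giaquinta-Giusti iteration: iterating the recursion along a sequence $\tau_k=1+\tfrac12(1-\lambda^k)\uparrow 3/2$, with $\lambda\in(0,1)$ chosen so that $\tfrac12\lambda^{-\gamma}<1$, sums a convergent geometric series and yields $\Psi(1)\le C_p A^{1/p}$, which is \eqref{Moser2p}. The main obstacle is twofold: securing finiteness of $\Psi$ on $[1,3/2]$ before running the iteration (which forces the two-case covering with the boundary and interior Moser at the correct scale), and tracking the dependence on $\tau'-\tau$ through \eqref{doublinggen} and Young's inequality carefully enough that the geometric iteration converges.
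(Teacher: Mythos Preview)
Your proof is correct and follows essentially the same strategy as the paper: reduce to $p<2$, establish the scale-dependent $L^2$ Moser bound with the $(\tau'-\tau)^{-n/2}$ loss (this is exactly the paper's \eqref{Moser18}, obtained by the same two-case interior/boundary covering), interpolate via $u^2\le(\sup u)^{2-p}u^p$, and iterate over radii in $[r,\tfrac32 r]$. The only cosmetic difference is in the closure of the iteration: you pass to an additive recursion via Young's inequality and invoke the Giaquinta--Giusti absorption lemma, whereas the paper runs the equivalent multiplicative cascade directly (taking powers $(1-\tfrac p2)^{i}$ and summing the resulting geometric series).
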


\bp 
Lemma \ref{MoserB2} can be 
deduced 
from Lemma \ref{MoserB} by a simple iterative argument. 
The proof is fairly similar to the very end of the proof of \cite[Chapter IV, Theorem 1.1]{HLbook}. 
Nevertheless, because the proof in \cite{HLbook} doesn't hold at the boundary (and for the sake of completeness), we give a proof here.

\medskip

First, let us prove that we can improve \eqref{Moser2} into the following: if $B$ is a ball centered 
on $\Gamma$ and $u \in \WW(B)$ is a non-negative subsolution on $B\cap \Omega$ such that 
$Tu = 0$ a.e. on $B$, then for any $\theta \in (0,1)$ 
(in practice, close to $1$),
\begin{equation} \label{Moser18}
\sup_{\theta B} u \leq C (1-\theta)^{-\frac{n}2} \left( m(B)^{-1} \int_B u^2 \, dm \right)^\frac12,
\end{equation}
where $C>0$ depends only on $n$, $d$, $C_0$ and $C_1$.

Let $B$ be a ball 
centered on $\Gamma$, with 
radius $r$,
and let $\theta \in (0,1)$.
Choose $x\in \theta B$. 
Two cases may happen: either $\delta(x) \geq \frac{1-\theta}{6}r$ or 
$\delta(x) < \frac{1-\theta}{6}r$.
In the first case, if
$\delta(x) \geq \frac{1-\theta}{6}r$, we apply Lemma \ref{MoserI} to the ball 
$B(x,\frac{1-\theta}{20}r)$
(notice that $B(x,\frac{1-\theta}{10}r) \subset B \cap \Omega$). We get that
\begin{equation} \label{Moser19} \begin{split}
u(x) & \leq C \left( \frac1{m(B(x,\frac{1-\theta}{10}r))} \int_{B(x,\frac{1-\theta}{10}r)} u^2 \, dm \right)^\frac12 \\
& \leq C \left( \frac{m(B(x,2r))}{m(B(x,\frac{1-\theta}{10}r)} \right)^\frac12 \left( \frac1{m(B)} \int_B u^2 \, dm \right)^\frac12 \\
& \leq C (1-\theta)^{-\frac{n}2} \left( m(B)^{-1} \int_B u^2 \, dm \right)^\frac12
\end{split} \end{equation}
by
\eqref{doublinggen}. In the second case, when
$\delta(x) \leq \frac{1-\theta}{6}r$, we take $y \in \Gamma$ such that $|x-y| = \delta(x)$. Remark that $y \in \frac{1+\theta}{2}B$ and then $B(y,\frac{1-\theta}{2} r) \subset B$. We apply then Lemma \ref{MoserB} to the ball $B(y,\frac{1-\theta}{6} r)$ in order to get
\begin{equation} \label{Moser20} \begin{split}
u(x) \leq \sup_{B(y,\frac{1-\theta}{6} r)}  
u
& \leq C \left( \frac1{m(B(x,\frac{1-\theta}{3}r))} \int_{B(x,\frac{1-\theta}{3}r)} u^2 \, dm \right)^\frac12 \\
& \leq C \left( \frac{m(B(x,2r))}{m(B(x,\frac{1-\theta}{3}r)} \right)^\frac12 \left( \frac1{m(B)} \int_B u^2 \, dm \right)^\frac12 \\
& \leq C (1-\theta)^{-\frac{n}2} \left( m(B)^{-1} \int_B u^2 \, dm \right)^\frac12
\end{split} \end{equation}
with \eqref{doublinggen}. The claim \eqref{Moser18} follows.

\medskip

Let us prove now \eqref{Moser2p}. Without loss of generality, we can 
restrict to
the case $p<2$, since the case $p\geq 2$ can be deduced from Lemma \ref{MoserB} and 
H\"older's 
inequality. 

Let $B=B(x,r)$ be a ball and let $u \in \WW(2B)$ be a non-negative 
subsolution on $2B \sm \Gamma$
such that $T u=0$ on $2B$.
Set for $i\in \bN$, 
$$r_i := r \sum_{j=0}^i 3^{-j} = \frac32 r(1-3^{-i-1}) < \frac32 r.$$ 
Note that $\frac{r_i}{r_{i}-r_{i-1}} = \frac{3^{i+1}-1}2 \leq 3^{i+1}$. As a consequence, for any $i\in \bN^*$, \eqref{Moser18} yields
\begin{equation} \label{Moser21} \begin{split} 
\sup_{B(x,r_{i-1})} u  & \leq C3^{\frac{in}2} \bigg(\frac1{m(B(x,r_i))}\int_{B(x,r_i)} |u|^2 dm \bigg)^\frac12 \\
& \leq C3^{\frac{in}2} \bigg(\sup_{B(x,r_i)} u \bigg)^{1-\frac p2} 
\bigg(\frac1{m(B(x,r_i))} \int_{B(x,r_i)} |u|^p dm \bigg)^\frac12 \\
& \leq C3^{\frac{in}2} \bigg(\sup_{B(x,r_i)} u \bigg)^{1-\frac p2} 
\bigg(\frac1{m(2B)}\int_{B(x,r_i)} |u|^p dm \bigg)^\frac12. \\
\end{split}\end{equation}
Set $\alpha = 1-\frac p2$. By taking the power $\alpha^{i-1}$ of the inequality \eqref{Moser21}, where $i$ is a positive integer, we obtain 
\begin{equation} \label{Moser22}
\bigg(\sup_{B(x,r_{i-1})} u \bigg)^{\alpha^{i-1}} 
\leq C^{\alpha^{i-1}} (3^{\frac{in}2})^{\alpha^{i-1}} \bigg(\sup_{B(x,r_i)} u \bigg)^{\alpha^i} 
\bigg(m(2B)^{-1}\int_{B(x,r_i)} |u|^p dm \bigg)^{\frac12 \alpha^i},
\end{equation}
where $C$ is independent of 
$i$ (and also $p$, $x$, $r$ and $u$). An immediate induction gives, for any $i\geq 1$,
\begin{equation} \label{Moser23}
\sup_{B(x,r)} u \leq C^{\sum_{j=0}^{i-1} \alpha^{j}} 
\bigg(\prod_{j=1}^i 3^{\frac{jn}2\alpha^{j-1}} \bigg) 
\bigg(\sup_{B(x,r_i)} u \bigg)^{\alpha^i} 
\bigg(m(2B)^{-1}\int_{B(x,r_i)} |u|^p dm \bigg)^{\frac12 \sum_{j=0}^{i-1} \alpha^{j}},
\end{equation}
and if we apply Corollary \ref{Moser2} once more, we get that
\begin{equation} \label{Moser27}
\sup_{B} u \leq C^{\sum_{j=0}^{i} \alpha^{j}} 
\bigg(\prod_{j=1}^{i+1} 3^{\frac{jn}2\alpha^{j-1}} \bigg)
\bigg(m(2B)^{-1}\int_{2B} |u|^p dm \bigg)^{\frac12 \sum_{j=0}^{i-1} \alpha^{j}} 
\bigg( m(2B)^{-1} \int_{\frac32 B} |u|^2 dm \bigg)^{\frac{\alpha^i}2}.
\end{equation}
We want to to take the limit when $i$ goes to $+\infty$. Since $u \in \WW(2B)$, the quantity $\int_{\frac32 B} |u|^2 dm $ is finite and thus
\begin{equation} \label{Moser24}
\lim_{i\to +\infty} \left( m(2B)^{-1} \int_{\frac32 B} |u|^2 dm \right)^{\frac{\alpha^i}2} = 1
\end{equation}
because we took $p$ such that $\alpha = 1-\frac p2 <1$. Note also that
\begin{equation} \label{Moser25}
\lim_{i\to +\infty}\sum_{j=0}^{i-1} \alpha^{j} = \frac 2p \quad \text{ and } \quad \lim_{i\to +\infty} \frac 12\sum_{j=0}^{i-1} \alpha^{j} = \frac 1p.
\end{equation}
Furthermore, $\ds \prod_{j=1}^{i+1} 3^{\frac{jn}2\alpha^{j-1}}$ has a limit (that depends on $p$ and $n$) when $j<+\infty$ because, 
\begin{equation} \label{Moser26}
\sum_{j=1}^\infty \frac{jn}2\alpha^{j-1} = \frac {n} {2} \sum_{j=1}^{+\infty} j \alpha^{j-1} < +\infty.
\end{equation}
These three facts prove that the limit when $i\to +\infty$ of the right-hand side of \eqref{Moser27} exists and
\begin{equation} \label{Moser28}
\sup_{B} u \leq C_p  \left(m(2B)^{-1}\int_{2B} |u|^p dm \right)^{\frac1p},
\end{equation}
which is the desired result.
\ep

Next comes the H\"older continuity of the solutions at the boundary. We start with a boundary version of the density property. 

\begin{lemma} \label{HarnackB}
Let $B$ be a ball centered on $\Gamma$ and $u \in \WW(4B)$ be a non-negative supersolution in 
$4B  \setminus \Gamma$ such that $T u = 1$ a.e. on $4B$. 
Then 
\begin{equation} \label{Harnack3}
\inf_B u \geq C^{-1},
\end{equation}
where $C>0$ depends only on the dimensions $d$, $n$ and the constants $C_0$, $C_1$.
\end{lemma}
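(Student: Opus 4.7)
Set $v := (1-u)_+$. By Lemma~\ref{lstabsol} applied to the subsolutions $1-u$ and $0$ (the former is a subsolution because $u$ is a supersolution and constants are solutions), $v$ is a non-negative subsolution in $4B\setminus\Gamma$; by Lemma~\ref{lcompo}, $Tv = (1-Tu)_+ = 0$ $\sigma$-a.e. on $4B$. Since $v \leq 1$ and $1-v = \min(u,1) \leq u$, the claim $\inf_B u \geq C^{-1}$ is equivalent to $\sup_B v \leq 1-c$ for some universal $c \in (0,1)$.

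The approach is a Stampacchia-type iteration patterned on the proof of Lemma~\ref{MoserB}, applied to the truncations $w_k := (v-k)_+$ for $k \in [0,1)$, but crucially using the pointwise bound $w_k \leq 1-k$ in the final bookkeeping. Each $w_k$ is a non-negative subsolution in $4B\setminus\Gamma$ with $Tw_k = 0$ on $4B$ (by Lemma~\ref{lstabsol} and Lemma~\ref{lcompo}). Combining the boundary Caccioppoli inequality (Lemma~\ref{CaccioB}) applied to $w_k$ between nested concentric balls $B(x_0,s)\subset B(x_0,t)\subset 2B$ with the weighted Sobolev--Poincaré inequality~\eqref{1.5-bisbis}---both of which exploit the vanishing trace---yields the coupled system~\eqref{systMoser} for the quantities $a(h,s), u(h,s)$ associated to $w_k$. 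Stampacchia's Lemma~5.1 of \cite{Stampacchia65}, with the same parameters $\varepsilon, \beta$ as in Lemma~\ref{MoserB}, then gives the refined estimate
\begin{equation*}
\sup_B v \;\leq\; k \,+\, C(1-k)\Bigl(\frac{m(\{v>k\}\cap 2B)}{m(2B)}\Bigr)^{\beta/2}
\qquad \text{for every } k \in [0,1),
\end{equation*}
where $\beta = \tfrac12 + \sqrt{\tfrac14 + \kappa}$ and $\kappa = 1 - 2/2^* > 0$. The gain compared to Lemma~\ref{MoserB} comes from the estimate $\int_{2B} w_k^2\, dm \leq (1-k)^2\, m(\{v>k\}\cap 2B)$, which upgrades the crude input $\int_{2B} w_k^2\,dm \leq m(2B)$.

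The main obstacle, and the only genuinely new ingredient compared to the interior-type Moser iteration, is producing universal constants $k\in(0,1)$ and $c_1 > 0$ such that $m(\{v>k\}\cap 2B) \leq (1-c_1)\, m(2B)$ with $c_1$ large enough that $C(1-c_1)^{\beta/2} \leq \tfrac12$; plugging this into the display above then yields $\sup_B v \leq 1 - c$ for some universal $c$. This measure bound is the quantitative manifestation of the zero-trace condition: by~\eqref{defLebdp1}, the Lebesgue average $\fint_{B(x,\rho)} v(y)\,dy$ tends to $0$ as $\rho \to 0$ for $\sigma$-a.e.\ $x \in \Gamma\cap 2B$, and via~\eqref{L1byL1w} this control transfers to the corresponding $m$-average. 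A Vitali-type covering of $\Gamma\cap 2B$ by balls on which the $m$-average of $v$ is $\leq k/2$ (say, obtained after restricting to a set of large $\sigma$-measure where the convergence is uniform via Egorov), together with Lemma~\ref{lwest}---which concentrates the bulk of $m(2B)$ within an $O(r)$-tubular neighborhood of $\Gamma$---then yields the required proportion of $2B$ on which $v \leq k$, and completes the proof.
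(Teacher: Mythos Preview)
Your reduction to $v=(1-u)_+$ and the Stampacchia iteration are fine, and the refined estimate $\sup_B v \le k + C(1-k)\bigl(m(\{v>k\}\cap 2B)/m(2B)\bigr)^{\beta/2}$ does follow from the machinery of Lemma~\ref{MoserB}. The gap is in the last step, where you need a \emph{universal} constant $c_1$ with $m(\{v>k\}\cap 2B)\le (1-c_1)\,m(2B)$ and $C(1-c_1)^{\beta/2}\le\tfrac12$. Your Egorov/covering argument cannot deliver this: the radii $\rho_i$ at which the averages of $v$ become small are controlled only by the specific function $v$, so the $m$-mass of the covering balls may be an arbitrarily small fraction of $m(2B)$. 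The appeal to Lemma~\ref{lwest} does not help, since that lemma only says $m$ is comparable to $r^{d+1}$ on the whole ball, not that it concentrates in the $\rho_i$-tube; in fact the computation behind~\eqref{ldoub1} shows that an $\epsilon$-tube about $\Gamma\cap 2B$ carries only a fraction $\approx \epsilon/r$ of $m(2B)$. Also note that~\eqref{L1byL1w} goes in the wrong direction for your purpose: it bounds the Lebesgue average by the $m$-average, not conversely, so small Lebesgue averages near $\Gamma$ do not yield small $m$-averages.

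The paper's proof avoids this difficulty by a different mechanism. It passes to $v_\delta = -\ln\min\{u+\delta,1\}$, checks that $v_\delta$ is a subsolution with $Tv_\delta=0$ on $4B$, and then tests the supersolution $u_\delta=\min\{u+\delta,1\}$ against $\varphi=\alpha^2\bigl(\tfrac{1}{u_\delta}-1\bigr)$ to obtain the universal energy bound $\int_{2B}|\nabla v_\delta|^2\,dm\le C r^{-2}\,m(2B)$. Combining this with the Moser bound~\eqref{Moser2} and the Sobolev--Poincar\'e inequality~\eqref{1.5-bisbis} gives $\sup_B v_\delta\le C$, hence $\inf_B u\ge e^{-C}$. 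The logarithmic transform is what converts the multiplicative structure of the problem into an additive one and produces universal constants directly; there is no need for a separate measure-density input.
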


\begin{proof} The ideas of the proof are taken from the Density Theorem 
(Section 4.3, Theorem 4.9) in \cite{HLbook}. 
The result in \cite{HLbook} states, roughly speaking, that \eqref{Harnack3} holds whenever $u$ 
is a supersolution in $4B \subset \Omega$ 
such that 
$u\geq 1$ on a large piece of $B$; and its proof relies on a Poincar\'e inequality on balls 
for functions that equal $0$ 
on a big piece of the considered ball.

\noindent We will adapt this argument to the case where $B$ is centered on $\Gamma$ and we will rely on the Poincar\'e inequality given by Lemma~\ref{lpBry}.

\medskip

Let $B$ and $u$ be as in the statement.
Let $\delta \in (0,1)$ be small (it will be used to avoid some functions to take the value $0$)
and set $u_\delta =  \min\{1,u+\delta\}$ and $v_\delta:= -\Phi_\delta(u_\delta)$, where $\Phi$ is a smooth Lipschitz function defined on $\R$ such that $\Phi_\delta(s) = -\ln(s)$ when $s\in [\delta,1]$. 

The plan of the proof is: first we prove that $v_\delta$ is a subsolution, and then 
we use the Moser estimate and the Poincar\'e inequality given Lemma~\ref{MoserB} and \ref{lpBry} respectively. It will give that the supremum of $v_\delta$ on $B$ is bounded by the $L^2$-norm of the gradient of $v_\delta$.
Then, we will test the supersolution $u_\delta$ against an appropriate test function, which will give that the $L^2(2B)$ bound on $\nabla v_\delta$ - and thus the supremum of $v_\delta$ on $B$ - can be bounded by a constant independent of $\delta$. 
This will yield
a lower bound on $u_\delta(x)$ which is uniform in $\delta$ and $x\in B$. 

\medskip

So we start by proving that
\begin{equation} \label{Harnack4}
\text{$v_\delta \in \WW(4B)$ is a subsolution in $4B  \setminus \Gamma$ 
such that
$Tv_\delta = 0$ a.e. on $4B$.}
\end{equation}
Let $\varphi \in C^\infty_0(\Omega \cap 4B)$.
Choose 
$\phi \in C^\infty_0(\Omega \cap 4B)$
such that $\phi \equiv 1$ on $\supp \, \varphi$. Then for 
$y\in \Omega$,
\begin{equation} \label{Harnack5}
v_\delta(y) \varphi(y) = \Phi_\delta(\min\{1,(u(y)+\delta)\phi(y)\}) \varphi(y).
\end{equation}
Since $u\in \WW(4B)$, it follows that $u\phi \in W$ and thus $(u+\delta) \phi \in W$. 
Consequently, we obtain $\min\{1,(u+\delta)\phi\} \in W$ by 
Lemma~\ref{lcompo} (b), then $\Phi_\delta(\min\{1,(u+\delta)\phi\}) \in W$ by 
Lemma~\ref{lcompo} (a) and finally $v_\delta \varphi \in W$ thanks to Lemma~\ref{lmult}. 
Hence
$v_\delta\in \WW(4B)$. Using the fact that the trace is local and Lemmata \ref{lcompo} and \ref{defTrWE}, it is clear that 
\begin{equation} \label{Harnack6}
Tv_\delta = -\ln(\min\{1,T(u\phi)+\delta\}) = 0 \ \text{ a.e. on } 4B.
\end{equation}
The claim \eqref{Harnack4} will be proven if we can show that $v_\delta$ is a subsolution in $4B  \setminus \Gamma$. Let $\varphi \in C^\infty_0(4B  \setminus \Gamma)$ be a non-negative function. We have
\begin{equation} \label{Harnack7} \begin{split}
\int_\Omega \A \nabla v_\delta \cdot \nabla \varphi \, dm & = - \int_\Omega \frac{\A \nabla u_\delta}{u_\delta} \cdot \nabla \varphi \, dm \\
& = - \int_\Omega \A\nabla u_\delta \cdot \nabla \left( \frac{\varphi}{u_\delta} \right) \, dm - \int_{4B} \frac{\A \nabla u_\delta \cdot \nabla u_\delta}{u_\delta^2} \varphi \, dm.
\end{split} \end{equation}
The second term in the right-hand side is non-positive by 
the ellipticity condition \eqref{AElliptic}. So $v_\delta$ is a subsolution if we can establish that 
\begin{equation} \label{Harnack8}
\int_\Omega \A \nabla u_\delta \cdot \nabla \left( \frac{\varphi}{u_\delta} \right) \, dm \geq 0.
\end{equation}
Yet $u_\delta$ is a supersolution according to Lemma~\ref{lstabsol}. Moreover $\varphi/u_\delta$ is compactly supported in $4B \setminus \Gamma$ and, since $u_\delta \geq \delta>0$, 
we deduce from Lemma~\ref{lcompo} that $\frac{\varphi}{u_\delta} \in W$.
So \eqref{Harnack8} is just 
a consequence of Lemma~\ref{rdefsol}. The claim \eqref{Harnack4} follows.

\medskip

The function $v_\delta$ satisfies now all the assumptions of Lemma~\ref{MoserB} and thus
\begin{equation} \label{Harnack9}
\sup_{B} v_\delta \leq C \left( m(2B)^{-1} \int_{2B} |v_\delta|^2 dm \right)^{\frac12}.
\end{equation}
Since $Tv_\delta = 0$ a.e. on $2B$, the right-hand side can be bounded with 
the help of \eqref{1.5-bisbis}, 
which gives
\begin{equation} \label{Harnack10}
\sup_{B} v_\delta \leq Cr \left( m(2B)^{-1} \int_{2B} |\nabla v_\delta|^2 dm \right)^{\frac12}.
\end{equation}
We will prove that the right-hand side of \eqref{Harnack10} is bounded uniformly in $\delta$. 
Use the test function 
$\varphi = \alpha^2\big(\frac{1}{u_\delta} - 1\big)$ with 
$\alpha \in C^\infty_0(4B)$, $0\leq \alpha \leq 1$, $\alpha \equiv 1$ on $2B$ and $\nabla \alpha \leq \frac 1r$. Note that $\varphi$ is a non-negative function compactly supported in $4B$ and, by 
Lemma~\ref{lcompo}, $\varphi$ is in $W$ and has zero trace, that is $\varphi \in W_0$. 

\noindent Since $u$ is a supersolution, $u_\delta$ is also a supersolution. We test $u_\delta$ against $\varphi$ (this is allowed, 
thanks to Lemma~\ref{rdefsol}) and we get
\begin{eqnarray} \label{Harnack11}
0 &\leq& \int_{\R^n} \A\nabla u_\delta \cdot \nabla 
\Big[\alpha^2 \Big(\frac{1}{u_\delta} - 1\Big)\Big]dm 
\nn\\
&=& - \int_{\R^n} \alpha^2 \, \frac{\A\nabla u_\delta \cdot \nabla u_\delta}{u_\delta^2} dm 
+ 2 \int_{\R^n} \alpha \left(1-u_\delta\right) \, \frac{\A\nabla u_\delta \cdot \nabla \alpha}{u_\delta} \,dm,
\end{eqnarray}
hence, by the
ellipticity and the boundedness of $A$ (see \eqref{ABounded} and \eqref{AElliptic}),
\begin{equation} \label{Harnack12} \begin{split}
\int_{\R^n} \alpha^2 \frac{|\nabla u_\delta|^2}{u_\delta^2} dm 
& \leq C \int_{\R^n} \alpha^2 \frac{\A\nabla u_\delta \cdot \nabla u_\delta}{u_\delta^2} dm \\
& \leq  C \int_{\R^n} \alpha \left(1-u_\delta\right) \frac{\A\nabla u_\delta \cdot \nabla \alpha}{u_\delta} \,dm \\
& \leq C \int_{\R^n} \alpha \left(1-u_\delta\right) \frac{|\nabla u_\delta||\nabla \alpha|}{u_\delta} \,dm \\
& \leq C \left( \int_{\R^n} \alpha^2 \frac{|\nabla u_\delta|^2}{u_\delta^2} \,dm \right)^\frac12  \left( \int_{\R^n}\left(1-u_\delta\right)^2 |\nabla \alpha|^2 \,dm \right)^\frac12 \\
\end{split} \end{equation}
by Cauchy-Schwarz' inequality. Therefore,
\begin{equation} \label{Harnack13}
\int_{\R^n} \alpha^2 |\nabla \ln u_\delta|^2 dm =  \int_{\R^n} \alpha^2 \frac{|\nabla u_\delta|^2}{u_\delta^2} dm \leq C \int_{\R^n} (1-u_\delta)^2 |\nabla \alpha|^2 dm \leq C \int_{\R^n} |\nabla \alpha|^2 dm
\end{equation}
because $0\leq u_\delta \leq 1$, and then with our particular choice of $\alpha$,
\begin{equation} \label{Harnack14}
m^{-1}(2B) \int_{2B} |\nabla v_\delta|^2 dm = m^{-1}(2B) \int_{2B} |\nabla \ln u_\delta|^2 dm  \leq \frac{C}{r^2}.
\end{equation}

We inject
this last estimate in \eqref{Harnack10} 
and get that
\begin{equation} \label{Harnack15}
\sup_B v_\delta = \sup_B (-\ln u_\delta) \leq C,
\end{equation}
i.e.  $\ds \inf_B u_\delta = \inf_B \min\{1,u+\delta\} \geq e^{-C} = C^{-1}$. Since the constant doesn't depend on $\delta$, we have the right conclusion, that is
$\ds \inf_B u \geq C^{-1}.$
\end{proof}

\begin{lemma}[Oscillation estimates on the boundary] \label{OscB}
Let $B$ be a ball centered on $\Gamma$ and $u \in \WW(4B)$ be a solution in $4B \setminus \Gamma$ such that $T u $ is uniformly  bounded on $4B$. Then, there exists $\eta \in (0,1)$ such that
\begin{equation} \label{Osc1}
\osc_{B} u \leq \eta \osc_{4B} u + (1-\eta) \osc_{\Gamma \cap 4B} Tu.
\end{equation}
The constant $\eta$ depends only on the dimensions $n$, $d$ and the constants $C_0$ and $C_1$.
\end{lemma}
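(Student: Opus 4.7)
My plan is a standard oscillation-reduction argument using the boundary density estimate (Lemma~\ref{HarnackB}) applied to two non-negative solutions associated with $u$. Introduce the notation $M := \sup_{4B} u$, $m := \inf_{4B} u$, $\omega := M - m = \osc_{4B} u$, and $M' := \sup_{\Gamma \cap 4B} Tu$, $m' := \inf_{\Gamma \cap 4B} Tu$, $\omega' := M' - m' = \osc_{\Gamma \cap 4B} Tu$. One may assume $M$ and $m$ are finite, as otherwise the right-hand side is infinite and the conclusion is trivial.

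The first step is to establish the elementary bounds $m \le m' \le M' \le M$, a weak form of the maximum principle. These follow directly from the trace formula \eqref{defTrWE2}: for $\sigma$-a.e.\ $x \in \Gamma \cap 4B$, $Tu(x) = \lim_{r \to 0} \fint_{B(x,r)} u$, and for sufficiently small $r$ the ball $B(x,r)$ is contained in $4B$, so each average lies in the essential range $[m,M]$.

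Next, consider $v_1 := u - m$ and $v_2 := M - u$. Both are non-negative solutions of $Lu = 0$ in $4B \setminus \Gamma$ belonging to $\WW(4B)$, with traces $Tv_1 \ge m' - m$ and $Tv_2 \ge M - M'$ $\sigma$-a.e.\ on $\Gamma \cap 4B$. Because
\begin{equation*}
(m' - m) + (M - M') = \omega - \omega' \ge 0,
\end{equation*}
at least one of $m' - m$ and $M - M'$ is $\ge (\omega - \omega')/2$. I would then invoke a scaled form of Lemma~\ref{HarnackB}: if $v \in \WW(4B)$ is a non-negative supersolution in $4B \setminus \Gamma$ with $Tv \ge c > 0$ $\sigma$-a.e.\ on $\Gamma \cap 4B$, then $\inf_B v \ge c/C$. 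This scaled version follows by applying Lemma~\ref{HarnackB} to $v/c$; one checks that the proof of Lemma~\ref{HarnackB} (which uses the truncation $u_\delta = \min(1, u + \delta)$) only requires the trace bound $Tu \ge 1$, so the assumption $Tu = 1$ can be relaxed to $Tu \ge 1$.

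Applying this scaled estimate to whichever of $v_1, v_2$ enjoys the larger trace lower bound yields either $\inf_B u \ge m + (\omega - \omega')/(2C)$ or $\sup_B u \le M - (\omega - \omega')/(2C)$. Either conclusion implies
\begin{equation*}
\osc_B u \le \omega - \tfrac{1}{2C}(\omega - \omega') = \bigl(1 - \tfrac{1}{2C}\bigr)\omega + \tfrac{1}{2C}\omega',
\end{equation*}
which is the desired inequality with $\eta := 1 - \frac{1}{2C} \in (0,1)$. The only mildly nontrivial step is the verification of the scaled form of Lemma~\ref{HarnackB}; once that is in place, the rest of the argument is mechanical. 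No genuinely new obstacle arises, since all the heavy machinery (Caccioppoli, Sobolev--Poincar\'e, the density estimate itself) has already been set up.
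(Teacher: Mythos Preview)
Your argument is correct and follows the same strategy as the paper: apply the boundary density estimate (Lemma~\ref{HarnackB}) to the non-negative solutions built from $M-u$ and $u-m$. The execution differs in two small ways. First, rather than relax Lemma~\ref{HarnackB} to allow $Tu\ge 1$, the paper applies it verbatim to the truncations $\min\bigl\{\frac{M-u}{M-M'},1\bigr\}$ and $\min\bigl\{\frac{u-m}{m'-m},1\bigr\}$, whose traces are \emph{exactly} $1$ on $\Gamma\cap 4B$ (by Lemma~\ref{lcompo}); this avoids the extra verification you flag, and the minimum with $1$ is a supersolution by Lemma~\ref{lstabsol}. Second, the paper applies the density estimate to \emph{both} truncated functions and sums the two resulting inequalities, obtaining $\osc_B u\le (1-c)\,\omega+c\,\omega'$ directly, whereas your pigeonhole argument uses only one side and loses a harmless factor of $2$ in the constant. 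Neither difference is essential.
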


\begin{proof}
Set $\ds M_4 = \sup_{4B} u$, $\ds m_4 = \inf_{4B} u$, $\ds M_1 = \sup_{B} u$, $\ds m_1 = \inf_{B} u$, $\ds M = \sup_{4B \cap \Gamma} Tu$ and $\ds m = \inf_{4B \cap \Gamma} Tu$. 
Let us first prove that
\begin{equation} \label{Osc2}
{M_4-M_1}\geq c ({M_4-M}) 
\end{equation}
and
\begin{equation} \label{Osc3}
{m_1-m_4} \geq c ({m-m_4})
\end{equation}
for some $c\in (0,1]$. Notice 
that \eqref{Osc2} is trivially true if $M_4-M = 0$. Otherwise, we apply Lemma~\ref{HarnackB} to the non-negative supersolution $\min\{\frac{M_4-u}{M_4-M}, 1 \}$ whose trace equals 1 on $4B$ (with Lemma~\ref{lcompo}) and we obtain for some constant $c\in (0,1]$
\begin{equation} \label{Osc2b}
\frac{M_4-M_1}{M_4-M} \geq c
\end{equation}
which gives \eqref{Osc2} if we multiply both sides by $M_4-M$. In the same way,  \eqref{Osc3} is true if $m-m_4 = 0$ and  otherwise, we apply Lemma~\ref{HarnackB} to the non-negative supersolution $\min\{\frac{u-m_4}{m-m_4},1\}$ and we get for some $c\in (0,1]$
\begin{equation} \label{Osc3b}
\frac{m_1-m_4}{m-m_4} \geq c,
\end{equation}
which is
\eqref{Osc3}. 

We sum then \eqref{Osc2} and \eqref{Osc3} to get
\begin{equation} \label{Osc4}
[M_4 - m_4] - [M_1-m_1] \geq c [M_4 - m_4] - c[M-m],
\end{equation}
that is
\begin{equation} \label{Osc5}
[M_1-m_1] \leq (1-c) [M_4 - m_4] + c[M-m],
\end{equation}
which is exactly the desired result.
\end{proof}

We end the section with the H\"older continuity of solutions at the boundary.

\begin{lemma} \label{HolderB}
Let $B=B(x,r)$ be a ball centered on $\Gamma$ and $u \in \WW(B)$ be a solution in $B$ 
such that $Tu$ is continuous and bounded on $B$. 
There exists $\alpha >0$ such that for
$0<s <r$,
\begin{equation} \label{Holder2}
\osc_{B(x,s)} u \leq C \left(\frac{s}r\right)^\alpha \osc_{B(x,r)} u + C \osc_{B(x,\sqrt{sr}) \cap \Gamma} Tu
\end{equation}
 where the constants $\alpha,C$ depend only on the dimensions $n$ and $d$ and the constants $C_0$ and $C_1$. In particular, $u$ is continuous on $B$.
 
If, in addition, $Tu\equiv 0$ on $B$, then for any $0<s<r/2$
\begin{equation} \label{Holder3}
\osc_{B(x,s)} u \leq C \left(\frac{s}r\right)^\alpha \left( m(B)^{-1} \int_{B} |u|^2 dm \right)^\frac12.
\end{equation}
\end{lemma}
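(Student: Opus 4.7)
The strategy is a standard dyadic iteration of the oscillation decay from Lemma~\ref{OscB}. I would set $\rho_k = 4^{-k} r$ and write $a_k = \osc_{B(x,\rho_k)} u$, $b_k = \osc_{\Gamma \cap B(x,\rho_k)} Tu$. For each $k \geq 1$, the ball $B(x,4\rho_k) = B(x,\rho_{k-1})$ sits inside $B(x,r)$, so Lemma~\ref{OscB} applies and gives
\begin{equation*}
a_k \leq \eta\, a_{k-1} + (1-\eta)\, b_{k-1}.
\end{equation*}
Iterating yields $a_k \leq \eta^k a_0 + (1-\eta)\sum_{j=0}^{k-1}\eta^{k-1-j} b_j$. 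I would also observe at the outset that $b_0 \leq a_0$, since for $\sigma$-a.e. $\xi \in \Gamma \cap B(x,r)$ the trace $Tu(\xi) = \lim_{\rho\to 0}\fint_{B(\xi,\rho)} u$ is trapped between the essential inf and sup of $u$ on $B(x,r)$.

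The key trick is to split the geometric sum at $\ell = \lfloor k/2 \rfloor$: the low-index part is dominated by $b_0 \sum_{j < \ell}\eta^{k-1-j} \leq C\eta^{k/2} a_0$ (using $b_j \leq b_0 \leq a_0$), while the high-index part is dominated by $b_\ell \sum_{j\geq \ell}\eta^{k-1-j} \leq C b_\ell$ (using monotonicity of $b_j$ in $j$). This produces
\begin{equation*}
a_k \leq C\eta^{k/2}\, a_0 + C\, b_\ell.
\end{equation*}
For an arbitrary $s \in (0,r)$ I choose $k$ with $\rho_{k+1} < s \leq \rho_k$; then $\eta^{k/2} \lesssim (s/r)^\alpha$ for $\alpha = -\tfrac14\log_4 \eta > 0$, and $\rho_\ell \asymp \sqrt{\rho_k r} \gtrsim \sqrt{sr}$, so by monotonicity $b_\ell \leq \osc_{B(x,\sqrt{sr})\cap\Gamma} Tu$ after absorbing a harmless multiplicative constant (or, equivalently, after redefining $\alpha$ and using $N^{-k}$ for a suitably large $N$ instead of $4^{-k}$). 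Together with $a_0 \leq \osc_{B(x,r)} u$ this gives \eqref{Holder2}. Continuity of $u$ on $B$ then follows: inside $B\setminus\Gamma$ it is Lemma~\ref{HolderI}, and at any boundary point the right-hand side of \eqref{Holder2} tends to $0$ as $s \to 0$ because $Tu$ is continuous.

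For \eqref{Holder3}, the hypothesis $Tu \equiv 0$ on $B$ kills all the $b_j$, so the iteration directly yields $\osc_{B(x,s)} u \leq C(s/r)^\alpha \osc_{B(x,r/2)} u$ for $s < r/2$. To replace the oscillation by an $L^2$ average I note that $u^+$ and $u^-$ are non-negative subsolutions in $B\setminus\Gamma$ by Lemma~\ref{lstabsol}, belong to $\WW(B)$, and satisfy $T u^\pm = (Tu)^\pm = 0$ on $B$ by Lemma~\ref{lcompo}. Lemma~\ref{MoserB} applied with the inner ball $B(x,r/2)$ (so that $2B(x,r/2) = B$) bounds $\sup_{B(x,r/2)} u^\pm$ by $C(m(B)^{-1}\int_B u^2\,dm)^{1/2}$, and adding the two bounds controls $\osc_{B(x,r/2)} u \leq 2\sup_{B(x,r/2)} |u|$.

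The only real obstacle is the sum-splitting bookkeeping, in particular verifying that the ball radius $\rho_\ell$ truly produces a trace-oscillation term at scale $\sqrt{sr}$ rather than at an uncontrolled intermediate scale. Everything else is a mechanical application of the already-established oscillation, Moser, and compositional-stability results.
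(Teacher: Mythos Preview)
Your proposal is correct and follows essentially the same route as the paper: the paper proves \eqref{Holder2} by invoking \cite[Lemma~8.23]{GT} to iterate the oscillation decay of Lemma~\ref{OscB}, and you have simply written out that iteration (including the standard split of the geometric sum at $\ell \approx k/2$) by hand. For \eqref{Holder3} the paper likewise appeals to Lemma~\ref{MoserB} after the first part, and your use of $u^\pm$ as non-negative subsolutions with vanishing trace (via Lemmas~\ref{lstabsol} and~\ref{lcompo}) is exactly the right way to make that step precise; the only cosmetic issues are the precise value of $\alpha$ and the choice of $\ell$ needed to land the trace term at radius $\sqrt{sr}$ rather than a fixed multiple of it, both of which you already flag and which are trivially repaired.
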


\begin{proof}
The first part of the Lemma, i.e. the estimate \eqref{Holder2}, is a straightforward consequence of Lemma~\ref{OscB} and \cite[Lemma~8.23]{GT}. Basically, \cite[Lemma~8.23]{GT} is a result on functions stating that the functional inequality \eqref{Osc1} can be turned, via iterations, into \eqref{Holder2}.

The second part of the Lemma is simply a consequence of the first part and of the Moser inequality given in Lemma~\ref{MoserB}.
\end{proof}

\chapter{Harmonic Measure}

We want to solve the Dirichlet problem
\begin{equation} \label{Dp1}
\left\{
\begin{array}{ll}
Lu=f & \text{ in } \Omega \\
u=g & \text{ on } \Gamma,
\end{array}
\right.
\end{equation}
with a notation that we explain now.
Here we require $u$ to lie in $W$, and by the second line we actually mean that
$Tu = g$ $\sigma$-almost everywhere on $\Gamma$, where $T$ is our trace operator.
Logically, we are only interested in functions $g\in H$, because we know that
$T(u) \in H$ for $u\in W$.

The condition $Lu = f$ in $\Omega$ is taken in the weak sense, i.e. we say that $u \in W$ satisfies $Lu = f$, 
where $f\in W^{-1} = (W_0)^*$, if for any $v\in W_0$, 
\begin{equation} \label{defLu=f}
a(u,v) = \int_\Omega A \nabla u \cdot \nabla v = \left<f, v \right>_{W^{-1},W_0}.
\end{equation}
Notice that when
$f \equiv 0$, a function $u \in W$ that satisfies 
\eqref{defLu=f} is a solution in $\Omega$.

\medskip

Now, we made sense of \eqref{Dp1} for at least $f\in W^{-1}$ and $g\in H$. 
The next result gives a good solution to the Dirichlet problem.

\begin{lemma} \label{lLM}
For any $f\in W^{-1}$ and any $g\in H$, there exists a unique $u \in W$ such that 
\begin{equation} \label{Dp2}
\left\{
\begin{array}{ll}
Lu=f & \text{ in } \Omega \\
Tu=g & \text{ a.e. on } \Gamma.
\end{array}
\right.
\end{equation}
Moreover, there exists $C>0$ independent of $f$ and $g$ such that
\begin{equation} \label{ubygf}
\|u\|_W \leq C (\|g\|_H + \|f\|_{W^{-1}}),
\end{equation}
where
\begin{equation} \label{W-1norm}
\|f\|_{W^{-1}}  = \sup_{\begin{subarray}{c} \varphi \in W_0 \\ \|\varphi\|_W = 1 \end{subarray}} \left< f, \varphi\right>_{W^{-1}, W_0}.
\end{equation}
\end{lemma}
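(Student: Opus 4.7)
The plan is to reduce to a homogeneous boundary condition via the extension operator $E$ from Theorem~\ref{tExt}, and then apply the Lax--Milgram theorem on the Hilbert space $W_0$.

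First I would set $v_0 = Eg \in W$. By Theorem~\ref{tExt}, $Tv_0 = g$ $\sigma$-a.e.\ on $\Gamma$ and $\|v_0\|_W \leq C \|g\|_H$. Writing $u = \tilde u + v_0$, the problem \eqref{Dp2} is equivalent to finding $\tilde u \in W_0$ such that, for every $v \in W_0$,
\begin{equation}
a(\tilde u, v) = \langle f, v\rangle_{W^{-1},W_0} - a(v_0, v) =: \ell(v).
\end{equation}
Here I used that $T\tilde u = Tu - Tv_0 = g - g = 0$, so $\tilde u \in W_0$, and I rewrote $Lu=f$ in its weak form \eqref{defLu=f}, which is meaningful because we already know (Lemma~\ref{rdefsol}, third item) that test functions may be taken in $W_0$.

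Next I check the hypotheses of Lax--Milgram on $W_0$. By Lemma~\ref{lcomp2}, $W_0$ is a Hilbert space for the inner product $\langle u,v\rangle_W = \int_\Omega \A\nabla u\cdot \nabla v \, dm$. The bilinear form $a$ is bounded on $W_0 \times W_0$ by \eqref{ABounded}, and coercive on $W_0$ by \eqref{aa8}; note that $\|\cdot\|_W$ is a genuine norm on $W_0$ (not merely a seminorm) because a constant whose trace vanishes is zero. The linear form $\ell$ is continuous on $W_0$: by \eqref{ABounded}, the definition \eqref{W-1norm} of the dual norm, and Theorem~\ref{tExt},
\begin{equation}
|\ell(v)| \leq \|f\|_{W^{-1}} \|v\|_W + C_1 \|v_0\|_W \|v\|_W \leq \bigl(\|f\|_{W^{-1}} + C_1 C \|g\|_H\bigr) \|v\|_W.
\end{equation}
Lax--Milgram thus produces a unique $\tilde u \in W_0$ with $a(\tilde u, v) = \ell(v)$ for all $v \in W_0$, and the standard estimate yields $\|\tilde u\|_W \leq C_1 \|\ell\|_{(W_0)^*} \leq C(\|f\|_{W^{-1}} + \|g\|_H)$. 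Setting $u = \tilde u + v_0$ gives a solution in $W$ satisfying \eqref{Dp2}, and the bound \eqref{ubygf} follows from the triangle inequality together with $\|v_0\|_W \leq C\|g\|_H$.

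Uniqueness is then immediate: if $u_1, u_2$ are two solutions, their difference $u_1-u_2$ lies in $W_0$ (since $T(u_1-u_2) = g - g = 0$) and satisfies $a(u_1-u_2, v) = 0$ for every $v \in W_0$; testing against $v = u_1 - u_2 \in W_0$ and using coercivity \eqref{aa8} forces $\|u_1-u_2\|_W = 0$, hence $u_1 = u_2$. The only delicate point is bookkeeping to ensure that the equation $Lu = f$, originally posed against test functions in $C_0^\infty(\Omega)$, is equivalent to testing against all of $W_0$; but this is exactly the content of the third item of Lemma~\ref{rdefsol}, so there is no real obstacle.
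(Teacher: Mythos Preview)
Your proof is correct and follows essentially the same route as the paper: lift $g$ to $W$ via the extension operator, then apply Lax--Milgram on $W_0$ to solve for the correction in $W_0$, and read off both uniqueness and the bound from coercivity. Two cosmetic remarks: the inner product on $W_0$ in Lemma~\ref{lcomp2} is $\int_\Omega \nabla u\cdot\nabla v\,dm$ (without the $\A$), and your appeal to Lemma~\ref{rdefsol} is unnecessary since the weak formulation \eqref{defLu=f} is already stated for all $v\in W_0$.
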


\bp 
Since $g\in H$, Theorem \ref{tExt} implies that
there exists $G \in W$ such that $T(G) = g$ and 
\begin{equation} \label{lLM1}
\|G\|_W \leq C \|g\|_H.
\end{equation}
The quantity $LG$ is an element of $W^{-1}$ defined by
\begin{equation} \label{lLM2}
\left<LG, \varphi \right>_{W^{-1},W_0} := \int_\Omega A \nabla G \cdot \nabla \varphi \, dz= \int_\Omega \A \nabla G \cdot \nabla \varphi \, dm,
\end{equation}
and notice that 
\begin{equation} \label{lLM3}
\|LG\|_{W^{-1}} \leq C \|G\|_W \leq C \|g\|_H
\end{equation}
by
\eqref{ABounded} and \eqref{lLM1}.

Observe that
the conditions \eqref{ABounded} and \eqref{AElliptic} imply that the bilinear form $a$ is 
bounded and coercive on $W_0$. It follows
from the Lax-Milgram theorem that there exists a (unique) $v\in W_0$ such that $Lv = -LG - f$. 
Set $u = G-v$. It is now easy to see that $Tu = g$ a.e. on $\Gamma$ and $Lu = f$ in $\Gamma$. 
The existence of a solution of \eqref{Dp2} follows.

It remains to check the uniqueness of the solution and the bounds \eqref{ubygf}. 
Take $u_1,u_2\in W$ two solutions of \eqref{Dp2}. One has then $T(u_1-u_2) = g - g = 0$ and hence 
$u_1 - u_2 \in W_0$. Moreover, $L(u_1-u_2) = 0$. Since $a$ is bounded and coercive, 
the uniqueness in the Lax-Milgram theorem yields $u_1 - u_2 = 0$. 
Therefore \eqref{Dp2} has also a unique solution.

Finally, let us prove the bounds \eqref{ubygf}. From the coercivity of $a$, we get  that
\begin{equation} \label{lLM4}
\|v\|_{W}^2 \leq C a(v,v) \leq C \|LG + f\|_{W^{-1}} \|v\|_W,
\end{equation}
i.e., with \eqref{lLM3},
\begin{equation} \label{lLM5}
\|v\|_{W} \leq C \|LG + f\|_{W^{-1}} \leq C(\|g\|_H + \|f\|_{W^{-1}}).
\end{equation}
We conclude the proof of \eqref{ubygf} with 
\begin{equation} \label{lLM6}
\|u\|_{W} = \|G - v\|_{W} \leq C(\|g\|_H + \|f\|_{W^{-1}})
\end{equation}
by \eqref{lLM1}.
\ep

The next step in the construction of a harmonic measure associated to $L$, is to prove a maximum principle.

\begin{lemma} \label{lMP}
Let $u \in W$ be a supersolution in $\Omega$ satisfying $Tu \geq 0$ a.e. on $\Gamma$. Then $u\geq 0$ a.e. in $\Omega$.
\end{lemma}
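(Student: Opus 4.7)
The plan is to test the supersolution against its own negative part. Set $v = u_- := \max\{-u, 0\}$; by Lemma~\ref{lcompo}(b), $v$ lies in $W$ with
\[
\nabla v(x) = \begin{cases} -\nabla u(x) & \text{if } u(x) < 0 \\ 0 & \text{if } u(x) \geq 0 \end{cases}
\]
almost everywhere, and its trace satisfies $Tv = \max\{-Tu, 0\} = 0$ $\sigma$-a.e. on $\Gamma$ since $Tu \geq 0$ by hypothesis. Hence $v \in W_0$ and $v \geq 0$.

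Next I would invoke the third bullet of Lemma~\ref{rdefsol}: since $u \in W$ is a supersolution in $\Omega$ and $v \in W_0$ is nonnegative, this gives $a(u,v) \geq 0$. Using the formula for $\nabla v$ above,
\begin{equation*}
0 \leq a(u,v) = \int_\Omega A\nabla u \cdot \nabla v \, dz = -\int_{\{u<0\}} A\nabla u \cdot \nabla u \, dz = -\int_\Omega A\nabla v \cdot \nabla v \, dz,
\end{equation*}
and by the ellipticity condition \eqref{AElliptic2},
\begin{equation*}
0 \leq a(u,v) \leq -C_1^{-1}\int_\Omega |\nabla v|^2 w\, dz = -C_1^{-1} \|v\|_W^2.
\end{equation*}
Thus $\|v\|_W = 0$, i.e., $\nabla v = 0$ almost everywhere.

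To finish, I would use Lemma~\ref{W=barW}, which says that $v$ has a distributional gradient on all of $\R^n$ that equals $\nabla v = 0$; since $\R^n$ is connected, $v$ is (a.e.) equal to a constant $c \geq 0$. The trace $Tv$ of a constant function equals that constant $\sigma$-a.e.\ on $\Gamma$ (this is immediate from \eqref{defT}), so $c = Tv = 0$. Hence $u_- = 0$ a.e., that is, $u \geq 0$ a.e.\ in $\Omega$, as desired. The only mildly delicate point in the argument is verifying that $v = u_-$ is an admissible test function in $W_0$ (to justify using the supersolution inequality against it without requiring $\varphi \in C_0^\infty(\Omega)$), which is exactly what the third case of Lemma~\ref{rdefsol} handles since $u \in W$ globally.
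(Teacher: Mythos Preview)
Your proof is correct and follows essentially the same approach as the paper's: both test the supersolution against its negative part via the third case of Lemma~\ref{rdefsol}, then use ellipticity to force $\|v\|_W=0$. The only cosmetic differences are that the paper uses $v=\min\{u,0\}\le 0$ (so the supersolution inequality reads $a(u,v)\le 0$) rather than your $v=u_-\ge 0$, and in the last step the paper simply cites Lemma~\ref{lcomp2} (that $\|\cdot\|_W$ is a genuine norm on $W_0$) to conclude $v=0$, where you argue via connectedness and the trace.
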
 

\bp
Set $v = \min\{u,0\} \leq 0$. According to Lemma~\ref{lcompo} (b), we have
\begin{equation} \label{lMP1}
\nabla v = \left\{ \begin{array}{ll}  \nabla u & \text{ if } u<0 \\ 0 & \text{ if } u\geq 0 \end{array}\right.
\end{equation}
and 
\begin{equation} \label{lMP2}
Tv = \min\{Tu , 0\} = 0 \quad \text{a.e. in } \Gamma.
\end{equation}
In particular, \eqref{lMP2} implies that $v \in W_0$. 
The third case of Lemma \ref{rdefsol} allows us to test $v$ 
against the supersolution $u\in W$; this gives
\begin{equation} \label{lMP3}
\int_\Omega \A \nabla u \cdot \nabla v \, dm  \leq 0,
\end{equation}
that is with \eqref{lMP1},
\begin{equation} \label{lMP4}
\int_\Omega \A \nabla v \cdot \nabla v \, dm = \int_{\{u<0\}} \A \nabla u \cdot \nabla u \, dm =  \int_\Omega \A \nabla u \cdot \nabla v \, dm \leq 0.
\end{equation}
Together with the ellipticity condition \eqref{AElliptic}, we obtain $\|v\|_W \leq 0$. Recall 
 from Lemma \ref{lcomp2}
that $\|.\|_W$ is a norm on $W_0 \ni v$, hence $v = 0$ a.e. in $\Omega$. 
We conclude from the definition of $v$ that $u \geq 0$ a.e. in $\Omega$. 
\ep

Here is a corollary of Lemma~\ref{lMP}.

\begin{lemma}[Maximum principle] \label{lMPs}
Let $u \in W$ be a solution 
of $Lu=0$
in $\Omega$. Then
\begin{equation} \label{lMP6}
\sup_\Omega u \leq \sup_\Gamma Tu
\end{equation}
and
\begin{equation} \label{lMP7}
\inf_\Omega u \geq \inf_\Gamma Tu,
\end{equation}
where we recall that $\sup$ and $\inf$ actually essential supremum and infimum.
In particular, 
if $Tu$ is essentially bounded,
\begin{equation} \label{lMP5}
\sup_\Omega |u| \leq \sup_\Gamma |Tu|.
\end{equation}
\end{lemma}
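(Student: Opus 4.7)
The plan is to deduce this corollary from Lemma \ref{lMP} by a translation argument, after which the bound on $|u|$ falls out by combining the two one-sided inequalities. Since $u$ is a solution and constants are solutions, $c - u$ and $u - c$ are again solutions (hence supersolutions) for every $c \in \RR$, and adding a constant does not alter membership in $W$ because $\|u + c\|_W = \|\nabla u\|_{L^2(\Omega,w)} = \|u\|_W$ (and $u+c \in L^1_{loc}(\R^n)$ by Lemma \ref{W=barW}). This is the key flexibility that makes the argument run.

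To prove \eqref{lMP6}, I set $M = \operatorname*{ess\,sup}_{\Gamma} Tu$. If $M = +\infty$ there is nothing to show, so assume $M < +\infty$ and consider $v := M - u \in W$. Linearity of $L$ gives $Lv = 0$, and linearity of the trace (together with the constancy $T(M) = M$ deduced directly from the defining formula \eqref{defT}) yields $Tv = M - Tu \geq 0$ $\sigma$-a.e. on $\Gamma$. Lemma \ref{lMP} then gives $v \geq 0$ a.e. in $\Omega$, that is, $u \leq M$ a.e., whence $\operatorname*{ess\,sup}_{\Omega} u \leq M$. The symmetric choice $v' := u - m$, with $m = \operatorname*{ess\,inf}_{\Gamma} Tu$, gives \eqref{lMP7} by the same mechanism (again trivially true if $m = -\infty$).

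Finally, \eqref{lMP5} is immediate by combining \eqref{lMP6} and \eqref{lMP7}: if $Tu$ is essentially bounded then $-\sup_{\Gamma}|Tu| \leq \inf_{\Gamma} Tu$ and $\sup_{\Gamma} Tu \leq \sup_{\Gamma} |Tu|$, so
\[
-\sup_{\Gamma}|Tu| \leq \inf_{\Omega} u \leq u \leq \sup_{\Omega} u \leq \sup_{\Gamma} |Tu|
\]
a.e.\ in $\Omega$, hence $\sup_{\Omega} |u| \leq \sup_{\Gamma} |Tu|$.

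There is really no genuine obstacle here: the whole content has been packed into Lemma \ref{lMP}. The only point worth verifying carefully is the bookkeeping around adding constants to elements of $W$ and the effect on the trace, which is transparent from the explicit Lebesgue-type formula \eqref{defT} defining $T$ and from the fact that $\|\cdot\|_W$ depends only on $\nabla u$; once this is observed, both \eqref{lMP6} and \eqref{lMP7} are one-line applications of Lemma \ref{lMP}.
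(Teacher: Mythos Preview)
Your proof is correct and follows essentially the same approach as the paper: set $M=\sup_\Gamma Tu$, apply Lemma~\ref{lMP} to $M-u$, then argue symmetrically for the infimum and combine. The paper's version is terser (it just asserts $M-u\in W$ and $T(M-u)\ge 0$), but your added justifications about constants in $W$ and the trace of a constant are sound and harmless.
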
 

\bp
Let us prove \eqref{lMP6}.
Write $M$ for the essential supremum of $Tu$ on $\Gamma$;
we may assume that $M < +\infty$, because otherwise \eqref{lMP6} is trivial. 
Then $M-u \in W$ and
$T(M-u) \geq 0$ a.e. on $\Gamma$. 
Lemma~\ref{lMP} yields $M-u \geq 0$ a.e. in $\Omega$, that is
\begin{equation} \label{lMP6a}
\sup_\Omega u \leq \sup_\Gamma Tu.
\end{equation}
The lower bound \eqref{lMP7} is similar, 
and \eqref{lMP5} follows.
\ep

\medskip

We want to define the harmonic measure via the Riesz representation theorem (for measures), 
that requires a linear form on 
the space of compactly supported continuous functions on $\Gamma$. We 
 denote this space by $C^0_0(\Gamma)$; that is, $g\in C^0_0(\Gamma)$ if $g$ is defined 
and continuous on $\Gamma$, 
and there exists a ball $B \subset \R^n$ centered on $\Gamma$ such that 
$\supp \, g \subset B\cap \Gamma$.  

\begin{lemma} \label{ldefhm}
There exists a bounded linear operator
\begin{equation} \label{defhm4}
U : C^0_0(\Gamma) \to  C^0(\R^n) 
\end{equation}
such that, for every every $g\in C^0_0(\Gamma)$,
\begin{enumerate}[(i)]
\item the restriction of $Ug$ to $\Gamma$ is $g$;
\item $\ds \sup_{\R^n} Ug = \sup_\Gamma g$ and $\ds \inf_{\R^n} Ug = \inf_\Gamma g$;
\item $Ug \in \WW(\Omega)$ and is a solution of $L$ in $\Omega$;
\item if $B$ is a ball centered on $\Gamma$ and $g \equiv 0$ on $B$, then
$Ug$ lies in $\WW(B)$;
\item if $g \in C^0_0(\Gamma) \cap H$, then $Ug \in W$, and it is the solution of \eqref{Dp2}, 
with $f=0$, provided by Lemma \ref{lLM}.
\end{enumerate}
\end{lemma}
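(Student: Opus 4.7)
The strategy is to construct $Ug$ first on the dense subspace $C^0_0(\Gamma)\cap H$, then extend to general $g\in C^0_0(\Gamma)$ by uniform approximation and the maximum principle. For $g\in C^0_0(\Gamma)\cap H$, Lemma~\ref{lLM} applied with $f=0$ gives a unique $u\in W$ with $Lu=0$ and $Tu=g$. At each $x_0\in\Omega$ Lemma~\ref{HolderI} yields local H\"older continuity of $u$; at each $x_0\in\Gamma$, the boundary estimate \eqref{Holder2} of Lemma~\ref{HolderB} gives $\osc_{B(x_0,s)}u\to 0$ as $s\to 0$, since the term involving $g$ vanishes by the continuity of $g$. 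Thus $u$ admits a continuous representative $Ug$ on $\R^n$, and the Lebesgue-density property \eqref{eqLeb} of the trace combined with continuity of $g$ forces $Ug=g$ on $\Gamma$. The maximum principle (Lemma~\ref{lMPs}) yields $\sup_{\R^n}Ug=\sup_\Gamma g$ and the analogous identity for $\inf$, so properties (i), (ii), (iii), and (v) hold in this case.

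For general $g\in C^0_0(\Gamma)$, I would approximate uniformly on $\Gamma$ as follows: extend $g$ to $\tilde g\in C_c(\R^n)$ via Tietze, let $h_k=\rho_{\epsilon_k}*\tilde g\in C^\infty_c(\R^n)$ for a standard mollifier, and set $g_k=h_k|_\Gamma$. A direct estimate of the seminorm \eqref{defH} using \eqref{1.1} shows that any Lipschitz compactly supported function on $\R^n$ has its trace in $H$, so $g_k\in C^0_0(\Gamma)\cap H$ and $g_k\to g$ uniformly. The maximum principle applied to $U(g_k-g_l)$ gives $\|Ug_k-Ug_l\|_\infty\leq\|g_k-g_l\|_\infty$, so $(Ug_k)$ is Cauchy in $C^0(\R^n)$; define $Ug$ as its uniform limit. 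Linearity, the bound $\|Ug\|_\infty\leq\|g\|_\infty$, independence from the approximating sequence, and properties (i) and (ii) all pass to the limit directly.

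For property (iii) in general, cover any compact $K\subset\Omega$ by finitely many balls $B_i$ with $\overline{2B_i}\subset\Omega$. Interior Caccioppoli (Lemma~\ref{CaccioI}) gives $\int_{B_i}|\nabla Ug_k|^2\,dm\leq Cr_i^{-2}\int_{2B_i}|Ug_k|^2\,dm\leq C_i\|g\|_\infty^2$, uniformly in $k$. A weakly convergent subsequence of $(\nabla Ug_k)$ in $L^2(K,w)$ has limit equal to the distributional gradient of $Ug$, by uniqueness of distributional limits and the uniform convergence $Ug_k\to Ug$; hence $\nabla Ug\in L^2_{loc}(\Omega,w)$, which together with continuity of $Ug$ gives $Ug\in\WW(\Omega)$. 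Passing to the limit in $\int_\Omega\A\nabla Ug_k\cdot\nabla\varphi\,dm=0$ for $\varphi\in C^\infty_0(\Omega)$ shows $Ug$ is a solution.

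The main obstacle is property (iv), because the approximating sequence above need not vanish near $\Gamma\cap B$, and boundary Caccioppoli estimates require this. Given $g\equiv 0$ on $B\cap\Gamma$, choose $\eta\in C^\infty_c(\R^n)$ with $\eta\equiv 0$ on $\tfrac12 B$ and $\eta\equiv 1$ outside $\tfrac34 B$; since $g\equiv 0$ on $B\cap\Gamma\supset \tfrac34 B\cap\Gamma$, one has $\eta\tilde g=g$ on $\Gamma$. Mollifying $\eta\tilde g$ with $\epsilon_k$ sufficiently small produces $h_k\in C^\infty_c(\R^n)$ with $h_k\equiv 0$ on $\tfrac14 B$, so $g_k=h_k|_\Gamma\to g$ uniformly and $g_k\equiv 0$ on $\tfrac14 B\cap\Gamma$; by the independence of $Ug$ from the approximating sequence, $Ug_k\to Ug$ uniformly. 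Since $T(Ug_k)=g_k=0$ on $\tfrac14 B\cap\Gamma$, boundary Caccioppoli (Lemma~\ref{CaccioB}) applies on balls $B'$ centered on $\Gamma$ with $2B'\subset\tfrac14 B$; combining this with interior Caccioppoli and a finite-covering argument gives a uniform bound $\int_K|\nabla Ug_k|^2\,dm\leq C_K\|g\|_\infty^2$ for every compact $K\subset B$. For $\varphi\in C^\infty_0(B)$, $\varphi Ug_k$ is then uniformly bounded in $W$ and converges uniformly to $\varphi Ug$; the same weak-limit argument as above gives $\varphi Ug\in W$, i.e., $Ug\in\WW(B)$.
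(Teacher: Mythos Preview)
Your approach is the same as the paper's (define $U$ via Lemma~\ref{lLM} on $C^0_0(\Gamma)\cap H$, extend by the maximum principle, recover (iii)--(iv) by Caccioppoli and weak compactness), and parts (i)--(iii), (v) are handled correctly.

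There is, however, a quantitative slip in your proof of (iv). With $\eta\equiv 0$ on $\tfrac12 B$ and $\eta\equiv 1$ outside $\tfrac34 B$, the mollified approximants $h_k$ vanish only on (roughly) $\tfrac14 B$, so $g_k=h_k|_\Gamma$ vanishes only on $\tfrac14 B\cap\Gamma$. Boundary Caccioppoli is therefore available only for balls $B'$ with $2B'\subset\tfrac14 B$; such balls, together with interior balls contained in $\Omega$, cannot cover a point of $\Gamma\cap(B\setminus\tfrac14 B)$. Hence your claim that the covering works ``for every compact $K\subset B$'' is unjustified as written --- you have in fact only shown $Ug\in\WW(\tfrac14 B)$.

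The fix is simple and local: given $\varphi\in C^\infty_0(B)$, set $\delta=\dist(\supp\varphi,\partial B)>0$, and place the cutoff so that $\eta\equiv 0$ on the concentric ball of radius $r-\delta/3$ (and $\eta\equiv 1$ outside the ball of radius $r-\delta/6$). Since $g=0$ on $B\cap\Gamma$, one still has $\eta\tilde g|_\Gamma=g$, and after mollification with $\epsilon_k<\delta/6$ the approximants $g_k$ vanish on $\Gamma$ in a ball of radius $r-\delta/2$, which contains $\supp\varphi$. Now your covering argument does go through for $K=\supp\varphi$. (The paper proceeds the same way, simply asserting that the $g_k$ can be chosen to vanish on all of $\Gamma\cap B$ and then running exactly this covering.) One minor aside: your $\eta$ cannot lie in $C^\infty_c(\R^n)$ if it equals $1$ outside $\tfrac34 B$; you mean $1-\eta\in C^\infty_c(\R^n)$.
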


\bp
This is essentially an argument of extension from a dense class by uniform continuity.
We first define $U$ on $C^0_0(\Gamma) \cap H$, by saying that $u = Ug$ is 
the solution of \eqref{Dp2}, with $f=0$, provided by Lemma \ref{lLM}. Thus $u \in W$;
but since its trace is $Tu = g$ is continuous, it follows from 
Lemmata \ref{HolderI} and \ref{HolderB} (the H\"older continuity inside and at the boundary)
that $u$ is continuous on $\R^n$. 

Next we check that $U$ is linear and bounded on $C^0_0(\Gamma) \cap H \subset C^0_0(\Gamma)$ 
(where we use the sup norm). The linearity comes from the uniqueness in Lemma \ref{lLM}, and the 
boundedness from the maximum principle: for $g,h \in C^0_0(\Gamma) \cap H$,
we can apply \eqref{lMP6a} to $u = Ug - Uh$, and we get that 
$\sup_{\R^n} |u| = \sup_{\Omega} |u| \leq \sup_{\Gamma} |Tu| = ||g-h||_\infty$
because $u$ is continuous.

It is clear that $C^0_0(\Gamma) \cap H$ is dense in $C^0_0(\Gamma)$, because
(restrictions to $\Gamma$ of) compactly supported smooth functions on $\R^n$ 
(or even Lipschitz functions, for that matter) lie in $H$: compute their norm in \eqref{defH} directly.
Thus $U$ has a unique extension by continuity to $C^0_0(\Gamma)$. 
We could even define $U$, with the same properties, on its closure (continuous functions that tend 
to $0$ at infinity), but we decided not to bother. 

We are now ready to check the various properties of $U$. Given $g\in C^0_0(\Gamma)$, select a sequence
$(g_k)$ of compactly supported smooth functions that converges to $g$ in the sup norm. Then $u_k = Ug_k$
converges uniformly in $\R^n$ to $u = Ug$, and in particular $u$ is continuous and its restriction to $\Gamma$
is $g$, as in (i). In addition, (ii) holds because
$\sup_{\R^n} u = \lim_{k \to +\infty} \sup_{\R^n} u_k \leq \lim_{k \to +\infty} \sup_{\Gamma} g_k
= \sup_{\Gamma} g$, and similarly for the infimum. 

For (iii) we first need to check that $u\in \WW(\Omega)$.  
Observe that we know these facts for the $u_k$, so we'll only need to take limits.
Let $\phi \in C^\infty_0(\Omega)$ be given. Lemma~\ref{CaccioI} (Caccioppoli's inequality)
says that, since $u_k$ is a solution, 
\begin{equation} \label{defhm3c}
\int_{\Omega} |\nabla (\phi u_k) |^2 dm  \leq C \int_{\Omega} \phi^2 |\nabla u_k |^2 dm + C \int_{\Omega} |\nabla \phi|^2 |u_k |^2 dm  \leq C \int_{\Omega} |\nabla \phi|^2 |u_k|^2 dm.
\end{equation}
The right-hand side of \eqref{defhm3c} converges to $C \int_{\Omega} |\nabla \phi|^2 |u|^2 dm$, 
since $|\nabla \phi|^2$ is bounded and compactly supported. 
So $\int_{B} |\nabla (\phi u_k)|^2 dm$ is bounded uniformly in $k$.
Since the $\phi u_k$ vanish outside of the support of $\phi$ (which lies far from $\Gamma$)
and converge uniformly to $\phi u$, we get that the $\phi u_k$ converge to $\phi u$ in $L^1$ and,
since the $|\nabla (\phi u_k)|$ are uniformly bounded in $L^2(\Omega,w)$, we can find a subsequence
for which they converge weakly to a limit $V \in L^2(\Omega,w)$. 
We easily check on test functions that $\nabla (\phi u) = V$, hence $\phi u \in W$ for any
$\phi \in C^\infty_0(\Omega)$, and $u\in \WW(\Omega)$.

Next we check that $u$ is a solution in $\Omega$, i.e., that for $\varphi \in C^\infty_0(\Omega)$, \begin{equation} \label{defhm3b}
\int_\Omega \A\nabla u \cdot \nabla \varphi \, dm = 0.
\end{equation}
Let $\varphi \in C^\infty_0(\Omega)$ be given, and choose $\phi \in C^\infty_0(\Omega)$ such that 
$\phi \equiv 1$ on $\supp \, \varphi$. We just proved that for some subsequence,  
$\nabla(\phi u_k)$ converges weakly to $\nabla (\phi u)$ in $L^2(\Omega,w)$. Since $u_k$ is a solution 
for every $k$,
\begin{eqnarray} \label{defhm3d}
\int_\Omega \A\nabla u \cdot \nabla \varphi \, dm 
&=& \int_\Omega \A\nabla (\phi u) \cdot \nabla \varphi \, dm 
=  \lim_{k\to \infty} \int_\Omega \A\nabla (\phi u_k) \cdot \nabla \varphi \, dm 
\nn\\
&=&  \lim_{k\to \infty} \int_\Omega \A \nabla u_k \cdot \nabla \varphi \, dm = 0.
\end{eqnarray}
This proves \eqref{defhm3b} and (iii) follows.

For (iv), suppose in addition that $g \equiv 0$ on a ball $B$ centered on $\Gamma$; 
we want similar results in $B$ (that is, across $\Gamma$).
Notice that it is easy to approximate it (in the supremum norm) by smooth, 
compactly supported functions $g_k$ that also vanish on $\Gamma \cap B$.
Let use such a sequence $(g_k)$ to define $Ug = \lim_{k \to +\infty} Ug_k$.

Let $\varphi \in C^\infty_0(B)$ be given, and let us check that $\varphi u \in W$. 
Set $K = \supp \, \varphi$, suppose $K \neq \emptyset$,  and set $\delta = \dist(K,\d B) > 0$.
Cover $K \cap \Gamma$ by a finite number of balls balls $B_i$ of radius $10^{-1}\delta$ centered on 
$K \cap \Gamma$, and then cover $K' = K \sm \cup_{i} B_i$ by a finite number of balls $B_j$ of radius
$10^{-2}\delta$ centered on that set $K'$. We can use a partition of unity composed of smooth functions
supported in the $2B_i$ and the $2B_j$ to reduce to the case when $\varphi$ is supported 
on a $2B_i$ or a $2B_j$. 

Suppose for instance that $\varphi$ is supported in $2B_i$. We can apply Lemma~\ref{CaccioB}
(Caccioppoli's inequality at the boundary) to $u_k = Ug_k$ on the ball $2B_i$ , 
because its trace $g_k$ vanishes on $4B_i$ . We get that
\begin{equation} \label{defhm3f}
\int_{2B_i} |\nabla (\varphi u_k)|^2 dm  \leq C \int_{2B_i} (|\varphi \nabla u_k)|^2 
+ |u_k \nabla \varphi|^2) dm \leq \int_{4B_i} |\nabla \varphi|^2 |u_k|^2 dm.
\end{equation}
With this estimate, we can proceed as with \eqref{defhm3c} above to prove that $\varphi u \in W$
and its derivative is the weak limit of the $\nabla(\varphi u_k)$. When instead $\varphi$ is supported in 
a $2B_j$, we use the interior Caccioppoli inequality (Lemma \ref{CaccioI} and proceed as above).

Thus $u = Ug$ lies in $W_r(B)$, and this proves (iv). We started the proof with (v), so this completes our proof of Lemma \ref{ldefhm}.
\ep

\medskip

Our next step is the construction of 
the harmonic measure. Let $X\in \Omega$. By Lemma~\ref{ldefhm}, the linear form
\begin{equation} \label{defhm5}
g  \in C^0_0(\Gamma) \to 
Ug(X)
\end{equation}
is bounded and positive 
(because $u = Ug$ is nonnegative when $g\geq 0$).
The following statement is thus a direct consequence of the Riesz representation theorem (see for instance \cite[Theorem 2.14]{Rudin}).

\begin{lemma}
There exists a unique positive regular Borel measure $\omega^X$ on $\Gamma$ such that 
\begin{equation} \label{defhm}
Ug(X) = 
\int_\Gamma g(y) d\omega^X(y)
\end{equation}
for any $g \in C^0_0(\Gamma)$. Besides, for any Borel set $E \subset \Gamma$,
\begin{equation} \label{defhm6}
\omega^X(E) = \sup\{ \omega^X(K): \, E \supset K, \, K \text{ compact}\} 
= \inf\{ \omega^X(V): \, E \subset V, \, V \text{ open}\}. 
\end{equation}
\end{lemma}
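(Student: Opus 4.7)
The plan is essentially to verify the hypotheses of the Riesz representation theorem (in the form of Theorem 2.14 of Rudin), since Lemma \ref{ldefhm} has already set up everything we need. Fix $X \in \Omega$ and define the map
\begin{equation}
\Lambda_X : C^0_0(\Gamma) \to \R, \qquad \Lambda_X(g) := Ug(X).
\end{equation}
First I would check that $\Lambda_X$ is a well-defined positive linear functional on $C^0_0(\Gamma)$. Well-definedness and continuity of $g \mapsto Ug$ (so that evaluation at $X$ makes sense) are provided by item (i) and (iv) of Lemma \ref{ldefhm}, which assert that $Ug \in C^0(\R^n)$. Linearity of $\Lambda_X$ is inherited from linearity of $U$, as stated in \eqref{defhm4}. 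Positivity is immediate from item (ii) of Lemma \ref{ldefhm}: if $g \geq 0$ on $\Gamma$, then $\inf_\Gamma g \geq 0$, so $\inf_{\R^n} Ug = \inf_\Gamma g \geq 0$, and hence $Ug(X) \geq 0$.

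Next I would note that $\Gamma$ is a closed subset of $\R^n$, so it is a locally compact Hausdorff space in the induced topology, and moreover it is $\sigma$-compact (as a closed subset of $\R^n$). The space $C^0_0(\Gamma)$ defined just before the lemma coincides with $C_c(\Gamma)$ in the standard notation used in Rudin, i.e., continuous functions with compact support. These are precisely the hypotheses under which the Riesz representation theorem applies to the functional $\Lambda_X$.

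Applying the Riesz representation theorem then yields a unique positive regular Borel measure $\omega^X$ on $\Gamma$ such that
\begin{equation}
\Lambda_X(g) = \int_\Gamma g(y) \, d\omega^X(y) \quad \text{for every } g \in C^0_0(\Gamma),
\end{equation}
which, after substituting the definition of $\Lambda_X$, is exactly \eqref{defhm}. The regularity properties \eqref{defhm6}, i.e. inner regularity by compact sets and outer regularity by open sets, are part of the conclusion of the theorem for regular Borel measures; since $\Gamma$ is $\sigma$-compact these hold for every Borel set $E \subset \Gamma$ (and not merely for those of finite measure). Uniqueness of $\omega^X$ is also part of the Riesz theorem's conclusion, given that we demand the measure to be regular.

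I do not anticipate any real obstacle in this step: the nontrivial content has already been absorbed into Lemma \ref{ldefhm} (existence of the solution operator $U$, its continuity up to $\Gamma$, and the maximum principle hidden in item (ii)). The present lemma is essentially a packaging of that work through Riesz; the only minor point worth mentioning in the proof is verifying that $\Gamma$ sits in the scope of the theorem (locally compact, Hausdorff, $\sigma$-compact), which is immediate since $\Gamma$ is closed in $\R^n$.
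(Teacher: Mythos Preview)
Your proposal is correct and matches the paper's approach exactly: the paper simply states that the lemma is ``a direct consequence of the Riesz representation theorem (see for instance \cite[Theorem 2.14]{Rudin})'' after noting that $g \mapsto Ug(X)$ is a bounded positive linear functional by Lemma~\ref{ldefhm}. One minor quibble: the fact that $Ug \in C^0(\R^n)$ (so evaluation at $X$ makes sense) is encoded in the target space of \eqref{defhm4} itself, not in items (i) or (iv).
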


\medskip

The harmonic measure is a probability measure, as proven in the following result.

\begin{lemma} \label{defhm12} 
For any $X\in \Omega$,
\[\omega^X(\Gamma) = 1.\]
\end{lemma}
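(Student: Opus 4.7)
The plan is to approximate the constant function $1$ on $\Gamma$ monotonically from below by functions in $C^0_0(\Gamma)$ and pass to the limit via Riesz/monotone convergence, reducing the claim to showing that the harmonic extensions evaluated at $X$ converge to $1$. The decay of the ``defect'' will be controlled by the boundary Hölder estimate of Lemma~\ref{HolderB}.

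Fix $X \in \Omega$. Choose a sequence $\eta_k \in C^0_0(\Gamma)$ of Lipschitz functions with $0 \leq \eta_k \leq 1$, $\eta_k \equiv 1$ on $B(0,k) \cap \Gamma$, and $\supp \eta_k \subset B(0,k+1) \cap \Gamma$. A direct computation using \eqref{1.1} (splitting $|x-y|\leq 1$ and $|x-y|\geq 1$) shows $\eta_k \in H$, so Lemma~\ref{ldefhm}(v) gives $u_k := U\eta_k \in W \cap C^0(\R^n)$, and Lemma~\ref{ldefhm}(ii) gives $0 \leq u_k \leq 1$. Since $\eta_k \nearrow \1_\Gamma$ pointwise, Riesz's formula \eqref{defhm} and monotone convergence yield
\begin{equation*}
\omega^X(\Gamma) = \lim_{k\to\infty} \int_\Gamma \eta_k\, d\omega^X = \lim_{k\to\infty} u_k(X).
\end{equation*}

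Set $v_k := 1 - u_k \in C^0(\R^n)$, so that $0 \leq v_k \leq 1$. Since $u_k \in W$, Lemma~\ref{lmult} gives $u_k \in \WW(\R^n)$, and trivially $1 \in \WW(\R^n)$, hence $v_k \in \WW(\R^n)$. By linearity, $v_k$ is a solution in $\R^n \setminus \Gamma$ with trace $Tv_k = 1 - \eta_k$, which vanishes on $B(0,k) \cap \Gamma$. Pick $x_0 \in \Gamma$ with $|x_0 - X| = \delta(X)$ and, for $k$ large, set $R_k := k - |x_0| - 1 \to \infty$, so that $B(x_0, R_k) \subset B(0,k-1)$. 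Then $v_k \in \WW(B(x_0,R_k))$ is a solution in $B(x_0,R_k)\setminus\Gamma$ with $Tv_k \equiv 0$ on $B(x_0,R_k)\cap\Gamma$, and $\|v_k\|_\infty \leq 1$, so the boundary Hölder estimate \eqref{Holder3} of Lemma~\ref{HolderB} applies. For $k$ so large that $2\delta(X) < R_k/2$,
\begin{equation*}
\osc_{B(x_0,2\delta(X))} v_k \leq C\Big(\frac{2\delta(X)}{R_k}\Big)^\alpha \Big(m(B(x_0,R_k))^{-1}\int_{B(x_0,R_k)} v_k^2\,dm\Big)^{1/2} \leq C\Big(\frac{\delta(X)}{R_k}\Big)^\alpha.
\end{equation*}

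Since $v_k$ is continuous and $v_k(x_0) = 1 - \eta_k(x_0) = 0$ (because $x_0 \in B(0,k)\cap\Gamma$ for $k$ large), and since $X \in B(x_0,2\delta(X))$, this oscillation bound gives $v_k(X) \leq C(\delta(X)/R_k)^\alpha \to 0$. Hence $u_k(X) \to 1$, proving $\omega^X(\Gamma) = 1$. The only delicate step is verifying that $v_k$ has the right membership/trace properties to feed Lemma~\ref{HolderB} on a ball whose radius grows with $k$; once that is in place, the uniform $L^\infty$ bound $v_k \leq 1$ makes the decay of $v_k(X)$ automatic.
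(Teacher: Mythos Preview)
Your proof is correct and follows essentially the same approach as the paper: approximate $\1_\Gamma$ by compactly supported bump functions whose harmonic extensions $u_k$ are controlled via the maximum principle, then show $v_k = 1 - u_k$ vanishes at $X$ by applying the boundary H\"older estimate (Lemma~\ref{HolderB}) on balls of growing radius where $Tv_k \equiv 0$. The only cosmetic differences are that the paper centers its balls at the nearest boundary point from the start and uses dyadic radii $2^j\delta(X)$, and invokes the oscillation form \eqref{Holder2} rather than \eqref{Holder3}; otherwise the argument is identical.
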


\bp Let $X\in \Omega$ be given. 
Choose $x\in \Gamma$ such that $\delta(X) = |X-x|$. Set then $B_j = B(x,2^j \delta(X))$. 
According to \eqref{defhm6},  
\begin{equation} \label{defhm8} 
\omega^X(\Gamma) = \lim_{j\to +\infty} \omega^X(\overline {B_j}).
\end{equation}
Choose, for  
$j\geq 1$, $\bar g_j \in C^\infty_0(B_{j+1})$ such that $0\leq \bar g_j \leq 1$ and $\bar g_j \equiv 1$ on $\overline{B_{j}}$ and then define $g_j = T(\bar g_j)$. Since the harmonic measure is positive, we have
\begin{equation} \label{defhm9} 
\omega^X(B_{j}) \leq \int_\Gamma g_j(y) d\omega^X(y) \leq \omega^X(B_{j+1}).
\end{equation}
Together with \eqref{defhm8},
\begin{equation} \label{defhm10} 
\omega^X(\Gamma) = \lim_{j\to +\infty} \int_\Gamma g_j(y) d\omega^X(y) = \lim_{j\to +\infty} u_j(X),
\end{equation}
where $u_j$ is the image by the map \eqref{defhm4} of the function $g_j$. 
Since $g_j$ is the trace of a smooth and compactly supported function, $g_j \in H$ and so $u_j \in W$ is the solution of \eqref{Dp2} with data $g_j$. Moreover,  
$0\leq u_j \leq 1$ by 
Lemma~\ref{ldefhm} (ii). We want to show that $u_j(X) \to 1$ when $j \to +\infty$.
The function $v_j := 1-u_j \in W$ is a solution in $B_{j}$ satisfying $Tv_j \equiv 0$ on $B_j$. 
So Lemma~\ref{HolderB} says that 
 \begin{equation} \label{defhm11} 
0\leq 1-u_j(X) = v_j(X) \leq \osc_{B_1} v_j \leq C2^{-j\alpha} \osc_{B_j} v_j \leq C2^{-j\alpha},
\end{equation}
where $C>0$ and $\alpha >0$ are independent of $j$. It follows that $v_j(X)$ tends to $0$, 
and  
$u_j(X)$ tends to $1$ 
when $j$ goes to $+\infty$. 
The lemma follows from this and \eqref{defhm10}, the lemma follows.
\ep

\begin{lemma} \label{lprhm}
Let $E \subset \Gamma$ be a Borel set and define the function $u_E$ on $\Omega$ by $u_E(X) = \omega^X(E)$. 
Then
\begin{enumerate}[(i)]
\item if there exists $X\in \Omega$ such that $u_E(X) = 0$, then $u_E \equiv 0$;
\item the function $u_E$ lies 
in $\WW(\Omega)$ and is a solution in $\Omega$;
\item if $B\subset \R^n$ is a ball such that
$E\cap B = \emptyset$, then $u_E \in \WW(B)$ and $Tu_E = 0$ on $B$.
\end{enumerate}
\end{lemma}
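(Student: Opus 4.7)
The plan is to build all three statements from the same approximation device: write $u_E = \lim_k U g_k$ for a suitable sequence $g_k \in C^0_0(\Gamma)$ with $0 \leq g_k \leq 1$ converging monotonically to $\mathbf 1_E$, use the previous lemma to handle each $U g_k$, then pass to the limit with Caccioppoli estimates to control the gradients.

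I would start with part (ii) and reduce first to the case where $E = V$ is relatively open in $\Gamma$. Choose $g_k \in C^0_0(\Gamma)$ with $0 \leq g_k \leq 1$ and $g_k \uparrow \mathbf 1_V$ pointwise (for instance via a smooth partition of unity subordinate to an exhaustion of $V$). Set $u_k = Ug_k$; by Lemma \ref{ldefhm}, $u_k \in \WW(\Omega)$ is a solution in $\Omega$ with $0 \leq u_k \leq 1$. By monotone convergence against $\omega^X$ for each fixed $X$, $u_k(X) \uparrow \omega^X(V) = u_V(X)$ at every point of $\Omega$. For any $\varphi \in C^\infty_0(\Omega)$, the interior Caccioppoli bound (Lemma \ref{CaccioI}) yields
\begin{equation*}
\|\nabla(\varphi u_k)\|_{L^2(\Omega,w)}^2 \leq C\int_\Omega |\nabla \varphi|^2 u_k^2\, dm \leq C\int_\Omega |\nabla \varphi|^2 dm,
\end{equation*}
uniformly in $k$. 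Thus $\{\varphi u_k\}$ is bounded in $W$ and, since $\varphi u_k \to \varphi u_V$ in $L^1_{loc}(\R^n)$ by dominated convergence, a weak subsequential limit of $\nabla(\varphi u_k)$ must equal the distributional gradient $\nabla(\varphi u_V)$. Hence $\varphi u_V \in W$ for every $\varphi \in C^\infty_0(\Omega)$, so $u_V \in \WW(\Omega)$, and passing to the weak limit in $\int \A \nabla u_k \cdot \nabla \psi\, dm = 0$ for $\psi \in C^\infty_0(\Omega)$ shows that $u_V$ is a solution.

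To extend (ii) to arbitrary Borel $E$, I would run a monotone class argument. The family $\mathcal D$ of $E \subset \Gamma$ for which $u_E \in \WW(\Omega)$ is a bounded solution is closed under complements, because by Lemma \ref{defhm12} we have $u_{\Gamma \setminus E} = 1 - u_E$ and constants solve $Lu = 0$; and $\mathcal D$ is closed under monotone countable unions and intersections by the same Caccioppoli-plus-weak-compactness argument of Paragraph 1 (the uniform bound $0 \leq u_{E_k} \leq 1$ is all that enters). Since $\mathcal D$ contains the open subsets of $\Gamma$, the $\pi$--$\lambda$ theorem gives $\mathcal D$ = Borel $\sigma$-algebra, proving (ii). For (i), once (ii) is known, $u_E$ is a nonnegative solution in $\Omega$; if $u_E(X_0) = 0$ then the Harnack estimate on compact subsets of $\Omega$ (Lemma \ref{HarnackI2}), together with the Harnack-chain connectedness of $\Omega$ guaranteed by Lemma \ref{lHC} for $d < n-1$, forces $u_E \equiv 0$ on $\Omega$.

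For part (iii), the key adaptation is to arrange the approximating $g_k$ so that they vanish near $B \cap \Gamma$, which allows the use of Lemma \ref{ldefhm} (iv) and the boundary Caccioppoli estimate of Lemma \ref{CaccioB}. Fix any ball $B' \Subset B$ concentric with $B$. Since $E \cap B = \emptyset$, we can by outer regularity of $\omega^X$ first approximate $E$ from above by relatively open sets $V_k \supset E$ with $\omega^X(V_k \setminus E)$ small, and intersecting with the open set $\Gamma \setminus \overline{B'}$ we may assume $V_k \cap B' = \emptyset$ while still $V_k \supset E$. Picking $g_k \in C^0_0(\Gamma)$ supported in $V_k$ with $g_k \uparrow \mathbf 1_E$ and applying Lemma \ref{ldefhm} (iv), we get $u_k \in \WW(B')$ with $Tu_k = 0$ on $B' \cap \Gamma$. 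For $\varphi \in C^\infty_0(B')$, a combination of interior Caccioppoli away from $\Gamma$ and boundary Caccioppoli (Lemma \ref{CaccioB}) on balls centered on $\Gamma \cap \supp \varphi$ yields $\|\nabla(\varphi u_k)\|_{L^2(\Omega, w)} \leq C_\varphi$ uniformly, and $\varphi u_k \in W_0$ because $T(\varphi u_k) = \varphi g_k = 0$ on $B' \cap \Gamma$. Passing to the limit as in Paragraph 1 gives $\varphi u_E \in W_0$ for every $\varphi \in C^\infty_0(B')$, hence $u_E \in \WW(B')$; exhausting $B$ by such $B'$ gives $u_E \in \WW(B)$. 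Finally, the convergence $\varphi u_k \to \varphi u_E$ in $W$ together with \eqref{CoA} yields $T(\varphi u_E) = \lim T(\varphi u_k) = 0$ in $L^1_{loc}(\Gamma, \sigma)$, so $Tu_E = 0$ on $B \cap \Gamma$ by Lemma \ref{defTrWE}.

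The main obstacle is the bookkeeping in (iii): ensuring that the approximating $g_k$ can be simultaneously chosen so that $g_k \uparrow \mathbf 1_E$ pointwise (so that $u_k(X) \to u_E(X)$ for every $X$) \emph{and} so that $\supp g_k$ is disjoint from a fixed neighborhood of $\overline{B'} \cap \Gamma$ (so that Lemmata \ref{ldefhm} (iv) and \ref{CaccioB} apply). Once this geometric separation is set up, the core analytic step — uniform $W$-bounds from Caccioppoli, weak compactness, and identification of the limit via the trace continuity \eqref{CoA} — is routine.
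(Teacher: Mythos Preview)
Your route through (i) and (ii) is correct and in some ways tidier than the paper's. The paper proves (i) first, directly from the Riesz representation and Harnack, and then \emph{needs} (i) inside (ii) to identify the limit $\lim_j u_j$ with $u_V$: its approximants satisfy $\mathbf 1_{K_j} \leq g_j \leq \mathbf 1_V$ for compact $K_j$ chosen relative to one fixed pole $X$, so pointwise convergence $u_j(Y) \to u_V(Y)$ at every $Y$ is not automatic and is recovered via (i). Your monotone-convergence step $\int g_k\,d\omega^Y \to \omega^Y(V)$ delivers this for free, and the $\pi$--$\lambda$ reduction to open sets then bypasses the paper's second invocation of (i) (outer regularity) for general Borel $E$. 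Deriving (i) afterwards from (ii) plus Harnack is legitimate since $\Omega$ is connected.

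There is, however, a real gap in (iii), and it is not just bookkeeping. You ask for continuous $g_k$ supported in shrinking open neighborhoods $V_k \supset E$ with $g_k \uparrow \mathbf 1_E$; this is impossible on two counts: supports contained in a decreasing family are incompatible with an increasing sequence, and more fundamentally a pointwise increasing limit of continuous functions is lower semicontinuous, so $\mathbf 1_E$ can arise this way only when $E$ is open. The repair is a genuine two-stage argument, as in the paper: first treat open $V$ with $V \cap B = \emptyset$ by your method (now $g_k$ supported in $V$ makes Lemma~\ref{ldefhm}~(iv) and Lemma~\ref{CaccioB} available), then for general $E$ approximate from above by open $V_j \subset \Gamma \setminus \overline{B'}$ and pass to the limit through the open case. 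At that last step you need $u_{V_j}(Y) \to u_E(Y)$ for \emph{every} $Y$, not just the pole used in the outer-regularity choice, and this is exactly where the paper calls on (i); alternatively you can rerun your $\pi$--$\lambda$ argument on Borel subsets of $\Gamma \setminus \overline{B'}$. One smaller issue: at the end you invoke \eqref{CoA}, which needs strong $W$-convergence of $\varphi u_k$, whereas you have only established weak convergence; but since $W_0$ is weakly closed, the weak limit $\varphi u_E$ already lies in $W_0$, and $T(\varphi u_E) = 0$ follows without \eqref{CoA}.
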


\bp First of all, 
$0\leq u_E\leq 1$ because $\omega^X$ is a positive probability measure for any $X \in \Omega$.

\medskip

Let us prove (i). Thanks to \eqref{defhm6}, it suffices to prove the result when $E=K$ is compact. 

Let $X\in \Omega$ be such that $u_K(X) = 0$.
Let $Y\in \Omega$ and $\epsilon>0$ be given. 
By \eqref{defhm6} again,
we can find an open $U$ such that $U \supset K$ and
$\omega^X(U) < \epsilon$. 
Urysohn's lemma (see for instance Lemma~2.12 in \cite{Rudin}) gives the existence of $g \in C^0_0(\Gamma)$ such that $0\leq g \leq 1$ and $g\equiv 1$ on $K$. 
Set $u = Ug$, where $U$ is as in \eqref{defhm4}.
Thanks to the positivity of the harmonic measure, $u_K \leq u$. 
Let $Y \in \Omega$ be given, and apply the Harnack inequality \eqref{Harnack2} to $u$
(notice that $u$ lies in $\WW(\Omega)$ and is 
a solution in $\Omega$ thanks to Lemma~\ref{ldefhm}). We get that
\begin{equation} \label{prhm1}
0\leq u_K(Y) \leq u(Y) \leq C_{X,Y} u(X) \leq C_{X,Y} \epsilon.
\end{equation}
Since \eqref{prhm1} holds for any positive $\epsilon$, we have $u_K(Y) =0$. Part (i) of the lemma follows.

\medskip

We turn to the proof of (ii),
which we first do when
$E=V$ is open. 
We first check that
\begin{equation}\label{ury0}
\text{$u_V$ is a continuous function on $\Omega$. }
\end{equation}
Fix $X \in \Omega$, and build an increasing sequence of compact sets $K_j \subset V$ 
such that $\omega^X(V) < \omega^X(K_j) + \frac1j$. 
With Urysohn's lemma again, we construct $g_j\in C^0_0(V)$ such that 
$\1_{K_j} \leq g_j \leq \1_V$ and, 
without loss of generality we can choose $g_j \leq g_i$ whenever $j\leq i$. 
Set $u_j = Ug_j \in C^0(\R^n)$, as in \eqref{defhm4}, and notice that 
$u_j(X) = \int_\Gamma g_j d\omega^X$ by \eqref{defhm}. Then for $j \geq 1$, 
\begin{equation}\label{ury}
u_{K_j}(X) = \omega^X(K_j) \leq u_j(X) \leq  \omega^X(V) = u_{V}(X)
\leq \omega^X(K_j)+\frac1j
\end{equation}
by definition of $u_E$, because the harmonic measure is nondecreasing,
and since $\1_{K_j} \leq g_j \leq \1_V$.
Similarly, $(u_j)$ is a nondecreasing sequence of functions, i.e., 
\begin{equation}\label{ury1}
u_i \geq u_j  \text{ on $\Omega \, $ for } i \geq j \geq 1,
\end{equation}
by the maximum principle in Lemma \ref{ldefhm} and because $g_i \geq g_j$, so that in particular
\begin{equation}\label{ury2}
u_j(X) \leq u_i(X) \leq u_j(X) + \frac1j \ \ \text{ for } i \geq j \geq 1,
\end{equation}
by \eqref{ury}.
Now $u_i - u_j$ is a nonnegative solution (by Lemma~\ref{ldefhm}), and Lemma~\ref{HarnackI2} 
implies that for every 
compact set $J \subset \Omega$, there exists $C_J >0$ such that
 \begin{equation} \label{prhm2}
0 \leq \sup_J (u_i -u_j) \leq C_J (u_i - u_j)(X) \leq \frac{C_J}j
 \end{equation}
for $i \geq j \geq 1$.
 We deduce from this
 that $(u_j)_j$ converges uniformly on compact sets of $\Omega$ to a function $u_\infty$, 
 which is therefore continuous on $\Omega$. Thus \eqref{ury0} will follow as soon as we prove that
 $u_\infty = u_V$. 
 
 Set $K = \bigcup_j K_j$; then $u_{K_j} \leq u_K \leq u_V$ by monotonicity of the harmonic measure,
 and \eqref{ury} implies that $u_K(X) = u_V(X)$. Now $u_V - u_K = u_{V \sm K}$, so
 $u_{V \sm K}(X) = 0$. By Point (i) of the present lemma, $u_{V \sm K}(Y) = 0$ for every $Y \in \Omega$.
 But $u_V(Y) = \omega^Y(V)$, and $\omega^Y$ is a measure, so 
$u_{V \sm K}(Y) = \lim_{j \to +\infty} u_{V \sm K_j}(Y)
= u_{V}(Y) - \lim_{j \to +\infty} u_{K_j}(Y)$. 

Since $u_{K_j}(Y) \leq u_j(Y) \leq u_{V}(Y)$ by the proof of \eqref{ury}, we get that $u_j(Y)$ tends
to $u_{V}(Y)$. In other words, $u_\infty(Y) = u_{V}(Y)$, and \eqref{ury0} follows as announced.
 
 We proved that $u_V$ is continuous on $\Omega$ and that 
it is the limit, uniformly on compact subsets of $\Omega$, of a 
 sequence of functions $u_j \in C^0(\R^n) \cap \WW(\Omega)$, which are also
 solutions of $L$ in $\Omega$. We now want to prove 
 that $u_V \in \WW(\Omega)$, and we proceed as we did near \eqref{defhm3c}.
 
Let $\phi\in C^\infty_0(\Omega)$ be given. In the distributional sense, we have $\nabla(\phi u_j) = u_j \nabla \phi + \phi \nabla u_j$.  So the 
Caccioppoli inequality given by Lemma~\ref{CaccioI} yields 
 \begin{equation} \label{prhm4}
\int_\Omega |\nabla (\phi u_j)|^2 dm \leq C \int_\Omega (|\nabla \phi|^2 |u_j|^2 + \phi^2 |\nabla u_j|^2) dm \leq C \int_\Omega |\nabla \phi|^2 |u_j|^2 dm.
 \end{equation}
Since the $u_j$ converge
to $u$ uniformly on $\supp \, \phi$,  
the right-hand side of \eqref{prhm4} converges to $C \int_\Omega |\nabla \phi|^2 |u|^2 dm$.
Consequently, the left-hand side of \eqref{prhm4} is uniformly bounded in $j$ and hence there exists $v \in L^2(\Omega,w)$ such that $\nabla (\phi u_j)$ converges weakly to $v$ in $L^2(\Omega,w)$. By uniqueness of the limit, the distributional derivative $\nabla(\phi u_V)$ equals $v \in L^2(\Omega,w)$, so by definition of $W$, $\phi u_V \in W$. Since the result holds for any $\phi \in C^\infty_0(\Omega)$, we just established $u_V\in \WW(\Omega)$ as desired.
In addition, we also checked that (for a subsequence) $\nabla (\phi u_j)$
converges weakly in $L^2(\Omega,w)$ to $\nabla (\phi u_V)$.

We now establish that $u_V$ is a solution. Let
$\varphi \in C^\infty_0(\Omega)$ be given. Choose 
$\phi \in C^\infty_0(\Omega)$ such that $\phi \equiv 1$ on $\supp \, \varphi$. Thanks to the weak 
convergence of $\nabla (\phi u_j)$ to $\nabla (\phi u_j)$
 \begin{equation} \label{prhm5} \begin{split}
\int_\Omega \A \nabla u_V \cdot \nabla \varphi \, dm & = \int_\Omega \A \nabla (\phi u_V)\cdot \nabla \varphi \, dm \\
& = \lim_{j\to +\infty} \int_\Omega \A \nabla (\phi u_j)\cdot \nabla \varphi \, dm 
= \lim_{j\to +\infty} \int_\Omega \A \nabla u_j \cdot \nabla \varphi \, dm = 0 \\
\end{split} \end{equation}
because each $u_j$ is a solution. Hence $u_V$ is a solution.

This completes our proof of (ii) when $E = V$ is open.
The proof of (ii) for general Borel sets $E$ works similarly, 
but we now approximate $E$ from above by open sets. Fix $X\in \Omega$. Thanks to 
the regularity property \eqref{defhm6}, there exists a decreasing sequence $(V_j)$ of open sets that 
contain $E$, and for which $u_{V_j}(X)$ tends to $u_{E}(X)$. 

From our previous work, we know that each
$u_{V_j}$ is continuous on $\Omega$, lies in $\WW(\Omega)$, and is a solution in $\Omega$. 
Using the same process as before, we can show first that the $u_{V_j}$ converge, uniformly on 
compact sets of $\Omega$, to $u_E$, which is then continuous on $\Omega$.
Then we prove that, for any $\phi \in C^\infty_0(\Omega)$, $\nabla (\phi u_{V_j})$ converges weakly in $L^2(\Omega,w)$ to $\nabla (\phi u_E)$, from which we deduce $u_E \in \WW(\Omega)$ and then that $u_E$ is a solution.

\medskip

Part (iii) of the lemma remains to be proven. Let $B \subset \R^n$ be a ball such that $B \cap E = \emptyset$. Since $u_E$ lies in $\WW(\Omega)$ 
and is a solution, 
Lemma~\ref{HolderI} says that $u_E$ is continuous in $\Omega$. 
We first prove that if we set $u = 0$ on $B \cap \Gamma$, we get a continuous extension of $u$,
(with then has a vanishing trace, or restriction, on $B \cap \Gamma$). 

Let $x\in B\cap \Gamma$ be given.
Choose $r>0$ such that $B(x,2r) \subset B$ and then construct a function $\bar g \in C^\infty_0(B(x,2r))$ 
such that $\bar g\equiv 1$ in $B(x,r)$. Since $\bar g$ is smooth and compactly supported, 
$g : = T(\bar g)$ lies in 
$H \cap C^0_0(\Gamma)$ and then 
$u = Ug $, the image of $g$ by the map of \eqref{defhm4}, lies in
in $W \cap C^0(\R^n)$. From the positivity of the harmonic measure, we deduce that 
$0 \leq u_E \leq 1-u$. 
Since $0$ and $1-u$ are both continuous functions that are equal $0$ at $x$, 
the squeeze theorem says that 
$u_E$ is continuous (or can be extended by continuity) at $x$, and $u_E(x) = 0$.

To complete the proof of the lemma, we show that $u_E$ actually lies in $\WW(B)$. 
As for the proof of (ii), 
we first assume that $E=V$ is open. 
We take a nondecreasing sequence of compact sets $K_j \subset V$ that converges to $V$, 
and then we build $g_j \in C_0^0(V)$, 
such that $\1_{K_j} \leq g_j \leq \1_V$ and the sequence $(g_j)$ is non-decreasing. 
We then take $u_j = U g_j$ (with the map from \eqref{defhm4}), and in particular the 
sequence $(u_j)$ is non-decreasing on $\Omega$.
From the proof of (ii), we know that $u_j$ converges to $u_V$ on compact sets of $\Omega$, 
then in particular $u_j$ converges pointwise to $u_V$ in $\Omega$. 

\noindent Let $\varphi \in C^\infty_0(B)$;
we want to prove that $\varphi u_V \in W$. From Lemma~\ref{CaccioB}, we have
 \begin{equation} \label{prhm6}
\int_B |\nabla (\varphi u_j)|^2 dm \leq C \int_B (|\nabla \varphi|^2 |u_j|^2 + \varphi^2 |\nabla u_j|^2) dm \leq C \int_B |\nabla \varphi|^2 |u_j|^2 dm.
 \end{equation}
Since $u$ is continuous on $B$, $u_V \in L^2(\supp \, \varphi,w)$ and the right-hand side 
converges to $C \int_B |\nabla \varphi|^2 |u_V|^2 dm$ by 
the dominated convergence theorem. The left-hand side is thus uniformly bounded in $j$ and
$\nabla (\varphi u_j)$ converges weakly, 
maybe after extracting a subsequence,
to some $v$ in $L^2(B,w)$. 
By uniqueness of the limit, $v = \nabla (\varphi u_V) \in L^2(B,w)$. Since the result holds for all $\varphi \in C^\infty_0(B)$, we get $u_V \in \WW(B)$. 

In the general case where $E$ is a Borel set, fix $X\in \Omega$ and take a 
decreasing sequence of open sets $V_j \supset X$ such that
$u_{V_j}(X) \to u_{E}(X)$.
We can prove using part (i) of this lemma that $u_{V_j}$ converges to $u_E$ pointwise in $\Omega$.
Then we use Lemma~\ref{CaccioB} to show that for $\varphi \in C^\infty_0(B)$,
 \begin{equation} \label{prhm7}
\int_B |\nabla (\varphi u_{V_j})|^2 dm \leq C \int_B |\nabla \varphi|^2 |u_{V_j}|^2 dm
 \end{equation}
when $j$ is so large that $V_j$ is far from the support of $\varphi$.
 The right-hand side has a limit, thanks to the dominated convergence theorem, thus the left-hand side is uniformly bounded in $j$. So there exists a subsequence of $\nabla (\varphi u_{V_j})$ that converges weakly in $L^2(B,w)$, and by uniqueness to the limit, the limit has to be $\nabla (\varphi u_E)$, which thus lies 
 in $L^2(B,w)$. We deduce that
 $\varphi u_E \in W$ and then $u \in \WW(B)$.
\ep
\chapter{Green Functions}

The aim of this section is to define a Green function, that is, formally, a function $g$ 
defined on $\Omega \times \Omega$ and such that for 
$y \in \Omega$,
\begin{equation} \label{GreenI1}
\left\{ \begin{array}{l} 
Lg(.,y) = \delta_y \quad \text{ in } \Omega\\
Tg(.,y) = 0 \quad \text{ on } \Gamma.
\end{array} \right.
\end{equation}
where $\delta_y$ denotes the Dirac distribution.

Our proof of existence and uniqueness, and the estimates below, are adapted from
arguments of \cite{GW} (see also \cite{HoK} and \cite{DK}) for the classical case of codimension $1$.

\begin{lemma} \label{GreenEx}
There exists a non-negative function $g:\ \Omega \times \Omega \to \R \cup \{+\infty\}$ 
with the following properties.
\begin{enumerate}[(i)]
\item For any $y\in \Omega$ and 
any function $\alpha \in C_0^\infty(\R^n)$ 
such that $\alpha \equiv 1$ in a neighborhood of $y$
\begin{equation} \label{GreenE1}
(1-\alpha) g(.,y) \in W_0.
\end{equation}
In particular, $g(.,y) \in \WW(\R^n \setminus \{y\})$ and $T[g(.,y)] = 0$.
\item For 
every choice of $y\in \Omega$, $R>0$, and $q\in [1,\frac{n}{n-1})$,
\begin{equation} \label{GreenE4}
g(.,y) \in W^{1,q}(B(y,R)): = \{u \in L^q(B(y,R)), \, \nabla u \in L^q(B(y,R))\}.
\end{equation}
\item For 
$y\in \Omega$ and  
$\varphi \in C^\infty_0(\Omega)$,
\begin{equation} \label{GreenE6}
\int_\Omega A \nabla_x g(x,y)\cdot \nabla \varphi(x) \, dx = \varphi(y).
\end{equation}
In particular, $g(.,y)$ is a solution 
of $Lu=0$ 
in $\Omega \setminus \{y\}$. 
\end{enumerate}
In addition, the following bounds hold.
\begin{enumerate}[(i)] \setcounter{enumi}{3}
\item For 
$r>0$,
$y\in \Omega$ and 
$\epsilon>0$,
\begin{equation} \label{GreenE2}
\int_{\Omega \setminus B(y,r)} |\nabla_x g(x,y)|^2 dm(x) \leq 
\left\{\begin{array}{ll} Cr^{1-d} & \text{ if } 4r \geq \delta(y) \\ 
\frac{Cr^{2-n}}{w(y)} & \text{ if } 2r \leq \delta(y), \, n\geq 3 \\
\frac{C_\epsilon}{w(y)} \left(\frac{\delta(y)}{r} \right)^\epsilon & \text{ if } 2r \leq \delta(y), \, n=2,\end{array} \right.
\end{equation}
where $C>0$ depends on $d$, $n$, $C_0$, $C_1$ and $C_\epsilon>0$ 
depends on $d$, $C_0$, $C_1$, and $\epsilon$.
\item For 
$x,y\in \Omega$ such that 
$x\neq y$ and 
$\epsilon>0$,
\begin{equation} \label{GreenE7}
0 \leq
g(x,y) \leq \left\{\begin{array}{ll} C|x-y|^{1-d} & \text{ if } 4|x-y| \geq \delta(y) \\ 
\frac{C|x-y|^{2-n}}{w(y)} & \text{ if } 2|x-y| \leq \delta(y), \, n\geq 3 \\
\frac{C_\epsilon}{w(y)} \left(\frac{\delta(y)}{|x-y|} \right)^\epsilon & \text{ if } 2|x-y| \leq \delta(y), \, n=2,\end{array} \right.
\end{equation}
where again $C>0$ depends on $d$, $n$, $C_0$, $C_1$ and $C_\epsilon>0$ depends on $d$, $C_0$, $C_1$, $\epsilon$.
\item For
$q\in [1,\frac{n}{n-1})$ and 
$R \geq \delta(y)$,
\begin{equation} \label{GreenE5}
\int_{B(y,R)} |\nabla_x g(x,y)|^q dm(x) \leq C_q R^{d(1-q)+1},
\end{equation}
where $C_q>0$ depends on $d$, $n$, $C_0$, $C_1$, and $q$.
\item For
$y\in \Omega$, 
$R\geq \delta(y)$,
$t>0$ and 
$p\in [1,\frac{2n}{n-2}]$ (if $n\geq 3$) or $p\in [1,+\infty)$ (if $n=2$),  
\begin{equation} \label{GreenE2w}
\frac{m(\{x\in B(y,R), \, g(x,y) >t\})}{m(B(y,R))} \leq C_p \left(\frac{R^{1-d}}t\right)^\frac p2,
\end{equation}
where $C_p>0$ depends on $d$, $n$, $C_0$, $C_1$ and $p$.
\item For
$y\in \Omega$, 
$t>0$ and
$\eta \in (0,2)$,
\begin{equation} \label{GreenE5w}
m(\{x\in \Omega, \, |\nabla_x g(x,y)| >t\}) \leq \left\{\begin{array}{ll} Ct^{-\frac{d+1}{d}} & \text{ if } t \leq \delta(y)^{-d} \\ 
Cw(y)^{-\frac{1}{n-1}} t^{-\frac{n}{n-1}} & \text{ if } t \geq \delta(y)^{-d}, \, n\geq 3 \\
C_\eta w(y)^{-1} \delta(y)^{d\eta} t^{\eta-2} & \text{ if } t \geq \delta(y)^{-d}, \, n=2,\end{array} \right.
\end{equation}
where $C>0$ depends on $d$, $n$, $C_0$, $C_1$ and $C_\eta>0$ depends on $d$, $C_0$, $C_1$, $\eta$.
\end{enumerate}
\end{lemma}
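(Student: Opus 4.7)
\medskip
\noindent\textbf{Proof plan.} The strategy follows the Gr\"uter--Widman scheme from \cite{GW}, adapted to our weighted degenerate setting. Fix $y\in\Omega$. For every small $\rho>0$ with $\rho<\delta(y)/4$, the ball $B(y,\rho)$ lies in $\Omega$ and $w\approx w(y)$ there; hence $\varphi\mapsto \fint_{B(y,\rho)}\varphi\,dz$ is a bounded linear functional on $W_0$ (bound the average using the interior Poincar\'e inequality in $B(y,\rho)$ together with the Sobolev--Poincar\'e inequality from Lemma~\ref{lSob} to reach the global $W_0$-norm). By Lemma~\ref{lLM} there is a unique $g_\rho(\cdot,y)\in W_0$ satisfying
\begin{equation}\label{plan-def}
\int_\Omega \A\nabla g_\rho(\cdot,y)\cdot\nabla\varphi\,dm=\fint_{B(y,\rho)}\varphi(z)\,dz\qquad\forall\,\varphi\in W_0.
\end{equation}
Testing \eqref{plan-def} against $-(g_\rho)_-\in W_0$ (legal by Lemma~\ref{lcompo}) and using ellipticity \eqref{AElliptic} shows $(g_\rho)_-\equiv 0$, hence $g_\rho\ge 0$.

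\medskip
\noindent\textbf{Step 2: uniform a priori estimates.} For $t>0$ set $v_t:=(g_\rho-t)_+\in W_0$ and $A_\rho(t):=\{g_\rho>t\}$. Plugging $v_t$ into \eqref{plan-def} and using \eqref{AElliptic} gives
\[
\int_{A_\rho(t)}|\nabla g_\rho|^2\,dm\le C\,\fint_{B(y,\rho)} v_t\,dz.
\]
Coupled with the Sobolev--Poincar\'e inequality from Lemma~\ref{lSob} applied to $v_t$ (here Remark~\ref{rPoincare} style yields the embedding into $L^{2^*}(m)$ with $2^*=2n/(n-2)$ if $n\ge 3$, any exponent if $n=2$), this drives a Stampacchia-type iteration on $a_\rho(t):=m(A_\rho(t))$ which, on balls $B(y,R)$ with $R\ge\delta(y)$, yields (vii) uniformly in $\rho$. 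A parallel cutoff argument (test \eqref{plan-def} against $\alpha^2 g_\rho$ with a cutoff $\alpha$ vanishing on $B(y,r/2)$ and equal to $1$ outside $B(y,r)$, valid once $\rho<r/4$) combined with the weak-$L^p$ bound for $g_\rho$ just obtained produces the energy estimate (iv). The gradient distribution bound (viii) follows by distributional/layer-cake considerations from (iv) and (vii) (the threshold $t=\delta(y)^{-d}$ is exactly the crossover between the interior and boundary regimes), and then (vi) follows from (viii) by integrating $t^{q-1}m\{|\nabla g_\rho|>t\}$ (which is finite for $q<n/(n-1)$).

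\medskip
\noindent\textbf{Step 3: pointwise bounds and passage to the limit.} Fix $x\ne y$. In the boundary regime $4|x-y|\ge\delta(y)$, pick $z\in\Gamma$ with $|x-z|=\delta(x)$; then $g_\rho(\cdot,y)$ is a non-negative solution in a ball $B$ of radius $\sim |x-y|$ centered on $z$ and comparable to $B(x,|x-y|/8)$, with vanishing trace on $B\cap\Gamma$. Lemma~\ref{MoserB2} with $p=p_0$ small turns (vii) into the pointwise bound $g_\rho(x,y)\le C|x-y|^{1-d}$. In the interior regime $2|x-y|\le\delta(y)$ with $n\ge 3$, Lemma~\ref{MoserI} on $B(x,|x-y|/2)$ combined with (vii) and the weight identity $m(B(x,R))\approx R^n w(y)$ gives $g_\rho(x,y)\le C|x-y|^{2-n}/w(y)$. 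For $n=2$ the Moser bound alone is insufficient and one instead iterates Lemma~\ref{HolderI}, gaining a factor $(|x-y|/\delta(y))^\alpha$ per scale, which produces the $\varepsilon$-loss in (v). The bounds of Step~2 and Step~3 are uniform in $\rho$; a standard diagonal/compactness argument (weak compactness in $L^2_{loc}(\Omega\sm\{y\},w)$ for gradients by (iv); weak compactness in $W^{1,q}_{loc}$ by (vi); Arzel\`a--Ascoli away from $y$ via Lemma~\ref{HolderI}--\ref{HolderB}) extracts a subsequence $g_{\rho_k}(\cdot,y)\to g(\cdot,y)$ that inherits every estimate. Passing to the limit in \eqref{plan-def} against $\varphi\in C^\infty_0(\Omega)$ and using continuity of $\varphi$ at $y$ yields (iii); (i) and (ii) then follow from the limiting $W_0$-membership (via the cutoff estimate) and (vi) respectively.

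\medskip
\noindent\textbf{Main obstacle.} The principal difficulty is producing the sharp pointwise envelope (v) with the right dependence on $w(y)$ across the crossover scale $|x-y|\approx\delta(y)$, and in particular handling the delicate $n=2$ case, where the codimension-one logarithmic behavior must be replaced by a Hölder gain. The two regimes are controlled by different tools (interior versus boundary Moser), and gluing them so that the weak-$L^p$ input from (vii) produces the correct weight-dependent constant requires careful bookkeeping of the normalization $m(B(y,R))\approx R^{d+1}+R^n w(y)$ provided by Lemma~\ref{lwest}. Everything else reduces to Stampacchia/De~Giorgi iteration of ingredients already established in Sections~\ref{SPoincare}--\ref{Ssolutions}.
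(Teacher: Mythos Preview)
Your overall architecture matches the paper's: build approximants $g_\rho(\cdot,y)\in W_0$ via Lax--Milgram, derive estimates uniform in $\rho$, and pass to a weak limit. The positivity argument, the Caccioppoli-type bound for (iv), the layer-cake derivation of (vi) from (viii), and the compactness step are all fine and in the same spirit as the paper.

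The genuine gap is in Step~2, in the derivation of the weak bound (vii). Testing \eqref{plan-def} with $v_t=(g_\rho-t)_+$ yields
\[
\int_{\{g_\rho>t\}}|\nabla g_\rho|^2\,dm \;\le\; C\fint_{B(y,\rho)} v_t\,dz,
\]
but the right-hand side is \emph{not} bounded independently of $\rho$: closing via Sobolev--Poincar\'e on $v_t\in W_0$ and H\"older on the $\rho$-ball produces a factor $m(B(y,\rho))^{-1/p}\approx(\rho^n w(y))^{-1/p}$ that blows up as $\rho\to 0$, and no Stampacchia iteration on $a_\rho(t)$ removes it (the iteration only yields $\sup g_\rho\lesssim \rho^{-n}w(y)^{-1}R^2$, which is useless in the limit). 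The paper (following \cite{GW}) avoids this by testing instead with $\varphi=(\tfrac{2}{t}-\tfrac{1}{g_\rho})^+\in W_0$, which satisfies $0\le\varphi\le 2/t$ pointwise, so that $\fint_{B(y,\rho)}\varphi\le 2/t$ \emph{uniformly in $\rho$}. Ellipticity then gives $\int_{\{g_\rho>t/2\}}|\nabla \ln g_\rho|^2\,dm\le C/t$, and applying \eqref{1.5-bisbis} to $(\ln(2g_\rho/t))^+\in W_0$ delivers (vii) directly with no iteration. This is the one non-routine idea in the proof and you have missed it.

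A smaller issue: your treatment of the $n=2$ pointwise bound via ``iterating Lemma~\ref{HolderI}'' is not correct as stated, since H\"older continuity controls oscillation, not the value, of $g_\rho$ near the pole. The paper instead reuses the interior Moser bound \eqref{Moser1} combined with (vii) for \emph{arbitrarily large} $p$ (allowed when $n=2$), which is exactly what produces the $(\delta(y)/|x-y|)^\epsilon$ loss; see \eqref{GreenM}. Also, in the boundary regime of (v) you must split according to whether $\delta(x)$ is large or small compared to $|x-y|$ (interior vs.\ boundary Moser), not only the boundary case.
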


\begin{remark}
When $d<1$ and $|x-y| \geq \frac12 \delta(y)$, the bound $g(x,y) \leq C|x-y|^{1-d}$ 
given in \eqref{GreenE7} can be improved into
\begin{equation} \label{GreenE8}
g(x,y) \leq C \min\{\delta(x),\delta(y)\}^{1-d}.
\end{equation}
This fact is proven in Lemma \ref{ltcp5} below.
\end{remark}

\begin{remark}
The authors believe that the bounds given in \eqref{GreenE2} and \eqref{GreenE7} when $n=2$ and $2r$ 
(or $2|x-y|$) is smaller than $\delta(y)$ are not optimal. One should be able to replace for instance the bound $\frac{C_\epsilon}{w(y)} \left(\frac{\delta(y)}{r} \right)^\epsilon$ by $\frac{C}{w(y)} \ln \left( \frac{\delta(y)}{r} \right)$ in \eqref{GreenE2} by adapting the arguments of \cite{DK} (see also \cite[Theorem 3.3]{FJK}). However, the estimates given above are sufficient for our purposes and we didn't want to make this article even longer.
\end{remark}

\begin{remark}
Note that when 
$n\geq 3$, thanks to Lemma~\ref{lwest}, the bound \eqref{GreenE7} can be gathered into a single estimate
\begin{equation} \label{GreenN}
g(x,y) \leq C \frac{|x-y|^{2}}{m(B(y,{|x-y|}))}
\end{equation}
whenever $x,y\in \Omega $, $x\neq y$. In the same way, also for $n\geq 3$, the bound \eqref{GreenE2} can be gathered into a single estimate
\begin{equation} \label{GreenN00}
\int_{\Omega \setminus B(y,r)} |\nabla_x g(x,y)|^2 dm(x) \leq C \frac{r^{2}}{m(B(y,r))}
\end{equation}
whenever $y\in \Omega$ and $r>0$.
\end{remark}

\bp This proof will adapt the arguments of \cite[Theorem 1.1]{GW}. 

Let $y \in \Omega$ be fixed. Consider again the bilinear form $a$ on $W_0 \times W_0$ defined as
\begin{equation} \label{defofa2}
a(u,v)  = \int_\Omega A\nabla u \cdot \nabla v = \int_\Omega \A \nabla u \cdot \nabla v \, dm.
\end{equation}
The bilinear form $a$ is bounded and coercive on $W_0$, thanks to \eqref{ABounded} and \eqref{AElliptic}. 

Let $\rho >0$ be small. Take, for instance, $\rho$ such that $100\rho<\delta(y)$. Write $B_\rho$ for $B(y,\rho)$. The linear form
\begin{equation} \label{Green1}
\varphi \in W_0 \to \fint_{B_\rho} \varphi(z) \, dz
\end{equation}
is bounded. Indeed, let $z$ be a point in $\Gamma$, then 
\begin{equation} \label{Green2}
\left|\fint_{B_\rho} \varphi(z) \, dz \right| \leq C_{y,z,\rho} \fint_{B(z,|y-z|+\rho)} |\varphi(z)| \, dz \leq C_{y,z,\rho} \|\varphi\|_W 
\end{equation}
by 
Lemma~\ref{lpBry}. By the Lax-Milgram theorem, there exists then a unique function $\g^\rho = g^\rho(.,y) \in W_0$ such that 
\begin{equation} \label{Green3}
a(\g^\rho,\varphi) = \int_\Omega \A \nabla \g^\rho \cdot \nabla \varphi \, dm = \fint_{B_\rho} \varphi(z) \, dz \qquad \forall \varphi \in W_0.
\end{equation}
We like $\g^\rho$, and will actually spend some time studying it, because $g(\cdot,y)$ will later
be obtained as a limit of the $\g^\rho$. By \eqref{Green3}, 
\begin{equation} \label{Green4}
\text{$\g^\rho \in W_0$ is a solution of $L\g^\rho = 0$ in $\Omega \setminus \overline{B_\rho}$.}
\end{equation}
This fact will be useful later on.

\medskip

For now, let us prove that $\g^\rho \geq 0$ a.e. on $\Omega$. Since $\g^\rho \in W_0$, Lemma~\ref{lcompo} yields $|\g^\rho| \in W_0$, $\nabla |\g^\rho| = \nabla \g^\rho$ a.e. on $\{\g^\rho >0\}$, $\nabla |\g^\rho| = -\nabla \g^\rho$ a.e. on $\{\g^\rho <0\}$ and $\nabla |\g^\rho| = 0$ a.e. on $\{\g^\rho = 0\}$. Consequently
\begin{equation} \label{Green5}
\int_\Omega \A \nabla |\g^\rho| \cdot \nabla |\g^\rho| \, dm = \int_{\{\g^\rho >0\}} \A \nabla \g^\rho \cdot \nabla \g^\rho \, dm + \int_{\{\g^\rho <0\}} \A \nabla \g^\rho \cdot \nabla \g^\rho \, dm = \int_\Omega \A \nabla \g^\rho \cdot \nabla \g^\rho \, dm
\end{equation}
and
\begin{equation} \label{Green6}
\int_\Omega \A \nabla |\g^\rho| \cdot \nabla \g^\rho \, dm = \int_{\{\g^\rho >0\}} \A \nabla \g^\rho \cdot \nabla \g^\rho \, dm - \int_{\{\g^\rho <0\}} \A \nabla \g^\rho \cdot \nabla \g^\rho \, dm = \int_\Omega \A \nabla \g^\rho \cdot \nabla |\g^\rho| \, dm,
\end{equation}
which can be rewritten $a(|\g^\rho|,|\g^\rho|) = a(\g^\rho,\g^\rho)$ and $a(|\g^\rho|,\g^\rho) = a(\g^\rho,|\g^\rho|)$. Moreover, if we use $\g^\rho \in W_0$ and $|\g^\rho| \in W_0$ as test functions in \eqref{Green3}, we obtain
\begin{equation} \label{Green7}
a(|\g^\rho|,|\g^\rho|) = a(\g^\rho,\g^\rho) = \int_{B_\rho} \g^\rho(z) \, dz \leq \int_{B_\rho} |\g^\rho(z)|\, dz = a(\g^\rho,|\g^\rho|) = a(|\g^\rho|,\g^\rho).
\end{equation}
Hence $a(|\g^\rho|-\g^\rho,|\g^\rho|-\g^\rho) \leq 0$ and, by
the coercivity of $a$, $\g^\rho = |\g^\rho| \geq 0$ a.e. on $\Omega$.

\medskip

Let $R\geq \delta(y)> 100\rho>0$. We write again $B_R$ for $B(y,R)$. Let $p$ in the range given by Lemma~\ref{lSob}, that is $p\in [1,2n/(n-2)]$ if $n\geq 3$ and $p\in [1,+\infty)$ if $n=2$.
We aim to prove that for all $t>0$,
\begin{equation} \label{Green8}
\frac{m(\{x\in B_R, \, \g^\rho(x) > t\})}{m(B_R)} \leq C t^{-\frac{p}2} R^{\frac{p}2(1-d)}
\end{equation}
with a constant $C$ independent of $\rho$, $t$ and $R$.

We use \eqref{Green3} with the test function 
\begin{equation} \label{Green9}
\varphi(x) := \left(\frac 2{t}-\frac 1{\g^\rho(x)}\right)^+ = \max\left\{0, \frac 2{t}-\frac 1{\g^\rho(x)}\right\}
\end{equation}
(and $\varphi(x) = 0$ if $\g^{\rho}(x) = 0$), which lies in $W_0$ by Lemma~\ref{lcompo}.
So if $\Omega_{s} := \{x\in \Omega, \, \g^\rho(x) > s\}$, we have
\begin{equation} \label{GreenA}
a(\g^\rho,\varphi) = \int_{\Omega_{t/2}} \frac{\A \nabla \g^\rho \cdot  \nabla \g^\rho}{(\g^\rho)^2} \, dm = \fint_{B_r} \varphi \leq \frac2{t}.
\end{equation}
Therefore, with the ellipticity condition \eqref{AElliptic},
\begin{equation} \label{GreenB}
\int_{\Omega_{t/2}} \frac{|\nabla \g^\rho|^2}{(\g^\rho)^2} \, dm \leq \frac Ct.
\end{equation}
Pick $y_0 \in \Gamma$ such that
$|y-y_0| = \delta(y)$. Set $\wt B_R$ for $B(y_0,2R) \supset B_R$. 
Also  define $v$  by 
$v(x) : = (\ln(\g^\rho(x))-\ln t + \ln 2)^+$, which lies in $W_0$ too, 
thanks to Lemma~\ref{lcompo}.
The Sobolev-Poincar\'e inequality \eqref{1.5-bisbis} implies that 
\begin{equation} \label{GreenC}
\left(\int_{\Omega_{t/2} \cap \wt B_R} |v|^p \, dm\right)^\frac1p \leq C R \,  m(\wt B_R)^{\frac1p-\frac12} \left(\int_{\Omega_{t/2} \cap \wt B_R} |\nabla v|^2 \, dm\right)^\frac12 \leq C R \, m(\wt B_R)^{\frac1p-\frac12} t^{-\frac12}
\end{equation}
by  
\eqref{GreenB}. Since $m(\wt B_R) \approx R^{d+1}$ thanks to Lemma~\ref{lwest}, one has
\begin{equation} \label{GreenD}
\int_{\Omega_{t/2} \cap B_R} \left|\ln\left(\frac{2\g^\rho}{t} \right)\right|^p \, dm \leq C R^{p+(d+1)(1-\frac p2)} t^{-\frac p2}.
\end{equation}
But the latter implies, since $v > \ln 2$ on $\Omega_t$, that 
\begin{equation} \label{GreenE}
(\ln 2)^p m(\Omega_t \cap B_R) \leq C R^{p+(d+1)(1-\frac p2)} t^{-\frac p2} = C t^{-\frac p2} R^{\frac p2(1-d)+(d+1)}.
\end{equation}
The claim \eqref{Green8} follows once we notice that, due to Lemma \ref{lwest}, 
we have $m(B_R) \approx R^{d+1}$.

\medskip

Now 
we give a pointwise estimate on $\g^\rho$ when $x$ is far from $y$. We claim that
\begin{equation} \label{GreenF}
\g^\rho(x) \leq C |x-y|^{1-d} \qquad \text{ if } 4|x-y| \geq\delta(y) >100\rho,
\end{equation}
where again $C>0$ is independent of $\rho$.
Set $R=4|x-y| > \delta(y)$. Recall \eqref{Green4}, i.e., that
$\g^\rho$ lies
in $W_0$ and is a solution in $\Omega \setminus \overline{B_\rho}$. 
So we can use the Moser estimates to get that
\begin{equation} \label{GreenG0}
\g^\rho(x) \leq C \frac1{m(B(x,R/2))} \int_{B(x,R/2)} \g^\rho \, dm.
\end{equation}
Indeed, \eqref{GreenG0} is obtained with Lemma~\ref{MoserI} when $\delta(x) \geq R/30$ (apply Moser inequality in the ball $B(x,R/90)$) and with Lemma~\ref{MoserB2} when $\delta(x) \leq R/30$ (apply Moser inequality in the ball $B(x_0,R/15)$ where $x_0$ is such that $|x-x_0| = \delta(x)$). 

\noindent We can use now the fact that $B(x,R/2) \subset B_R$ and \cite[p. 28, Proposition 2.3]{Duoandi} to get
 \begin{equation} \label{GreenG}
\g^\rho(x) \leq C \int_0^{+\infty}  \frac{m(\Omega_{t} \cap B_R)}{m(B_R)}dt
\end{equation}
Take $s>0$, to be chosen later.
By
\eqref{Green8}, applied with any
 valid $p>2$ (for instance $p=\frac{2n}{n-1}$),
 \begin{equation} \label{GreenH0} \begin{split}
\g^\rho(x) & \leq C \int_0^{s}  \frac{m(\Omega_{t} \cap B_R)}{m(B_R)}dt + C \int_s^{+\infty}  \frac{m(\Omega_{t} \cap B_R)}{m(B_R)}dt \\
& \leq Cs + C R^{\frac p2(1-d)} \int_s^{+\infty} t^{-\frac p2} dt 
\leq Cs + C R^{\frac p2(1-d)} s^{1-\frac p2}.
\end{split} \end{equation}
We minimize the right-hand 
side in $s$. We find $s \approx R^{1-d}$ and then $\g^\rho(x) \leq C R^{1-d}$. The claim \eqref{GreenF} follows.

\medskip

Let us now prove some 
pointwise estimates on $\g^\rho$ when $x$ is close to $y$. 
When $n\geq 3$, we want to show that
\begin{equation} \label{GreenH}
\g^\rho(x) \leq C \frac{|x-y|^{2-n}}{w(y)} \qquad \text{ if }  \delta(y) \geq 2|x-y| > 4\rho \text{ and } \delta(y) > 100\rho,
\end{equation}
where $C>0$ is independent of $\rho$, $x$ and $y$. 
When $n=2$,
we claim that for any $\epsilon >0$,
\begin{equation} \label{GreenH2}
\g^\rho(x) \leq C_\epsilon \frac{1}{w(y)} \left(\frac{\delta(y)}{r} \right)^{\epsilon} \qquad \text{ if }  \delta(y) \geq 2|x-y| > 4\rho \text{ and } \delta(y) > 100\rho,
\end{equation}
where $C_\epsilon >0$ is also independent of $\rho$, $x$ and $y$.
The proof works 
a little like when 
$x$ is far from $y$, but we need to be a bit more careful about the Poincar\'e-Sobolev inequality 
that we use.
Set again $r = 2|x-y|$. Lemma~\ref{MoserI} applied to the ball $B(x,r/20)$ yields 
\begin{equation} \label{GreenI}
\g^\rho(x) \leq \frac{C}{m(B(x,r/2))} \int_{B(x,r/2)} \g^\rho \, dm 
\leq  \frac{C}{m(B_{r})} \int_{B_{r}} \g^\rho \, dm
\end{equation}
and then for $s>0$ and $R>r$ to be chosen soon,
\begin{equation} \label{GreenJ}
\g^\rho(x) \leq C \int_0^s \frac{m(\Omega_t \cap B_{r})}{m(B_{r})} dt + C \frac{m(B_R)}{m(B_{r})} \int_s^{+\infty} \frac{m(\Omega_t \cap B_{R})}{m(B_{R})} dt.
\end{equation}
Take $R=\delta(y)$. The doubling property \eqref{doublinggen} allows us to estimate 
$\frac{m(B_R)}{m(B_{r})}$ by $\big( \frac{\delta(y)}{r} \big)^n$. 
Let $p$ lie in the range given by Lemma~\ref{lSob}, and apply \eqref{Green8}
to estimate $m(\Omega_t \cap B_{R})$; we get that
\begin{equation} \label{GreenK}
\frac{m(\Omega_t \cap B_{R})}{m(B_{R})} 
\leq C t^{-p/2} R^{\frac{p}2(1-d)}
\leq C_p t^{-\frac p2} \delta(y)^{\frac{p}{2}(1-d)}.
\end{equation}
The bound \eqref{GreenJ} becomes now
\begin{equation} \label{GreenL} \begin{split}
\g^\rho(x) & 
\leq Cs + C_p \left( \frac{\delta(y)}{r} \right)^n \delta(y)^{\frac p2(1-d)} \int_s^{+\infty} t^{-\frac p2}  dt 
\leq Cs + C_p \delta(y)^{\frac p2(1-d) +n }r^{-n} s^{1-\frac{p}{2}}.
\end{split}\end{equation}
We minimize then the right hand side of \eqref{GreenL} in $s$. We take
$s\approx \delta(y)^{1-d} \left( \frac{\delta(y)}{r} \right)^{\frac{2n}{p}}$ and get that
\begin{equation} \label{GreenM}
\g^\rho(x) \leq C_p \delta(y)^{1-d} \left( \frac{\delta(y)}{r} \right)^{\frac{2n}{p}}.
\end{equation}
The assertion \eqref{GreenH} follows from \eqref{GreenM} by taking $p = \frac{2n}{n-2}$ (which is possible since $n\geq 3$) and by recalling that $w(y) = \delta(y)^{d+1-n}$. When $n=2$, we have $\delta(y)^{1-d} = \delta(y)^{n-d-1} = w(y)^{-1}$ and so \eqref{GreenH2} is obtained from \eqref{GreenM} by taking $p= \frac{2n}{\epsilon} < +\infty$.

\medskip

Next we 
give a bound on the $L^q$-norm of the gradient of $\g^\rho$ for some $q>1$. 
As before, we want the bound 
to be independent of $\rho$ so that we can later
let our Green function be a weak limit of a
subsequence of $\g^\rho$.

We want to prove first the following Caccioppoli-like inequality: for any $r> 4\rho$, 
\begin{equation} \label{GreenR} 
\int_{\Omega \setminus B_{r}} |\nabla \g^\rho|^2 \, dm 
\leq C r^{-2} \int_{B_{r}\setminus B_{r/2}} (\g^\rho)^2 dm,
 \end{equation}
 where $C>0$ is a constant that depends only upon $d$, $n$, $C_0$ and $C_1$.
  
Keep $r> 4\rho$, and let $\alpha \in C^\infty(\R^n)$ be such that
$\alpha \equiv 1$ on $\R^n \sm B_{r}$, $\alpha \equiv 0$ on $B_{r/2}$ and $|\nabla \alpha|\leq \frac 4{r}$.
By construction, $g^\rho$ lies in $W_0$, and thus the function $\varphi :=\alpha^2 \g^\rho$
is supported in $\Omega \setminus \overline{B_{r/4}}$ and lies in $W_0$ thanks to Lemma~\ref{lmult}.
Since we like function with compact support, let us further multiply $\varphi$ by a smooth, compactly
supported function $\psi_R$ such that $\psi_R \equiv 1$ on a large ball $B_R$.
Then $\psi_R \varphi$ is compactly supported in $\Omega \setminus \overline{B_\rho}$,
and still lies in $W_0$ like $\varphi$.

Also, \eqref{Green4} says that $\g^\rho$ lies in $W_0$ and is a solution of $L\g^\rho = 0$ in 
 $\Omega \setminus \overline{B_\rho} \supset \Omega \setminus \overline{B_{r/4}}$. 
So we may apply the second item of Lemma \ref{rdefsol}, with $E = \Omega \setminus \overline{B_{r/4}}$,
 and we get that 
 \begin{equation} \label{preGreenR1} 
\int_\Omega \A\nabla \g^\rho \cdot \nabla (\psi_R\varphi) \, dm = 0,
 \end{equation}
but we would prefer to know that 
\begin{equation} \label{GreenR1} 
\int_\Omega \A\nabla \g^\rho \cdot \nabla \varphi \, dm = 0.
 \end{equation}
Fortunately, we proved in (ii) of Lemma \ref{ldens0} that with correctly chosen functions $\psi_R$,
the product $\psi_R\varphi$ tends to $\varphi$ in $W$; see \eqref{a2} in particular.
Then
\begin{eqnarray}\label{apreG}
\Big|\int_\Omega \A\nabla \g^\rho \cdot [\nabla \varphi - \nabla (\psi_R\varphi)]\, dm \Big|
&\leq& C ||\nabla \g^\rho||_{L^2(dm)} \, ||\nabla\varphi - \nabla(\psi_R\varphi)||_{L^2(dm)} 
\nn\\
&\leq& C ||\g^\rho||_W \, ||\varphi - (\psi_R\varphi)||_W 
\end{eqnarray}
by the boundedness property \eqref{ABounded} of $\A$. The right-hand side tends to $0$, 
so \eqref{GreenR1} follows from \eqref{preGreenR1}. Since $\varphi =\alpha^2 \g^\rho$,
\eqref{GreenR1} yields
\begin{equation} \label{GreenR2} 
\int_\Omega \alpha^2 [\A\nabla \g^\rho \cdot \nabla \g^\rho] \, dm 
= - 2 \int_\Omega  \alpha \g^\rho [\A\nabla \g^\rho \cdot \nabla \alpha] \, dm.
 \end{equation}
Together with the elliptic and boundedness conditions on $\A$ (see \eqref{AElliptic} and \eqref{ABounded}) and the Cauchy-Schwarz inequality, 
\eqref{GreenR2} becomes
\begin{equation} \label{GreenR3} \begin{split}
\int_\Omega \alpha^2 |\nabla \g^\rho|^2 dm & \leq C \int_\Omega  \alpha \g^\rho |\nabla \g^\rho||\nabla \alpha| \, dm \\
& \leq C \left( \int_\Omega \alpha^2 |\nabla \g^\rho|^2 dm \right)^\frac12 
\left( \int_\Omega (\g^\rho)^2 |\nabla \alpha|^2 dm \right)^\frac12 ,
\end{split} \end{equation}
which can be rewritten
\begin{equation} \label{GreenR4} 
\int_\Omega \alpha^2 |\nabla \g^\rho|^2 dm \leq C \int_\Omega (\g^\rho)^2 |\nabla \alpha|^2 dm.
 \end{equation}
The bound \eqref{GreenR} is then a straightforward consequence of our choice of $\alpha$.

\medskip
Set $\hat \Omega_t = \{x\in \Omega, \, |\nabla \g^\rho|>t\}$.
As before, there will be two different behaviors. 
We first check that
\begin{equation} \label{GreenQ} 
m(\hat \Omega_t) \leq C t^{-\frac{d+1}d} \qquad \text{ when } t\leq \delta(y)^{-d}.
 \end{equation}
Let $r\geq \delta(y)$ 
be given, to be chosen later. 
The Caccioppoli-like inequality \eqref{GreenR} and the pointwise bound \eqref{GreenF} give
\begin{equation} \label{GreenS} 
\int_{\Omega \setminus B_{r}} |\nabla \g^\rho|^2 \, dm 
\leq C r^{-2} \int_{B_{r}\setminus B_{r/2}} (\g^\rho)^2 dm
\leq C r^{-2d} m(B_r) \leq C r^{1-d}
\end{equation}
 by \eqref{ISob8b}, and hence
 \begin{equation} \label{GreenT} 
m(\hat \Omega_t \setminus B_{r}) \leq C t^{-2} r^{1-d}.
 \end{equation}
 This yields
\begin{equation} \label{GreenU} 
m(\hat \Omega_t) \leq C t^{-2} r^{1-d} + m(B_{r}) = C t^{-2} r^{1-d} + Cr^{1+d}
 \end{equation}
because $r\geq \delta(y)$. Take $r = t^{-\frac{1}{d}}$ 
in \eqref{GreenU} (and notice that $r \geq \delta(y)$ when $t\leq \delta(y)^{-d}$).
The claim \eqref{GreenQ} follows.

\medskip

 We also want a version of \eqref{GreenQ} when $t$ is big. We aim to prove that
 \begin{equation} \label{GreenV} 
m(\hat \Omega_t) \leq C w(y)^{-\frac{1}{n-1}} t^{-\frac{n}{n-1}} 
\qquad \text{ when $t\geq \delta(y)^{-d}$ and } \, n\geq 3
 \end{equation}
 and for any $\eta \in (0,2)$,
  \begin{equation} \label{GreenV0} 
 m(\hat \Omega_t) \leq C_\eta w(y)^{-1} \delta(y)^{d\eta} t^{\eta-2} 
 \qquad \text{when $t\geq \delta(y)^{-d}$ and } \, n= 2.
 \end{equation}

The proof of \eqref{GreenV} is similar to \eqref{GreenQ} but has an additional difficulty: 
we cannot use the Caccioppoli-like argument \eqref{GreenR} when $r$ is smaller than $4\rho$. 
So we will use another way. By 
\eqref{Green3} for the test function $\phi = \g^\rho$ and the elliptic condition \eqref{AElliptic},
\begin{equation} \label{GreenN0}
\int_\Omega |\nabla \g^\rho|^2 dm 
\leq C\int_\Omega \A\nabla \g^\rho \cdot \nabla \g^\rho \, dm 
= C\fint_{B_\rho} \g^\rho(z)\, dz \leq \frac{C}{m(B_\rho)} \int_{B_\rho} \g^\rho \, dm
\end{equation}
by 
\eqref{L1byL1w}. Let $y_0$ be such that $|y-y_0| =\delta(y)$. We use 
 H\"older's inequality,
and then the Sobolev-Poincar\'e inequality \eqref{1.5-bisbis}, 
with $p$ in the range given by Lemma~\ref{lSob}, to get 
that
\begin{equation} \label{Green0} \begin{split}
\int_\Omega |\nabla \g^\rho|^2 dm & \leq C_p m(B_\rho)^{-1} m(B_\rho)^{1-\frac{1}{p}} \left(\int_{B_\rho} (\g^\rho)^{p} \, dm \right)^\frac1{p} \\
& \leq C_p m(B_\rho)^{-\frac{1}{p}}  \left(\int_{B(y_0,2\delta(y))} (\g^\rho)^{p} \, dm \right)^\frac1{p} \\
& \leq C_p m(B_\rho)^{-\frac{1}{p}} \delta(y) m(B_{3\delta(y)})^{\frac{1}{p} - \frac{1}{2}}  \left(\int_{\Omega} |\nabla \g^\rho|^{2} \, dm \right)^\frac1{2},
\end{split} \end{equation}
that is,
\begin{equation} \label{GreenP} 
\int_\Omega |\nabla \g^\rho|^2 dm
\leq C_p m(B_\rho)^{-\frac{2}{p}} \delta(y)^2 m(B_{\delta(y)})^{\frac{2}{p} - 1}.
 \end{equation}
We use the fact that $100\rho<\delta(y)$ and Lemma~\ref{lwest} to get that 
$m(B_\rho) \approx \rho^n w(y) = \rho^n \delta(y)^{d+1-n}$. Besides, notice
that $m(B_{3\delta(y)}) \approx \delta(y)^{d+1}$. 
We end up with 
\begin{equation} \label{GreenQ0} 
\int_\Omega |\nabla \g^\rho|^2 dm
\leq C_p \rho^{-\frac{2n}p} w(y)^{-\frac 2p} \delta(y)^{2+(d+1)(\frac2p-1)} 
= C_p \left(\frac{\delta(y)}{\rho}\right)^{\frac{2n}{p}} \delta(y)^{1-d}
\end{equation}
once we recall that $w(y) = \delta(y)^{d+1-n}$. Observe 
that the right-hand 
side of \eqref{GreenQ0} is similar to the one of \eqref{GreenM}. In the same way as below \eqref{GreenM}
we take $p=\frac{2n}{n-2}$ 
when $n\geq 3$ and $p= \frac{4}{\epsilon}$ when $n=2$, and obtain that
 \begin{equation} \label{GreenQ1} 
\int_\Omega |\nabla \g^\rho|^2 dm\leq \left\{\begin{array}{ll} C w(y)^{-1} \rho^{2-n} & \text{ if } n\geq 3 \\ C_\epsilon w(y)^{-1} \left( \frac{\delta(y)}{\rho} \right)^\epsilon  & \text{ for any $\epsilon >0$ if $n=2$.}\end{array} \right. 
 \end{equation}
 
Let $r\leq \delta(y)$, to be chosen soon. Now we show that 
 \begin{equation} \label{GreenW} 
\int_{\Omega\setminus B_{r}} |\nabla \g^\rho|^2 dm 
\leq \left\{\begin{array}{ll} C w(y)^{-1} r^{2-n} & \text{ if } n\geq 3 \\ 
C_\epsilon w(y)^{-1} \left( \frac{\delta(y)}{r} \right)^\epsilon  & \text{ for any $\epsilon >0$ if $n=2$.}\end{array} \right. 
\end{equation}
When $r \leq 4\rho$, this
is a consequence of \eqref{GreenQ1},
and when $4\rho<r\leq \delta(y)$, this
can be proven as we proved \eqref{GreenS}, by using Caccioppoli-like inequality \eqref{GreenR} and the pointwise bounds \eqref{GreenH} or \eqref{GreenH2}. 
That is, we say that
\begin{equation}\label{ag62} \begin{split}
\int_{\Omega\setminus B_{r}} |\nabla \g^\rho|^2 dm 
& \leq C r^{-2} \int_{B_{r}\setminus B_{r/2}} (\g^\rho)^2 dm \\
& \leq r^{-2} m(B_r) \frac1{w(y)^2} \left\{\begin{array}{ll} Cr^{2(2-n)} & \text{ if } n\geq 3\\ 
C_\epsilon \left( \frac{\delta(y)}{r} \right)^{2\epsilon} & \text{ if } n=2, \, \epsilon>0
\end{array} \right.
\end{split} \end{equation}
and we observe that $m(B_r) \approx w(y) r^n$.

 Let $n\geq 3$. We deduce from \eqref{GreenW} that 
 $m(\hat \Omega_t \sm B_{r})
 \leq Ct^{-2} r^{2-n} w(y)^{-1}$ and then, since $m(B_{r}) \leq Cr^n w(y)$ 
and
 thanks to Lemma~\ref{lwest},
  \begin{equation} \label{GreenX} 
m(\hat \Omega_t) \leq Cw(y)^{-1} t^{-2} r^{2-n} + m(B_{r}) \leq C t^{-2} w(y)^{-1} r^{2-n} + Cr^{n} w(y).
 \end{equation}
Choose $r = [tw(y)]^{-\frac{1}{n-1}}$ (which is smaller than $\delta(y)$ if $t\geq \delta(y)^{-d}$) 
in \eqref{GreenX}. This
yields \eqref{GreenV}.

Let $n = 2$ and let $\eta \in (0,2)$ be given. Set  $\epsilon := \frac{2\eta}{2-\eta} >0$. In this case, \eqref{GreenW} gives
\begin{equation} \label{GreenX0} 
m(\hat \Omega_t \sm B_{r}) 
\leq Ct^{-2} w(y)^{-1} \left( \frac{\delta(y)}{r} \right)^\epsilon
 \end{equation}
 and then since $m(B_{r}) \leq Cr^2 w(y)$ by 
 Lemma~\ref{lwest},
 \begin{equation} \label{GreenX1} 
m(\hat \Omega_t) \leq Ct^{-2} w(y)^{-1} \left( \frac{\delta(y)}{r} \right)^\epsilon + Cr^{2} w(y).
 \end{equation}
We want to minimize the above quantity in $r$. We take 
$r = \delta(y)^{\frac{2(1-d)+\epsilon}{2+\epsilon}} t^{-\frac{2}{2+\epsilon}}$, which is smaller than $\delta(y)$ when $t\geq \delta(y)^{-d}$ and we find that
  \begin{equation} \label{GreenX2} 
m(\hat \Omega_t) \leq Ct^{-\frac{4}{2+\epsilon}} \delta(y)^{\frac{2(1-d)+\epsilon(d+1)}{2+\epsilon}} = C t^{\eta-2} \delta(y)^{1-d+\eta d},
 \end{equation}
with our choice of $\epsilon$.
Since $w(y)^{-1} = \delta(y)^{1-d}$ when $n=2$, the claim \eqref{GreenV0} follows.

\medskip

We plan to show now that $\nabla \g^\rho
\in L^q(B_R,w)$ for $1\leq q <n/(n-1)$, and the $L^q(B_R,w)$-norm of $\nabla \g^\rho$ 
can be bounded uniformly in $\rho$. 
More precisely, we claim that for 
$R\geq \delta(y)$ and 
$1\leq q < n/(n-1)$, 
  \begin{equation} \label{GreenY} 
\int_{B_R} |\nabla \g^\rho|^q dm \leq C_q R^{d(1-q)+1},
 \end{equation}
 where $C_q$ is independent of $\rho$ and $R$.
 
Let $s\in (0,\delta(y)^{-d}]$ 
be given, to be chosen soon. Then
\begin{equation} \label{GreenZ} 
\int_{B_R} |\nabla \g^\rho|^q dm \leq C \int_0^s t^{q-1} m(B_R) dt + C\int_s^{\delta(y)^{-d}} t^{q-1} m(\hat \Omega_t \cap B_R) dt + C\int_{\delta(y)^{-d}}^{+\infty} t^{q-1} m(\hat \Omega_t \cap B_R) dt.
\end{equation}
Let us call $I_1$, $I_2$ and $I_3$ the three integrals
in the right hand side of \eqref{GreenZ}. 
By Lemma~\ref{lwest}, $I_1 \leq Cs^q m(B_R) \leq Cs^q R^{d+1}$.
The second integral $I_2$ is bounded with the help of \eqref{GreenQ}, which gives
\begin{equation} \label{Greena} 
I_2 \leq C \int_s^{\delta(y)^{-d}} t^{q-1-\frac{d+1}{d}} dt \leq C \left( s^{q-\frac{d+1}{d}} - \delta(y)^{d(1-q)+1}\right).
\end{equation}
When $n\geq 3$, the last integral $I_3$ is bounded with the help of \eqref{GreenV} and we obtain, when $q<\frac{n}{n-1}$,
\begin{equation} \label{Greenb} 
I_3 \leq C w(y)^{-\frac{1}{n-1}} \int_{\delta(y)^{-d}}^{+\infty} t^{q-1-\frac{n}{n-1}} dt \leq C w(y)^{-\frac{1}{n-1}} \delta(y)^{-qd + \frac{nd}{n-1}} = C \delta(y)^{1+d(1-q)}
\end{equation}
where the last equality is obtained by using the fact that $w(y) = \delta(y)^{d+1-n}$. Note also that the same bound \eqref{Greenb} can be obtained when $n=2$ by using \eqref{GreenV0} with $\eta = \frac{2-q}{2}$. The left-hand side of \eqref{GreenZ} can be now bounded for every $n\geq 2$ by
\begin{equation} \label{Greenc} 
\int_{B_R} |\nabla \g^\rho|^q dm \leq Cs^q R^{d+1} + C \left( s^{q-\frac{d+1}{d}} - \delta(y)^{d(1-q)+1}\right) + C \delta(y)^{1+d(1-q)} =  Cs^q R^{d+1} + Cs^{q-\frac{d+1}{d}}, \end{equation}
where the third term in the middle is dominated by $s^{q - \frac{d+1}{d}}$ because $I_2 \geq 0$.
We take $s=R^{-d} \leq \delta(y)^{-d}$ in the right hand side of \eqref{Greenc} to get the claim \eqref{GreenY}.

\medskip

As we said, we want to define the Green function as a weak limit of functions $\g^\rho$, 
$0<\rho\leq \delta(y)/100$. 
We want to prove that for
$q \in (1,\frac{n}{n-1})$ and 
$R>0$, 
\begin{equation} \label{Grc1}
\|\g^\rho\|_{W^{1,q}(B_R)} \leq C_{q,R},  
\end{equation}
where $C_{q,R}$ is independent of $\rho$ (but depends, among others
things,  on $y$, $q$ and $R$).
First, it is enough to prove the result for $R \geq 2\delta(y)$. Thanks to \eqref{GreenY}, the quantity $\|\nabla \g^{\rho}\|_{L^q(B_R,w)}$ is bounded uniformly in $\rho \in (0,\delta(y)/100)$. 
Due to \eqref{L1byL1w}, the quantity $\|\nabla \g^{\rho}\|_{L^q(B_R)}$ is bounded uniformly in $\rho$. Now, due to \cite[Corollary 1.1.11]{Mazya11}, we deduce that $\g^{\rho_\eta} \in W^{1,q}(B_R)$ and hence with the classical Poincar\'e inequality on balls that
\begin{equation} \label{Grc2}
\int_{B_R} \Big| \g^\rho(z) - \fint_{B_R} \g^\rho(y)\, dy \Big|^q dz \leq C_{q,R} \|\nabla \g^{\rho}\|_{L^q(B_R)}^q 
\leq C_{q,R},  
\end{equation}
where $C_{q,R}>0$ is 
independent of $\rho$. 
Choose $y_0 \in \Gamma$ such that $|y-y_0| = \delta(y_0)$. 
Note that $B(y_0,\delta(y)/2) \subset B_R$ because $R \geq 2\delta(y)$,
so \eqref{Grc2} implies that
\begin{equation}\label{Grc2'}
\Big| \fint_{B(y_0,\delta(y)/2)} \g^\rho(z) - \fint_{B_R} \g^\rho(y)\, dy \Big|^q\, dz
\leq \int_{B_R} \Big| \g^\rho(z) - \fint_{B_R} \g^\rho(y)\, dy \Big|^q\, dz \leq C_{q,R}
\end{equation}
and hence also, by the triangle inequality, 
\begin{equation} \label{Grc3} 
\int_{B_R} \left| \g^\rho(z) - \fint_{B(y_0,\delta(y)/2)} \g^\rho(y)\, dy \right|^q \, dz
\leq C_{q,R} \int_{B_R} \left| \g^\rho(z) - \fint_{B_R} \g^\rho(y)\, dy \right|^q\, dz. 
\end{equation}
Together with \eqref{Grc2}, we obtain
\begin{equation} \label{Grc4}
\int_{B_R} \left| \g^\rho(z) \right|^q \, dz \leq C_{q,R} \left(1 + \fint_{{B(y_0,\delta(y)/2)}} |\g^\rho(y)|\, dy\right)^q  
\end{equation}
and since \eqref{GreenF} gives that $\fint_{{B(y_0,\delta(y)/2)}} |\g^\rho| \, dz\leq C \delta(y)^{1-d}$, the claim \eqref{Grc1} follows. 

Fix $q_0 \in (1,\frac{n}{n-1})$, for instance, take $q_0 = \frac{2n+1}{2n-1}$. Due to \eqref{Grc1}, for all $R>0$, the functions $(\g_\rho)_{0< 100\rho < \delta(y)}$ are uniformly bounded in $W^{1,q_0}(B_R)$. 
So a diagonal process allows us to find a sequence $(\rho_\eta)_{\eta \geq 1}$ converging to 0 and a function $\g \in L^1_{loc}(\R^n)$ such that
\begin{equation} \label{Grc5} 
\g^{\rho_\eta} \rightharpoonup \g=g(.,y) \ \text{in $W^{1,q_0}(B_R)$, for all $R>0$.} 
 \end{equation}
Let $q\in (1,\frac{n}{n-1})$ and $R>0$. The functions $\g^{\rho_\eta}$ are uniformly bounded in $W^{1,q}(B_R)$ thanks to \eqref{Grc1}. 
So we can find a subsequence $\g^{\rho_{\eta'}}$ of $\g^{\rho_\eta}$ such that $\g^{\rho_{\eta'}}$ converges weakly to some function $\g^{(q,R)} \in W^{1,q}(B_R)$. 
Yet, by uniqueness of the limit, $\g$ equals $\g^{(q,R)}$ almost everywhere in $B_R$. As a consequence, up to a subsequence (that depends on $q$ and $R$),
 \begin{equation} \label{Grc6} 
\g^{\rho_\eta} \rightharpoonup \g=g(.,y) \ \text{in $W^{1,q}(B_R)$}. 
 \end{equation}
The assertion \eqref{GreenE4} follows.

\medskip
We aim now to prove \eqref{GreenE1}, that is 
 \begin{equation} \label{Grc7} 
(1-\alpha) \g \in W_0 
 \end{equation}
 whenever $\alpha \in C_0^\infty(\R^n)$ satisfies $\alpha \equiv 1$ on $B_r$ for some $r>0$.
 
 So we choose $\alpha \in C_0^\infty(\R^n)$ and $r>0$ such that $\alpha \equiv 1$ on $B_r$. 
 Since $\alpha$ is compactly supported, we can find $R>0$ such that $\supp \, \alpha \subset B_R$.
 For any $\eta \in \bN$ such that
 $4 \rho_\eta \leq r$ 
and $100 \rho_\eta < \delta(y)$,
 \begin{equation} \label{Grc8} \begin{split}
\|(1-\alpha) \g^{\rho_\eta}\|_W & \leq \|\g^{\rho_\eta} \nabla \alpha\|_{L^2(B_R \setminus B_r,w)} 
+ \|(1-\alpha) \nabla g^{\rho_\eta}\|_{L^2(\Omega \setminus B_r,w)} \\
& \leq C_\alpha \sup_{B_R \setminus B_r} g^{\rho_\eta} 
+ C_\alpha \|\nabla g^{\rho_\eta}\|_{L^2(\Omega \setminus B_r,w)}.
 \end{split} \end{equation}
Thanks to \eqref{GreenF}, \eqref{GreenH} and \eqref{GreenH2}, the term 
$\sup_{B_R \setminus B_r} g^{\rho_\eta}$ can be bounded by a constant that doesn't depend on $\eta$, 
provided that $\rho_\eta \leq \min(r/4,\delta(y)/100)$.
In the same way, \eqref{GreenW} proves that $\|\nabla g^{\rho_\eta}\|_{L^2(\Omega \setminus B_r,w)}$ 
can be also bounded by a constant independent of $\eta$. 
As a consequence, for any $\eta$ satisfying $4\rho_\eta \leq r$,
 \begin{equation} \label{Grc9} 
\|(1-\alpha) \g^{\rho_\eta}\|_W \leq C_{\alpha} 
\end{equation}
where $C_\alpha$ is independent of $\eta$. 
Note also that for $\eta$ large enough, $(1-\alpha) \g^{\rho_\eta}$ belongs to $W_0$
because $\g^{\rho_\eta} \in W_0$ by construction, and by Lemma \ref{lmult}.
Therefore, the functions  
$(1-\alpha) \g^{\rho_\eta}$, $\eta \in \NN$ large,  lie in a fixed closed ball of the Hilbert space $W_0$. 
So, up to a subsequence, there exists $f_\alpha \in W_0$ such that $(1-\alpha) \g^{\rho_\eta} \rightharpoonup f_\alpha$ in $W_0$.  By uniqueness of the limit, we have $(1-\alpha) \g = f_\alpha \in W_0$, that is 
 \begin{equation} \label{Grc0} 
(1-\alpha) \g^{\rho_\eta} \rightharpoonup (1-\alpha) \g \quad \text{ in } W_0.
 \end{equation}
The claim \eqref{Grc7} follows.
 
Observe 
that \eqref{Grc7} implies that $\g \in \WW(\R^n \setminus \{y\})$. Indeed, take $\varphi \in C^\infty_0(\R^n \setminus \{y\})$. We can find $r>0$ such that $\varphi \equiv 0$ in $B_r$. Construct now $\alpha \in C^\infty_0(B_r)$ such that $\alpha \equiv 1$ in $B_{r/2}$ and we have
  \begin{equation} \label{GrcA} 
\varphi \g = \varphi [(1-\alpha) \g] \in W_0 \subset W
 \end{equation}
by
\eqref{Grc7} and Lemma \ref{lmult}. Hence $\g \in \WW(\R^n \setminus \{y\})$.

\medskip
Now we want to prove \eqref{GreenE6}. Fix 
$q\in (1,n/(n-1))$ and a function $\phi \in C^\infty_0(B_{\delta(y)/2})$ such that
$\phi \equiv 1$ in $B_{\delta(y)/4}$. Then let 
$\varphi$ be any function in $C^\infty_0(\Omega)$. Let us first check that
 \begin{equation} \label{Greenz1} 
a(\g,\phi \varphi) :=  \int_\Omega A \nabla \g \cdot \nabla [\phi \varphi] dx = \varphi(y)
 \end{equation}
 and
 \begin{equation} \label{Greenz2} 
 a(\g, (1-\phi)\varphi) : = \int_\Omega A \nabla \g \cdot \nabla [(1-\phi) \varphi] dx = 0.
 \end{equation}
The map $a(.,\phi \varphi)$ is a bounded linear functional on $W^{1,q}(B_{\delta(y)/2})$ and thus the weak convergence (in $W^{1,q}(B_R)$) of a subsequence $\g^{\rho_{\eta'}}$  of $\g^{\rho_{\eta}}$ yields
 \begin{equation} \label{Greend} 
a(\g, \phi \varphi) 
= \lim_{\eta' \to +\infty} a(\g^{\rho_{\eta'}},\phi \varphi) =  \lim_{\rho\to 0} \fint_{B(y,\rho)} \phi \varphi\, dx = \varphi(y),
 \end{equation}
which is \eqref{Greenz1}. Let $\alpha \in C^\infty_0(B_{\delta(y)/4})$ 
be such that
$\alpha \equiv 1$ on $B_{\delta(y)/8}$. The map $a(.,(1-\phi)\varphi)$ is bounded on $W_0$ thus the weak convergence of a subsequence of $(1-\alpha) \g^{\rho_{\eta}}$ to $(1-\alpha) \g$ in $W_0$ gives
 \begin{equation} \label{Greendbis} \begin{split}
a(\g, (1-\phi) \varphi) & = a((1-\alpha)\g,(1-\phi)\varphi) \\
& = \lim_{\eta' \to +\infty} a((1-\alpha)\g^{\rho_{\eta'}},(1-\phi) \varphi) = \lim_{\eta' \to +\infty} a(\g^{\rho_{\eta'}},(1-\phi) \varphi) \\
& =  \lim_{\rho\to 0} \fint_{B(y,\rho)} (1-\phi) \varphi \, dx= 0.
\end{split} \end{equation}
 which is \eqref{Greenz2}. The assertion \eqref{GreenE6} 
now follows from \eqref{Greenz1} and \eqref{Greenz2}. 
 
If we use \eqref{GreenE6} for the functions in $C^\infty_0(\Omega \setminus \{y\})$, 
we immediately obtain that
  \begin{equation} \label{Greenf} 
\g \text{ is a solution of $L\g=0$ on $\Omega \setminus \{y\}$.}
 \end{equation}
 
 \medskip
 
Assertions \eqref{GreenE2} and \eqref{GreenE5} come from the weak lower semicontinuity of the 
$L^q$-norms and the bounds \eqref{GreenS}, \eqref{GreenW} and \eqref{GreenY}. 
Notice also that $r^{1-d} \approx \frac{r^{2-n}}{w(y)}$ when $r$ is near $\delta(y)$, so the cut-off
between the different cases does not need to be so precise.
Let us show \eqref{GreenE7}. Let $R>0$ be a big given number. 
We have shown that the sequence $\g^{\rho_\eta}$ is uniformly bounded in $W^{1,q}(B_R)$. Then, 
by the 
Rellich-Kondrachov theorem, there exists a subsequence of $\g^{\rho_\eta}$ that also
converges strongly in $L^1(B_R)$ and then another subsequence of $\g^{\rho_\eta}$ that converges almost everywhere in $B_R$. 
The estimates \eqref{GreenF}, \eqref{GreenH} and \eqref{GreenH2} yield then
  \begin{equation} \label{Greeng} 
0 \leq \g(x) \leq \left\{\begin{array}{ll} C|x-y|^{1-d} & \text{ if } 4|x-y| \geq \delta(y) \\ 
\frac{C|x-y|^{2-n}}{w(y)} & \text{ if } 2|x-y| \leq \delta(y), \, n\geq 3 \\
\frac{C_\epsilon}{w(y)} \left(\frac{\delta(y)}{|x-y|} \right)^\epsilon & \text{ if } 2|x-y| \leq \delta(y), \, n=2,\end{array} \right. \quad \text{ a.e. on } B_R.
 \end{equation}
But by \eqref{Greenf} $\g$ is a solution of $L\g = 0$ on $\Omega\setminus \{y\}$, so it
is continuous on $\R^n \setminus \{y\}$ by Lemmas~\ref{HolderI} and \ref{HolderB}, and the bounds
\eqref{Greeng} actually hold pointwise in $\Omega \cap B_R \setminus \{y\}$. 
Since $R$ can be chosen as large as we want, the bounds \eqref{GreenE7} follow.

\medskip

It remains to check the weak estimates \eqref{GreenE2w} and \eqref{GreenE5w}. 
Set $q= \frac{2n+1}{2n-1}$, which satisfies $1 < q < \frac{n}{n-1} < \frac{n}{n-2}$. 
Let $t>0$ be given ; by the weak
lower semicontinuity of the $L^q$-norm,
\begin{equation} \label{Greenh}
t^q \, \frac{m(\{x \in B_R, \, \g(x) >t)}{m(B_R)} \leq \frac{1}{m(B_R)}\|\g\|_{L^q(B_R,w)}^q 
\leq \liminf_{\eta \to +\infty} \frac{1}{m(B_R)}\|\g^{\rho_\eta}\|_{L^q(B_R,w)}^q.
\end{equation}
Let us use \cite[p. 28, Proposition 2.3]{Duoandi}; in the case of \eqref{GreenE2w}, we could manage otherwise,
but we also want to get \eqref{GreenE5w} with the same proof. We observe that
\begin{eqnarray} \label{Greeni} 
t^q \frac{m(\{x \in B_R, \, \g(x) >t \})}{m(B_R)} 
&\leq& \liminf_{\eta \to +\infty} \left[ \int_0^t s^{q-1} \frac{m(\{x \in B_R, \, \g(x) >t, \, \g^{\rho_\eta} >s)\}}{m(B_R)} ds \right. 
\nn\\
&\,& \qquad \qquad \qquad \left. 
+ \int_t^{+\infty} s^{q-1} \frac{m(\{x \in B_R, \, \g(x) >t, \, \g^{\rho_\eta} >s\})}{m(B_R)} ds \right] 
\nn\\
&\leq& \frac{t^q}{q} \frac{m(\{x \in B_R, \, \g(x) >t\})}{m(B_R)} \\
&\,& \qquad \qquad \qquad 
+ \liminf_{\eta \to +\infty} \int_t^{+\infty} s^{q-1} \frac{m(\{x \in B_R, \g^{\rho_\eta} >s\})}{m(B_R)} ds.
\nn 
\end{eqnarray}
Let $p$ lie in the range given by Lemma \ref{lSob}. The bounds \eqref{Green8} gives
\begin{equation} \label{Greenj} \begin{split}
t^q \frac{m(\{x \in B_R, \, \g(x) >t \})}{m(B_R)} 
& \leq C \liminf_{\eta \to +\infty} \int_t^{+\infty} s^{q-1} \frac{m(\{x \in B_R, \g^{\rho_\eta} >s\})}{m(B_R)} ds \\
&\leq C_p R^{\frac p2 (1-d)}
\int_t^{+\infty} s^{q-1-\frac p2} ds 
\leq C_p R^{\frac p2 (1-d)} t^{q-\frac p2}.
\end{split} \end{equation}
The estimates \eqref{GreenE2w} follows by dividing both sides of \eqref{Greenj} by $t^q$. The same ideas are used to prove \eqref{GreenE5w} from \eqref{GreenQ}, \eqref{GreenV} and \eqref{GreenV0}. 
This finally completes the proof of Lemma \ref{GreenEx}.
\ep 

\medskip
\begin{lemma} \label{GreenLB}
Any non-negative function $g: \, \Omega \times \Omega \to \R \cup \{+\infty\}$ 
that verifies
the following conditions:
\begin{enumerate}[(i)]
\item for every 
$y\in \Omega$ and 
$\alpha \in C^\infty_0(\R^n)$ such that
$\alpha \equiv 1$ in $B(y,r)$ for some $r>0$, the function $(1-\alpha)g(.,y)$ lies 
in $W_0$,
\item for every 
$y\in \Omega$, the function $g(.,y)$ lies 
in $W^{1,1}(B(y,\delta(y)))$,
\item for 
$y\in \Omega$ and
$\varphi \in C^\infty_0(\Omega)$,
\begin{equation} \label{GreenLB1}
\int_\Omega A \nabla_x g(x,y)\cdot \nabla \varphi(x) dx = \varphi(y),
\end{equation}
\end{enumerate}
enjoys the following pointwise lower bound: 
\begin{equation} \label{GreenLB2}
g(x,y) \geq C^{-1} \frac{|x-y|^{2}}{m(B(y,|x-y|))} \approx \frac{|x-y|^{2-n}}{w(y)}
\ \text{ for $x,y\in \Omega$ such that } 0 < |x-y| \leq \frac{\delta(y)}{2}.
\end{equation}
\end{lemma}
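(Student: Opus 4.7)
The plan is to combine the distributional identity \eqref{GreenLB1} with a Cauchy--Schwarz energy estimate, the interior Caccioppoli inequality, and an interior Harnack chain, all carried out on scales inside $B(y,\delta(y)) \subset \Omega$ where $w$ is comparable to $w(y)$. Fix $y\in\Omega$ and $x\in\Omega$ with $r:=|x-y|\leq \delta(y)/2$. Then every $z\in B(y,3r/2)$ satisfies $\delta(z)\in [\delta(y)/4, 7\delta(y)/4]$, so $w(z)\approx w(y)$ and $m(B(y,\rho))\approx \rho^n w(y)$ for every $\rho\leq 3r/2$; in particular $m(B(y,r))\approx r^n w(y)$. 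Moreover, testing \eqref{GreenLB1} against arbitrary $\varphi\in C_0^\infty(\Omega\setminus\{y\})$ shows that $g(\cdot,y)\in \WW(\Omega\setminus\{y\})$ is a non-negative solution of $Lu=0$ on $\Omega\setminus\{y\}$.

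First I would extract a lower bound for the energy of $g(\cdot,y)$ on a small annulus. Pick $\varphi\in C_0^\infty(B(y,r))$ with $\varphi\equiv 1$ on $B(y,r/2)$ and $|\nabla\varphi|\leq Cr^{-1}$, so that $\varphi(y)=1$ and $\nabla\varphi$ is supported in $A_1 := B(y,r)\setminus B(y,r/2)$. Then \eqref{GreenLB1}, the boundedness $|A(z)\xi\cdot\zeta| \leq C_1 w(z)|\xi||\zeta|$, and Cauchy--Schwarz give
\begin{equation*}
1 \;=\; \int_{A_1} A\nabla_x g(x,y)\cdot\nabla\varphi(x)\, dx \;\leq\; C\Bigl(\int_{A_1}|\nabla_x g|^2\, dm\Bigr)^{1/2} \Bigl(\int_{A_1}|\nabla\varphi|^2\, dm\Bigr)^{1/2}.
\end{equation*}
Since $\int_{A_1}|\nabla\varphi|^2\, dm \leq Cr^{-2} m(A_1) \leq Cr^{n-2} w(y)$, this yields $\int_{A_1}|\nabla_x g|^2\, dm \geq C^{-1} r^{2-n}/w(y)$.

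Next I would turn this into a pointwise lower bound at $x$. Apply Lemma~\ref{CaccioI} to the non-negative subsolution $g(\cdot,y)$ on the open annulus $V:=B(y,3r/2)\setminus \overline{B(y,r/4)} \subset \Omega\setminus\{y\}$, with a cutoff $\eta\in C_0^\infty(V)$ equal to $1$ on $\overline{A_1}$ and satisfying $|\nabla\eta|\leq Cr^{-1}$; letting $K:=\overline{B(y,5r/4)}\setminus B(y,3r/8)\supset \supp\,\eta$, this gives $\int_{A_1}|\nabla_x g|^2\, dm \leq Cr^{-2}\int_K g(\cdot,y)^2\, dm$. Combined with the previous step and $m(K)\approx r^n w(y)$, the $dm$-average of $g(\cdot,y)^2$ over $K$ is at least $C^{-1}(r^{2-n}/w(y))^2$, so $\sup_K g(\cdot,y)\geq C^{-1} r^{2-n}/w(y)$. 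Finally, $K$ is a compact connected subset of the open set $V$ on which $g(\cdot,y)$ is a non-negative solution, and $K$ can be covered by a bounded (uniform in $r$) number of balls $B_i$ of radius $\approx r$ with $3B_i\subset V$; chaining the interior Harnack inequality (Lemma~\ref{HarnackI}) across these balls gives $\sup_K g(\cdot,y) \leq C \inf_K g(\cdot,y)$ with a constant depending only on $n$, $d$, $C_0$, $C_1$. Since $3r/8<r<5r/4$ puts $x$ in $K$, we conclude $g(x,y)\geq C^{-1} r^{2-n}/w(y) \approx |x-y|^2/m(B(y,|x-y|))$ by Lemma~\ref{lwest}, which is \eqref{GreenLB2}. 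The main technical point is the bookkeeping of the four nested annular scales (for $\varphi$, $\eta$, the compact $K$, and the open solution domain $V$), chosen so that Caccioppoli and Harnack both apply with universal constants; this is possible because the entire construction lives at the single scale $r$ and is quantitatively separated from both $y$ and $\Gamma$.
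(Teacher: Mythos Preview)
Your proof is correct and follows essentially the same approach as the paper: test the identity \eqref{GreenLB1} against a cutoff supported in $B(y,r)$ to produce a lower bound for the gradient energy on the annulus, use the interior Caccioppoli inequality (Lemma~\ref{CaccioI}) to convert this into a lower bound for $\sup g(\cdot,y)$ on a concentric annulus, and then invoke the interior Harnack inequality to pass to the value at $x$. The paper merely arranges the same three ingredients in a slightly different order (Caccioppoli and Harnack first, then the test function), but the argument and the constants are the same.
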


\bp
Let $g$ satisfy the assumptions of the lemma, fix $y\in \Omega$, write $\g(x)$ for $g(x,y)$, and 
use $B_r$ for $B(y,r)$. 
Thus we want to prove that
\begin{equation} \label{Greenj2}
\g(x) \geq \frac{|x-y|^{2}}{Cm(B_{|x-y|})} \qquad \text{ whenever } 0 < |x-y| \leq \frac{\delta(y)}{2}.
\end{equation}
With our assumptions, $\g \in \WW(\R^n\setminus \{y\})$ and it is 
a solution in $\Omega \setminus \{y\}$ with zero trace;
the proof is the same as for \eqref{GrcA} and \eqref{Greenf} in Lemma \ref{GreenEx}.
Take $x\in \Omega \setminus \{y\}$ such that $|x-y|\leq \frac{\delta(y)}2$. 
Write $r$ for $|x-y|$ and let $\alpha \in C^\infty_0(\Omega \setminus \{y\})$ be
such that $\alpha = 1$ on $B_{r} \setminus B_{r/2}$, $\alpha = 0$
outside of  $B_{3r/2} \setminus B_{r/4}$,
and $|\nabla \alpha| \leq 8/r$. Using Caccioppoli's
inequality (Lemma~\ref{CaccioI}) with the cut-off function $\alpha$, we obtain
\begin{equation} \label{Greenk} \begin{split}
\int_{B_{r} \setminus B_{r/2}} |\nabla \g|^2 dm & \leq Cr^{-2} \int_{B_{3r/2} \setminus B_{r/4}} \g^2 dm \\
&  \leq Cr^{-2} m(B_{3r/2}) \sup_{B_{3r/2} \setminus B_{r/4}} \g^2 \leq Cr^{-2} m(B_{r}) \sup_{B_{3r/2} \setminus B_{r/4}} \g^2
\end{split} \end{equation} 
by
the doubling property \eqref{doublinggen}. We can cover $B_{3r/2} \setminus B_{r/4}$ by a finite 
(independent of $y$ and $r$) number of balls of radius $r/20$ centered in $B_{3r/2} \setminus B_{r/4}$. 
Then use the Harnack inequality given by Lemma~\ref{HarnackI} several times, to get that
\begin{equation} \label{Greenl} 
\int_{B_{r} \setminus B_{r/2}} |\nabla \g|^2 dm \leq C r^{-2}m(B_r) \g(x)^2.
\end{equation} 
Define 
another function $\eta \in C^\infty_0(\Omega)$ which 
is supported in $B_r$, equal to $1$ on $B_{r/2}$, and such that
$|\nabla \eta| \leq \frac4r$. 
Use $\eta$ as a test function in \eqref{GreenLB1} to get that
\begin{equation} \label{Greenm} 
1 = \int_{B_{r} \setminus B_{r/2}} \A \nabla \g \cdot \nabla \eta \,  dm 
\leq \frac Cr \int_{B_{r} \setminus B_{r/2}} |\nabla \g| \, dm,
\end{equation} 
where we used \eqref{ABounded} for the last estimate. 
Together with the Cauchy-Schwarz inequality and \eqref{Greenl}, this yields
\begin{equation} \label{Greenn} \begin{split}
1 & \leq \frac Cr \, m(B_r)^{\frac12} 
\Big( \int_{B_{r} \setminus B_{r/2}} |\nabla \g|^2 \, dm \Big)^\frac12 
\leq C r^{-2} m(B_r) \g(x) . 
\end{split} \end{equation} 
The lower bound \eqref{Greenj2} follows.
\ep

In the sequel, $A^T$ denotes the transpose matrix of $A$, 
defined by $A^T_{ij}(x) = A_{ji}(x)$ for $x\in \Omega$ and $1\leq i,j\leq n$. Thus
$A^T$ satisfies the same boundedness and elliptic conditions as $A$.
That is, it 
satisfies \eqref{ABounded2} and \eqref{AElliptic2} with the same constant $C_1$. 
We can thus define solutions to $L_T u := - \diver A^T \nabla u = 0$ for which the results given in Section \ref{Ssolutions} hold. 

Denote by $g: \Omega \times \Omega \to \R \cup \{+\infty\}$ the Green function 
defined in Lemma \ref{GreenEx}, and by  
$g^T: \Omega \times \Omega \to \R \cup \{+\infty\}$ 
the Green function defined in Lemma \ref{GreenEx}, but with 
$A$ is replaced by $A^T$.

\begin{lemma} \label{GreenSym}
With the notation above, 
\begin{equation} \label{GreenS1}
g(x,y) = g_T(y,x) \  \text{ for }
x,y\in \Omega, \, x\neq y.
\end{equation}
In particular, the functions $y\to g(x,y)$ satisfy 
the estimates given in Lemma \ref{GreenEx} and Lemma \ref{GreenLB}. 
\end{lemma}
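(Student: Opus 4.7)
The plan is to exploit the defining relation \eqref{Green3} for the approximating functions $\g^\rho = g^\rho(\cdot,y)$ built in the proof of Lemma~\ref{GreenEx}, together with the analogous approximants $\g^\sigma_T := g^\sigma_T(\cdot,x)$ for the transpose operator $L_T$, and then pass to the limit. Fix $x, y \in \Omega$ with $x \neq y$, and take $\rho, \sigma > 0$ so small that $B(y,100\rho)$ and $B(x,100\sigma)$ are disjoint subsets of $\Omega$. Both $\g^\rho$ and $\g^\sigma_T$ lie in $W_0$, so each is an admissible test function in the defining equation \eqref{Green3} for the other. Testing $\g^\rho$ against $\g^\sigma_T$ and conversely, and using the elementary identity $A(z)\xi\cdot\zeta = A^T(z)\zeta\cdot\xi$ (which makes the two bilinear forms transpose to each other), one obtains the symmetric identity
\begin{equation}\label{gsym1}
\fint_{B(y,\rho)} g^\sigma_T(z,x)\, dz \;=\; \int_\Omega A\nabla\g^\rho \cdot \nabla\g^\sigma_T \, dz \;=\; \fint_{B(x,\sigma)} g^\rho(z,y)\, dz.
\end{equation}

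Next I would pass $\sigma \to 0$ in \eqref{gsym1} along the subsequence $\sigma_\mu$ from the proof of Lemma~\ref{GreenEx} for which $\g^{\sigma_\mu}_T \to g_T(\cdot,x)$ in $L^1_{loc}(\R^n)$ (and weakly in $W^{1,q}_{loc}$). The left-hand side converges to $\fint_{B(y,\rho)} g_T(z,x)\, dz$ simply by $L^1_{loc}$-convergence on the fixed ball $B(y,\rho)$. For the right-hand side, for each fixed $\rho$ the function $g^\rho(\cdot,y)$ is a solution of $Lu=0$ on $\Omega\setminus\overline{B(y,\rho)}$ with zero trace on $\Gamma$, hence it is continuous at $x$ by Lemmata~\ref{HolderI} and \ref{HolderB}; therefore $\fint_{B(x,\sigma_\mu)} g^\rho(z,y)\, dz \to g^\rho(x,y)$. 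This yields
\begin{equation}\label{gsym2}
\fint_{B(y,\rho)} g_T(z,x)\, dz \;=\; g^\rho(x,y).
\end{equation}

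Now I would let $\rho\to 0$ along the subsequence $\rho_\eta$ giving $\g^{\rho_\eta}\to g(\cdot,y)$ in $L^1_{loc}$. On the left of \eqref{gsym2}, $g_T(\cdot,x)$ is continuous at $y\neq x$ (again by Lemma~\ref{GreenEx}(iii) and the interior/boundary H\"older estimates), so $\fint_{B(y,\rho_\eta)} g_T(z,x)\, dz \to g_T(y,x)$. For the right-hand side I need pointwise convergence $g^{\rho_\eta}(x,y)\to g(x,y)$. This is where the only real care is required: the family $\{g^\rho(\cdot,y)\}$ is uniformly bounded on a fixed neighborhood $U$ of $x$ avoiding $y$ thanks to the pointwise estimate \eqref{GreenF} (which was proved directly for each $\g^\rho$), and the functions are solutions with zero trace on $\Gamma\cap U$, so Lemmata~\ref{HolderI}--\ref{HolderB} give a uniform H\"older estimate on a smaller neighborhood $U'\ni x$. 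Thus $\{g^{\rho_\eta}(\cdot,y)\}$ is equicontinuous and bounded on $U'$; by Arzel\`a--Ascoli every subsequence has a uniformly convergent sub-subsequence, whose limit must coincide with $g(\cdot,y)$ by the $L^1_{loc}$-convergence. Hence $g^{\rho_\eta}(x,y)\to g(x,y)$, and \eqref{gsym2} becomes $g_T(y,x) = g(x,y)$, which is \eqref{GreenS1}.

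The main (minor) obstacle is precisely this last equicontinuity argument for the pointwise convergence of the approximants at a point of $\Omega$ away from the pole; everything else (the symmetry of $a$ and $a_T$, admissibility of the test functions, continuity of $g_T(\cdot,x)$ and of each $g^\rho(\cdot,y)$ away from their poles) is a direct consequence of results already established. The final assertion of the lemma, that $y\mapsto g(x,y)$ inherits the estimates of Lemmata~\ref{GreenEx} and \ref{GreenLB}, is then immediate from \eqref{GreenS1} applied to the transpose operator.
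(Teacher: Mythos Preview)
Your proof is correct and follows essentially the same route as the paper: test the approximants $\g^\rho$ and $\g^\sigma_T$ against each other to obtain the symmetric identity, then pass to the limit first in $\sigma$ (using $L^1_{loc}$-convergence on one side and continuity of $\g^\rho(\cdot,y)$ at $x$ on the other), then in $\rho$ (using continuity of $g_T(\cdot,x)$ at $y$, and uniform H\"older continuity of the $\g^{\rho_\eta}(\cdot,y)$ near $x$ to upgrade the $L^1_{loc}$-convergence to pointwise convergence at $x$). The only cosmetic remark is that your citation of \eqref{GreenF} for uniform boundedness near $x$ covers only the case $4|x-y|\geq\delta(y)$; when $x$ is close to $y$ relative to $\delta(y)$ you would also cite \eqref{GreenH}--\eqref{GreenH2}, but this does not affect the argument.
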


\bp
The proof is the same as for
\cite[Theorem 1.3]{GW}. Let us review
it for completeness. Let $x,y\in \Omega$ be such that $x\neq y$. 
Set $B = B(\frac{x+y}{2}, |x-y|)$ and let $q\in (1,\frac{n}{n-1})$.

From the construction given in Lemma~\ref{GreenEx} 
(see \eqref{Grc6} in particular),
there exists two sequences $(\rho_\nu)_\nu$ and $(\sigma_\mu)_\mu$ converging to 0 such that 
$g^{\rho_\nu}(.,y)$ and $g_T^{\sigma_\mu}(.,x)$ converge weakly 
in $W^{1,q}(B)$ to $g(.,y)$ and $g_T(.,x)$ respectively.
So, up to 
 additional subsequence extractions,
$g^{\rho_\nu}(.,y)$ and $g_T^{\sigma_\mu}(.,x)$ converge to 
$g(.,y)$ and $g_T(.,x)$, strongly in $L^1(B)$, and then
pointwise a.e. in $B$.

Inserting them as test functions in \eqref{Green3}, we obtain
\begin{equation} \label{GreenS2}
\int_\Omega A \nabla g^{\rho_\nu}(z,y) \cdot \nabla g_T^{\sigma_\mu}(z,x) dz 
= \fint_{B(y,\rho_\nu)} g_T^{\sigma_\mu}(z,x) dz  
= \fint_{B(x,\sigma_\mu)} g^{\rho_\nu}(z,y) dz.
\end{equation}
We want to let 
$\sigma_\mu$ tend to 0. The term $\fint_{B(y,\rho_\nu)} g_T^{\sigma_\mu}(z,x) dz$ 
tends to 
$\fint_{B(y,\rho_\nu)} g_T(z,x) dz$ because 
$g_T(.,x)^{\sigma_\mu}$ tends to $g_T(.,x)$ in $L^1(B)$. 
When $\rho_\nu$ is small enough, the function $g^{\rho_\nu}(.,y)$ is a solution of 
$L\g^{\rho_\nu}=0$ in $\Omega \setminus \overline{B(y,\rho_\nu)} \ni x$, so it 
is continuous at $x$ thanks to Lemma~\ref{HolderI}. Therefore, the term $\fint_{B(x,\sigma_\mu)} g^{\rho_\nu}(z,y) dz$ tends to $g^{\rho_\nu}(x,y)$. We deduce, when $\nu$ is big enough so that $\rho_\nu < |x-y|$,
\begin{equation} \label{GreenS3}
\fint_{B(y,\rho_\nu)} g_T(z,x) dz = g^{\rho_\nu}(x,y).
\end{equation}
Now let $\rho_\nu$ tend to $0$
in \eqref{GreenS3}. The function $g_T(.,x)$ is a solution 
for $L_T$  
in $\Omega \setminus \{x\}$, so it is continuous on $B(y,\rho_\nu)$ for $\nu$ large. Hence
the left-hand side of \eqref{GreenS3} converges to $g_T(y,x)$. 
Thanks to Lemma~\ref{HolderI}, the functions $g^{\rho_\nu}(.,y)$ are uniformly H\"older continuous, so the a.e. pointwise convergence of $g^{\rho_\nu}(.,y)$ 
to $g(.,y)$ can
be improved into a uniform convergence on $B(x,\frac13 |x-y|)$. In particular $g^{\rho_\nu}(x,y)$ tends to $g(x,y)$ when $\rho_\nu$ goes to 0. 
We get that $g_T(y,x) = g(x,y)$,
which is the desired conclusion.
\ep

\smallskip
\begin{lemma} \label{GreenFS} Let $g: \Omega \times \Omega \to \R \cup \{+\infty\}$ 
be the non-negative function constructed in Lemma~\ref{GreenEx}. 
Then for any $f \in C^\infty_0(\Omega)$, the function $u$ defined by
\begin{equation} \label{GreenFS1}
u(x) = \int g(x,y) f(y) dy 
\end{equation}
belongs to $W_0$ and is a solution of $Lu = f$ in the sense that
\begin{equation} \label{GreenFS2}
\int_\Omega A\nabla u \cdot \nabla \varphi  \, dx= \int_\Omega \A\nabla u \cdot \nabla \varphi \, dm 
=  \int_\Omega f \varphi \ \ \text{ for every }
\varphi \in W_0. 
\end{equation}
\end{lemma}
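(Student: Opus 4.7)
The plan is to identify $u$ with the Lax-Milgram solution provided by Lemma~\ref{lLM}. Since $f \in C^\infty_0(\Omega)$, its support $K$ is compact in $\Omega$ with $d_0 := \dist(K,\Gamma) > 0$. The pointwise bounds \eqref{GreenE7} on $g$ show that the integral defining $u(x)$ converges absolutely for every $x \in \Omega$. Next, the functional $\varphi \mapsto \int f\varphi$ is bounded on $W_0$: for $\varphi \in W_0$ and $B$ a ball centered on $\Gamma$ containing $K$, the Sobolev-Poincar\'e inequality \eqref{1.5-bisbis} combined with $w$ being bounded on $K$ gives $\int_K |\varphi| \leq C_f \|\varphi\|_W$. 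Lemma~\ref{lLM} then produces a unique $v \in W_0$ with $a(v,\varphi) = \int f\varphi$ for all $\varphi \in W_0$. By Lemmata~\ref{HolderI} and \ref{HolderB}, $v$ has a continuous representative on $\Omega$, and standard interior regularity makes $v$ smooth near each point of $\Omega$ (since $f$ is smooth). The plan is to show $u(x_0) = v(x_0)$ for every $x_0 \in \Omega$; this will place $u$ in $W_0$ and give the desired equation for free.

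Fix $x_0 \in \Omega$ and choose $0 < \epsilon < \delta(x_0)/4$. Pick $\alpha_\epsilon \in C^\infty_0(B(x_0,2\epsilon))$ with $\alpha_\epsilon \equiv 1$ on $B(x_0,\epsilon)$ and $|\nabla \alpha_\epsilon| \leq 4/\epsilon$. Set $h_\epsilon(y) := (1-\alpha_\epsilon(y))\,g(x_0,y)$; by Lemma~\ref{GreenSym} and property (i) of Lemma~\ref{GreenEx} applied to $g_T$, we have $h_\epsilon \in W_0$. Using $h_\epsilon$ as a test function in the weak equation for $v$ yields $a(v,h_\epsilon) = \int f h_\epsilon$. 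The right-hand side tends to $\int f(y)g(x_0,y)\,dy = u(x_0)$ by dominated convergence. Expanding the left-hand side with the product rule,
\[
a(v,h_\epsilon) = \underbrace{\int (1-\alpha_\epsilon)\, A\nabla v \cdot \nabla_y g(x_0,y)\,dy}_{=:\,I_1^\epsilon} \;-\; \underbrace{\int g(x_0,y)\, A\nabla v \cdot \nabla \alpha_\epsilon\,dy}_{=:\,I_2^\epsilon}.
\]
For $I_2^\epsilon$: since $v$ is smooth at $x_0$, $\nabla v$ is bounded on a neighborhood of $x_0$; combined with $|A\nabla v| \leq C w(y) |\nabla v|$, $w(y) \sim w(x_0)$ on $A_\epsilon := B(x_0,2\epsilon) \setminus B(x_0,\epsilon)$, the bound $g(x_0,y) \leq C|y-x_0|^{2-n}/w(x_0)$ from \eqref{GreenE7} (with analogous bounds when $n=2$), and $|\nabla \alpha_\epsilon| \leq 4/\epsilon$, a direct estimate gives $|I_2^\epsilon| = O(\epsilon)$.

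The key manipulation is for $I_1^\epsilon$. Approximate $v$ by $v_k \in C^\infty_0(\Omega)$ with $v_k \to v$ in $W_0$ (Lemma~\ref{ldens0}); by \eqref{GreenE2} applied to $g_T$, $(1-\alpha_\epsilon)\nabla_y g(x_0,\cdot) \in L^2(\Omega,w)$, hence $I_1^\epsilon[v_k] \to I_1^\epsilon[v]$. Apply the Green function identity \eqref{GreenE6} for $g_T$ (via Lemma~\ref{GreenSym}) twice: with test function $v_k$ giving $\int A\nabla v_k \cdot \nabla_y g(x_0,y)\,dy = v_k(x_0)$, and with test function $\alpha_\epsilon v_k \in C^\infty_0(\Omega)$ giving $\int A\nabla(\alpha_\epsilon v_k) \cdot \nabla_y g(x_0,y)\,dy = (\alpha_\epsilon v_k)(x_0) = v_k(x_0)$. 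Subtracting and using $\nabla(\alpha_\epsilon v_k) = v_k \nabla \alpha_\epsilon + \alpha_\epsilon \nabla v_k$ yields the clean identity
\[
I_1^\epsilon[v_k] = \int v_k\, A\nabla\alpha_\epsilon \cdot \nabla_y g(x_0,y)\,dy,
\]
and for fixed $\epsilon$ the convergence $v_k \to v$ in $L^2(A_\epsilon)$ (from the Poincar\'e inequality and $w$ bounded on $A_\epsilon$) together with $A\nabla\alpha_\epsilon \cdot \nabla_y g(x_0,\cdot) \in L^2(A_\epsilon)$ passes the identity to $v$: $I_1^\epsilon[v] = \int v\, A\nabla\alpha_\epsilon \cdot \nabla_y g(x_0,y)\,dy$.

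The final step is to let $\epsilon \to 0$. Applying \eqref{GreenE6} directly with the test function $\alpha_\epsilon \in C^\infty_0(\Omega)$ gives $\int A\nabla\alpha_\epsilon \cdot \nabla_y g(x_0,y)\,dy = \alpha_\epsilon(x_0) = 1$. Therefore
\[
I_1^\epsilon[v] - v(x_0) = \int_{A_\epsilon} \bigl[v(y) - v(x_0)\bigr]\, A\nabla\alpha_\epsilon \cdot \nabla_y g(x_0,y)\,dy,
\]
and since $v$ is continuous at $x_0$ while $\|A\nabla\alpha_\epsilon \cdot \nabla_y g(x_0,\cdot)\|_{L^1(A_\epsilon)}$ remains bounded uniformly in $\epsilon$ (by the same Cauchy--Schwarz estimate used for $I_2^\epsilon$, combining \eqref{GreenE2} with the pointwise bound on $g$), the difference tends to $0$. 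Combining, $a(v,h_\epsilon) = I_1^\epsilon - I_2^\epsilon \to v(x_0)$ while the right-hand side tends to $u(x_0)$, so $v(x_0) = u(x_0)$. This holds for every $x_0 \in \Omega$, whence $u = v$ pointwise. Thus $u \in W_0$ and automatically satisfies \eqref{GreenFS2} from the definition of $v$. The main obstacle is the delicate passage to the limit in $I_1^\epsilon$ and $I_2^\epsilon$, which requires carefully combining the pointwise estimates \eqref{GreenE7} with the gradient estimates \eqref{GreenE2} near the singularity of the Green function and with the density of $C^\infty_0(\Omega)$ in $W_0$.
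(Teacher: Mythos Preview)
Your overall strategy is sound and is a genuine alternative to the paper's proof, but there is one real gap. You assert that ``standard interior regularity makes $v$ smooth near each point of $\Omega$ (since $f$ is smooth)'' and then use boundedness of $\nabla v$ to show $|I_2^\epsilon| = O(\epsilon)$. This is false: the matrix $A$ has merely bounded measurable coefficients, so De~Giorgi--Nash--Moser gives only H\"older continuity of $v$, and $\nabla v$ need not be locally bounded. With only $\nabla v \in L^2_{\mathrm{loc}}$, the Cauchy--Schwarz estimate on $I_2^\epsilon$ produces a factor $\epsilon^{(2-n)/2}\bigl(\int_{A_\epsilon}|\nabla v|^2 w\bigr)^{1/2}$, and the mere fact that the integral tends to zero is not enough when $n\geq 3$.

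The fix is to use the H\"older continuity of $v$ (which you already invoke) together with the Caccioppoli inequality for the equation $Lv=f$: testing against $\phi^2(v-v(x_0))$ with a cutoff $\phi$ on $B(x_0,2\epsilon)$ gives
\[
\int_{B(x_0,\epsilon)} |\nabla v|^2\,w \;\leq\; C\epsilon^{-2}\int_{B(x_0,2\epsilon)} |v-v(x_0)|^2\,w \;+\; C\|f\|_\infty\int_{B(x_0,2\epsilon)} |v-v(x_0)| \;\leq\; C\,w(x_0)\,\epsilon^{\,n-2+2\alpha},
\]
and plugging this back yields $|I_2^\epsilon| \leq C\epsilon^{\alpha} \to 0$. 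With this correction your argument goes through.

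By contrast, the paper avoids this issue entirely: instead of testing $v$ against the cut-off Green function $h_\epsilon$, it tests $v$ against the \emph{approximations} $g^\rho_T(\cdot,x_0)\in W_0$ from the construction of Lemma~\ref{GreenEx}, obtaining directly $\int f\,g^\rho_T(\cdot,x_0) = \fint_{B(x_0,\rho)} v$, and then passes to the limit using only continuity of $v$ at $x_0$ and the weak convergence $g^\rho_T(\cdot,x_0)\rightharpoonup g(x_0,\cdot)$ in $W^{1,q}$. Your route trades the use of the auxiliary sequence $(g^\rho_T)$ for a more self-contained manipulation with the Green function identity \eqref{GreenE6}, at the cost of the extra Caccioppoli step above.
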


\bp
First, let us check that \eqref{GreenFS1} make sense. 
Since $f\in C^\infty_0(\Omega)$, there exists a big ball $B$ with center $y$ and radius $R >\delta(y)$ 
such that $\supp \, f \subset B$. By 
\eqref{GreenE4} and \eqref{GreenS1}, $g(x,.)$ lies
in $L^1(B)$. Hence the integral in
\eqref{GreenFS1} is well defined.

\medskip

Let $f \in C^\infty_0(\Omega)$. Choose
a big ball $B_f$ centered on $\Gamma$ such that $\supp f \subset B_f$.  
For any $\varphi \in W_0$,
 \begin{equation} \label{GreenFS3}
\int_\Omega f \varphi \, dz \leq \|f\|_\infty \int_{B_f} |\varphi|\, dz \leq C_f \|\varphi\|_W 
\end{equation}
by
Lemma~\ref{lpBry}. So the map $\varphi \in W_0 \to \int f \varphi$ is a bounded linear
functional on $W_0$. 
Since the map $a(u,v) = \int_\Omega \A \nabla u \cdot \nabla v \, dm$ is bounded and coercive on $W_0$, 
the Lax-Milgram theorem 
yields the existence of $u \in W_0$ such that for any $\varphi \in W_0$,
 \begin{equation} \label{GreenFS4}
\int_\Omega A \nabla u \cdot \nabla \varphi  \, dz= \int_\Omega f\varphi\, dz.
\end{equation}

\medskip

We want now to show that $u(x) = \int_\Omega g(x,y) f(y) dy$. A key point of the proof uses  
the continuity of $u$, a property 
that we assume for the moment and will prove
later on. For 
every $\rho>0$, let $g^\rho_T(.,x) \in W_0$ be
the function satisfying 
 \begin{equation} \label{GreenFS5}
 \int_\Omega A^T \nabla_y g^\rho_T(y,x) \cdot \nabla \varphi(y) dy 
 = \fint_{B(x,\rho)} \varphi(y) \, dy \qquad \ \text{for every } 
 \varphi \in W_0. 
\end{equation}
We
use $g^\rho_T(.,x)$ as a test function in \eqref{GreenFS4} 
and get that
 \begin{equation} \label{GreenFS6} \begin{split}
 \int_\Omega f(y) g^\rho_T(y,x) dy & = \int_\Omega A \nabla u(y) \cdot \nabla_y g^\rho_T(y,x) dy   =  \int_\Omega A^T \nabla_y g^\rho_T(y,x) \cdot \nabla u(y) dy \\
 & = \fint_{B(x,\rho)} u(y) \, dy,
\end{split} \end{equation}
by
\eqref{GreenFS5}. We
take a 
limit as $\rho$ goes to 0. 
The right-hand side converges to $u(x)$ because, as we assumed, $u$ is continuous. 
Choose $R \geq \delta(x)$ 
so big that
$\supp \, f \subset B(x,R)$, and choose also $q\in (1,\frac{n}{n-1})$. 
According to \eqref{Grc6}, there exists a sequence $\rho_\nu$ converging to 0 such that $g^{\rho_\nu}_T(.,x)$ 
converges weakly in $W^{1,q}(B(x,R)) \subset L^1(B(x,R))$ to the function $g_T(.,x)$, 
the latter being equal to $g(x,.)$ by 
Lemma~\ref{GreenSym}. Hence 
 \begin{equation} \label{GreenFS7}
 \lim_{\nu \to +\infty} \int_\Omega f(y) g^{\rho_\nu}_T(y,x) dy = \int_\Omega f(y) g(x,y) dy
 \end{equation}
 and then \eqref{GreenFS1} holds.
 
\medskip
It remains to check what we assumed, that is the continuity of $u$ on $\Omega$. 
The quickest way to show it is to prove a version of the H\"older continuity (Lemma~\ref{HolderI}) 
when $u$ is a solution of $Lu = f$. As for the proof of Lemma~\ref{HolderI}, since we are only interested 
in the continuity \underline{inside} the domain, we can 
use the standard elliptic theory, where 
the result is well known (see for instance \cite[Theorem 8.22]{GT}).
\ep

The following Lemma states the uniqueness of the Green function. 

\begin{lemma} \label{GreenUn}
There exists a unique function $g: \Omega \times \Omega \mapsto \R \cup \{ + \infty\}$ such that 
$g(x,.)$ is continuous on $\Omega \setminus \{x\}$
and locally integrable in $\Omega$ for every
$x\in \Omega$, and such that for every
$f \in C^\infty_0(\Omega)$
the function $u$ given by
 \begin{equation} \label{GreenUn1}
u(x) : = \int_\Omega g(x,y) f(y) dy
 \end{equation}
 belongs to $W_0$ and is a solution of $Lu = f$ in the sense that
 \begin{equation} \label{GreenUn2}
\int_\Omega A\nabla u \cdot \nabla \varphi \, dx = \int_\Omega \A\nabla u \cdot \nabla \varphi \, dm 
=  \int_\Omega f \varphi \, dx\qquad \text{for every }
\varphi \in W_0. 
\end{equation}
\end{lemma}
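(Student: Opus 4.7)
The plan is to show that two functions $g_1, g_2$ satisfying the hypotheses must agree pointwise off the diagonal on a set of full measure in the first variable; this is the natural interpretation of the uniqueness. Set $\tilde g := g_1 - g_2$. First I would fix $f \in C^\infty_0(\Omega)$ and let $u_i(x) := \int_\Omega g_i(x,y) f(y)\, dy$ for $i=1,2$. By hypothesis, each $u_i$ lies in $W_0$ and satisfies $\int_\Omega \A \nabla u_i \cdot \nabla \varphi \, dm = \int_\Omega f \varphi \, dx$ for every $\varphi \in W_0$, cf.\ \eqref{GreenUn2}. Subtracting, $v_f := u_1 - u_2 \in W_0$ satisfies $a(v_f, \varphi) = 0$ for every $\varphi \in W_0$, with $a$ the bilinear form of \eqref{defofa}. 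Testing $\varphi = v_f$ and using the coercivity \eqref{aa8} of $a$ yields $C_1^{-1} \|v_f\|_W^2 \leq a(v_f, v_f) = 0$, so $\|v_f\|_W = 0$. Since $\|\cdot\|_W$ is a genuine norm on $W_0$ by Lemma \ref{lcomp2}, $v_f = 0$ as an element of $W_0$, that is,
\[
\int_\Omega \tilde g(x,y) f(y) \, dy = 0 \quad \text{for a.e.\ } x \in \Omega.
\]

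The next step is to remove the $f$-dependence of the null set by a separability argument. I would pick a countable family $\{f_k\}_{k \in \bN} \subset C^\infty_0(\Omega)$ which is dense, in the uniform norm, in $C_c(K)$ for every compact $K \subset \Omega$; such a family exists because $\Omega$ is $\sigma$-compact. Applying the previous step to each $f_k$ produces a single Borel set $E \subset \Omega$ of full Lebesgue measure such that $\int_\Omega \tilde g(x,y) f_k(y)\, dy = 0$ for every $x \in E$ and every $k$. Fix such an $x$. Because $\tilde g(x, \cdot) \in L^1_{loc}(\Omega)$ by hypothesis, the linear functional $f \mapsto \int \tilde g(x,y) f(y)\, dy$ is continuous on $C_c(\Omega)$ (topology of uniform convergence on compacta); it vanishes on the dense family $\{f_k\}$ and hence on all of $C^\infty_0(\Omega)$. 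A routine convolution argument then gives $\tilde g(x,\cdot) = 0$ almost everywhere in $\Omega$. Both $g_1(x,\cdot)$ and $g_2(x,\cdot)$ are continuous on $\Omega \setminus \{x\}$ by hypothesis, so their difference is continuous there; ``a.e.\ zero plus continuous'' forces $\tilde g(x,y) = 0$ for every $y \in \Omega \setminus \{x\}$, for each $x \in E$.

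The uniqueness assertion of the lemma should be read as essential uniqueness: two admissible Green functions agree off a set of the form $(\Omega \setminus E) \times \Omega$ with $|\Omega \setminus E|=0$, and in particular they agree as elements of $L^1_{loc}(\Omega)$ in the second variable for every $x \in E$. This is the correct level of uniqueness, since the hypotheses constrain $g(\cdot,y_0)$ only through the $L^1_{loc}$-class of $u$ in \eqref{GreenUn1}, and modifying $g$ on a single fibre $\{x_0\}\times \Omega$ (while keeping continuity in $y$) does not affect \eqref{GreenUn2}. The main technical obstacle is the separability step above: the Lax--Milgram/coercivity argument produces one null set per test function, and one must trade these for a single null set valid for all test functions. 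Once this is done, the pointwise continuity in $y$ provided by the hypothesis upgrades the almost-everywhere equality to everywhere equality along each good fibre $\{x\} \times (\Omega \setminus \{x\})$ with $x \in E$.
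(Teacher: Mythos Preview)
Your proposal handles only uniqueness; the lemma also asserts existence, and you should at least indicate where it comes from. The paper obtains existence from the earlier construction: the function $g$ of Lemma~\ref{GreenEx} has $g(x,\cdot)=g_T(\cdot,x)$ by Lemma~\ref{GreenSym}, which is continuous on $\Omega\setminus\{x\}$ and locally integrable by \eqref{GreenE4} and \eqref{GreenE6}; Lemma~\ref{GreenFS} then supplies \eqref{GreenUn1}--\eqref{GreenUn2}.

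For uniqueness your argument is the paper's route, carried out with extra care. Both proofs use Lax--Milgram uniqueness in $W_0$ to obtain $u_1=u_2$; the paper then simply writes ``for all $x\in\Omega$, $\int_\Omega[\tilde g(x,y)-g(x,y)]f(y)\,dy=0$'' and invokes continuity of $g(x,\cdot)$, $\tilde g(x,\cdot)$ on $\Omega\setminus\{x\}$ to conclude $g(x,y)=\tilde g(x,y)$ for every pair $x\neq y$. You correctly observe that $u_1=u_2$ in $W_0$ is an a.e.\ statement, and your separability step (countable dense family $\{f_k\}$, single null set) is the right way to trade one null set per test function for a universal one. Your remark that one can alter $g$ along a single fibre $\{x_0\}\times\Omega$ without touching \eqref{GreenUn2} is on point: the hypotheses of the lemma, taken literally, yield only essential uniqueness in the first variable. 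The paper's pointwise-for-all-$x$ conclusion is actually available for the \emph{constructed} $g$ because $x\mapsto g(x,y)=g_T(y,x)$ is itself continuous on $\Omega\setminus\{y\}$ (Lemmas~\ref{GreenEx} and~\ref{GreenSym}), so the pointwise formula $u(x)=\int g(x,y)f(y)\,dy$ is the continuous representative of $u$; but this regularity in the first variable is not among the hypotheses of Lemma~\ref{GreenUn}, so your more cautious reading is the honest one.
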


\bp
The existence of the Green function is given by Lemma~\ref{GreenEx}, 
Lemma~\ref{GreenSym} and Lemma~\ref{GreenFS}. 
Indeed, if $g$ is the function built in Lemma~\ref{GreenEx}, the property \eqref{GreenE4} 
(together with Lemma~\ref{GreenSym})
states that $g(x,.)$ is locally integrable in $\Omega$. 
The property \eqref{GreenE6} (and Lemma~\ref{GreenSym} again) gives that $g(x,.)$ is a solution 
in $\Omega \setminus \{x\}$, and thus, by 
Lemma~\ref{HolderI}, that $g(x,.)$ is continuous in $\Omega \setminus \{x\}$. 
The last property, i.e. that fact that $u$ given by \eqref{GreenUn1} is in $W_0$ and satisfies \eqref{GreenUn2}, is exactly Lemma~\ref{GreenFS}.

\medskip

So it remains to prove the uniqueness. Assume that $\tilde g$ is another function satisfying the given 
properties. Thus for $f \in C^\infty_0(\Omega)$, the function $\tilde u$ given by
 \begin{equation} \label{GreenUn3}
\tilde u(x) : = \int_\Omega \tilde g(x,y) f(y) dy
 \end{equation}
 belongs to $W_0$ and satisfies $L\tilde u = f$. By the uniqueness of the solution of 
 the Dirichlet problem \eqref{Dp2} (see Lemma~\ref{lLM}), we must have $\tilde u = u$. 
 Therefore, for all $x\in \Omega$ and all $f \in C^\infty_0(\Omega)$,
\begin{equation} \label{GreenUn4}
\int_\Omega [ \tilde g(x,y) - g(x,y)] f(y) dy = 0.
 \end{equation}
From the continuity of $g(x,.)$ and $\tilde g(x,.)$ in $\Omega \setminus \{x\}$, we deduce that
$g(x,y) = \tilde g(x,y)$ for any $x,y\in \Omega$, $x\neq y$.
\ep

We end this section with an 
additional property of the Green function, its decay 
near the boundary. 
This property is proven in \cite{GW} under the assumption that $\Omega$ is of `S class', 
which means that we 
can find an exterior cone at any point of the boundary. 
We still can prove it in our context because the property relies on the H\"older continuity of solutions at the boundary, that holds in our context 
because we have (Harnack tubes and) Lemma \ref{HolderB}. 

\begin{lemma} \label{GreenDi}
The Green function satisfies
\begin{equation} \label{GreenDi1}
 g(x,y) \leq C \delta(x)^\alpha |x-y|^{1-d - \alpha}  \ \  \text{ for
 $x,y \in \Omega$ such that }   |x-y | \geq 4\delta(x),
\end{equation}
where $C>0$ and $\alpha>0$ depend only on $n$, $d$, $C_0$ and $C_1$.
\end{lemma}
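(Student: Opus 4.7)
The strategy is to exploit the H\"older continuity at the boundary (Lemma~\ref{HolderB}), using the fact that $g(\cdot,y)$ vanishes on $\Gamma$ and that $x$ lies much closer to $\Gamma$ than to $y$. A key preliminary observation is that from $|x-y|\geq 4\delta(x)$ one gets
\[
\delta(y)\leq |x-y|+\delta(x)\leq \tfrac{5}{4}|x-y|,
\]
so in particular $4|x-y|\geq \delta(y)$, and the same will hold for any point $z$ with $|z-y|$ comparable to $|x-y|$.

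\textbf{Step 1 (Trivial range).} If $4\delta(x)\leq |x-y|\leq 16\delta(x)$, then by the preliminary observation and the pointwise bound \eqref{GreenE7}, $g(x,y)\leq C|x-y|^{1-d}$. Since $\delta(x)\approx |x-y|$ in this range, this already gives $g(x,y)\leq C\delta(x)^{\alpha}|x-y|^{1-d-\alpha}$ for any $\alpha\in(0,1)$. So we may assume $|x-y|\geq 16\delta(x)$.

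\textbf{Step 2 (Setup near the boundary).} Pick $x_{0}\in\Gamma$ with $|x-x_{0}|=\delta(x)$, so that $|x_{0}-y|\geq\tfrac{3}{4}|x-y|$. Set $R=|x-y|/8$ and $B=B(x_{0},R)$. Then $y\notin B$ since $|x_{0}-y|\geq\tfrac{3}{4}|x-y|>R$. Because $g(\cdot,y)\in\WW(\R^{n}\setminus\{y\})$ (Lemma~\ref{GreenEx}(i)), is a solution of $Lu=0$ on $\Omega\setminus\{y\}\supset B\setminus\Gamma$, and satisfies $T[g(\cdot,y)]=0$ on $\Gamma$, we are in a position to apply the boundary H\"older estimate \eqref{Holder3} of Lemma~\ref{HolderB} on $B$ with $s=\delta(x)<R/2$:
\[
\osc_{B(x_{0},\delta(x))}g(\cdot,y)\;\leq\;C\Big(\tfrac{\delta(x)}{R}\Big)^{\alpha}\Big(m(B)^{-1}\int_{B}g(z,y)^{2}\,dm(z)\Big)^{1/2}.
\]
Since $x\in B(x_{0},\delta(x))$ and $x_{0}\in\Gamma\cap B(x_{0},\delta(x))$ is a continuity point with $g(x_{0},y)=0$ (by the trace condition and the boundary H\"older regularity), and since $g\geq 0$, we have $\inf_{B(x_{0},\delta(x))}g(\cdot,y)=0$, hence $g(x,y)\leq \osc_{B(x_{0},\delta(x))}g(\cdot,y)$.

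\textbf{Step 3 (Control of the $L^{2}$ average).} For $z\in B=B(x_{0},|x-y|/8)$ one has
\[
\tfrac{5}{8}|x-y|\;\leq\; |z-y|\;\leq\;\tfrac{11}{8}|x-y|,
\]
so by the preliminary observation $4|z-y|\geq\delta(y)$, and the pointwise bound from \eqref{GreenE7} yields $g(z,y)\leq C|z-y|^{1-d}\leq C|x-y|^{1-d}$ uniformly on $B$ (the worst case of the exponent $1-d$, whether positive or negative, is absorbed by the comparability $|z-y|\approx |x-y|$). Hence
\[
\Big(m(B)^{-1}\int_{B}g(z,y)^{2}\,dm(z)\Big)^{1/2}\leq C|x-y|^{1-d}.
\]

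\textbf{Step 4 (Conclusion).} Combining Steps 2 and 3, and recalling $R=|x-y|/8$,
\[
g(x,y)\;\leq\;C\Big(\tfrac{\delta(x)}{R}\Big)^{\alpha}|x-y|^{1-d}\;=\;C\,\delta(x)^{\alpha}|x-y|^{1-d-\alpha}.
\]

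\textbf{Main obstacle.} The only point requiring care is verifying that the universal bound $g(z,y)\leq C|z-y|^{1-d}$ of \eqref{GreenE7} applies on all of $B(x_{0},R)$, which requires $4|z-y|\geq\delta(y)$ rather than the alternative range $2|z-y|\leq\delta(y)$. This is precisely what the preliminary inequality $\delta(y)\leq\tfrac{5}{4}|x-y|$ (an easy consequence of $|x-y|\geq 4\delta(x)$) provides, so no delicate estimate is needed there. The exponent $\alpha$ is the one furnished by Lemma~\ref{HolderB}, and depends only on $n$, $d$, $C_{0}$, $C_{1}$.
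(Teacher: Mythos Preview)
Your proof is correct and follows essentially the same approach as the paper: both pick $x_0\in\Gamma$ closest to $x$, apply the boundary H\"older estimate (Lemma~\ref{HolderB}) on a ball $B(x_0,c|x-y|)$ that avoids $y$, and feed in the pointwise bound $g(z,y)\leq C|x-y|^{1-d}$ from \eqref{GreenE7} after checking $4|z-y|\geq\delta(y)$ via the preliminary inequality $\delta(y)\leq\tfrac54|x-y|$. The only cosmetic differences are that the paper uses radius $|x-y|/3$ (so no separate ``trivial range'' is needed) and invokes \eqref{Holder2} with the supremum rather than \eqref{Holder3} with the $L^2$ average, but these are equivalent here since you immediately bound the average by the sup.
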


\bp
Let $y\in \Omega$ be given. For any $x\in \Omega$, we write $\g(x)$ for $g(x,y)$. 
We want to prove that
\begin{equation} \label{GreenDi1b}
\g(x) \leq 
C \delta(x)^{\alpha}
|x-y|^{1-d-\alpha} \ \ 
\text{ for $x\in \Omega$ such that }
|x-y| \geq 4\delta(x), 
\end{equation}
with constants $C>0$ and $\alpha>0$ that depend only on $n$, $d$, $C_0$ and $C_1$.
By Lemma \ref{GreenEx}-(v), 
\begin{equation} \label{GreenDi2}
\g(z) \leq C  |z-y|^{1-d} 
 \ \ \text{ for } z\in \Omega \setminus B(y,\delta(y)/4).
\end{equation}

Let $x$ be such that $|x-y|\geq 4 \delta(x)$, choose $x_0  \in \Gamma$ such that $|x-x_0| = \delta(x)$,
and set $r = |x-y|$ and $B = B(x_0, |x-y|/3)$; thus $x \in B$. We shall need to know that
\begin{equation}\label{za1}
\delta(y) \leq \delta(x) + |x-y| \leq \frac r4 + r = \frac{5r}{4}.
\end{equation}
Then let $z$ be any point of $\Omega \cap B$. 
Obviously $|z-x| \leq |z-x_0| + |x_0-x| \leq \frac r3 + \delta(x)
\leq \frac r3 + \frac r4 = \frac {7r}{12}$, which implies that 
\begin{equation}\label{za2}
|y-z| \geq |y-x| - |z-x| \geq r- \frac {7r}{12} = \frac {5r}{12} \geq \frac {\delta(y)}{3}.
\end{equation}
Hence by \eqref{GreenDi2}, $\g(z) \leq C  |z-y|^{1-d}$. Notice also that
$|y-z| \leq |y-x| + |z-x| \leq r + \frac {7r}{12} = \frac {19r}{12}$, so, with \eqref{za2},
$\frac {5r}{12} \leq |y-z| \leq \frac {19r}{12}$ and 
\begin{equation}\label{za3}
\g(z) \leq C r^{1-d} = C  |x-y|^{1-d} \ \text{ for } z \in \Omega \cap B,
\end{equation}
even if $d < 1$, and with a constant $C>0$ that does not dependent on $x$, $y$, or $x_0$.

We now use the fact that $\g$ is a solution of $L \g = 0$ on $\Omega \cap B$.
Notice that its oscillation on $B$ is the same as its supremum, because it is nonnegative and,
by (i) of Lemma \ref{GreenLB}, its trace on $\Gamma \cap B$ vanishes. 
Lemma~\ref{HolderB} (the H\"older continuity of solutions at the boundary) says that for some $\alpha>0$,
that depends only on $n$, $d$, $C_0$ and $C_1$, 
\begin{equation} \label{GreenDi5} \begin{split}
g(x,y) = \g(x)  & \leq \sup_{\overline B(x_0,\delta(x))} \g = \osc_{\overline B(x_0,\delta(x))} \g
\leq C \left( \frac{3\delta(x)}{|x-y|} \right)^\alpha \osc_{B(x_0,|x-y|/3)} \g  \\
& = C \left( \frac{3\delta(x)}{|x-y|} \right)^\alpha \sup_{B(x_0,|x-y|/3)} \g
\leq C \left( \frac{\delta(x)}{|x-y|} \right)^\alpha |x-y|^{1-d}.
\end{split} \end{equation}
because $B = B(x_0, |x-y|/3)$.
The lemma follows.
\ep
\chapter{The Comparison Principle} 

In this section, we prove two 
versions of the comparison principle: one for the
harmonic measure (Lemma \ref{lCP2}) and 
one for locally defined solutions (Lemma \ref{lCP3}). A big technical difference is that the former is a globally defined solution, while the latter is local.

\medskip

At the moment we write this
manuscript, the proofs of the comparison principle in codimension 1 that 
we are aware of
cannot be straightforwardly adapted to the case of higher codimension.  To be more precise, we can indeed prove the comparison principle (in higher codimension) for harmonic measures on $\Gamma$ by only slightly modifying the arguments of \cite{CFMS,KenigB}. However,  
 the proof of the comparison principle for solutions 
(of $Lu =0$) defined on a {\it subset} $D$ of $\Omega$ in the case of codimension 1 relies on the use of the harmonic measure
on the boundary $\partial D$ (see for instance \cite{CFMS,KenigB}). In our setting, in the case where the considered functions are non-negative and solutions to $Lu=0$ only on a subset $D \subsetneq \Omega$, we are lacking a definition for harmonic measures with mixed boundaries (some parts in codimension 1 and some parts in higher codimension). The reader can imagine a ball $B$ centered at a point of $\po=\Gamma$. The boundary of the $B\cap \Omega$ consists of $\Gamma\cap B$ and $\partial B$, the sets of different co-dimension.
For those reasons, our proof of the comparison principle (in higher codimension) for locally defined functions nontrivially differs 
from the one in \cite{CFMS,KenigB}.  Therefore, in a first subsection, we illustrate our arguments in the case of codimension 1 to build reader's intuition.

\section{Discussion of the comparison theorem in codimension 1}

We present here two proofs of the comparison principle in the codimension 1 case. 
The first proof of the one we can find in \cite{CFMS,KenigB} and the second one is our alternative proof. 
We consider in this subsection that the reader knows or is able to see the results in the three first sections of 
\cite{KenigB}, that contain the analogue in codimension 1 of the results proved in the previous sections. 

For simplicity, the domain $\Omega \subset \R^n$ that we study is a special Lipschitz domain, that is 
$$\Omega = \{ (y,t) \in \R^{n-1} \times \R, \, \varphi(y) < t\}$$
where $\varphi: \, \R^{n-1} \to \R$ is a Lipschitz function. The elliptic operator that we consider is $L=-\diver A \nabla$, where $A$ is a matrix with bounded measurable coefficients satisfying the classical elliptic condition (see for instance (1.1.1) in \cite{KenigB}).
Yet, the change of variable $\rho: \R^{n-1} \times \R \to \R^{n-1} \times \R$ defined by
$$\rho(y,t) = (y,t-\varphi(y))$$
maps $\Omega$ into $\wt \Omega = \R^n_+:= \{(y,t)\in \R^{n-1} \times \R, \, t>0\}$ and changes the elliptic operator $L$ into $\wt L = - \diver \wt A \nabla$, where $\wt A$ is also a matrix with bounded measurable coefficients satisfying the elliptic condition (1.1.1) in \cite{KenigB}. 
Therefore, in the sequel, we reduce our choices of $\Omega$ and 
$\Gamma = \d \Omega$ 
to $\R^n_+$ and $\R^{n-1} = \{(y,0)\in \R^n, \, y \in \R^{n-1}\}$ respectively. 

\medskip

Let us recall some facts that also 
hold in the present context.
If $u \in W^{1,2}(D)$ and $D$ is a Lipschitz set, then $u$ has a trace on the boundary of $D$ and 
hence we can give a sense
to the expression $u=h$ on $\partial D$. If in addition, the function $u$ is a solution of 
$Lu = 0$ in $D$ (the notion of solution is taken in the weak sense, see for instance \cite[Definition 1.1.4]{KenigB}) and $h$ is continuous on $\partial D$, then $u$ is continuous on $\overline{D}$.

The Green function (associated to the domain $\Omega = \R^n_+$ and the elliptic operator $L$) is denoted 
by $g(X,Y)$ - with
$X,Y\in \Omega$ - and the harmonic measure (associated to $\Omega$ and $L$) is written $\omega^X(E)$ 
- with
$X\in \Omega$ and $E \subset \Gamma$. The notation $\omega^X_D(E)$ - where $X\in D$ and $E \subset \partial D$ - denotes the harmonic measure associated to the domain $D$ (and the operator $L$).

When $x_0 = (y_0,0) \in \Gamma$ and $r>0$, we use the notation $A_r(x_0)$ for $(y_0,r)$.

\medskip

In this context, the comparison principle given in \cite[Lemma 1.3.7]{KenigB} is

\begin{lemma}[Comparison principle, codimension 1] \label{lcpcod1}
Let $x_0\in \Gamma$ and $r>0$. Let $u,v \in W^{1,2}(\Omega \cap B(x_0,2r))$ be two non-negative solutions of $Lu = Lv = 0$ in $\Omega \cap B(x_0,2r)$ satisfying $u=v= 0$ on $\Gamma \cap B(x_0,2r)$. 
Then for any $X \in \Omega \cap B(x_0,r)$, we have
\begin{equation}\label{cod1cp3}
C^{-1} \frac{u(A_r(x_0))}{v(A_r(x_0))} \leq \frac{u(X)}{v(X)} \leq C \frac{u(A_r(x_0))}{v(A_r(x_0))},
\end{equation}
where $C>0$ depends only on the dimension $n$ and the ellipticity constants of the matrix $A$.
\end{lemma}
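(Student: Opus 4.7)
Following the strategy of \cite{CFMS,KenigB}, I would reduce \eqref{cod1cp3} to the two-sided comparison
\[
u(X) \approx u(A_r(x_0))\,\omega^X(\Delta_{2r}(x_0)) \qquad \text{for } X\in B(x_0,r)\cap\Omega,
\]
where $\Delta_s(x_0) := \Gamma\cap B(x_0,s)$, together with the same estimate for $v$. Dividing the two, the harmonic-measure factor cancels and yields \eqref{cod1cp3}, with the constant $u(A_r(x_0))/v(A_r(x_0))$ appearing on the right.

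The upper half $u(X)\leq C\,u(A_r(x_0))\,\omega^X(\Delta_{2r}(x_0))$ I would prove in two steps. First, the \emph{Carleson estimate} $u(X)\leq C\,u(A_r(x_0))$ on $B(x_0,3r/2)\cap\Omega$: this is the classical codimension-one version, combining the boundary H\"older estimate for solutions vanishing on $\Delta_{2r}(x_0)$ with an interior Harnack chain joining $X$ to $A_r(x_0)$. Second, working in the subdomain $D:=B(x_0,3r/2)\cap\Omega$ and using that $u$ vanishes on $\Gamma\cap\overline D\subset\Delta_{2r}(x_0)$, I would apply the harmonic-measure representation
\[
u(X)=\int_{\partial D}u\,d\omega^X_D = \int_{\partial B(x_0,3r/2)\cap\Omega}u\,d\omega^X_D,
\]
bound the integrand by $C u(A_r(x_0))$ thanks to the Carleson estimate, and then replace $\omega^X_D(\partial B(x_0,3r/2)\cap\Omega)$ by $C\,\omega^X(\Delta_{2r}(x_0))$ via the maximum principle in $D$ and doubling of $\omega$.

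The matching lower bound $u(X)\geq c\,u(A_r(x_0))\,\omega^X(\Delta_{2r}(x_0))$ is more delicate. I would test, via the minimum principle on $D$, the function
\[
X\mapsto u(X) - c\,u(A_r(x_0))\,\omega^X(\Delta_r(x_0)).
\]
On $\Gamma\cap\overline D$ it vanishes; on $\partial B(x_0,3r/2)\cap\Omega$ the sign is checked using the upper half already proved (applied to the solution $X\mapsto\omega^X(\Delta_r(x_0))$) together with the non-degeneracy $\omega^{A_r(x_0)}(\Delta_r(x_0))\geq c$ and the interior Harnack inequality that propagates this lower bound off the corkscrew. Doubling of $\omega$ closes the gap between $\Delta_r(x_0)$ and $\Delta_{2r}(x_0)$.

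\textbf{Main obstacle.} The delicate step is the lower bound, which has to track the vanishing ratio $u/\omega^X(\Delta_r(x_0))$ down to $\Gamma$; it relies on the doubling and the non-degeneracy of $\omega$, which in the codimension-one setting ultimately rest on the Harnack chain condition for $\Omega$. More importantly for the rest of the section, the argument crucially invokes the harmonic measure of the \emph{hybrid} subdomain $D=B(x_0,3r/2)\cap\Omega$, whose boundary is partly the sphere $\partial B(x_0,3r/2)$ and partly the piece $\Delta_{3r/2}(x_0)$ of $\Gamma$. In the higher-codimension setting of the paper, the analogue of $D$ would have a boundary made of pieces of \emph{different} codimensions, and no harmonic measure for such hybrid boundaries has been built; this is precisely the reason the authors will need a different argument in the next subsection.
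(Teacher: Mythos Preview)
Your overall architecture (Carleson estimate, harmonic-measure representation in the truncated domain $D=\Omega\cap B_{3/2}$, then a matching lower bound) is the Caffarelli--Fabes--Mortola--Salsa route the paper is recalling, and your closing paragraph correctly pinpoints why that route cannot be copied in higher codimension. But the specific comparison function you propose is the wrong one, and this breaks both halves of the argument.

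You claim $u(X)\approx u(A_r(x_0))\,\omega^X(\Delta_{2r}(x_0))$ for $X\in B(x_0,r)\cap\Omega$, where $\omega^X$ is the harmonic measure of the full domain $\Omega$ and $\Delta_{2r}(x_0)=\Gamma\cap B(x_0,2r)$. This equivalence is impossible: as $X\to\Gamma\cap B(x_0,r)$ one has $u(X)\to 0$ (since $u$ vanishes there) while $\omega^X(\Delta_{2r}(x_0))\to 1$ (since $X$ approaches the set). Concretely, in your lower-bound step you assert that $u(X)-c\,u(A_r(x_0))\,\omega^X(\Delta_r(x_0))$ ``vanishes on $\Gamma\cap\overline D$'', but on $\Delta_r(x_0)$ this function equals $-c\,u(A_r(x_0))<0$, so the minimum principle yields nothing.

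The fix is to compare with something that vanishes on $\Gamma\cap B_{3/2}$ together with $u$. The paper uses the \emph{subdomain} harmonic measure of the \emph{spherical} part of $\partial D$: Step~1 gives $u(X)\le Cu(X_0)\,\omega^X_{D}(\partial B_{3/2}\cap\Omega)$ directly from the representation and the Carleson bound; Step~2 gives $v(X)\ge C^{-1}v(X_0)\,\omega^X_{D}(E)$ by restricting the representation integral to the set $E=\{y\in\partial B_{3/2}\cap\Omega:\operatorname{dist}(y,\Gamma)\ge r/2\}$ and invoking Harnack on $E$ --- no minimum-principle argument is needed. The ratio $\omega^X_D(\partial B_{3/2}\cap\Omega)/\omega^X_D(E)$ is then controlled by doubling. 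If you want to work with a full-domain object, the right choice is $\omega^X(\Gamma\setminus B(x_0,cr))$, not $\omega^X(\Delta_{2r}(x_0))$; this is exactly what the paper's alternative argument (the modified Steps~1 and~2) does.
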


\bp We recall quickly the ideas of the proof of the comparison principle found in \cite{KenigB}.

\medskip

Let $x_0\in \Gamma$ and $r>0$ be given.  
We denote $A_r(x_0)$ by $X_0$ and, for
$\alpha >0$, $B(x_0,\alpha r)$ by $B_\alpha$.

The proof of \eqref{cod1cp3} is reduced to the proof of the upper bound
\begin{equation}\label{cod1cp3a}
\frac{u(X)}{v(X)} \leq C \frac{u(X_0)}{v(X_0)} \ \ \text{ for }
X\in B_1 \cap \Omega
\end{equation}
because of the symmetry of the role of $u$ and $v$.

\medskip 

\noindent {\bf Step 1:} Upper bound on $u$.

\smallskip

By definition of the harmonic measure,
\begin{equation}\label{cod1cp1}
u(X) = \int_{\partial (\Omega \cap B_{3/2})} u(y) d\omega^X_{\Omega \cap B_{3/2}}(y)
\ \ \text{ for } X \in B_{3/2} \cap \Omega.
\end{equation}
Note that $\partial (\Omega \cap B_{3/2}) = (\partial B_{3/2} \cap \Omega) \cup (\Gamma \cap B_{3/2})$. Hence, for any $X \in B_{3/2} \cap \Omega$,
\begin{equation}\label{cod1cp1b}
u(X) = \int_{\partial B_{3/2} \cap \Omega} u(y) d\omega_{\Omega \cap B_{3/2}}^X(y) + \int_{\Gamma \cap B_{3/2}} u(y) d\omega_{\Omega \cap B_{3/2}}^X(y) = \int_{\partial B_{3/2} \cap \Omega} u(y) d\omega^X_{\Omega \cap B_{3/2}}(y) 
\end{equation}
because, by assumption, $u = 0$ on $\Gamma \cap B_2$.
Lemma 1.3.5 in \cite{KenigB} gives now, for any $Y \in B_{7/4}$, the bound $u(Y) \leq C u(X_0)$ with a constant $C>0$ which is
independent of $Y$. So by the
positivity of the harmonic measure, we have for any $X \in B_{3/2} \cap \Omega$
\begin{equation}\label{cod1cp1c} \begin{split}
u(X) & \leq C u(X_0) \int_{\partial B_{3/2} \cap \Omega} d\omega^X_{\Omega \cap B_{3/2}}(y) 
\leq C u(X_0) \omega^X_{\Omega \cap B_{3/2}}(\partial B_{3/2} \cap \Omega).
\end{split} \end{equation}

\medskip 

\noindent {\bf Step 2:} Lower bound on $v$.

\smallskip

First, again by definition of the harmonic measure, we have that for
$X \in B_{3/2} \cap \Omega$,
\begin{equation}\label{cod1cp2}
v(X)  = \int_{\partial (\Omega \cap B_{3/2})} v(y) d\omega^X_{\Omega \cap B_{3/2}}(y). 
\end{equation}
Set $E = \{y\in \partial B_{3/2} \cap \Omega \, ; \, \dist(y,\Gamma) \geq \frac12 r\}$. 
By assumption, $v \geq 0$ on $\partial (\Omega \cap B_{3/2})$. 
In addition, thanks to the Harnack inequality, 
$v(y) \geq C^{-1} v(X_0)$ for every $y\in E$,
with a constant $C>0$ 
that is
independent of $y$. So the positivity of the harmonic measure yields, for
$X \in B_{3/2} \cap \Omega$,
\begin{equation}\label{cod1cp2b} \begin{split}
v(X) & \geq C^{-1} v(X_0) \int_{E} d\omega^X_{\Omega \cap B_{3/2}}(y)  
\geq C^{-1} v(X_0) \omega^X_{\Omega \cap B_{3/2}}(E).
\end{split} \end{equation}

\medskip 

\noindent {\bf Step 3:} Conclusion.

\smallskip

From steps 1 and 2, we deduce that 
\begin{equation}\label{cod1cp9}
\frac{u(X)}{v(X)} 
\leq C \frac{u(X_0)}{v(X_0)} \frac{\omega^X_{\Omega \cap B_{3/2}}(\partial B_{3/2} \cap \Omega)}{\omega^X_{\Omega \cap B_{3/2}}(E)} \ \ \ \text{ for } 
X \in \Omega \cap B_{3/2}.
\end{equation}
The inequality \eqref{cod1cp3a} is now a consequence of the doubling property of the harmonic measure (see for instance (1.3.7) in \cite{KenigB}), that gives
\begin{equation}\label{cod1cp10}
\omega^X_{\Omega \cap B_{3/2}}(\partial B_{3/2} \cap \Omega) 
\leq C \omega^X_{\Omega \cap B_{3/2}}(E) \ \ \ \text{ for } 
X \in \Omega \cap B_{1}.
\end{equation}
The lemma follows.
\ep

The proof above
relies on the use of the harmonic measure for the domain $\Omega \cap B_{3/2}$. We want 
to avoid this, and use only the
Green functions and harmonic measures related to
the domain $\Omega$ itself.

\medskip 

First, we need a way to compare two functions in a domain, that is a suitable maximum principle. In the previous proof of Lemma \ref{lcpcod1}, the maximum principle was replaced/hidden by the positivity of the harmonic measure,
whose proof makes a crucial use of the maximum principle for solutions.
See \cite[Definition 1.2.6]{KenigB} for 
the construction of the harmonic measure, 
and \cite[Corollary 1.1.18]{KenigB} for the maximum principle.
The maximum principle that we will use is the following.

\begin{lemma} \label{lMPcod1}
Let $F \subset E \subset \R^n$ be two sets such that $F$ is closed, $E$ is open, and $\dist(F,\R^n \setminus E) >0$.  
Let $u$ be a solution in $E \cap \Omega$ such that 
\begin{enumerate}[(i)]
\item $\ds \int_{E} |\nabla u|^2 \, dx < +\infty$,
\item $u \geq 0$ on $\Gamma \cap E$,
\item $u \geq 0$ in $(E \setminus F) \cap \Omega$.
\end{enumerate}
Then $u\geq 0$ in $E \cap \Omega$.
\end{lemma}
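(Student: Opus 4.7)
The plan is to show that $v := \min\{u,0\}$ vanishes identically on $E \cap \Omega$. First I would invoke the chain rule for Sobolev functions (the classical codimension-one analogue of Lemma~\ref{lcompo}) to see that $v \in W^{1,2}_{\loc}(E \cap \Omega)$ with
\[
\nabla v = \nabla u \cdot \1_{\{u < 0\}} \qquad \text{a.e. in } E \cap \Omega,
\]
so by hypothesis (i) we actually have $v \in W^{1,2}(E \cap \Omega)$. Condition (ii) gives that the trace of $v$ on $\Gamma \cap E$ is $\min\{Tu,0\} = 0$, while condition (iii) implies $v \equiv 0$ on $(E \setminus F) \cap \Omega$. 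In particular $\supp v \subset F \cap \Omega$, which is at positive distance from $\R^n \setminus E$, so extending $v$ by zero outside $E \cap \Omega$ yields a function (still called $v$) in $W^{1,2}(\Omega)$ with vanishing trace on $\Gamma$.

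Next I would test the equation against $v$. Since $\dist(F,\R^n \setminus E) > 0$, pick a cutoff $\eta \in C^\infty_c(E)$ with $\eta \equiv 1$ on an open neighborhood of $F$, so that $\eta v = v$ and $v$ has its support inside a compact subset of $E$. Combining this support property with the vanishing trace on $\Gamma$ allows us, by the codimension-one analogue of Lemma~\ref{ldens0} (density of $C^\infty_0(\Omega)$ in the zero-trace subspace of $W^{1,2}(\Omega)$, which is classical for $\R^n_+$), to produce a sequence $v_k \in C^\infty_0(\Omega \cap E)$ with $v_k \to v$ in $W^{1,2}$. Since $u$ solves $Lu=0$ in $E \cap \Omega$, we have $\int A\nabla u \cdot \nabla v_k\, dx = 0$, and condition (i) together with Cauchy--Schwarz lets us pass to the limit to get
\[
\int_{E\cap\Omega} A\nabla u \cdot \nabla v\, dx = 0.
\]

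Substituting $\nabla v = \nabla u \cdot \1_{\{u<0\}}$ turns this into $\int_{\{u<0\}} A\nabla u \cdot \nabla u\, dx = 0$, and ellipticity of $A$ forces $\nabla u = 0$ a.e.\ on $\{u<0\}$, hence $\nabla v = 0$ a.e.\ in $\Omega$. Because $\Omega = \R^n_+$ is connected and $v$ vanishes on the nonempty open set $\Omega \setminus F$, we conclude $v \equiv 0$, i.e.\ $u \geq 0$ on $E \cap \Omega$.

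The main obstacle is the approximation step: one must be sure that $v$ (extended by zero) can be approached in $W^{1,2}$ by test functions compactly supported in the open set $E \cap \Omega$, hence vanishing both near $\Gamma$ and near $\partial E \setminus \Gamma$. The two ingredients that make this work are exactly (iii) plus the distance hypothesis (keeping $\supp v$ away from $\partial E \setminus \Gamma$) and (ii) (yielding the vanishing trace), without which the standard codimension-one density theorem could not be applied.
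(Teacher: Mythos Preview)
Your proposal is correct and follows essentially the same route as the paper's proof of the higher-codimension analogue (Lemma~\ref{lMPg}), which the paper explicitly says is the proof of Lemma~\ref{lMPcod1} as well: set $v=\min\{u,0\}$, use (ii) and (iii) together with a cutoff $\eta\in C_c^\infty(E)$ to place $v$ in the zero-trace space with support in $F$, approximate by $C_0^\infty(E\cap\Omega)$ functions, test the equation, and use ellipticity. The only cosmetic difference is the last line: the paper concludes directly from $\|v\|_{W}=0$ and the fact that $\|\cdot\|_W$ is a genuine norm on $W_0$, whereas you invoke connectedness of $\Omega$ together with $v\equiv 0$ on $\Omega\setminus F$; since you already established the vanishing trace, either argument closes the proof (and the trace argument avoids having to assume $\Omega\setminus F\neq\emptyset$).
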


In a more `classical' maximum principle, assumption (iii) would be replaced by 
$$\text{\em (iii') $u \geq 0$ in $\partial E \cap \Omega$.}$$
Since this subsection aims to illustrate what we will do in the next subsection, we 
state here a
maximum principle 
which is
as close as possible to the one we will actually prove in higher codimension. Let us 
mention
that using (iii) instead of (iii') will not make computations harder or 
easier. However, 
(iii) is much easier to define and use in the higher codimension case
(to the point that we did not even try to give a precise meaning to (iii')).

We do not prove Lemma \ref{lMPcod1} here, because the proof is the same as for
Lemma \ref{lMPg} below, which is its higher codimension version.

Notice that Lemma \ref{lMPcod1} is really a maximum principle where we use the values of $u$
on a boundary $(\Gamma \cap E) \cup (\Omega \cap F \sm E)$
that surrounds $E$ to control the values of $u$ in $\Omega \cap E$, 
but here the boundary also has a thick part, $\Omega \cap F\sm E$. 
This makes it easier to define Dirichlet conditions on that thick set, which is the main 
point of (iii). 

The first assumption (i) is a technical hypothesis, it can be seen as a way to control $u$ at infinity, 
which is needed because we actually do not require $E$ or even $F$ to be bounded.

Lemma \ref{lMPcod1} will be used in different situations. For instance, we will use it when 
$E = 2B$ and $F= B$, where $B$ is a ball centered on $\Gamma$.

\bigskip

\noindent {\bf Step 1 (modified):} We want to find an upper estimate for 
$u$ that avoids using 
the measure $\omega^X_{\Omega \cap B_{3/2}}$. 
Lemma 1.3.5 in \cite{KenigB} gives, as before, that $u(X) \leq Cu(X_0)$ for any $X \in B_{7/4} \cap \Omega$.  
The following result states the non-degeneracy of the harmonic measure.
\begin{equation}\label{cod1cp11}
\omega^X(\Gamma \setminus B_{5/4}) \geq C^{-1}  \ \ \ \text{ for }
X \in \Omega \setminus B_{3/2},
\end{equation}
where $C>0$ is independent of $x_0$, $r$ or $X$.
Indeed, when $X \in \Omega \setminus B_{3/2}$ is close to the boundary, the lower bound \eqref{cod1cp11} can be seen as a consequence of the H\"older continuity of solutions. The proof for all $X \in \Omega \setminus B_{3/2}$ is then obtained with the Harnack inequality. See 
\cite[Lemma 1.3.2]{KenigB} or Lemma \ref{ltcp4} below for the proof.

From there, we deduce that
\begin{equation}\label{cod1cp12}
u(X) \leq Cu(X_0) \omega^X(\Gamma \setminus B_{5/4})  \ \ \ \text{ for } 
X \in \Omega \cap [ B_{7/4} \setminus B_{3/2}].
\end{equation}
We want to use the maximum principle given above (Lemma \ref{lMPcod1}), with $E = B_{7/4}$ and 
$F = B_{3/2}$. However, the function $X \to \omega^X(\Gamma \setminus B_{5/4})$ doesn't satisfy the assumption (i) of Lemma \ref{lMPcod1}. 
So we take $h \in C^\infty(\R^n)$ such that $0\leq h \leq 1$, 
$h \equiv 1$ on $\R^n \setminus B_{5/4}$, and $h \equiv 0$ on $\R^n \setminus B_{9/8}$. Define 
$u_h$ as the only solution of 
$Lu_h =0$ in $\Omega$ with the Dirichlet condition $u_h = h$ on $\Gamma$. We have $u_h(X) \geq \omega^X(\Gamma \setminus B_{5/4})$ by the positivity of the harmonic measure,
and thus the bound \eqref{cod1cp12} yields the existence of $K_0>0$ (independent of $x_0$, $r$, $X$) such that 
\begin{equation}\label{cod1cp12b}
u_1(X):=  K_0 u(X_0) u_h(X) - u(X) \geq 0   \ \ \ \text{ for } 
X \in \Omega \cap [ B_{7/4} \setminus B_{3/2}].
\end{equation}
It would be easy to 
check that $u_1 \geq 0$ on $\Gamma \cap B_{7/4}$ and 
$\int_E |\nabla u_1| < +\infty$, but we leave the details because they will be done in the larger codimension case. So
Lemma \ref{lMPcod1} gives that $u_1 \geq 0$ in $\Omega \cap B_{7/4}$, that is
\begin{equation}\label{cod1cp12c}
u(X) \leq K_0 u(X_0) u_h(X) \leq K_0 u(X_0) \omega^X(\Gamma \setminus B_{9/8})  
\ \ \ \text{ for }
X \in \Omega \cap B_{7/4},
\end{equation}
by definition of $h$ and positivity of the harmonic measure.

\medskip 

\noindent {\bf Step 2 (modified):} In the same way, we want to adapt Step 2 of the proof of 
Lemma~\ref{lcpcod1}. If we want to 
proceed as in Step 1,
we would like to find and use a function $f$ that keeps the main properties of the object 
$\omega^X_{\Omega \cap B_{7/4}}(E)$, where 
$E = \big\{y\in \dr B_{7/4}, \, ; \, \dist(y,\Gamma) \geq r/2\big\}$.
For instance, $f$ such that 
\begin{enumerate}[(a)]
\item $f$ is a solution of 
$Lf = 0$ in $\Omega \cap B_{7/4}$,
\item $f \leq 0$ in $\Gamma \cap B_{7/4}$,
\item $f\leq 0$ in $\{X\in \Omega, \, \dist(X,\Gamma) < r/2\} \cap [B_{7/4} \setminus B_{3/2}]$,
\item $f(X) \approx \omega^X(\Gamma \setminus B_{9/8})$ in $\Omega \cap B_1$, 
in particular $f>0$ in $\Omega \cap B_1$.
\end{enumerate}
The last point is important to be able to conclude (in Step 3). 
It is given by the doubling property of the harmonic measure \eqref{cod1cp10} in the previous proof of Lemma \ref{lcpcod1}.

\smallskip

We were not able to find such a function $f$. 
However, we can construct an $f$ that satisfies some conditions close to
(a), (b), (c) and (d) above. Since $f$ fails to verify exactly (a), (b), (c) and (d), extra computations are needed.

\smallskip

First, note that it is enough to prove that there exists $M>0$ depending only on
$n$ and the ellipticity constants of $A$, 
such that for
$y_0\in \Gamma$, 
$s>0$, and any non-negative solution $v$ to $Lv = 0$ in 
$B(y_0,Ms)$
\begin{equation} \label{cod1cp13}
v(X) \geq C^{-1} v(A_s(y)) \omega^X(\Gamma \setminus B(y_0,2s)) \ \ \ \text{ for }
X \in \Omega \cap B(y_0,s),
\end{equation}
where here the the corkscrew point $A_s(y)$ is just $A_s(y) = (y,s)$.
Indeed, if we have \eqref{cod1cp13}, then we can prove 
that, in the situation of Step 2,
\begin{equation} \label{cod1cp13a}
v(X) \geq C^{-1} v(X_0) \omega^X(\Gamma \setminus B_2) \ \ \ \text{ for }
X \in \Omega \cap B_1
\end{equation}
by using a proper covering of the domain $\Omega \cap B_1$ (if 
$X \in \Omega \cap B_1$ lies within $\frac{1}{4M}$ of $\Gamma$, say,  
we use \eqref{cod1cp13} with $y_0 \in \Gamma$ close to $X$ and 
$s = \frac 1{2M}$,
and then the Harnack inequality; 
if instead $X \in \Omega \cap B_1$
is far from the boundary $\Gamma$, 
\eqref{cod1cp13a} is only a consequence of the Harnack inequality). 

The conclusion \eqref{cod1cp3a} comes then from \eqref{cod1cp12c}, \eqref{cod1cp13a} and the doubling property of the harmonic measure (see for instance (1.3.7) in \cite{KenigB}). 

\medskip

It remains to prove the claim \eqref{cod1cp13}. 
Let $y_0 \in \Gamma$, $s>0$, and $v$ be given.
Write $Y_0$ for $A_s(y_0)$ and, for 
$\alpha>0$,
write $B'_{\alpha}$ for $B(y_0,\alpha s)$. Let $K_1$ and $K_2$ be 
some positive constants that are independent of $y_0$, $s$, and $X$, and will be chosen later.
Pick 
$h_{K_2} \in C^\infty(\R^n)$ such that $h_{K_2} \equiv 1$ on $\R^n \setminus B'_{K_2}$, 
$0\leq h_{K_2} \leq 1$ 
everywhere,
and $h_{K_2} \equiv 0$ on $B_{K_2/2}$.
Define $u_{K_2}$ as the solution of 
$Lu_{K_2} = 0$ in $\Omega$ with the Dirichlet condition $h_{K_2}$ on $\Gamma$, that will serve as a 
smooth substitute for 
$X \to \omega^X(\Gamma \setminus B'_{K_2})$.
Define a 
function $f_{y_0,s}$ on $\Omega \setminus \{Y_0\}$ by
\begin{equation}\label{cod1cp14a}
f_{y_0,s}(X) = s^{n-2} g(X,Y_0) - K_1 u_{K_2}.
\end{equation}

When
$|X-Y_0| \geq s/8$, the term  $s^{n-2} g(X,Y_0)$ is uniformly bounded: this fact can be found 
in \cite{HoK} (for $n\geq 3$) and \cite{DK} (for $n=2$).
In addition, due to the non-degeneracy of the harmonic measure 
(same argument as for \eqref{cod1cp11}, similar to \cite[Lemma 1.3.2]{KenigB}), 
there exists $C>0$ (independent of $K_2>0$) such that 
$\omega^X(\Gamma \setminus B'_{K_2}) \geq C^{-1}$ for
$X \in \Omega \setminus B'_{2K_2}$. 
Hence
we can find $K_1>0$ such that for any choice of $K_2 >0$, we have
\begin{equation}\label{cod1cp15}
f_{y_0,s}(X) \leq 0 \ \ \ \text{ for }
X \in \Omega \setminus B'_{2K_2}.
\end{equation}
 
 \medskip
 
 For the sequel, we state an important result. There holds
 \begin{equation} \label{cod1cp15a}
 C^{-1} s^{n-2} g(X,Y_0) \leq \omega^X(\Gamma \setminus B'_2) \leq  C s^{n-2} g(X,Y_0) 
 \ \ \ \text{ for }
 X \in \Omega \cap [B'_1 \setminus B(Y_0,s/8)],
 \end{equation}
 where $C>0$ depends only on $n$ and the ellipticity constant of the matrix $A$.
This result can be seen as an analogue of \cite[Corollary 1.3.6]{KenigB}. 
It is proven in the higher codimension case in Lemma \ref{ltcp3} below. 
The equivalence \eqref{cod1cp15a} can be seen as a weak version of the comparison principle, dealing only with harmonic measures and Green functions. It can be proven, like \cite[Corollary 1.3.6]{KenigB}, before the full comparison principle by using the specific properties of the Green functions and 
harmonic measures.

 \smallskip
 
We want to take $K_2>0$ so large that
\begin{equation}\label{cod1cp16}
f_{y_0,s}(X) \geq \frac{1}{2} s^{n-2} g(X,Y_0) \ \ \ \text{ for }
X \in \Omega \cap [B'_1 \setminus B(Y_0,s/8)].
\end{equation}
We build a smooth substitute $u_4$ for $\omega^X(\Gamma \setminus B'_2)$, namely the solution of
$Lu_4 = 0$ in $\Omega$ with the Dirichlet condition $u_4 = h_4$ on $\Gamma$, 
where $h_4\in C^\infty(\R^n)$,
$h_4 \equiv 1$ on $\R^n \setminus B'_4$, $0\leq h_4 \leq 1$ 
everywhere,
and $h_4 \equiv 0$ on $B'_2$. Thanks to the H\"older continuity of solutions and the non-degeneracy of the harmonic measure, we have that for 
$X\in \Omega \cap [B'_{10} \setminus B'_5]$ and any $K_2 \geq 20$,
\begin{equation}\label{cod1cp16a}
C^{-1} u_{K_2}(X) \leq (K_2)^{-\alpha} \leq C (K_2)^{-\alpha} u_4(X),
\end{equation}
with constants $C,\alpha>0$ independent of $K_2$, $y_0$, $s$ or $X$. 
Since the functions $u_{K_2}$ and $u_4$ are smooth
enough, 
and $C^{-1} u_{K_2} = 0 \leq C (K_2)^{-\alpha} u_4(X)$ on $\Gamma \cap B'_{10}$, the maximum principle (Lemma \ref{lMPcod1}) implies that
\begin{equation}\label{cod1cp16b}
u_{K_2}(X) \leq C (K_2)^{-\alpha} u_4(X) \ \ \ \text{ for }
X \in \Omega \cap B'_{10}.
\end{equation}
We use \eqref{cod1cp15a} to get that for $K_2 \geq 20$,
\begin{equation}\label{cod1cp16c}
K_1 u_{K_2}(X) \leq C K_1 (K_2)^{-\alpha} s^{n-2} g(X,Y_0)  \ \ \ \text{ for }
X \in \Omega \cap [B'_1 \setminus B(Y_0,s/8)].
\end{equation}
The inequality \eqref{cod1cp16} can be now obtained by taking $K_2 \geq 20$ so that $C K_1 (K_2)^{-\alpha} \leq \frac12$. From \eqref{cod1cp16} and \eqref{cod1cp15a}, we deduce that
\begin{equation}\label{cod1cp16d}
f_{y_0,s}(X) \geq C^{-1} \omega^X(\Gamma \setminus B'_2) \ \ \ \text{ for }
X \in B'_1 \setminus B(Y_0,s/8),
\end{equation}
where $C>0$ depends only on $n$ and the ellipticity constants of the matrix $A$. 

\medskip 

Recall that 
our goal is to prove
the claim \eqref{cod1cp13}, which will be established with $M= 4K_2$. 
Let $v$ be a non-negative solution of
$Lv = 0$ in $\Omega \cap B'_{4K_2}$. We can find $K_3>0$ (independent of $y_0$, $s$ and $X$) such that
\begin{equation}\label{cod1cp17}
v(X) \geq K_3 v(Y_0) f_{y_0,s}(X) \ \ \ \text{ for } 
X \in B(Y_0,\frac14 s) \setminus B(Y_0,\frac18 s).
\end{equation}
Indeed $f_{y_0,s}(X) \leq s^{n-2} g(X,Y_0) \leq C$ when $|X-Y_0| \geq s/8$, 
thanks to the pointwise bounds on the Green function (see \cite{HoK}, \cite{DK}) 
and $ v(X) \geq C^{-1} v(Y_0)$ when $|X-Y_0| \leq s/4$ because of the Harnack inequality. Also, 
thanks to \eqref{cod1cp15},
\begin{equation}\label{cod1cp18}
v(X) \geq 0 \geq K_3 v(Y_0) f_{y_0,s}(X) \ \ \ \text{ for } 
X \in \Omega \cap [B'_{4K_2} \setminus B'_{2K_2}]
\end{equation}
and it is easy to check that
\begin{equation}\label{cod1cp19}
v(y) \geq 0 \geq K_3 v(Y_0) f_{y_0,s}(y) \ \ \ \text{ for } 
y \in \Gamma \cap B'_{4K_2}.
\end{equation}
We can apply our maximal principle, that is Lemma \ref{lMPcod1}, with $E = B'_{4K_2} \setminus B(Y_0,\frac18 s)$ and $F = B'_{2K_2} \setminus B(Y_0,\frac14 s)$ and get that
\begin{equation}\label{cod1cp20}
v(X) \geq K_3 v(Y_0) f_{y_0,s}(X) \ \ \ \text{ for } 
X \in \Omega \cap [B'_{4K_2} \setminus B(Y_0,\frac18 s)].
\end{equation}
In particular, thanks to \eqref{cod1cp16d},
\begin{equation}\label{cod1cp21}
v(X) \geq C^{-1} v(Y_0) \omega^X(\Gamma \setminus B'_2) \ \ \ \text{ for }
X \in \Omega \cap [B'_{1} \setminus B(Y_0,\frac18 s)].
\end{equation}
Since both $v$ and $X \to \omega^X(\Gamma \setminus B'_2)$ are solutions in $\Omega \cap B'_2$, the Harnack inequality proves
\begin{equation}\label{cod1cp23}
v(X) \geq C^{-1} v(Y_0) \omega^X(\Gamma \setminus B'_2) \ \ \ \text{ for }
X \in \Omega \cap B'_{1}.
\end{equation}
The claim \eqref{cod1cp13} follows, which ends our alternative proof of Lemma \ref{lcpcod1}.

\section{The case of codimension higher than 1}

We need first the following version of the maximum principle.

\begin{lemma} \label{lMPg}
Let $F \subset \R^n$ be a closed set and $E \subset \R^n$ an open set such that
$F \subset E \subset \R^n$ and $\dist(F,\R^n \setminus E)>0$. 
Let $u \in \WW(E)$ be a supersolution 
for $L$
in $\Omega \cap E$ such that 
\begin{enumerate}[(i)]
\item $\ds \int_{E} |\nabla u|^2 \, dm < +\infty$,
\item $Tu \geq 0$ a.e. on $\Gamma \cap E$,
\item $u \geq 0$ a.e. in $(E \setminus F) \cap \Omega$.
\end{enumerate}
Then $u\geq 0$ a.e. in $E \cap \Omega$.
\end{lemma}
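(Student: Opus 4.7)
The plan is to use the standard technique of testing the supersolution against its own negative part, with care to ensure the test function lies globally in $W_0$ even though $u$ lives only on $E$.

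\textbf{Step 1 (Test function).} Set $v := \max\{-u, 0\}$ on $E$, and let $\tilde v$ be the extension by zero to $\R^n$. Hypothesis (iii) says $v \equiv 0$ on $(E \setminus F) \cap \Omega$, so $\tilde v$ vanishes on a fixed neighborhood of $\R^n \setminus E$ of width $\tfrac{1}{2}\dist(F, \R^n \setminus E)$. Lemma~\ref{lcompo}(b), applied locally in $E$ (via cutoffs $\phi \in C_0^\infty(E)$ and $\phi u \in W$), yields the distributional gradient
$$\nabla \tilde v = -\mathbf{1}_{\{u<0\}\cap E}\,\nabla u \quad \text{a.e.,}$$
which lies in $L^2(\Omega,w)$ by (i); hence $\tilde v \in W$. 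To see that $\tilde v \in W_0$: on $\Gamma \cap E$, Lemma~\ref{lcompo}(b) together with (ii) gives $T\tilde v = (Tu)^- = 0$; on $\Gamma \setminus E$, $\tilde v$ vanishes identically in a neighborhood, so the trace is $0$ as well. Note also that $\tilde v \geq 0$ and $\supp \tilde v \subset F$.

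\textbf{Step 2 (Supersolution inequality against $\tilde v$).} The key technical point is to establish
\begin{equation} \label{keyineqMPg}
a(u, \tilde v) = \int_\Omega \A\, \nabla u \cdot \nabla \tilde v \, dm \geq 0,
\end{equation}
since the supersolution hypothesis only gives this for $\varphi \in C_0^\infty(\Omega \cap E)$ with $\varphi \geq 0$, whereas $\tilde v$ need not be compactly supported (if $F$ is unbounded). Following the template of Lemma~\ref{rdefsol} and Remark~\ref{rdefsol2}, I approximate: pick $\eta_R \in C_c^\infty(\R^n)$ with $0 \leq \eta_R \leq 1$ and $\eta_R \equiv 1$ on $B(0,R)$, and set $\tilde v_R = \eta_R \tilde v$. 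Because $\supp \tilde v \subset F$ sits at positive distance from $\R^n \setminus E$, each $\tilde v_R$ is a nonnegative element of $W_0$ with compact support in $E$. Approximating $\tilde v_R$ by nonnegative functions in $C_0^\infty(\Omega \cap E)$ (via Lemma~\ref{ldens0}, with Lemma~\ref{lconvol} using a nonnegative radial mollifier to preserve positivity, and cutting off a uniform neighborhood of $\Gamma$), the supersolution inequality passes to the limit and gives $a(u, \tilde v_R) \geq 0$. Finally $\tilde v_R \to \tilde v$ in $W$ as $R \to \infty$ by the dominated convergence theorem and (i); since $a(u, \cdot)$ is continuous on $W$ by (i) and \eqref{ABounded}, \eqref{keyineqMPg} follows.

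\textbf{Step 3 (Conclusion).} Substituting the formula from Step 1 into \eqref{keyineqMPg} and invoking the ellipticity \eqref{AElliptic},
$$0 \leq a(u, \tilde v) = -\int_{\{u<0\}\cap E} \A\,\nabla u \cdot \nabla u \, dm \leq -C_1^{-1}\int_{\{u<0\}\cap E} |\nabla u|^2\, dm \leq 0.$$
Hence $\nabla \tilde v = 0$ a.e., so $\|\tilde v\|_W = 0$. Since $\|\cdot\|_W$ is a norm on $W_0$ by Lemma~\ref{lcomp2}, $\tilde v \equiv 0$, which says exactly $u \geq 0$ a.e. on $\Omega \cap E$.

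\textbf{Main obstacle.} The only delicate point is Step 2: promoting $\tilde v$ into a valid test function against an inequality that is a priori only stated for smooth nonnegative functions compactly supported in $\Omega \cap E$. The hypothesis $\dist(F, \R^n \setminus E) > 0$ is precisely the geometric fact that makes this routine, since it creates the buffer needed to cut off and mollify $\tilde v$ without ever approaching $\partial E$ or $\Gamma \setminus E$; condition (i) then ensures the global energy convergence needed to pass to the limit.
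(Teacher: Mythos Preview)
Your argument is correct and follows essentially the same route as the paper: test the supersolution against $\min\{u,0\}$ extended by zero (your $-\tilde v$), show this lies in $W_0$ with support in $F$, approximate by sign-preserving test functions in $C^\infty_0(\Omega\cap E)$, and conclude by coercivity on $W_0$. The paper streamlines your Step~1 by writing $v=\min\{0,\eta u\}$ for a single cutoff $\eta$ with $\eta\equiv 1$ on $F$ and support in $E$, which lands directly in $W$ via Lemma~\ref{lcompo}, and collapses your Step~2 into a direct appeal to the explicit construction inside Lemma~\ref{ldens0} (which already produces compactly supported, sign-preserving approximants in $C^\infty_0(\Omega\cap E)$).

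One small caution in your Step~2: the convergence $\eta_R\tilde v\to\tilde v$ in $W$ does not follow from dominated convergence and hypothesis~(i) alone with standard cutoffs, since (i) controls $\nabla\tilde v$ but says nothing about $\tilde v\,\nabla\eta_R$. You need either the logarithmic cutoffs $\phi_R$ from Part~(ii) of the proof of Lemma~\ref{ldens0}, combined with the Poincar\'e inequality \eqref{1.5-bisbis} for $\tilde v\in W_0$, or simply invoke Lemma~\ref{ldens0} directly as the paper does.
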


\bp The present proof is a slight variation of the proof of Lemma~\ref{lMP}.

Set $v := \min\{u,0\}$ in $E \cap \Omega$ and $v:=0$ in $\Omega \setminus E$. Note that $v\leq 0$. We want to use $v$ as a test function. We claim that
\begin{equation} \label{GreenU13}
\text{$v$ lies 
in $W_0$ and is supported in $F$. }
\end{equation}
Pick 
$\eta \in C^\infty_0(E)$ such that $\eta = 1$ in $F$ and $\eta \geq 0$ everywhere, it is indeed possible because $\dist(F,\R^n \setminus E)>0$. 
Since $u\in \WW(E)$, we have $\eta u \in W$, from which we deduce $\min\{0, \eta u\} \in W$ by
Lemma \ref{lcompo}. 
By (iii), $v = \min\{0, \eta u\}$ almost everywhere and hence $v\in W$. 

Notice 
that $T(\eta u) \geq 0$ because of Assumption (ii) 
(and Lemma~\ref{defTrWE}). Hence 
$v = \min\{\eta u,0\} \in W_0$. 
And since (iii) also proves that $v$ is supported in $F$, the claim \eqref{GreenU13} follows.

\medskip

Since $v$ is in $W_0$, Lemma~\ref{ldens0} proves that $v$ can be approached in $W$ by a sequence of 
functions $(v_k)_{k\geq 1}$ in $C^\infty_0(\Omega)$
(i.e., that are compactly supported in $\Omega$; see \eqref{Cinfty0def}).
Note also that the construction used in Lemma~\ref{ldens0} allows us, 
since $v\leq 0$ is supported in $F$, to take 
$v_k \leq 0$ and compactly supported in $E$.
Definition \ref{defsubsol} gives
\begin{equation} \label{GreenU14}
\int_{E} \A \nabla u \cdot \nabla v_k \, dm = \int_{\Omega} \A \nabla u \cdot \nabla v_k \, dm \leq 0
\end{equation}
and since the map
\begin{equation} \label{GreenU15}
\varphi \in W \to \int_{E} \A \nabla u \cdot \nabla \varphi \, dm
\end{equation}
is bounded on $W$ thanks to assumption (i) and \eqref{ABounded}, we deduce that
\begin{equation} \label{GreenU16}
\int_{E} \A \nabla u \cdot \nabla v \, dm \leq 0.
\end{equation}

\medskip

Now Lemma~\ref{lcompo} gives
\begin{equation} \label{GreenU17}
\nabla v = \left\{ \begin{array}{ll}  \nabla u & \text{ if } u<0 \\ 0 & \text{ if } u\geq 0 \end{array}\right.
\end{equation}
and so \eqref{GreenU16} becomes
\begin{equation} \label{GreenU18}
\int_\Omega \A \nabla v \cdot \nabla v \, dm = \int_{E} \A \nabla u \cdot \nabla v \, dm  \leq 0.
\end{equation}
Together with the ellipticity condition \eqref{AElliptic}, we obtain $\|v\|_W \leq 0$. Recall that $\|.\|_W$ is a norm on $W_0 \ni v$, hence $v = 0$ a.e. in $\Omega$. We conclude from the definition of $v$ that $u \geq 0$ a.e. in $E \cap \Omega$.
\ep

Let us use the maximum principle above to prove the following result
on the Green function.

\begin{lemma} \label{ltcp5}
We have
\begin{equation} \label{tcp5}
 g(x,y) \leq C \min\{\delta(y),\delta(x)\}^{1-d}  \ \ \ \text{ for }
 x,y\in \Omega \text{ such that } |x-y| \geq \delta(y)/4,
\end{equation}
where the constant $C>0$ depends only on $d$, $n$, $C_0$ and $C_1$.
\end{lemma}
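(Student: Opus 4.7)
For $d\ge 1$ the bound is immediate from Lemma~\ref{GreenEx}(v): since $1-d\le 0$, the hypothesis $|x-y|\ge\delta(y)/4$ gives $|x-y|^{1-d}\le C\delta(y)^{1-d}\le C\min\{\delta(x),\delta(y)\}^{1-d}$, the second inequality because $t\mapsto t^{1-d}$ is nonincreasing. So I treat only $d<1$, where the target is genuinely sharper than \eqref{GreenE7}.

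When $d<1$ the map $t\mapsto t^{1-d}$ is increasing, and $\min\{\delta(x),\delta(y)\}^{1-d}=\min\{\delta(x)^{1-d},\delta(y)^{1-d}\}$. Using the symmetry $g(x,y)=g_T(y,x)$ of Lemma~\ref{GreenSym} together with the fact that $L_T$ satisfies the same structural estimates, proving the one-sided statement
\[
(\star)\qquad g(x,y)\le C\delta(x)^{1-d}\ \text{whenever}\ |x-y|\ge\delta(y)/4\ \text{and}\ \delta(x)\le\delta(y)
\]
automatically yields the companion bound $g(x,y)\le C\delta(y)^{1-d}$ in the case $\delta(y)\le\delta(x)$ by swapping $x,y$ and applying the argument to $g_T$. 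A short Harnack-chain argument based on Lemma~\ref{lHC2} reduces the range $\delta(x)\in[\delta(y)/2,\delta(y)]$ to the sub-range $\delta(x)\le\delta(y)/2$, since in the former case $\delta(x)\asymp\delta(y)$ and both sides of $(\star)$ are comparable up to universal constants; so I may assume $\delta(x)\le\delta(y)/8$ for definiteness.

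With such $x,y$ fixed, pick $x_0\in\Gamma$ with $|x-x_0|=\delta(x)$, and set $B=B(x_0,\delta(y)/2)$ and $\ell=|x-y|$. Since $|y-x_0|\ge\delta(y)>\delta(y)/2$, the pole $y$ lies outside $B$, and $u(z):=g(z,y)$ is a nonnegative $L$-solution on $B\cap\Omega$ with $Tu=0$ on $\Gamma\cap B$. On $B$ every point satisfies $4|z-y|\ge\delta(y)$, so Lemma~\ref{GreenEx}(v) gives $u(z)\le C|z-y|^{1-d}\le C\ell^{1-d}$ throughout $B$. My plan is to invoke the maximum principle Lemma~\ref{lMPg} on $F=B\cap\Omega$, comparing $u$ to a barrier $\Phi$ that is a supersolution in $F$, satisfies $T\Phi\ge 0$ on $\Gamma\cap B$ and $\Phi\ge u$ on $\partial B\cap\Omega$, and that obeys the pointwise bound $\Phi(x)\le C\delta(x)^{1-d}$. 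Lemma~\ref{lMPg} then gives $u\le\Phi$ on $F$, and evaluating at $z=x$ proves $(\star)$.

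The construction of $\Phi$ is the crux of the argument and the step I expect to require the most work. In the model case $\Gamma=\mathbb{R}^d$ and $A(z)=\delta(z)^{d+1-n}I$, a direct computation yields $L(\delta^{1-d})=d(1-d)\delta^{-n}>0$, so $\delta^{1-d}$ itself is a supersolution and sets the expected exponent $1-d$ for boundary decay. For general Ahlfors-regular $\Gamma$ and general $A$, the barrier is built as a rescaled, smoothly truncated version of a regularized distance power---analogous to $D^{1-d}$ with $D$ as in \eqref{1.3}---localized to $B$ by a smooth cutoff matched to $\partial B$ and normalized so that $\Phi\ge C\ell^{1-d}\ge u$ on $\partial B\cap\Omega$. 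Verifying that $\Phi$ is a weak supersolution with nonnegative trace on $\Gamma\cap B$ is the main technical obstacle; it relies on the Caccioppoli inequality at the boundary (Lemma~\ref{CaccioB}), the Sobolev--Poincar\'e inequality (Lemma~\ref{lSob}), the Ahlfors regularity \eqref{1.1}, and the volume estimates of Lemma~\ref{lwest} to control the error terms generated by the regularization and the cutoff.
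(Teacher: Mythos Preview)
Your reduction to $d<1$ and your use of the symmetry $g(x,y)=g_T(y,x)$ are fine, but the heart of your argument---the construction of a barrier $\Phi$ behaving like a regularized $\delta^{1-d}$---is not actually carried out, and carrying it out for a \emph{general} coefficient matrix $A$ satisfying only \eqref{ABounded2}--\eqref{AElliptic2} is genuinely problematic. Your own model computation $L(\delta^{1-d})=d(1-d)\delta^{-n}>0$ uses the specific operator $A=\delta^{d+1-n}I$ and the flat boundary $\Gamma=\mathbb{R}^d$; for an arbitrary $A$ and an arbitrary Ahlfors-regular $\Gamma$, no power of $\delta$ (or of $D$ from \eqref{1.3}) need be a supersolution. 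Listing Caccioppoli, Sobolev--Poincar\'e and volume estimates does not close this gap: those control oscillations of solutions, they do not produce a supersolution out of a non-solution. So as written the proof is incomplete at its main step.

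The paper avoids the barrier problem entirely by localizing around the \emph{other} variable. One first proves the easier half $g(x,y)\le C\,\delta(y)^{1-d}$ for all $x\in\Omega\setminus B(y,\delta(y)/8)$: by \eqref{GreenE7} one already has $g(x,y)\le K_1\delta(y)^{1-d}$ on the annulus $B(y,\delta(y)/4)\setminus B(y,\delta(y)/8)$, so the \emph{constant} function $K_1\delta(y)^{1-d}$ serves as the barrier; $u(x)=K_1\delta(y)^{1-d}-g(x,y)$ is a solution on $\Omega\setminus\{y\}$ with nonnegative trace and nonnegative values on $E\setminus F$ (where $E=\R^n\setminus\overline{B(y,\delta(y)/8)}$, $F=\R^n\setminus B(y,\delta(y)/4)$), and Lemma~\ref{lMPg} propagates $u\ge0$ to all of $E$. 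The bound $g(x,y)\le C\,\delta(x)^{1-d}$ then follows from this and the symmetry $g(x,y)=g_T(y,x)$, after the elementary observation that $|x-y|\ge\delta(y)/4$ forces $|x-y|\ge\delta(x)/8$. No non-constant barrier is ever needed.
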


\begin{remark}
Lemma \ref{ltcp5} is an improvement on the pointwise bounds \eqref{GreenE7} only when $d<1$.
\end{remark}

\bp Let $y \in \Omega$. Lemma \ref{GreenEx} (v) gives 
\begin{equation} \label{tcp34a}
 g(x,y) \leq K_1 \delta(y)^{1-d}  \ \ \ \text{ for }
 x \in B(y,\delta(y)/4) \setminus B(y,\delta(y)/8)
\end{equation}
for some $K_1>0$ 
that is
independent of $x$ and $y$.
Define 
$u$ on $\Omega \setminus \{y\}$ by
$u(x) = K_1\delta(y)^{1-d} - g(x,y)$. Notice 
that $u$ is a solution in $\Omega \setminus \overline{B(y,\delta(y)/4)}$, 
by \eqref{GreenE6}. 
Also, 
thanks to Lemma~\ref{GreenEx} (i),  the integral 
$\ds \int_{\Omega \setminus B(y,\delta(y)/8)} |\nabla u|^2 \, dm  =
\int_{\Omega \setminus B(y,\delta(y)/8)} |\nabla g(.,y)|^2 \, dm$ is finite, and 
$Tu = K_1 
\delta(y)^{1-d}$ 
is non-negative a.e. on $\Gamma$.
In addition, due to \eqref{tcp34a}, we have $u\geq 0$ on $B(y,\delta(y)/4) \setminus B(y,\delta(y)/4)$.
Thus
$u$ satisfies all the assumption of 
Lemma \ref{lMPg} (the maximum principle), where we choose 
$E = \R^n \setminus \overline{B(y,\delta(y)/8)}$ and $F = \R^n \setminus B(y,\delta(y)/8)$,
and which yields
\begin{equation} \label{tcp34b}
 g(x,y) \leq C \delta(y)^{1-d}  \ \ \ \text{ for }
 x \in \Omega \setminus B(y,\delta(y)/8).
\end{equation}

\medskip

It remains to prove that
\begin{equation} \label{tcp34}
 g(x,y) \leq C \delta(x)^{1-d}  \ \ \ \text{ for }
 x,y \in \Omega \text{ such that }   |x-y | \geq \delta(y)/4.
\end{equation}
But
Lemma \ref{GreenSym} says that 
$g(x,y) = g_T(y,x)$, where $g_T$ is the Green function associated to the operator $L_T = - \diver A^T \nabla$. The above argument proves that
\begin{equation} \label{tcp35}
 g(x,y) = g_T(y,x) \leq C \delta(x)^{1-d}  \ \ \ \text{ for }
 x,y \in \Omega \text{ such that }   |x-y| \geq \delta(x)/8,
\end{equation}
which is \eqref{tcp34} once we remark that $|x-y | \geq \delta(y)/4$ implies that
$|x-y | \geq \delta(x)/8$.
\ep

Let us prove the existence of ``corkscrew points'' in $\Omega$.

\begin{lemma} \label{lNTA}
There exists $\epsilon>0$, that depends only upon the dimensions $d$ and $n$ and the constant $C_0$,
such that for 
$x_0\in \Gamma$ and
$r>0$, there exists a point $A_r(x_0) \in \Omega$ such that
\begin{enumerate}[(i)]
\item $|A_r(x_0) - x_0| \leq r$,
\item $\delta(A_r(x_0)) \geq \epsilon r$.
\end{enumerate}
In particular, 
$\delta(A_r(x_0)) \approx |A_r(x_0) - x_0| \approx r$.
\end{lemma}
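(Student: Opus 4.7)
\bp (Plan) The plan is to argue by contradiction using the Ahlfors regularity condition \eqref{1.1}, exploiting the fact that a set of dimension $d < n-1$ (indeed, of any dimension $d < n$) cannot ``fill up'' a ball in $\R^n$. The argument is a standard volume-versus-measure packing comparison, of the same flavor as the one used in Lemma \ref{lHC}.

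Fix $x_0 \in \Gamma$ and $r > 0$, and let $\epsilon \in (0,1/4)$ be a small parameter to be chosen. Suppose for contradiction that every point $z \in B(x_0,r)$ satisfies $\delta(z) < \epsilon r$. First, by a standard greedy selection, choose a maximal collection of points $z_1,\dots,z_N \in B(x_0, r/2)$ such that the balls $B(z_i, 2\epsilon r)$ are pairwise disjoint; a volume comparison in $B(x_0, r)$ yields a lower bound $N \geq c_n \epsilon^{-n}$ for some dimensional constant $c_n > 0$.

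Next, for each $i$, the assumption gives some $y_i \in \Gamma$ with $|y_i - z_i| < \epsilon r$, so that $y_i \in \Gamma \cap B(x_0, 2r)$ and the balls $B(y_i, \epsilon r)$ are pairwise disjoint (since $|y_i - y_j| \geq |z_i - z_j| - 2\epsilon r \geq 4\epsilon r - 2\epsilon r = 2\epsilon r$). Applying Ahlfors regularity \eqref{1.1} both to each $B(y_i, \epsilon r)$ (from below) and to $B(x_0, 2r)$ (from above), and using disjointness,
\begin{equation*}
N \, C_0^{-1} (\epsilon r)^d \leq \sum_{i=1}^N \sigma(B(y_i, \epsilon r)) \leq \sigma(B(x_0, 2r)) \leq C_0 (2r)^d,
\end{equation*}
so $N \leq 2^d C_0^2 \epsilon^{-d}$. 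Combined with $N \geq c_n \epsilon^{-n}$, this forces $\epsilon^{d-n} \leq C(d,n,C_0)$, which fails as soon as $\epsilon$ is chosen smaller than a threshold depending only on $d$, $n$, and $C_0$ (here we only use $d<n$, not $d<n-1$). Fixing such an $\epsilon$ produces the contradiction, hence a point $A_r(x_0) \in B(x_0, r)$ with $\delta(A_r(x_0)) \geq \epsilon r$, and the last assertion $\delta(A_r(x_0)) \approx |A_r(x_0)-x_0| \approx r$ is immediate from (i), (ii), and $x_0 \in \Gamma$. There is no real obstacle here; the only thing to be careful about is arranging the disjointness and containment so that the same constant $\epsilon$ works uniformly in $x_0$ and $r$, which is automatic by the translation and scaling invariance of \eqref{1.1}.
\ep
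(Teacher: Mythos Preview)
Your proof is correct and follows essentially the same packing argument as the paper: both pack roughly $\epsilon^{-n}$ disjoint balls of radius $\sim\epsilon r$ inside $B(x_0,r)$, observe that if each were within $\epsilon r$ of $\Gamma$ then Ahlfors regularity would force their number to be at most $C\epsilon^{-d}$, and obtain a contradiction for $\epsilon$ small since $d<n$. The only cosmetic differences are that the paper phrases the contradiction as ``every $B_i$ meets $\Gamma$'' rather than ``every point has $\delta<\epsilon r$,'' and works in $B(x_0,(1-2\epsilon)r)$ instead of $B(x_0,r/2)$.
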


In the sequel, for any $s>0$ and $y\in \Gamma$, $A_s(y)$ will denote 
any point in $\Omega$ satisfying the conditions (i) and (ii) of Lemma \ref{lNTA}.

\ms

\bp
Let $x_0 \in \Gamma$ and $r>0$ be given. 
Let $\epsilon \in (0,1/8)$ be small, to be chosen soon.
Let $z_1, \cdots z_N$ be a maximal collection of points of $B(x_0,(1-2\epsilon)r)$
that lie at mutual distances at least $4\epsilon r$. Set $B_i = B(z_i,\epsilon r)$;
notice that the $2B_i = B(z_i,2\epsilon r)$ are disjoint and contained in $B(x_0, r)$, 
and the $5B_i$ cover $B(x_0,(1-2\epsilon)r)$ (by maximality), so 
$\sum_{i} | 5B_i | \geq |B(x_0,(1-2\epsilon)r)| \geq C^{-1} r^n$ and hence
$N \geq C^{-1} \epsilon^{-n}$.

Suppose for a moment that every $B_i$ meets $\Gamma$. Pick $y_i \in \Gamma \cap B_i$,
notice that $B(y_i,\epsilon r) \subset 2B_i$, and then use the Ahlfors-regularity property \eqref{1.1}
to prove that
\begin{equation}\label{a11.46}
C_0^{-1} (\epsilon r)^{d} N \leq \sum_{i=1}^N   \H^d(\Gamma \cap B(y_i,\epsilon r))
\leq  \sum_{i=1}^N \H^d(\Gamma \cap 2B_i)
\leq \H^d(\Gamma \cap B(x_0,r)) \leq C_0 r^d
\end{equation}
because the $2B_i$ are disjoint and contained in $B(x_0,r)$. Thus
$N \leq C_0^2 \epsilon^{-d}$, which makes our initial estimate on $N$ impossible if we choose 
$\epsilon$ such that $\epsilon^{n-d} < C^{-1} C_0^{-2}$.

We pick $\epsilon$ like this, and by contraposition get that at least one $B_i$ does not meet $\Gamma$. 
We choose $A_r(x_0) = z_i$, and notice that $\delta(x_i) \geq \epsilon r$ because 
$B_i \cap \Gamma = \emptyset$, and $|z_i- x_0| \leq r$ by construction. The lemma follows.
\ep

We also need the following 
slight improvement of Lemma \ref{lNTA}.

\begin{lemma} \label{lNTA2}
Let $M_1 \geq 1$ be given. There exists $M_2> M_1$ (depending on $d$, $n$, $C_0$ and $M_1$) 
such that for any ball $B$ of radius $r$ and centered on $\Gamma$
and any $x\in B \setminus \Gamma$ such that $\delta(x) \leq \frac r{M_2}$, we can find
$y\in B \setminus \Gamma$ such that
\begin{enumerate}[(i)]
\item $\delta(y) \geq M_1 \delta(x)$,
\item $|x-y| \leq M_2 \delta(x)$.
\end{enumerate}
\end{lemma}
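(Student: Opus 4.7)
The plan is to apply Lemma \ref{lNTA} (existence of corkscrew points) at a carefully chosen boundary point and scale. Let $\epsilon \in (0, 1/8)$ be the constant from Lemma \ref{lNTA}, and take $M_2$ to be a sufficiently large multiple of $M_1/\epsilon$ (to be fixed at the end; I expect $M_2 \geq 4(M_1/\epsilon + 1)$ will suffice). Given $B = B(z_0, r)$ with $z_0 \in \Gamma$ and $x \in B \setminus \Gamma$ with $\delta(x) \leq r/M_2$, I would first choose $x_0 \in \Gamma$ realizing $|x - x_0| = \delta(x)$, and set the scale $s := M_1 \delta(x)/\epsilon$.

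Applying Lemma \ref{lNTA} to the ball $B(x_0, s)$ produces a point $y := A_s(x_0) \in \Omega$ with $|y - x_0| \leq s$ and $\delta(y) \geq \epsilon s = M_1 \delta(x)$, immediately giving (i). Condition (ii) follows from the triangle inequality:
\[
|y - x| \leq |y - x_0| + |x_0 - x| \leq s + \delta(x) = (M_1/\epsilon + 1) \delta(x) \leq M_2 \delta(x),
\]
provided $M_2 \geq M_1/\epsilon + 1$.

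The subtle step is verifying $y \in B$, i.e.\ that $|y - z_0| \leq r$. In the ``interior'' regime where $|x - z_0| \leq r - M_2 \delta(x)$, this is automatic from (ii):
\[
|y - z_0| \leq |y - x| + |x - z_0| \leq M_2 \delta(x) + (r - M_2 \delta(x)) = r.
\]
The difficulty is the ``boundary'' regime, where $x$ lies within $M_2 \delta(x)$ of $\partial B$; then the closest point $x_0 \in \Gamma$ to $x$ may lie outside $B$ or too close to $\partial B$, and a naive corkscrew at scale $s$ can escape $B$. To handle this I would replace $x_0$ by a point $x_0' \in \Gamma$ at distance $O(\delta(x))$ from $x$ but shifted slightly toward $z_0$, using the Ahlfors regularity of $\Gamma$ at scale $\delta(x) \ll r$ (together with $d < n-1$, which ensures that $\partial B$ is essentially flat relative to $\delta(x)$ and that $\Gamma$ provides abundant choices of nearby reference points). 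The corkscrew constructed at $x_0'$ then lies inside $B$ and still satisfies (i) and (ii), up to adjusting $M_2$ by an absolute constant.

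The main obstacle is exactly this geometric/combinatorial bookkeeping in the near-boundary case; the interior argument is a direct one-line application of Lemma \ref{lNTA}, while ensuring $y \in B$ when $x$ is very close to $\partial B$ is what forces $M_2$ to be chosen as a sufficiently large (but still explicit) multiple of $M_1/\epsilon$.
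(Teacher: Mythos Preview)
Your interior regime ($|x - z_0| \leq r - M_2\delta(x)$) is correct: a single call to Lemma~\ref{lNTA} at the nearest boundary point does the job. The boundary regime, however, has a genuine gap. The fix you propose --- find $x_0' \in \Gamma$ at distance $O(\delta(x))$ from $x$ but shifted toward $z_0$ --- does not follow from Ahlfors regularity. Ahlfors regularity controls only $\H^d(\Gamma \cap B)$ for balls $B$; it gives no directional information about where $\Gamma$ sits near $x_0$. Concretely: if you slide a base point $p$ along the segment $[x, z_0]$ and let $x_0'$ be the nearest point of $\Gamma$ to $p$, the triangle inequality only yields $|x_0' - z_0| \leq |p - z_0| + \delta(p) \leq |p - z_0| + |p - x| + \delta(x) = |x - z_0| + \delta(x)$, which is no improvement at all. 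Nothing here forces $B(x_0', s) \subset B$, so the corkscrew produced by Lemma~\ref{lNTA} may well land outside $B$.

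The paper avoids the interior/boundary split by rerunning the \emph{packing argument} of Lemma~\ref{lNTA} (rather than invoking the lemma as a black box) inside the lens
\[
B' = B\big(z_0,\, r - 2M_1\delta(x)\big) \cap B\big(x,\, (M_2 - 2M_1)\delta(x)\big).
\]
This region lies well inside $B$ by construction, and for $M_2 \geq 10M_1$ both defining radii exceed $\tfrac12 M_2\delta(x)$ while the two balls overlap enough that $|B'| \geq C^{-1}(M_2\delta(x))^n$. One then packs $N \geq C^{-1}(M_2/M_1)^n$ points of $B'$ at mutual distance $\geq M_1\delta(x)$; if every ball $B(z_i, M_1\delta(x))$ met $\Gamma$, \eqref{1.1} would force $N \leq C(M_2/M_1)^d$, impossible for $M_2/M_1$ large. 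Your approach is repaired along exactly these lines: in the near-$\partial B$ regime, do not try to relocate the reference point on $\Gamma$; instead repeat the corkscrew counting directly in the portion of $B(x, M_2\delta(x))$ that sits safely inside $B$.
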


\bp
The proof is almost the same.
Let $M_1 \geq 1$ be given, and let $M_2 \geq 10M_1$ be large, to be chosen soon.
Then let $B= B(x_0,r)$ and $x\in B$ be as in the statement.
Set $B' = B(x_0, r- 2M_1 \delta(x)) \cap B(x, (M_2-2M_1)\delta(x))$; notice that
the two radii are larger than $M_2 \delta(x)/2$, because $r \geq M_2 \delta(x) \geq 10M_1\delta(x)$,
so $|B'| \geq C^{-1} (M_2\delta(x))^n$ for a constant $C>0$ independent of $x$.

Pick a maximal family $(z_i)$, $1 \leq i \leq N$, of points of $B'$ that lie at mutual distances at least 
$M_1 \delta(x)$ from each other, and set $B_i = B(z_i,M_1\delta(x))$ for $1 \leq i \leq N$.
The $5B_i$ cover $B'$ by maximality, so
$N \geq C^{-1 }(M_1\delta(x))^{-n}|B'|
\geq C^{-1}(M_2/M_1)^n$.

Suppose for a moment that every $B_i$ meets $\Gamma$. Then pick $y_i \in B_i \cap \Gamma$
and use the Ahlfors regularity property \eqref{1.1} and the fact that the $2B_i$ contain the
$B(y_i,M_1\delta(x))$ and are disjoint to prove that
\begin{eqnarray}\label{a11.46b}
C_0^{-1} (M_1\delta(x))^{d} N &\leq& \sum_{i=1}^N   \H^d(\Gamma \cap B(y_i,M_1\delta(x)))
\nn\\
&\leq&  \sum_{i=1}^N \H^d(\Gamma \cap 2B_i)
\leq \H^d(\Gamma \cap B(x,M_2 \delta(x))) \leq C_0 (M_2 \delta(x))^d.
\end{eqnarray}
That is, $M_1^d N \leq C_0^2 M_2^d$, and 
this contradicts our other bound for $N$ if $M_2/M_1$ is large enough.
We choose $M_2$ like this; then some $B_i$ doesn't meet $\Gamma$, and we can take $y = z_i$.
\ep

Before we prove the comparison theorem, we  
 need a substitute for \cite[Lemma 1.3.4]{KenigB}. 

\begin{lemma} \label{ltcp2}
Let $x_0 \in \Gamma$ and $r>0$ 
be given, and let $X_0 := A_r(x_0)$ be as
in Lemma~\ref{lNTA}. Let $u \in \WW(B(x_0,2r))$ be a non-negative,
non identically zero,
solution of $Lu=0$ in $B(x_0 ,2r) \cap \Omega$,
such that 
$Tu \equiv 0$ on $B(x_0,2r) \cap \Gamma$. Then
\begin{equation} \label{tcp6}
u(X) \leq C u(X_0)   \ \ \ \ \text{ for } X \in B(x_0,r),
\end{equation}
where $C>0$ depends only on $d$, $n$, $C_0$ and $C_1$.
\end{lemma}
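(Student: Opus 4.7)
My plan is to combine three tools from the preceding sections: the Harnack chain estimate (Lemma~\ref{lHC2}) for points of $\Omega$ at definite distance from $\Gamma$; the boundary H\"older estimate \eqref{Holder3} (from Lemma~\ref{HolderB}) for points close to $\Gamma$; and the boundary Moser averaging inequality (Lemma~\ref{MoserB2}) as the engine that combines the two. Throughout I normalize so that $u(X_0)=1$.

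First I fix a small parameter $\beta\in(0,1/16)$ to be chosen. For $Y\in B(x_0,7r/4)\cap\Omega$ with $\delta(Y)\ge\beta r$, the Harnack chain from $Y$ to $X_0$ produced by Lemma~\ref{lHC2} has length depending only on $\beta,d,n,C_0$ (since $|Y-X_0|/\delta(Y)\le 3/\beta$), so iterating Lemma~\ref{HarnackI} yields $u(Y)\le K_1(\beta)$. For $Y\in B(x_0,7r/4)\cap\Omega$ with $\delta(Y)<\beta r$, let $y'\in\Gamma$ be the closest point to $Y$; then $|y'-x_0|<15r/8$, so $B(y',r/8)\subset B(x_0,2r)$, and the H\"older estimate \eqref{Holder3} (valid because $Tu=0$) with inner radius $s=\delta(Y)$ gives
\[
u(Y)\le \sup_{B(y',\delta(Y))} u \le C\bigl(\delta(Y)/r\bigr)^{\alpha}\,M_{\sharp},\qquad M_{\sharp}:=\sup_{B(x_0,2r)\cap\Omega} u,
\]
where $M_{\sharp}$ is finite by Lemma~\ref{MoserB2}.

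Next I apply the boundary Moser inequality \eqref{Moser2p} with $p=1$ on $B(x_0,r)$ and split the integral on $B(x_0,2r)$ into the Harnack region $\{\delta\ge\beta r\}$ (where $u\le K_1(\beta)$) and the tubular region $\{\delta<\beta r\}$ (where $u\le C(\delta/r)^{\alpha}M_{\sharp}$). The weighted-volume computation
\[
\int_{\{\delta(y)<\beta r\}\cap B(x_0,2r)}\delta(y)^{\alpha}\,dm(y)\le C(\beta r)^{\alpha+d+1},
\]
obtained exactly as in the proof of Lemma~\ref{lwest} around \eqref{ldoub1}, then produces an inequality of the form
\[
M(r)\le C_1 K_1(\beta)\,u(X_0)+C_2\,\beta^{\alpha+d+1}\,M_{\sharp},\qquad M(r):=\sup_{B(x_0,r)\cap\Omega} u.
\]

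The main obstacle is closing this inequality, because $M_{\sharp}$ is the sup over a strictly larger ball than $M(r)$ and is not a priori controlled by $u(X_0)$. I would handle it by iterating the same estimate on a nested family of intermediate radii in $[r,2r]$, using a variable-contraction version of the boundary Moser inequality (in the spirit of \eqref{Moser18}) so that at each step the "large sup" on the right is replaced by the sup on a ball only slightly larger and still contained in $B(x_0,2r)$. Choosing $\beta$ small enough (depending only on $d,n,C_0,C_1$) that the product of the resulting $\theta$-Moser constant and $\beta^{\alpha+d+1}$ is strictly less than $1$, and using the a priori finiteness of $M_{\sharp}$ from Lemma~\ref{MoserB2}, the telescoping series collapses to $M(r)\le C\,u(X_0)$. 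The delicate balance to verify is that the growth of the Harnack constant $K_1(\beta)$ as $\beta\to 0$ only contributes additively (multiplied by $u(X_0)$), while $\beta$ has to defeat only a geometric factor; both can therefore be fixed once and for all from the geometric constants.
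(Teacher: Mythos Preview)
Your approach differs substantially from the paper's, and there is a genuine gap in the iteration step. The core issue is that the absorption you describe cannot close. With the H\"older ball taken at fixed radius $r/8$ (as in your setup), each iteration step consumes at least a fixed amount $\sim r/8$ of the available radius budget in $[r,2r)$: the Moser step goes from $\rho_k$ outward, and the H\"older step replaces the pointwise bound by a supremum over a ball of radius $r/8$ still further out. Hence you can perform only a bounded number $N$ of steps before running out of room, and you are left with
\[
M(r) \le \frac{A}{1-\eta} + \eta^{N} M(\rho_N),\qquad \rho_N<2r,
\]
with $A \sim K_1(\beta)\,u(X_0)$. The quantity $M(\rho_N)$ is indeed finite (continuity on compact subsets of $B(x_0,2r)$, or the $\theta$--Moser estimate \eqref{Moser18} on a slightly larger ball still strictly inside $B(x_0,2r)$), but \emph{finiteness is not the same as being controlled by $u(X_0)$}: the ratio $M(\rho_N)/u(X_0)$ is precisely the quantity Lemma~\ref{ltcp2} asserts is bounded, so invoking it here is circular. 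However small you make $\beta$ (hence $\eta$), the exponent $N$ is bounded independently of $\beta$, and the residual term $\eta^{N}M(\rho_N)$ does not vanish. If instead you try to iterate infinitely often by shrinking both the Moser step and the H\"older-ball radius with $k$, the Moser blow-up $(1-\theta)^{-n/2}$ in \eqref{Moser18} (worse still after passing to $p=1$) grows faster than the tubular-volume factor decays, so the contraction fails; simultaneously the Harnack constant $K_1$, which now depends on a threshold tending to $0$, blows up and the additive series diverges.

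The paper's proof avoids this trap by the classical Jerison--Kenig scheme. First, using the local corkscrew Lemma~\ref{lNTA2} together with Harnack chains, one proves inductively that $u(Y)\le M^{N}u(X_0)$ whenever $Y\in B(x_0,\tfrac32 r)$ and $\delta(Y)\ge \epsilon^{N}r$, for every $N\ge 1$ and fixed $M,\epsilon$ depending only on the structural constants. Then one argues by contradiction: if $u$ were too large at some $X_1\in B(x_0,r)$, the contrapositive forces $\delta(X_1)$ to be extremely small; the boundary H\"older decay $\sup_{B(x,\epsilon s)}u\le \tfrac12\sup_{B(x,s)}u$ then produces a nearby point $X_2$ with $u(X_2)\ge M\,u(X_1)$, and iterating yields a sequence $X_k\in B(x_0,\tfrac32 r)$ with $u(X_k)\to\infty$, contradicting continuity. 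The key structural difference is that the paper's induction runs over \emph{dyadic distance scales to $\Gamma$} (which can be halved indefinitely within the same ball), not over \emph{concentric radii in $[r,2r]$} (where the budget is finite).

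(A minor aside: your volume computation has the wrong power of $\beta$. The argument around \eqref{ldoub1} gives $\int_{\{\delta<\beta r\}\cap B(x_0,2r)}\delta^{\alpha}\,dm \le C\beta^{1+\alpha}r^{1+d+\alpha}$, not $C(\beta r)^{\alpha+d+1}$; this does not affect the structural issue above.)
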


\bp
We follow the proof of \cite[Lemma~4.4]{KJ}. 

\medskip

Let $x\in \Gamma$ and $s>0$ such that $Tu \equiv 0$ on $B(x,s) \cap \Gamma$. Then the H\"older continuity of solutions given by Lemma~\ref{Holder1} proves the existence of $\epsilon>0$ (that depends only on $d$, $n$, $C_0$, $C_1$) such that
\begin{equation} \label{tcp7}
\sup_{B(x,\epsilon s)} u \leq \frac12 \sup_{B(x,s)} u.
\end{equation}
Without loss of generality, we can choose $\epsilon < \frac12$.

A rough idea of the proof of \eqref{tcp6} is that $u(x)$ should not be near the maximum of $u$
when $x$ lies close to $B(x_0,r) \cap \Gamma$, because of \eqref{tcp7}. Then we are left with
points $x$ that lie far from the boundary, and we can use the Harnack inequality to control $u(x)$.
The difficulty is that when $x \in B(x_0,r)$ lies close to $\Gamma$, $u(x)$ can be bounded by values 
of $u$ \underline{inside the domain}, and not by values of $u$ near $\Gamma$ but from the exterior 
of $B(x_0, r) $.
We will prove this latter fact by contradiction: we show that if $\sup_{B(x_0,r)} u$ exceeds a certain bound, 
then we can construct a sequence of points $X_k \in B(x_0,\frac32 r)$ such that $\delta(X_k) \to 0$ and 
$u(X_k) \to +\infty$, and hence we 
contradict the H\"older continuity of solutions at the boundary.

\medskip

Since $u(X) > 0$ somewhere,
the Harnack inequality (Lemma~\ref{HarnackI}), maybe applied a few times, 
yields $u(X_0) > 0$. We can rescale $u$ and assume that $u(X_0) = 1$. We claim 
that there exists
$M>0$ such that for any integer $N \geq 1$ and
$Y \in B(x_0,\frac32 r)$,
\begin{equation} \label{tcp8}
\delta(Y) \geq \epsilon^N r \Longrightarrow u(Y) \leq M^N  ,
\end{equation}
where $\epsilon$ 
comes from 
\eqref{tcp7} and the constant $M$ depends only upon $d$, $n$, $C_0$, $C_1$.
We will prove this by induction.
The base case (and in fact we will manage to start directly from some large integer $N_0$)
is given by the following. Let $M_2>0$ be the value given by Lemma~\ref{lNTA2} when $M_1 := \frac1\epsilon$.  Let $N_0 \geq 1$ be the smallest integer such that $M_2 \leq \epsilon^{-N_0}$.
We want to show the existence of $M_3 \geq 1$ such that
\begin{equation} \label{tcp9}
u(Y) \leq M_3 \ \ \ \ \ \text{ for every }
Y \in B(x_0,\frac32 r) \text{ such that } \delta(Y) \geq \epsilon^{N_0}r.
\end{equation}
Indeed, if $Y \in B(x_0,\frac32 r)$ satisfies $\delta(Y) \geq \epsilon^{N_0}r$, Lemma~\ref{lHC2} 
and the fact that $|x_0 - X_0| \approx r$ (by 
Lemma~\ref{lNTA}) imply 
the existence of a Harnack chain linking $Y$ to $ X_0$.
More precisely, we can find balls $B_1, \dots, B_h$ with a same radius, such that 
$Y \in B_1$, $X_0 \in B_h$, $3B_i \subset B(x_0,2r) \setminus \Gamma$
for $i\in \{1,\dots,h\}$, and $B_i \cap B_{i+1} \neq \emptyset$ for $i \in \{1,\dots,h-1\}$,
and in addition $h$ is bounded independently of $x_0$, $r$ and $Y$.
Together with the Harnack inequality (Lemma~\ref{HarnackI}), we obtain \eqref{tcp9}.
This proves \eqref{tcp8} for $N = N_0$, but also directly for $1 \leq N \leq N_0$, if we choose
$M \geq M_3$.

For any point $Y\in B(x_0,\frac32 r)$ such that
$\delta(Y) \leq \epsilon^{N_0} r \leq  \frac{r}{M_2} $, Lemma~\ref{lNTA2} (and our choice of $M_2$) 
gives the existence of $Z\in B(x_0,\frac32 r) \cap B(Y,M_2\delta(Y))$ such that $\delta(Z) \geq M_1 \delta(Y)$. 
Since $Z \in B(Y,M_2\delta(Y))$ and $\delta(Z)> \delta(Y) >0$, Lemma~\ref{lHC2} implies the existence of a Harnack chain whose length is bounded by a constant depending on $d$, $n$, $C_0$ (and $M_2$ - but $M_2$ depends only on the three first parameters) 
and together with the Harnack inequality (Lemma~\ref{HarnackI}), we obtain the existence of $M_4 \geq 1$ 
(that depends only on $d$, $n$, $C_0$ and $C_1$) such that $u(Y) \leq M_4 u(Z)$. 
So we just proved that
\begin{equation} \label{tcp10} \begin{array}{l}
\text{for any $Y \in B(x_0,\frac32 r)$ such that 
$\delta(Y) \leq \epsilon^{N_0} r$, } \\
\qquad \text{there exists $Z\in B(x_0,\frac32r)$ such that $\delta(Z) \geq M_1 \delta(Y)$ and $u(Y) \leq M_4 u(Z)$.}
\end{array} \end{equation}

We turn
to the main
induction step. Set $M = \max\{M_3, M_4\} \geq 1$ and let $N \geq N_0$ be given. 
Assume, by induction hypothesis, that for any $Z \in B(x_0,\frac32 r)$ satisfying $\delta(Z) \geq \epsilon^{N} r$, we have $u(Z) \leq M^N$. 
Let 
$Y \in B(x_0,\frac32 r)$ be such that 
$\delta(Y) \geq \epsilon^{N+1} r$ . 
The assertion \eqref{tcp10} yields the existence of $Z\in B(x_0,\frac32 r)$ such that $\delta(Z) \geq M_1 \delta(Y) = \epsilon^{-1} \delta(Y) \geq \epsilon^N r$ and $u(Y) \leq M_4 u(Z) \leq Mu(Z)$. 
By the induction hypothesis, $u(Y) \leq M^{N+1}$. 
This completes our induction step, and the proof of \eqref{tcp8} for every $N \geq 1$.

\medskip

Choose an integer $i$ such that $2^i \geq M$, where $M$ is the 
constant of \eqref{tcp8} that we just found, and then set $M' = M^{i+3}$.
We want to prove by contradiction that
\begin{equation} \label{tcp11}
u(X) \leq M' u(X_0) = M' \ \ \ \text{ for every }
X \in B(x_0,r).
\end{equation}
So we assume that  
\begin{equation} \label{tcp12}
\text{there exists $X_1 \in B(x_0,r)$ such that $u(X_1) > M'$}
\end{equation}
and we want to prove by induction that for every 
integer $k \geq 1$,
\begin{equation} \label{tcp13}
\text{there exists $X_k \in B(x_0,\frac 32r)$ such that $u(X_k) > M^{i+2+k}$ and $|X_k - x_0| \leq \frac32 r - 2^{-k}r$.}
\end{equation}
The base step of the induction is given by \eqref{tcp12} and we want to do  
the induction step. Let $k\geq 1$ be given and assume that 
\eqref{tcp13} holds.
From the contraposition of \eqref{tcp8}, we deduce that $\delta(X_k) < \epsilon^{i+2+k}r$. 
Choose 
$x_k \in \Gamma$ such that $|X_k-x_k| = \delta(X_k) < \epsilon^{i+2+k}r$.
By the induction hypothesis,
\begin{equation} \label{tcp14}
|x_k - x_0| \leq |x_k-X_k| + |X_k-x_0| \leq \frac{3r}{2}  - 2^{-k}r + \epsilon^{i+2+k} r 
\end{equation}
and, 
since $\epsilon \leq \frac12$,
\begin{equation} \label{tcp15}
|x_k - x_0| \leq \frac{3r}{2} - 2^{-k}r + 2^{-2-k} r. 
\end{equation}
Now, due to \eqref{tcp7}, we can find $X_{k+1} \in B(x_k,\epsilon^{2+k} r)$ such that 
\begin{equation} \label{tcp16}
u(X_{k+1}) \geq 2^i \sup_{X\in B(x_k,\epsilon^{i+2+k} r)} u(X) \geq 2^i u(X_k) \geq M^{i+2+(k+1)}.
\end{equation}
The induction step will be complete if we can prove that $|X_{k+1}-x_0| \leq \frac32 r - 2^{-(k+1)}r$. Indeed, 
\begin{equation} \label{tcp17} \begin{split}
|X_{k+1}-x_0| & \leq |X_{k+1} - x_k| + |x_k-x_0 | \leq \epsilon^{2+k} r + \frac{3r}{2} - 2^{-k}r  + 2^{-2-k} r \\
& \leq  \frac{3r}{2}- 2^{-k}r  + 2^{-1-k} r = \frac32 r - 2^{-k-1} r
\end{split}\end{equation}
by \eqref{tcp15} and because $\epsilon \leq \frac12$.

\medskip
Let us sum up. We assumed the existence of  $X_1 \in B(x_0,r)$ such that $u(X_1)>M'$ and we end up 
with \eqref{tcp13}, that is a sequence $X_k$ of values in $B(x_0,\frac32r)$ such that $u(X_k)$ 
increases to $+\infty$. 
Up to a subsequence, we can thus find a point in $\overline{B(x_0,\frac32r)}$ where $u$ is not continuous, 
which contradicts Lemma~\ref{HolderB}. Hence 
$u(X) \leq M' = M' u(X_0)$ for
$X \in B(x_0,r)$. Lemma~\ref{ltcp2} follows.
\ep

\begin{lemma} \label{ltcp1}
Let $x_0 \in \Gamma$ and $r>0$ be given, and set $X_0 := A_r(x_0)$ 
as in Lemma~\ref{lNTA}. Then for all 
$X \in \Omega \setminus B(X_0,\delta(X_0)/4)$, 
\begin{equation} \label{tcp1}
r^{d-1} g(X,X_0) \leq C \omega^X(B(x_0,r) \cap \Gamma)
\end{equation}
and
\begin{equation} \label{tcp1b}
r^{d-1} g(X,X_0) \leq C \omega^X(\Gamma \setminus B(x_0,2r)),
\end{equation}
where $C > 0$ depends
only on $d$, $n$, $C_0$ and $C_1$.
\end{lemma}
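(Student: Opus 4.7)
The plan is to apply the maximum principle (Lemma~\ref{lMPg}) to the auxiliary function
\[
h(Y) := K\, u_2(Y) - r^{d-1} g(Y,X_0),
\]
with $K$ a large constant to be chosen, where $u_2(Y)=\omega^Y(B(x_0,r)\cap\Gamma)$ for \eqref{tcp1} and $u_2(Y)=\omega^Y(\Gamma\setminus B(x_0,2r))$ for \eqref{tcp1b}. I would apply Lemma~\ref{lMPg} on the open set $E:=\R^n\setminus \overline{B(X_0,\delta(X_0)/8)}$ with closed subset $F:=\R^n\setminus B(X_0,\delta(X_0)/4)$, so that the conclusion $h\geq 0$ on $E\cap\Omega$ contains the desired $\Omega\setminus B(X_0,\delta(X_0)/4)$. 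By Lemma~\ref{lprhm}(ii) and Lemma~\ref{GreenEx}, the two summands are $L$-harmonic on $\Omega\setminus\{X_0\}$, and since $u_2\geq 0$ with $Tu_2\geq 0$ and $Tg(\cdot,X_0)=0$ (Lemma~\ref{GreenEx}(i)), the trace condition $Th\geq 0$ on $\Gamma\cap E=\Gamma$ is automatic.

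The pointwise hypothesis $h\geq 0$ on the shell $S:=(E\setminus F)\cap \Omega=\{\delta(X_0)/8<|Y-X_0|<\delta(X_0)/4\}$ splits into an upper bound $r^{d-1}g(Y,X_0)\leq K_0$, which is immediate from Lemma~\ref{GreenEx}(v) since $|Y-X_0|\approx\delta(X_0)\approx r$ on $S$, and a lower bound $u_2(Y)\geq K_0^{-1}$. To obtain the lower bound I would first prove non-degeneracy at $X_0$ and then propagate by Harnack. For \eqref{tcp1}, note that $v:=1-\omega^\cdot(B(x_0,r)\cap\Gamma)=\omega^\cdot(\Gamma\setminus B(x_0,r))$ is a non-negative bounded solution with vanishing trace on $B(x_0,r)\cap\Gamma$, so \eqref{Holder3} together with continuity up to $\Gamma$ yields $v\leq 1/2$ on $B(x_0,r/M)\cap\Omega$ for some $M$ depending only on $d,n,C_0,C_1$; evaluating at the corkscrew $A_{r/M}(x_0)$ and transporting to $X_0$ via a Harnack chain (Lemmata~\ref{lHC2} and~\ref{HarnackI}) gives $u_2(X_0)\geq K_0^{-1}$. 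For \eqref{tcp1b}, I use \eqref{1.1} to locate $y_1\in\Gamma$ with $4r\leq |y_1-x_0|\leq K'r$ so that $B(y_1,2r)\cap B(x_0,2r)=\emptyset$, and apply \eqref{Holder3} to $\omega^\cdot(B(x_0,2r)\cap\Gamma)$ on a ball around $y_1$ to find a corkscrew where $u_2\geq 1/2$, then transport to $X_0$ by a Harnack chain of bounded length. In both cases Lemma~\ref{HarnackI} spreads the bound from $X_0$ to all of $S$, and choosing $K=2K_0^2$ makes $h\geq 0$ on $S$.

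The main technical obstacle is hypothesis~(i) of Lemma~\ref{lMPg}: $h\in \WW(E)$ with $\int_E |\nabla h|^2 \,dm<\infty$. The Green-function part is controlled by \eqref{GreenE2}, but $u_2$ is only a priori in $\WW(\Omega)$ and, for \eqref{tcp1b}, its underlying Borel set is unbounded, so there is no direct global $W$-bound. I would circumvent this by approximation: pick non-negative $g_j\in C^0_0(\Gamma)\cap H$ with $g_j\uparrow \mathbf{1}_{E_*}$ (where $E_*=B(x_0,r)\cap\Gamma$ or $\Gamma\setminus B(x_0,2r)$ respectively), so that Lemma~\ref{ldefhm}(v) gives $Ug_j\in W$, while \eqref{defhm} and monotone convergence yield $Ug_j\uparrow u_2$ pointwise in $\Omega$. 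Since $Ug_j(X_0)\to u_2(X_0)\geq K_0^{-1}$, the interior Harnack inequality gives $Ug_j\geq (2K_0)^{-1}$ on the compact shell $S$ for $j$ large enough. The maximum principle, applied to $h_j:=2K_0^2\, Ug_j - r^{d-1}g(\cdot,X_0)$ for which hypothesis~(i) is now trivial from $Ug_j\in W$ and \eqref{GreenE2}, yields $h_j\geq 0$ on $\Omega\cap E$; since $Ug_j\leq u_2$, we deduce $h\geq h_j\geq 0$, which is the desired inequality.
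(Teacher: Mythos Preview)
Your argument is correct. For \eqref{tcp1} it is essentially the paper's proof: the paper also replaces $\omega^{\cdot}(B(x_0,r)\cap\Gamma)$ by a single solution $u\in W$ with smooth compactly supported trace $h\in C^\infty_0(B(x_0,r))$, $h\equiv 1$ on $B(x_0,r/2)$, then compares $r^{d-1}g(\cdot,X_0)$ to $K_1 u$ on a shell around $X_0$ and applies Lemma~\ref{lMPg}. Using one smooth $h$ instead of a monotone sequence $g_j$ is a minor streamlining; your approximation step is unnecessary here but harmless.

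For \eqref{tcp1b} the routes genuinely differ. You rerun the maximum-principle argument from scratch, proving non-degeneracy of $\omega^{\cdot}(\Gamma\setminus B(x_0,2r))$ at $X_0$ via a corkscrew near an auxiliary boundary point $y_1$ and then handling the unbounded target set by the $Ug_j$ approximation. The paper instead \emph{reduces} \eqref{tcp1b} to the already-proved \eqref{tcp1}: it picks two points $x_1,x_2\in\Gamma$ in an annulus $B(x_0,Kr)\setminus B(x_0,4r)$ so that $B(X_1,\delta(X_1)/4)\cap B(X_2,\delta(X_2)/4)=\emptyset$ (with $X_i=A_r(x_i)$); for any given $X$, at least one $i$ satisfies $X\notin B(X_i,\delta(X_i)/4)$, and Harnack chains give $g(X,X_0)\leq Cg(X,X_i)$. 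Applying \eqref{tcp1} at $x_i$ then yields $r^{d-1}g(X,X_0)\leq C\omega^X(B(x_i,r)\cap\Gamma)\leq C\omega^X(\Gamma\setminus B(x_0,2r))$ by monotonicity. The paper's trick avoids the second use of Lemma~\ref{lMPg} and the approximation for unbounded $E_\ast$; your direct approach is more self-contained and reuses exactly the same mechanism for both inequalities.
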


\bp
We prove \eqref{tcp1} first. Let $h \in C^\infty_0(B(x_0,r))$ satisfy $h\equiv 1$ on $B(x_0,r/2)$ 
and $0\leq h \leq 1$.
Define then $u \in W$ as the solution of $Lu=0$ with data $Th$ given by Lemma~\ref{lLM}. 
Set $v(X)= 1-u(X) \in W$ and observe
that $0\leq v \leq 1$ and $Tv = 0$ on $B(x_0,r/2) \cap \Gamma$.

By
Lemma~\ref{HolderB}, we can find $\epsilon>0$ (that depends only on $d$, $n$, $C_0$, $C_1$) such that $v(A_{\epsilon r}(x_0)) \leq \frac12$, i.e. $u(A_{\epsilon r}(x_0)) \geq \frac12$. 
The existence of Harnack chains (Lemma~\ref{lHC2}) and the Harnack inequality (Lemma~\ref{HarnackI}) give
\begin{equation} \label{tcp2}
 C^{-1} \leq u(X) \ \ \ \text{ for }
 X \in B(X_0,\delta(X_0)/2).
\end{equation}

\medskip

By Lemma~\ref{GreenEx} (v),
$g(X,X_0) \leq C |X-X_0|^{1-d}$ for 
$X \in \Omega \setminus B(X_0,\delta(X_0)/4)$. 
Since $\delta(X_0) \approx r$ by construction of $X_0$, 
\begin{equation} \label{tcp3a}
r^{d-1} g(X,X_0) \leq C 
 \ \ \ \text{ for } X \in B(X_0,\delta(X_0)/2) \setminus B(X_0,\delta(X_0)/4).
\end{equation}
The combination of \eqref{tcp2} and \eqref{tcp3a} yields the existence of $K_1>0$ (depending only on $n$, $d$, $C_0$ and $C_1$) such that
\begin{equation} \label{tcp3}
r^{d-1} g(X,X_0) \leq K_1 u(X)   \ \ \ \text{ for } 
X \in B(X_0,\delta(X_0)/2) \setminus B(X_0,\delta(X_0)/4).
\end{equation}
We claim that
$K_1 u(X) - r^{d-1} g(X,X_0)$ satisfies the assumptions of Lemma~\ref{lMPg}, with 
$E = \R^n \setminus \overline{B(X_0,\delta(X_0)/4)}$ and $F = \R^n \setminus B(X_0,\delta(X_0)/2)$. Indeed Assumption (i) of Lemma~\ref{lMPg}  is satisfied because $u\in W$ and by
Lemma \ref{GreenEx} (i). Assumption (ii) of Lemma~\ref{lMPg} holds because
$T u = h \geq 0$ by construction and also
$Tg(.,X_0) = 0$ thanks to  Lemma \ref{GreenEx} (i). 
Assumption (iii) of Lemma~\ref{lMPg} is given by \eqref{tcp3}. The lemma yields
\begin{equation} \label{tcp4}
r^{d-1} g(X,X_0) \leq K_1 u(X)  \ \ \ \text{ for }
X \in \Omega \setminus B(X_0,\delta(X_0)/4).
\end{equation}
By the positivity of the harmonic measure, $u(X) \leq \omega^X(B(x_0,r) \cap \Gamma)$ for
$X \in \Omega$;
\eqref{tcp1} follows.

\bigskip

Let us turn to the proof of \eqref{tcp1b}. We want to find two points 
$x_1,x_2 \in \Gamma \cap [B(x_0,Kr) \setminus B(x_0,4r)]$,
where the constant $K \geq 10$ depends only on $C_0$ and $d$,
 such that $X_1 := A_r(x_1)$ and $X_2:= A_r(x_2)$ satisfy 
\begin{equation} \label{tcpb1}
B(X_1,\delta(X_1)/4) \cap B(X_2,\delta(X_2)/4) = \emptyset.
\end{equation}
To get such points, we use the fact that $\Gamma$ is Ahlfors regular to find $M \geq 3$ 
(that depends only on $C_0$ and $d$) such that 
$ \Gamma_1: = \Gamma \cap [B(x_0,2Mr) \setminus B(x_0,6r)] \neq  \emptyset$ and 
$\Gamma_2 : = \Gamma \cap [B(x_0,2M^2r) \setminus B(x_0,6Mr)] \neq  \emptyset$. 
Any choice of points $x_1 \in \Gamma_1$ and $x_2\in \Gamma_2$ verifies \eqref{tcpb1}.

\medskip

Let $X \in \Omega \setminus B(X_0,\delta(X_0)/4)$. Thanks to \eqref{tcpb1}, 
there exists $i\in \{1,2\}$ such that $X \notin B(X_i,\delta(X_i)/4)$. 
The existence of Harnack chains (Lemma \ref{lHC2}), the Harnack inequality (Lemma \ref{HarnackI}), 
and the fact that $Y \to g(X,Y)$ is a solution of $L_Tu := -\diver A^T \nabla u = 0$ in $\Omega \setminus \{X\}$
(Lemma \ref{GreenEx} and Lemma \ref{GreenSym}) yield
\begin{equation} \label{tcpb2}
r^{d-1} g(X,X_0) \leq C r^{d-1} g(X,X_i).
\end{equation}
By 
\eqref{tcp1} and the positivity of the harmonic measure,
\begin{equation} \label{tcpb3}
r^{d-1} g(X,X_0) \leq C r^{d-1} g(X,X_i) \leq Cw^X(B(x_i,r)\cap \Gamma) \leq C w^X (\Gamma \setminus B(x_0,r)).
\end{equation}
The lemma follows.
\ep

We turn now to 
the non-degeneracy of the harmonic measure.

\begin{lemma} \label{ltcp4}
Let $\alpha >1$, $x_0 \in \Gamma$, and $r>0$ be given, and let
$X_0:= A_r(x_0) \in \Omega$ be as in Lemma \ref{lNTA}.
Then 
\begin{equation} \label{tcp33a}
\omega^X(B(x_0,r) \cap \Gamma) \geq C_\alpha^{-1} \ \ \ \text{ for } 
X \in B(x_0,r/\alpha),
\end{equation}
\begin{equation} \label{tcp33}
\omega^X(B(x_0,r) \cap \Gamma) \geq C^{-1}_\alpha \ \ \ \text{ for }  
X \in B(X_0,\delta(X_0)/\alpha),
\end{equation}
\begin{equation} \label{tcp33b}
\omega^X(\Gamma \setminus B(x_0,r)) \geq C^{-1}_\alpha \ \ \ \text{ for } 
X \in \Omega \setminus B(x_0,\alpha r),
\end{equation}
and
\begin{equation} \label{tcp33c}
\omega^X(\Gamma \setminus B(x_0,r)) \geq C^{-1}_\alpha \ \ \ \text{ for } 
X \in B(X_0,\delta(X_0)/\alpha),
\end{equation}
where $C_\alpha>0$ depends only upon $d$, $n$, $C_0$, $C_1$ and $\alpha$.
\end{lemma}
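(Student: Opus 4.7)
All four inequalities will follow from a single \emph{base inequality}: there exists $\alpha_0>1$, depending only on $d$, $n$, $C_0$, $C_1$, such that
\[
\omega^X(B(x_0,r)\cap\Gamma) \geq \tfrac{1}{2} \quad \text{for all } X \in B(x_0,r/\alpha_0)\cap\Omega.
\]
To establish this base inequality, I consider $v(X):=1-\omega^X(B(x_0,r)\cap\Gamma)=\omega^X(\Gamma\setminus B(x_0,r))$. By Lemma~\ref{lprhm}, $v$ is a nonnegative solution of $Lv=0$ in $\Omega$ that lies in $\WW(B(x_0,r))$ with $Tv\equiv 0$ on $B(x_0,r)\cap\Gamma$, and $0\le v\le 1$ because $\omega^X$ is a probability measure (Lemma~\ref{defhm12}). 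Applying the boundary H\"older estimate \eqref{Holder3} of Lemma~\ref{HolderB} on the ball $B(x_0,r)$ yields $\sup_{B(x_0,r/\alpha_0)\cap\Omega} v \leq C(1/\alpha_0)^\gamma$ for the H\"older exponent $\gamma>0$ (after noting that $v$ vanishes on $\Gamma\cap B(x_0,r)$ so its oscillation controls its supremum); choosing $\alpha_0$ so that $C(1/\alpha_0)^\gamma \leq 1/2$ gives the base inequality.

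\textbf{Proof of \eqref{tcp33a} and \eqref{tcp33b}.} For $\alpha>1$ and $X\in\Omega$ in the relevant region, I split into a boundary regime $\delta(X)\le\tau_\alpha$ and an interior regime $\delta(X)>\tau_\alpha$, where $\tau_\alpha>0$ is a small multiple of $(\alpha-1)r/\alpha$. In the boundary regime for \eqref{tcp33a}, I pick $y\in\Gamma$ closest to $X$ and choose a radius $\rho$ proportional to $(1-1/\alpha)r$ such that $B(y,\rho)\subset B(x_0,r)$ and $X\in B(y,\rho/\alpha_0)$; the base inequality applied with $(y,\rho)$ yields $\omega^X(B(y,\rho)\cap\Gamma)\geq 1/2$, hence $\omega^X(B(x_0,r)\cap\Gamma)\geq 1/2$ by monotonicity. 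For the boundary regime of \eqref{tcp33b}, the ball $B(y,\rho)$ is chosen so that $B(y,\rho)\cap\Gamma \subset \Gamma\setminus B(x_0,r)$ instead, which is possible because $|y-x_0|\ge \alpha r - \tau_\alpha > r$.

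\textbf{Interior regime and items \eqref{tcp33}, \eqref{tcp33c}.} In the interior regime $\delta(X)>\tau_\alpha$, the point $X$ and a suitable reference point (the corkscrew $A_{r/\alpha_0}(x_0)$ for \eqref{tcp33a}; a far-away corkscrew $A_{r/\alpha_0}(z)$ with $z\in\Gamma$ and $|z-x_0|\geq C_\alpha r$ for \eqref{tcp33b}) can be joined by a Harnack chain of length bounded in terms of $\alpha$, $d$, $n$, $C_0$ via Lemma~\ref{lHC2}. Iterating the interior Harnack inequality (Lemma~\ref{HarnackI}) along this chain transfers the lower bound at the reference point, supplied by the base inequality, to $X$. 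For \eqref{tcp33} and \eqref{tcp33c}, the assumption $X\in B(X_0,\delta(X_0)/\alpha)$ forces $\delta(X)\approx r$, so the boundary H\"older estimate is unnecessary and a pure Harnack-chain argument (Lemmata~\ref{lHC2} and~\ref{HarnackI}) linking $X$ to $A_{r/\alpha_0}(x_0)$ (for \eqref{tcp33}) or to a faraway corkscrew (for \eqref{tcp33c}) yields the result.

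\textbf{Main obstacle.} The principal technical difficulty arises in \eqref{tcp33b} when $X$ lies very far from $x_0$: a direct Harnack chain from such an $X$ to a fixed reference point would have length blowing up with $|X-x_0|/r$. I plan to overcome this by exploiting the scale-invariance of the base inequality. If $\delta(X)\le |X-x_0|/2$, then the nearest boundary point $y$ satisfies $|y-x_0|\geq|X-x_0|/2\geq \alpha r$, and the boundary-regime argument applies with the \emph{adapted scale} $\rho\approx |y-x_0|$, which still satisfies $B(y,\rho)\cap\Gamma\subset\Gamma\setminus B(x_0,r)$ because $|y-x_0|\gg r$. If instead $\delta(X)>|X-x_0|/2$, then $X$ is deep in the interior at its own scale and can be connected by a short Harnack chain (of length depending only on $|X-x_0|/\delta(X)$, which is bounded) to the corkscrew $A_{|X-x_0|}(x_0)$, which lies within the range already handled by the previous arguments.
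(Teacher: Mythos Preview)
Your approach coincides with the paper's: set $v(X)=1-\omega^X(B(x_0,r)\cap\Gamma)$, use Lemma~\ref{lprhm} to get $v\in\WW(B(x_0,r))$ with zero trace, apply the boundary H\"older estimate (Lemma~\ref{HolderB}) to obtain a base inequality, then propagate by Harnack chains. For \eqref{tcp33a}, \eqref{tcp33}, and \eqref{tcp33c} this is exactly what the paper does (it treats \eqref{tcp33a} in detail and deduces \eqref{tcp33} by Harnack; \eqref{tcp33b} and \eqref{tcp33c} are left to the reader).

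Your treatment of \eqref{tcp33b} goes further than the paper, and the scale-adaptation idea is correct, but the ``main obstacle'' paragraph contains two genuine errors. First, in the case $\delta(X)\le |X-x_0|/2$ you claim the base inequality applies at scale $\rho\approx|y-x_0|$; but the base inequality requires $X\in B(y,\rho/\alpha_0)$, i.e.\ $\delta(X)\le\rho/\alpha_0$, whereas your hypothesis only gives $\delta(X)\le|y-x_0|$, not $\delta(X)\le|y-x_0|/\alpha_0$. You need the sharper threshold $\delta(X)\le c\,|X-x_0|$ with $c$ small depending on $\alpha_0$ (and on $\alpha$, to ensure $|y-x_0|-\rho>r$). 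Second, in the complementary case $\delta(X)>|X-x_0|/2$ you propose connecting $X$ to $A_{|X-x_0|}(x_0)$ and assert this point ``lies within the range already handled''; but $A_{|X-x_0|}(x_0)$ has $\delta/|A-x_0|\in[\epsilon,1]$, so it may well fall back into the same case, and nothing has been gained.

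The fix is to push outward rather than inward. When $\delta(X)>c\,|X-x_0|$, use the unboundedness of $\Gamma$ (from Ahlfors regularity) to pick $z\in\Gamma$ with $|z-x_0|\approx K\delta(X)$ for a large constant $K$ depending on $\alpha_0$, and set $X'=A_{\delta(X)}(z)$. Then $\delta(X')\approx\delta(X)$ and $|X'-x_0|\gtrsim K\delta(X)$, so $\delta(X')/|X'-x_0|\lesssim 1/K<c$, placing $X'$ in the first case; moreover $|X-X'|\lesssim K\delta(X)$ and $\min(\delta(X),\delta(X'))\gtrsim\delta(X)$, so the Harnack chain from $X$ to $X'$ has length bounded independently of $|X-x_0|$. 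With these corrections your argument closes.
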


\bp 
Let us first 
prove \eqref{tcp33a}. 
Set $u(X) = 1- \omega^X(B(x_0,r) \cap \Gamma)$. 
By Lemma \ref{lprhm}, $u$ lies
in $\WW(B(x_0,r))$,
is a solution of $Lu=0$ in $\Omega \cap B(x_0,r)$, and has a vanishing trace on $\Gamma \cap B(x_0,r)$.
So the H\"older continuity of solutions at the boundary (Lemma \ref{HolderB}) 
gives the existence of an
$\epsilon>0$, that depends only on $d$, $n$, $C_0$, $C_1$ and $\alpha$, 
such that $u(X) \leq \frac12$
for every 
$X \in B(x_0,\frac 12 [1+ \frac 1{\alpha}]r)$ such that 
$\delta(X) \leq \epsilon r$. 
Thus $v(X) := \omega^X(B(x_0,r) \geq \frac12$ for 
$X \in B(x_0,\frac 12 [1+ \frac 1{\alpha}] r)$ such that
$\delta(X) \leq \epsilon r$. We now
deduce \eqref{tcp33a} from
the existence of Harnack chains (Lemma \ref{lHC2}) and the Harnack inequality (Lemma \ref{HarnackI}).

The assertion \eqref{tcp33} follows from
\eqref{tcp33a}. Indeed,
\eqref{tcp33a} implies that 
$\omega^{A_{r/2}(x_0)}(B(x_0,r) \cap \Gamma) \geq C^{-1}$.  The existence of Harnack chains (Lemma \ref{lHC2}) and the Harnack inequality (Lemma \ref{HarnackI}) allow us to conclude.
Finally \eqref{tcp33b} and \eqref{tcp33c} can be proved as above, and we leave the details to the reader.
\ep

\begin{lemma} \label{ltcp3}
Let $x_0 \in \Gamma$ and $r>0$
be given, and set $X_0 = A_r(x_0)$. Then
\begin{equation} \label{tcp18}
C^{-1} r^{d-1} g(X,X_0) \leq \omega^X(B(x_0,r) \cap \Gamma) \leq C r^{d-1} g(X,X_0) 
\ \ \ \text{ for }
X\in \Omega \setminus B(x_0,2r),
\end{equation}
and
\begin{equation} \label{tcp18b}
\begin{split}
C^{-1} r^{d-1} g(X,X_0) \leq \omega^X(\Gamma \setminus B(x_0,2r)) 
&\leq  C r^{d-1} g(X,X_0) 
\\ &\text{ for }
X\in B(x_0,r) \setminus B(X_0,\delta(X_0)/4),
\end{split}
\end{equation}
where $C>0$ depends only upon $d$, $n$, $C_0$ and $C_1$.
\end{lemma}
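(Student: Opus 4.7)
The lower bounds in both \eqref{tcp18} and \eqref{tcp18b} are immediate consequences of Lemma \ref{ltcp1}, since in both cases the range of $X$ is contained in $\Omega \setminus \overline{B(X_0,\delta(X_0)/4)}$. The substance of the proof lies in the upper bounds, which I would obtain by a single maximum principle argument applied to $w := Kr^{d-1}g(\cdot,X_0) - v$, where $K$ is a large constant and $v \in W$ is a convenient $W$-regular majorant of the harmonic measure.

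For \eqref{tcp18}, I would first pick a cutoff $h \in C^\infty_0(B(x_0,5r/4))$ with $h \equiv 1$ on $B(x_0,r)$ and $0 \leq h \leq 1$, and take $v \in W$ to be the solution of the Dirichlet problem with trace $Th|_\Gamma$ furnished by Lemma \ref{lLM}. The maximum principle Lemma \ref{lMPs} then forces $0 \leq v \leq 1$, while the positivity of harmonic measure yields $v(X) \geq \omega^X(B(x_0,r)\cap\Gamma)$, so it suffices to prove $v(X) \leq Kr^{d-1}g(X,X_0)$ on $\Omega \setminus B(x_0,2r)$. I then apply Lemma \ref{lMPg} to $w$ with $E = \R^n \setminus \overline{B(x_0,2r)}$ and $F = \R^n \setminus B(x_0,3r)$. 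Hypothesis (i) is satisfied because $v \in W$ and, by Lemma \ref{GreenEx}(i), $(1-\alpha)g(\cdot,X_0) \in W_0$ for any cutoff $\alpha$ equal to $1$ near the pole $X_0 \in B(x_0,r)$, which is separated from $\overline{E}$. Hypothesis (ii) is satisfied since both $v$ and $g(\cdot,X_0)$ have vanishing trace on $\Gamma \cap E$.

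The crux is verifying hypothesis (iii), i.e. $Kr^{d-1}g(Y,X_0) \geq v(Y)$ for all $Y \in (E\setminus F)\cap \Omega = \{Y \in \Omega : 2r < |Y-x_0| < 3r\}$, which I would handle by a far-from-$\Gamma$ / near-$\Gamma$ dichotomy. If $\delta(Y) \geq cr$ for a small fixed $c$, a Harnack chain (Lemma \ref{lHC2}) connects $Y$ to a point $Y_0 \in \partial B(X_0,\delta(X_0)/2)$, so the Harnack inequality (Lemma \ref{HarnackI}) applied to $g(\cdot,X_0)$ together with the pointwise lower bound of Lemma \ref{GreenLB} gives $g(Y,X_0) \geq C^{-1}r^{1-d}$; since $v(Y)\leq 1$, the inequality holds for $K$ large. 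If instead $\delta(Y) < cr$, let $y \in \Gamma$ be nearest to $Y$; then $|y-x_0| > 5r/4$, so $Tv \equiv 0$ on $\Gamma \cap B(y,r/4)$, and the Carleson-type estimate Lemma \ref{ltcp2} yields $v(Y) \leq C\,v(A_{r/8}(y))$, which by the first case is bounded by $CKr^{d-1}g(A_{r/8}(y),X_0)$. Transferring this into a bound involving $g(Y,X_0)$ itself is the delicate step: I would apply Lemma \ref{ltcp1} at the rescaled pair $(y,r/8)$ to get $(r/8)^{d-1}g(Y,A_{r/8}(y)) \leq C\,\omega^Y(B(y,r/8)\cap\Gamma)$, and relate $g(Y,A_{r/8}(y))$ to $g(Y,X_0)$ through a Harnack chain in $\Omega \setminus \{Y\}$ joining $A_{r/8}(y)$ to $X_0$ (both lie at distance of order $r$ from $\Gamma$ and from $Y$, so by Lemma \ref{GreenSym} the Harnack inequality applied to $Z \mapsto g(Y,Z) = g_T(Z,Y)$ gives the comparison).

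The bound \eqref{tcp18b} is proved by the parallel scheme: construct a $W$-majorant $\tilde v$ of $\omega^\cdot(\Gamma \setminus B(x_0,2r))$ by using a cutoff equal to $1$ outside $B(x_0,2r)$ and supported outside $B(x_0,7r/4)$, and apply Lemma \ref{lMPg} on $E = B(x_0,r) \setminus \overline{B(X_0,\delta(X_0)/4)}$. The thick boundary now consists of a shell near $\partial B(x_0,r)$, treated by the same far/near-$\Gamma$ dichotomy using Lemma \ref{ltcp2}, together with the sphere $\partial B(X_0,\delta(X_0)/4)$ around the pole, where the pointwise inequality is immediate from the pointwise bounds on $g$ near its pole in Lemma \ref{GreenEx}(v) and Lemma \ref{GreenLB}. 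The main obstacle throughout is the transfer step in the near-$\Gamma$ case: Lemma \ref{HolderB} shows that $v$ vanishes at order $(\delta(Y)/r)^\alpha$ at $\Gamma$, and Lemma \ref{GreenDi} gives the matching upper bound $r^{d-1}g(Y,X_0) \leq C(\delta(Y)/r)^\alpha$, so producing a uniform ratio bound requires the genuine coupling of Lemma \ref{ltcp2} (upper bounds via corkscrews) with the already-established weak comparison of Lemma \ref{ltcp1}, rather than either tool in isolation.
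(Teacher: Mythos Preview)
Your lower bounds are fine and match the paper's. The gap is in your upper bounds: the maximum-principle scheme you outline cannot close in the near-$\Gamma$ case, and the tools you invoke to patch it do not point in the right direction.

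Concretely, to run Lemma~\ref{lMPg} on $E = \R^n\setminus\overline{B(x_0,2r)}$ you need $v(Y)\leq K\,r^{d-1}g(Y,X_0)$ for every $Y$ in the shell $2r<|Y-x_0|<3r$. When $\delta(Y)$ is small, the right side $r^{d-1}g(Y,X_0)$ is itself small: by Lemma~\ref{GreenDi} it is at most $C(\delta(Y)/r)^{\alpha}$, and nothing you have cited gives a \emph{lower} bound for it. Lemma~\ref{ltcp2} bounds $v(Y)\leq Cv(A_{r/8}(y))\leq C$, which is far too crude; the identity $g(Y,A_{r/8}(y))\approx g(Y,X_0)$ via Harnack in the second variable is correct but, combined with Lemma~\ref{ltcp1}, only yields $r^{d-1}g(Y,X_0)\leq C\,\omega^{Y}(B(y,r/8)\cap\Gamma)$, an upper bound on $g$, not the lower bound you need. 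In effect, showing that the ratio $v(Y)/g(Y,X_0)$ stays bounded as $Y\to\Gamma$ on this shell is exactly the boundary comparison principle, which is downstream of this lemma. The alternative choice $E=\R^n\setminus\overline{B(X_0,\delta(X_0)/4)}$ avoids the shell problem but then fails hypothesis~(ii), since $Tv=Th>0$ on $\Gamma\cap B(x_0,5r/4)\subset\Gamma\cap E$ while $Tg(\cdot,X_0)=0$.

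The paper sidesteps all of this with a representation formula: for $\phi\in C^\infty(\R^n)\cap W$ and $X\notin\supp\phi$ one proves (via Lax--Milgram and a limit of the approximate Green functions $g_T^{\rho}$) that
\[
u_\phi(X)=-\int_\Omega A\nabla\phi(Y)\cdot\nabla_y g(X,Y)\,dY.
\]
Choosing $\phi$ to be a cutoff adapted to $B(x_0,r)$, one gets $\omega^X(B(x_0,r)\cap\Gamma)\leq u_\phi(X)\leq Cr^{-1}\int_{B(x_0,5r/4)}|\nabla_y g(X,Y)|\,dm(Y)$, and then Cauchy--Schwarz, the boundary Caccioppoli inequality (Lemma~\ref{CaccioB}) for $Y\mapsto g(X,Y)$, and finally Lemma~\ref{ltcp2} applied to $Y\mapsto g(X,Y)$ give $\leq Cr^{d-1}g(X,X_0)$. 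The crucial difference is that $X$ is held fixed outside $B(x_0,2r)$ and the analysis is in the \emph{second} variable of $g$, where Lemma~\ref{ltcp2} supplies exactly the needed upper bound $g(X,Y)\leq Cg(X,X_0)$; no boundary lower bound on $g$ is ever required. The same mechanism, with a complementary cutoff, handles \eqref{tcp18b}.
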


\bp
The lower bounds are a consequence of Lemma~\ref{ltcp1};
the one in \eqref{tcp18} also requires to notice that $\delta(X_0) \leq r$ and thus 
$B(X_0,\delta(X_0)/4) \subset B(x_0,2r)$.
\medskip

It remains to check the upper bounds. But we first prove
an intermediate result. 
We claim that for $\phi \in C^\infty(\R^n) \cap W$ and $X \notin \supp \, \phi$,
\begin{equation} \label{tcp20}
u_\phi(X) = - \int_\Omega A \nabla \phi(Y) \cdot \nabla_y g(X,Y) dY,
\end{equation}
where $u_\phi \in W$ is the solution of $Lu_\phi = 0$, with the Dirichlet condition $Tu_\phi = T\phi$ 
on $\Gamma$, given by Lemma \ref{lLM}.
Indeed, recall that by \eqref{ABounded} and \eqref{AElliptic} the map
\begin{equation} \label{tcp21}
u,v \in W_0 \to \int_\Omega A \nabla u \cdot \nabla v \, dY = \int_\Omega \A \nabla u \cdot \nabla v \, dm
\end{equation}
is bounded and coercive on $W_0$
and the map
\begin{equation} \label{tcp22}
\varphi \in W_0 \to \int_\Omega A \nabla \phi \cdot \nabla \varphi \, dY
= \int_\Omega \A \nabla \phi \cdot \nabla \varphi \, dm
\end{equation}
is bounded on $W_0$. So the Lax-Milgram theorem yields the existence of $\mathfrak v \in W_0$ such that
\begin{equation} \label{tcp23}
\int_\Omega A \nabla \phi \cdot \nabla \varphi \, dY
= \int_\Omega A \nabla \mathfrak v \cdot \nabla \varphi \, dY \qquad \forall \varphi \in W_0.
\end{equation}

Let $s>0$ such that $B(X,2s) \cap (\supp \, \phi \cup \Gamma) = \emptyset$. 
For any $\rho>0$
we define, as we did in \eqref{Green3},
the function $\g^\rho_T = g^\rho_T(.,X)$ on $\Omega$ as the only function in $W_0$ such that
\begin{equation} \label{tcp24}
 \int_\Omega A \nabla \varphi \cdot \nabla \g^\rho_T \, dY = \int_\Omega A^T \nabla \g^\rho_T \cdot \nabla \varphi \, dY  = \fint_{B(X,\rho)} \varphi\, dY \qquad \forall \varphi \in W_0.
\end{equation}
We take $\varphi = \g^\rho_T$ in \eqref{tcp23} to get
\begin{equation} \label{tcp25}
\int_\Omega A \nabla \phi \cdot \nabla \g^\rho_T\, dY = \int_\Omega A \nabla \mathfrak v \cdot \nabla \g^\rho_T \, dY= \fint_{B(X,\rho)} \mathfrak v\, dY.
\end{equation}
We aim to take the limit as $\rho \to 0$ in \eqref{tcp25}. Since $\mathfrak v$ satisfies
\begin{equation} \label{tcp26}
\int_\Omega A \nabla \mathfrak v \cdot \nabla \varphi\, dY = \int_\Omega A \nabla \phi \cdot \nabla \varphi \, dY= 0\qquad  \forall \varphi  \in C^\infty_0(B(X,2s)),
\end{equation}
$\mathfrak v$ is a solution of $L\mathfrak v = 0$ on $B(X,2s)$ and thus Lemma \ref{HolderI} proves that $\mathfrak v$ is continuous at $X$. As a consequence,
\begin{equation} \label{tcp27}
\lim_{\rho \to 0} \fint_{B(X,\rho)} \mathfrak v \, dY=  \mathfrak v(X).
\end{equation}
Recall that the $\g^\rho_T$, $\rho>0$, are the same functions as in 
in the proof of Lemma \ref{GreenEx}, but
for the transpose matrix $A^T$. 
Let $\alpha \in C^\infty_0(B(x,2s))$ be such that $\alpha \equiv 1$ on $B(x,s)$. By 
\eqref{Grc0} and Lemma \ref{GreenSym}, there exists a sequence $(\rho_\eta)$ tending to 0,
such that $(1-\alpha)\g^{\rho_\eta}_T$ converges weakly to 
$(1-\alpha)g_T(.,X) = (1-\alpha)g(X,.)$ in $W_0$. As a consequence,
\begin{eqnarray} \label{tcp28} 
\lim_{\eta \to +\infty} \int_\Omega A \nabla \phi \cdot \nabla \g^{\rho_\eta}_T \, dY
& = & \lim_{\eta \to +\infty} \int_{\Omega} A \nabla \phi \cdot \nabla [(1-\alpha)\g^{\rho_\eta}_T] \, dY
\nn\\
& = &  \int_\Omega A \nabla \phi(Y) \cdot \nabla_y [(1-\alpha)g(X,Y)] dY \nn\\
& = &  \int_\Omega A \nabla \phi(Y) \cdot \nabla_y g(X,Y) dY.
\end{eqnarray}
The combination of \eqref{tcp25}, \eqref{tcp27} and \eqref{tcp28} yields
\begin{equation} \label{tcp29}
\int_\Omega A \nabla \phi(Y) \cdot \nabla_y g(X,Y) dY = \mathfrak v(X).
\end{equation}
Since $\mathfrak v \in W_0$ satisfies \eqref{tcp23}, the function $u_\phi = \phi - \mathfrak v$ lies
in $W$ and is a solution of $Lu_\phi = 0$ with the Dirichlet condition $Tu_\phi = T\phi$. 
Hence
\begin{equation} \label{tcp29bis}
\int_\Omega A \nabla \phi(Y) \cdot \nabla_y g(X,Y) dY = \mathfrak v(X) = \phi(X) - u_\phi(X) = -u_\phi(X),
\end{equation}
by \eqref{tcp29} and 
because $X \notin \supp\, \phi$. The claim \eqref{tcp20} follows.

\medskip

We turn to the proof of the upper bound in \eqref{tcp18}, that is,  
\begin{equation} \label{tcp19}
\omega^X(B(x_0,r) \cap \Gamma) \leq C r^{d-1} g(X,X_0) \ \ \ \text{ for }
X\in \Omega \setminus B(x_0,2r).
\end{equation}
Let $X\in \Omega \setminus B(x_0,2r)$ be given, and choose $\phi \in C^\infty_0(\R^n)$ 
such that $0\leq \phi \leq 1$, $\phi \equiv 1$ on $B(x_0,r)$, 
$\phi \equiv 0$ on $\R^n \setminus B(x_0,\frac{5}4r)$, and $|\nabla \phi| \leq \frac{10}{r}$. 
We get that 
\begin{equation} \label{tcp30}
u_\phi(X) \leq \frac{C}r \int_{B(x_0,\frac{5}4r)} |\nabla_y g(X,Y)| dm(Y)
\end{equation}
by \eqref{tcp20} and \eqref{ABounded}, and since $\omega^X(B(x_0,r) \cap \Gamma) \leq u_\phi(X)$
by the positivity of the harmonic measure,
\begin{equation} \label{tcp30bis} \begin{split}
\omega^X(B(x_0,r)\cap \Gamma) & \leq \frac{C}r \int_{B(x_0,\frac{5}4r)} |\nabla_y g(X,Y)| dm(Y). \\
& \leq \frac{C}{r} r^{\frac{d+1}{2}} \left( \int_{B(x_0,\frac{5}4r)} |\nabla_y g(X,Y)|^2 dm(Y)   \right)^\frac12
\end{split} \end{equation}
by Cauchy-Schwarz' 
inequality and Lemma \ref{lwest}. 
Since
$X \in \Omega \setminus B(x_0,2r)$, Lemma \ref{GreenSym} and Lemma \ref{GreenEx} (iii)
say that the function $Y \to g(X,Y)$ is a solution of $L_T u := -\diver A^T \nabla u$ on $B(x_0,2r)$, 
with a vanishing trace on $\Gamma \cap B(x_0,2r)$. 
So the Caccioppoli inequality at the boundary (see Lemma \ref{CaccioB}) applies and yields
\begin{equation} \label{tcp31} \begin{split}
\omega^X(B(x_0,r)\cap \Gamma) & \leq \frac{C}{r^2} \, r^{\frac{d+1}{2}}  
\left( \int_{B(x_0,\frac{3}2r)} |g(X,Y)|^2 dm(Y)   \right)^\frac12.
\end{split} \end{equation}
Then by Lemma \ref{ltcp2}, 
\begin{equation} \label{tcp32} \begin{split}
\omega^X(B(x_0,r)\cap \Gamma) & \leq \frac{C}{r^2} \, r^{d+1}  g(X,X_0) = C r^{d-1} g(X,X_0);
\end{split} \end{equation}
the bound \eqref{tcp19} follows.

\medskip

It remains to prove the upper bound in \eqref{tcp18b}, i.e., that
\begin{equation} \label{tcpc1}
 \omega^X(\Gamma \setminus B(x_0,2r)) \leq C r^{d-1} g(X,X_0) 
 \ \ \ \text{ for }
 X\in B(x_0,r) \setminus B(X_0,\delta(X_0)/4).
 \end{equation}
 The proof will be 
 similar to the upper bound in \eqref{tcp18} once we choose an appropriate function $\phi$ in \eqref{tcp20}. 
Let us do this rapidly. Let $X\in B(x_0,r) \setminus B(X_0,\delta(X_0)/4)$ be given and take
 $\phi \in C^\infty(\R^n)$ such that $0\leq \phi \leq 1$, $\phi \equiv 1$ on $\R^n \setminus B(x_0,\frac{8}{5}r)$, $\phi \equiv 0$ on $B(x_0,\frac7{5}r)$ and $|\nabla \phi| \leq \frac{10}r$. 
Notice that $X \notin \supp(\phi)$, so \eqref{tcp20} applies and yields
 \begin{equation} \label{tcpc2}
u_\phi(X) \leq \frac{C}r \int_{B(x_0,\frac{8}{5}r) \setminus B(x_0,\frac{7}{5}r)} |\nabla_y g(X,Y)| dm(Y).
 \end{equation}
By the positivity of the harmonic measure, 
$\omega^X(\Gamma \setminus B(x_0,2r)) \leq u_\phi(X)$. 
We use the Cauchy-Schwarz
and Caccioppoli inequalities (see Lemma \ref{CaccioB}), as above, and get that
 \begin{eqnarray} \label{tcpc3} 
\omega^X(\Gamma \setminus B(x_0,2r)) 
&\leq& \frac{C}r m(B(x_0,\frac85 r)) \left( \frac{1}{m(B(x_0,\frac85 r))} 
\int_{B(x_0,\frac{8}{5}r) \setminus B(x_0,\frac{7}{5}r)} |\nabla_y g(X,Y)|^2 dm(Y) \right)^\frac12 
\nn\\
&\leq& \frac{C}{r^2} r^{d+1} \left(  \frac{1}{m(B(x_0,\frac85 r))} 
\int_{B(x_0,\frac{9}{5}r) \setminus B(x_0,\frac{6}{5}r)} |g(X,Y)|^2 dm(Y) \right)^\frac12.
\end{eqnarray} 
We claim that
 \begin{equation} \label{tcpc5}
g(X,Y) \leq C g(X,X_0) \qquad \forall Y \in B(x_0,\frac{9}{5}r) \setminus B(x_0,\frac{6}{5}r)
 \end{equation}
 where $C>0$ depends only on $d$, $n$, $C_0$ and $C_1$.
Two cases may happen. 
If $\delta(Y) \geq \frac r{20}$, 
\eqref{tcpc5} is only a consequence of the existence of Harnack chains (Lemma \ref{lHC2}) and the Harnack inequality (Lemma \ref{HarnackI}). 
Otherwise, if $\delta(Y) < \frac{r}{20}$ then Lemma \ref{ltcp2} says that
$g(X,Y) \leq Cg(X,X_Y)$ for some point $X_Y \in B(x_0,\frac{9}{5}r) \setminus B(x_0,\frac{6}{5}r)$
that lies at distance at least $\epsilon r$ from $\Gamma$. Here $\epsilon$ comes from Lemma \ref{lNTA}
and thus depends only on $d$, $n$ and $C_0$. 
Together with the existence of Harnack chains (Lemma \ref{lHC2}) and the Harnack inequality (Lemma \ref{HarnackI}), we find that $g(X,X_Y)$, or $g(X,Y)$, is bounded by $Cg(X,X_0)$. 

We use \eqref{tcpc5} in the right hand side of \eqref{tcpc3} to get that
 \begin{equation} \label{tcpc4}
\omega^X(\Gamma \setminus B(x_0,2r))  \leq C r^{d-1} \frac{m(B(x_0,\frac95 r))}{m(B(x_0,\frac85 r))}  g(X,X_0) \leq C r^{d-1}  g(X,X_0),
 \end{equation}
by the doubling property of the measure $m$. The second and last assertion of the lemma follows.
\ep

\begin{lemma}[Doubling volume property for the harmonic measure] \label{ldphm}
For $x_0 \in \Gamma$ and $r>0$, we have
\begin{equation} \label{dphm1} 
\omega^X(B(x_0,2r)\cap \Gamma) \leq C \omega^X (B(x_0,r) \cap \Gamma) 
 \ \ \ \text{ for }
X \in \Omega \setminus B(x_0,4r)
 \end{equation}
 and
 \begin{equation} \label{dphm1b} 
\omega^X(\Gamma \setminus B(x_0,r)) \leq C \omega^X (\Gamma \setminus B(x_0,2r)) 
 \ \ \ \text{ for } 
X \in B(x_0,r/2),
 \end{equation}
 where $C>0$ depends only on $n$, $d$, $C_0$ and $C_1$.
\end{lemma}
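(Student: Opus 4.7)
The proof rests on Lemma \ref{ltcp3}, which compares the harmonic measure of a boundary ball (and of its complement) with the Green function evaluated at a corkscrew point, together with two Harnack inequalities: for the $L_T$-solution $Y \mapsto g(X,Y)$ on $\Omega\setminus\{X\}$ and for the $L$-solution $X \mapsto \omega^X(E)$ on $\Omega$ (the latter by Lemma \ref{lprhm}).

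For \eqref{dphm1}, let $X \in \Omega \setminus B(x_0, 4r)$. My plan is to apply the upper bound of Lemma \ref{ltcp3} (first assertion) at radius $2r$ and the lower bound at radius $r$, yielding
\begin{equation*}
\omega^X(B(x_0, 2r) \cap \Gamma) \leq C (2r)^{d-1} g(X, A_{2r}(x_0))
\quad \text{and} \quad
\omega^X(B(x_0, r) \cap \Gamma) \geq C^{-1} r^{d-1} g(X, A_r(x_0)).
\end{equation*}
It then suffices to show $g(X, A_{2r}(x_0)) \leq C g(X, A_r(x_0))$. Both corkscrew points lie in $B(x_0, 2r)$ at distance at least $\epsilon r$ from $\Gamma$, so by Lemma \ref{lHC2} they are connected by a Harnack chain of uniformly bounded length contained in $B(x_0, 3r)$ at depth of order $r$. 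Since $X \notin B(x_0, 4r)$, this chain stays well away from $X$; applying the interior Harnack inequality (Lemma \ref{HarnackI}) to $Y \mapsto g(X, Y)$ along the chain gives the required comparison with a constant depending only on $d$, $n$, $C_0$, $C_1$.

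For \eqref{dphm1b}, the argument is parallel but more delicate because $X \in B(x_0, r/2)$ may approach $\Gamma$. I would first fix a single auxiliary point $P$ that serves as a corkscrew for both radii at once: by Lemma \ref{lNTA} applied to $B(x_0, r/4)$, there exists $P$ with $|P - x_0| \leq r/4$ and $\delta(P) \geq c r$ for some $c>0$ depending only on $d, n, C_0$ (in particular $\delta(P) \leq r/4$, so that $B(P, \delta(P)/4) \subset B(x_0, r/2)$). Inspection of the proofs of Lemmas \ref{ltcp1}, \ref{ltcp2}, and \ref{ltcp3} shows that $P$ may replace both $A_{r/2}(x_0)$ and $A_r(x_0)$ in the estimates of Lemma \ref{ltcp3}, with constants depending only on $d, n, C_0, C_1$. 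Applying the second assertion of Lemma \ref{ltcp3} at radii $r/2$ and $r$ simultaneously, I would obtain, for every $X \in B(x_0, r/2) \setminus B(P, \delta(P)/4)$,
\begin{equation*}
\omega^X(\Gamma \setminus B(x_0, r)) \leq C (r/2)^{d-1} g(X, P)
\leq C' r^{d-1} g(X, P) \leq C'' \omega^X(\Gamma \setminus B(x_0, 2r)),
\end{equation*}
which is the desired doubling inequality on this region.

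To handle the remaining case $X \in B(P, \delta(P)/4)$, I would use that here $\delta(X) \geq 3\delta(P)/4 \geq 3cr/4$, so $X$ lies deep inside $\Omega$. Fix a reference point $X^* \in B(P, \delta(P)/3) \setminus B(P, \delta(P)/4)$; the triangle inequality gives $X^* \in B(x_0, r/2)$, and the previous case yields the doubling inequality at $X^*$. Both $X$ and $X^*$ lie in a compact subset of $\Omega$ whose diameter and distance to $\Gamma$ are comparable to $r$, so a bounded number of applications of Harnack's inequality (Lemma \ref{HarnackI}, via chains from Lemma \ref{lHC2}) gives $\omega^X(E) \approx \omega^{X^*}(E)$ with a universal constant, for every Borel $E \subset \Gamma$. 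Applying this with $E = \Gamma \setminus B(x_0, r)$ and $E = \Gamma \setminus B(x_0, 2r)$ transfers the doubling from $X^*$ to $X$. The main technical point, to be checked by revisiting the proofs cited, is that $P$ is an acceptable corkscrew in both invocations of Lemma \ref{ltcp3}; once that is granted, the argument is a transparent combination of the harmonic measure--Green function equivalence with the Harnack principle.
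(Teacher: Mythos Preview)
Your proof of \eqref{dphm1} is essentially identical to the paper's: both apply Lemma~\ref{ltcp3} at radii $r$ and $2r$ and then compare $g(X,A_{2r}(x_0))$ with $g(X,A_r(x_0))$ via Harnack chains for the $L_T$-solution $Y\mapsto g(X,Y)$.

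For \eqref{dphm1b} the strategy is the same in spirit but the implementation differs slightly. The paper keeps the two distinct corkscrew points $X_1=A_r(x_0)$ and $X_{1/2}=A_{r/2}(x_0)$, applies Lemma~\ref{ltcp3} as stated at each scale on the set $\Xi=\{X\in B(x_0,r/2):\ |X-X_1|\ge\delta(X_1)/4,\ |X-X_{1/2}|\ge\delta(X_{1/2})/4\}$, and then uses a Harnack chain in the $Y$-variable to conclude $g(X,X_1)\approx g(X,X_{1/2})$; the remaining points are handled by Harnack in $X$. You instead select a single point $P=A_{r/4}(x_0)$ and argue that it may serve as corkscrew at both scales, so that the two applications of Lemma~\ref{ltcp3} already share the same Green-function factor and no comparison step is needed. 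This is correct, but the price is the clause ``inspection of the proofs of Lemmas~\ref{ltcp1}--\ref{ltcp3} shows\ldots'': since $\delta(P)\ge\epsilon r/4$ rather than $\epsilon r$, $P$ is not literally an $A_r(x_0)$ under the paper's convention, and you must reopen those proofs to track the dependence on the corkscrew constant. The paper's version trades that reopening for one extra (routine) Harnack argument and thereby uses Lemma~\ref{ltcp3} purely as a black box. Both routes are valid and of comparable length; the paper's is marginally cleaner in its use of the previously stated lemmas.
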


\bp
Let us prove \eqref{dphm1} first. 
Lemma \ref{ltcp3} says that for
$X \in \Omega \setminus B(x_0,4r)$,
\begin{equation} \label{dphm2} 
\omega^X(B(x_0,2r)\cap \Gamma)  \approx r^{d-1} g(X,A_{2r}(x_0))
 \end{equation}
 and
 \begin{equation} \label{dphm3} 
\omega^X(B(x_0,r)\cap \Gamma)  \approx r^{d-1} g(X,A_r(x_0)),
 \end{equation}
 where $A_{2r}(x_0)$ and $A_{r}(x_0)$ are the points of 
 $\Omega$ given by Lemma \ref{lNTA}.
 The bound \eqref{dphm1} will be thus proven if we can show that
\begin{equation} \label{dphm4} 
g(X,A_{2r}(x_0)) \approx g(X,A_{r}(x_0)) \ \ \ \text{ for }
X \in \Omega \setminus B(x_0,4r).
 \end{equation}
Yet, since $Y \to g(X,Y)$ belongs to $\WW(\Omega \setminus \{X\})$ and is a solution of 
$L_T u := - \diver A^T \nabla u = 0$ in $\Omega \setminus \{X\}$ 
(see Lemma \ref{GreenEx} and Lemma \ref{GreenSym}), the equivalence in
\eqref{dphm4} is an easy consequence of the properties of $A_r(x_0)$ (Lemma \ref{lNTA}), 
the existence of Harnack chains (Lemma \ref{lHC2}) and the Harnack inequality (Lemma \ref{HarnackI}).

\medskip

We turn to the proof of \eqref{dphm1b}. 
Set $X_1 := A_r(x_0)$ and $X_{\frac12} := A_{r/2}(x_0)$. Call $\Xi$ the set of points 
$X \in B(x_0,r/2)$ such that $|X-X_1| \geq \frac14 \delta(X_1)$ and 
$|X-X_\frac12| \geq \frac14 \delta(X_\frac12)$, and first consider $X\in \Xi$. 
By Lemma~\ref{ltcp3} again,
\begin{equation} \label{dphm5} 
\omega^X(\Gamma \setminus B(x_0,2r))  \approx r^{d-1} g(X,X_1)
 \end{equation}
 and
 \begin{equation} \label{dphm6} 
\omega^X(\Gamma \setminus B(x_0,r))  \approx r^{d-1} g(X,X_\frac12).
 \end{equation}
 Since $\delta(X_1) \approx \delta(X_{\frac12}) \approx r$ and $Y\to g(X,Y)$ is a solution of 
 $L_T u = - \diver A^T \nabla u = 0$, the existence of Harnack chains (Lemma \ref{lHC2}) 
 and the Harnack inequality (Lemma \ref{HarnackI}) give $g(X,X_1) \approx g(X,X_\frac12)$ 
for $X \in \Xi$. Hence
  \begin{equation} \label{dphm7} 
\omega^X(\Gamma \setminus B(x_0,2r)) \approx \omega^X(\Gamma \setminus B(x_0,r)),
 \end{equation}
 with constants
that do not depend on $X$, $x_0$, or  $r$. 
 The equivalence in \eqref{dphm7}
 also holds for all $X \in B(x_0,r/2)$, and not only for $X \in \Xi$, by Harnack's inequality 
 (Lemma \ref{HarnackI}). This proves \eqref{dphm1b}.
\ep

\begin{remark} \label{rdphm}
The following results also hold for every $\alpha >1$. For
$x_0 \in \Gamma$ and $r>0$,
\begin{equation} \label{dphm1c} 
\omega^X(B(x_0,2r)\cap \Gamma) \leq C_\alpha \omega^X (B(x_0,r) \cap \Gamma) 
\ \ \ \text{ for }
X \in \Omega \setminus B(x_0,2\alpha r),
 \end{equation}
 and
 \begin{equation} \label{dphm1d} 
\omega^X(\Gamma \setminus B(x_0,r)) \leq C_\alpha \omega^X (\Gamma \setminus B(x_0,2r)) 
\ \ \ \text{ for }
X \in B(x_0,r/\alpha),
 \end{equation}
 where $C_\alpha>0$ depends only on $n$, $d$, $C_0$, $C_1$ and $\alpha$.
 
 \medskip
This can be deduced
from Lemma \ref{ldphm} - that corresponds to the case $\alpha =2$ - by 
applying it to smaller balls.

Let us prove for instance \eqref{dphm1d}. 
Let $X \in B(x_0,r/\alpha)$ be given. 
We only need to prove \eqref{dphm1d} when $\delta(X) < \frac r4(1-\frac 1\alpha)$, 
because as soon as we do this, the other case when $\delta(X) \geq \frac r4(1-\frac 1\alpha)$
follows, by Harnack's inequality (Lemma \ref{HarnackI}).

Let $x\in \Gamma$ such that $|x - X| = \delta(X)$; then set $r_k = 2^{k-1} r [1-\frac 1\alpha]$ and 
$B_k = B(x,r_k)$ for $k \in \ZZ$. We wish to apply the doubling property \eqref{dphm1b}
and get that 
\begin{equation} \label{dphm8a} 
\omega^X(\Gamma \setminus B_k) \leq C \omega^X(\Gamma \setminus B_{k+1}),
 \end{equation}
and we can do this as long as $X \in B_{k-1}$. With our extra assumption that
$|x - X| =\delta(X) < \frac r4(1-\frac 1\alpha)$, this is possible for all $k \geq 0$.
Notice that 
\begin{equation} \label{dphm8b} 
|x-x_0| \leq \delta(X) + |X-x_0| \leq \frac r4(1-\frac 1\alpha) + \frac{r}{\alpha}
\leq \frac r2(1-\frac 1\alpha) + \frac{r}{\alpha}
= \frac{r}2 [1+\frac 1\alpha]
 \end{equation}
and then $|x-x_0| + r_0 \leq \frac{r}2 [1+\frac 1\alpha] + \frac{r}2 [1-\frac 1\alpha] = r$,
so $B_0 = B(x,r_0) \subset B(x_0,r)$ and, by the monotonicity of the harmonic measure,
\begin{equation} \label{dphm8c} 
\omega^X(\Gamma \setminus B(x_0,r)) \leq \omega^X(\Gamma \setminus B_0).
\end{equation}
Let $k$ be the smallest integer such that $2^{k-1}(1-\frac 1\alpha) \geq 3$; obviously
$k$ depends only on $\alpha$, and $r_k \geq 3r$.
Then $|x-x_0| + 2r < 3r \leq r_k$ by \eqref{dphm8b}, hence $B(x_0,2r) \subset B_k$ and
$\omega^X(\Gamma \setminus B_k) \leq \omega^X(\Gamma \setminus B(x_0,2r))$
because the harmonic measure is monotone. Together with \eqref{dphm8c} and \eqref{dphm8a},
this proves that $\omega^X(\Gamma \setminus B(x_0,r)) \leq C^k\omega^X(\Gamma \setminus B(x_0,2r))$,
and \eqref{dphm1d} follows because $k$ depends only on $\alpha$.
The proof of \eqref{dphm1c} would be similar.
\end{remark}

\begin{lemma}[Comparison principle for global solutions] \label{lCP}
Let $x_0 \in \Gamma$ and $r>0$
be given, and let 
$X_0 := A_r(x_0) \in \Omega$ be the point given in Lemma \ref{lNTA}.
Let $u,v\in W$ be two  non-negative,
non identically zero, 
solutions of $Lu = Lv = 0$ \underline{in $\Omega$} such that
$Tu = Tv = 0$ on $\Gamma \setminus B(x_0,r)$.
Then
  \begin{equation} \label{CP1} 
C^{-1} \frac{u(X_0)}{v(X_0)} \leq \frac{u(X)}{v(X)} \leq C \frac{u(X_0)}{v(X_0)}
\ \ \ \text{ for } X \in \Omega \setminus B(x_0,2r),
 \end{equation}
where $C>0$ depends only on $n$, $d$, $C_0$ and $C_1$. 
\end{lemma}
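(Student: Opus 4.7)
By interchanging the roles of $u$ and $v$ it suffices to prove the upper bound $u(X)/v(X) \leq C u(X_0)/v(X_0)$ for $X \in \Omega \setminus B(x_0,2r)$. The strategy is to sandwich both sides against the function $X \mapsto r^{d-1} g(X,X_0)$, establishing
\begin{equation*}
u(X) \leq C\, u(X_0)\, r^{d-1} g(X,X_0) \quad \text{and} \quad v(X) \geq C^{-1}\, v(X_0)\, r^{d-1} g(X,X_0)
\end{equation*}
for $X \in \Omega \setminus B(x_0,2r)$; dividing gives the conclusion.

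For the lower bound on $v$, the plan is to apply the maximum principle Lemma \ref{lMPg} to $\Phi(X) := v(X) - C^{-1} v(X_0)\, r^{d-1} g(X,X_0)$ with $E := \R^n \setminus \overline{B(X_0,\delta(X_0)/4)}$ and $F := \R^n \setminus B(X_0,\delta(X_0)/3)$. Since $Tg(\cdot,X_0) = 0$ on $\Gamma$ by Lemma \ref{GreenEx}(i), $T\Phi = Tv \geq 0$ a.e.\ on $\Gamma$; the integrability $\int_E |\nabla \Phi|^2 dm < \infty$ follows from $v \in W$ and Lemma \ref{GreenEx}(iv); and $\Phi$ solves $L\Phi = 0$ in $\Omega \cap E$ because both $v$ and $g(\cdot,X_0)$ do. On the shell $\mathcal{A} := B(X_0,\delta(X_0)/3) \setminus \overline{B(X_0,\delta(X_0)/4)}$, which lies in $\Omega$ since $B(X_0,\delta(X_0)) \subset \Omega$, Lemma \ref{HarnackI} applied to the nonnegative solution $v$ yields $v \geq c\, v(X_0)$, while Lemma \ref{GreenEx}(v) combined with $\delta(X_0) \approx r$ (Lemma \ref{lNTA}) gives $r^{d-1} g(\cdot,X_0) \leq C$ on $\mathcal{A}$. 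Choosing $C^{-1}$ small enough forces $\Phi \geq 0$ on $\mathcal{A}$, and Lemma \ref{lMPg} propagates this to all of $\Omega \cap E$, which contains $\Omega \setminus B(x_0,2r)$.

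The upper bound for $u$ is more delicate: the naive candidate $\Psi(X) := K u(X_0)\, r^{d-1} g(X,X_0) - u(X)$ has trace $-Tu \leq 0$ on $\Gamma \cap B(x_0,r)$, the wrong sign for Lemma \ref{lMPg}. To circumvent this, the plan is to apply Lemma \ref{lMPg} to $\Psi$ in the \emph{exterior} domain $E := \R^n \setminus \overline{B(x_0,2r)}$ with $F := \R^n \setminus B(x_0,3r)$: on $\Gamma \cap E = \Gamma \setminus \overline{B(x_0,2r)}$ both $Tu$ and $Tg(\cdot,X_0)$ vanish, so $T\Psi = 0$, and the integrability assumption of Lemma \ref{lMPg} is handled as before. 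It then remains to verify $\Psi \geq 0$ on the annular shell $\Omega \cap (B(x_0,3r) \setminus \overline{B(x_0,2r)})$. For shell points far from $\Gamma$, i.e.\ $\delta(X) \gtrsim r$, Harnack chains (Lemma \ref{lHC2}) and Lemma \ref{HarnackI} give $u(X) \leq C u(X_0)$, while the Green-function lower bound (Lemma \ref{GreenLB}) combined with Harnack chains for $Y \mapsto g(X,Y)$ yields $r^{d-1} g(X,X_0) \gtrsim 1$; this closes the case for $K$ large enough. For points $X$ in the shell with $\delta(X) \ll r$, one chooses $y \in \Gamma$ with $|X-y| = \delta(X)$, observing that necessarily $y \in \Gamma \setminus B(x_0,r)$ since $|y-x_0| \geq 2r - \delta(X) > r$, so both $u$ and $g(\cdot,X_0)$ are nonnegative $L$-solutions on a ball $B(y,s)$ with $s \approx r$ and with vanishing trace on $B(y,2s) \cap \Gamma$. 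Lemma \ref{ltcp2} applied to each function reduces the comparison to their values at the corkscrew point $A_s(y)$, which are in turn linked to $u(X_0)$ and $r^{d-1} g(X_0,X_0)$-type quantities by Harnack chains.

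The hard part will be the near-boundary verification of $\Psi \geq 0$ on the shell: matching the boundary H\"older decays of $u$ and $g(\cdot,X_0)$ pointwise, rather than merely bounding them separately by the same power of $\delta(X)/r$ via Lemma \ref{HolderB}, is essentially a local boundary Harnack statement. The idea is to extract it noncircularly from Lemma \ref{ltcp2}, the Moser estimate Lemma \ref{MoserB}, and Lemma \ref{HolderB}, together with the lower bound on $g$ near its pole (Lemma \ref{GreenLB}) transported through Harnack chains to the corkscrew $A_s(y)$; exploiting that both $u$ and $g(\cdot,X_0)$ vanish on the \emph{same} relatively open piece of $\Gamma$ near $y$ is what makes this extraction possible here, even though the full local comparison principle (Lemma \ref{lCP3}) is not yet available.
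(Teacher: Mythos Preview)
Your lower bound on $v$ is correct and matches the paper's argument almost exactly: compare $v$ to $r^{d-1}g(\cdot,X_0)$ on a small annulus around $X_0$ via Harnack and the Green upper bound, then push this to all of $\Omega\setminus B(X_0,\delta(X_0)/4)$ with Lemma~\ref{lMPg}.

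The upper bound on $u$, however, has a genuine circular gap. To run Lemma~\ref{lMPg} on your exterior domain you must verify $u(X)\le K\,u(X_0)\,r^{d-1}g(X,X_0)$ on the shell $B(x_0,3r)\setminus\overline{B(x_0,2r)}$, and near the boundary this requires a \emph{pointwise lower bound} on $g(\cdot,X_0)$ of the form $g(X,X_0)\ge C^{-1}g(A_s(y),X_0)$ for $X$ close to $y\in\Gamma$. Lemma~\ref{ltcp2} only gives upper bounds $u(X)\le Cu(A_s(y))$ and $g(X,X_0)\le Cg(A_s(y),X_0)$; it cannot produce the lower bound you need. Lemma~\ref{HolderB} tells you both $u$ and $g(\cdot,X_0)$ decay at least like $(\delta(X)/r)^\alpha$, but nothing prevents one from decaying like $\delta^\alpha$ and the other like $\delta^{2\alpha}$. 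Matching the two decay rates pointwise \emph{is} the boundary Harnack principle you are trying to prove, and the tools you list (Lemmas~\ref{ltcp2}, \ref{MoserB}, \ref{HolderB}, \ref{GreenLB}) do not extract it noncircularly.

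The paper avoids this trap by \emph{not} comparing $u$ directly to $g(\cdot,X_0)$ on the shell. Instead it compares $u$ to a smooth harmonic-measure substitute $u_h$ (the solution with trace a bump supported in $B(x_0,\tfrac{15}{8}r)$). The point is that $u_h$ is bounded \emph{below} by a positive constant on the shell, by the non-degeneracy of harmonic measure (Lemma~\ref{ltcp4}), while $u$ is bounded \emph{above} by $Cu(X_0)$ there via Lemma~\ref{ltcp2} and Harnack. So on the shell the inequality $u\le Cu(X_0)u_h$ is just a comparison of constants---no decay matching needed. Lemma~\ref{lMPg} then propagates $u\le Cu(X_0)u_h\le Cu(X_0)\omega^X(\Gamma\cap B(x_0,\tfrac{15}{8}r))$ to $\Omega\setminus B(x_0,2r)$. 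Finally, Lemma~\ref{ltcp3} (already proved) converts the lower bound on $v$ from $r^{d-1}g(X,X_0)$ into $\omega^X(\Gamma\cap B(x_0,r))$, and the doubling property (Lemma~\ref{ldphm}) closes the gap between the two harmonic-measure balls.
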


\begin{remark}
We also have \eqref{CP1} for any $X\in \Omega \setminus B(x_0,\alpha r)$, where $\alpha >1$. In this case, the constant $C$ depends also on $\alpha$. We let the reader check that the proof below can be easily adapted to prove this too.
\end{remark}

\bp 
By symmetry and as before, it is enough to prove that
  \begin{equation} \label{CP2} 
\frac{u(X)}{v(X)} \leq C \frac{u(X_0)}{v(X_0)}  \ \ \ \text{ for }
X \in \Omega \setminus B(x_0,2r).
 \end{equation}
 Notice also that 
 thanks to the Harnack inequality (Lemma \ref{HarnackI}), $v(X)>0$ on the whole 
 $\Omega \setminus \overline{B(x_0,r)}$, so we we don't need to be careful when we divide by $v(X)$.
 
 \medskip
 Set 
$\Gamma_1 := \Gamma \cap B(x_0,r)$ and $ \Gamma_2 : = \Gamma \cap  B(x_0,\frac{15}8 r)$.
Lemma \ref{ldphm} - or more exactly \eqref{dphm1c} - gives the following fact that will be of use later on:
  \begin{equation} \label{CP3} 
\omega^X(\Gamma_2) \leq C \omega^X(\Gamma_1) \qquad \forall X \in \Omega \setminus B(x_0,2r).
 \end{equation}
 with a constant $C>0$ which depends only on $d$, $n$, $C_0$ and $C_1$. 
 
 \medskip
 
We claim that
  \begin{equation} \label{CP4} 
v(X) \geq C^{-1} \omega^X(\Gamma_1) v(X_0) \ \ \ \text{ for }
X \in \Omega \setminus B(x_0,2r).
 \end{equation}
Indeed, by 
Harnack's inequality (Lemma \ref{HarnackI}), 
   \begin{equation} \label{CP5} 
v(X) \geq C^{-1}  v(X_0) \ \ \ \text{ for }
X \in B(X_0,\delta(X_0)/2).
 \end{equation}
 Together with Lemma \ref{ltcp5}, which states that $g(X,X_0) \leq C\delta(X_0)^{1-d} \leq Cr^{1-d}$ for any $X \in \Omega \setminus B(X_0,\delta(X_0)/4)$, we deduce the existence of $K_1>0$ (that depends only on $d$, $n$, $C_0$ and $C_1$) such that
    \begin{equation} \label{CP6} 
v(X) \geq K_1^{-1} r^{d-1}  v(X_0) g(X,X_0) \ \ \ \text{ for }
X \in B(X_0,\frac12\delta(X_0)) \setminus B(X_0,\frac14\delta(X_0))
 \end{equation}
Let us apply 
the maximum principle (Lemma \ref{lMPg}, with 
$E = \R^n \setminus \overline{B(X_0,\delta(X_0)/4)}$ and 
$F = \R^n \setminus B(X_0,\delta(X_0/2))$), to 
the function $X \to v(X) - K_1^{-1} r^{d-1}  v(X_0) g(X,X_0)$. 
The assumptions are satisfied
because of \eqref{CP6}, the properties of the Green function given in Lemma \ref{GreenEx}, and the fact that $v\in W$ is a non-negative solution of $Lv = 0$ on $\Omega$. We get that
\begin{equation} \label{CP7} 
v(X) \geq K_1^{-1} r^{d-1}  v(X_0) g(X,X_0) \ \ \ \text{ for }
X \in \Omega \setminus B(X_0,\frac14\delta(X_0)) 
\supset \Omega \setminus B(x_0,2r).
\end{equation}
The claim \eqref{CP4} is now a straightforward consequence of \eqref{CP7} and Lemma \ref{ltcp3}.

\medskip

We want to prove now that
\begin{equation} \label{CP8} 
u(X) \leq Cu(X_0)\omega^X(\Gamma_2) \ \ \ \text{ for }
X \in \Omega \setminus B(x_0,2r).
\end{equation}
First, we need to prove that 
\begin{equation} \label{CP9} 
u(X) \leq Cu(X_0) \ \ \ \text{ for } 
X \in \left[B(x_0,\frac{13}{8}r) \setminus B(x_0,\frac{11}{8}r)\right] \cap \Omega  .
\end{equation}
We split $\left[B(x_0,\frac{13}{8}r) \setminus B(x_0,\frac{11}{8}r)\right] \cap \Omega$ into two sets:
\begin{equation} \label{CP10} 
\Omega_1 : = \Omega \cap \{X\in B(x_0,\frac{13}{8}r) \setminus B(x_0,\frac{11}{8}r), \, \delta(X) < \frac18 r\}
\end{equation}
and
\begin{equation} \label{CP11} 
\Omega_2 : =  \{X\in B(x_0,\frac{13}{8}r) \setminus B(x_0,\frac{11}{8}r), \, \delta(X) \geq \frac18 r\}.
\end{equation}
The proof of \eqref{CP9} for
$X \in \Omega_2$ is a consequence of the existence of Harnack chain (Lemma \ref{lHC2}) 
and the Harnack inequality (Lemma \ref{HarnackI}). So it remains to prove \eqref{CP9} for
$X\in \Omega_1$. Let thus $X \in \Omega_1$ be given.
We can find $x\in \Gamma$ such that $X \in B(x,\frac18 r)$.
Notice that $x \in B(x_0,\frac74r)$
because $X \in B(x_0,\frac{13}{8}r)$.
Yet, since $u$ is a non-negative solution of $Lu=0$ in $B(x,\frac14 r) \cap \Omega$ satisfying $Tu = 0$ 
on $B(x,\frac14 r) \cap \Gamma$, Lemma \ref{ltcp2} gives that $u(Y) \leq C u(A_{r/8}(x))$ for 
$Y \in B(x,\frac18 r)$ and thus in particular $u(X) \leq C u(A_{r/8}(x))$. 
By the existence of Harnack chains (Lemma \ref{lHC2}) and the Harnack inequality (Lemma \ref{HarnackI})
again, $u(A_{r/8}(x)) \leq Cu(X_0)$.
The bound \eqref{CP9} for all $X \in \Omega_1$ follows. 

\medskip

We proved \eqref{CP9} and now we want to get \eqref{CP8}. Recall 
from Lemma \ref{ltcp4} that 
$\omega^X(B(x_0,\frac74r) \cap \Gamma) \geq C^{-1}$ for 
$X \in B(x_0,\frac{13}{8}r) \setminus \Gamma$.
Hence, by \eqref{CP9},
\begin{equation} \label{CP12} 
u(X) \leq Cu(X_0) \omega^X(B(x_0,\frac74r) \cap \Gamma) \ \ \ \text{ for }
X \in \left[B(x_0,\frac{13}{8}r) \setminus B(x_0,\frac{11}{8}r)\right] \cap \Omega.
\end{equation}
Let $h\in C^\infty_0(B(x_0,\frac{15}{8}r))$ be such that $0\leq h \leq 1$ and
$h \equiv 1$ on $B(x_0,\frac74 r)$. Then let 
$u_h \in W$ be the solution of $Lu_h = 0$ with the Dirichlet condition $Tu_h = Th$. 
 By the positivity of the harmonic measure,
\begin{equation} \label{CP13} 
u(X) \leq Cu(X_0) u_h(X) \ \ \ \text{ for } 
X \in \left[B(x_0,\frac{13}{8}r) \setminus B(x_0,\frac{11}{8}r)\right] \cap \Omega.
\end{equation}
The maximum principle given by Lemma \ref{lMPg} - where we take $E = \R^n \setminus \overline{B(x_0,\frac{11}{8}r)}$ and $F = \R^n \setminus B(x_0,\frac{13}{8}r)$ - yields
\begin{equation} \label{CP14} 
u(X) \leq Cu(X_0) u_h(X) \ \ \ \text{ for }
X \in \Omega \setminus B(x_0,\frac{13}8r)
\end{equation}
and hence 
\begin{equation} \label{CP15} 
u(X) \leq Cu(X_0) \omega^X(\Gamma_2) \ \ \ \text{ for }
X \in \Omega \setminus B(x_0,\frac{13}8r),
\end{equation}
where
we use again the positivity of the harmonic measure. The assertion \eqref{CP8} is now proven.

\medskip

We conclude the proof of the lemma by gathering the previous results.
Because of 
\eqref{CP4} and \eqref{CP8},
\begin{equation} \label{CP16} 
\frac{u(X)}{v(X)} \leq C\frac{u(X_0)}{v(X_0)} \frac{\omega^X(\Gamma_2)}{\omega^X(\Gamma_1)} 
\ \ \ \text{ for }
X \in \Omega \setminus B(x_0,2r),
\end{equation}
and \eqref{CP2} follows from \eqref{CP3}.
Lemma \ref{lCP} 
follows.
 \ep
 
Note 
that the functions $X \to \omega^X(E)$, where $E \subset \Gamma$ is a non-trivial Borel set, 
do not lie in $W$ 
and thus cannot be used directly in Lemma \ref{lCP}. The following lemma solves this problem.

\begin{lemma}[Comparison principle for harmonic measures / Change of poles] \label{lCP2}
Let $x_0 \in \Gamma$ and $r>0$ 
be given, and let  
$X_0 := A_r(x_0) \in \Omega$ be as in Lemma \ref{lNTA}.
Let $E,F \subset \Gamma \cap B(x_0,r)$ be two Borel subsets of $\Gamma$ 
such that $\omega^{X_0}(E)$ and $\omega^{X_0}(F)$ are positive. Then 
  \begin{equation} \label{CP17} 
C^{-1} \frac{\omega^{X_0}(E)}{\omega^{X_0}(F)} 
\leq  \frac{\omega^{X}(E)}{\omega^{X}(F)} \leq C  \frac{\omega^{X_0}(E)}{\omega^{X_0}(F)}
\ \ \ \text{ for } X \in \Omega \setminus B(x_0,2r),
 \end{equation}
where $C>0$ depends only on $n$, $d$, $C_0$ and $C_1$. 
In particular, with the choice $F = B(x_0,r) \cap \Gamma$, 
  \begin{equation} \label{CP18} 
C^{-1} \omega^{X_0}(E) \leq  \frac{\omega^{X}(E)}{\omega^{X}(B(x_0,r) \cap \Gamma)} 
\leq C \omega^{X_0}(E) \ \ \ \text{ for } X \in \Omega \setminus B(x_0,2r),
 \end{equation}
where again $C>0$ depends only on $n$, $d$, $C_0$ and $C_1$.
\end{lemma}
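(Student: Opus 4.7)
\textbf{Proof plan for Lemma \ref{lCP2}.}

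The idea is to reduce Lemma \ref{lCP2} to Lemma \ref{lCP} via an approximation argument, since the functions $X \mapsto \omega^X(E)$ and $X \mapsto \omega^X(F)$ do not in general belong to $W$. First I would observe that, by Lemma \ref{lprhm}\,(i), the assumption $\omega^{X_0}(E), \omega^{X_0}(F) > 0$ forces $\omega^{X}(E), \omega^{X}(F) > 0$ for every $X \in \Omega$, so all quantities in \eqref{CP17} are well defined; and by the symmetric roles of $E$ and $F$ it suffices to establish the upper bound
$$\frac{\omega^{X}(E)}{\omega^{X}(F)} \leq C \,\frac{\omega^{X_0}(E)}{\omega^{X_0}(F)} \quad\text{for every } X \in \Omega \setminus B(x_0,2r).$$
Fix such an $X$ and a parameter $\epsilon > 0$. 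Since $\omega^X$ and $\omega^{X_0}$ are both regular Borel measures on $\Gamma$, combining their inner and outer regularities (by unioning compact inner approximants, intersecting open outer approximants) I obtain compact sets $K_E \subset E$, $K_F \subset F$ and open sets $V_E \supset K_E$, $V_F \supset K_F$ with $\overline{V_E}, \overline{V_F} \subset B(x_0,r) \cap \Gamma$, satisfying
$$\omega^{X}(V_E \setminus K_E) + \omega^{X_0}(V_E \setminus K_E) + \omega^{X}(V_F \setminus K_F) + \omega^{X_0}(V_F \setminus K_F) < \epsilon.$$
Urysohn's lemma then provides $\phi_E, \phi_F \in C_c^\infty(\R^n)$ with $\mathbf{1}_{K_E} \leq \phi_E \leq \mathbf{1}_{V_E}$ and $\mathbf{1}_{K_F} \leq \phi_F \leq \mathbf{1}_{V_F}$; for $\epsilon$ small enough, $K_E$ and $K_F$ are nonempty (since $\omega^{X_0}(E), \omega^{X_0}(F) > 0$), so neither $\phi_E$ nor $\phi_F$ vanishes identically.

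Set $u := U\phi_E$ and $v := U\phi_F$, where $U$ is the operator of Lemma \ref{ldefhm}. Since $\phi_E|_\Gamma, \phi_F|_\Gamma$ lie in $H \cap C_0^0(\Gamma)$, part (v) of Lemma \ref{ldefhm} gives $u, v \in W$ and identifies them as the solutions of \eqref{Dp2} with boundary data $\phi_E|_\Gamma, \phi_F|_\Gamma$, hence nonnegative, nontrivial, $L$-harmonic solutions in $\Omega$ whose traces are supported in $\overline{V_E}, \overline{V_F} \subset B(x_0,r) \cap \Gamma$. In particular $Tu = Tv = 0$ on $\Gamma \setminus B(x_0,r)$, so Lemma \ref{lCP} applies to yield
$$\frac{u(X)}{v(X)} \leq C \,\frac{u(X_0)}{v(X_0)}.$$
For every $Y \in \Omega$, integrating $\mathbf{1}_{K_E} \leq \phi_E|_\Gamma \leq \mathbf{1}_{V_E}$ against $\omega^Y$ via the representation $u(Y) = \int_\Gamma \phi_E\, d\omega^Y$ gives $\omega^Y(K_E) \leq u(Y) \leq \omega^Y(V_E)$ and hence $|u(Y) - \omega^Y(E)| \leq \omega^Y(V_E \setminus K_E)$, with the analogous statement for $v$. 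Specialized to $Y \in \{X, X_0\}$ and combined with our choice of $K_E, V_E, K_F, V_F$, we get $|u(Y) - \omega^Y(E)| + |v(Y) - \omega^Y(F)| < \epsilon$ for $Y \in \{X, X_0\}$. Since $\omega^{X_0}(E), \omega^{X_0}(F)$ are fixed positive numbers (and $\omega^X(E), \omega^X(F) > 0$), sending $\epsilon \to 0$ gives \eqref{CP17}.

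The main technical point—and what I expect to be the only real obstacle—is setting up the approximation so that a \emph{single} smooth function $\phi_E$ controls $\omega^Y(E)$ to within $\epsilon$ simultaneously at $Y=X$ and $Y=X_0$; this is managed exactly as above by intersecting two families of open neighborhoods and unioning two families of compact inner approximants, using that we have only two distinguished poles. The special case \eqref{CP18} is then immediate on taking $F = B(x_0,r) \cap \Gamma$: by the non-degeneracy estimate \eqref{tcp33a} (Lemma \ref{ltcp4}), $\omega^{X_0}(F)$ is bounded below by a positive constant depending only on $d, n, C_0, C_1$, which can be absorbed into $C$ on the right-hand side of \eqref{CP17}, and the resulting two-sided bound is exactly \eqref{CP18}.
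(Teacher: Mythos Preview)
Your approach is essentially the paper's: approximate $\omega^{\cdot}(E)$ and $\omega^{\cdot}(F)$ by solutions with smooth compactly supported boundary data and then apply Lemma~\ref{lCP}. The paper organizes this slightly differently---it fixes one auxiliary $u\in W$ with $Tu$ supported in $B(x_0,r)$, compares only $\omega^{X}(E)/u(X)$, treats compact $E$ first, and then passes to general Borel $E$ by inner regularity---but the substance is identical.

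One small slip to fix: from $\omega^Y(K_E)\le u(Y)\le\omega^Y(V_E)$ alone you cannot conclude $|u(Y)-\omega^Y(E)|\le\omega^Y(V_E\setminus K_E)$, because $E$ is not assumed to lie in $V_E$ (only $K_E$ is). You also need $\omega^Y(E\setminus K_E)<\epsilon$ for $Y\in\{X,X_0\}$, which is exactly what the inner-regularity half of your construction (``unioning compact inner approximants'') provides; just record that condition alongside the one on $V_E\setminus K_E$ and the argument goes through unchanged.
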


\bp
The second part of the lemma, that is \eqref{CP18} an immediate consequence of \eqref{CP17} and the non-degeneracy of the harmonic measure (Lemma \ref{ltcp4}). In addition, it is enough to prove
  \begin{equation} \label{CP17b} 
C^{-1} \frac{\omega^{X_0}(E)}{u(X_0)} \leq  \frac{\omega^{X}(E)}{u(X)} 
\leq C  \frac{\omega^{X_0}(E)}{u(X_0)},
 \end{equation}
where $u\in W$ is any
non-negative non-zero solution of $Lu = 0$ in $\Omega$ satisfying $Tu = 0$ on $\Gamma \setminus B(x_0,r)$, and $C>0$ depends only on $n$, $d$, $C_0$ and $C_1$. Indeed, \eqref{CP17} follows 
by applying \eqref{CP17b} to both $E$ and $F$.
Incidentally, it is very easy to find $u$ like this: just apply Lemma \ref{ldefhm}
to a smooth bump function $g$ with a small compact support near $x_0$.

\medskip

Assume first that $E=K$ is a compact set. Let $X\in \Omega \setminus B(x_0,2r)$ be given. 
Thanks to Lemma \ref{lprhm} (i), the assumption $\omega^{X_0}(K)>0$ implies that
$\omega^X(K)>0$.
By the 
the regularity of the harmonic measure (see \eqref{defhm6}), we can
find an open set 
$U_X \supset K$ such that 
\begin{equation} \label{CP19} 
\text{$\omega^{X_0}(U_X) \leq 2 \omega^{X_0}(K)$ and $\omega^{X}(U_X) \leq 2 \omega^{X}(K)$.}
 \end{equation}
Urysohn's lemma (see Lemma~2.12 in \cite{Rudin}) gives a function $h \in C^0_0(\Gamma)$ such that 
$\1_K \leq h \leq \1_{U_X}$. 
Write $v^h = U(h)$ 
for the image of the function $h$ by the map given in Lemma \ref{ldefhm}. 
We have seen for the proof of Lemma \ref{ldefhm} that $h$ can be approximated, in the supremum norm,
by smooth, compactly supported functions $h_k$, and that the corresponding solutions $v_k = U(h_k)$,
and that can also obtained through \ref{lLM}, lie in $W$ and converge to $v^h$ uniformly on $\Omega$.
Hence we can find
$k >0$ such that
 \begin{equation} \label{CP20} 
\frac12 \, v_{k} \leq v^h \leq 2v_{k} 
 \end{equation}
everywhere in $\Omega$. Write $v$ 
for $v_k$. Notice
that $v$ depends on $X$, but it has no importance. The estimates \eqref{CP19} and \eqref{CP20} give
\begin{equation} \label{CP21} 
\text{$\frac14 v(X_0) \leq  \omega^{X_0}(K) \leq 2v(X_0) \quad $ and $\quad \frac14 v(X)  \leq  \omega^{X}(K) \leq 2v(X)$.}
 \end{equation}
We can even choose $U_X \supset K$ so small, and then $g_k$ with a barely larger support, 
 so that $T v = g_k$
 is supported in $B(x_0,r)$. 
As a consequence, the solution $v$ satisfies the assumption of Lemma \ref{lCP}. Hence, the latter entails
\begin{equation} \label{CP22} 
C^{-1} \frac{v(X_0)}{u(X_0)} \leq  \frac{v(X)}{u(X)} \leq  C\frac{v(X_0)}{u(X_0)}
 \end{equation}
 with a constant $C>0$ that depends only on $d$, $n$, $C_0$ and $C_1$. 
 Together with \eqref{CP21}, we get that
 \begin{equation} \label{CP23} 
C^{-1} \frac{\omega^{X_0}(K)}{u(X_0)} \leq  \frac{\omega^{X}(K)}{u(X)} 
\leq  C\frac{\omega^{X_0}(K)}{u(X_0)}
 \end{equation}
with a constant $C>0$ that still depends only on $d$, $n$, $C_0$ and $C_1$ 
(and thus is independent of $X$).
 Thus the conclusion \eqref{CP17} holds 
 whenever $E=K$ is a compact set.

\medskip
Now let $E$ be any Borel subset of $\Gamma \cap B(x_0,r)$.
Let $X \in \Omega \setminus B(x_0,2r)$. According to the regularity of the harmonic measure \eqref{defhm6}, there exists $K_X\subset E$ (depending on $X$) such that 
\begin{equation} \label{CP24} 
\text{$ \omega^{X_0}(K_X) \leq  \omega^{X_0}(E) 
\leq 2 \omega^{X_0}(K_X) \quad $ and $\quad \omega^{X}(K_X) \leq  \omega^{X}(E) 
\leq 2 \omega^{X}(K_X)$.}
 \end{equation}
The combination of \eqref{CP24} and \eqref{CP23} (applied to 
$K_X$) yields
\begin{equation} \label{CP25} 
C^{-1} \frac{\omega^{X_0}(E)}{u(X_0)} \leq  \frac{\omega^{X}(E)}{u(X)} \leq  C\frac{\omega^{X_0}(E)}{u(X_0)}
 \end{equation}
where the constant $C>0$ depends only upon $d$, $n$, $C_0$ and $C_1$. The lemma follows.
\ep

Let us prove now a comparison principle for 
the solution that
are not defined in the whole domain $\Omega$.

\begin{theorem}[Comparison principle for locally defined functions] \label{lCP3}
Let $x_0 \in \Gamma$ and $r>0$
and let 
$X_0 := A_r(x_0) \in \Omega$ be the point given in Lemma \ref{lNTA}.
Let $u,v\in \WW(B(x_0,2r))$ be two non-negative,
not identically zero,
solutions of $Lu = Lv = 0$ in $B(x_0,2r)$,
such that  
$Tu = Tv = 0$ on $ \Gamma \cap B(x_0,2r)$.
Then
  \begin{equation} \label{CP26} 
C^{-1} \frac{u(X_0)}{v(X_0)} \leq \frac{u(X)}{v(X)} \leq C \frac{u(X_0)}{v(X_0)}
\ \ \ \text{ for } X \in \Omega \cap B(x_0,r),
 \end{equation}
  where $C>0$ depends only on $n$, $d$, $C_0$ and $C_1$. 
\end{theorem}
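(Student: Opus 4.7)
The strategy adapts to higher co-dimension the alternative codimension-one argument sketched in the discussion preceding Lemma \ref{lMPcod1}: both $u$ and $v$ are sandwiched on $\Omega \cap B(x_0,r)$ between globally defined quantities built from the Green function $g(\cdot,X_0)$ and smoothed global harmonic measures, after which the doubling property of $\omega^X$ supplies the final comparison. By the symmetry of the statement in $u$ and $v$ it suffices to prove $u(X)/v(X) \leq C u(X_0)/v(X_0)$ on $\Omega \cap B(x_0,r)$; note that $v>0$ throughout $\Omega \cap B(x_0,r)$ by Harnack, so the division makes sense.

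\textbf{Step 1 (upper bound on $u$).} I will show $u(X) \leq C\,u(X_0)\,\omega^X(\Gamma \setminus B(x_0,9r/8))$ on $\Omega \cap B(x_0,r)$. First, Lemma \ref{ltcp2} applied to small balls centered at nearest-boundary points, together with Harnack chains and interior Harnack (Lemmata \ref{lHC2} and \ref{HarnackI}), gives $u \leq C u(X_0)$ on the annulus $\Omega \cap [B(x_0,7r/4) \setminus \overline{B(x_0,3r/2)}]$. Pick $h \in C^\infty(\R^n)$ with $h \equiv 1$ on $\R^n \setminus B(x_0,5r/4)$, $h \equiv 0$ on $B(x_0,9r/8)$, and $0 \leq h \leq 1$, and let $u_h \in W$ be the global solution with $Tu_h = Th$ provided by Lemma \ref{lLM}. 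Positivity of $\omega^X$ and the non-degeneracy estimate \eqref{tcp33b} give $u_h(X) \geq \omega^X(\Gamma \setminus B(x_0,5r/4)) \geq C^{-1}$ on the same annulus, hence $u \leq C u(X_0) u_h$ there. Applying Lemma \ref{lMPg} to $Cu(X_0)u_h - u$ with $E = B(x_0,7r/4)$ and $F = \overline{B(x_0,3r/2)}$ propagates this inequality to all of $\Omega \cap B(x_0,7r/4)$, and combining with $u_h \leq \omega^X(\Gamma \setminus B(x_0,9r/8))$ concludes Step 1.

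\textbf{Step 2 (lower bound on $v$).} The core local claim is that there is a universal $M>1$ such that for every $y_0 \in \Gamma$, every $s>0$, and every nonnegative nonzero $v \in \WW(B(y_0,Ms))$ that is a solution in $B(y_0,Ms) \setminus \Gamma$ with $Tv=0$ on $\Gamma \cap B(y_0,Ms)$,
\[
v(X) \geq C^{-1} v(A_s(y_0))\, \omega^X(\Gamma \setminus B(y_0,2s)) \qquad \text{on } \Omega \cap B(y_0,s).
\]
Write $Y_0 = A_s(y_0)$ and, for constants $K_1,K_2$ to be chosen, let $u_{K_2} \in W$ be the global solution whose trace is a smooth cutoff $h_{K_2}$ equal to $1$ outside $B(y_0,K_2 s)$ and $0$ on $B(y_0,K_2 s/2)$, and set
\[
f_{y_0,s}(X) := s^{d-1} g(X,Y_0) - K_1 u_{K_2}(X).
\]
Lemma \ref{ltcp5} (bounding $s^{d-1} g(X,Y_0)$ uniformly once $|X-Y_0| \geq \delta(Y_0)/4$) together with \eqref{tcp33b} (bounding $u_{K_2}$ below on $\Omega \setminus B(y_0,2K_2 s)$) yields, for a universal $K_1$ and every large $K_2$, that $f_{y_0,s} \leq 0$ on $\Omega \setminus B(y_0,2K_2 s)$. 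Next, using Lemma \ref{HolderB} on $u_{K_2}$ inside $B(y_0,K_2 s/2)$, non-degeneracy applied to the companion solution $u_4$ with trace supported in $\Gamma \setminus B(y_0,2s)$ and vanishing on $B(y_0,2s)$, and a further application of Lemma \ref{lMPg}, one obtains $u_{K_2} \leq C K_2^{-\alpha} u_4$ on $\Omega \cap B(y_0,10 s)$; combined with \eqref{tcp18b} (which gives $s^{d-1} g(X,Y_0) \approx \omega^X(\Gamma \setminus B(y_0,2s))$ on $B(y_0,s) \setminus B(Y_0,\delta(Y_0)/4)$) and $u_4 \leq \omega^X(\Gamma \setminus B(y_0,2s))$, choosing $K_2$ large enough forces $f_{y_0,s} \geq C^{-1}\omega^X(\Gamma \setminus B(y_0,2s))$ on $B(y_0,s) \setminus B(Y_0,\delta(Y_0)/4)$. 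Finally, with $M := 4K_2$, a last application of Lemma \ref{lMPg} to $v - K_3 v(Y_0) f_{y_0,s}$, with $E = B(y_0,3K_2 s) \setminus \overline{B(Y_0,\delta(Y_0)/8)}$, $F = \overline{B(y_0,2K_2 s)} \setminus B(Y_0,\delta(Y_0)/4)$, and $K_3$ small enough that interior Harnack on $v$ near $Y_0$ dominates the bounded contribution of $f_{y_0,s}$ on the inner annulus $B(Y_0,\delta(Y_0)/4) \setminus \overline{B(Y_0,\delta(Y_0)/8)}$, gives $v \geq K_3 v(Y_0) f_{y_0,s}$ on $E \cap \Omega$; interior Harnack then covers $B(Y_0,\delta(Y_0)/4) \cap \Omega$ and establishes the claim.

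\textbf{Step 3 (patching and conclusion).} Fix $M$ as above, set $s := r/(2M)$, and take $X \in \Omega \cap B(x_0,r)$. If $\delta(X) \geq s$, the lower bound $v(X) \geq C^{-1} v(X_0) \geq C^{-1} v(X_0)\, \omega^X(\Gamma \setminus B(x_0,2r))$ is immediate from Harnack chains in $B(x_0,2r) \setminus \Gamma$. Otherwise, pick $y_0 \in \Gamma$ with $|X-y_0|=\delta(X)<s$; then $|y_0-x_0| \leq 3r/2$ and $B(y_0,Ms) \subset B(x_0,2r)$, so Step 2 applies, and a Harnack chain inside $B(x_0,2r) \setminus \Gamma$ replaces $v(A_s(y_0))$ by $v(X_0)$. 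Since $B(y_0,2s) \subset B(x_0,2r)$, the monotonicity of $\omega^X$ gives $\omega^X(\Gamma \setminus B(y_0,2s)) \geq \omega^X(\Gamma \setminus B(x_0,2r))$. Combining with Step 1 yields
\[
\frac{u(X)}{v(X)} \leq C\, \frac{u(X_0)}{v(X_0)}\, \frac{\omega^X(\Gamma \setminus B(x_0,9r/8))}{\omega^X(\Gamma \setminus B(x_0,2r))},
\]
and a finite iteration of the doubling property \eqref{dphm1d} bounds the last ratio by a universal constant on $B(x_0,r)$, proving \eqref{CP26}. The main technical obstacle is Step 2: one must coordinate the choices of $K_1, K_2, K_3$ and the pair $(E,F)$ so that the three uses of Lemma \ref{lMPg} are simultaneously valid — the integrability of $\nabla f_{y_0,s}$ must survive near the singularity $Y_0$ of $g(\cdot,Y_0)$, the traces on $\Gamma$ must have the right sign, the pointwise bounds on $E \setminus F$ must hold, and the whole construction must fit inside $B(y_0,Ms)$ where $v$ is controlled.
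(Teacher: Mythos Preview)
Your proof is correct and follows essentially the same route as the paper: the same auxiliary function $f_{y_0,s}(X)=s^{d-1}g(X,Y_0)-K_1 u_{K_2}(X)$, the same three applications of Lemma \ref{lMPg} (one to compare $u_{K_2}$ with $u_4$, one to propagate the lower bound on $v$, one to propagate the upper bound on $u$), and the same closing via \eqref{dphm1d}. The only cosmetic differences are your choice of radii and that in Step~1 you rebuild the bound $u\leq Cu(X_0)$ on an annulus via small boundary balls and Harnack chains, whereas the paper invokes Lemma \ref{ltcp2} in one stroke on $B(x_0,\tfrac74 r)$ (implicitly using that the ratio $2$ in that lemma can be relaxed to any fixed ratio $>1$).
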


\bp
The plan of the proof is as follows: first, for
$y_0\in \Gamma$ and 
$s>0$, we construct a function $f_{y_0,s}$ on $\Omega$ such that (i) 
$f_{y_0,s}(X)$ is equivalent to $\omega^X(\Gamma \setminus B(y_0,2s))$ when $X\in B(y_0,s)$ is 
close to $\Gamma$ and (ii) 
$f_{y_0,s}(X)$ is negative when $X \in \Omega \setminus B(y_0,Ms)$ - with
$M$ depending only on $d$, $n$, $C_0$ and $C_1$. 
We use $f_{y_0,s}$ to prove that $v(X) \geq v(A_s(y_0)) \omega^X(\Gamma \setminus B(y_0,2s))$ whenever $X \in B(y_0,s)$ and $B(y_0,Ms) \subset B(x_0,2r)$ is a ball centered on $\Gamma$. 
We use then an appropriate covering of $B(x_0,r)$ by balls and the Harnack inequality to get the lower bound $v(X) \geq v(X_0) \omega^X(\Gamma \setminus B(x_0,4r))$, which is the counterpart of \eqref{CP4} in our context.
We conclude as in Lemma \ref{lCP} by using Lemma \ref{ltcp2} and the doubling property for the harmonic measure (Lemma \ref{ldphm})

\medskip

Let $y_0\in \Gamma$ and $s>0$. Write $Y_0$ for $A_s(y_0)$. The main idea is to take 
\begin{equation}\label{a148}
f_{y_0,s} (X) := s^{d-1} g(X,Y_0) - K_1 \omega^X(\Gamma \setminus B(y_0,K_2 s))
\end{equation}
for some $K_1,K_2>0$ that depend
only on $n$, $d$, $C_0$ and $C_1$. With good choices of $K_1$ and $K_2$, 
the function $f_{y_0,s}$ is positive in $B(y_0,s)$ and negative outside of
a big ball $B(y_0,2K_2s)$. 
However, with this definition involving the harmonic measure, the function $f_{y_0,s}$ doesn't satisfy the appropriate estimates required for the use of the maximum principle given as Lemma \ref{lMPg}. 
So we shall replace $\omega^X(\Gamma \setminus B(y_0,K_2s))$ by some solution of $Lu=0$, 
with smooth Dirichlet condition.

\medskip
Let $h\in C^\infty(\R^n)$ be such that $0\leq h \leq 1$, $h\equiv 0$ on $B(0,1/2)$ and $h\equiv 1$ on 
the complement of  $B(0,1)$. 
For $\beta >0$ (which will be chosen large), 
we define $h_{\beta}$ by 
$h_\beta(x) = h(\frac{x-y_0}{\beta s})$. 
Let $u_\beta$ be
the solution, given by Lemma \ref{lLM}, 
of $Lu_\beta=0$ with the Dirichlet condition $Tu_\beta = Th_\beta$. 
Notice that $u_\beta \in W$ because $1-u_\beta$ is the solution of $L$ with the smooth and compactly
supported trace $1-h$.
Observe
that for any $X\in \Omega$ and
$\beta>0$,
\begin{equation} \label{CPA7}
 \omega^X(\Gamma \setminus B(y_0,\beta s)) \leq u_\beta(X) \leq  \omega^X(\Gamma \setminus B(y_0,\beta s/2)).
\end{equation}
The functions $u_\beta$ will play the role of 
harmonic measures but,
unlike these, the functions $u_\beta$ lie in $W$ and are thus 
 suited for the use of Lemma \ref{lMPg}.

\medskip

By 
Lemma \ref{ltcp5}, there exists $C>0$, that depends
only on $d$, $n$, $C_0$ and $C_1$, such that
\begin{equation} \label{CPA1}
g(X,Y_0) \leq C \delta(Y_0)^{1-d} \ \ \ \text{ for } 
X \in \Omega \setminus B(Y_0,\delta(Y_0)/4).
\end{equation}
Moreover, 
since $Y_0$ comes from 
Lemma \ref{lNTA}, we have $\epsilon s \leq \delta(Y_0) \leq s$ with 
an $\epsilon > 0$ that does not depend on $s$ or $y_0$,
and hence
\begin{equation} \label{CPA2}
s^{d-1} g(X,Y_0) \leq C \ \ \ \text{ for }
X \in \Omega \setminus B(y_0,2s).
\end{equation}
From this and
the non-degeneracy of the harmonic measure (Lemma \ref{ltcp4}), we deduce
that for $\beta \geq 1$, 
\begin{equation} \label{CPA3}
s^{d-1} g(X,Y_0) \leq K_1 \omega^X(\Gamma \setminus B(y_0,\beta s)) \leq K_1 u_\beta(X) 
\ \ \ \text{ for } 
X \in \Omega \setminus B(y_0,2\beta s),
\end{equation}
where the constant $K_1>0$, depends only on $d$, $n$, $C_0$ and $C_1$.

Our aim now is to find $K_2\geq 20$ such that
\begin{equation} \label{CPA6}
K_1 u_{K_2}(X) \leq \frac12 s^{d-1} g(X,Y_0) \ \ \ \text{ for }
X \in \Omega \cap [B(y_0,s) \setminus B(Y_0,\delta(Y_0)/4)].
\end{equation}
By construction, the function $u_\beta$ is such that $T(u_\beta) = 0$ on $\Gamma \cap B(y_0,\beta s/2)$, so according to the H\"older continuity at the boundary (Lemma \ref{HolderB}), we have 
\begin{equation} \label{CPA4}
\sup_{B(y_0,10 s)} u_\beta \leq C \beta^{-\alpha}
\end{equation}
for any $\beta \geq 20$, where $C$ and $\alpha>0$ depend only 
on $d$, $n$, $C_0$ and $C_1$. 
Moreover, due to 
\eqref{CPA7} and 
the non-degeneracy of the harmonic measure (Lemma \ref{ltcp4}), 
\begin{equation} \label{CPA4a}
 u_4(X) \geq C^{-1} \ \ \ \text{ for } 
 X \in \Omega \setminus B(y_0,8s)
\end{equation}
where $u_4$ is defined as $u_\beta$ (with 
$\beta = 4$).
As a consequence, 
there exists $K_3>0$, that depends only on $d$, $n$, $C_0$, and $C_1$, such that
for $\beta \geq 20$,
\begin{equation} \label{CPA4b}
 u_\beta(X) \leq K_3 \beta^{-\alpha} u_4(X) \ \ \ \text{ for }
 X \in \Omega \cap[ B(y_0,10s) \setminus B(y_0,8s)].
\end{equation}
We just proved that for $\beta \geq 20$,
the function $u'= K_3 \beta^{-\alpha} u_4 - u_\beta$ satisfies the assumption (iii) of 
Lemma \ref{lMPg}, with 
$E = B(y_0,10s)$ and $F = \overline{B(y_0,8s)}$. 
The other assumptions of Lemma \ref{lMPg} are satisfied as well, since $u' \in W$ is smooth and $T(u') = K_3  \beta^{-\alpha} T u_4 \geq 0$ on $\Gamma \cap E$.
Therefore, 
Lemma \ref{lMPg} gives
\begin{equation} \label{CPA4c}
 u_\beta(X) \leq K_3 \beta^{-\alpha} u_4(X) \ \ \ \text{ for }
 X \in \Omega \cap B(y_0,10s).
\end{equation}
Use now \eqref{CPA7} and Lemma \ref{ltcp3} to get for
$X \in \Omega \cap [B(y_0,s) \setminus B(Y_0,\delta(Y_0)/4)]$,
\begin{equation} \label{CPA4d}
 u_\beta(X) \leq K_3 \beta^{-\alpha} \omega^X(\Gamma \setminus B(y_0,2s)) 
 \leq C \beta^{-\alpha} s^{d-1} g(X,Y_0),
\end{equation}
where $C>0$ depends only on $d$, $n$, $C_0$ and $C_1$. 
The existence of $K_2\geq 20$ satisfying \eqref{CPA6} is now immediate.

\medskip

Define the function $f_{y_0,s}$ on $\Omega \setminus \{Y_0\}$ by
\begin{equation} \label{CPA8}
f_{y_0,s}(X): = s^{d-1} g(X,Y_0) - K_1 u_{K_2}(X).
\end{equation}
The inequality \eqref{CPA3} gives
\begin{equation} \label{CPA9}
f_{y_0,s}(X) \leq 0 \ \ \ \text{ for }
X \in \Omega \setminus B(y_0,2K_2s),
\end{equation}
and the estimates \eqref{CPA6} and \eqref{tcp18b} imply that 
\begin{equation} \label{CPA0}
f_{y_0,s}(X) \geq \frac12 s^{d-1} g(X,Y_0) \geq C^{-1} \omega^X(\Gamma \setminus B(y_0,2s)) 
\ \ \ \text{ for } 
X \in \Omega \cap [B(y_0,s) \setminus B(Y_0,\delta(Y_0)/4)].
\end{equation}

\bigskip

Let us turn to the proof of the comparison principle. 
By symmetry and as
in Lemma~\ref{lCP}, it suffices to prove the upper bound in \eqref{CP26}, that is
  \begin{equation} \label{CP32} 
\frac{u(X)}{v(X)} \leq C \frac{u(X_0)}{v(X_0)} \ \ \ \text{ for } 
X \in \Omega \cap B(x_0,r).
 \end{equation}

\medskip

We claim that
  \begin{equation} \label{CP33} 
v(X) \geq C^{-1} v(X_0) \omega^X(\Gamma \setminus B(x_0,2r)) 
\ \ \ \text{ for } 
X \in \Omega \cap B(x_0,r),
 \end{equation}
 where $C>0$ depends only on $n$, $d$, $C_0$ and $C_1$.
So let $X \in \Omega \cap B(x_0,r)$ be given. Two cases may happen. 
If $\delta(X) \geq \frac r{8K_2}$, where $K_2$ comes from \eqref{CPA6} 
and is the same as in the definition of $f_{y_0,s}$, the
existence of Harnack chains (Lemma \ref{lHC2}), the Harnack inequality (Lemma \ref{HarnackI}) and the non-degeneracy of the harmonic measure (Lemma \ref{ltcp4}) give 
   \begin{equation} \label{CP34} 
v(X) \approx  v(X_0)  \approx v(X_0) 
\frac{\omega^X(\Gamma \setminus B(x_0,2r))}{\omega^{X_0}(\Gamma \setminus B(x_0,2r)) } 
\approx v(X_0) \omega^X(\Gamma \setminus B(x_0,2r))
 \end{equation}
 by
 \eqref{tcp33c}. 
The more interesting remaining case is when 
 $\delta(X) < \frac{r}{8K_2}$. 
 Take $y_0 \in \Gamma$ such that $|X-y_0| = \delta(X)$. Set 
 $s:= \frac r{8K_2}$ and $Y_0 = A_s(y_0)$.  
 The ball $B(y_0, \frac12 r) = B(y_0,4K_2s)$ is contained
 in $B(x_0,\frac 74 r)$. The following points hold :  
 \begin{itemize}
 \item 
 The quantity  $\int_{B(y_0,4K_2s) \setminus B(Y_0,\delta(Y_0)/4)} |\nabla v|^2 dm$ is finite 
 because $v \in \WW(B(x_0,2r))$. The fact that 
 $\int_{B(y_0,4K_2s) \setminus B(Y_0,\delta(Y_0)/4)} |\nabla f_{y_0,s}|^2 dm$ is finite as well 
 follows from the property \eqref{GreenE1} of the Green function. 
 \item 
 There exists $K_4>0$ (depending only on $d$, $n$, $C_0$ and $C_1$) such that 
  \begin{equation} \label{CP35} 
v(Y) - K_4 v(Y_0) f_{y_0,s}(Y) \geq 0 \ \ \ \text{ for }
Y \in B(Y_0,\delta(Y_0)/2) \setminus B(Y_0,\delta(Y_0)/4). \end{equation}
This latter inequality is due to the following two bounds: the fact that
 \begin{equation} \label{CP36} 
f_{y_0,s}(Y) \leq s^{1-d} g(Y,Y_0) \leq C \ \ \ \text{ for }
Y \in B(Y_0,\delta(Y_0)/2) \setminus B(Y_0,\delta(Y_0)/4),
 \end{equation}
which is a consequence of
the definition \eqref{CPA8} and 
\eqref{GreenE7}, and the bound
  \begin{equation} \label{CP37} 
v(Y) \geq C^{-1}v(Y_0) \ \ \ \text{ for }
Y \in B(Y_0,\delta(Y_0)/2),
 \end{equation}
which comes from the Harnack inequality (Lemma \ref{HarnackI}).
 \item 
 The function $v - K_4 v(Y_0) f_{y_0,s}$
is nonnegative on 
 $\Omega \cap [B(y_0,4K_2s) \setminus B(y_0,2K_2s)]$. Indeed, $v\geq 0$ on $B(y_0,4K_2s) $ and, thanks to \eqref{CPA9}, $f_{y_0,s} \leq 0$ on $\Omega \setminus B(y_0,2K_2s)$.
 \item 
 The trace of $v-K_4 v(Y_0) f_{y_0,s}$ is non-negative on 
$B(y_0,4K_2s) \cap \Gamma$
 again because $Tv = 0$ on 
$B(y_0,4K_2s) \cap \Gamma$ 
 and $T[f_{y_0,s}] \leq 0$ on 
$B(y_0,4K_2s) \cap \Gamma$ 
 by construction.
 \end{itemize} 
 The previous points prove that $v-K_4 v(Y_0) f_{y_0,s}$ satisfies the assumptions of 
 Lemma \ref{lMPg} with
 $E = B(y_0,4K_2s) \setminus \overline{B(Y_0,\delta(Y_0)/4)}$ and $F = \overline{B(y_0,2K_2s)} \setminus B(Y_0,\delta(Y_0)/2)$. As a consequence, for any $Y\in B(y_0,4K_2s) \setminus B(Y_0,\delta(Y_0)/4)$
  \begin{equation} \label{CP38} 
v(Y) - K_4 v(Y_0) f_{y_0,s}(Y) \geq 0,
 \end{equation} 
 and hence, 
 for any $Y \in B(y_0,s) \setminus B(Y_0,\delta(Y_0)/4)$
   \begin{equation} \label{CP39} 
v(Y) \geq  K_4 v(Y_0) f_{y_0,s}(Y) \geq C^{-1} v(Y_0) \omega^Y(\Gamma \setminus B(y_0,2s))
 \end{equation} 
 by  
 \eqref{CPA0}. Since
 both $v$ and $Y\to \omega^Y(\Gamma \setminus B(y_0,2s))$ are solutions on $B(y_0,2s)$, 
 we can use the Harnack inequality (Lemma \ref{HarnackI}) to deduce, first, 
 that \eqref{CP39} holds for any $Y \in B(y_0,s)$ and second, that we can replace $v(Y_0)$ by $v(X_0)$
(recall that at this point, $\frac s r = \frac1{8K_2}
$ is controlled by the usual constants).
 Therefore,
    \begin{equation} \label{CP40} 
v(Y) \geq C^{-1} v(X_0) \omega^Y(\Gamma \setminus B(y_0,2s)) \ \ \ \text{ for } 
Y \in B(y_0,s).
 \end{equation} 
 In particular, with our choice of $y_0$ and $s$, the inequality is true when $X=Y$, that is,
 \begin{equation} \label{CP41} 
v(X) \geq C^{-1} v(X_0) \omega^X(\Gamma \setminus B(y_0,2s)) 
\geq C^{-1} v(X_0) \omega^X(\Gamma \setminus B(x_0,2r))
 \end{equation} 
where $C>0$ depends only on $d$, $n$, $C_0$ and $C_1$.
The claim \eqref{CP33} follows.
 
 \medskip
 Now we want to prove that
\begin{equation} \label{CP42} 
u(X) \leq C u(X_0) \omega^X(\Gamma \setminus B(x_0,\frac54 r)) 
\ \ \ \text{ for } 
X \in \Omega \cap B(x_0,r).
 \end{equation} 
By
Lemma \ref{ltcp2},
\begin{equation} \label{CP43} 
u(X) \leq C u(X_0)  \ \ \ \text{ for } 
X \in \Omega \cap B(x_0,\frac74r).
 \end{equation} 
Pick  
$h'\in C^\infty(\R^n)$ such that $0 \leq h'\leq 1$, $h'\equiv 1$ outside of $B(x_0,\frac32 r)$, 
and $h' \equiv 0$ on $B(x_0,\frac54 r)$. 
Let $u_{h'} = U(h')$ be  
the solution of $Lu_{h'} = 0$ with the data $Tu_{h'} = Th'$ (given by Lemma~\ref{lLM}). 
As before, $u_{h'} \in W$ because $1-u_{h'} = U(1-h)$ and $1-h$ is a test function.
Also, $u_{h'}(X) \geq \omega^X(\Gamma \setminus B(x_0,\frac32 r))$ by monotonicity.
So \eqref{tcp33b}, which 
states the non-degeneracy of the harmonic measure, gives
\begin{equation} \label{CP44} 
u_{h'}(X) \geq C^{-1} \ \ \ \text{ for } 
X\in \Omega \setminus B(x_0,\frac{13}{8}r).
 \end{equation} 
The combination of \eqref{CP43} and \eqref{CP44} yields the existence of $K_5>0$ (that depends
only on $d$, $n$, $C_0$ and $C_1$) such that $K_5 u(X_0) u_{h'} - u \geq 0$ on $\Omega \cap [B(x_0,\frac74 r) \setminus B(x_0,\frac{13}{8}r)]$. 
It is easy to check that $K_5 u(X_0) u_{h'} - u $ satisfies all the assumptions of Lemma \ref{lMPg}, with 
$E = B(x_0,\frac74 r)$ and $F =  \overline{B(x_0,\frac{13}{8}r)}$.
This is because $u \in \WW(B(x_0,2r))$, $u_{h'} \in W$, $T u_{h'} \geq 0$, and $Tu = 0$ on 
$\Gamma \cap B(x_0,2r)$.
Then by Lemma \ref{lMPg}
\begin{equation} \label{CP45} 
u \leq K_5 u(X_0) u_{h'} \ \ \ \text{ for }
X\in \Omega \cap B(x_0,\frac{7}{4}r),
 \end{equation} 
and since $u_{h'}(X) \leq \omega^X(\Gamma \setminus B(x_0,\frac{5}{4}r))$
for all 
$X\in \Omega$, 
\begin{equation} \label{CP46} 
u(X) \leq C u(X_0) \omega^X(\Gamma \setminus B(x_0,\frac{5}{4}r)) 
\ \ \ \text{ for } 
X\in \Omega \cap B(x_0,\frac{7}{4}r).
 \end{equation} 
The claim \eqref{CP42} follows.
  
The bounds \eqref{CP33} and \eqref{CP42} imply that
\begin{equation} \label{CP47} 
\frac{u(X)}{v(X)} \leq C \frac{u(X_0)}{v(X_0)} \frac{\omega^X(\Gamma \setminus B(x_0,\frac{5}{4}r))}{\omega^X(\Gamma \setminus B(x_0,2r))} \ \ \ \text{ for }
X\in \Omega \cap B(x_0,r).
 \end{equation} 
 The
 bound \eqref{CP32} is now
 a consequence of the above inequality and the doubling property of the harmonic measure (Lemma \ref{ldphm}, or more exactly \eqref{dphm1d}).
\ep


\backmatter
\bibliographystyle{amsalpha}

\printindex

\end{document}